\theoremstyle{plain}
\newtheorem{thm}{Theorem}[section]
\newtheorem*{thm*}{Theorem}
\newtheorem{lem}[thm]{Lemma}
\newtheorem{cor}[thm]{Corollary}
\newtheorem{prop}[thm]{Proposition}
\newtheorem*{prop*}{Proposition}
\theoremstyle{definition}
\newtheorem*{df*}{Definition}
\newtheorem*{dfs*}{Definitions}
\newtheorem*{exercise*}{Exercise}
\theoremstyle{remark}
\newtheorem{rem}[thm]{Remark}
\newtheorem*{rem*}{Remark}
\newtheorem*{example}{Example}
\newtheorem*{examples}{Examples}
\newcommand{\fk}[1]{\mathfrak{#1}}
\newcommand{\sr}[1]{\mathscr{#1}}
\newcommand{\bo}[1]{\boldsymbol{#1}} 
\newcommand{\Z}{\mathbf{Z}} 
\newcommand{\Q}{\mathbf{Q}} 
\newcommand{\R}{\mathbf{R}} 
\newcommand{\C}{\mathbf{C}} 
\newcommand{\F}{\mathbf{F}}
\newcommand{\x}{\times}
\newcommand{\pt}{\mathrm{pt}}
\newcommand{\CFKhat}{\smash{\widehat{\mathrm{CFK}}}}
\newcommand{\HFKhat}{\smash{\widehat{\mathrm{HFK}}}}
\newcommand{\HFhat}{\smash{\widehat{\mathrm{HF}}}}
\newcommand{\HFLhat}{\smash{\widehat{\mathrm{HFL}}}}
\DeclareMathOperator{\Isharp}{I^\sharp}
\DeclareMathOperator{\Inat}{I^\natural}
\DeclareMathOperator{\SHI}{SHI}
\DeclareMathOperator{\KHI}{KHI}
\DeclareMathOperator{\rank}{rank}
\DeclareMathOperator{\Int}{Int}
\DeclareMathOperator{\Hom}{Hom}
\DeclareMathOperator{\Id}{Id}
\DeclareMathOperator{\U}{U}
\DeclareMathOperator{\SO}{SO}
\DeclareMathOperator{\Spin}{Spin}
\DeclareMathOperator{\HFK}{HFK}
\DeclareMathOperator{\HFL}{HFL}
\DeclareMathOperator{\CFL}{CFL}
\DeclareMathOperator{\Kh}{Kh}
\DeclareMathOperator{\Khr}{Khr}
\DeclareMathOperator{\KhC}{KhC}
\DeclareMathOperator{\KhrC}{KhrC}
\DeclareMathOperator{\SFH}{SFH}
\DeclareMathOperator{\SFC}{SFC}
\DeclareMathOperator{\Sym}{Sym}
\DeclareMathOperator{\Eig}{Eig}
\title{The cosmetic crossing conjecture for split links}
\author{Joshua Wang}
\date{}
\begin{document}
\maketitle
\begin{abstract}
	Given a band sum of a split two-component link along a nontrivial band, we obtain a family of knots indexed by the integers by adding any number of full twists to the band. We show that the knots in this family have the same Heegaard knot Floer homology and the same instanton knot Floer homology. In contrast, a generalization of the cosmetic crossing conjecture predicts that the knots in this family are all distinct. We verify this prediction by showing that any two knots in this family have distinct Khovanov homology. Along the way, we prove that each of the three knot homologies detects the trivial band.
\end{abstract}
\tableofcontents


\newpage
\section{Introduction}

The cosmetic crossing conjecture asserts that every crossing change of an oriented knot which does not change the oriented knot type is \textit{nugatory} (Figure~\ref{fig:nugatory}). 

\begin{figure}[!ht]
	\centering
	\includegraphics[width=.4\textwidth]{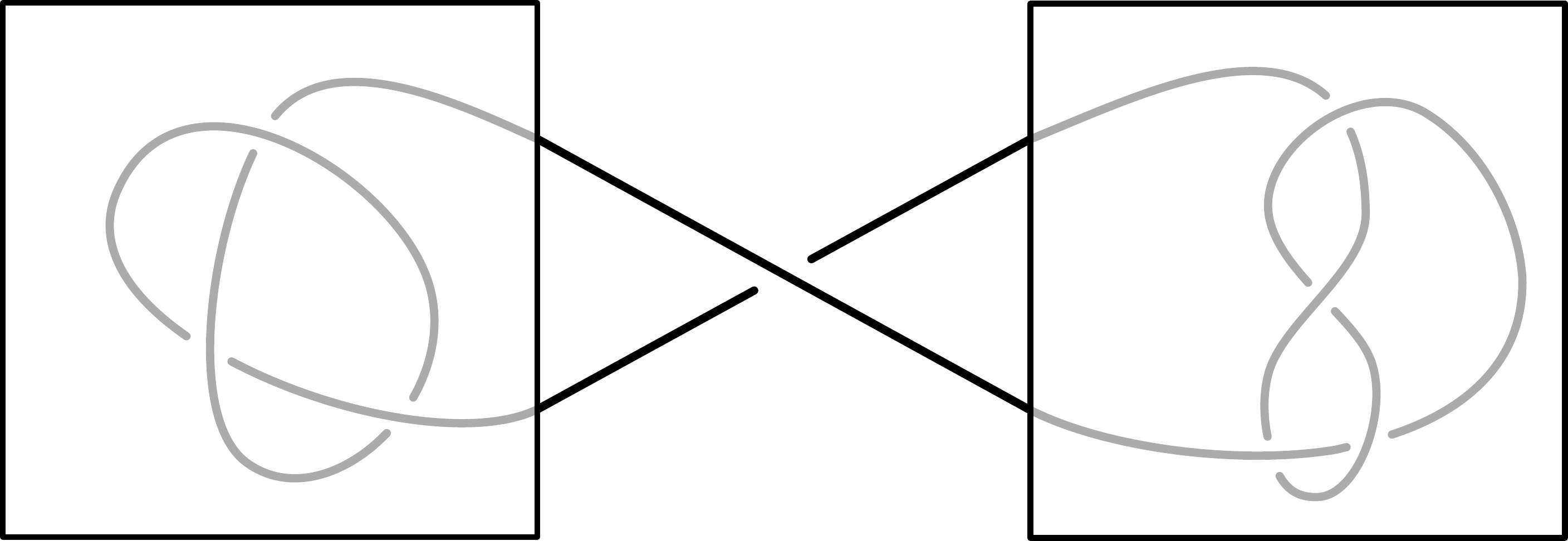}\\
	\vspace{5pt}
	\includegraphics[width=.4\textwidth]{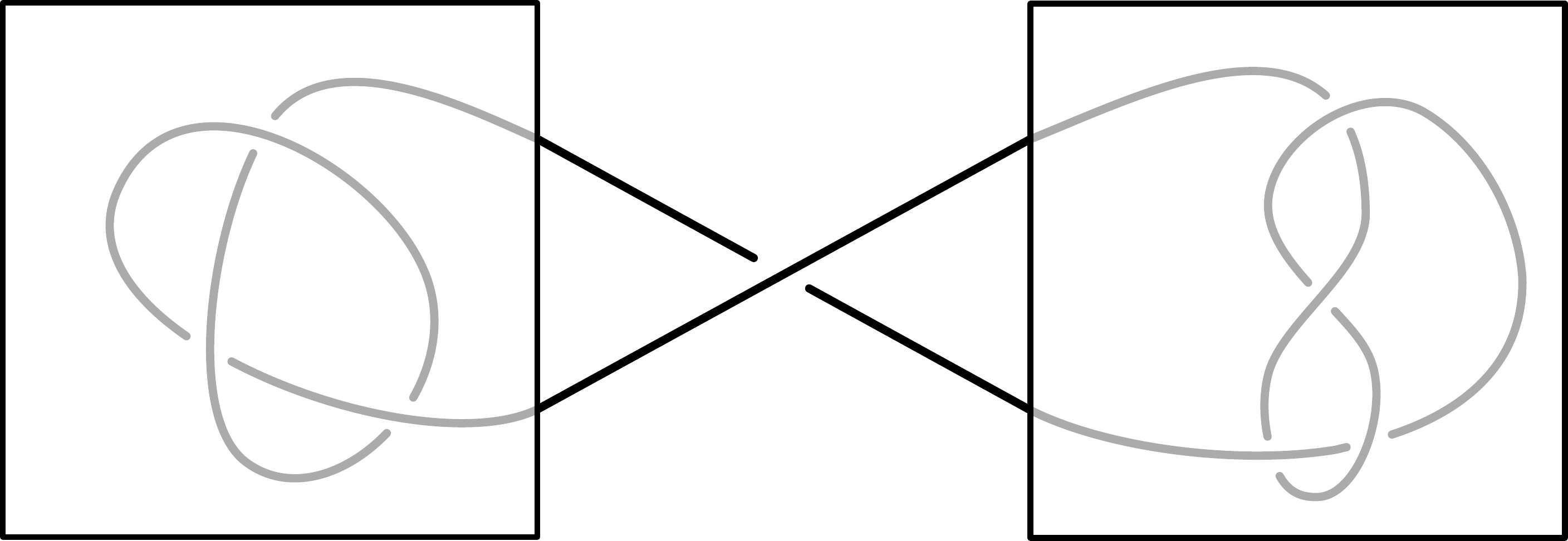}
	\caption{A nugatory crossing change.}
	\label{fig:nugatory}
\end{figure}

\noindent More precisely, suppose $K_-$ and $K_+$ are oriented knots in $S^3$ which agree outside of a small ball inside of which $K_-$ is a negative crossing and $K_+$ is a positive crossing. Let $L$ be the two-component link obtained by taking the oriented resolution of this crossing. Observe that $K_-$ can be obtained from $L$ by band surgery along a band $b$ in the small ball (Figure~\ref{fig:bandb}).

\begin{figure}[!ht]
	\centering
	\includegraphics[width=.13\textwidth]{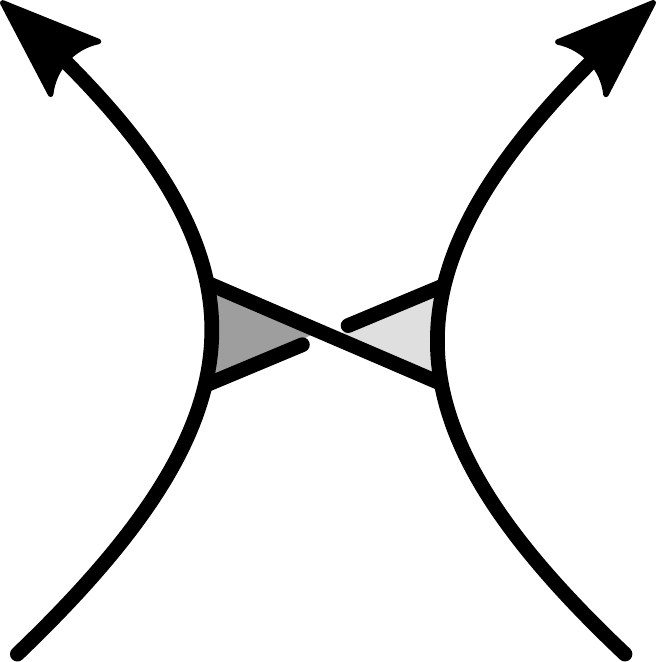}
	\captionsetup{width=.7\textwidth}
	\caption{$K_-$ is obtained from $L$ by band surgery along a band $b$.}
	\label{fig:bandb}
\end{figure}

\noindent The knot $K_+$ is then obtained from $K_-$ by adding a full twist to the band $b$. If $L$ happens to be a split link, then $b$ is said to be \textit{trivial} if there exists an embedded sphere which splits $L$ and intersects $b$ along a single arc. 

\theoremstyle{plain}
\newtheorem*{CCC}{Cosmetic Crossing Conjecture}

\begin{CCC}
	Suppose $K_-$ and $K_+$ are isotopic as oriented knots. Then \begin{enumerate}[itemsep=-.5ex, topsep=0.5ex]
		\item the link $L$ is split,
		\item the band $b$ is trivial. 
	\end{enumerate}
\end{CCC}

\noindent If $L$ is split and $b$ is trivial, then the crossing change from $K_-$ to $K_+$ is called \textit{nugatory}. 
The cosmetic crossing conjecture, which is also known as the nugatory crossing conjecture, appears as Problem 1.58 on Kirby's problem list \cite{MR1470751} and is attributed to X. S. Lin. 

The conjecture has the following generalization. If $K_b$ is an oriented knot obtained from a two-component link $L$ by band surgery along a band $b$, then we may consider the family of oriented knots $\{K_{b+n}\}_{n\in\Z}$ where $K_{b+n}$ is obtained from $K_b$ by adding $n$ full twists to the band. 

\theoremstyle{plain}
\newtheorem*{GCCC}{Generalized Cosmetic Crossing Conjecture}

\begin{GCCC}
	Suppose there are distinct integers $n, m$ for which $K_{b+n}$ and $K_{b+m}$ are isotopic as oriented knots. Then \begin{enumerate}[itemsep=-.5ex, topsep=0.5ex]
		\item the link $L$ is split,
		\item the band $b$ is trivial. 
	\end{enumerate}
\end{GCCC}

\subsection{Statement of results}

The conjecture is usually viewed as a claim about knots and their crossing changes. See for example \cite{MR1669827,MR2980586,MR2928712,MR3501264,MR3654492,MR3605982}. Alternatively, the conjecture can viewed as a claim about two-component links and their band surgeries. For example, the generalized cosmetic crossing conjecture is equivalent to the following two assertions: \begin{enumerate}[itemsep=-0.5ex, topsep=0.5ex]
	\item If $L$ is a \textit{nonsplit} two-component link, then for any band $b$, the knots $K_{b+n}$ for $n \in \Z$ are all distinct as oriented knots. 
	\item If $L$ is a \textit{split} two-component link, then for any \textit{nontrivial} band $b$, the knots $K_{b+n}$ for $n \in \Z$ are all distinct as oriented knots. 
\end{enumerate}
The main result of this paper is a proof of the second assertion, the generalized cosmetic crossing conjecture for split links. When $L$ is a split two-component link, a knot $K_b$ obtained from $L$ by band surgery is called a \textit{band sum} of $L$ (Figure~\ref{fig:bandSum}). The proof appears in section~\ref{subsec:KhovanovFullTwists}. 

\begin{thm}\label{thm:nonTrivBandFullTwistsDifferent}
	Let $K_b$ be the band sum of a split two-component link along a nontrivial band $b$, and let $K_{b+n}$ be obtained from $K_b$ by adding $n$ full twists to the band. The knots $K_{b+n}$ for $n \in \Z$ are all distinct as unoriented knots. 

	In fact, if $K_{b+n/2}$ is obtained from $K_b$ by adding $n$ half twists to the band, then $K_{b+n/2}$ for $n \in \Z$ are all distinct as unoriented knots.
\end{thm}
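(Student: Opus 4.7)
The plan relies on two ingredients: the Khovanov unoriented skein exact triangle, and the earlier theorem of the paper that Khovanov homology detects a trivial band for a split two-component link.

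First, represent $K_{b+n/2}$ by a diagram in which the $n$ half-twists of the band appear as $n$ adjacent crossings in a single twist region. Applying the unoriented Khovanov skein exact triangle at any one of these crossings, the oriented resolution removes one half-twist (yielding $K_{b+(n-1)/2}$), while the other resolution cuts through the band and returns the split link $L$ (the two half-bands left over become trivial arcs attached to $L_1$ and $L_2$). So for each $n \in \Z$ one gets an exact triangle
\[
\Kh(L)\{a_n\}[\alpha_n] \longrightarrow \Kh(K_{b+n/2}) \longrightarrow \Kh(K_{b+(n-1)/2})\{b_n\}[\beta_n] \longrightarrow
\]
with explicit bigrading shifts depending on the sign of the twist crossing. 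Since $L$ is split, $\Kh(L) \cong \Kh(L_1)\otimes \Kh(L_2)$ is nonzero.

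Suppose for contradiction that $K_{b+n/2} \cong K_{b+m/2}$ as unoriented knots for some $n > m$; then their Khovanov homologies coincide. Splicing the skein exact triangles at levels $m{+}1, \ldots, n$ yields a telescope whose $n-m$ internal subquotients are grading-shifted copies of $\Kh(L)$ stacked on $\Kh(K_{b+m/2})$. The hypothesised isomorphism forces all $n-m$ copies of $\Kh(L)$ in the telescope to cancel via the connecting maps. Because the $n$ twist crossings share a fixed sign, the shift parameters $a_i$ vary linearly in $i$, so these copies occupy well-separated strips of quantum grading and can only pair off against neighboring copies (and against $\Kh(K_{b+m/2})$) in highly restricted patterns. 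Unwinding these constraints forces $\Kh(K_b)$ to have the bigraded structure of the Khovanov homology of the connect sum $L_1 \# L_2$.

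Finally, invoke the earlier Khovanov detection theorem for trivial bands: once $\Kh(K_b)$ matches $\Kh(L_1 \# L_2)$ in this way, $b$ must be trivial, contradicting the hypothesis. The main obstacle is exactly the bigrading bookkeeping in the telescoping step — ruling out exotic cancellation patterns among the $n-m$ shifted copies of $\Kh(L)$. I expect this to be the technical heart of the argument: one tracks the shifts explicitly using the fixed sign pattern of the twist crossings, and shows that only the rigid connect-sum configuration permits total cancellation. Once that rigidity is established, the detection theorem closes the argument, and the same reasoning handles the full-twist statement as the special case of even $n-m$.
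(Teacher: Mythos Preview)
Your plan has the right ingredients (the unoriented skein triangle and the trivial-band detection theorem) but is missing the key structural input, and the ``telescoping'' step is not a proof. Splicing exact triangles does not produce a long exact sequence or a filtered object in which the copies of $\Kh(L)$ appear as subquotients of $\Kh(K_{b+n/2})$; each connecting map $\Kh(K_{b+i/2}) \to \Kh(L)$ need be neither injective nor surjective, and there is no mechanism by which the hypothesised isomorphism $\Kh(K_{b+n/2}) \cong \Kh(K_{b+m/2})$ forces ``cancellation'' of the intermediate $\Kh(L)$ summands. The sentence ``Unwinding these constraints forces $\Kh(K_b)$ to have the bigraded structure of the Khovanov homology of the connect sum'' is exactly the statement you need to prove, and you have not indicated any mechanism for it beyond grading separation of the $\Kh(L)$ pieces --- which by itself says nothing about how those pieces interact with $\Kh(K_{b+m/2})$.

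The idea you are missing is the ribbon concordance $K_\# \to K_b$ (Miyazaki, via the light bulb trick) together with Levine--Zemke's theorem that such a concordance induces a split injection on Khovanov homology. This gives, for every $n$, a canonical splitting $\Kh(K_{b+n/2}) \cong \Kh(K_\#) \oplus H_{b+n/2}$, and moreover the ribbon concordances can be chosen compatibly so that the skein exact triangle for $K_b$ splits as the direct sum of the skein triangle for the trivial band (in which the map $\Kh(K_\#) \to \Kh(K_\#')$ vanishes for dimension reasons) and a triangle with \emph{zero} at the $L$ vertex. The latter triangle is then just a bigrading-shifting isomorphism $H_b \to H_{b+1/2}$ of degree $(1,2)$, so $H_{b+n/2} \cong h^n q^{2n} H_b$ for all $n$. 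The detection theorem tells you $H_b \neq 0$, and since $H_b$ is finite-dimensional the shifted copies are pairwise non-isomorphic. This replaces your entire telescoping argument with a single clean splitting.
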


\begin{figure}[!ht]
	\centering
	\captionsetup{width=.7\textwidth}
	\includegraphics[width=.5\textwidth]{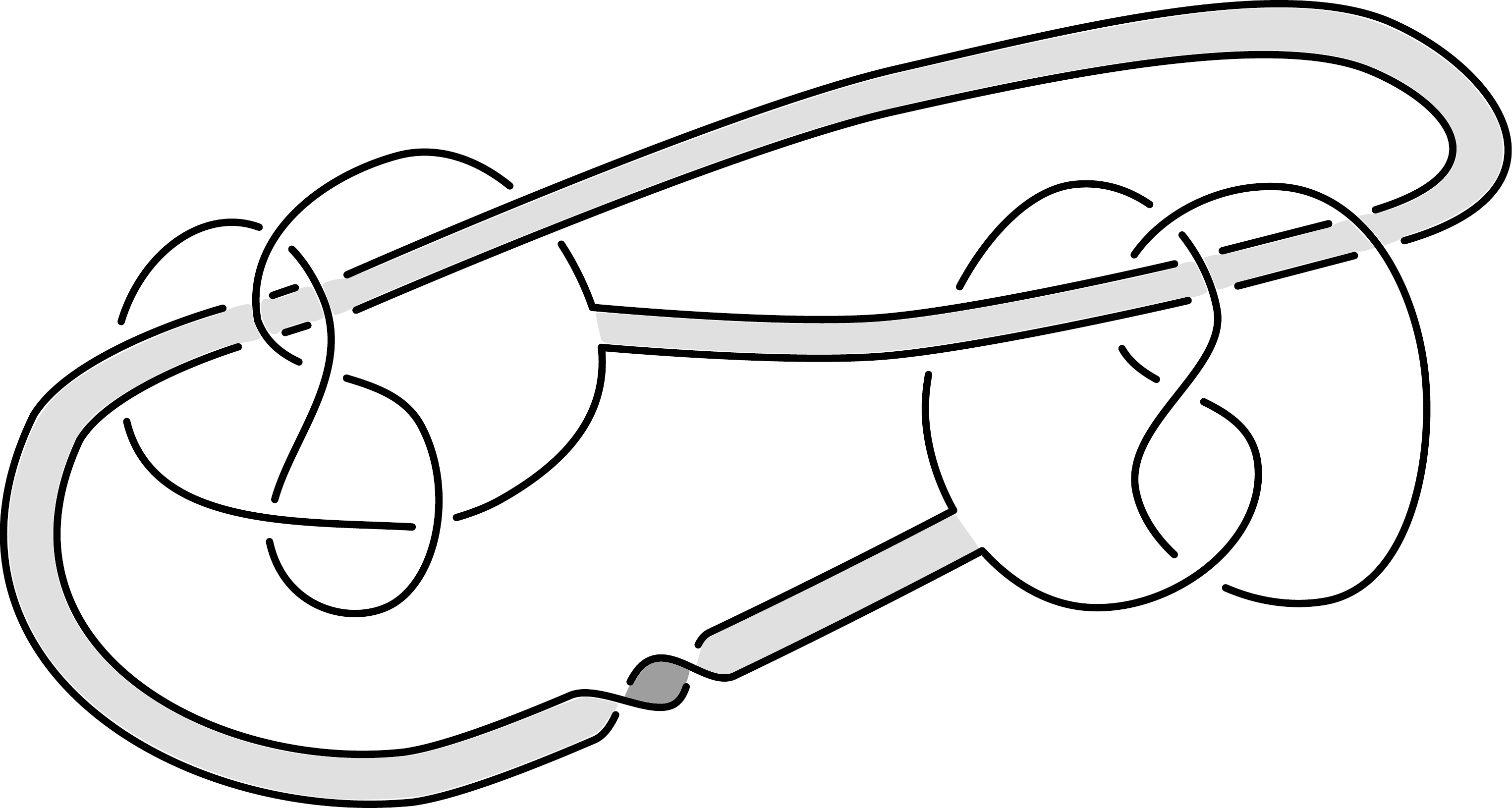}
	\caption{A band sum along a nontrivial band. The genus of this particular band sum is equal to the genus of the connected sum.}
	\label{fig:bandSum}
\end{figure}

In this paper, we study how a number of modern knot invariants of $K_b$ behave under adding full twists to the band. We first recall the behavior of two classical knot invariants, the Alexander and Jones polynomials, under this operation. The (symmetrized) Alexander polynomial $\Delta(J) \in \Z[t^{\pm1}]$ is an invariant of oriented links $J$ in $S^3$ which evaluates to $1$ on the unknot and satisfies the oriented skein relation \[
	\Delta(J_+) - \Delta(J_-) = (t^{-1/2} - t^{1/2})\Delta(J_0)
\]for three links $J_+,J_-,J_0$ which agree outside a small ball in which $J_+$ is a positive crossing, $J_-$ is a negative crossing, and $J_0$ is the oriented resolution of the crossing. When three links $J_+,J_-,J_0$ stand in this relation, they are said to form an \textit{oriented skein triple}. If $J$ is a knot, then $\Delta(J)$ is actually an invariant of the unoriented knot type of $J$. Let $K_b$ be a band sum of a split two-component link $L$, and let $K_\#$ be the connected sum of $L$. Then $K_\#,K_\#,L$ form an oriented skein triple (Figure~\ref{fig:nugatory}), so we see that $\Delta(L) = 0$ from the skein relation. It then follows from the skein relation for the triple $K_{b+1},K_b,L$ that $\Delta(K_b) = \Delta(K_{b+1})$. Although this elementary argument shows the Alexander polynomial fails to distinguish the knots $K_{b+n}$ for $n \in \Z$ when $L$ is split, the same argument proves the generalized cosmetic crossing conjecture for many nonsplit links. 

\begin{prop}
	Let $L$ be a two-component link for which $\Delta(L) \neq 0$. If $K_b$ is obtained from $L$ by band surgery, and $K_{b+n}$ is obtained from $K_b$ by adding $n$ full twists to the band, then the knots $K_{b+n}$ for $n \in \Z$ are all distinct as unoriented knots. In particular, the generalized cosmetic crossing conjecture is true for all two-component links $L$ with nonzero linking number. 
\end{prop}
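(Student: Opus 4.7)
The plan is to exploit the oriented skein relation for the Alexander polynomial, building on the computation already carried out in the paper for the split case. The key observation is that the triple $(K_{b+n+1}, K_{b+n}, L)$ is an oriented skein triple for every $n \in \Z$, by exactly the same local picture that identifies $(K_{b+1}, K_b, L)$ as one: inside the small ball where the band lives, the two configurations differ by the same local twist at every step, and the oriented resolution of the distinguishing crossing always cuts the band and returns the configuration to $L$, independently of $n$.

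Applying the skein relation $\Delta(J_+) - \Delta(J_-) = (t^{-1/2}-t^{1/2})\Delta(J_0)$ and telescoping over $n$ yields
\[
	\Delta(K_{b+n}) \;=\; \Delta(K_b) \;+\; n\,(t^{-1/2}-t^{1/2})\,\Delta(L)
\]
for every $n \in \Z$. Since $t^{-1/2}-t^{1/2}$ is a nonzero element of the integral domain $\Z[t^{\pm 1/2}]$, the hypothesis $\Delta(L) \neq 0$ forces the assignment $n \mapsto \Delta(K_{b+n})$ to be injective. The Alexander polynomial of a knot is an invariant of the unoriented knot type, so the knots $K_{b+n}$ are pairwise distinct as unoriented knots.

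For the ``in particular'' clause I need to know that any two-component link with nonzero linking number has nonzero Alexander polynomial. The cleanest route is via the Conway polynomial: for any two-component link one has the divisibility $\nabla_L(z) = z\cdot p(z)$, with $p(0) = \pm\operatorname{lk}(L)$, so a nonzero linking number forces $\nabla_L \neq 0$, and hence $\Delta_L(t) = \nabla_L(t^{1/2}-t^{-1/2}) \neq 0$.

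The argument is essentially a two-line skein-relation computation and I do not expect a serious obstacle. The only point deserving a sanity check is that the sign and orientation conventions are consistent step by step, so that the same link $L$ (and not some twisted variant of it) really appears as the oriented resolution at every stage of the telescoping; this is immediate because the twist is supported in the small ball and is applied identically at each increment of $n$.
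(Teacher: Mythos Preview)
Your proof is correct and follows essentially the same approach as the paper: telescoping the oriented skein relation to obtain $\Delta(K_{b+n}) = \Delta(K_b) + n(t^{-1/2}-t^{1/2})\Delta(L)$, then invoking the Conway polynomial's linear coefficient to handle the nonzero linking number case. The only cosmetic difference is that the paper states the linking-number fact as ``the coefficient of the linear term of $\nabla(L)$ is the linking number,'' which is equivalent to your formulation $\nabla_L(z) = z\cdot p(z)$ with $p(0) = \pm\operatorname{lk}(L)$.
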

\begin{proof}
	The skein relation implies that $\Delta(K_{b+n}) = \Delta(K_b) + n(t^{-1/2} - t^{1/2})\Delta(L)$, so the polynomials $\Delta(K_{b+n})$ for $n \in \Z$ are all distinct by the assumption that $\Delta(L) \neq 0$. 
	Thus, the unoriented knots $K_{b+n}$ for $n \in \Z$ are distinct. 

	The Conway polynomial $\nabla(J) \in \Z[z]$ is an invariant of oriented links $J$ for which the substitution $z = t^{-1/2} - t^{1/2}$ recovers $\Delta(J)$. Hence, $\nabla(J) = 0$ if and only if $\Delta(J) = 0$. If $L$ is a two-component link with nonzero linking number, then $\nabla(L) \neq 0$ because the coefficient of the linear term of $\nabla(L)$ is precisely the linking number of $L$, which follows from an elementary argument using the skein relation $\nabla(J_+) - \nabla(J_-) = z\nabla(J_0)$. 
\end{proof}

We return to the split link case. The (unnormalized) Jones polynomial $V(J) \in \Z[q^{\pm1}]$ is an invariant of oriented links $J$ in $S^3$ which evaluates to $q + q^{-1}$ on the unknot and satisfies a different oriented skein relation \[
	q^{-2}V(J_+) - q^2V(J_-) = (q^{-1} - q)V(J_0).
\]Let $L$ be a split two-component link, and as before, let $K_\#$ be the connected sum and $K_b$ be a band sum of $L$. From the skein relation for the triple $K_\#,K_\#,L$, we obtain $(q^{-1} + q)V(K_\#) = V(L)$, and in combination with the skein relation for the triple $K_{b+1},K_b,L$, we find that \[
	V(K_{b+1}) - V(K_\#) = q^4(V(K_b) - V(K_\#)).
\]Let $P_b$ be the polynomial $V(K_b) - V(K_\#)$ so that $V(K_b) = V(K_\#) + P_b$. The skein relation implies that $V(K_{b+1}) = V(K_\#) + q^4P_b$. The argument iterates to show that $V(K_{b+n}) = V(K_\#) + q^{4n}P_b$. Thus, if $P_b \neq 0$, then the polynomials $V(K_{b+n})$ are all distinct. The author does not know if $P_b \neq 0$ for all nontrivial bands $b$. 

\vspace{20pt}

Knot Floer homology \cite{MR2704683,MR2065507} in its most basic form associates to a knot $J$ in $S^3$ a finite-dimensional vector space $\HFKhat(J)$ over $\F_2 = \Z/2$. It is equipped with two $\Z$-gradings, called the Alexander and Maslov gradings. Up to bigraded isomorphism, $\HFKhat(J)$ is an invariant of the unoriented knot type of $J$. The Euler characteristic of $\HFKhat(J,s)$ in Alexander grading $s$ is the coefficient of $t^s$ in the Alexander polynomial $\Delta(J)$ of $J$. In section~\ref{subsec:HeegaardInvariance}, we prove the following result which categorifies the observation that $\Delta(K_b) = \Delta(K_{b+1})$. 

\begin{thm}\label{thm:HeegaardKnotFloerInvariantUnderFullTwists}
	Let $K_b$ be a band sum of a split two-component link, and let $K_{b+n}$ be obtained by adding $n$ full twists to the band. Then \[
		\HFKhat(K_b) \cong \HFKhat(K_{b+n})
	\]as bigraded vector spaces over $\F_2$. 
\end{thm}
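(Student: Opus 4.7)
By induction on $|n|$, it suffices to produce a bigraded isomorphism $\HFKhat(K_b) \cong \HFKhat(K_{b+1})$. The plan is to categorify the classical Alexander polynomial argument --- that $\Delta(K_b) = \Delta(K_{b+1})$ follows from $\Delta(L) = 0$ via the skein relation --- using the Ozsv\'ath--Szab\'o oriented skein exact triangle for $\HFKhat$. Applied to the triple $(K_{b+1}, K_b, L)$, this triangle produces a long exact sequence
\[
\cdots \to \HFKhat(K_{b+1}) \xrightarrow{\,f\,} \HFKhat(K_b) \xrightarrow{\,g\,} \HFKhat(L) \xrightarrow{\,h\,} \HFKhat(K_{b+1})[1] \to \cdots.
\]
Because $\HFKhat(L) \cong \HFKhat(L_1) \otimes \HFKhat(L_2) \otimes V$ is generally nonzero even though $\chi = \Delta(L) = 0$, the triangle alone is not enough, and the split structure of $L$ must enter through an additional comparison.

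The comparison I have in mind is with the analogous skein triangle for the nugatory triple $(K_\#, K_\#, L)$ arising from the trivial band sum $K_\# = L_1 \# L_2$, in which the outer terms are tautologically the same bigraded vector space. The nugatory triangle pins down the bigraded ranks of its connecting maps $g_0, h_0$ to and from $\HFKhat(L)$ purely from exactness and the K\"unneth decomposition. The heart of the plan is to show that $g, h$ in the nontrivial triangle have the same bigraded ranks as $g_0, h_0$; exactness then forces $\HFKhat(K_b)$ and $\HFKhat(K_{b+1})$ to have matching bigraded dimensions, yielding the desired isomorphism.

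To carry out the identification of ranks, I would choose a Heegaard diagram for $(S^3, L)$ adapted to the splitting sphere $S$: one in which $S$ meets the Heegaard surface $\Sigma$ in a single separating simple closed curve $c$, so that the diagram decomposes along $c$ as a ``connect sum'' of standard Heegaard diagrams for $(S^3, L_1)$ and $(S^3, L_2)$. The band $b$ is then represented as a localized arc in the diagram, and the skein triangle connecting maps are computed by holomorphic polygon counts supported in a neighborhood of the band. The main obstacle is to show that these polygon counts are insensitive to the global isotopy class of $b$, matching the corresponding counts for the trivial band; I would approach this via a neck-stretching argument across $c$, localizing the relevant holomorphic polygons onto the two split pieces and thereby reducing the comparison to a computation on disjoint Heegaard diagrams for $(S^3, L_1)$ and $(S^3, L_2)$. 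This last step is the categorified analog of the scalar identity $(t^{-1/2} - t^{1/2})\Delta(L) = 0$, and I expect it to be where the bulk of the technical work lies.
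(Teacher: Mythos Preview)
Your overall strategy---compare the skein triangle for $(K_{b+1},K_b,L)$ with the nugatory triangle for $(K_\#,K_\#,L)$ and deduce the isomorphism from the comparison---is exactly the one the paper uses. The gap is in how you propose to carry out the comparison.

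You want to show that the connecting maps $g,h$ in the nontrivial triangle have the same bigraded ranks as $g_0,h_0$ in the nugatory one. But $g\colon \HFKhat(K_b)\to\HFKhat(L)$ and $g_0\colon \HFKhat(K_\#)\to\HFKhat(L)$ have different domains, and your neck-stretching sketch does not explain what identification of $\HFKhat(K_b)$ with something involving $\HFKhat(K_\#)$ would make ``the polygon counts match'' a meaningful statement. Even at the chain level, a splitting-sphere Heegaard diagram does not identify $\CFKhat(K_b)$ with $\CFKhat(K_\#)$: the band crosses the neck, so the relevant generators and differentials differ. Localizing polygons to the two sides of $c$ would at best factor the maps through tensor products of maps on the pieces, but on one side the piece still sees the nontrivial band. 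As written, the argument is circular: you need some input that trivializes the band, and neck-stretching alone does not supply it.

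The paper supplies that input via Miyazaki's ribbon concordance $K_\#\to K_b$ (built from the light-bulb trick) together with Zemke's theorem that ribbon concordances induce split injections on $\HFKhat$. Choosing the concordance to be supported away from the skein crossing, one gets a commuting map of exact triangles that embeds the nugatory triangle as a bigraded direct summand of the nontrivial one. Since a dimension count shows the top map in the nugatory triangle is zero, the quotient triangle has $0$ in the $L$-slot, and its top map $F_{b+1}\to F_b$ is the desired bigraded isomorphism. This replaces your proposed holomorphic-curve localization with a clean functoriality argument; the ribbon concordance is the missing idea.
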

\begin{rem}
	Another version of knot Floer homology takes the form of a bigraded module $\HFK^-(K_b)$ over the polynomial ring $\F_2[U]$. Theorem~\ref{thm:HeegaardKnotFloerInvariantUnderFullTwists} holds for this version as well. 
\end{rem}
\begin{rem}
	The special case of this result for both $\HFKhat$ and $\HFK^-$ when the split link is the unlink was proven by Hedden-Watson (Theorem 1 of \cite{MR3782416}). 
\end{rem}

Instanton knot Floer homology \cite{MR2652464,MR1171893} is a similar knot invariant and takes the form of a finite-dimensional vector space $\KHI(J)$ over $\C$ equipped with a $\Z$-grading called the Alexander grading, and a $\Z/2$-grading called the canonical mod $2$ grading. Again up to isomorphism respecting the two gradings, $\KHI(J)$ is an invariant of the unoriented knot type of $J$ in $S^3$. The Euler characteristic of $\KHI(J,s)$ in Alexander grading $s$ is also the coefficient of $t^s$ in the Alexander polynomial $\Delta(J)$ of $J$ \cite{MR2661575,MR2683750}. We will sometimes refer to knot Floer homology as Heegaard knot Floer homology to distinguish it from instanton knot Floer homology. In section~\ref{subsec:invarianceInstanton}, we prove the following result analogous to Theorem~\ref{thm:HeegaardKnotFloerInvariantUnderFullTwists}.

\begin{thm}\label{thm:SuturedInstantonInvariantUnderFullTwists}
	Let $K_b$ be a band sum of a split two-component link, and let $K_{b+n}$ be obtained by adding $n$ full twists to the band. Then \[
		\KHI(K_b) \cong \KHI(K_{b+n})
	\]as vector spaces over $\C$ equipped with Alexander gradings and canonical mod $2$ gradings. 
\end{thm}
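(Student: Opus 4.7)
The plan is to adapt the proof of Theorem~\ref{thm:HeegaardKnotFloerInvariantUnderFullTwists} to the instanton setting, using Kronheimer--Mrowka's sutured instanton Floer homology $\SHI$ in place of sutured Heegaard Floer homology. Both functors share the relevant formal properties: diffeomorphism invariance of the underlying sutured manifold, a surface decomposition formula, and the identification $\KHI(K) = \SHI(S^3 \setminus N(K), \gamma_\mu)$ for two oppositely oriented meridional sutures $\gamma_\mu$.

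First I would set up the sutured manifold $(M, \gamma_\mu) = (S^3 \setminus N(K_b), \gamma_\mu)$ and observe that adding $n$ full twists to the band is implemented by $-1/n$ surgery on a small unknot $c \subset M$ meridional to the band; this surgery converts $M$ into the exterior of $K_{b+n}$. To exploit the hypothesis that $L$ is split, I would fix a splitting sphere $S$ for $L$ intersecting the band $b$ transversely in a minimal collection of arcs. After band surgery, $S$ yields a properly embedded planar surface $F \subset M$ whose boundary circles on $\partial M$ are meridians of $K_b$.

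The core of the argument is to decompose $(M, \gamma_\mu)$ along $F$ via the surface decomposition formula for $\SHI$ and to verify that $c$ lies in a product sutured submanifold of the resulting decomposition. Since surgery on a curve contained in a product region does not affect $\SHI$, this would yield $\KHI(K_b) \cong \KHI(K_{b+n})$. The Alexander and mod $2$ gradings are propagated by choosing a Seifert surface $\Sigma$ for $K_b$ adapted to $F$, so that the Seifert surface operator defining the Alexander grading restricts compatibly to the pieces of the decomposition.

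The main obstacle is establishing the product-region structure around $c$ after the decomposition. For a trivial band (the Hedden--Watson setting), $F$ is essentially a twice-punctured disk and the product-region claim is straightforward; for a nontrivial band, $F$ carries more boundary components, so additional care is needed to ensure that $F$ can be made taut and that a product region containing $c$ persists in the decomposed manifold. A secondary technical point is grading compatibility: unlike in Heegaard Floer, the Alexander grading on $\KHI$ is defined via an eigenspace decomposition under a Seifert surface operator rather than directly on generators, so preserving it under the decomposition requires verifying that the operator action restricts correctly to each piece.
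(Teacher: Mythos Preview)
Your proposal has a genuine gap, and in fact the strategy you describe is \emph{not} the one used in the proof of Theorem~\ref{thm:HeegaardKnotFloerInvariantUnderFullTwists} that you say you are adapting. That proof (and the paper's proof of the present theorem) does not decompose along a planar surface coming from the splitting sphere. Instead it uses the oriented skein exact triangle for $\KHI$ relating $K_{b+1},K_b,L$ (Theorem~\ref{thm:suturedInstantonSkein}), together with ribbon concordance maps. A ribbon concordance $K_\#\to K_b$ (Corollary~\ref{cor:RibbonConcordance}) and its reverse give injective/surjective maps that embed the skein triangle for the trivial band as a direct summand of the skein triangle for $b$. Since $\dim\KHI(L)=2\dim\KHI(K_\#)$ (Lemma~\ref{lem:suturedInstantonTrivialSkeinTri}), the map $\KHI(K_\#)\to\KHI(K_\#)$ in the trivial triangle vanishes, and the complementary summand of the triangle then forces a grading-preserving isomorphism $F_{b+1}\cong F_b$.

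Your proposed argument fails at two points. First, the surface decomposition theorem for $\SHI$ identifies $\SHI(M',\gamma')$ only as a direct \emph{summand} of $\SHI(M,\gamma)$, not as all of it; so even if surgery on $c$ left the decomposed pieces unchanged, this would not by itself give $\KHI(K_b)\cong\KHI(K_{b+n})$. Second, and more seriously, the assertion that $c$ lies in a product region after cutting along $F$ is not justified and is generally false: after cutting along the splitting sphere, $c$ is a meridian of one band strand inside one of the two balls, and for a nontrivial band this strand can be tangled with $J_i$ and with the other band strands, so $c$ remains homologically essential in the tangle complement and does not sit in any product piece. You acknowledge this as ``the main obstacle,'' but it is not a technical difficulty to be overcome---it is exactly where the approach breaks down.
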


Khovanov homology \cite{MR1740682} associates to an oriented link $J$ in $S^3$ a finitely-generated $R$-module $\Kh(J)$ equipped with two-bigradings labeled $q$ and $h$, where $R$ is any commutative ring. The $R$-module $\Kh(J)$ up to bigraded isomorphism is an invariant of the oriented link type of $J$, and in the special case when $J$ is a knot, depends only on the unoriented knot type of $J$. With coefficients in any field, the Euler characteristic of $\Kh^i(J)$ in $q$-grading $i$ is the coefficient of $q^i$ in the Jones polynomial $V(J)$. When $J$ is equipped with a basepoint $p$, then there is a reduced variant $\Khr(J,p)$ which is an invariant of the pointed link $(J,p)$. If $J$ is knot, then $\Khr(J) = \Khr(J,p)$ does not depend on the basepoint. 

Let $L$ be a split two-component link, let $K_b$ be a band sum of $L$, and let $K_\#$ be the connected sum. As we have observed, if we write $V(K_b) = V(K_\#) + P_b$ for some polynomial $P_b \in \Z[q^{\pm 1}]$, then $V(K_{b+n}) = V(K_\#) + q^{4n}P_b$. If $P_b \neq 0$, then the Jones polynomial distinguishes the knots in the family $K_{b+n}$. The main result of \cite{MR4041014} implies that $\Kh(K_\#)$ is isomorphic as a bigraded $R$-module to a direct summand of $\Kh(K_b)$. In particular, there is a ribbon concordance from $K_\#$ to $K_b$ \cite{MR1451821} and a ribbon concordance $C\colon K \to K'$ induces a bigrading-preserving $R$-module map $\Kh(C)\colon \Kh(K) \to \Kh(K')$ which is an isomorphism onto a direct summand \cite{MR4041014}. Thus, we may write $\Kh(K_b) \cong \Kh(K_\#) \oplus H_b$ for some finitely-generated bigraded $R$-module $H_b$. When the coefficient ring $R$ is a field, the $q$-graded Euler characteristic of $H_b$ is the polynomial $P_b$. For any integers $i,j$, let $h^iq^j\:H_b$ denote the bigraded $R$-module whose graded summand in $(h,q)$-grading $(x + i,y+j)$ is equal to the $(x,y)$-graded summand of $H_b$. We prove in sections~\ref{subsec:KhovanovDetectingTrivBand} and \ref{subsec:KhovanovFullTwists} the following two results, where the coefficient ring $R$ is the field $\F_2$. The second result categorifies the identity $V(K_{b+n}) = V(K_\#) + q^{4n}P_b$ and implies Theorem~\ref{thm:nonTrivBandFullTwistsDifferent}. 

\begin{thm}\label{thm:KhovanovHomologyDetectsTrivialBand}
	Let $K_b$ be the band sum of a split two-component link along a band $b$, and let $K_\#$ be the connected sum. Then \[
		\dim \Kh(K_\#) = \dim\Kh(K_b)
	\]over $\F_2$ if and only if $b$ is trivial. 
\end{thm}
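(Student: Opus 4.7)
The easy direction is immediate: a trivial band $b$ yields $K_b = K_\#$, so $\Kh(K_b) = \Kh(K_\#)$. For the converse, assume $\dim \Kh(K_b) = \dim \Kh(K_\#)$. The ribbon concordance $C \colon K_\# \to K_b$, which exists because $K_b$ is a band sum of a split link, gives via the Levine--Zemke theorem (as cited in the excerpt) a split injection $\Kh(C)\colon \Kh(K_\#) \hookrightarrow \Kh(K_b)$; the dimension hypothesis promotes this to an isomorphism, so the complementary summand $H_b$ vanishes. It remains to show that $H_b = 0$ forces $b$ to be trivial.

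My plan is to appeal to Gabai's theorem on unique band sums: for a band sum $K_b$ of $L = L_1 \sqcup L_2$, one has $g(K_b) \geq g(L_1) + g(L_2) = g(K_\#)$, with equality if and only if $b$ is trivial. It therefore suffices to upgrade the Khovanov equality to an equality of Seifert genera. Since Khovanov homology does not detect genus on its own, the strategy is to transfer the equality to an auxiliary invariant that does---either $\HFKhat$ or $\KHI$, each of which reads off $g(K)$ as its top non-vanishing Alexander grading. The same ribbon concordance yields analogous split injections $\HFKhat(K_\#) \hookrightarrow \HFKhat(K_b)$ and $\KHI(K_\#) \hookrightarrow \KHI(K_b)$, producing decompositions $\HFKhat(K_b) \cong \HFKhat(K_\#) \oplus H_b^{\HFKhat}$ and similarly for $\KHI$. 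Using the Kronheimer--Mrowka spectral sequence $\Khr(\bar K) \Rightarrow \Inat(K)$ (or Dowlin's spectral sequence to $\HFKhat$), together with naturality under the concordance-induced map, the plan is to propagate $H_b = 0$ into the vanishing of the corresponding summand on the instanton or Floer side. Reading off the top Alexander grading of the resulting isomorphism then gives $g(K_b) = g(K_\#)$, and Gabai concludes.

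The main obstacle is the propagation step. The Khovanov-to-instanton (or Floer) spectral sequence respects only the homological grading, not the Alexander grading, so the total-dimension equality does not by itself give a gradewise equality on the instanton or Floer side. Making the argument go through requires either verifying sufficient naturality of the spectral sequence under the ribbon concordance map---so that $\Kh(C)$ being an isomorphism forces the analogous map on $\KHI$ or $\HFKhat$ to be an isomorphism in each Alexander grading---or else bypassing this comparison entirely in favor of a Khovanov-native detection result, for example one extracting geometric information about the band from the annular or sutured Khovanov homology of the pair $(L, b)$.
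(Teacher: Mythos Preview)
Your overall architecture---promote the Levine--Zemke injection to an isomorphism and then push through the Kronheimer--Mrowka spectral sequence to land in an instanton invariant---is exactly the paper's route. But the endgame you propose does not work, and the ``obstacle'' you flag is not the real difficulty.

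First, the genus argument fails outright. Gabai's superadditivity theorem gives $g(K_b) \ge g(K_\#)$, but equality does \emph{not} characterize trivial bands: the paper itself exhibits nontrivial bands $b$ with $g(K_b) = g(K_\#)$ (Figure~\ref{fig:bandSum} and the remark following Theorem~\ref{thm:HeegaardFloerHomologyDetectsTrivialBand}). So even a gradewise isomorphism $\KHI(K_\#) \cong \KHI(K_b)$ would not let you conclude via genus.

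Second, the propagation step is easier than you suggest, because you do not need anything gradewise. Functoriality of the Kronheimer--Mrowka spectral sequence (Theorem~1.3 of \cite{MR3903915}) provides a filtered chain map $f\colon \bo{C}(K_\#) \to \bo{C}(K_b)$ whose induced map on $E_2$ is $\Kh(R)$. An isomorphism on $E_2$ forces isomorphisms on every later page and on $E_\infty$, so you get $\dim \Isharp(m(K_\#)) = \dim \Isharp(m(K_b))$ as a total dimension equality---no Alexander grading enters.

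The actual hard step, which your proposal does not address, is showing that this total-dimension equality of $\Isharp$ (equivalently of $\KHI$, via $\Inat \cong \KHI$) forces $b$ to be trivial. This is Theorem~\ref{thm:suturedInstantonHomologyDetectsTrivialBand}, and its proof is where all the geometry lives: one builds a sequence of nice surface decompositions of the sutured exterior of $K_b \cup C$ (with $C$ a linking circle) terminating in a Hopf link exterior (Theorem~\ref{thm:SeqOfSurfDecomp}), and tracks a suitable minus version of sutured instanton homology through this hierarchy. Neither genus detection nor a Khovanov-native argument substitutes for this.
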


\begin{thm}\label{thm:KhovanovHomologyChangesUnderFullTwists}
	Let $K_b$ be the band sum of a split two-component link along a nontrivial band $b$, let $K_\#$ be the connected sum, and let $K_{b+n}$ be obtained from $K_b$ by adding $n$ full twists to the band. Then there is a nonzero finite-dimensional bigraded vector space $H_b$ for which \[
		\Kh(K_{b+n}) \cong \Kh(K_\#) \oplus h^{2n}q^{4n}\: H_b
	\]as bigraded vector spaces over $\F_2$. In particular, the bigraded vector spaces $\Kh(K_{b+n})$ over $\F_2$ for $n \in \Z$ are all distinct. 

	In fact, if $K_{b+n/2}$ is obtained from $K_b$ by adding $n$ half twists to the band, then $\Kh(K_{b+n/2}) \cong \Kh(K_\#) \oplus h^{n}q^{2n}\: H_b$ so the bigraded vector spaces $\Kh(K_{b+n/2})$ for $n \in \Z$ are also all distinct.
\end{thm}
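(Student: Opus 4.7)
I proceed by induction on $|n|$, the base case $n = 0$ being trivial, using the Khovanov skein exact triangle combined with the ribbon-concordance splitting $\Kh(K_{b+n/2}) \cong \Kh(K_\#) \oplus H_{b+n/2}$ from \cite{MR4041014}. The cases $n \geq 1$ and $n \leq -1$ are symmetric (obtained by reversing the twist direction), so it suffices to treat $n \geq 1$.

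For the inductive step, pick a half-twist crossing $c$ in the diagram of $K_{b+n/2}$, placing the twist region at a location in the band where the subsequent geometric identification cleanly applies. The two smoothings of $c$ are: the smoothing that removes the crossing, giving $K_{b+(n-1)/2}$; and the horizontal smoothing, which severs the band at $c$ and, after an isotopy of the two resulting sub-bands that cleans up their leftover twists and any sub-bridges, recovers the split link $L$. Writing the Khovanov skein long exact sequence with bigrading shifts $(i_1, j_1)$ and $(i_2, j_2)$ determined by the sign of $c$ and a writhe correction, one obtains
\[
\cdots \to h^{i_1}q^{j_1}\Kh(L) \xrightarrow{\phi_n} \Kh(K_{b+(n-1)/2}) \xrightarrow{\psi_n} \Kh(K_{b+n/2}) \xrightarrow{\delta_n} h^{i_2}q^{j_2}\Kh(L) \to \cdots,
\]
in which $\phi_n$ is the map induced by the saddle cobordism $L \to K_{b+(n-1)/2}$ realizing band surgery along the $(n-1)$-half-twist band.

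The key step is to identify $\phi_n$: it factors as $\Kh(L) \twoheadrightarrow \Kh(K_\#) \hookrightarrow \Kh(K_{b+(n-1)/2})$, where the first arrow is induced by the trivial (connected-sum) saddle $L \to K_\#$ and the second is the ribbon-concordance inclusion from \cite{MR4041014}. Under the split-link identification $\Kh(L) \cong \Kh(K_\#) \otimes V$ (with $V$ the standard 2-dimensional Frobenius algebra, generators in $q$-degree $\pm 1$), the first arrow is, up to grading shift, the Frobenius counit $V \to \F_2$ tensored with the identity on $\Kh(K_\#)$. Hence $\phi_n$ has image equal to the ribbon summand $\Kh(K_\#)$ of $\Kh(K_{b+(n-1)/2})$ and kernel equal to a single shifted copy of $\Kh(K_\#)$ inside $\Kh(L)$. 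Exactness of the triangle and splitting of short exact sequences over $\F_2$ then yield
\[
\Kh(K_{b+n/2}) \cong \Kh(K_\#) \oplus h q^2 \, H_{b+(n-1)/2},
\]
the $hq^2$ shift collecting the bigrading shifts from the skein sequence (and matching the Jones-polynomial identity $V(K_{b+n/2}) - V(K_\#) = q^{2n}(V(K_b) - V(K_\#))$ at the level of Euler characteristic). Inserting the inductive hypothesis $H_{b+(n-1)/2} \cong h^{n-1}q^{2(n-1)} H_b$ gives $\Kh(K_{b+n/2}) \cong \Kh(K_\#) \oplus h^n q^{2n} H_b$. The non-triviality $H_b \neq 0$ needed for pairwise distinctness of the $\Kh(K_{b+n/2})$ is Theorem~\ref{thm:KhovanovHomologyDetectsTrivialBand}.

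Two steps are delicate. First, identifying the third term of the skein triangle with $\Kh(L)$ requires a careful geometric argument that the horizontal smoothing, followed by an isotopy of the sub-bands created by the cut, restores $L$ regardless of how complicated the embedding of $b$ is; one may need to choose the twist location within the band carefully. Second, the factorization of $\phi_n$ through the ribbon-concordance inclusion is a cobordism-level statement that needs to be established by a surface-level isotopy (handle-sliding the band-surgery saddle past the ribbon part of $b$) or by a chain-level argument in the Bar-Natan cobordism category, combined with the functoriality of Khovanov homology.
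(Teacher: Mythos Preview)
Your overall strategy matches the paper's: unoriented skein triangle, ribbon-concordance splitting, and Theorem~\ref{thm:KhovanovHomologyDetectsTrivialBand} for nontriviality of $H_b$. The substantive difference is in how you justify the key step, and there the argument has a real gap.

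You correctly isolate the crux as the factorization
\[
\phi_n \;=\; \bigl(\Kh(K_\#)\hookrightarrow \Kh(K_{b+(n-1)/2})\bigr)\circ\bigl(\Kh(L)\twoheadrightarrow \Kh(K_\#)\bigr),
\]
but your proposed justification---an isotopy of the band-surgery saddle to the composite of the trivial saddle with Miyazaki's ribbon concordance, or an unspecified Bar-Natan argument---is not carried out, and it is genuinely not obvious as a surface-isotopy statement. The paper bypasses this entirely with a naturality argument: the ribbon concordance $K_\#\to K_b$ admits a planar movie supported away from the skein crossing $c$ (Proposition~\ref{prop:KhovanovSkeinTriangleRibbonMaps}), and elementary-move maps supported away from $c$ commute with the skein-triangle maps (Remark~\ref{rem:KhovanovSkeinTriangleCommutesWithCobordismMaps}). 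Running the \emph{same} movie with $c$ resolved in each of the three ways produces compatible ribbon concordances on all three vertices, yielding a map of exact triangles. Your factorization is then literally one commuting square of that diagram (up to the ribbon self-concordance automorphism of $\Kh(L)$, which is injective on a finite-dimensional space hence an isomorphism). Once you have the map of triangles, the splitting $H_b\xrightarrow{\ \cong\ }H_{b+1/2}\xrightarrow{\ \cong\ }H_{b+1}$ with bidegree $(1,2)$ drops out directly, with no need to chase the long exact sequence.

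This framing also dissolves your first ``delicate'' point. In the paper's setup the skein triple $K_{b+1},K_b,L$ is fixed from the outset at a single crossing $c$ in a local model, so $L$ is the oriented resolution by definition; no isotopy of severed sub-bands is required. Your picture---placing the crossing at a half-twist inside the band of $K_{b+n/2}$---forces you to argue that retracting the two twisted tongues recovers $L$, which is true but an avoidable complication.
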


\begin{rem}
	Theorems \ref{thm:KhovanovHomologyDetectsTrivialBand} and \ref{thm:KhovanovHomologyChangesUnderFullTwists} also hold for $\Khr$ over $\F_2$ and $\Khr$ over $\Q$. 
\end{rem}
\begin{rem}
	Hedden-Watson prove that the bigraded vector spaces $\Khr(K_{b+n})$ over $\F_2$ are all distinct for any nontrivial band $b$ in the special case where the split link is the unlink (Theorem 3.2 of \cite{MR3782416}). 
\end{rem}
\begin{rem}
	Observe that \textit{a priori}, there could exist a nontrivial band $b$ for which $K_b$ and $K_\#$ are isotopic. Miyazaki \cite{MR4058258} proves that such a band does not exist, and our work gives an independent proof (Corollary~\ref{cor:miyazaki}).
\end{rem}
\begin{rem}
	For any oriented link $J$, the maximal and minimal $h$-gradings in which $\Kh(J)$ is nonzero provide lower bounds on the number of positive and negative crossings in any diagram for $J$. In particular, if $K_b$ is a nontrivial band sum of a split two-component link, then Theorem~\ref{thm:KhovanovHomologyChangesUnderFullTwists} implies that the minimal number of positive (resp. negative) crossings in any diagram for $K_{b+n}$ is unbounded as $n \to \infty$ (resp. $n \to -\infty$). 
\end{rem}

We prove Theorem~\ref{thm:KhovanovHomologyDetectsTrivialBand} using the spectral sequence from Khovanov homology to singular instanton homology, constructed by Kronheimer-Mrowka in their proof that Khovanov homology detects the unknot \cite{MR2805599}. Singular instanton homology \cite{MR2805599} associates to an unoriented link $J$ in $S^3$ a finitely-generated $R$-module $\Isharp(J)$ for any commutative ring $R$. The invariant is defined for links in more general $3$-manifolds, but for links $J$ in $S^3$, there is a spectral sequence whose $E_2$-page is the Khovanov homology of the mirror of $J$ which abuts to $\Isharp(J)$. Just as for Khovanov homology, if the link $J$ is equipped with a basepoint $p$, then there is a reduced variant $\Inat(J,p)$ call the \textit{reduced} singular instanton homology of $(J,p)$. When $J$ is a knot, we omit the basepoint $p$ from the notation $\Inat(J) = \Inat(J,p)$. 

Kronheimer-Mrowka show that there is an isomorphism $\Inat(J) \cong \KHI(J)$ of vector spaces over $\C$ when $J$ is a knot (Proposition 1.4 of \cite{MR2805599}). We prove that the dimension of $\KHI(K_b)$ over $\C$ detects the trivial band in section~\ref{subsec:DetectTrivBandInstanton}. We show that Khovanov homology detects the trivial band by reducing to this result.

\begin{thm}\label{thm:suturedInstantonHomologyDetectsTrivialBand}
	Let $K_b$ be the band sum of a split two-component link along a band $b$, and let $K_\#$ be the connected sum. Then \[
		\dim \KHI(K_\#) = \dim \KHI(K_b)
	\]over $\C$ if and only if $b$ is trivial. 
\end{thm}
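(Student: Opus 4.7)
The forward direction is immediate since a trivial band yields $K_b=K_\#$. For the converse, suppose $b$ is nontrivial; the plan is to prove $\dim_\C\KHI(K_b)>\dim_\C\KHI(K_\#)$ via a sutured decomposition argument along a splitting sphere for $L$. I would first fix a splitting sphere $S\subset S^3$ for $L=L_1\sqcup L_2$ chosen to minimize $|S\cap b|$; an innermost-arc argument on $b$ forces $S\cap b$ to be a union of parallel arcs crossing the band, and since $b$ joins the two components of $L$ their number is odd, say $2k+1$. Triviality of $b$ is exactly the case $k=0$, so by hypothesis $k\geq 1$ and $|S\cap K_b|=4k+2\geq 6$.

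\textbf{Step 1: Surface decomposition.} Let $(Y,\gamma)$ denote the sutured knot complement $(S^3\setminus\nu(K_b),\gamma_\mu)$ with two meridional sutures, so that $\SHI(Y,\gamma)\cong\KHI(K_b)$. The sphere $S$ yields a properly embedded planar surface $P\subset Y$ with $4k+2$ boundary circles on $\partial Y$. Applying the surface decomposition theorem for sutured instanton Floer homology (which follows from Kronheimer--Mrowka excision, in the form developed by Ghosh and Li--Ye) along $P$, I would split $(Y,\gamma)$ into a disjoint union of sutured tangle complements $(Y_1,\gamma_1)\sqcup(Y_2,\gamma_2)$ living in the two balls cut off by $S$, obtaining an isomorphism
\[
\SHI(Y,\gamma)\;\cong\;\SHI(Y_1,\gamma_1)\otimes_\C\SHI(Y_2,\gamma_2).
\]
Verifying the hypothesis of the decomposition theorem (that $P$ is taut in the appropriate sutured sense) is routine given the minimality of $|S\cap b|$, since $P$ has no compressing or boundary-compressing disks.

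\textbf{Step 2: Comparison with the trivial-band case.} When $k=0$, each $Y_i^{\mathrm{triv}}$ is the sutured complement of a one-strand tangle containing $L_i$, and a direct identification gives $\SHI(Y_i^{\mathrm{triv}})\cong\KHI(L_i)$, recovering the connect-sum formula $\KHI(K_\#)\cong\KHI(L_1)\otimes\KHI(L_2)$. For $k\geq 1$, the plan is to reduce each $(Y_i,\gamma_i)$ toward its one-strand model by successive product disk decompositions, each of which preserves $\SHI$. This yields a weak inequality $\dim\SHI(Y_i)\geq\dim\KHI(L_i)$ for each $i$.

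\textbf{Step 3: Strict inequality.} The hard part --- and the step where nontriviality of $b$ must genuinely enter --- is upgrading Step 2 to a strict inequality for at least one $i$. The plan is to apply a bypass exact triangle in sutured instanton Floer homology (\`a la Baldwin--Sivek) to a bypass arc on $\partial Y_i$ whose associated product disk reduction is obstructed precisely by the nontriviality of $b$: if both $(Y_i,\gamma_i)$ could be fully reduced to their one-strand models, the removed product disks could be reassembled into an isotopy of $S$ with $|S\cap b|=1$, contradicting that $b$ is nontrivial. A non-degenerate bypass triangle would then force $\dim\SHI(Y_i)>\dim\SHI(Y_i^{\mathrm{triv}})$. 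The main obstacle is exhibiting such a bypass and verifying non-degeneracy --- in other words, converting the purely topological obstruction (the absence of a splitting sphere meeting $b$ in a single arc) into a genuine Floer-theoretic one. Combined with Steps 1 and 2 this yields $\dim\KHI(K_b)=\dim\SHI(Y_1)\cdot\dim\SHI(Y_2)>\dim\KHI(L_1)\cdot\dim\KHI(L_2)=\dim\KHI(K_\#)$, completing the proof.
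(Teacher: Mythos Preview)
Your approach diverges sharply from the paper's and has genuine gaps at each step, most seriously at Step~3.

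\textbf{Step 1 is not an isomorphism.} The surface decomposition theorem for sutured instanton homology (Kronheimer--Mrowka, Baldwin--Sivek, Ghosh--Li) asserts that $\SHI$ of the decomposed manifold is a \emph{direct summand} of $\SHI(Y,\gamma)$, not isomorphic to it. When $k\ge 1$ your planar surface $P$ has at least six boundary components and negative Euler characteristic; decomposing along it picks out only the outer $\mathrm{Spin}^c$ structures. You would get $\SHI(Y_1,\gamma_1)\otimes\SHI(Y_2,\gamma_2)\hookrightarrow\KHI(K_b)$ as a summand, which happens to point the right way, but the equality you claim is false in general. (You also need to specify and control the induced sutures on the tangle complements; these depend on how $\partial P$ sits relative to the meridional sutures and are not automatic.)

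\textbf{Step 2 does not produce the inequality you want.} Product disc decompositions induce \emph{isomorphisms} on $\SHI$, so if $(Y_i,\gamma_i)$ genuinely reduces to the one-strand model by product discs you obtain equality, and if it does not you obtain nothing. You give no mechanism for the inequality $\dim\SHI(Y_i)\ge\dim\KHI(L_i)$ in the non-reducible case.

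\textbf{Step 3 is the entire theorem and is left open.} You correctly identify that the crux is converting the topological nontriviality of $b$ into a strict Floer-theoretic inequality, but ``apply a bypass triangle to some arc whose reduction is obstructed'' is not a proof plan: you have not produced the arc, identified the third term of the triangle, or explained why the triangle is nondegenerate. This is precisely the step the paper devotes most of its machinery to, and it does \emph{not} proceed via a splitting sphere at all. Instead the paper introduces a linking circle $C$ for the band, builds a minus version $\SHI^-(S^3(K_b\cup C),\sigma)$ over $\C[U]$, proves $\operatorname{rank}_{\C[U]}\SHI^-=2\dim\KHI(K_b)$, and then uses Gabai's sutured hierarchy (his proof of superadditivity of genus under band sum) to construct a sequence of nice surface decompositions, all disjoint from $\partial N(C)$, terminating in a product sutured manifold together with the Hopf link exterior. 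A model computation for the Hopf link exhibits two free $\C[U]$-summands whose generators sit in gradings differing by $[\sigma]$; pushed back along the hierarchy and compared against the ribbon-concordance inclusion of $\SHI^-(S^3(K_\#\cup C),\sigma)$ (which has no such pair), this forces the strict rank inequality. The linking circle and the Hopf-link endpoint are exactly what convert nontriviality of $b$ into Floer-theoretic data; your splitting-sphere picture never sees this structure.
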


\begin{cor}\label{cor:singularInstantonHomologyDetectsTrivialBand}
	Let $K_b$ be the band sum of a split two-component link along a band $b$, and let $K_\#$ be the connected sum. Then \[
		\dim \Inat(K_\#) = \dim \Inat(K_b)
	\]over $\Q$ if and only if $b$ is trivial. 
\end{cor}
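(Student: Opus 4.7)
The plan is to deduce this corollary immediately from Theorem~\ref{thm:suturedInstantonHomologyDetectsTrivialBand} by transferring the equality of dimensions from $\KHI$ over $\C$ to $\Inat$ over $\Q$. The bridge is Kronheimer--Mrowka's isomorphism $\Inat(J) \cong \KHI(J)$ of $\C$-vector spaces, cited in the excerpt as Proposition~1.4 of \cite{MR2805599}.

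The forward direction is essentially tautological: if $b$ is trivial, then $K_b$ is isotopic to $K_\#$, so the two reduced singular instanton homologies agree and in particular have the same $\Q$-dimension.

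For the converse, I would run a universal-coefficients argument. Since $\Q$ is a field, the chain complex computing $\Inat(J;\Q)$ is automatically flat over $\Q$, so tensoring with $\C$ is exact and gives a natural isomorphism
\[
\Inat(J;\C) \;\cong\; \Inat(J;\Q) \otimes_{\Q} \C
\]
of $\C$-vector spaces for every knot $J$. In particular $\dim_\Q \Inat(J;\Q) = \dim_\C \Inat(J;\C)$, and by Kronheimer--Mrowka the latter equals $\dim_\C \KHI(J)$. Applying this identity to both $K_\#$ and $K_b$ shows that $\dim_\Q \Inat(K_\#) = \dim_\Q \Inat(K_b)$ if and only if $\dim_\C \KHI(K_\#) = \dim_\C \KHI(K_b)$.

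There is no substantive obstacle here; the entire content is already packaged into Theorem~\ref{thm:suturedInstantonHomologyDetectsTrivialBand}. The only point requiring any care is the passage between coefficient rings, which is handled by the fact that both field extensions and chain-level tensor products preserve dimension when the ground ring is a field. Invoking Theorem~\ref{thm:suturedInstantonHomologyDetectsTrivialBand} on the $\KHI$ side then immediately yields the desired equivalence with triviality of $b$.
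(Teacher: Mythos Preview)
Your proof is correct and essentially the same as the paper's: the paper simply cites Theorem~\ref{thm:suturedInstantonHomologyDetectsTrivialBand} together with Proposition~\ref{prop:suturedIsoSingular} (the Kronheimer--Mrowka isomorphism), while you have additionally spelled out the universal-coefficients step identifying $\dim_\Q \Inat(J;\Q)$ with $\dim_\C \Inat(J;\C)$.
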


\begin{rem}
	Corollary~\ref{cor:singularInstantonHomologyDetectsTrivialBand} also holds for $\Inat$ over $\F_2$ and $\Isharp$ over $\F_2$. See section~\ref{subsec:singularInstantonHomology}.
\end{rem}

We establish the same detection result for knot Floer homology. 

\begin{thm}\label{thm:HeegaardFloerHomologyDetectsTrivialBand}
	Let $K_b$ be the band sum of a split two-component link along a band $b$, and let $K_\#$ be the connected sum. Then \[
		\dim \HFKhat(K_\#) = \dim \HFKhat(K_b)
	\]over $\F_2$ if and only if $b$ is trivial. 
\end{thm}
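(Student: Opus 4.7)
The \emph{if} direction is immediate: when $b$ is trivial, $K_b$ is isotopic to the connected sum $K_\#$, so $\HFKhat(K_b) \cong \HFKhat(K_\#)$ as bigraded vector spaces.

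For the \emph{only if} direction, the plan is to mirror the proof of Theorem~\ref{thm:suturedInstantonHomologyDetectsTrivialBand}, with Juh\'asz's sutured Floer homology $\SFH$ playing the role of sutured instanton Floer homology. Fix a splitting sphere $\Sigma$ for $L = L_1 \sqcup L_2$ that minimizes $n := |\Sigma \cap b|$. Because $\Sigma \cap L = \emptyset$, every arc of $\Sigma \cap b$ must have both endpoints on the short edges of $b$; therefore $\Sigma$ meets $K_b$ transversely in exactly $2n$ points, and $b$ is trivial if and only if $n = 1$. Equip $Y_b := S^3 \setminus \nu(K_b)$ with meridional sutures so that $\SFH(Y_b) \cong \HFKhat(K_b)$, and set $P := \Sigma \cap Y_b$, a $2n$-punctured sphere which serves as a decomposing surface for $Y_b$.

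The heart of the argument is to apply Juh\'asz's surface decomposition theorem to $P$. Decomposing $Y_b$ along $P$ yields the disjoint union $Y_1 \sqcup Y_2$, where each $Y_i \subset B_i$ is the sutured complement of the tangle consisting of $L_i$ together with $n$ band arcs. The theorem delivers an isomorphism
\[
\SFH(Y_1) \otimes \SFH(Y_2) \;\cong\; \SFH(Y_b, \mathfrak{s})
\]
for the relative Spin$^c$ structure $\mathfrak{s}$ determined by $P$, so in particular $\dim \HFKhat(K_b) \geq \dim \SFH(Y_1) \cdot \dim \SFH(Y_2)$. When $n = 1$ each band arc is boundary-parallel in $B_i$, giving $\SFH(Y_i) \cong \HFKhat(L_i)$ and recovering $\dim \HFKhat(K_b) = \dim \HFKhat(L_1) \cdot \dim \HFKhat(L_2) = \dim \HFKhat(K_\#)$.

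To rule out $n \geq 2$ under the dimension hypothesis, the cleanest route is to invoke Ozsv\'ath--Szab\'o's Seifert genus detection for $\HFKhat$ together with Gabai's superadditivity of knot genus under band sum and its equality case: $g(K_b) \geq g(L_1) + g(L_2)$, with equality if and only if $b$ is trivial. Dimension equality forces the top Alexander gradings of $\HFKhat(K_b)$ and $\HFKhat(K_\#)$ to agree, hence $g(K_b) = g(K_\#) = g(L_1) + g(L_2)$, so $b$ is trivial. A more self-contained alternative is to further decompose each $Y_i$ along product annuli and disks inside sutured Floer theory to establish a strict inequality $\dim \SFH(Y_i) > \dim \HFKhat(L_i)$ when $n \geq 2$. \textbf{The main obstacle is precisely this strict growth when $n \geq 2$, equivalently the equality case of Gabai's theorem}; modulo that topological input, the remainder of the argument is a formal application of the surface decomposition theorem.
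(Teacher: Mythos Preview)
Your main route has a genuine gap: the ``equality case'' of Gabai's superadditivity that you invoke is false. There exist nontrivial bands $b$ with $g(K_b) = g(K_\#)$; the paper gives an explicit example in Figure~\ref{fig:bandSum} and remarks on this point directly after the statement of Theorem~\ref{thm:HeegaardFloerHomologyDetectsTrivialBand}. Gabai proved $g(K_b) \ge g(J_1)+g(J_2)$, but equality does \emph{not} force the band to be trivial. (Kobayashi's result, cited as Corollary~\ref{cor:kobayashi}, shows that equality together with fiberedness of $K_b$ implies triviality, but fiberedness is not available here.) So while your deduction $\dim\HFKhat(K_b)=\dim\HFKhat(K_\#)\Rightarrow g(K_b)=g(K_\#)$ is correct via Zemke's ribbon concordance summand and genus detection, the final implication to ``$b$ trivial'' fails.

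Your alternative route --- a strict inequality $\dim\SFH(Y_i)>\dim\HFKhat(L_i)$ from further product decompositions --- is not a proof as stated, and in fact this strict growth is exactly the hard content. The paper's argument is substantially different from your sphere-decomposition setup: it introduces a linking circle $C$, works with a minus version $\HFL^-(K_b\cup C,\sigma)$ whose $\F_2[U]$-rank equals $2\dim\HFKhat(K_b)$ (Proposition~\ref{prop:rankEqualsTwiceDim}), and shows via the sequence of nice surface decompositions in Theorem~\ref{thm:SeqOfSurfDecomp} (terminating at a Hopf link exterior) that when $b$ is nontrivial there are free summands whose generators sit in $\Spin^c$ structures differing by $[\sigma]$. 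Since this cannot happen for $K_\#\cup C$ (Proposition~\ref{prop:SpinCstructDiff}), the rank --- and hence $\dim\HFKhat(K_b)$ --- is strictly larger. This is what actually establishes the strict inequality $\dim\HFKhat(K_b,g(K_b))>\dim\HFKhat(K_\#,g(K_b))$ even when the genera coincide.
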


\begin{rem}
	In fact, we have a strict inequality $\dim \HFKhat(K_b,g(K_b)) > \dim \HFKhat(K_\#,g(K_b))$ in Alexander grading $g(K_b)$, the Seifert genus of $K_b$ when $b$ is nontrivial (Theorem~\ref{thm:HeegaardTrivBandDetect}). The same is true for $\KHI$. Note that there are nontrivial bands $b$ for which $g(K_b) = g(K_\#)$. For example, see Figure~\ref{fig:bandSum}. This strict inequality in Alexander grading $g(K_b)$ recovers a result of Kobayashi (Theorem 2 of \cite{MR1177410}) that if $g(K_b) = g(K_\#)$ and $K_b$ is fibered, then $b$ is trivial. See Corollary~\ref{cor:kobayashi}.
\end{rem}

\subsection{Outline of the arguments}

We first outline the proofs that $\HFKhat(K_b)$ and $\KHI(K_b)$ are invariant under adding full twists to the band (Theorems~\ref{thm:HeegaardKnotFloerInvariantUnderFullTwists} and \ref{thm:SuturedInstantonInvariantUnderFullTwists}). See sections~\ref{subsec:HeegaardInvariance} and \ref{subsec:invarianceInstanton} respectively for full proofs. Let $K_b$ be the band sum of a split two-component link $L$ along a nontrivial band $b$. As we have observed, $K_{b+1},K_b,L$ form an oriented skein triple. Both $\HFKhat$ and $\KHI$ have extensions to links in $S^3$, and each satisfies an oriented skein exact triangle \cite{MR2065507,MR2683750}. In particular, there are exact triangles \[
	\begin{tikzcd}[column sep=0]
		\HFKhat(K_{b+1}) \ar[rr] & & \HFKhat(K_b) \ar[dl]\\
		& \HFKhat(L) \ar[ul] &
	\end{tikzcd} \qquad\qquad \begin{tikzcd}[column sep=0]
		\KHI(K_{b+1}) \ar[rr] & & \KHI(K_b) \ar[dl]\\
		& \KHI(L) \ar[ul] &
	\end{tikzcd} 
\]for which the maps $\HFKhat(K_{b+1}) \to \HFKhat(K_b)$ and $\KHI(K_{b+1}) \to \KHI(K_b)$ preserve the two gradings on each invariant. 

Zemke's work on ribbon concordances \cite{Zem19a} gives us a way to compute these maps. According to the main result of \cite{Zem19a}, a ribbon concordance $K_\# \to K_b$ induces a bigrading-preserving inclusion of $\HFKhat(K_\#)$ onto a direct summand of $\HFKhat(K_b)$. The argument extends to show that the skein exact triangle for $K_\#$ is a direct summand of the skein exact triangle for $K_b$. With splittings \[
	\HFKhat(K_{b+1}) \cong \HFKhat(K_\#) \oplus F_{b+1} \qquad \HFKhat(K_b) \cong \HFKhat(K_\#) \oplus F_b
\]as bigraded vector spaces, the skein exact triangle splits as the following direct sum. \[
	\begin{tikzcd}[column sep=-2ex]
		\HFKhat(K_{b+1}) \ar[rr] & & \HFKhat(K_b) \ar[dl]\\
		& \HFKhat(L) \ar[ul] &
	\end{tikzcd} \cong \begin{tikzcd}[column sep=-2ex]
		\HFKhat(K_\#) \ar[rr] & & \HFKhat(K_\#) \ar[dl] \\
		& \HFKhat(L) \ar[ul] &
	\end{tikzcd} \oplus \begin{tikzcd}[column sep=-2ex,row sep=5ex]
		H \ar[rr] & & H' \ar[dl]\\
		& \phantom{\HFK}0\phantom{\HFK} \ar[ul] &
	\end{tikzcd}
\]The isomorphism $H \to H'$ preserves the bigradings, so $\HFKhat(K_{b+1}) \cong \HFKhat(K_b)$ as bigraded vector spaces. The same argument works for $\KHI$. 

\vspace{20pt}

Next, we outline how Theorem~\ref{thm:KhovanovHomologyChangesUnderFullTwists} follows from the result that $\Kh(K_b)$ detects the trivial band (Theorem~\ref{thm:KhovanovHomologyDetectsTrivialBand}). The full proof appears in section~\ref{subsec:KhovanovFullTwists}. Let $K_{b+1/2}$ be the knot obtained by taking the unoriented resolution of the crossing. Khovanov homology satisfies an unoriented skein exact triangle (see Proposition 4.2 of \cite{MR2704683} for example) which in our case, takes the following form. \[
	\begin{tikzcd}[row sep=large]
		\Kh(K_b) \ar[rr,"{(1,2)}"] & & \Kh(K_{b+1/2}) \ar[rr,"{(1,2)}"] \ar[dl,"{(0,-1)}"] & & \Kh(K_{b+1}) \ar[dl,"{(0,-1)}"]\\
		& \Kh(L) \ar[ul,"{(0,-1)}"] & & \Kh(L) \ar[ul,"{(0,-1)}"] &
	\end{tikzcd}
\]The maps shift the bigradings $(h,q)$ by the displayed degrees. Ribbon concordances can be chosen to give us splittings \cite{MR4041014} \[
	\Kh(K_b) \cong \Kh(K_\#) \oplus H_b \quad \Kh(K_{b+1/2}) \cong \Kh(K_\#) \oplus H_{b+1/2} \quad \Kh(K_{b+1}) \cong \Kh(K_\#) \oplus H_{b+1}
\]as bigraded vector spaces over $\F_2$ so that the unoriented skein exact triangles for $K_b$ split as the following direct sum. \[
	\left( \begin{tikzcd}[column sep=-2ex]
		\Kh(K_\#) \ar[rr] & & \Kh(K_\#) \ar[rr] \ar[dl] & & \Kh(K_\#) \ar[dl]\\
		& \Kh(L) \ar[ul] & & \Kh(L) \ar[ul] &
	\end{tikzcd} \right) \oplus \left(\begin{tikzcd}[column sep=1ex]
		H_b \ar[rr] & & H_{b+1/2} \ar[rr] \ar[dl] & & H_{b+1} \ar[dl]\\
		& 0 \ar[ul] & & 0 \ar[ul] &
	\end{tikzcd} \right)
\]The isomorphisms $H_b \to H_{b+1/2} \to H_{b+1}$ each shift the bigrading by $(1,2)$. Thus $H_{b+1/2} \cong hq^2\:H_b$ and $H_{b+1} \cong h^2q^4\:H_b$. The argument iterates to show that $H_{b+n/2} \cong h^nq^{2n}H_b$. The fact that $H_b$ is of positive dimension whenever $b$ is nontrivial is exactly Theorem~\ref{thm:KhovanovHomologyDetectsTrivialBand}. 

\vspace{20pt}

To prove that Khovanov homology detects the trivial band (Theorem~\ref{thm:KhovanovHomologyDetectsTrivialBand}), we reduce to showing that singular instanton homology detects the trivial band in section~\ref{subsec:KhovanovDetectingTrivBand}. Using functoriality \cite{MR3903915} of Kronheimer-Mrowka's spectral sequence from Khovanov homology to singular instanton homology, we show that if Levine-Zemke's injective ribbon concordance map $\Kh(K_\#) \to \Kh(K_b)$ over $\F_2$ is an isomorphism, then $\dim \Isharp(K_\#) = \dim \Isharp(K_b)$ over $\F_2$ as well. Using the identity $\dim \Isharp(K) = 2\dim \Inat(K)$ special to the coefficient ring $\F_2$ (Lemma 7.7 of \cite{MR3880205}), we reduce the problem to showing that $\Inat(K_b)$ over $\F_2$ detects the trivial band. By a universal coefficient argument and the isomorphism $\Inat(K) \cong \KHI(K)$ over $\C$ for knots mentioned previously, we reduce to showing that $\dim \KHI(K_b)$ over $\C$ detects the trivial band. This reduction argument appears in section~\ref{subsec:KhovanovDetectingTrivBand}.

\vspace{20pt}

Lastly, we outline the argument that $\dim \KHI(K_b)$ and $\dim \HFKhat(K_b)$ detect the trivial band (Theorems~\ref{thm:suturedInstantonHomologyDetectsTrivialBand} and \ref{thm:HeegaardFloerHomologyDetectsTrivialBand}). Full proofs appear in sections~\ref{subsec:DetectTrivBandInstanton} and \ref{subsec:HeegaardDetecting}, respectively. The two proofs have the same general structure, and since the argument for knot Floer homology is slightly simpler than for $\KHI$, we outline a proof for $\HFKhat$. 

\begin{figure}[!ht]
	\centering
	\labellist
	\pinlabel $C$ at 480 205
	\endlabellist
	\includegraphics[width=.5\textwidth]{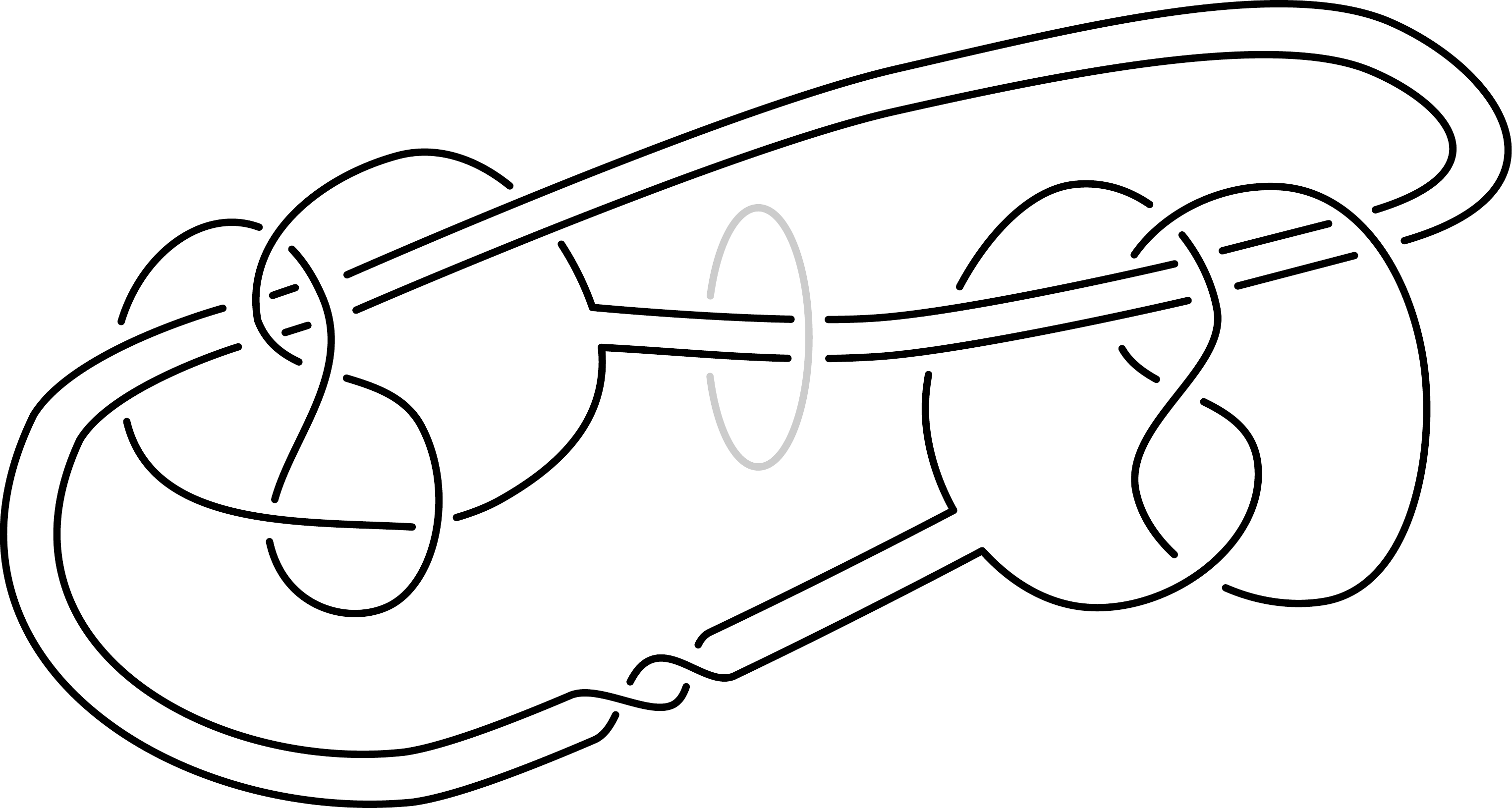}
	\caption{A band sum with a linking circle $C$.}
	\label{fig:linkingCircle}
\end{figure}

Let $C$ be an unknot in the complement of $K_b$ which bounds a disc that meets the band transversely in a single arc. Such a circle $C$ in the complement of $K_b$ is called a \textit{linking circle} for the band (Figure~\ref{fig:linkingCircle}). Note that the band is trivial if and only if $C$ bounds a disc disjoint from $K_b$. The link Floer homology $\HFLhat(K_b \cup C)$ of the link $K_b \cup C$ detects the trivial band since it detects the Thurston norm of the link exterior \cite{MR2393424}. Zemke's ribbon concordance argument \cite{Zem19a} gives an inclusion of $\HFLhat(K_\# \cup C)$ onto a direct summand of $\HFLhat(K_b \cup C)$ so we may write \[
	\HFLhat(K_b \cup C) \cong \HFLhat(K_\# \cup C) \oplus G_b
\]where $G_b \neq 0$. It does not immediately follow that $\HFKhat(K_b)$ also detects the trivial band; there is a spectral sequence from $\HFLhat(K_b \cup C)$ to $\HFKhat(K_b) \otimes \F_2^2$, and the higher differentials in the spectral sequence may kill $G_b$. We essentially prove the detection result by showing that a nonzero element of $G_b$ does, in fact, survive the spectral sequence. 

The spectral sequence from $\HFLhat(K_b \cup C)$ to $\HFKhat(K_b) \otimes \F_2^2$ is directly analogous to the spectral sequence from $\HFKhat(J)$ to $\HFhat(S^3)$ for a knot $J \subset S^3$. A Heegaard diagram for $J$ has two basepoints, and if we ignore one of them, we obtain a Heegaard diagram for $S^3$. The ignored basepoint gives a filtration on the chain complex for $S^3$, and the spectral sequence associated to this filtered complex has $E_2$-page $\HFKhat(J)$ and abuts to $\HFhat(S^3)$. Similarly, a Heegaard diagram for the link $K_b \cup C$ has four basepoints, two for each component. If we ignore a basepoint corresponding to $C$, we obtain a sutured Heegaard diagram for the sutured exterior of $K_b$ with a sutured puncture, which is the result of attaching a $2$-handle to the sutured exterior of $K_b \cup C$ along a suture lying on $\partial N(C)$. The sutured Floer homology of this sutured manifold is isomorphic to $\HFKhat(K_b) \otimes \F_2^2$. The spectral sequence associated to the ignored basepoint has $E_2$-page $\HFLhat(K_b \cup C)$ and abuts to $\HFKhat(K_b) \otimes \F_2^2$. 

There is a splitting of $\HFLhat(K_b \cup C)$ along relative $\Spin^c$ structures, which form an affine space over $H_1(S^3\setminus (K_b \cup C))$. Let $\sigma$ be one of the two parallel sutures on $\partial N(C)$, and let $\tau$ be one of the two parallel sutures on $\partial N(K_b)$. In the splitting $\HFLhat(K_b \cup C) \cong \HFLhat(K_\# \cup C) \oplus G_b$, all of $\HFLhat(K_\# \cup C)$ lies in $\Spin^c$ structures differing only by multiples of $[\tau] \in H_1(S^3\setminus(K_b \cup C))$. To show that $\HFKhat(K_b)$ detects the trivial band, it suffices to show that there exist two nonzero elements of $\HFLhat(K_b \cup C)$ lying in $\Spin^c$ structures differing by $[\sigma] \in H_1(S^3\setminus (K_b \cup C))$ that survive in the spectral sequence to the $E_\infty$-page. 

Let $(M_0,\gamma_0)$ be the sutured exterior of $K_b \cup C$, and let $(N_0,\beta_0)$ be obtained from $(M_0,\gamma_0)$ by attaching a $2$-handle the suture $\sigma$ on $\partial N(C)$. Let $S_1$ be a nice decomposing surface in $(M_0,\gamma_0)$ disjoint from $\partial N(C)$, and let $(M_1,\gamma_1)$ be the result of decomposing along $S_1$. The distinguished toral boundary component $\partial N(C)$ remains a toral boundary component of $(M_1,\gamma_1)$, and $\sigma$ is still a suture of $(M_1,\gamma_1)$. Let $(N_1,\beta_1)$ be obtained from $(M_1,\gamma_1)$ by attaching a $2$-handle along $\sigma$. Alternatively, $(N_1,\beta_1)$ is the result of decomposing $(N_0,\beta_0)$ along $S_1$. Just like the spectral sequence $\SFH(M_0,\gamma_0) \Rightarrow \SFH(N_0,\beta_0)$, we have a spectral sequence $\SFH(M_1,\gamma_1) \Rightarrow \SFH(N_1,\beta_1)$ and it is compatible with the direct summand inclusions given by Juh\'asz \cite{Juh08}. \[
	\begin{tikzcd}
		\SFH(M_1,\gamma_1) \ar[d,Rightarrow] \ar[r,hook] & {\SFH(M_0,\gamma_0) = \HFLhat(K_b \cup C)} \ar[d,Rightarrow]\\
		\SFH(N_1,\beta_1) \ar[r,hook] & {\SFH(N_0,\beta_0) = \HFKhat(K_b) \otimes \F_2^2}
	\end{tikzcd}
\]If two nonzero elements of $\SFH(M_1,\gamma_1)$ lying in $\Spin^c$ structures differing by $[\sigma] \in H_1(M_1)$ survive the spectral sequence to $\SFH(N_1,\beta_1)$, then the same is true for $\SFH(M_0,\gamma_0)$ and $\SFH(N_0,\beta_0)$. 

Using Gabai's proof of superadditivity of genus under band sum \cite{Gab87a,MR895573}, we show that if $b$ is nontrivial, then there is a sequence of nice surface decompositions \[
	S^3(K_b \cup C) = (M_0,\gamma_0) \overset{S_1}{\rightsquigarrow} (M_1,\gamma_1) \overset{S_2}{\rightsquigarrow} \cdots \overset{S_n}{\rightsquigarrow} (M_n,\gamma_n)
\]where each $S_i$ is disjoint from $\partial N(C)$, and $(M_n,\gamma_n)$ is the disjoint union of a product sutured manifold and the sutured exterior of a two-component Hopf link $H$ (Theorem~\ref{thm:SeqOfSurfDecomp}). This reduces the entire problem to a model computation for the Hopf link, which we do explicitly. 

There are a number of challenges in trying to adapt this argument to sutured instanton homology. The essential difficulty is the construction of spectral sequences $\SHI(M_i,\gamma_i) \Rightarrow \SHI(N_i,\beta_i)$; a spectral sequence from the sutured instanton homology of a knot $J \subset S^3$ to the sutured instanton homology of $S^3$ has not even been constructed. A way forward is to repackage the spectral sequence as a minus version of Floer homology. For example, in place of the spectral sequence $\HFKhat(J) \Rightarrow \HFhat(S^3)$ for a knot $J \subset S^3$, we have the $\F_2[U]$-module $\HFK^-(J)$ together with the exact triangles \[
	\begin{tikzcd}[column sep=0]
		\HFK^-(J) \ar[rr,"U"] & & \HFK^-(J) \ar[dl]\\
		& \HFKhat(J) \ar[ul] &
	\end{tikzcd}\qquad \begin{tikzcd}[column sep=0]
		\HFK^-(J) \ar[rr,"U-\Id"] & & \HFK^-(J) \ar[dl]\\
		& \HFhat(S^3) \ar[ul] &
	\end{tikzcd}
\]the second of which implies that the $\F_2[U]$-rank of $\HFK^-(J)$ equals the $\F_2$-dimension of $\HFhat(S^3)$. In section~\ref{subsec:HeegaardDetecting}, we prove Theorem~\ref{thm:HeegaardFloerHomologyDetectsTrivialBand} using a minus version of Heegaard Floer homology rather than spectral sequences as outlined above. By work of Baldwin-Sivek, Li, and Ghosh-Li \cite{MR3352794,MR3477339,1801.07634,1810.13071,1901.06679,1910.01758,1910.10842} inspired by ideas in \cite{MR3650078}, there is minus version of sutured instanton homology; in particular, there is a $\C[U]$-module $\KHI^-(J)$ whose $\C[U]$-rank equals the $\C$-dimension of the sutured instanton homology of $S^3$. We extend their work to our situation, and prove a number of technical results along the way. 

Li points out that one of our technical results has the following corollary. Although a conjectural spectral sequence $\KHI(Y,K) \Rightarrow \SHI(Y(1))$ for a null-homologous knot $K \subset Y$ has not yet been established, we have the following rank inequality.

\begin{prop}\label{prop:dimInequalitySuturedInstanton}
	Let $K$ be a null-homologous knot in a closed oriented $3$-manifold $Y$. Then \[
		\dim \KHI(Y,K) \ge \dim \SHI(Y(1))
	\]where $Y(1)$ is $Y$ with a sutured puncture.
\end{prop}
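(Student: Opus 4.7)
The plan is to realize the inequality as an algebraic consequence of the $\C[U]$-module structure on the minus version $\KHI^-(Y, K)$, whose definition for null-homologous knots in closed oriented $3$-manifolds is among the technical results developed in this paper, extending the $Y = S^3$ case of Baldwin-Sivek, Li, and Ghosh-Li. Concretely, I would first establish the two exact triangles
\[
\begin{tikzcd}[column sep=0]
\KHI^-(Y, K) \ar[rr, "U"] & & \KHI^-(Y, K) \ar[dl]\\
& \KHI(Y, K) \ar[ul] &
\end{tikzcd}
\qquad
\begin{tikzcd}[column sep=0]
\KHI^-(Y, K) \ar[rr, "U - \Id"] & & \KHI^-(Y, K) \ar[dl]\\
& \SHI(Y(1)) \ar[ul] &
\end{tikzcd}
\]
strictly analogous to the Heegaard triangles recalled in the outline, and confirm that $\KHI^-(Y, K)$ is finitely generated over the PID $\C[U]$.

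Granted these ingredients, the structure theorem writes $\KHI^-(Y, K) \cong \C[U]^r \oplus T$ with $T$ a finite-dimensional torsion $\C[U]$-module. Unrolling either exact triangle into its long exact sequence collapses to a short exact sequence
\[
0 \to \coker(f) \to C \to \ker(f) \to 0
\]
where $(f, C)$ is either $(U, \KHI(Y, K))$ or $(U - \Id, \SHI(Y(1)))$. Since $U - \Id$ acts invertibly on $T$ (because $U$ is nilpotent there) and is injective with one-dimensional cokernel on each free summand, the second triangle yields $\dim_{\C} \SHI(Y(1)) = r$. For $U$, each free summand contributes one dimension to $\coker(U)$ and nothing to $\ker(U)$, while each torsion summand $\C[U]/(U^k)$ contributes one dimension to each; writing $t$ for the number of torsion summands, the first triangle yields $\dim_{\C} \KHI(Y, K) = r + 2t$, so $\dim_{\C} \KHI(Y, K) \geq r = \dim_{\C} \SHI(Y(1))$, as claimed.

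The main obstacle is not the final dimension count but the structural input: extending the Baldwin-Sivek-Li-Ghosh-Li minus theory together with both exact triangles from knots in $S^3$ to null-homologous knots in arbitrary closed $3$-manifolds $Y$, and verifying finite generation of the resulting $\C[U]$-module. Once these foundations are in place, and once the triangles are shown to genuinely identify $\KHI(Y, K)$ and $\SHI(Y(1))$ as the mapping cones of $U$ and $U - \Id$ respectively, the proposition reduces to the short piece of linear algebra above.
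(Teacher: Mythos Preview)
Your overall strategy matches the paper's: use the minus version $\KHI^-$ as a finitely generated $\C[U]$-module, read off $\dim\KHI(Y,K)=r+2t$ from the $U$-triangle, and compare $r=\rank\KHI^-$ to $\dim\SHI(Y(1))$. The paper's proof is exactly the chain of inequalities
\[
\dim\KHI(Y,K)\;\ge\;\rank\KHI^-(Y,K)\;\ge\;\dim\SHI(Y(1)),
\]
the first from the $U$-triangle and structure theorem (Proposition~\ref{prop:instantonMinusUTriangle}, Corollary~\ref{cor:structMinusInstantonSutured}), the second from Proposition~\ref{prop:rankOfMinusBiggerThanDim}.

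Where you diverge is in the second step. You propose to establish a $(U-\Id)$ exact triangle with third term $\SHI(Y(1))$, mirroring the Heegaard Floer short exact sequence $0\to\CFL^-\xrightarrow{U-\Id}\CFL^-\to\CFL^-/(U-\Id)\to 0$. The paper does \emph{not} do this, and it is not clear one can: in the instanton setting $\KHI^-$ is defined as a direct limit of sutured groups under bypass maps, not as the homology of a chain complex over $\C[U]$, so there is no evident chain-level source for a $(U-\Id)$ triangle. Instead the paper produces a different exact triangle coming from a \emph{surgery} exact triangle on the closures (Proposition~\ref{prop:NbetaCommutativeDiagram} and its corollary), in which the top map $\SHI^-\to\SHI^-$ is not identified with $U-\Id$ at all. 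What is shown is that all three maps are $\C[U]$-equivariant with $U$ acting as the identity on $\SHI(Y(1))$; since no nonzero element of $\KHI^-$ is $U$-fixed (by the grading), the map $\SHI(Y(1))\to\KHI^-$ vanishes, and the surjection $\KHI^-\to\SHI(Y(1))$ then factors through $\KHI^-/(U-\Id)\cong\C^r$, yielding only the inequality $r\ge\dim\SHI(Y(1))$. (The reverse inequality, hence equality, is obtained separately in Proposition~\ref{prop:rankOfMinusSmallerThanDim} by a grading argument, but is not needed here.)

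So your proposal is sound in outline, but the specific structural input you name --- a $(U-\Id)$ triangle --- is replaced in the paper by a surgery triangle plus a short $U$-equivariance argument. That substitution is the actual content of Proposition~\ref{prop:rankOfMinusBiggerThanDim}.
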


\vspace{20pt}

\theoremstyle{definition}
\newtheorem*{ack}{Acknowledgments}
\begin{ack}
	I would like to thank John Baldwin, Zhenkun Li, and Matthew Stoffregen for the helpful discussions. I would also like to thank my advisor Peter Kronheimer for his continued guidance, support, and encouragement. This material is based upon work supported by the NSF GRFP through grant DGE-1745303.
\end{ack}


\section{Sutured manifolds}

After a review of the basics of sutured manifold theory, we construct a sequence of surface decompositions in section~\ref{subsec:seqOfSurfDecomp} which we use in an essential way in the proofs that knot Floer homology, instanton Floer homology, and Khovanov homology detect the trivial band. We also collect a number of consequences of the light bulb trick in section~\ref{subsubsec:LBT} which we will use later. 

\subsection{Preliminaries}

\begin{dfs*}
	A \textit{band} $b$ for a two-component link $J_1 \cup J_2$ in $S^3$ is an embedding of a rectangle $[0,1] \x [0,10]$ into $S^3$ so that one short edge lies on $J_1$, the other short edge lies on $J_2$, and the rest of the rectangle is disjoint from the link. Explicitly, we require that $[0,1] \x 0 \subset J_1$, $[0,1] \x 10 \subset J_2$, and $[0,1] \x (0,10) \cap (J_1 \cup J_2) = \emptyset$. The result of \textit{band surgery} on $J_1 \cup J_2$ along a band $b$ is the knot $K$ obtained by deleting the interior of the band along with the two short edges of its boundary. Explicitly, $K = (J_1 \cup b \cup J_2) \setminus (0,1) \x [0,10]$. An orientation on one of the three knots $J_1,J_2,K$ naturally induces orientations on the other two. 

	A two-component link $J_1 \cup J_2$ in $S^3$ is \textit{split} if there exists an embedded $2$-sphere $Q$ in $S^3$ disjoint from the link for which $J_1$ lies on one side of $Q$ and $J_2$ lies on the other. Any such embedded sphere is called a \textit{splitting sphere}. The result of band surgery on a split link $J_1 \cup J_2$ is called a \textit{band sum} of $J_1 \cup J_2$. A band $b$ for a link $J_1 \cup J_2$ is \textit{trivial} if $J_1 \cup J_2$ is split and there exists a splitting sphere for $J_1 \cup J_2$ that intersects the band transversely and in a single arc. The band sum along a trivial band is the connected sum $J_1 \# J_2$. 

	A \textit{linking circle} of a band $b$ for a two-component link $J_1 \cup J_2$ is a ``meridian'' of the band thought of as an unknot in the complement of $J_1 \cup b \cup J_2$. It bounds a disc in $S^3$ which is disjoint from $J_1 \cup J_2$ and intersects $b$ transversely along an arc. 
\end{dfs*}

\begin{rem}
	Our convention is that surgery on a linking circle with slope $+1$ adds a full twist to the band. If $K_b$ denotes the band sum, we write $K_{b+1}$ to denote the band sum with a full twist added to the band. 
\end{rem}

\begin{dfs*}
	A \textit{cobordism} $C\colon J_0 \to J_1$ between links $J_0,J_1 \subset S^3$ is a compact surface $C$ properly embedded in $[0,1] \x S^3$ for which $\partial C \cap i \x S^3 = J_i$ for $i = 0,1$. If $J_0,J_1$ are oriented, then $C\colon J_0 \to J_1$ is an \textit{oriented cobordism} if $C$ is oriented so that its boundary orientation satisfies $\partial C = J_1 - J_0$. Two cobordisms are equivalent if they are isotopic rel boundary. 

	A \textit{concordance} $C\colon J_0 \to J_1$ from a link $J_0 \subset S^3$ to another link $J_1 \subset S^3$ with the same number of components is a cobordism consisting of disjoint annuli $A_i$ where each $A_i$ has a boundary component in each of $0 \x S^3$ and $1 \x S^3$. A \textit{ribbon concordance} $R\colon J_0 \to J_1$ between links is a concordance $R$ for which, up to isotopy rel boundary, the function $R \hookrightarrow [0,1] \x S^3 \to [0,1]$ is Morse and has no critical points of index $2$. Such critical points are referred to as \textit{deaths}, while critical points of index $0$ and $1$ are called \textit{births} and \textit{saddles}, respectively.
\end{dfs*}

\subsubsection{The light bulb trick}\label{subsubsec:LBT}

The following argument, called the \textit{light bulb trick}, is well-known.

\begin{lem}[Light bulb trick]\label{lem:LBT}
	Let $K$ be a knot in $S^1 \x S^2$ which intersects $\pt \x S^2$ transversely and in a single point. Then $K$ is isotopic to $S^1 \x \pt$. 
\end{lem}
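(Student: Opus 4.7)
The plan is to cut $S^1 \times S^2$ open along $\{p\} \times S^2$ to reduce the statement to a claim about proper arcs. Cutting along the sphere produces $[0,1] \times S^2$ with its two boundary spheres identified by the identity, and $K$ becomes a properly embedded arc $\alpha$ with endpoints $(0,q_0)$ and $(1,q_0)$ for some $q_0 \in S^2$. If one can produce an ambient isotopy of $[0,1] \times S^2$, trivial on a neighborhood of the boundary, that carries $\alpha$ to the horizontal arc $[0,1] \times \{q_0\}$, then the isotopy descends to $S^1 \times S^2$ and carries $K$ to $S^1 \times \{q_0\}$, proving the lemma.

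To produce this isotopy, first perturb $\alpha$ so that the projection $\pi \colon \alpha \to [0,1]$ is Morse. The hypothesis that $K$ meets $\{p\}\times S^2$ transversely in a single point means the algebraic intersection number of $\alpha$ with every slice $\{t\} \times S^2$ equals $1$, so the interior critical points of $\pi$ occur in matched maximum-minimum pairs. I would cancel these pairs one at a time: pick an adjacent local maximum at $t_1$ and local minimum at $t_2$ with $t_2 < t_1$ and no critical values in $(t_2, t_1)$. The corresponding arc of $\alpha$ lies in $[t_2, t_1] \times S^2$ and doubles back on itself; since $S^2$ is connected and $2$-dimensional, there is enough room to isotope this segment by a finger move, supported in a neighborhood of the arc between the critical points, which eliminates the pair without introducing new ones. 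After finitely many such cancellations, $\pi|_\alpha$ has no interior critical points and is a diffeomorphism $[0,1] \to [0,1]$.

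Once $\pi|_\alpha$ is a diffeomorphism, reparametrize the domain so $\alpha(t) = (t, f(t))$ for some smooth $f \colon [0,1] \to S^2$ with $f(0) = f(1) = q_0$. Since $\pi_1(S^2) = 0$, there is a smooth homotopy rel endpoints $\{f_s\}_{s \in [0,1]}$ from $f_0 = f$ to the constant map $f_1 \equiv q_0$. The family $\alpha_s(t) = (t, f_s(t))$ is automatically a smooth family of properly embedded arcs, since the $[0,1]$-coordinate is always injective, and the isotopy extension theorem upgrades this to the required ambient isotopy from $\alpha$ to $[0,1] \times \{q_0\}$.

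The main obstacle is the critical-point cancellation step. Geometrically it is intuitive (this is the visual content of the light bulb trick, screwing the filament around the sphere), but formalizing it requires checking that the finger move can always be performed while preserving embeddedness, which is where both the connectivity and the $2$-dimensionality of $S^2$ are essential.
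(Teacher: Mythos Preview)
Your approach is correct and is a genuinely different packaging from the paper's argument. The paper's proof is the classical one-liner: isotope $K$ so that all but a short arc lies inside a single slice $\{p\}\times S^2$, observe that any crossing change of the resulting planar diagram can be realized by an ambient isotopy (swing one strand around the far side of the sphere), and conclude because crossing changes suffice to unknot. Your version instead cuts open along the sphere, Morse-theoretically straightens the resulting arc in $[0,1]\times S^2$, and then uses $\pi_1(S^2)=0$. The paper's route is shorter and has the advantage that the ``crossing change by isotopy'' statement is exactly what the three corollaries use; your route is more systematic but buries that same geometric content inside the cancellation step.

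Two small things to tighten. First, the max--min pair you cancel should be chosen \emph{adjacent along the arc}, not merely adjacent in critical value; as written, ``no critical values in $(t_2,t_1)$'' does not guarantee that the max and min are joined by a single monotone subarc of $\alpha$, and without that your phrase ``the corresponding arc of $\alpha$\ldots doubles back on itself'' has no referent. Choosing a max immediately followed along $\alpha$ by a min always works and makes the picture the one you want. Second, the finger move that cancels the pair cannot in general be ``supported in a neighborhood of the arc between the critical points'': other monotone strands of $\alpha$ may pass through that region, and getting the cap past them is precisely where you must slide around the sphere. You acknowledge this in your last paragraph, but the phrasing in the main argument undersells it; that slide is the whole content of the lemma, not a technicality.
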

\begin{proof}
	Isotope $K$ so that the intersection of $K$ with $\pt \x S^2$ is a single transverse point $p$ and an arc. With the endpoints of the arc fixed, we may imagine pulling the arc within $\pt \x S^2$ across the point $p$ thereby doing a crossing change to $K$. Through such crossing changes we may certainly make $K$ become $S^1 \x \pt$. The light bulb trick is the observation that this crossing change can be achieved by an isotopy of the knot; simply swing the arc through the other side of $\pt \x S^2$. 
\end{proof}

We will use the following three corollaries of the light bulb trick. The first is elementary, the second is due to Thompson \cite{MR895572}, and the third is due to Miyazaki \cite{MR1451821}. 

\begin{cor}\label{cor:ZeroFillingMeridian}
	Let $M$ be the exterior of the two-component link $J \cup \mu$, where $J \subset S^3$ is a knot and $\mu$ is a meridian of $J$ thought of as an unknot in the complement of $J$. Filling the boundary component $\partial N(\mu)$ of $M$ along the zero slope results in the solid torus. 
\end{cor}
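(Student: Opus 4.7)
The plan is to identify the result of the zero-filling with a standard manifold that is already set up for the light bulb trick. First I would observe that since $\mu$ is an unknot in $S^3$, performing $0$-surgery on $\mu$ in $S^3$ yields $S^1 \x S^2$, and that inside this $S^1 \x S^2$ the knot $J$ still sits as a knot. The sutured manifold $M$ with $\partial N(\mu)$ filled along the zero slope is therefore exactly the exterior of $J$ inside $S^1 \x S^2$.

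Next I would pin down how $J$ meets a standard sphere factor. Since $\mu$ is a meridian of $J$, it bounds a meridian disc $D$ in $S^3$ meeting $J$ transversely in exactly one point, and $D$ is a Seifert disc for $\mu$. Because the zero slope on $\partial N(\mu)$ is the Seifert framing of $\mu$, the $2$-handle attached in the $0$-filling caps off $D$ to a $2$-sphere $Q$; under the identification of the filled manifold with $S^1 \x S^2$, the sphere $Q$ is (isotopic to) $\pt \x S^2$. By construction $Q$ meets $J$ transversely in the single point $D \cap J$.

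Now the light bulb trick (Lemma~\ref{lem:LBT}) applies directly to $J \subset S^1 \x S^2$ and its intersection with $Q$: it gives an ambient isotopy of $S^1 \x S^2$ taking $J$ to $S^1 \x \pt$. The exterior of $S^1 \x \pt$ in $S^1 \x S^2$ is $S^1 \x D^2$, a solid torus, so the filled manifold is a solid torus, as claimed.

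There is no serious obstacle; the only thing to be careful about is verifying that the slope convention matches, that is, that the zero slope on $\partial N(\mu)$ really is the slope bounding a disc in $S^3 \setminus \mu$ so that $0$-surgery produces $S^1 \x S^2$ with the sphere factor meeting $J$ in one point. After that, the light bulb trick does all the work.
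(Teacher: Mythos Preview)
Your proof is correct and follows essentially the same approach as the paper's: both identify the zero-filling with the exterior of $J$ in $S^1\times S^2$, cap off the meridian disc to a sphere meeting $J$ once, and apply the light bulb trick. Your version simply spells out the slope verification a bit more explicitly than the paper does.
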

\begin{proof}
	Zero surgery on $\mu$ makes $J$ a knot in $S^1 \x S^2$. The disc that $\mu$ bounds in $S^3$ that intersects $J$ transversely and in a single point can be capped off with a disc in $S^1 \x S^2$ to obtain a copy of $\pt \x S^2$, so we may apply the light bulb trick. The exterior of $S^1 \x \pt$ in $S^1 \x S^2$ is a solid torus. 
\end{proof}

\begin{cor}[Claim (a) of \cite{MR895572}]\label{cor:ZeroSurgeryLinkingCircle}
	Let $K_b$ be a band sum of a two-component split link $J_1 \cup J_2$, and let $C$ be a linking circle. Let $M$ be the exterior of $K_b \cup C$. Filling the component $\partial N(C)$ along the zero slope results in a reducible manifold. In particular, the result is the connected sum of $S^1 \x S^2$ with the exterior of $J_1 \# J_2$ in $S^3$. 
\end{cor}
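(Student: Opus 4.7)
The plan is to use the light bulb trick to identify $K_b$ in the surgered manifold as a local connected sum $J_1 \# J_2$ contained in a $3$-ball, from which the claimed connected-sum decomposition of the exterior follows immediately. Carrying out $0$-surgery on $C$ transforms $S^3$ into $S^1 \x S^2$, and the disc $D$ bounded by $C$ in $S^3$ (disjoint from $J_1 \cup J_2$ and meeting $b$ transversely in a single arc) caps off with the core disc of the attached solid torus to yield a non-separating $2$-sphere $\Sigma \subset S^1 \x S^2$. The sphere $\Sigma$ is disjoint from $J_1 \cup J_2$, meets the band $b$ in a single arc, and the core arc $\alpha$ of $b$ runs from $J_1$ to $J_2$ crossing $\Sigma$ transversely in exactly one point.

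Choose a splitting sphere $Q$ for $J_1 \cup J_2$ in $S^3$ disjoint from $D$; this can be arranged because a linking circle of $b$ is only specified up to isotopy along $b$, so we may slide $C$ to a point of $b$ on one side of $Q$, and then take $D$ to be the corresponding small perpendicular disc. After surgery, $Q$ persists as a separating sphere in $S^1 \x S^2$ disjoint from $\Sigma$, placing $J_1$ and $J_2$ on opposite sides of $Q$ but both disjoint from $\Sigma$. In the complement of $J_1 \cup J_2$, the sphere $\Sigma$ remains non-separating, and by the light bulb trick $\alpha$ is isotopic rel endpoints to a short straight arc $\alpha_0$ whose band neighborhood is supported in a collar of $\Sigma$. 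Carrying the band $b$ along this isotopy, the knot $K_b$ becomes contained in a $3$-ball $B \subset S^1 \x S^2$ obtained by joining balls around $J_1$ and $J_2$ through the collar of $\Sigma$, and, viewed as a knot in $B \subset S^3$, $K_b$ is precisely the connected sum $J_1 \# J_2$.

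Since $K_b \subset B$ and $K_b = J_1 \# J_2$ inside $B$, the exterior of $K_b$ in $S^1 \x S^2$ decomposes along $\partial B$ as
\[
(S^1 \x S^2 \setminus \Int(B)) \cup_{\partial B} (B \setminus \Int(N(K_b))),
\]
which is by definition the connected sum $(S^1 \x S^2) \# E(J_1 \# J_2)$; in particular it is reducible. The corollary follows by identifying $0$-filling of $\partial N(C)$ in the exterior of $K_b \cup C$ in $S^3$ with the exterior of $K_b$ in the $0$-surgery of $S^3$ along $C$. The main technical point to fill in is the rel-endpoint isotopy of arcs in $(S^1 \x S^2) \setminus (J_1 \cup J_2)$: two arcs connecting fixed points of $J_1$ and $J_2$ which each cross $\Sigma$ transversely once are isotopic rel endpoints in this complement. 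This reduces to the standard light bulb trick (Lemma~\ref{lem:LBT}) by cutting along $\Sigma$ to obtain $S^2 \x I$ and using the splitting sphere $Q$ to confine the ambient isotopy to simply connected pieces.
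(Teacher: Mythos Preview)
Your proof is correct and follows essentially the same approach as the paper: after $0$-surgery on $C$, cap off $D$ to a non-separating sphere $\Sigma$ in $S^1 \times S^2$, then use the light bulb trick to trivialize the band so that $K_b$ becomes $J_1 \# J_2$ in a ball.

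The main difference is in packaging. The paper applies the light bulb trick directly to the band: since the band meets $\Sigma$ in a single arc, one may swing the band through $\Sigma$ to effect ``crossing changes'' in which one strand is the band and the other is any strand of $K_b$ (or of the band itself). Iterating these moves untangles the band. Your formulation instead isotopes the core arc $\alpha$ of $b$ rel endpoints in the complement of $J_1 \cup J_2$. This is equivalent, but your stated reduction---cutting along $\Sigma$ to get $S^2 \times I$ and invoking simple connectivity---does not by itself give isotopy (only homotopy) of arcs. The argument that actually works is the same crossing-change mechanism the paper uses: since $\alpha$ meets $\Sigma$ once while $J_1, J_2$ are disjoint from $\Sigma$, you can change any crossing of $\alpha$ with $J_1$, $J_2$, or itself by swinging through $\Sigma$, which is an honest ambient isotopy. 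You might also note that carrying the band along the isotopy of $\alpha$ could alter the framing, but this is harmless since twisting a trivial band still yields $J_1 \# J_2$. Your extra setup with the splitting sphere $Q$ is not needed for the argument, though it does no harm.
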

\begin{proof}
	The linking circle $C$ bounds a disc in $S^3$ which intersects the band of $K_b$ along an arc. After doing zero surgery on $C$, this disc can be capped off with a disc in $S^1 \x S^2$ to obtain a copy of $\pt \x S^2$. Using the light bulb trick, we may now do ``crossing changes'' where exactly one of the two ``strands'' is the band (Figure~\ref{fig:bandCrossing}). Through such moves, the band can be made trivial so that the resulting knot is a copy of $J_1 \# J_2$ in a $3$-ball contained in $S^1 \x S^2$. 
\end{proof}

\begin{figure}[!ht]
	\centering
	\includegraphics[width=.2\textwidth]{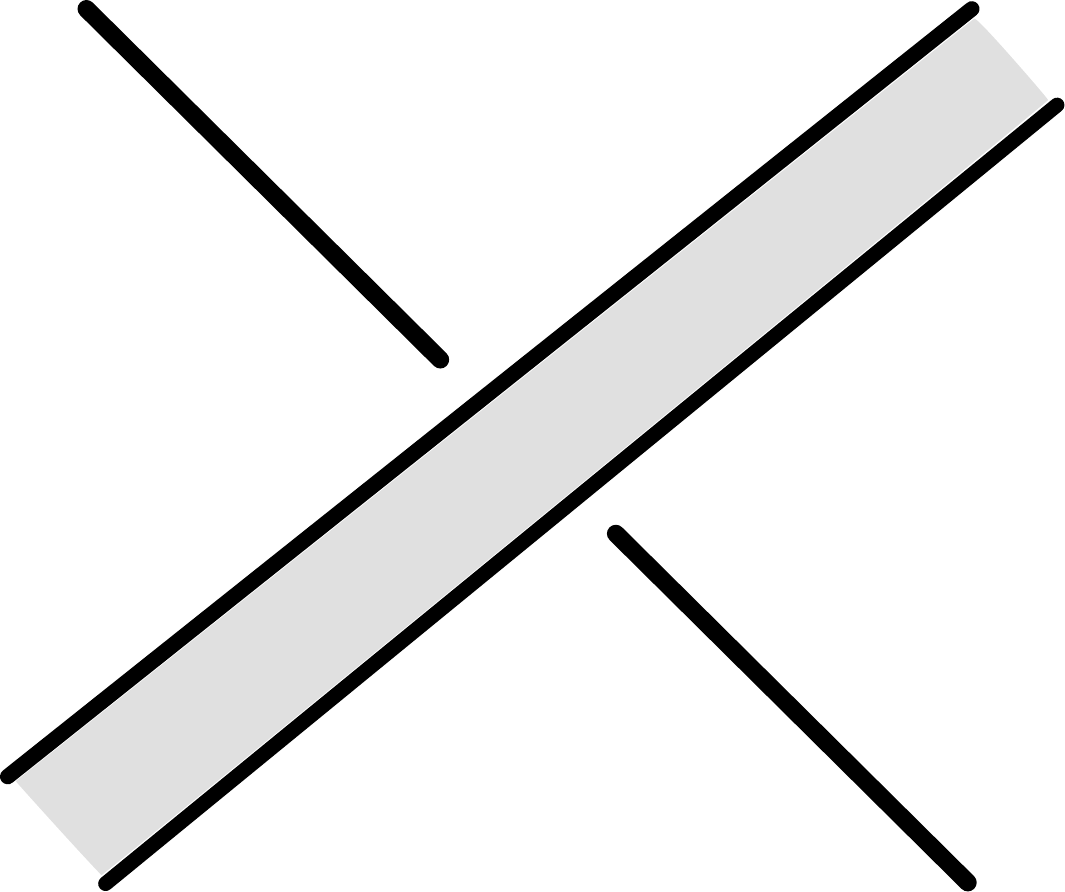}
	\hspace{30pt}
	\includegraphics[width=.2\textwidth]{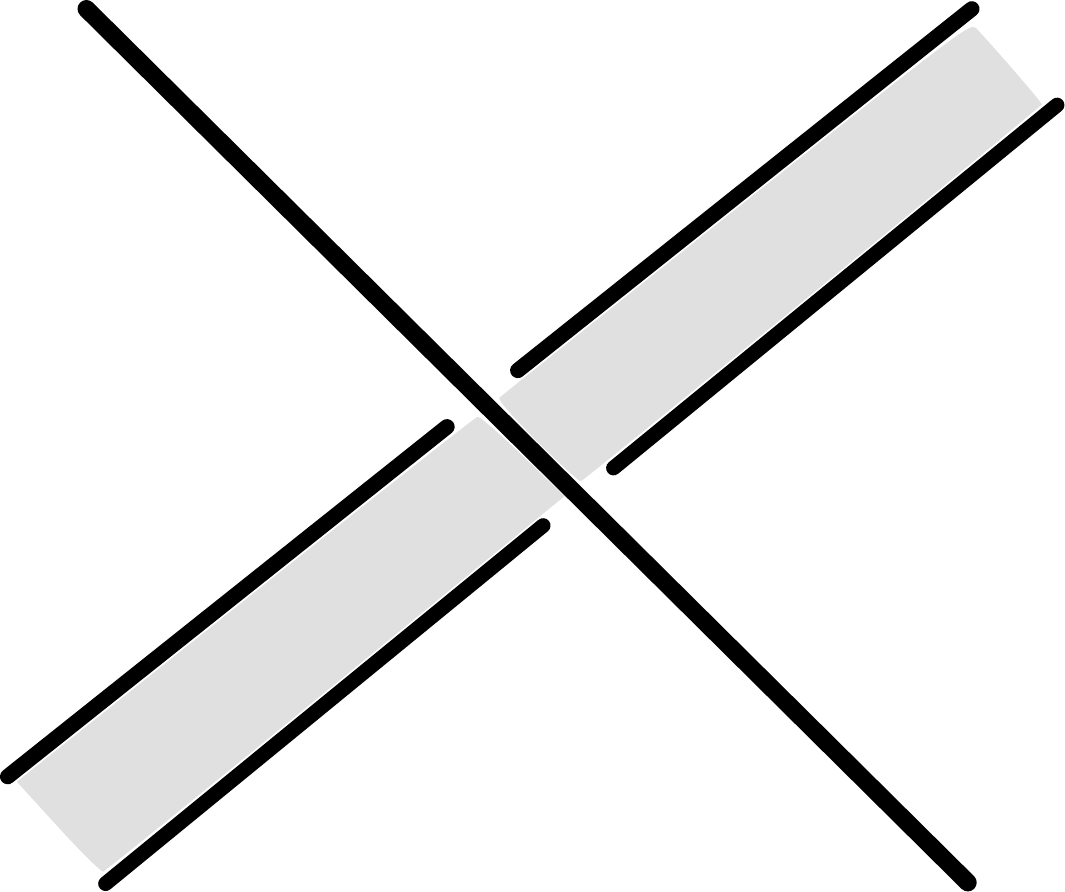}
	\caption{``Crossing change'' where one ``strand'' is the band.}
	\label{fig:bandCrossing}
\end{figure}

\begin{cor}[Theorem 1.1 of \cite{MR1451821}]\label{cor:RibbonConcordance}
	Let $K_b$ be a band sum of a two-component split link $J_1 \cup J_2$. Then there is a ribbon concordance from $J_1 \# J_2$ to $K_b$. 
\end{cor}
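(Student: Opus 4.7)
The plan is to construct the ribbon concordance $R\colon K_\# \to K_b$ explicitly in $S^3 \times [0,1]$ using exactly one birth and one saddle. Counting critical points, this produces a surface of Euler characteristic $1 - 1 = 0$ with two boundary circles $K_\# \times \{0\}$ and $K_b \times \{1\}$, hence an annulus (that is, a concordance); since it has no index-$2$ critical points, it is ribbon by the definition given in the preliminaries.

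First I would set up the geometry by realizing $K_\#$ as the band sum of $L = J_1 \cup J_2$ along a trivial band $b_0$ whose short edges coincide with those of the given band $b$. This requires only an ambient isotopy of $L$ so that $b_0$ can be taken to be a simple band between the two prescribed short arcs, meeting a fixed splitting sphere $S$ in a single arc. The concordance then proceeds in three substeps. For $t \in [0,1/3]$, take the product $K_\# \times [0,1/3]$. At $t = 1/3$, perform a birth that introduces a small unknot in a ball disjoint from $K_\#$, followed by an ambient isotopy (carrying no critical points) moving this unknot to a linking circle $U$ of the trivial band $b_0$; thus $U$ bounds an embedded disc $D \subset S^3$, disjoint from $J_1 \cup J_2$, meeting $b_0$ transversely in a single arc. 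At $t = 2/3$, perform a saddle attaching a band $b^*$ between $U$ and the portion of $K_\#$ lying on the band $b_0$, where $b^*$ runs from $U$ along $D$ and then traces the path of the core arc of $b$. Finally, for $t \in [2/3, 1]$, ambient isotope the resulting knot to $K_b$.

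The main obstacle is verifying that the saddle in the second substep produces $K_b$ up to isotopy. The mechanism is essentially the light bulb trick (Lemma~\ref{lem:LBT}) applied in reverse to Corollary~\ref{cor:ZeroSurgeryLinkingCircle}: there, zero-surgery on the linking circle $C$ of the nontrivial band $b$ produces an ambient $2$-sphere intersecting $K_b$ in a single point, allowing the band to be isotoped to a trivial one and identifying $K_b$ with $K_\#$ in $S^1 \times S^2$. Here we invert that maneuver by trading the surgery for a band surgery: the birth produces a linking disc, and the saddle along $b^*$ uses this disc to \emph{thread} the trivial band $b_0$ along any prescribed path, in particular along the core arc of $b$, yielding $K_b$ after isotopy. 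The technical heart is an explicit local model for this trade near the band region, verified by a direct isotopy argument in a neighborhood of $b \cup b_0$ in $S^3$; see Miyazaki \cite{MR1451821} for the original treatment.
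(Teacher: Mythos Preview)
Your approach diverges from the paper's in a substantive way, and the divergence is exactly where your argument has a gap. The paper (following Miyazaki) runs the movie in reverse: starting from $K_b$, for each ``crossing'' in which one strand is the band, a saddle move performs the band--crossing change at the cost of producing a linking circle for the band (this is the local picture in Figure~\ref{fig:saddle}). After finitely many such moves the band is trivial, and all of the linking circles are capped off by deaths. The number of saddle/death pairs equals the number of band--crossing changes needed, which is in general greater than one.

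You instead assert that a \emph{single} birth and a \emph{single} saddle suffice. This is the unjustified step. Your band $b^*$ is described only informally (``runs from $U$ along $D$ and then traces the path of the core arc of $b$''), and it is not clear what embedded band this specifies: the core of $b$ is an arc with both endpoints on $K_\#$, whereas $b^*$ must run from $U$ to $K_\#$, so ``tracing the core of $b$'' does not pin down a well-defined path, let alone a framing. More seriously, your appeal to the light bulb trick is misplaced. Corollary~\ref{cor:ZeroSurgeryLinkingCircle} uses the trick \emph{after} performing $0$-surgery on the linking circle, which places the knot in $S^1 \times S^2$ where a nonseparating $2$-sphere enables crossing changes by isotopy. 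You are working entirely in $S^3$, where no such sphere exists; there is no mechanism by which birthing a linking circle and doing one saddle ``trades'' for that surgery. The sentence ``the technical heart is an explicit local model \ldots\ see Miyazaki'' defers precisely the point at issue, and the argument the paper attributes to Miyazaki does not proceed this way.

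If you want a correct proof, follow the paper's scheme: one saddle per band--crossing change, each producing a linking circle, then cap them all off. That argument is local and transparent, and the Euler-characteristic count (equal numbers of saddles and deaths) still yields an annulus.
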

\begin{proof}
	We describe a movie presentation of a concordance from $K_b$ to $J_1 \# J_2$ consisting entirely of saddle moves and deaths. Reversing the movie provides the desired ribbon concordance. 

	Consider a ``crossing'' of $K_b$ where exactly one of the two ``strands'' is the band $b$ (Figure~\ref{fig:bandCrossing}). Via a saddle move, the ``crossing change'' can be achieved at the cost of a linking circle for the band (Figure~\ref{fig:saddle}). Through these moves, the band can be made trivial, after which all additional linking circles may be capped off with embedded discs, yielding deaths. The resulting knot is $J_1 \# J_2$. There are an equal number of saddles as deaths so the cobordism is indeed a concordance. 
\end{proof}

\begin{figure}[!ht]
	\centering
	\includegraphics[width=.25\textwidth]{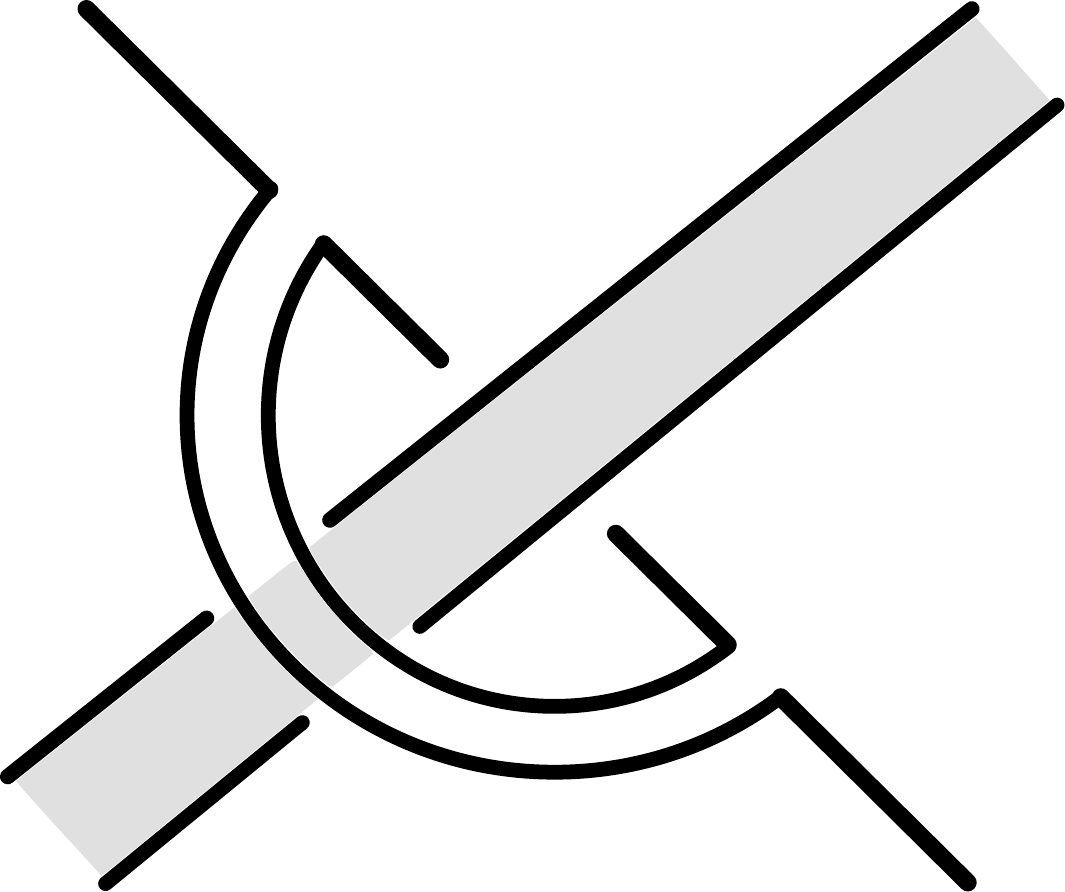}
	\captionsetup{width=.7\textwidth}
	\caption{The ``crossing change'' of Figure~\ref{fig:bandCrossing} achieved by a saddle at the cost of a linking circle for the band.}
	\label{fig:saddle}
\end{figure}

\subsubsection{Sutured manifolds}\label{subsubsec:suturedManifolds}

We review some basics of Gabai's theory of sutured manifolds \cite{Gab83,Gab87a}.

\begin{dfs*}
	A \textit{sutured manifold} $(M,\gamma)$ consists of a compact oriented $3$-manifold $M$ and subsurface $\gamma \subset \partial M$ consisting of pairwise disjoint annuli $A(\gamma)$ and tori $T(\gamma)$. Each annular component of $\gamma$ is equipped with an essential oriented simple closed curve, called a \textit{suture}, in its interior. The union of the sutures of $\gamma$ is denoted $s(\gamma)$. We let $R(\gamma)$ denote the complement of the interior of $\gamma$ in $\partial M$, and we require that each component $V$ of $R(\gamma)$ is given an orientation in such a way that each boundary component of $V$ with the induced orientation, thought of as lying in an annulus in $A(\gamma)$, is in the same homology class as the suture in that annulus. We define $R_+(\gamma)$ (resp. $R_-(\gamma)$) to be the union of the components of $R(\gamma)$ whose orientation agrees (resp. disagrees) with the induced boundary orientation on $R(\gamma)$ from $M$. 

	A sutured manifold $(M,\gamma)$ is \textit{balanced} if $M$ has no closed components, every closed component of $\partial M$ contains a suture, and $\chi(R_+) = \chi(R_-)$ where $\chi$ is the Euler characteristic. In particular, there are no toral components of $\gamma$. 
\end{dfs*}

\begin{examples}
	Let $\Sigma$ be a compact oriented surface with no closed components, and consider the three-manifold $[-1,1] \x \Sigma$. Let $\gamma = [-1,1] \x \partial \Sigma$ with $s(\gamma) = 0 \x \partial \Sigma$ oriented as the boundary of $\Sigma$. It follows that $R_+(\gamma) = 1 \x \partial \Sigma$ while $R_-(\gamma) = -1 \x \partial \Sigma$. This sutured manifold is balanced, and is called a \textit{product sutured manifold}. 

	Let $L$ be a link in a closed oriented connected $3$-manifold $Y$. We make the exterior of $L$ into a balanced sutured manifold by placing two oppositely oriented meridional sutures on each boundary component. We refer to this sutured manifold as \textit{the sutured exterior of $L$}. Another possible choice of sutures which yields a (non-balanced) sutured manifold is to place these meridians on only a subset of the toral boundary components, and label the rest as toral components in $T(\gamma)$. 

	Let $(M,\gamma)$ be a connected sutured manifold. Let $B^3$ be an open $3$-ball embedded in the interior of $M$. Let $(M,\gamma)(1)$ denote the sutured manifold $(M\setminus B^3,\gamma \cup \beta)$ where $\beta$ is a suture on the new $S^2$ boundary component of $M \setminus B^3$. We refer to $(M,\gamma)(1)$ as the sutured manifold $(M,\gamma)$ with a \textit{sutured puncture}. Similarly, $(M,\gamma)(n)$ is $(M,\gamma)$ with $n$ sutured punctures. 
\end{examples}

\begin{dfs*}
	Let $S$ be a compact oriented surface, properly embedded in a sutured manifold $(M,\gamma)$. Then $S$ is called a \textit{decomposing surface} if for every component $\lambda$ of $S \cap \gamma$, one of the following hold: \begin{enumerate}[nolistsep]
		\item[(1)] $\lambda$ is a properly embedded non-separating arc in $\gamma$. 
		\item[(2)] $\lambda$ is a simple closed curve in an annular component $A$ of $\gamma$ in the same homology class as its suture. 
		\item[(3)] $\lambda$ is a homotopically nontrivial curve in a toral component $T$ of $\gamma$, and every other component $\delta$ of $T \cap \partial S$ represents the same homology class in $H_1(T)$. 
	\end{enumerate}We also require that no component of $S$ is a disc $D$ with $\partial D \subset R(\gamma)$ and no component of $\partial S$ bounds a disc in $R(\gamma)$.

	A decomposing surface $S \subset (M,\gamma)$ defines a \textit{sutured manifold decomposition} (or \textit{surface decomposition}) \[
		(M,\gamma) \overset{S}{\rightsquigarrow} (M',\gamma')
	\]where $M' = M \setminus N(S)$ and \begin{align*}
		\gamma' &= (\gamma \cap M') \cup N(S_+' \cap R_-(\gamma)) \cup N(S_-'\cap R_+(\gamma))\\
		R_+(\gamma') &= ((R_+(\gamma) \cap M') \cup S_+') - \mathrm{Int}(\gamma')\\
		R_-(\gamma') &= ((R_-(\gamma) \cap M') \cup S_-') - \mathrm{Int}(\gamma')
	\end{align*}where $S_+'$ (resp. $S_-'$) consists of the components of $\partial N(S) \cap M'$ whose normal vector points out of (resp. into) $M'$, thereby agreeing (resp. disagreeing) with the induced boundary orientation. Note that $s(\gamma)$ is determined by the decomposition $R(\gamma') = R_+(\gamma') \cup R_-(\gamma')$. 
\end{dfs*}

\begin{examples}
	Let $\Sigma$ be a Seifert surface for a null-homologous link $L$ in a closed oriented connected $3$-manifold $Y$. The \textit{sutured exterior of $\Sigma$} is the exterior of $\Sigma$ with a suture running along each boundary component of $\Sigma$, with the boundary orientation. Explicitly, we thicken $\Sigma$ in $Y$ to obtain an embedded copy of $[-1,1] \x \Sigma$, and we delete its interior and let $s(\gamma) = 0 \x \partial \Sigma$. We may view $\Sigma$ as a decomposing surface in the sutured exterior of $L = \partial \Sigma$. The sutured manifold decomposition along $\Sigma$ gives the sutured exterior of $\Sigma$. 
\end{examples}

\begin{dfs*}
	The \textit{complexity} of a compact oriented surface $S$ is \[
		x(S) = \sum_{S_i} \max\{0,-\chi(S_i) \}
	\]where the sum is over the components $S_i$ of $S$. Let $M$ be a compact oriented $3$-manifold, and let $N$ be a subsurface of $\partial M$. We define the \textit{Thurston norm} $x(\alpha)$ of a relative homology class $\alpha \in H_2(M,N)$ to be the minimal complexity of a compact oriented surface $S$ that can be properly embedded in $(M,N)$ in the homology class $\alpha$. Typically $N$ is either empty, the entire boundary, or $\gamma \subset \partial M$. A properly embedded compact oriented surface $(S,\partial S) \subset (M,N)$ is called \textit{norm-minimizing} if it is incompressible and $x(S) = x([S])$ where $[S] \in H_2(M,N)$. 
\end{dfs*}

\begin{df*}
	A sutured manifold $(M,\gamma)$ is \textit{taut} if $M$ is irreducible and $R(\gamma)$ is norm-minimizing in $H_2(M,\gamma)$. 
\end{df*}

\begin{lem}[Lemma 0.4 of \cite{Gab87a}]\label{lem:TautImpliesTaut}
	Let $(M,\gamma) \overset{S}{\rightsquigarrow} (M',\gamma')$ be a sutured manifold decomposition. If $(M',\gamma')$ is taut, then $(M,\gamma)$ is taut.
\end{lem}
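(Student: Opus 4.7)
The plan is to verify both conditions in the definition of tautness for $(M,\gamma)$ --- namely, that $M$ is irreducible and that $R(\gamma)$ is norm-minimizing in $H_2(M,\gamma)$ --- by transporting information across the decomposition $(M,\gamma) \overset{S}{\rightsquigarrow} (M',\gamma')$. The guiding idea is that cutting along $S$ turns candidate surfaces in $(M,\gamma)$ into candidate surfaces in $(M',\gamma')$, and that the resulting change in Thurston norm depends only on $S$, so the minimality of $R(\gamma')$ in $(M',\gamma')$ forces the minimality of $R(\gamma)$ in $(M,\gamma)$.

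For irreducibility, I would assume for contradiction that $M$ contains an essential $2$-sphere $Q$, and isotope $Q$ to meet $S$ transversely with $|Q \cap S|$ minimal. If $Q \cap S = \emptyset$ then $Q$ persists as an essential sphere in $M'$, contradicting the irreducibility of $M'$. Otherwise, an innermost-disc argument on $Q$ yields a disc $D \subset M'$ with $\partial D$ a circle on a copy of $S$: if $\partial D$ bounds a disc on $S$ then irreducibility of $M'$ supplies a ball through which we isotope $Q$ to reduce $|Q \cap S|$, contradicting minimality; if $\partial D$ is essential on $S$, surgering $S$ along $D$ (together with a parallel copy on the other side) replaces $S$ by a simpler decomposing surface yielding the same $(M',\gamma')$, and the argument can be iterated, using the standing decomposing-surface conditions to rule out trivial or degenerate cases.

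For norm-minimality, let $T$ be a properly embedded oriented surface in $(M,\gamma)$ with $[T] = [R(\gamma)] \in H_2(M,\gamma)$; the goal is to show $x(T) \geq x(R(\gamma))$. I would first make $T$ transverse to $S$ and remove, via innermost-disc and outermost-arc surgeries, all components of $T \cap S$ that are inessential on either $T$ or $S$; such surgeries do not increase $x(T)$ and do not alter $[T]$. Cutting the resulting $T$ along $T \cap S$ and gluing in appropriate parallel copies of $S_\pm$ on each side produces a properly embedded oriented surface $T' \subset M'$ in the class $[R(\gamma')]$ with $x(T') \leq x(T) + \Delta$, where $\Delta$ is a fixed quantity depending only on $S$. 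Applying exactly the same cut-and-paste to $T = R(\gamma)$ reproduces $R(\gamma')$ itself (essentially by the definition of $R(\gamma')$ given in the excerpt), so $x(R(\gamma')) = x(R(\gamma)) + \Delta$. The tautness of $(M',\gamma')$ gives $x(T') \geq x(R(\gamma'))$, and combining the two equalities and inequalities eliminates $\Delta$ to yield $x(T) \geq x(R(\gamma))$.

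The main obstacle I anticipate is the cut-and-paste construction: one must check that $T'$ truly represents the class $[R(\gamma')]$ and achieves the claimed norm bound. This requires careful orientation bookkeeping so that the glued copies of $S$ lie on the correct sides to become part of $R_\pm(\gamma')$, and it is precisely here that the conditions imposed on a decomposing surface --- the nonseparating condition on arc components of $S \cap \gamma$, the parallelism condition on toral intersections, and the exclusion of trivial disc components of $S$ --- enter the argument in an essential way, both to guarantee that the inessential-intersection cleanup in the first step terminates without changing $[T]$ and to ensure that the surgered pieces fit together as a legitimate surface in $(M',\gamma')$.
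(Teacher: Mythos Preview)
The paper does not give a proof of this lemma at all: it is stated with a citation to Gabai's paper and immediately followed by the next definition. So there is no argument in the paper to compare against; your proposal is being measured against Gabai's original proof.

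Your overall two-step strategy (irreducibility, then norm-minimality via cut-and-paste with $S$) is the right one, and the norm-minimality sketch is a reasonable outline of Gabai's argument, with the caveat that making ``$\Delta$ depends only on $S$'' precise requires tracking that the double-curve sum creates no new sphere or disc components (so that complexity and negative Euler characteristic stay aligned); this is exactly where the decomposing-surface hypotheses you flag are used.

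There is, however, a genuine error in your irreducibility step. When the innermost disc $D \subset Q$ has $\partial D$ essential on $S$, you propose to ``surger $S$ along $D$ \ldots\ yielding the same $(M',\gamma')$''. Surgering $S$ changes the decomposing surface and in general changes $(M',\gamma')$, so this does not set up a valid induction. The correct move is much simpler: the copies $S_+'$ and $S_-'$ of $S$ lie in $R(\gamma')$, so a disc $D \subset M'$ with $\partial D$ essential on one of them is a compressing disc for $R(\gamma')$. This directly contradicts the incompressibility of $R(\gamma')$, which is part of the tautness of $(M',\gamma')$. With this correction the irreducibility argument goes through cleanly.
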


\begin{df*}
	A disc $D$ properly embedded in a balanced sutured manifold $(M,\gamma)$ for which $\partial D \cap s(\gamma)$ consists of two points and $\partial D \cap A(\gamma)$ consists of essential arcs in $A(\gamma)$ is called a \textit{product disc}. A product disc is \textit{trivial} if it is isotopic rel boundary to a disc in $\partial M$. An annulus $A$ properly embedded in $(M,\gamma)$ for which one component of $\partial A$ is contained in $R_+(\gamma)$ and the other in $R_-(\gamma)$ is called a \textit{product annulus} if neither component bounds a disc in $R(\gamma)$.
\end{df*}

\begin{lem}[Lemma 3.12 of \cite{Gab83}]\label{lem:decompAlongProdDiscAnnIsTaut}
	Let $(M,\gamma) \overset{S}{\rightsquigarrow} (M',\gamma')$ be decomposition along a product disc or a product annulus. Then $(M,\gamma)$ is taut if and only if $(M',\gamma')$ is taut. 
\end{lem}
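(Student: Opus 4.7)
The proof plan breaks into two directions. The implication $(M',\gamma')$ taut $\Rightarrow$ $(M,\gamma)$ taut is already immediate from Lemma~\ref{lem:TautImpliesTaut}, which holds for any surface decomposition. The substantive direction is the converse, so I assume $(M,\gamma)$ is taut and $S$ is a product disc $D$ (with $\chi(D)=1$) or a product annulus $A$ (with $\chi(A)=0$), and aim to show $(M',\gamma')$ is taut.

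First I would verify that $M'$ is irreducible. Given an embedded 2-sphere $\Sigma \subset M'$, irreducibility of $M$ provides a ball $B \subset M$ with $\partial B = \Sigma$. Since $\Sigma$ is disjoint from $S$ and each component of $S \cap B$ is a simple closed curve in the disc or annulus $S$ (bounding a disc in $B$ by a standard innermost-disc argument using irreducibility of $M$), I can successively push $B$ off $S$ by isotopy, producing $B \subset M'$.

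The main content is to show $R(\gamma')$ is norm-minimizing in $H_2(M',\gamma')$. A direct Euler-characteristic bookkeeping, using that $S$ meets $A(\gamma)$ in essential arcs (disc case) or in curves parallel to the sutures (annulus case), gives the identity $x(R(\gamma')) = x(R(\gamma))$: the two copies $S_\pm'$ of $S$ that are attached to $R(\gamma)\cap M'$ to form $R(\gamma')$ contribute $2\chi(S)$, and this is exactly cancelled by the annular neighborhoods of the new sutures that get removed. Now let $T' \subset (M',\gamma')$ be any properly embedded oriented surface with $[T'] = [R(\gamma')]$ and $x(T') \le x(R(\gamma'))$; after standard simplifications I may assume $T'$ has no disc components with boundary in $R(\gamma')$ and meets $\gamma'$ in the standard way. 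I then glue two parallel copies of $S$ along the arcs of $\partial T'$ lying on $S_\pm'$ to produce a properly embedded surface $T \subset M$. Because the two parallel copies of $S$ cobound the product region $N(S)$ in $M$, I have $[T] = [R(\gamma)]\in H_2(M,\gamma)$. The same Euler-characteristic accounting used for $R(\gamma')$ yields $x(T) \le x(T')$ (discarding any inessential disc or sphere components created by the gluing, which only decreases complexity). Tautness of $(M,\gamma)$ forces $x(R(\gamma)) \le x(T)$, and combining $x(R(\gamma')) = x(R(\gamma)) \le x(T) \le x(T')$ establishes that $R(\gamma')$ is norm-minimizing.

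The main obstacle will be the cut-and-paste bookkeeping in the central step: verifying the Euler-characteristic identity $x(R(\gamma')) = x(R(\gamma))$ and $x(T) \le x(T')$ from the explicit combinatorics of how $R_\pm(\gamma')$ is assembled from $R_\pm(\gamma) \cap M'$ and the appropriate sides of $S$, confirming that the reconstructed surface $T$ really lies in the class $[R(\gamma)]$, and carefully discarding any inessential components (spheres bounding balls, discs with boundary in $R(\gamma)$) created by the regluing without inadvertently altering the relative homology class.
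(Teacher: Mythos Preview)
The paper does not give its own proof of this lemma; it is stated with a citation to Gabai's original paper and no argument is supplied. So there is no in-paper proof to compare against, and your outline is in fact the standard direct argument that Gabai uses.

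Two points are worth tightening. First, the paper's definition of \emph{norm-minimizing} explicitly requires incompressibility of $R(\gamma')$ in addition to $x(R(\gamma'))=x([R(\gamma')])$, and your argument only establishes the latter. You should check incompressibility separately: a compressing disc $E$ for $R(\gamma')$ can be isotoped so that $\partial E$ either lies in $R(\gamma)\cap M'$ (giving a compressing disc for $R(\gamma)$, contradiction) or lies in $S_\pm'$. In the disc case $\partial E$ then bounds a disc in $S_\pm'$, contradicting essentiality; in the annulus case an essential $\partial E\subset S_\pm'$ is a core circle, so $E$ compresses $A$, which via incompressibility of $R(\gamma)$ forces $\partial A$ to bound discs in $R(\gamma)$, contradicting the definition of product annulus.

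Second, your reconstruction of $T$ from $T'$ is phrased imprecisely: $\partial T'\subset\gamma'$ while $S_\pm'\subset R(\gamma')$, so ``arcs of $\partial T'$ lying on $S_\pm'$'' does not parse. What you actually do is extend $T'$ across $N(S)\cong S\times[-1,1]$ by product pieces over the portions of $\partial T'$ lying on the \emph{new} sutures of $\gamma'$ (those created by the decomposition). With these two fixes the argument goes through.
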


\subsection{A sequence of surface decompositions to the Hopf link exterior}\label{subsec:seqOfSurfDecomp}

In section~\ref{subsubsec:suturedFloerHomology}, we will review Juh\'asz's theory of Floer homology for sutured manifolds \cite{Juh06,Juh08}. The sutured manifolds for which sutured Floer homology is defined are balanced sutured manifolds, and the relevant surface decompositions are those called \textit{nice}. Among other things, the result of decomposing a balanced sutured manifold by a nice surface is another balanced sutured manifold.

\begin{df*}[Definition 1.2 of \cite{Juh08} and Definition 3.20 of \cite{MR2653728}]
	A decomposing surface $S$ in a balanced sutured manifold $(M,\gamma)$ is \textit{nice} if $S$ has no closed components, and for each component $V$ of $R(\gamma)$, the set of closed components of $S \cap V$ consists of parallel, coherently oriented, and boundary-coherent simple closed curves. 

	An oriented simple closed curve $C$ in a compact oriented surface $R$ with no closed components is \textit{boundary-coherent} if it is homologically essential in $R$ or it is oriented as the boundary of its interior. The \textit{interior} of $C$ is the connected component of $R \setminus C$ which is disjoint from $\partial R$. 
\end{df*}
\begin{rem}
	Juh\'asz requires an additional condition that a normal vector to $S$ is never parallel to an auxiliary vector field on $\partial M$ used in defining $\Spin^c$ structures. This condition will be satisfied by a nice surface as defined here when placed in generic position with respect to $\partial M$.
\end{rem}

We obtain nice surface decompositions using the following two lemmas. In particular, any \textit{groomed} decomposing surface in a balanced sutured manifold with no closed components is nice. 

\begin{df*}[Definition 0.2 of \cite{Gab87a}]
	A decomposition $(M,\gamma) \overset{S}{\rightsquigarrow} (M',\gamma')$ is \textit{groomed} if both sutured manifolds are taut, no subset of toral components of $S \cup R(\gamma)$ is homologically trivial in $H_2(M)$ and for each component $V$ of $R(\gamma)$, either $S \cap V$ is a union of parallel, coherently oriented, non-separating closed curves or $S \cap V$ is a union of arcs such that for each component $\delta$ of $\partial V$, $|\delta \cap \partial S| = |\langle \delta,\partial S\rangle|$. A decomposing surface $S$ is \textit{groomed} if its corresponding sutured manifold decomposition is groomed. 
\end{df*}

\begin{lem}[Lemma 0.7 of \cite{Gab87a}]\label{lem:thereAreGroomedSurfaces}
	Let $(M,\gamma)$ be a taut sutured manifold. If $y \in H_2(M,\partial M)$ is nonzero, then there exists a properly embedded groomed surface $S$ in $(M,\gamma)$ representing $y$. 
\end{lem}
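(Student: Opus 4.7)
The plan is to take a norm-minimizing properly embedded oriented surface representing $y$ and modify it through a sequence of oriented cut-and-paste moves, each preserving the homology class and the Thurston norm, until it satisfies the groomed conditions.

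First I would pick a norm-minimizing oriented surface $S$ representing $y$, put into general position with respect to $\gamma$ and $R(\gamma)$. Such an $S$ exists because $y \ne 0$. Discarding null-homologous closed components and compressing any disc of $S$ whose boundary bounds in $R(\gamma)$, I may assume $S$ has no such components. Since $(M,\gamma)$ is taut and $S$ is norm-minimizing, the decomposed sutured manifold $(M',\gamma')$ is taut; this is the standard converse to Lemma~\ref{lem:TautImpliesTaut} built into Gabai's original sutured manifold theory. Any subsequent move that does not change $[S] \in H_2(M,\partial M)$ and does not increase $x(S)$ will preserve this tautness.

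Next I would normalize the intersection with each component $V \subset R(\gamma)$. First, compress $S$ along any innermost disc bounded by $S \cap V$ in $V$, eliminating inessential closed curves without increasing $x(S)$. Then, if two parallel closed curves in $S \cap V$ carry opposite orientations, perform oriented cut-and-paste along the annulus they cobound in $V$: this preserves $[S]$, does not increase $x(S)$, and strictly reduces $|S \cap V|$. Iterating, I am left with $S \cap V$ a disjoint union of parallel, coherently oriented closed curves, which can be taken non-separating by one further disc compression if necessary. If both arcs and circles appear in $S \cap V$, tube a circle into a nearby arc through a band in $V$, converting the pair into a single arc and reducing to the all-arc case. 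For the arc condition, whenever $|\delta \cap \partial S| > |\langle \delta,\partial S\rangle|$ for some boundary circle $\delta \subset \partial V$, there is an outermost arc on $\delta$ cobounding with $\partial S$ a bigon in the annular component of $\gamma$ that contains $\delta$; push $S$ across this bigon to cancel the two intersection points without changing $[S]$ or increasing $x(S)$.

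Finally I would address the toral condition: if a nonempty subset of toral components of $S \cup R(\gamma)$ is null-homologous in $H_2(M)$, it cobounds a submanifold $W \subset M$, and tubing $S$ across $\partial W$ removes these components while preserving $[S]$. The main obstacle is coordinating all of these local moves across different components of $R(\gamma)$ without cycling; this is handled by placing the lexicographic complexity $(x(S),\, |S \cap R(\gamma)|,\, |\partial S \cap s(\gamma)|)$ on the set of candidate surfaces, and noting that each of the above moves strictly decreases one entry without increasing any earlier one. Since the process starts at the minimum value of the first coordinate and the lower coordinates are bounded below, it terminates at a norm-minimizing groomed surface representing $y$.
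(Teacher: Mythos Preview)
The paper does not prove this lemma; it is quoted verbatim as Lemma~0.7 of Gabai's \emph{Foliations and the topology of 3-manifolds II} and used as a black box. So there is no ``paper's own proof'' to compare against, and your sketch must be judged on its own merits.

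Your outline has the right shape (start norm-minimizing, normalize the intersections with $R(\gamma)$), but there is a genuine gap in the first paragraph. You assert that because $(M,\gamma)$ is taut and $S$ is norm-minimizing, the decomposed manifold $(M',\gamma')$ is automatically taut, calling this ``the standard converse to Lemma~\ref{lem:TautImpliesTaut}.'' There is no such converse. A taut sutured manifold admits plenty of norm-minimizing surfaces whose decompositions are \emph{not} taut: tautness of $(M',\gamma')$ requires $R(\gamma')$ to be norm-minimizing, and that depends delicately on how $\partial S$ sits relative to $R(\gamma)$ and $s(\gamma)$, not merely on $x(S)$. In Gabai's argument the logic runs the other way: one first arranges the well-groomed boundary conditions on $S\cap R(\gamma)$ (coherently oriented parallel curves, or arcs with $|\delta\cap\partial S|=|\langle\delta,\partial S\rangle|$), and only \emph{then} invokes a separate lemma to conclude that a norm-minimizing surface with such boundary yields a taut decomposition. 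Your sketch reverses cause and effect, so the sentence ``any subsequent move that does not change $[S]$ and does not increase $x(S)$ will preserve this tautness'' is unsupported.

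There are also smaller issues. The move ``tube a circle into a nearby arc through a band in $V$'' is not complexity-nonincreasing in general and is not how Gabai handles the mixed case; one instead works component-by-component and argues that after eliminating trivial circles the remaining intersection type is forced. The bigon push you describe for reducing $|\delta\cap\partial S|$ is morally right but lives in $V$, not in the annulus of $A(\gamma)$; and the toral condition is not handled by ``tubing across $\partial W$'' (tori have $\chi=0$, so discarding a null-homologous union of toral components is free, but you must check no component of $R(\gamma)$ is among them). None of these are fatal, but the tautness claim is: without it the surface you produce is well-groomed in boundary pattern only, not groomed in the sense of the definition, which explicitly requires both $(M,\gamma)$ and $(M',\gamma')$ to be taut.
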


\begin{lem}\label{lem:throwingAwayClosedCompsOfGroomedIsNice}
	Suppose $(M,\gamma) \overset{S'}{\rightsquigarrow} (M'',\gamma'')$ is a groomed surface decomposition and that $(M,\gamma)$ is balanced. Let $S' = S \cup P$ where $P$ consists of the closed components of $S'$, so that $S$ is nice. If $(M',\gamma')$ is obtained from $(M,\gamma)$ by decomposing along $S$, then $(M',\gamma')$ is taut. 
\end{lem}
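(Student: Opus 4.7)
The plan is to realize $(M'',\gamma'')$ as the result of first decomposing $(M,\gamma)$ along $S$ to obtain $(M',\gamma')$ and then decomposing $(M',\gamma')$ along $P$. Since a groomed decomposition has taut target, $(M'',\gamma'')$ is taut, and Lemma~\ref{lem:TautImpliesTaut} applied to the second decomposition will then give tautness of $(M',\gamma')$.

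The key structural observation is that $P$ qualifies as a decomposing surface in $(M',\gamma')$. Because $S'$ is properly embedded in $M$, the closed components $P$ have empty boundary and are therefore disjoint from $\partial M$; after a small isotopy they are also disjoint from $N(S)$, so $P$ sits in the interior of $M'$ and is disjoint from $\gamma'$ and from $R_\pm(\gamma')$. The intersection conditions on $S \cap \gamma$ in the definition of a decomposing surface are then vacuous for $P$, as is the disc condition on components of $\partial S$. The same disjointness $P \cap \partial M = \emptyset$ gives $S \cap V = S' \cap V$ for every component $V$ of $R(\gamma)$, so the grooming hypothesis on $S'$ forces each such intersection to be either a disjoint union of arcs or a disjoint union of parallel, coherently oriented, non-separating — hence homologically essential, hence boundary-coherent — closed curves. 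This confirms that $S$ is indeed nice and in particular a valid decomposing surface in $(M,\gamma)$.

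It remains to check that decomposing $(M',\gamma')$ along $P$ reproduces $(M'',\gamma'')$. At the level of underlying 3-manifolds, $M' \setminus N(P) = M \setminus N(S) \setminus N(P) = M \setminus N(S') = M''$, using $S \cap P = \emptyset$. For the sutured structure, the formula for $(M,\gamma) \overset{S'}{\rightsquigarrow} (M'',\gamma'')$ involves only $S' \cap \gamma$ and $S_\pm' \cap R_\mp(\gamma)$; since $P \cap \partial M = \emptyset$, the first reduces to $S \cap \gamma$ and the second to $S_\pm \cap R_\mp(\gamma)$, precisely the data used to build $\gamma'$ and $R_\pm(\gamma')$ in the first step. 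The remaining contributions to $R_\pm(\gamma'')$ come from the two copies $P_\pm'$ of $P$ in $\partial N(P)$, and these appear identically in the two-step and one-step constructions. Thus the two sutured manifold structures on $M''$ agree, and Lemma~\ref{lem:TautImpliesTaut} applied to $(M',\gamma') \overset{P}{\rightsquigarrow} (M'',\gamma'')$ yields the conclusion. The only real obstacle is this bookkeeping, and it is mild precisely because $P$ is disjoint from both $\partial M$ and $S$, so the modifications carried out by the two decompositions have disjoint support.
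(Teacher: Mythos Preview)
Your proof is correct and follows the same approach as the paper: decompose $(M',\gamma')$ along $P$ to recover $(M'',\gamma'')$, then invoke Lemma~\ref{lem:TautImpliesTaut}. The paper's proof is two sentences; you have simply filled in the bookkeeping (why $P$ is a decomposing surface in $(M',\gamma')$, why the sutured structures agree) that the paper leaves implicit.
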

\begin{proof}
	Observe that $(M'',\gamma'')$ can be obtained from $(M',\gamma')$ by decomposing along $P$. Since $(M'',\gamma'')$ is taut, it follows that $(M',\gamma')$ is taut as well by Lemma~\ref{lem:TautImpliesTaut}. 
\end{proof}

Although sutured Floer homology is only defined for balanced sutured manifolds, the following lemma will allow us to utilize sutured manifolds with toral components of $\gamma$.

\begin{lem}\label{lem:ToralSuture}
	Let $(M,\gamma')$ be a taut sutured manifold with a toral component $T$ of $\gamma'$. Let $\mu$ be a simple closed curve on $T$ which does not bound a disc in $M$. If $\gamma$ is obtained from $\gamma'$ by replacing $T$ with annular neighborhoods of two oppositely oriented copies of $\mu$, then $(M,\gamma)$ is taut. 
\end{lem}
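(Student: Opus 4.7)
The plan is to verify the two conditions in the definition of tautness for $(M,\gamma)$. Irreducibility of $M$ is immediate, since the underlying $3$-manifold is the same as in $(M,\gamma')$ and $(M,\gamma')$ is taut. The real work is to show that $R(\gamma)$ is Thurston norm-minimizing in $H_2(M,\gamma)$.

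First I would compare the $R$-surfaces directly. The torus $T$ decomposes as $T = \alpha_1 \cup \alpha_2 \cup A_+ \cup A_-$, where $\alpha_1,\alpha_2$ are the annular sutures around the two oppositely oriented copies of $\mu$ and $A_+, A_-$ are the two complementary annuli. Choosing orientations consistently, $R_\pm(\gamma) = R_\pm(\gamma') \sqcup A_\pm$, and since $A_\pm$ are annuli, $x(R_\pm(\gamma)) = x(R_\pm(\gamma'))$.

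The key observation is that $\gamma \subset \gamma'$ as subsurfaces of $\partial M$: passing from $\gamma'$ to $\gamma$ merely subdivides the toral component $T$. Consequently, any properly embedded surface $(S,\partial S) \subset (M,\gamma)$ is automatically a properly embedded surface in $(M,\gamma')$. Moreover, the inclusion-induced map $H_2(M,\gamma) \to H_2(M,\gamma')$ sends $[R_+(\gamma)]$ to $[R_+(\gamma')]$, because the extra piece $A_+ \subset T \subset \gamma'$ represents zero in $H_2(M,\gamma')$. Given any surface $S$ representing $[R_+(\gamma)]$ in $(M,\gamma)$, viewing $S$ in $(M,\gamma')$ shows it represents $[R_+(\gamma')]$, and the tautness of $(M,\gamma')$ forces $x(S) \ge x(R_+(\gamma')) = x(R_+(\gamma))$. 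This proves $R_+(\gamma)$ is norm-minimizing, and a symmetric argument handles $R_-(\gamma)$.

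The hypothesis that $\mu$ does not bound a disc in $M$ is where I would expect the main subtlety. It ensures the new sutures are essential in $M$, so that the complementary annuli $A_\pm$ in $R(\gamma)$ do not admit compressing discs in $M$—a condition implicit in the tautness applications of this lemma in subsequent sections. The one technical step to verify carefully is the class identification $[R_+(\gamma)] \mapsto [R_+(\gamma')]$ under the inclusion map on relative $H_2$; once that is in hand, the argument reduces to a direct application of the tautness of $(M,\gamma')$.
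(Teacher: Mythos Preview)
Your argument is correct and in fact cleaner than the paper's. The paper takes a competing surface $S$ with $\partial S\subset\gamma$ and $[S]=[R(\gamma)]\in H_2(M,\gamma)$ and then explicitly modifies it until $\partial S\cap T=\emptyset$: inessential circles of $\partial S\cap T$ are capped with discs, and the remaining essential circles (all parallel to $\mu$, with total homology class zero on $T$) are tubed off in pairs by annuli. Only after this modification does the paper invoke tautness of $(M,\gamma')$ via $[R(\gamma)]=[R(\gamma')]\in H_2(M,\gamma')$. Your observation that $\gamma\subset\gamma'$ makes this modification unnecessary: any competitor in $(M,\gamma)$ is already a competitor in $(M,\gamma')$, and the inclusion-induced map $H_2(M,\gamma)\to H_2(M,\gamma')$ carries $[R_+(\gamma)]$ to $[R_+(\gamma')]$ since the extra annulus $A_+$ lies in $T\subset\gamma'$. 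So your route reaches the same conclusion with less work.

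One minor correction of emphasis: you describe the incompressibility of the annuli $A_\pm$ as ``a condition implicit in the tautness applications of this lemma in subsequent sections.'' In the paper's conventions, norm-minimizing \emph{includes} incompressibility by definition, so this is not an auxiliary remark but part of what must be checked for tautness of $(M,\gamma)$. Your observation already handles it: $R(\gamma)=R(\gamma')\sqcup A_+\sqcup A_-$, the piece $R(\gamma')$ is incompressible by tautness of $(M,\gamma')$, and $A_\pm$ are incompressible precisely because $\mu$ does not bound a disc in $M$. Just make that explicit rather than deferring it.
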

\begin{proof}
	Certainly $M$ is still irreducible, and $R(\gamma)$ is incompressible because $\mu$ does not bound a disc in $M$. Let $S$ be a compact oriented surface, properly embedded in $M$ with $\partial S \subset \gamma$ and $[S] = [R(\gamma)] \in H_2(M,\gamma)$. Starting from $S$, we construct a new surface that is disjoint from $T$ through moves that preserve the properties $\partial S \subset \gamma$ and $[S] = [R(\gamma)] \in H_2(M,\gamma)$ and that do not increase complexity.

	Inessential components of $\partial S \cap T$ may be capped off with discs, innermost components firsts. Hence we may assume that all components of $\partial S\cap T$ are essential, and we note that each component is a parallel copy of $\mu$. Since $[\partial S \cap T] = [R(\gamma) \cap T] = 0$ in $H_1(T)$, if $\partial S \cap T$ is nonempty, there are a pair of components $\lambda_1,\lambda_2$ of $\partial S \cap T$ which are parallel and oppositely oriented. They cobound an annulus which we may attach to $S$. By iterating this procedure, we obtain the desired surface. Since $[R(\gamma)] = [R(\gamma')] \in H_2(M,\gamma')$ and since $R(\gamma')$ is norm-minimizing, we find that $x(S) \ge x(R(\gamma')) = x(R(\gamma))$ so $R(\gamma)$ is norm-minimizing. 
\end{proof}

\vspace{20pt}

If a sequence of decomposing surfaces is chosen carefully, then any taut sutured manifold can be reduced to a product sutured manifold. 

\begin{thm}[Theorem 4.2 of \cite{Gab83} and Theorem 8.2 of \cite{Juh08}]\label{thm:niceHierarchy}
 	If $(M,\gamma)$ is a taut sutured manifold with nonempty boundary, then there is a sequence of surface decompositions \[
 		(M,\gamma) \overset{S_1}{\rightsquigarrow} (M_1,\gamma_1) \overset{S_2}{\rightsquigarrow} \cdots \overset{S_n}{\rightsquigarrow} (M_n,\gamma_n)
 	\]where $(M_n,\gamma_n)$ is a product sutured manifold. If $(M,\gamma)$ is balanced, then the surfaces may be chosen to be nice. 
\end{thm}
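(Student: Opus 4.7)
The plan is to induct on a well-founded complexity of taut sutured manifolds, following Gabai's original hierarchy construction. For a taut $(M,\gamma)$, set $c(M,\gamma)$ to be a lexicographic complexity whose first coordinate is the Thurston norm $x(R_+(\gamma))$ and whose secondary coordinates track finer invariants (e.g., a count of non-product-disc components of $R_+(\gamma)$, or the Euler characteristics of certain boundary pieces) chosen so that product sutured manifolds are exactly the minimal-complexity objects and so that every non-product taut sutured manifold admits a groomed decomposition that strictly decreases $c$. Termination of the hierarchy is then immediate from well-foundedness.

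For the inductive step, suppose $(M,\gamma)$ is taut and not a product. I would split into cases. If $H_2(M,\partial M)$ contains a nonzero class, apply Lemma~\ref{lem:thereAreGroomedSurfaces} to produce a groomed decomposing surface $S'$ realizing an appropriate class, and the decomposition along $S'$ yields a taut $(M',\gamma')$ with strictly smaller first coordinate of $c$. If $H_2(M,\partial M) = 0$ but $(M,\gamma)$ is not a product, produce either a non-trivial product disc or product annulus; Lemma~\ref{lem:decompAlongProdDiscAnnIsTaut} preserves tautness upon decomposition, and the secondary coordinate of $c$ strictly decreases. Toral components of $\gamma$ that may appear along the way can be handled via Lemma~\ref{lem:ToralSuture} when needed.

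For the refinement to nice surfaces when $(M,\gamma)$ is balanced, take the groomed surface $S'$ from the previous step and let $S = S' \setminus P$, where $P$ denotes the union of closed components of $S'$. Lemma~\ref{lem:throwingAwayClosedCompsOfGroomedIsNice} guarantees that the decomposition of $(M,\gamma)$ along $S$ still yields a taut sutured manifold $(M_1,\gamma_1)$. The surface $S$ is nice because the essential closed components of $S \cap R(\gamma)$ inherit coherence and boundary-coherence from the groomed structure, and $(M_1,\gamma_1)$ remains balanced since no closed component of $S$ can upset the equality $\chi(R_+) = \chi(R_-)$. Iterating yields the desired hierarchy of nice decompositions.

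The principal obstacle is producing a complexity $c$ that is simultaneously well-founded, bottoms out precisely at product sutured manifolds, and strictly decreases under every groomed decomposition supplied by the inductive step; this combinatorial bookkeeping is the technical core of Gabai's theorem and is where the case analysis becomes delicate. A secondary subtlety, addressed by Juh\'asz, is verifying that discarding the closed components of a groomed surface does not compromise the strict decrease in complexity, so that nice-ness can be maintained throughout the hierarchy without breaking termination.
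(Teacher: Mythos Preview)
The paper does not supply its own proof of this theorem; it cites Gabai and Juh\'asz and follows the statement only with a short paragraph sketching the architecture: Gabai defines a complexity $C(M,\gamma)$ taking values in a totally ordered set in which every weakly decreasing sequence stabilizes, and shows that any non-product taut sutured manifold admits a sequence of decompositions to something of strictly smaller complexity. Your outline has the same shape, and your treatment of the ``nice'' refinement via Lemma~\ref{lem:throwingAwayClosedCompsOfGroomedIsNice} matches how the paper uses that lemma elsewhere.

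There are two points where your sketch diverges from what the paper (and Gabai) actually do. First, the complexity you propose, with leading coordinate $x(R_+(\gamma))$, does not decrease under a groomed decomposition: after decomposing along $S$, the new $R_+$ absorbs a copy of $S_+'$, so its Thurston norm typically goes up, not down. Gabai's genuine complexity is far more elaborate and does not live in any lexicographic product of integers; the paper pointedly says only that it lies in a totally ordered set with the descending-chain property. You correctly flag this construction as the crux, but your concrete proposal would not survive the first step. Second, the paper records (Lemma~\ref{lem:dropComplexity}) that the strict drop in Gabai's complexity is obtained not by a single groomed decomposition but by a groomed decomposition \emph{followed by} a maximal product-disc decomposition, starting from a manifold with no nontrivial product discs. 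Your inductive step treats the groomed decomposition alone as complexity-reducing, which is not how the argument runs.
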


\noindent Gabai defines a notion of \textit{complexity} $C(M,\gamma)$ for taut sutured manifolds $(M,\gamma)$ (Definition 4.3 of \cite{Gab83}) which takes values in a totally ordered set. If $C_1\ge C_2 \ge\cdots$ is a sequence of taut sutured manifold complexities, then Proposition 4.4 of \cite{Gab83} states that the sequence eventually stabilizes. Gabai then shows that if $(M,\gamma)$ is a taut sutured manifold which is not a product sutured manifold, then there is a sequence of surface decompositions resulting in a taut sutured manifold of strictly lower complexity. A sequence of surface decompositions resulting in a product sutured manifold is called a \textit{sutured manifold hierarchy}. We say that a sutured manifold hierarchy for a balanced sutured manifold is \textit{nice} if all of the surface decompositions are nice. 

The following two lemmas provide a way to ensure that a sequence of surface decompositions strictly lowers complexity. The first lemma is an elaboration of Step 1 of the proof of Theorem 4.2 of \cite{Gab83}.

\begin{lem}\label{lem:existenceOfMaximalSetProdDiscs}
	Let $(M,\gamma)$ be a taut sutured manifold. Then any set of disjoint pairwise-nonparallel nontrivial product discs in $(M,\gamma)$ is contained in a finite maximal one. 
\end{lem}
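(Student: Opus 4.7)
The plan is to show that any collection of disjoint, pairwise non-parallel, nontrivial product discs in $(M,\gamma)$ is finite. Once this is established, the existence of a finite maximal one containing a given such set is immediate: extend the given set greedily by adjoining further disjoint nontrivial product discs, no two of which are parallel to each other or to an existing disc; the process must terminate by the finiteness bound.

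First I would verify that every nontrivial product disc $D$ is essential as a properly embedded surface in $M$. Incompressibility is automatic because $D$ is a disc. The nontriviality hypothesis says exactly that $D$ is not isotopic rel boundary to a disc in $\partial M$, i.e.\ $D$ is not $\partial$-parallel. Since $(M,\gamma)$ is taut, $M$ is irreducible, so we are in the setting where Kneser--Haken finiteness applies.

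Next, I would invoke Kneser--Haken finiteness: in a compact, orientable, irreducible 3-manifold $M$, there is a constant $c(M)$ such that any collection of disjoint, properly embedded, pairwise non-parallel essential surfaces in $M$ has cardinality at most $c(M)$. The standard proof proceeds via normal surface theory: fix a triangulation $\mathcal{T}$ of $M$ with $t$ tetrahedra, isotope each disc into normal form with respect to $\mathcal{T}$ (an isotopy that preserves the non-parallelism relation), and exploit the fact that two disjoint normal surfaces whose intersections with every tetrahedron consist of the same collection of normal disc types must be parallel. Since in each tetrahedron there are only finitely many normal disc types, the pigeonhole principle forces the number of pairwise non-parallel disjoint essential surfaces to be bounded by an explicit function of $t$.

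Applying this to the collection under consideration, any set of disjoint, pairwise non-parallel, nontrivial product discs has cardinality at most $c(M)$. The main point that requires care is simply the translation from the sutured-manifold language of product discs to the Kneser--Haken hypothesis: one has to note that, forgetting the sutured structure, a nontrivial product disc is just an essential properly embedded disc in the compact irreducible 3-manifold $M$, so the finiteness theorem applies verbatim. With the bound in hand, the greedy extension procedure in the first paragraph produces a finite maximal set containing any prescribed collection, proving the lemma.
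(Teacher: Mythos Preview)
Your argument is correct, but it takes a genuinely different route from the paper's. You invoke Kneser--Haken finiteness as a black box: since $(M,\gamma)$ is taut, $M$ is compact, orientable, and irreducible, and a nontrivial product disc is in particular an essential properly embedded disc, so the general finiteness theorem bounds the size of any disjoint non-parallel family and the greedy extension terminates. The paper instead gives a direct, elementary argument: it reformulates the problem as showing that any sequence of nontrivial product disc decompositions is finite, then proves this by induction on $|\chi(\partial M)|+|\pi_0(\partial M)|$, using that cutting along a product disc raises $\chi(\partial M)$ by $2$ and carefully handling the special case where a sphere boundary component is created. In effect the paper is reproving the disc case of Kneser--Haken from scratch, so its argument is self-contained and avoids normal surface theory; your approach is shorter but imports heavier machinery.

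One small point you pass over and might want to make explicit: you need that ``parallel'' for product discs in the sutured sense coincides with ``parallel'' for properly embedded discs in the Kneser--Haken sense. This is true here: if two nontrivial product discs cobound a product region $D^2\times I$ with $\partial D^2\times I\subset\partial M$, then after decomposing along one the other becomes boundary-parallel, hence a trivial product disc, which is exactly the sutured notion of parallelism the paper uses. Without this observation the translation from the sutured hypothesis to the Kneser--Haken hypothesis is not quite complete.
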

\begin{proof}
	First note that if $(M',\gamma')$ is obtained by decomposing $(M,\gamma)$ along a nontrivial product disc $D$, then any product disc $D'$ in $(M',\gamma')$ can be isotoped through products discs to not intersect $\partial N(D)$. In particular, we may view $D'$ as a disc in $(M,\gamma)$ disjoint from $D$. Note that $D'$ is trivial in $(M',\gamma')$ if and only if $D'$ is either trivial or parallel to $D$ in $(M,\gamma)$. It therefore suffices to show that any sequence of nontrivial product disc decompositions is finite. 

	Since $(B^3,\beta) = ([-1,1] \x D^2,[-1,1] \x \partial D^2)$ has no nontrivial product discs, we may assume that no component of $\partial M$ is a sphere. 
	Let $(M',\gamma')$ be obtained by decomposing $(M,\gamma)$ along a nontrivial product disc $D$. Then $\chi(\partial M') = \chi(M) + 2$. Suppose that a boundary component of $M'$ is a $2$-sphere, from which it follows that $(B^3,\beta)$ is a component of $M'$. Let $Q$ be the boundary component of $M$ which contains $\partial D$. If $\partial D$ were separating in $Q$, then $D$ would be trivial, so $\partial D$ must be non-separating in $Q$. It follows that $Q$ is a torus and that the component of $(M,\gamma)$ containing $Q$ is the sutured exterior of the unknot $U$ in $S^3$. 

	The result follows by induction on $|\chi(\partial M)| + |\partial M|$ for $(M,\gamma)$ with no closed components and no spherical boundary components but with a nontrivial product disc. The base case $|\chi(\partial M)| + |\partial M| = 1$ is true because $(M,\gamma)$ must be $S^3(U)$. Assume the result is true when $|\chi(\partial M)| + |\partial M| \leq k$, and suppose $(M,\gamma)$ has a nontrivial product disc $D$ and satisfies the stated conditions with $|\chi(\partial M)| + |\partial M| = k+1$. Let $(M',\gamma')$ be obtained by decomposing $(M,\gamma)$ along $D$. If no boundary component of $M'$ is a sphere, then since $|\chi(\partial M')| + |\partial M'| \leq k$, the result follows from the induction hypothesis. If a boundary component of $M'$ is a sphere, then $(M,\gamma) = (M'',\gamma'') \cup S^3(U)$ and $(M',\gamma') = (M'',\gamma'') \cup (B^3,\beta)$. Since $|\chi(\partial M'')| + |\partial M''| = k$, the result also follows. 
\end{proof}

\begin{lem}[Lemma 4.12 of \cite{Gab83}]\label{lem:dropComplexity}
	Let $(N,\delta) \overset{S}{\rightsquigarrow} (N',\delta')$ be a decomposition for which $(N,\delta)$ and $(N,\delta')$ are taut. Suppose that every product disc in $(N,\delta)$ is trivial, and that some component of $S$ is not boundary parallel. Let $D$ be a maximal set of disjoint pairwise-nonparallel nontrivial product discs in $(N',\delta')$, and let $(N'',\delta'')$ be obtained by decomposing $(N',\delta')$ along $D$. Then $C(N,\delta) > C(N'',\delta'')$.
\end{lem}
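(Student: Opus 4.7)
The plan is to follow Gabai's original strategy and track how the complexity $C$ changes across the composite decomposition $(N,\delta) \overset{S}{\rightsquigarrow} (N',\delta') \overset{D}{\rightsquigarrow} (N'',\delta'')$. Recall that $C(M,\gamma)$ is defined as a lexicographically ordered tuple whose leading coordinates record the non-product essential structure of $(M,\gamma)$: roughly, something proportional to $x(R(\gamma))$ appears first, with ancillary entries distinguishing taut sutured manifolds of the same Thurston norm.

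First, I would check the easy direction that both decompositions weakly decrease $C$. Tautness is preserved throughout: $(N',\delta')$ is taut by hypothesis, and $(N'',\delta'')$ is taut by Lemma~\ref{lem:decompAlongProdDiscAnnIsTaut} applied iteratively along the product discs in $D$. Under any standard formulation of $C$, decomposing along a taut surface or a product disc cannot increase complexity, since the relevant relative Thurston norms do not increase and the lexicographic tiebreakers are designed to be monotone under such moves.

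The core of the argument is to produce a strict drop, and I expect this to break into cases depending on which component $S_0 \subseteq S$ witnesses the non-boundary-parallel hypothesis. In the first case, $S_0$ already forces a strict decrease in the leading coordinate of $C$ (for instance by decreasing $x(R)$ or by removing an essential piece of the manifold), and we are done at the $S$-step already. In the complementary case, the leading coordinates of $C$ are unchanged by the $S$-decomposition; I would argue that $S_0$ must then become a non-trivial product disc or product annulus in $(N',\delta')$, because otherwise its Euler characteristic contribution would be incompatible with preserving the Thurston norm while still being non-boundary-parallel in $(N,\delta)$. A non-trivial product annulus can be cut into non-trivial product discs, so after a small adjustment we obtain a non-trivial product disc $D_0$ in $(N',\delta')$. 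By maximality of $D$, this $D_0$ is parallel to some disc in $D$, and decomposing along $D$ therefore strictly reduces a secondary coordinate of $C$.

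The hypothesis that every product disc in $(N,\delta)$ is trivial enters at exactly this point: without it, one could explain the presence of non-trivial product discs in $(N',\delta')$ by pre-existing product structure in $(N,\delta)$ that persists through the $S$-decomposition, and the drop in $C$ would not be attributable to $S$. With the hypothesis, every non-trivial product disc in $(N',\delta')$ must have genuinely arisen from the decomposition, so the subsequent product-disc decomposition along $D$ records a real simplification. The main obstacle I anticipate is the topological case analysis in the previous paragraph, specifically proving that a non-boundary-parallel component of $S$ that fails to reduce the leading coordinate of $C$ must produce a non-trivial product disc or annulus in $(N',\delta')$; this requires carefully tracking how $S_0$ sits inside $R(\delta')$ and using the assumption that $S_0$ is incompressible and boundary-incompressible in $N$.
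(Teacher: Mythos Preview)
The paper does not give its own proof of this lemma: it is stated with the attribution ``Lemma 4.12 of \cite{Gab83}'' and used as a black box in the construction of Theorem~\ref{thm:SeqOfSurfDecomp}. So there is nothing in the paper to compare your proposal against beyond the citation itself.

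That said, your sketch is a reasonable outline of the spirit of Gabai's argument, but it is too imprecise to count as a proof. The complexity $C$ in Definition~4.3 of \cite{Gab83} is a specific ordinal-valued invariant built from a hierarchy of data (it is not simply ``$x(R(\gamma))$ with tiebreakers''), and the actual proof of Lemma~4.12 there tracks exactly which entries of this tuple move under the decomposition. Your Step~2 case analysis---that if the leading coordinate is unchanged then $S_0$ must become a product disc or annulus---is the right intuition, but the justification you give (``its Euler characteristic contribution would be incompatible with preserving the Thurston norm'') is not an argument; one needs to invoke the specific definition of $C$ and Gabai's analysis of how taut decompositions interact with it. The claim that ``a non-trivial product annulus can be cut into non-trivial product discs'' is also not true in general without further hypotheses. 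If you want to actually prove this lemma rather than cite it, you will need to engage directly with Gabai's definition of $C$ and follow his case analysis; the sketch as written identifies the right moving parts but does not assemble them.
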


We now prove the main result of this section. Essentially, it is Gabai's proof of superadditivity of genus under band sum \cite{MR895573} but adapted for sutured Floer homology. We make essential use of this result in the argument that Heegaard knot Floer homology and instanton knot Floer homology detect the trivial band (see sections~\ref{subsec:HeegaardDetecting} and \ref{subsec:DetectTrivBandInstanton}). 

\begin{thm}\label{thm:SeqOfSurfDecomp}
	Let $(M_0,\gamma_0)$ be the sutured exterior of $K_b \cup C$ where $K_b$ is obtained from a two-component link by band surgery along a nontrivial band $b$ and $C$ is a linking circle. Then there is a sequence of surface decompositions \[
		(M_0,\gamma_0) \overset{S_1}{\rightsquigarrow} (M_1,\gamma_1) \overset{S_2}{\rightsquigarrow} \cdots \overset{S_n}{\rightsquigarrow} (M_n,\gamma_n)
	\]with the following properties: \begin{enumerate}[itemsep=-0.5ex]
		\item[(1)] each $S_i$ is nice and disjoint from $\partial N(C)$,
		\item[(2)] each $(M_i,\gamma_i)$ is taut, 
		\item[(3)] $(M_n,\gamma_n)$ is a union of a product sutured manifold and a taut sutured manifold $(H,\delta)$ containing $\partial N(C)$ as a boundary component of the following special form. The manifold $H$ is the exterior of a link $J \cup \mu$ in $S^3$ where $J$ is a knot and $\mu$ is a meridian of $J$ with $\partial N(C) = \partial N(\mu)$, and $\delta$ consists of two meridional sutures on $\partial N(\mu)$ and an even number of parallel sutures on $\partial N(J)$.  
	\end{enumerate}
	If the original two-component link is split, then there exists a sequence of surface decompositions with the stated properties where $(H,\delta)$ is the sutured exterior of a two-component Hopf link in $S^3$. 
\end{thm}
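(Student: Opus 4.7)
The strategy is to adapt Gabai's proof of the superadditivity of Seifert genus under band sum \cite{Gab87a,MR895573} to the sutured setting, requiring every decomposing surface to be disjoint from $\partial N(C)$ so that this torus is carried through the hierarchy unchanged. It is convenient to temporarily promote $\partial N(C)$ to a toral sutured component via Lemma~\ref{lem:ToralSuture} for the duration of the hierarchy, and to reintroduce the two meridional sutures on $\partial N(C)$ only at the very end inside $(H,\delta)$. Tautness of the initial $(M_0,\gamma_0)$ is the standard tautness of a link exterior with meridional sutures: $K_b \cup C$ has no split unknotted component.

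\textbf{First decomposition.} I would take $S_1$ to be a norm-minimizing Seifert surface for $K_b$ that meets the band $b$ transversely in a single arc across its width and is disjoint from $\partial N(C)$. In the split case, Gabai's construction provides such an $S_1 = \Sigma_1 \cup b \cup \Sigma_2$, where each $\Sigma_i$ is a minimal-genus Seifert surface for $J_i$ placed inside the ball on its side of a splitting sphere; it avoids $C$ automatically because $C$ lies in a small neighborhood of $b$ that misses the $\Sigma_i$. Gabai's sum theorem under band sum is precisely the statement that $\Sigma_1 \cup b \cup \Sigma_2$ is norm-minimizing, so the decomposition $(M_0,\gamma_0) \overset{S_1}{\rightsquigarrow} (M_1,\gamma_1)$ is taut. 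After a standard perturbation argument $S_1$ is groomed, and by Lemma~\ref{lem:throwingAwayClosedCompsOfGroomedIsNice} we may replace it by a nice surface while preserving tautness.

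\textbf{Separating the pieces.} Next I would further decompose $(M_1,\gamma_1)$ using surfaces inherited from the splitting structure of $J_1 \cup J_2$. In the split case, a splitting sphere $Q$ for $J_1\cup J_2$ can be isotoped so that $Q \cap S_1$ is a union of arcs crossing $b$, so that $Q \cap M_1$ is a disjoint family of properly embedded discs in $M_1$ disjoint from $\partial N(C)$. These discs are product discs, so decomposing along them is taut by Lemma~\ref{lem:decompAlongProdDiscAnnIsTaut}. After this series of decompositions, the manifold splits as a disjoint union of three sutured pieces: the sutured exterior of $\Sigma_i$ inside the ball $B_i$ for $i=1,2$, and a band-neighborhood piece carrying $\partial N(C)$. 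An explicit local analysis identifies the band-neighborhood piece with the sutured exterior of the two-component Hopf link, with $\partial N(C) = \partial N(\mu)$. Theorem~\ref{thm:niceHierarchy} now applies to each of the two $\Sigma_i$-exterior pieces and reduces each to a product sutured manifold via further nice surface decompositions; concatenating the whole sequence produces the claimed hierarchy.

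\textbf{Main obstacle.} The principal difficulty lies in the explicit local identification of the band-neighborhood piece as the Hopf link exterior with an even number of parallel sutures on $\partial N(J)$. This requires carefully tracking the cumulative effect on the sutures of the initial decomposition along $S_1$ together with the subsequent product disc decompositions, and verifying at each step that the decomposing surface is nice in the sense of Juh\'asz (closed components of intersection with $R(\gamma)$ are parallel, coherently oriented, and boundary-coherent). For the general nonsplit case there is no splitting sphere, and I would replace $Q$ by a family of decomposing surfaces produced by iterating Lemmas~\ref{lem:thereAreGroomedSurfaces}, \ref{lem:existenceOfMaximalSetProdDiscs}, and \ref{lem:dropComplexity}, so that the band-neighborhood piece becomes the exterior of a meridian link $J \cup \mu$ for a knot $J$ encoding the residual linking and knottedness between $J_1$ and $J_2$; all other steps then carry over with only notational modifications.
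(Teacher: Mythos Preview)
Your proposal has a genuine error at the very first step. The surface $S_1 = \Sigma_1 \cup b \cup \Sigma_2$ is in general \emph{not embedded}: the band $b$ may pass through the balls $B_i$ and intersect the Seifert surfaces $\Sigma_i$ many times, and there is no way to isotope the $\Sigma_i$ to avoid it. Gabai's theorem is that genus is \emph{superadditive} under band sum, i.e.\ $g(K_b) \ge g(J_1)+g(J_2)$, and the inequality is often strict (e.g.\ a nontrivial band sum of two unknots can be a trefoil). When it is strict, no embedded Seifert surface of genus $g(J_1)+g(J_2)$ exists, so your $S_1$ cannot be the first decomposing surface. The paper instead takes $S_1$ to be an arbitrary minimal-genus Seifert surface for $K_b$ disjoint from $C$, and then isotopes it so that it contains the band $b$; no splitting structure is assumed.

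The second step is also not what happens, and your outline never uses the hypothesis that $b$ is nontrivial. The paper does not cut along pieces of a splitting sphere. Instead it runs Gabai's complexity-decreasing procedure from \cite{Gab87a}: alternately decompose along a maximal family of nontrivial product discs and along a groomed surface representing a class whose boundary misses $\partial N(C)$. This terminates when the component $(H,\delta')$ containing $\partial N(C)$ satisfies the homological condition that $H_1(T)\to H_1(H)$ is injective, where $T=\partial H\setminus\partial N(C)$. A half-lives-half-dies argument forces $T$ to be a torus, and the nontriviality of $b$ is used exactly here: one intersects $T$ with the meridian disc $D$ of $C$, and if an innermost curve of $T\cap D$ were inessential in $T$ one would produce a splitting sphere meeting $b$ in a single arc, contradicting nontriviality. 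Thus $T$ bounds a solid torus on the $K_b$ side and $H$ is the exterior of a knot-with-meridian $J\cup\mu$. In the split case the paper then fills $\partial N(C)$ with slope zero and applies the light bulb trick (Corollaries~\ref{cor:ZeroFillingMeridian} and~\ref{cor:ZeroSurgeryLinkingCircle}): tautness of the filled hierarchy would contradict reducibility of the zero-filling, forcing the sutures on $\partial N(J)$ to be meridional. A final product-annulus decomposition splits off the Hopf link exterior. Your ``explicit local analysis'' would have to reproduce all of this, and as written it does not.
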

\begin{proof}
	Let $(M_0,\gamma_0')$ be the exterior of $K_b \cup C$ with two meridional sutures on $\partial N(K_b)$ and with $\partial N(C)$ a toral component of $\gamma_0'$. Following Gabai's proof of Theorem 1.7 in \cite{Gab87a}, we will construct a sequence of surface decompositions \begin{equation}\label{eq:SeqOfSurfDecomps}
		\tag{$\ast$}
		(M_0,\gamma_0') \overset{S_1}{\rightsquigarrow} (M_1,\gamma_1') \overset{S_2}{\rightsquigarrow} \cdots \overset{S_n}{\rightsquigarrow} (M_n,\gamma_n')
	\end{equation}satisfying the following properties: \begin{enumerate}[nolistsep]
		\item[(1)] Each $S_i$ is nice and disjoint from $\partial N(C)$.
		\item[(2')] Each $(M_i,\gamma_i')$ is taut. 
		\item[(3')] $(M_n,\gamma_n')$ is a union of a product sutured manifold and a taut sutured manifold $(H,\delta')$ containing $\partial N(C)$ as a component of $\delta'$ of the following special form. The manifold $H$ is the exterior of a link $J\cup \mu$ in $S^3$ where $J$ is a knot and $\mu$ is a meridian of $J$ with $\partial N(C) = \partial N(\mu)$, and $\delta'$ consists of $\partial N(C)$ as a toral component and an even number of parallel sutures on $\partial N(J)$. 
	\end{enumerate}
	Given the sequence~(\ref{eq:SeqOfSurfDecomps}), the desired sequence in the proposition may be obtained in the following way. Let $\gamma_i$ be obtained from $\gamma_i'$ by replacing the toral component $\partial N(C)$ of $\gamma_i'$ with two oppositely oriented meridional sutures on $\partial N(C)$. The meridian of $\partial N(C)$ does not bound a disc in $M_i$ because it does not bound a disc in $M_0$. Thus each $(M_i,\gamma_i)$ is taut by Lemma~\ref{lem:ToralSuture}. It follows that the sequence \[
		(M_0,\gamma_0) \overset{S_1}{\rightsquigarrow} (M_1,\gamma_1) \overset{S_2}{\rightsquigarrow} \cdots \overset{S_n}{\rightsquigarrow} (M_n,\gamma_n)
	\]satisfies properties (1), (2), and (3) as desired. 

	To construct the sequence~(\ref{eq:SeqOfSurfDecomps}), we hand-pick the first surface $S_1$ and use an inductive procedure to produce the rest. Let $S_1$ be a Seifert surface for $K_b$ disjoint from $C$ which has minimal genus among all such Seifert surfaces. We may assume that $S_1$ contains the band $b$. Indeed, let $D$ be a disc in $S^3$ whose boundary is $C$ and which intersects the band $b$ along an arc. The intersection $S_1 \cap D$ consists of simple closed curves and an arc whose endpoints coincide with the arc $b \cap D$. By an isotopy of $S_1$, we may assume that the arc of $S_1 \cap D$ coincides with $b \cap D$. Now that $b \cap D$ lies in $S_1$, by a further isotopy of $S_1$ we may assume that all of $b$ lies in $S_1$. Decomposing along $S_1$ gives a taut sutured manifold $(M_1,\gamma_1')$. 

	Now suppose we have already constructed the sequence up to $(M_k,\gamma_k')$ so that it satisfies (1) and (2'). Let $S_{k+1}$ be a maximal set of disjoint pairwise-nonparallel nontrivial product discs in $(M_k,\gamma_k')$, which exists by Lemma~\ref{lem:existenceOfMaximalSetProdDiscs}. This extension of the sequence \[
		(M_k,\gamma_k') \overset{S_{k+1}}{\rightsquigarrow} (M_{k+1},\gamma_{k+1}')
	\]still satisfies (1) and (2'), and now every product disc in $(M_{k+1},\gamma_{k+1}')$ is trivial. 

	Now let $(H,\delta')$ be the component of $(M_{k+1},\gamma_{k+1}')$ containing $\partial N(C)$. Note that $\partial H \neq \partial N(C)$ because $H$ is embedded in the exterior of $K_b \cup C$; filling in $H$ with $N(C)$ would give a closed $3$-dimensional submanifold in the exterior of $K_b$ which is impossible. Let $T = \partial H \setminus \partial N(C)$ so that $\partial H$ is the disjoint union of the closed oriented surface $T$ with the torus $\partial N(C)$. There are two cases: \begin{enumerate}
		\item[(a)] The map $H_1(T) \to H_1(H)$ is not injective. 

		Then there exists a nonzero $y \in H_2(H,\partial H)$ such that $[y \cap \partial N(C)] = 0$ and $[y \cap T] \neq 0$. By Lemma~\ref{lem:thereAreGroomedSurfaces}, there exists a groomed surface $S_{k+2}'$ for which $[S_{k+2}'] = y \in H_2(H,\partial H)$. Let $S_{k+2}' = S_{k+2} \cup P$ where $P$ consists of the closed components of $S_{k+2}'$. The decomposing surface $S_{k+2}$ is nice, and the result of decomposing along $S_{k+2}$ is taut by Lemma~\ref{lem:throwingAwayClosedCompsOfGroomedIsNice}. Note that by the definition of a decomposing surface, $S_{k+2} \cap \partial N(C)$ consists of parallel essential simple closed curves in the same homology class of $\partial N(C)$ because $\partial N(C)$ is a toral component of $\gamma_{k+1}'$. Since $[S_{k+2} \cap \partial N(C)] = 0$, it follows that $S_{k+2} \cap \partial N(C) = \emptyset$. Thus we can extend our sequence by \[
			(M_{k+1},\gamma_{k+1}') \overset{S_{k+2}}{\rightsquigarrow} (M_{k+2},\gamma_{k+2}')
		\]and it still satisfies (1) and (2'). 

		We now return to the inductive step of the argument and repeat. If we always return to case (a), then the complexities $C(M_i,\gamma_i')$ constructed in the infinite sequence are decreasing and never stabilize by Lemma~\ref{lem:dropComplexity} because every other decomposing surface is a maximal set of disjoint pairwise-nonparallel nontrivial product discs. Since every infinite decreasing sequence of sutured manifold complexities must stabilize by Proposition 4.4 of \cite{Gab83}, we eventually must reach case (b). 
		\item[(b)] The map $H_1(T) \to H_1(H)$ is injective. 

		\item[$-$] We will show that $T$ is a torus. Recall that ``half lives, half dies'': the kernel of the map $\iota\colon H_1(\partial H;\Q) \to H_1(H;\Q)$ has half the rank of $H_1(\partial H;\Q)$. Also note that the rank of $\iota$ is at least the dimension of $H_1(T)$ because $H_1(T) \to H_1(H)$ is injective by assumption. With $\Q$-coefficients, we have \begin{align*}
			\dim H_1(T) + \dim H_1(\partial N(C)) &= \dim H_1(\partial H)\\
			&= \rank(\iota) + \dim(\ker \iota)\\
			&\ge \dim H_1(T) + \textstyle\frac12 \dim H_1(\partial H)
		\end{align*}from which it follows that $\dim H_1(T) \leq 2$. Since $(H,\delta')$ is taut, no component of $\partial H$ is a $2$-sphere. Thus $T$ is a torus because $T$ is a nonempty closed oriented surface with no sphere components. 

		Since $H \subset M_0$, we may view $T$ as a torus embedded in the exterior of $K_b \cup C$. In fact, the torus $T$ will be disjoint from the first decomposing surface $S_1$ so $T$ is disjoint from the band $b$. The torus $T \subset S^3$ separates $S^3$ into two $3$-manifolds with boundary. One side contains $C$; deleting a regular neighborhood of $C$ from this side gives $H$. The other side contains $S_1$ and in particular $K_b$ and the band $b$. 

		\item[$-$] We will show that the side of $T$ containing $K_b$ is a solid torus. Let $D$ be a disc in $S^3$ with boundary the linking circle $C$ which intersects $b$ in a single arc. As $T$ is disjoint from the band, $T \cap D$ consists of simple closed curves in $D \setminus (D \cap b)$. If a component of $T \cap D$ is inessential in $D \setminus (D \cap b)$, let $\lambda$ be an innermost one, and compress $T$ along the disc that $\lambda$ bounds in $D\setminus (D \cap b)$. If the result is connected, it is a splitting sphere for $K_b \cup C$ so $b$ is trivial. As $b$ is nontrivial, the result of the compression is disconnected. One component must be a sphere in the complement of $K_b \cup C$ so it bounds a ball disjoint from $K_b \cup C$. Thus we may remove the circle of intersection $\lambda$ via an isotopy of $T$. By iterating this procedure, we may assume there are no inessential circles in $T \cap D \subset D\setminus(D \cap b)$. 

		The remaining components are all essential and parallel to the boundary. There are also an odd number of such components because $T$ separates $K_b$ from $C$. Let $\lambda$ be the innermost such component and consider the disc $D_\lambda$ that $\lambda$ bounds in $D$. Note that $D_\lambda$ intersects the band along an arc. If $\lambda$ is inessential in $T$, then the union of a disc in $T$ with $D_\lambda$ is an embedded sphere in $S^3$ which meets the band $b$ in an arc, contradicting the fact that $b$ is nontrivial. Since the band is nontrivial, $\lambda$ must be essential in $T$. Thus $D_\lambda$ is an essential compressing disc on the side of $T$ containing $K_b$. It follows that the side of $T$ containing $K_b$ is a solid torus, and the side of $T$ containing $C$ is a knot exterior. The other components of $T \cap D$ thought of as circles in $T$ must be parallel copies of $\lambda$. Since $\lambda$ is the meridian of the knot exterior, we see that $H$ is indeed homeomorphic to the exterior of a link of the form $J \cup \mu$ where $\mu$ is a meridian of a knot $J$. Furthermore, $\partial N(\mu) = \partial N(C)$ is a toral component of $\delta'$ and the sutures on the other boundary component $T = \partial N(J)$ consists of an even number of parallel sutures. Indeed, because each decomposing surface has no closed components, we know that $T$ is not a toral component of $\delta'$, and since $(H,\delta')$ is taut, all sutures on $T$ are essential. 
		\item[$-$] Finally, choose a nice sutured manifold hierarchy which exists by Theorem~\ref{thm:niceHierarchy} for the taut sutured manifold $(M_{k+1},\gamma_{k+1}')\setminus (H,\delta')$. Extend the sequence by applying this hierarchy to $(M_{k+1},\gamma_{k+1}')$. The end result is the union of a product sutured manifold and $(H,\delta')$. 
	\end{enumerate}
	In the case that the original two-component link is split, we first show that the parallel sutures on $\partial N(J)$ are meridional following Gabai's proof of superadditivity of genus under band sum \cite{MR895573}. Given the sequence \[
		(M_0,\gamma_0) \overset{S_1}{\rightsquigarrow} (M_1,\gamma_1) \overset{S_2}{\rightsquigarrow} \cdots \overset{S_n}{\rightsquigarrow} (M_n,\gamma_n)
	\]with the properties stated in the proposition, we fill the torus boundary component $\partial N(C)$ of each $(M_i,\gamma_i)$ with the zero slope filling to obtain the sequence of surface decompositions \[
		(N_0,\gamma_0'') \overset{S_1}{\rightsquigarrow} (N_1,\gamma_1'') \overset{S_2}{\rightsquigarrow} \cdots \overset{S_n}{\rightsquigarrow} (N_n,\gamma_n'').
	\]By the light bulb trick (Corollary~\ref{cor:ZeroFillingMeridian}), the manifold $N_n$ is the disjoint union of a product sutured manifold and a solid torus, and the sutures on this solid torus bound discs if and only if the parallel sutures on $\partial N(J)$ were originally meridional. If these sutures were not meridional, then $(N_n,\gamma_n'')$ is taut, so by Lemma~\ref{lem:TautImpliesTaut} all of the manifolds $(N_i,\gamma_i'')$ are taut. In particular, $N_0$ is irreducible. However, $N_0$ is the zero-filling of $\partial N(C)$ in the exterior of $K_b \cup C$ which again by the light bulb trick (Corollary~\ref{cor:ZeroSurgeryLinkingCircle}) is reducible. Thus the parallel sutures on $\partial N(J)$ are meridional. 

\begin{figure}[!ht]
	\centering
	\labellist
	\pinlabel $\partial N(C)$ at 480 470
	\pinlabel $\partial N(J)$ at 200 700
	\pinlabel $A$ at 30 160
	\endlabellist
	\includegraphics[width=.3\textwidth]{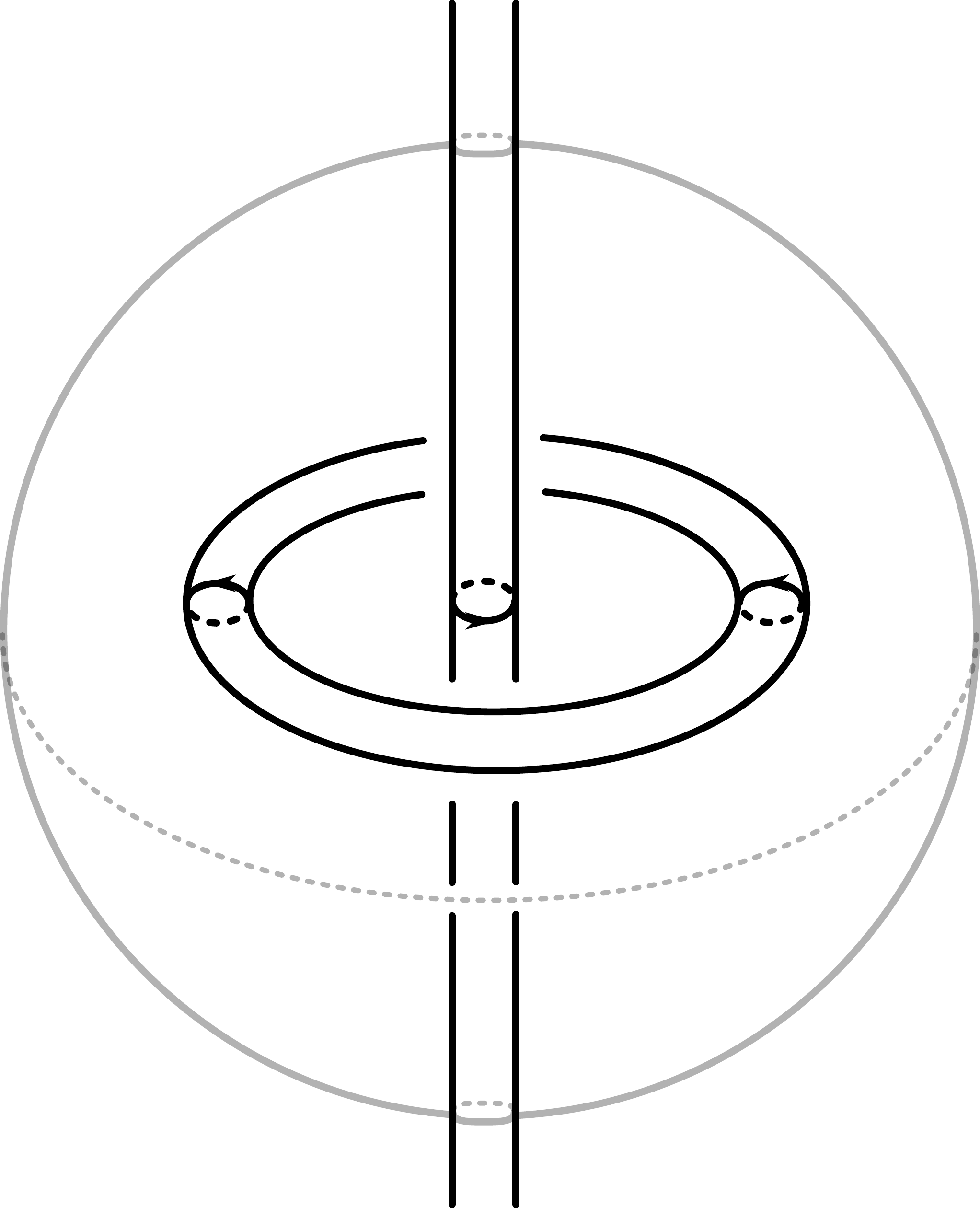}
	\captionsetup{width=.7\textwidth}
	\caption{A product annulus $A$.}
	\label{fig:prodAnnulus}
\end{figure}

	Since the sutures on $\partial N(J)$ are meridional, there is a product annulus $A$ whose boundary lies on $\partial N(J)$ for which the decomposition of $(H,\delta)$ along $A$ yields the union of the sutured exterior of a Hopf link and the exterior of $J$ with an even number of meridional sutures (Figure~\ref{fig:prodAnnulus}). We continue the sequence of decompositions with a nice hierarchy which turns the exterior of $J$ with its sutures into a product sutured manifold using Theorem~\ref{thm:niceHierarchy}. 
\end{proof}


\section{Heegaard Floer homology}

In section~\ref{subsec:HeegaardInvariance}, we show that the knot Floer homology of a band sum is invariant under adding full twists to the band. The proof combines Zemke's ribbon concordance argument \cite{Zem19a} with a surgery exact triangle. Then in section~\ref{subsec:HeegaardDetecting}, we show that the knot Floer homology of a band sum detects the trivial band. We again use Zemke's ribbon concordance argument, but the main argument utilizes a suitable minus version of sutured Floer homology and the sequence of surface decompositions constructed in Theorem~\ref{thm:SeqOfSurfDecomp}. We first review sutured Floer homology \cite{Juh06,Juh08} and Zemke's functoriality of link Floer homology \cite{Zem19c,Zem19b}. 

\subsection{Preliminaries}\label{subsec:HeegaardPreliminaries}

The simplest version of the knot Floer homology \cite{MR2065507,MR2704683}, referred to as the hat version, takes the form of a finite-dimensional bigraded vector space $\HFKhat(K)$ over $\F_2$. We will also make use of the minus version $\HFK^-(K)$ taking the form of a finitely-generated bigraded $\F_2[U]$-module. Both arise from suitable versions of Lagrangian intersection theory in a symmetric product of a Heegaard diagram for the knot. 

The two gradings are called the Maslov and Alexander gradings. The Maslov grading $d$ may be thought of as the homological grading since the differential of the chain complex drops Maslov grading by one, while the Alexander grading $s$ may be thought of the splitting of knot Floer homology along relative $\Spin^c$ structures (see section~\ref{subsubsec:suturedFloerHomology}). The splitting along bigradings is written \[
	\HFKhat(K) = \bigoplus_{(s,\,d) \in \Z^2} \HFKhat_{\,d}(K,s).
\]We also write $\HFKhat(K,s) = \bigoplus_d \HFKhat_d(K,s)$. For knots in $S^3$ for example, knot Floer homology detects the genus of a knot (Theorem 1.2 of \cite{MR2023281}), and it also detects whether the knot is fibered (Theorem 1.1 of \cite{MR2357503}). Specifically, the largest Alexander grading $s$ for which $\HFKhat(K,s)$ is nonzero is the Seifert genus of $K$, and $\HFKhat(K,g(K))$ is $1$-dimensional if and only if the knot is fibered. See section~\ref{subsubsec:suturedFloerHomology} for the definition of knot Floer homology. 

In section~\ref{subsubsec:suturedFloerHomology}, we review sutured Floer homology \cite{Juh06,Juh08}, a version of Heegaard Floer homology for balanced sutured manifolds, encompassing the hat version for closed oriented $3$-manifolds and the hat version of knot Floer homology and its generalization to links. Then in section~\ref{subsubsec:functorialityLinkFloerHomology}, we review Zemke's functoriality for a minus version of link Floer homology \cite{Zem19c,Zem19b}. 

\subsubsection{Sutured Floer homology}\label{subsubsec:suturedFloerHomology}

We recall the basics of Juh\'asz's sutured Floer homology \cite{Juh06,Juh08}. For the basics of sutured manifolds, see section~\ref{subsubsec:suturedManifolds}. 

\begin{df*}[sutured Heegaard diagram]
	Let $\Sigma$ be a closed oriented surface with collections $\bo\alpha = \{\alpha_1,\ldots,\alpha_k\}$ and $\bo\beta = \{\beta_1,\ldots,\beta_k\}$ of pairwise disjoint simple closed curves along with a finite set of basepoints $\bo p = \{p_1,\ldots,p_n\}$ disjoint from $\bigcup{\bo\alpha}\cup\bigcup\bo\beta$. Assume that each curve in $\bo\alpha$ intersects each curve in $\bo\beta$ transversely. If there is a basepoint in every component of $\Sigma\setminus\bigcup\bo\alpha$ and in every component of $\Sigma\setminus\bigcup\bo\beta$, then we say that $(\Sigma,\bo\alpha,\bo\beta,\bo p)$ is a \textit{balanced sutured Heegaard diagram}. 

	Consider the following construction of a balanced sutured manifold from a balanced sutured Heegaard diagram. Delete a regular neighborhood of the set of basepoints $\bo p$ to obtain a compact oriented surface with boundary $\Sigma'$. Attach $3$-dimensional $2$-handles to $[-1,1] \x \Sigma'$ along $\{-1\} \x \alpha_i$ and $\{1\} \x \beta_i$ and call the resulting $3$-manifold $M$. View $\{0\} \x \partial \Sigma'$ as an oriented $1$-manifold $\gamma$ in $\partial M$, oriented as the boundary of $\Sigma'$. Then $(M,\gamma)$ is a balanced sutured manifold by Proposition 2.9 of \cite{Juh06}, and we say that $(\Sigma,\bo\alpha,\bo\beta,\bo p)$ is a balanced diagram for $(M,\gamma)$. By Proposition 2.14 of \cite{Juh06}, every balanced sutured manifold has a balanced diagram. 
\end{df*}

\begin{rem*}
	Juh\'asz's definition of a balanced diagram is the diagram with boundary $(\Sigma',\bo\alpha,\bo\beta)$ obtained from $(\Sigma,\bo\alpha,\bo\beta,\bo p)$ by deleting a regular neighborhood of $\bo p$. Clearly $(\Sigma,\bo\alpha,\bo\beta,\bo p)$ can be recovered from $(\Sigma',\bo\alpha,\bo\beta)$ by simply capping off each boundary component of $\Sigma'$ with a disc with a basepoint. 

	Note that the basepoints $\bo p$ are in one-to-one correspondence with the sutures on $(M,\gamma)$ when $(\Sigma,\bo\alpha,\bo\beta,\bo p)$ is a balanced sutured Heegaard diagram for $(M,\gamma)$. 
\end{rem*}

\begin{dfs*}[Whitney discs, domains, admissibility]
	Fix a balanced diagram $(\Sigma,\bo\alpha,\bo\beta,\bo p)$. Let $\mathbf{T}_\alpha,\mathbf{T}_\beta$ be tori $\alpha_1 \x \cdots \x \alpha_k,\beta_1 \x \cdots \x \beta_k$ in the $k$-fold symmetric product $\Sym\!{}^k(\Sigma)$ of $\Sigma$. The intersection points $\mathbf{x} \in \mathbf{T}_\alpha \cap \mathbf{T}_\beta$ are called \textit{generators}. For generators $\mathbf{x},\mathbf{y}$, a \textit{Whitney disc from $\mathbf{x}$ to $\mathbf{y}$} is a continuous map $\phi\colon D^2 \to \Sym\!{}^k(\Sigma)$ for which \begin{align*}
		\phi(+i) = \mathbf{y} & \qquad\quad \phi(\{z \in \partial D^2 \:|\: \mathrm{Re}(z) \ge 0\}) \subset \mathbf{T}_\alpha\\
		\phi(-i) = \mathbf{x} &\qquad\quad \phi(\{z \in \partial D^2 \:|\: \mathrm{Re}(z) \leq 0\}) \subset \mathbf{T}_\beta
	\end{align*}and $\pi_2(\mathbf{x},\mathbf{y})$ denotes the set of such Whitney discs up to homotopy through Whitney discs. A \textit{domain} $D = \sum_{i=1}^n a_iD_i$ with $a_i \in \Z$ is a formal sum of the closures $D_1,\ldots,D_n$ of the connected components of $\Sigma \setminus (\bigcup\bo\alpha\cup\bigcup\bo\beta)$. A domain is \textit{nonnegative} if all coefficients are nonnegative. The boundary $\partial D$ of a domain $D$, thought of as a $2$-chain, is a sum of arcs where each arc lies in either $\bigcup \bo\alpha$ or $\bigcup\bo\beta$ so according to this division we write $\partial D = \partial_\alpha D + \partial_\beta D$. We say that $D$ is a \textit{domain from $\mathbf{x}$ to $\mathbf{y}$} if $\partial(\partial_\alpha D)) = \mathbf{y} - \mathbf{x}$ and $\partial(\partial_\beta D)) = \mathbf{x} - \mathbf{y}$. For a Whitney disc $\phi$ and a point $p \in \Sigma \setminus (\bigcup\bo\alpha \cup \bigcup\bo\beta)$, we define $n_p(\phi)$ to be the algebraic intersection number of the image of $\phi$ with $p \x \Sym\!{}^{k-1}(\Sigma)$. Similarly, define $n_p(D)$ to be the coefficient of $D_i$ in $D$ where $p \in D_i$. The association $\phi\mapsto \sum_i^n n_{p_i}(\phi) D_i$ where $p_i \in D_i$ gives a map $\pi_2(\mathbf{x},\mathbf{y}) \to D(\mathbf{x},\mathbf{y})$. 

	A domain $P$ is called \textit{periodic} if $\partial_\alpha P$ consists of a sum of $\alpha$ curves and $\partial_\beta P$ consists of a sum of $\beta$ curves. A balanced diagram $(\Sigma,\bo\alpha,\bo\beta,\bo p)$ is called \textit{admissible} if every nonnegative periodic domain $P$ with $n_p(P) = 0$ for all $p \in \bo p$ is zero. By Corollaries 3.12 and 3.15 of \cite{Juh06}, any balanced diagram defining a balanced sutured manifold with trivial second homology is admissible, and every balanced sutured manifold in general has an admissible diagram. 
\end{dfs*}

\begin{df*}[relative $\Spin^c$ structures]
	Let $(M,\gamma)$ be a connected balanced sutured manifold. Fix a nowhere-vanishing vector field $v_0$ on $\partial M$ which points into (resp. out of) $M$ along $R_-(\gamma)$ (resp. $R_+(\gamma)$) and which is the gradient of $[-1,1] \x s(\gamma) \to [-1,1]$ on $\gamma = [-1,1] \x s(\gamma)$, where $\{\pm1\} \x s(\gamma) \subset \partial R_{\pm}(\gamma)$. 

	Let $v$ and $w$ be nowhere-vanishing vector fields on $M$ that agree with $v_0$ on $\partial M$. They are \textit{homologous} if they are homotopic rel $\partial M$ through nowhere-vanishing vector fields on the complement of an open ball in the interior of $M$. A homology class of such a vector field is called a relative $\Spin^c$ structure on $(M,\gamma)$. It turns out that the set of relative $\Spin^c$ structures $\Spin^c(M,\gamma)$ is an affine space over $H^2(M,\partial M;\Z) = H_1(M)$. 

	Associated to each generator $\mathbf{x}$ in a balanced diagram $(\Sigma,\bo\alpha,\bo\beta,\bo p)$ for $(M,\gamma)$ is a relative $\Spin^c$ structure $\fk{s}(\mathbf{x})$ (see Definition 4.5 of \cite{Juh06}). Since $\Spin^c(M,\gamma)$ is affine over $H_1(M)$, there is a well-defined difference $\fk{s}(\mathbf{x}) - \fk{s}(\mathbf{y})$ in $H_1(M)$. This homology class is denoted $\epsilon(\mathbf{x},\mathbf{y}) \in H_1(M)$ and may be constructed as follows. Choose paths $a\colon I \to \mathbf{T}_\alpha,b\colon I \to \mathbf{T}_\beta$ with $\partial a = \partial b = \mathbf{x} - \mathbf{y}$. Then $a - b$ viewed as a $1$-cycle in $\Sigma' = \Sigma \setminus N(\bo p)$ represents $\epsilon(\mathbf{x},\mathbf{y})$, viewing $\Sigma' \subset M$. 
\end{df*}

\begin{rem}\label{rem:SpinCSutures}
	Let $(\Sigma,\bo\alpha,\bo\beta,\bo p)$ be a balanced diagram for $(M,\gamma)$. Recall that the basepoints $\bo p$  are in one-to-one correspondence with the sutures $s(\gamma)$. Let $[\sigma_p] \in H_1(M)$ denote the homology class of the suture $\sigma_p$ corresponding to $p \in \bo p$. Then if $D \in D(\mathbf{x},\mathbf{y})$ is a domain from $\mathbf{x}$ to $\mathbf{y}$, then \[
		\epsilon(\mathbf{x},\mathbf{y}) = \sum_{p \in \bo p} n_p(D)\:[\sigma_p].
	\]In particular, if $n_p(D) = 0$ for all $p \in \bo p$, then $\epsilon(\mathbf{x},\mathbf{y}) = 0$. 
\end{rem}

For a suitable family of almost complex structures $J_s$ on $\Sym\!{}^k(\Sigma)$, the moduli space $\sr M(\phi)$ of $J_s$-holomorphic representatives of $\phi$ is a smooth manifold of dimension $\mu(\phi)$, the Maslov index of $\phi$. When $\mu(\phi) = 1$, the quotient $\widehat{\sr M}(\phi)$ of $\sr M(\phi)$ by the free $\R$-action given by reparametrization is a compact $0$-dimensional manifold. See \cite{Juh06,MR2113019} for details. 

\begin{df*}[sutured Floer homology]
	Let $(\Sigma,\bo\alpha,\bo\beta,\bo p)$ be an admissible balanced diagram for $(M,\gamma)$. For each relative $\Spin^c$ structure $\fk{s}$ on $(M,\gamma)$, let $\SFC(\Sigma,\bo\alpha,\bo\beta,\bo p,\fk{s})$ be the vector space over $\F_2$ with basis the set of generators $\mathbf{x} \in \mathbf{T}_\alpha \cap \mathbf{T}_\beta$ with $\fk{s}(\mathbf{x}) = \fk{s}$ equipped with the differential \[
		\partial\mathbf{x} = \sum_{\mathbf{y} \in \mathbf{T}_\alpha\cap\mathbf{T}_\beta} \sum_{\substack{\phi \in \pi_2(\mathbf{x},\mathbf{y}),\:\mu(\phi) = 1\\n_p(\phi) = 0 \:|\: p \in \bo p}} \#\widehat{\sr M}(\phi) \cdot \mathbf{y}.
	\]Any $\mathbf{y}$ appearing in $\partial \mathbf{x}$ with nonzero coefficient satisfies $\fk{s}(\mathbf{y}) = \fk{s}$ by Remark~\ref{rem:SpinCSutures}. The formula defines a differential whose homology is independent of the choice of $J_s$ and the choice of admissible balanced diagram by Theorems 7.2 and 7.5 of \cite{Juh06}. Denote the homology by $\SFH(M,\gamma,\fk{s})$, and set \[
		\SFC(\Sigma,\bo\alpha,\bo\beta,\bo p) = \bigoplus_{\fk{s}\in \Spin^c(M,\gamma)} \SFC(\Sigma,\bo\alpha,\bo\beta,\bo p,\fk{s}) \qquad \SFH(M,\gamma) = \bigoplus_{\fk{s} \in \Spin^c(M,\gamma)} \SFH(M,\gamma,\fk{s}).
	\]
\end{df*}

\begin{example}
	For a knot $K$ in $S^3$, recall that $S^3(K)$ is the exterior of $K$ equipped with two meridional sutures. Knot Floer homology $\HFKhat(K)$ is defined to be the sutured Floer homology of $S^3(K)$. The relative $\Spin^c$ structures form an affine space over $H_1(S^3 \setminus K) = \Z$. It turns out that there is a symmetry among the groups $\SFH(S^3(K),\fk{s})$: there is a unique $\Spin^c$ structure $\fk{s}_0$ for which $\dim \SFH(S^3(K),\fk{s}_0 - n) = \dim\SFH(S^3(K),\fk{s}_0 + n)$ for all $n$. The relative $\Z$-grading determined by the splitting along $\Spin^c$ structures and choice of generator of $H_1(S^3\setminus K)$ is upgraded to an absolute $\Z$-grading called the Alexander grading. 

	In this situation, there is also an absolute $\Z$-grading, called the Maslov grading, on each of the complexes $\SFC(\Sigma,\bo\alpha,\bo\beta,\bo p)$ with respect to which the differential is homogeneous of degree $-1$. This may be thought of as the homological grading of the chain complex. The knot Floer homology group in Alexander grading $s$ and Maslov grading $d$ is denoted $\HFKhat_d(K,s)$. We will only use the Maslov grading in the context of proving invariance of knot Floer homology under full twists of the band. For more general sutured manifolds and for the gauge-theoretic invariants of sutured manifolds, an absolute $\Z$-grading is absent; there is often only a relative $\Z/2$ or $\Z/4$ grading. 
\end{example}

The following definition and theorem exhibits the behavior of sutured Floer homology under nice surface decompositions (see section~\ref{subsubsec:suturedManifolds}) \cite{Juh08}.

\begin{df*}[outer $\Spin^c$ structure]
	Let $S$ be a decomposing surface in a balanced sutured manifold $(M,\gamma)$. A $\Spin^c$ structure $\fk{s}$ is \textit{outer with respect to $S$} if there is a unit vector field $v$ on $M$ whose homology class is $\fk{s}$ and $v_p \neq (\nu_S)_p$ for every $p \in S$ where $\nu_S$ is a unit normal vector field of $S$ with respect to a Riemannian metric on $M$. The set of outer $\Spin^c$ structures is denoted $O_S$. 
\end{df*}

\begin{thm*}[Theorem 1.3 of \cite{Juh08}]
	Let $(M,\gamma) \overset{S}{\rightsquigarrow} (M',\gamma')$ be a nice surface decomposition of balanced sutured manifolds. Then \[
		\SFH(M',\gamma') \cong \bigoplus_{\fk{s} \in O_S} \SFH(M,\gamma,\fk{s}).
	\]
\end{thm*}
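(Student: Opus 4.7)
The plan is to reduce the theorem to a combinatorial statement by constructing, for the decomposing surface $S \subset (M,\gamma)$, a balanced Heegaard diagram $(\Sigma,\bo\alpha,\bo\beta,\bo p)$ that is \emph{adapted} to $S$. By this I mean that $S$ meets the Heegaard surface $\Sigma$ in a disjoint union of arcs and simple closed curves separating $\Sigma$ into two pieces $\Sigma = \Sigma^+ \cup_\partial \Sigma^-$ in a way compatible with $\bo\alpha$ and $\bo\beta$. The niceness hypothesis on $S$ is precisely what allows us to build such a diagram: since closed components of $S \cap R(\gamma)$ are parallel, coherently oriented, and boundary-coherent, the intersections with $\Sigma$ can be arranged consistently, so that the two sides of $S$ in $M$ are reflected in the two pieces $\Sigma^\pm$ together with the $\alpha$- and $\beta$-handlebodies.

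From a surface-adapted diagram, I would construct a balanced diagram $(\Sigma',\bo\alpha',\bo\beta',\bo p')$ for $(M',\gamma')$ by cutting $\Sigma$ along $S \cap \Sigma$, duplicating the resulting boundary components to reflect the decomposition, and adding new basepoints corresponding to the new sutures in $\gamma'$. I would then verify that this is indeed a balanced diagram for $(M',\gamma')$ and that admissibility is preserved (or can be arranged by further isotopy of the curves), using that periodic domains on $\Sigma'$ lift to periodic domains on $\Sigma$ that have zero multiplicity across $S$.

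The core identification is a bijection between generators $\mathbf{x} \in \mathbf{T}_\alpha \cap \mathbf{T}_\beta$ with $\fk{s}(\mathbf{x}) \in O_S$ and the generators of the cut diagram. Each coordinate of $\mathbf{x}$ lies in either $\Sigma^+$ or $\Sigma^-$, and using the explicit vector-field construction of $\fk{s}(\mathbf{x})$, one shows that $\fk{s}(\mathbf{x})\in O_S$ if and only if each coordinate lies on the ``correct'' side of $S \cap \Sigma$---exactly the condition for $\mathbf{x}$ to descend to a generator of the cut diagram. Generators $\mathbf{x}$ with $\fk{s}(\mathbf{x}) \notin O_S$ necessarily have at least one coordinate on the wrong side and are destroyed by the cut.

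The main obstacle is matching the differentials under this bijection. Any Whitney disc contributing to $\partial \mathbf{x}$ for $\mathbf{x}$ with $\fk{s}(\mathbf{x}) \in O_S$ must, by positivity of intersection applied to $S \x \Sym\!{}^{k-1}(\Sigma)$ and the condition $n_p(\phi)=0$ at basepoints, have domain supported entirely on the correct side of $S \cap \Sigma$; it therefore descends to a disc in the cut diagram, and conversely every disc in the cut diagram lifts uniquely. This holomorphic disc correspondence, combined with the generator bijection and an admissibility check, delivers a chain isomorphism between $\SFC(\Sigma',\bo\alpha',\bo\beta',\bo p')$ and the direct summand of $\SFC(\Sigma,\bo\alpha,\bo\beta,\bo p)$ spanned by generators in $O_S$, yielding the claimed isomorphism on homology.
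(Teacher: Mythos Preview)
This theorem is not proved in the present paper; it is quoted from Juh\'asz \cite{Juh08}. The paper does, however, invoke the structure of Juh\'asz's proof in the argument for Proposition~\ref{prop:HeegaardMinusSurfDecomp}, citing Lemma~4.5, Propositions~4.4 and~4.8, Theorem~6.4, Proposition~7.6, and Lemma~5.4 of \cite{Juh08}. So the comparison below is to Juh\'asz's actual argument, as outlined there.

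Your broad strategy---build a diagram adapted to $S$, cut it to get a diagram for $(M',\gamma')$, set up a bijection between outer generators and generators of the cut diagram, then match differentials---is exactly Juh\'asz's. But your disc-matching step has a real gap. You write that a contributing Whitney disc ``must, by positivity of intersection applied to $S \times \Sym^{k-1}(\Sigma)$\ldots have domain supported entirely on the correct side.'' This does not parse: $S$ is a surface in $M$, not a divisor in $\Sigma$, so $S \times \Sym^{k-1}(\Sigma)$ is not a cycle in $\Sym^k(\Sigma)$ you can intersect with. What you actually need is that domains of index-one discs between outer generators do not cross the arcs $S \cap \Sigma$; this is not automatic from positivity, since those arcs are not $\alpha$- or $\beta$-curves and carry no basepoints. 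Even granting that domains stay on one side, equality of holomorphic disc counts in $\Sym^k(\Sigma)$ versus $\Sym^k(\Sigma')$ is an analytic statement about different symmetric products with different almost complex structures, and it does not follow from a domain identification alone.

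Juh\'asz sidesteps both issues by passing to a \emph{nice} diagram in the Sarkar--Wang sense (Theorem~6.4 of \cite{Juh08}): every non-basepoint region is a bigon or square, so the differential is combinatorial and the disc correspondence reduces to matching embedded bigons and squares across the cut (Proposition~7.6 of \cite{Juh08}). The identification of $O_S$ with the set of outer generators is handled separately via a relative Chern class computation (Lemma~5.4 of \cite{Juh08}), not via the vector-field description you sketch. Your proposal is missing the Sarkar--Wang step, and without it the holomorphic correspondence is not justified.
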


\subsubsection{Functoriality and ribbon concordances}\label{subsubsec:functorialityLinkFloerHomology}

Knot Floer homology, and its generalization to links, is a functorial theory. We will use Zemke's formulation \cite{Zem19c,Zem19b} since we need cobordism maps for the minus version of link Floer homology. These maps are an extension of the maps that Juh\'asz defines for the hat version \cite{MR3519484,MR4080483}. 

\begin{dfs*}
	If $L \subset Y$ and $L' \subset Y'$ are oriented links in closed oriented $3$-manifolds $Y,Y'$, then a link cobordism from $L$ to $L'$ is a compact oriented $4$-manifold $W$ whose boundary is equipped with an identification $\partial W = -Y \amalg Y'$ along with a properly embedded compact oriented surface $F \subset W$ for which $\partial F = -L \amalg L'$. For functoriality of knot Floer homology, we need \textit{decorated} link cobordisms between \textit{multi-based} links.

	A \textit{multi-based link} $\sr L = (L,\mathbf{w},\mathbf{z})$ is an oriented link $L \subset Y$ in a closed oriented $3$-manifold $Y$ with two disjoint sets of basepoints $\mathbf{w},\mathbf{z}$ on $L$ such that each component of $L$ has at least two basepoints, and adjacent basepoints are not in the same set. A \textit{decorated link cobordism} from a multi-based link $\sr L \subset Y$ to another $\sr L'\subset Y'$ is a link cobordism $(W,F)$ with a properly embedded $1$-manifold $\sr A \subset F$ dividing $F$ into two subsurfaces $F_{\mathbf{w}}$ and $F_{\mathbf{z}}$ meeting along $\sr A$ so that $\mathbf{w},\mathbf{w}' \subset F_{\bo w}$ and $\mathbf{z},\mathbf{z}' \subset F_{\bo z}$. Two cobordisms between the same multi-based links are equivalent if they are diffeomorphic rel boundary preserving the decorations. 
\end{dfs*}

\begin{examples}
	Let $L,L'$ be links, and suppose $F$ is a link cobordism with $\partial F = -L \amalg L'$ where $F$ is a disjoint union of annuli. We put basepoints on $L$ and $L'$ in such a way that each component contains exactly two basepoints. Then choose an arc on each annular component of $F$ joining the $\bo z$ basepoints and let $F_{\bo z}$ be a regular neighborhood of the arcs. The $1$-manifold $\sr A \subset F$ will consist of two parallel copies of the chosen arc in each annular component of $F$. The choice of decorations is not canonical, as it depends on the initial arc joining the $\bo z$ basepoints. This decoration on a concordance between knots is used in \cite{MR3590358}.

	Let $\sr L \subset Y$ be a multi-based link, and consider the cobordism $W$ obtained by attaching a $4$-dimensional $2$-handle to $[0,1] \x Y$ along $\{1\} \x K \subset \{1\} \x Y$ where $K \subset Y$ is a framed knot disjoint from $L$. Then there is a natural link cobordism $F = [0,1] \x L \subset [0,1] \x Y \subset W$ consisting of disjoint annuli. We may choose our decorations in the way described in the previous example by taking $F_{\bo z}$ to be a regular neighborhood of the collection of arcs $[0,1] \x \bo{z}$ within $F$. 
\end{examples}

Associated to each multi-based link $\sr L$ in $Y$ is a Floer homology group $\HFLhat(\sr L,Y)$ and associated to each decorated link cobordism $(W,F)$ together with $\fk{s} \in \Spin^c(W)$ is a functorial cobordism map \[
	\HFLhat(\sr L,Y,\fk{s}|_Y) \to \HFLhat(\sr L',Y',\fk{s}|_{Y'}).
\]When the $\Spin^c$ structure on $Y$ is unique, as it is for $Y = S^3$, we omit it from our notation. A cobordism map without a specified $\Spin^c$ structure is just the sum over all $\Spin^c$ structures. The Floer homology group is just the sutured Floer homology of the exterior of the link with a meridional suture for each basepoint. There is a general grading formula (Theorem 1.4 \cite{Zem19b}) when $\fk{s}|_Y, \fk{s}|_{Y'}$ are torsion and $\sr L,\sr L'$ are null homologous, but we only record here two cases where both the Maslov and Alexander gradings are preserved. If $K_+$ is a knot in $S^3$ with a local positive crossing, then there is a $2$-handle attachment cobordism from $K_+$ to the corresponding knot $K_-$ with a local negative crossing. Viewing this $2$-handle attachment as a decorated link cobordism as above, the induced map $\HFKhat(K_+) \to \HFKhat(K_-)$ preserves both the Maslov and Alexander grading (Example 12.7 of \cite{Zem19b}). If $C\colon K \to K'$ is a concordance between knots in $S^3$, and $C$ is decorated as above, then the induced map $\HFKhat(K) \to \HFKhat(K')$ preserves both the Maslov and Alexander gradings. We will use the following result in sections~\ref{subsec:HeegaardInvariance} and \ref{subsec:HeegaardInvariance}.

\begin{thm}[Theorem 1.1 of \cite{Zem19a}]\label{thm:zemke}
	Let $C\colon K_0 \to K_1$ be a ribbon concordance between knots in $S^3$. Let $C'\colon K_1 \to K_0$ denote the concordance in reverse. Then the composite of the maps induced by $C$ and $C'$ \[
		\HFKhat(K_0) \to \HFKhat(K_1) \to \HFKhat(K_0)
	\]is the identity. In particular, the map induced by $C$ is a bigrading-preserving inclusion onto a direct summand. 
\end{thm}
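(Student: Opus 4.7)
The plan is to prove the stronger statement that $\HFKhat(C') \circ \HFKhat(C) = \Id$ by exhibiting the composite cobordism $C' \circ C$ in $[0,2] \times S^3$ as equivalent, up to an isotopy rel boundary plus TQFT-trivial closed components, to the identity cobordism on $K_0$. The splitting and bigrading assertion of the theorem then follows formally, using the fact already noted that the Zemke cobordism maps for concordances with the standard decoration preserve both the Maslov and Alexander gradings.

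First, I would use the absence of index-$2$ critical points of the defining Morse function on $C$ to decompose $C$ (up to isotopy) as $B_s \circ B_w$, where $B_w$ consists of $b$ births of small unknotted unlinked circles $U_1, \ldots, U_b$ and $B_s$ consists of $b$ saddles producing $K_1$ from $K_0 \sqcup \bigsqcup_i U_i$. The equality ``births $=$ saddles $= b$'' is forced by $\chi(C) = 0$. The reverse concordance then factors as $C' = B_w' \circ B_s'$, with $B_s'$ the dual saddles and $B_w'$ the $b$ deaths capping off small unknots. Stacking yields $C' \circ C = B_w' \circ B_s' \circ B_s \circ B_w$, with births at the bottom, two layers of saddles in the middle, and deaths at the top.

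The central geometric step is to cancel the two middle layers of saddles in dual pairs: each saddle in $B_s$ attaches a band, and by construction the matching saddle in $B_s'$ cuts along a geometrically dual arc, so each dual pair can be removed via an ambient isotopy of the surface in $[0,2] \times S^3$ rel boundary and replaced by a product annulus. After cancelling all $b$ such pairs, the composite reduces to the disjoint union of the product cobordism $[0,1] \times K_0$ with $b$ small $2$-spheres, each realized as a birth immediately followed by a death of a small unknot inside a $4$-ball. Invoking the behaviour of Zemke's decorated link cobordism maps under disjoint unions, the product cobordism contributes $\Id_{\HFKhat(K_0)}$ and each decorated $2$-sphere component contributes a unit, giving $\HFKhat(C' \circ C) = \Id$. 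The map $\HFKhat(C)$ is therefore a bigrading-preserving split injection and its image is a direct summand of $\HFKhat(K_1)$.

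The main obstacle will be making the dual-handle cancellation rigorous in the \emph{decorated} setting: a saddle in a decorated link cobordism carries a portion of the dividing $1$-manifold $\sr A$, and the cancellation isotopy must be performed compatibly with these decorations. I expect this to require a careful appeal to Zemke's invariance theorems for cobordism maps under diffeomorphism rel boundary of decorated surfaces, combined with his explicit local formulas for band maps and birth/death maps, in order to verify that each dual-pair cancellation leaves the induced map on $\HFKhat$ unchanged and that the leftover $S^2$ components act as the identity rather than as zero.
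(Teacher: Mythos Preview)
The paper does not supply its own proof of this theorem; it is quoted directly as Theorem~1.1 of Zemke's paper \cite{Zem19a} and used as a black box. So there is no ``paper's proof'' to compare against---the relevant comparison is with Zemke's original argument, and your outline is essentially that argument.

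Your plan is correct in its broad strokes and matches Zemke: factor $C$ as births then saddles, observe that in $C'\circ C$ each saddle is immediately undone by its mirror, cancel these pairs to obtain $(I\times K_0)$ together with $b$ trivial $2$-spheres, and then invoke the fact that such spheres act by the identity on $\HFKhat$. Two small comments. First, calling the matching saddle in $B_s'$ ``geometrically dual'' is misleading terminology---these are not index-$1$/index-$2$ Morse cancellations but rather the same band attached and then immediately cut; the cancellation is the elementary observation that a merge followed by the identical split is isotopic rel boundary to a product. Second, the assertion that each resulting sphere sits ``inside a $4$-ball'' disjoint from $I\times K_0$ is correct but is exactly the point that needs justification: one must check that the successive cancellation isotopies (each supported near its band $\beta_i$) leave every $I\times U_j$ undisturbed, so that $S_j$ really is the standard sphere in $I\times B_j$. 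This follows because each $\beta_i$ is disjoint from the other $U_j$, but it is worth saying explicitly.

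You have already correctly flagged the genuine remaining work: tracking the decorations through the cancellation and verifying that a standardly decorated trivial sphere contributes the identity rather than zero. In Zemke's paper this is handled by his local computations for birth/death maps and the behaviour of closed components; your plan to appeal to those results is the right one.
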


Zemke defines cobordism maps for a minus version of link Floer homology. We review basic features of his maps only the in case of links in $S^3$. A Heegaard diagram for such a multi-based link $\sr L = (L,\bf{w},\bf{z})$ is simply a Heegaard diagram for the exterior of the link with a meridional suture for each basepoint. The sutures and thereby the basepoints on the diagram are labelled $w_1,\ldots,w_n,z_1,\ldots,z_n$ according to their correspondence with $\bf{w},\bf{z}$ on $L$. In section 3.3 of \cite{Zem19c}, Zemke associates to a diagram an $\F_2[U_1,\ldots,U_n,V_1,\ldots,V_n]$-module $CFL^-(\sr L)$ freely generated by $\bf{x} \in \bf{T}_\alpha \cap \bf{T}_\beta$, equipped with a module endomorphism $\partial$ defined by \[
	\partial \mathbf{x} = 
	\sum_{\bf{y}\in\bf{T}_\alpha\cap\bf{T}_\beta} 
	\sum_{\substack{\phi\in\pi_2(\bf{x},\bf{y})\\\mu(\phi) = 1}} 
	\#\widehat{\sr M}(\phi) U_1^{n_{z_1}(\phi)} \cdots U_n^{n_{z_n}(\phi)}V_1^{n_{w_1}(\phi)}\cdots V_n^{n_{w_n}(\phi)} \cdot\mathbf{y}.
\]In general, the endomorphism is not a differential, but its square $\partial^2$ is multiplication by an element in the ring $\F_2[U_1,\ldots,U_n,V_1,\ldots,V_n]$ (Lemma 3.4 of \cite{Zem19c}). In section~\ref{subsec:HeegaardDetecting}, we will consider a minus version of link Floer homology defined when each component of $L$ has exactly two basepoints, and all of the $U_i,V_i$ variables are set to zero, except for one $U_i$ variable. The induced endomorphism on this quotient is a differential. Associated to a decorated link cobordism $(W,F)\colon (S^3,\sr L_1) \to (S^3,\sr L_2)$ and a $\Spin^c$ structure $\fk{s} \in \Spin^c(W)$ is an equivariant map $CFL^-(L_1) \to CFL^-(L_2)$ which commutes with the endomorphisms. The map up to equivariant chain homotopy is a diffeomorphism invariant of $(W,F)$ and is functorial. There are also Maslov and Alexander gradings on $CFL^-$, and the cobordism map induced by a concordance between links preserves these gradings (Theorem 2.14 of \cite{Zem19b}).

When the multi-based link is a knot $K$ in $S^3$ with exactly two basepoints, we let $\HFK^-(K)$ denote the homology of the complex $\CFL^-(K)$ after setting $V = 0$. In this paper, the \textit{minus} version of knot Floer homology refers to this finitely-generated bigraded $\F_2[U]$-module $\HFK^-(K)$. 

\subsection{Invariance under full twists of the band}\label{subsec:HeegaardInvariance}

Let $K_b$ be a band sum of a split two-component link $L$ along a band $b$, and let $K_\#$ denote the connected sum. Let $K_{b+1}$ be obtained by adding a full twist to the band, and recall that $K_{b+1},K_b,L$ form an oriented skein triple. If $C$ is a linking circle for the band, then $+1$ surgery on $C$ in the complement of $K_b$ yields $K_{b+1}$. The $\infty,0$, and $1$ surgeries on $C$ fit into an exact triangle (Theorem 8.2 of \cite{MR2065507}). \[
	\begin{tikzcd}[column sep=0]
		\HFKhat(K_{b+1}) \ar[rr] & & \HFKhat(K_b) \ar[dl]\\
		& \HFKhat(S^1 \x S^2,K_\#) \ar[ul] &
	\end{tikzcd}
\]Observe that $0$-surgery on $C$ yields a copy of $K_\#$ contained in a ball in $S^1 \x S^2$ by the light bulb trick (Corollary~\ref{cor:ZeroSurgeryLinkingCircle}). The maps in the triangle are the functorial $2$-handle attachment maps with the standard decorations described in section~\ref{subsubsec:functorialityLinkFloerHomology}. The map $\HFKhat(K_{b+1}) \to \HFKhat(K_b)$ preserves the Alexander grading by Theorem 8.2 of \cite{MR2065507} and it preserves the Maslov grading by Example 12.7 of \cite{Zem19b}. 

\begin{lem}\label{lem:HeegaardTrivTriangle}
	In the surgery triangle for the trivial band \[
		\begin{tikzcd}[column sep=0]
			\HFKhat(K_\#) \ar[rr] & & \HFKhat(K_\#) \ar[dl]\\
			& \HFKhat(S^1 \x S^2,K_\#) \ar[ul] &
		\end{tikzcd}
	\]the map $\HFKhat(K_\#) \to \HFKhat(K_\#)$ is zero. 
\end{lem}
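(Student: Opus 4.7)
The plan is to show that the second map in the triangle,
\[
g\colon \HFKhat(K_b) \to \HFKhat(S^1 \x S^2, K_\#),
\]
is injective; by exactness this forces the first map $\HFKhat(K_\#) \to \HFKhat(K_\#)$ to have trivial image, hence to vanish.

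First I identify the third vertex. Since the band is trivial, the linking circle $C$ bounds a disc in $S^3$ disjoint from $K_\# = K_b$, so $C$ is isotopic to an unknot contained in a small ball disjoint from $K_\#$. Consequently $0$-surgery on $C$ turns $(S^3, K_\#)$ into $(S^1 \x S^2, K_\#)$ with $K_\#$ sitting inside a $3$-ball, and a K\"unneth-type identification gives
\[
\HFKhat(S^1 \x S^2, K_\#) \cong \HFKhat(K_\#) \otimes \HFhat(S^1 \x S^2) \cong \HFKhat(K_\#) \otimes \F_2^2.
\]

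Next I identify $g$ geometrically. The cobordism from $(S^3, K_\#)$ to $(S^1 \x S^2, K_\#)$ inducing $g$ is the trace of $0$-surgery on $C$, equipped with the product annular link cobordism $[0,1] \x K_\#$ and the standard decorations. Because $C$ is a split unknot from $K_\#$, this decorated link cobordism is the boundary connect sum of the product cobordism on $(S^3, K_\#)$ with the closed $2$-handle cobordism $W_0\colon S^3 \to S^1 \x S^2$ attached to an unknot, disjoint from the link. Under the K\"unneth identification above, the boundary connect sum formula for Heegaard/link Floer cobordism maps yields
\[
g = \Id_{\HFKhat(K_\#)} \otimes F_{W_0},
\]
where $F_{W_0}\colon \HFhat(S^3) \to \HFhat(S^1 \x S^2)$ is the standard $0$-framed $2$-handle map. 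A standard computation shows $F_{W_0}$ sends the generator of $\HFhat(S^3) \cong \F_2$ to a nonzero generator of the torsion $\Spin^c$-summand of $\HFhat(S^1 \x S^2)$, so $F_{W_0}$, and therefore $g$, is injective. By exactness of the triangle the image of the first map is contained in $\ker g = 0$, which is the desired conclusion.

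The main obstacle is establishing the connect-sum decomposition $g = \Id \otimes F_{W_0}$. While geometrically transparent and an instance of a standard boundary connect sum/K\"unneth principle for Heegaard Floer cobordism maps, it may not be stated in a form that applies verbatim to Zemke's decorated link cobordism maps. An alternative is to argue on the chain level: choose a Heegaard diagram for $(S^3, K_\#)$ supported in a ball disjoint from a standard genus-one diagram for a neighborhood of $C$, and then verify directly that the holomorphic triangle counts defining the $2$-handle map factor as the tensor product of the identity with the classical triangle count defining $F_{W_0}$.
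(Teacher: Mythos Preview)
Your approach is correct in spirit but considerably more elaborate than necessary, and the paper's proof shows why. You already establish in your first step that
\[
\dim \HFKhat(S^1 \x S^2, K_\#) = 2\dim \HFKhat(K_\#),
\]
and this alone forces the map $f\colon \HFKhat(K_\#) \to \HFKhat(K_\#)$ to vanish. Indeed, in any exact triangle $A \xrightarrow{f} B \xrightarrow{g} C \xrightarrow{h} A$ one has $\dim A + \dim B + \dim C = 2(\rank f + \rank g + \rank h)$; with $\dim A = \dim B = d$ and $\dim C = 2d$, a short computation using $\rank g = d - \rank f$ and $\rank h = d - \rank f$ gives $\rank f = 0$. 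This is exactly the paper's argument: it cites the connected sum formula for sutured Floer homology to get the dimension identity and stops there.

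Your second step, identifying $g$ as $\Id \otimes F_{W_0}$ and checking $F_{W_0}$ is injective, is therefore superfluous. It is also the step you correctly flag as technically delicate: the boundary connect sum / K\"unneth formula for Zemke's decorated link cobordism maps is plausible and can be proved, but invoking it here introduces an unnecessary dependence on a result that needs its own justification. The dimension count avoids this entirely.
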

\begin{proof}
	By the connected sum formula for sutured manifolds (Proposition 9.15 of \cite{Juh06}), we know that \[
		\dim\HFKhat(S^1 \x S^2,K_\#) = \dim \HFKhat(K_\#) \cdot \dim\HFhat(S^1 \x S^2) = 2\cdot\dim \HFKhat(K_\#)
	\] which implies that the map $\HFKhat(K_\#) \to \HFKhat(K_\#)$ is zero. 
\end{proof}

\begin{lem}\label{lem:HeegaardInjectBetweenTriangles}
	There are injective ribbon concordance maps making the following diagram commute. \[
		\begin{tikzcd}[column sep=tiny,row sep=1.5em]
			\HFKhat(K_{b+1}) \ar[rr] & & \HFKhat(K_b) \ar[dl]\\
			& \HFKhat(S^1 \x S^2,K_\#) \ar[lu] & \\
			\HFKhat(K_\#) \ar[uu,hook] \ar[rr] & & \HFKhat(K_\#) \ar[dl] \ar[uu,hook]\\
			& \HFKhat(S^1 \x S^2,K_\#) \ar[uu,hook,crossing over] \ar[ul] &
		\end{tikzcd}
	\]
\end{lem}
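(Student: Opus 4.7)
The plan is to construct a ribbon concordance $R \colon K_\# \to K_b$ via Corollary~\ref{cor:RibbonConcordance} and to upgrade it to a map between the two surgery exact triangles using functoriality of Zemke's link cobordism maps. First I would refine the construction of Corollary~\ref{cor:RibbonConcordance} so that a chosen linking circle $C$ of the band $b$ of $K_b$ extends to a product annulus $[0,1] \times C \subset [0,1] \times S^3$ disjoint from $R$, with $\{0\}\times C$ automatically a linking circle of the trivial band of $K_\#$. The movie for $R$ consists of births and saddles localized near the band, and $C$ can be placed as a meridional loop encircling the band at a point away from all of these local modifications.

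Next I would identify the three arrows in each surgery exact triangle as link cobordism maps induced by $2$-handle attachments along $C$ (or along the meridian of the surgered $C$) with framings drawn from $\{\infty,0,+1\}$. Because $[0,1]\times C$ is disjoint from $R$, each such $2$-handle can be attached either below $R$ (at $\{0\}\times C$) to produce the bottom triangle's cobordisms, or above $R$ (at $\{1\}\times C$) to produce the top triangle's cobordisms. Stacking the relevant pairs simultaneously yields compound $4$-manifolds which admit two natural decompositions, and Zemke's functoriality for decorated link cobordism maps \cite{Zem19b, Zem19c} then gives the commutativity of each square in the diagram, with the vertical maps identified as ribbon concordance maps on both ends of $R$ under the respective surgery identifications.

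Injectivity then follows from Theorem~\ref{thm:zemke}. The right-hand vertical is the map induced by $R$ itself. For the left-hand vertical, I would construct an explicit ribbon concordance $R_{b+1} \colon K_\# \to K_{b+1}$ by appending to $R$ a birth-and-saddle cobordism $T$ that installs a full twist in the band; since $T$ contains only index-$0$ and index-$1$ critical points, the composite $R \cup T$ is still ribbon, so Theorem~\ref{thm:zemke} applies. The middle vertical on $\HFKhat(S^1\times S^2, K_\#)$ can simply be taken to be the identity, which is trivially an inclusion onto a direct summand.

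The main technical obstacle I foresee is reconciling the compound $4$-manifold construction of the left-hand vertical with the explicit ribbon concordance $R_{b+1} = R \cup T$, i.e., verifying that the $+1$-framed $2$-handle cobordism attached above $R$ induces the same map on $\HFKhat$ as does $T$. This amounts to a Kirby-type equivalence between two decorated link cobordisms from $(S^3, K_b)$ to $(S^3, K_{b+1})$---the $+1$-surgery trace and the birth-and-saddle---that should reduce to the standard Rolfsen twist description of $+1$-surgery on the unknot $C$, with the handle slide through $[0,1]\times C$ not interacting with the surface $R$.
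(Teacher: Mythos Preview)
Your overall framework---arranging the ribbon concordance $R$ to be disjoint from a product annulus $[0,1]\times C$, then using functoriality of Zemke's decorated cobordism maps to commute the $2$-handle attachments past $R$---is exactly the paper's argument. The problems are in how you handle two of the three vertical maps.

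\textbf{The middle vertical is not the identity.} Once you attach the $0$-framed $2$-handle along $C$ and carry $R$ through, what you obtain is a genuine ribbon concordance $R_0$ in $[0,1]\times(S^1\times S^2)$ from $K_\#$ to itself. The light bulb trick identifies both ends with $K_\#$, but it says nothing about whether $R_0$ is isotopic rel boundary to the product annulus; in general there is no reason it should be. If you simply declare the middle vertical to be the identity, the diagram will not commute, because commutativity forces that arrow to be the map induced by $R_0$. The paper handles this correctly: it identifies the middle vertical as the map induced by $R_0$ and then invokes Zemke's ribbon concordance argument (which works equally well in $[0,1]\times(S^1\times S^2)$) to conclude injectivity. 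That is all you need.

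\textbf{The left vertical cannot be built by appending a cobordism $T\colon K_b\to K_{b+1}$.} A birth-and-saddle cobordism in $[0,1]\times S^3$ cannot install a full twist in the band: the linking circle produced by such a saddle bounds no embedded disc disjoint from $K_b$ when $b$ is nontrivial, so there is no ribbon concordance $K_b\to K_{b+1}$ of this shape. The ``technical obstacle'' you flag is therefore not a matching problem but an existence problem---the $+1$-surgery trace lives in a $4$-manifold with nontrivial second homology and is not a cobordism in $[0,1]\times S^3$ at all, so no Kirby move will turn it into your $T$. The paper sidesteps this entirely: since the births and saddles of $R$ are supported away from a small ball containing a segment of the band, one simply replays the \emph{same} movie with a full twist pre-inserted in that ball. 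The source becomes $K_\#$ (a twist on a trivial band does nothing) and the target becomes $K_{b+1}$, yielding the desired ribbon concordance $K_\#\to K_{b+1}$ directly. Commutativity then follows exactly as in your functoriality argument, with no need to compare two descriptions of the same map.
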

\begin{proof}
	Let $R\colon K_\# \to K_b$ be a ribbon concordance in $[0,1] \x S^3$ arising from the construction in \cite{MR1451821} (see Corollary~\ref{cor:RibbonConcordance}), and view it as a movie consisting of births followed by saddle moves. Observe that we may assume that all of the births and saddle moves are completely disjoint from a fixed $3$-ball containing a short segment of the band. Playing the movie except with a full twist added to the band within the fixed ball is a ribbon concordance $K_\# \to K_{b+1}$. Similarly we may think of the linking circle $C$ as lying in the fixed ball; after doing $0$-surgery on $C$, playing the movie is a ribbon concordance within $[0,1] \x S^1 \x S^2$ from $K_\#$ to $K_\#$. 

	Now consider the cobordisms maps induced from attaching a $2$-handle along $C$. We view these cobordisms as knot cobordisms in the standard way. The associated cobordism maps are the maps in the exact triangle. Pre- and post-composing these $2$-handle attachments with the ribbon concordances yield diffeomorphic knot cobordisms, so commutativity now follows from functoriality. 

	Zemke's ribbon concordance argument \cite{Zem19a} applies to the ribbon concordance inducing the map $\HFKhat(S^1 \x S^2,K_\#) \to \HFKhat(S^1 \x S^2,K_\#)$ so it is injective. 
\end{proof}

\begin{rem}\label{rem:HeegaardSurjTri}
	If $C\colon K_\# \to K_b$ is a ribbon concordance, and $C'\colon K_b \to K_\#$ is the concordance in reverse, then the composite $C'\circ C\colon K_\# \to K_\#$ induces the identity on knot Floer homology (Theorem~\ref{thm:zemke}). The argument of Lemma~\ref{lem:HeegaardInjectBetweenTriangles} yields the following commutative diagram. \[
		\begin{tikzcd}[column sep=tiny,row sep=1.5em]
			\HFKhat(K_\#) \ar[rr] & & \HFKhat(K_\#) \ar[dl]\\
			& \HFKhat(K_\#,S^1 \x S^2) \ar[ul]\\
			\HFKhat(K_{b+1}) \ar[rr] \ar[uu,two heads] & & \HFKhat(K_b) \ar[uu,two heads] \ar[dl]\\
			& \HFKhat(K_\#,S^1 \x S^2) \ar[lu] \ar[uu,two heads,crossing over] & \\
			\HFKhat(K_\#) \ar[uu,hook] \ar[rr] & & \HFKhat(K_\#) \ar[dl] \ar[uu,hook]\\
			& \HFKhat(K_\#,S^1 \x S^2) \ar[uu,hook,crossing over] \ar[ul] &
		\end{tikzcd}
	\]
\end{rem}

\theoremstyle{plain}
\newtheorem*{thm:HFKinvariant}{Theorem~\ref{thm:HeegaardKnotFloerInvariantUnderFullTwists}}
\begin{thm:HFKinvariant}
	Let $K_b$ be a band sum of a split two component link, and let $K_{b+n}$ be obtained by adding $n$ full twists to the band. Then \[
		\HFKhat(K_b) \cong \HFKhat(K_{b+n})
	\]as bigraded vector spaces over $\F_2$. 
\end{thm:HFKinvariant}

\begin{proof}
	It suffices to prove the result for $n = 1$. Using the ribbon concordances in the proof of Lemma~\ref{lem:HeegaardInjectBetweenTriangles}, we obtain a copy of $\HFKhat(K_\#)$ as a direct summand of $\HFKhat(K_b)$ and similarly for $\HFKhat(K_{b+n})$ by Theorem~\ref{thm:zemke}. Hence we may write \[
		\HFKhat(K_b) \cong \HFKhat(K_\#) \oplus F_b \qquad \HFKhat(K_{b+1}) \cong \HFKhat(K_\#) \oplus F_{b+1}
	\]as bigraded vector spaces for some bigraded vector spaces $F_b$ and $F_{b+1}$. It suffices to show that $F_b$ and $F_{b+1}$ are isomorphic as bigraded vector spaces. It follows from Lemmas \ref{lem:HeegaardTrivTriangle}, \ref{lem:HeegaardInjectBetweenTriangles}, and Remark~\ref{rem:HeegaardSurjTri} that the exact triangle \[
		\begin{tikzcd}[column sep=tiny]
			\HFKhat(K_{b+1}) \ar[rr] & & \HFKhat(K_b) \ar[dl]\\
			& \HFKhat(K_\#,S^1 \x S^2)\ar[ul] &
		\end{tikzcd}
	\]splits into the following direct sum of two exact triangles. \[
		\left(\begin{tikzcd}[column sep=0]
			\HFKhat(K_\#) \ar[rr,"0"] & & \HFKhat(K_\#) \ar[dl,hook]\\
			& \HFKhat(K_\#,S^1 \x S^2)\ar[ul,two heads] &
		\end{tikzcd}\right) \bigoplus \left(\begin{tikzcd}[column sep=large]
			F_{b+1} \ar[rr,hook,two heads] & & F_{b} \ar[dl]\\
			& 0 \ar[ul] &
		\end{tikzcd} \right)
	\]The isomorphism $F_{b+1} \to F_{b}$ respects both gradings, so the result follows. The fact that the isomorphism preserves Alexander grading appears in Theorem 8.2 of \cite{MR2065507}, and the fact that it preserves Maslov grading appears in Example 12.7 of \cite{Zem19b}. 
\end{proof}

The author thanks Ian Zemke for explaining that there is also a surgery exact triangle for the minus version of knot Floer homology \[
	\begin{tikzcd}[column sep=0]
		\HFK^-(K_{b+1}) \ar[rr] & & \HFK^-(K_b) \ar[dl]\\
		& \HFK^-(S^1 \x S^2,K_\#) \ar[ul] &
	\end{tikzcd}
\]where the maps are the functorial $2$-handle attachment maps with the standard decorations. Although the maps are \textit{a priori} a sum of infinitely many homogeneous maps indexed by $\Spin^c$ structures on the $2$-handle attachment cobordisms, but it follows from an adjunction inequality (Example 12.7 of \cite{Zem19b}) that only finitely many of the maps are possibly nonzero. The only possibly nonzero contributions to the map $\HFK^-(K_{b+1}) \to \HFK^-(K_b)$ preserve both the Maslov and Alexander gradings. The argument of Proposition 11.5 of \cite{1011.1317} shows that the triangle is exact after tensoring with the power series ring $\F_2[[U]]$. Since $\F_2[[U]]$ is faithfully flat as a module over $\F_2[U]$, the surgery triangle over $\F_2[U]$ is also exact. Note that there is a similar exact triangle (Theorem 1.1 of \cite{0707.1165}) where the maps are defined using special Heegaard diagrams rather than functorial $2$-handle attachment maps. Using the surgery exact triangle described here, the argument in Theorem~\ref{thm:HeegaardKnotFloerInvariantUnderFullTwists} works equally well for $\HFK^-$. 

\subsection{Detecting the trivial band}\label{subsec:HeegaardDetecting}

Let $C$ be a linking circle for the band $b$, and consider the link $K_b \cup C$. Recall that $C$ bounds a disc disjoint from $K_b$ if and only if the band is trivial. We will show that the knot Floer homology of $K_b$ detects the trivial band by relating it to the link Floer homology of $K_b \cup C$. In section~\ref{subsubsec:ComparisonWithMinusLinkFloer}, we show that the $\F_2[U]$-rank of a suitable minus version of link Floer homology $\HFL^-(K_b \cup C,\sigma)$ is exactly twice the $\F_2$-dimension of $\HFKhat(K_b)$. To show that $\dim\HFKhat(K_b) > \dim\HFKhat(K_\#)$ when $b$ is nontrivial, it therefore suffices to show that $\rank \HFL^-(K_b \cup C,\sigma) > \rank \HFL^-(K_\# \cup C,\sigma)$. To obtain this strict rank inequality, we use the sequence of surface decompositions constructed in Theorem~\ref{thm:SeqOfSurfDecomp} along with a suitable minus version of sutured Floer homology developed in section~\ref{subsubsec:minusHeegaardSuturedFloer} and a model computation made in section~\ref{subsubsec:HopfHeegaard}. 

\subsubsection{Comparison with a minus version of link Floer homology}\label{subsubsec:ComparisonWithMinusLinkFloer}

\begin{df*}[a minus version of link Floer homology]
	Fix an admissible balanced diagram $(\Sigma,\bo\alpha,\bo\beta,\bo p)$ with four basepoints for the link $K_b \cup C$. Let $S^3(K_b \cup C)$ be the sutured exterior of the link $K_b \cup C$, and of the two sutures on $\partial N(C) \subset \partial S^3(K_b \cup C)$, let $\sigma$ be the one oriented as the meridian of $C$. Let $z \in \bo p$ be the basepoint corresponding to $\sigma$. Also assume that $(\Sigma,\bo\alpha,\bo\beta,\bo p\setminus\{z\})$ is an admissible diagram. 

	As usual, let ${\bf T}_\alpha$, $\mathbf{T}_\beta$ be the tori $\alpha_1 \x \cdots \alpha_k$, $\beta_1 \x \cdots \beta_k$ in the $k$-fold symmetric product of $\Sigma$, where $k$ is the number of curves in $\bo\alpha$. The version of link Floer homology that we will use is a chain complex over $\F_2[U]$ with underlying module \[
		\CFL^-(\Sigma,\bo\alpha,\bo\beta,\bo p,z) = \bigoplus_{\mathbf{x} \in \mathbf{T}_\alpha \cap \mathbf{T}_\beta} \F_2[U]\cdot\mathbf{x}
	\]with $\F_2[U]$-linear differential \[
		\partial\mathbf{x} = \sum_{\mathbf{y} \in \mathbf{T}_\alpha\cap\mathbf{T}_\beta} \sum_{\substack{\phi \in \pi_2(\mathbf{x},\mathbf{y}),\:\mu(\phi) = 1\\n_p(\phi) = 0 \:|\: p \in \bo p\setminus\{z\}}} \#\widehat{\sr M}(\phi) \cdot U^{n_z(\phi)}\cdot\mathbf{y}. 
	\]Here $\mu(\phi)$ is the Maslov index of a Whitney disc $\phi \in \pi_2(\bo x,\bo y)$, the number $\#\widehat{\sr M}(\phi) \in \F_2$ is the mod $2$ count of the $J$-holomorphic representatives of $\phi$ up to reparametrization of the disc for a suitable family $J = (J_t)_{t \in [0,1]}$ of almost complex structures on $\Sym\!{}^k(\Sigma)$, and $n_p(\phi)$ is the algebraic intersection number of $\phi$ and $p \x \Sym\!{}^{k-1}(\Sigma)$.

	Note that the differential is blocked by all basepoints except $z$, and algebraic intersection with $z \x \Sym\!{}^{k-1}(\Sigma)$ is recorded in the $U$-variable. Let $\HFL^-(K_b \cup C,\sigma)$ denote the homology of this complex. 
\end{df*}

\begin{rem}
	To see that we can arrange that $(\Sigma,\bo\alpha,\bo\beta,\bo p)$ and $(\Sigma,\bo\alpha,\bo\beta,\bo p\setminus\{z\})$ are both admissible, see Lemma~\ref{lem:existsAdmissibleDiagram}. Invariance of $\HFL^-(K_b \cup C,\sigma)$ follows from Theorem 4.7 of \cite{MR2443092}. See section~\ref{subsubsec:minusHeegaardSuturedFloer} for a generalization of this construction. 
\end{rem}

Associated to each generator $\mathbf{x}\in \mathbf{T}_\alpha\cap \mathbf{T}_\beta$ is a $\Spin^c$ structure $\fk{s}(\bf{x})$. Recall that the set of $\Spin^c$ structures on a balanced sutured manifold $M$ is an affine space over $H_1(M)$. The homology class of $\sigma$, the suture on $\partial N(C) \subset \partial S^3(K_b \cup C)$ oriented as the meridian of $C$, is infinite-cyclic in $H_1(S^3(K_b \cup C))$. We define a decomposition of $\CFL^-(\Sigma,\bo\alpha,\bo\beta,\bo p,z)$ along $\Spin^c$ structures by declaring that $U^n \mathbf{x}$ lies in the $\Spin^c$ structure $\fk{s}(\mathbf{x}) - n[\sigma]$. The differential now preserves $\Spin^c$ structure so $\HFL^-(K_b \cup C,\sigma) = \bigoplus_{\fk s} \HFL^-(K_b \cup C,\sigma,\fk{s})$ splits along $\Spin^c$ structures as well. The map $U$ sends $\HFL^-(K_b \cup C,\sigma,\fk{s})$ to $\HFL^-(K_b \cup C,\sigma,\fk{s} - [\sigma])$. The behavior of $U$ with respect to the splitting gives restrictions on the module structure of $\HFL^-(K_b \cup C,\sigma)$ by the following well-known lemma. 

\begin{lem}\label{lem:HomologyStruct}
	Let $\F$ be a field, and let $H$ be a finitely-generated $\F[U]$-module. Also assume that there is a set $S$ with a free $\Z$-action $S \x \Z \to S$ denoted $(\fk{s},n) \mapsto \fk{s} + n$ for which there is a splitting $H = \bigoplus_{\fk{s}} H(\fk{s})$ and $U$ maps $H(\fk{s})$ to $H(\fk{s} - 1)$. Then $H$ is isomorphic to \[
		\F[U]^k \oplus\bigoplus_{i=1}^n \frac{\F[U]}{U^{k_i}}
	\]for a nonnegative integer $k$ and positive integers $k_1,\ldots,k_n$. The isomorphism can be chosen so that generators of the summands are homogeneous elements of $H$ with respect to the splitting along $S$.
\end{lem}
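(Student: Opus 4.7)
The plan is to reduce the claim to the standard structure theorem for finitely-generated modules over the PID $\F[U]$, after first proving that every torsion element of $H$ is in fact $U$-torsion, and then refining the decomposition so that all generators can be chosen homogeneous with respect to the splitting along $S$.

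The first step is to show that the torsion submodule $T\subset H$ is $U$-torsion. Take a homogeneous element $x\in H(\fk s)$ and suppose $f(U)\cdot x=0$ for some nonzero $f\in\F[U]$. Factor $f=U^m g(U)$ with $g(0)\neq 0$ and expand $g=\sum_j c_j U^j$. The relation $\sum_j c_j U^{m+j}\cdot x=0$ splits into homogeneous pieces lying in the summands $H(\fk s-m-j)$, which are pairwise distinct because the $\Z$-action on $S$ is free. Each piece must vanish, and the $j=0$ piece gives $c_0 U^m\cdot x=0$ with $c_0\neq 0$, so $U^m\cdot x=0$. Since torsion elements are sums of homogeneous torsion elements, $T$ is a graded submodule, every element of which is killed by some power of $U$.

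Next, split off the free part as a graded summand. The quotient $H/T$ is a finitely-generated torsion-free graded module over the PID $\F[U]$, hence free; a homogeneous generating set can be trimmed to a homogeneous $\F[U]$-basis. Lifting each basis element to a homogeneous element of $H$ of the same grading yields a graded section of $H\twoheadrightarrow H/T$, and therefore a graded splitting $H\cong T\oplus F$ with $F\cong\F[U]^k$ generated by homogeneous elements.

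It remains to decompose $T$ inductively in a graded way. Choose a homogeneous $x_1\in T$ whose $U$-order $k_1$ (that is, the smallest $k\geq 1$ with $U^k\cdot x_1=0$) is maximal; then $U^{k_1}$ annihilates every homogeneous element of $T$, hence all of $T$. The cyclic submodule $\F[U]\cdot x_1\cong\F[U]/(U^{k_1})$ has homogeneous generator $x_1$. The key step is to produce a graded direct complement to $\F[U]\cdot x_1$ in $T$: given a homogeneous $\bar y\in T/\F[U]\cdot x_1$ with $U^j\bar y=0$, pick any homogeneous lift $\tilde y\in T$ of the same grading; if $U^j\tilde y=h(U)\cdot x_1$, the maximality of $k_1$ forces $U^{k_1}$ to kill the left side, which (using $U$-torsion and comparing $U$-orders of $x_1$) allows one to write $h(U)=U^j\cdot h_1(U)$ and replace $\tilde y$ by $\tilde y-h_1(U)\cdot x_1$, a homogeneous lift killed by $U^j$. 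Iterating on the strictly smaller quotient $T/\F[U]\cdot x_1$ produces the remaining cyclic summands $\F[U]/(U^{k_i})$ with homogeneous generators, and combining with the free summand $F$ gives the required decomposition. The main obstacle is precisely this last step: arranging that the complement of $\F[U]\cdot x_1$ can be chosen homogeneous, which is the graded refinement of the classical elementary divisors argument and relies crucially on the maximality of $k_1$ together with the freeness of the $\Z$-action on $S$.
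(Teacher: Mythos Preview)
Your proof is correct and takes a genuinely different route from the paper. The paper's argument is two lines: it splits $H$ as a direct sum over the orbits of the $\Z$-action on $S$, reducing to the case of a single orbit (i.e.\ an ordinary $\Z$-graded $\F[U]$-module), and then simply cites Proposition~A.4.3 of \cite{MR3381987} for that case. You instead supply a self-contained proof of the graded structure theorem: a degree argument to show torsion is $U$-torsion, a graded section to split off the free part, and the classical elementary-divisors induction carried out with homogeneous lifts. Your approach buys self-containment and makes the role of the free $\Z$-action explicit at each step; the paper's buys brevity by outsourcing the single-orbit case to a standard reference.

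Two small points are worth tightening. First, the sentence ``since torsion elements are sums of homogeneous torsion elements'' is exactly the statement that $T$ is graded, and it is not quite implied by what precedes it: you showed that a \emph{homogeneous} torsion element is $U$-torsion, but passing to an arbitrary torsion element $y=\sum y_{n_i}$ requires one more leading-term argument (the top-degree piece of $f(U)y$ isolates $y_{n_1}$ and forces it to be $U$-torsion, then induct). Second, in the lifting step you write $U^j\tilde y = h(U)\cdot x_1$ and then factor $h(U)=U^j h_1(U)$; since $U^j\tilde y$ is homogeneous and the nonzero elements $x_1, Ux_1,\ldots,U^{k_1-1}x_1$ lie in distinct gradings, $h(U)\cdot x_1$ is already a monomial $cU^\ell x_1$, and this is precisely what makes $h_1(U)\cdot x_1 = cU^{\ell-j}x_1$ homogeneous of the same grading as $\tilde y$. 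Both are expository omissions rather than mathematical gaps.
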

\begin{proof}
	Proposition A.4.3 of \cite{MR3381987} gives the result when free $\Z$-action on $S$ is transitive. We split $H$ as a direct sum over the orbits of $S \x \Z \to S$ to reduce to this case. 
\end{proof}

Let $(\Sigma,\bo\alpha,\bo\beta,\bo p)$ be a Heegaard diagram for $K_b \cup C$ with four basepoints, and let $z \in \bo p$ be the basepoint corresponding to the suture on $\partial N(C)$ oriented as the meridian. The short exact sequence \[
	\begin{tikzcd}
		0 \ar[r] & \CFL^-(\Sigma,\bo\alpha,\bo\beta,\bo p,z) \ar[r,"U - \Id"] &[0.5em] \CFL^-(\Sigma,\bo\alpha,\bo\beta,\bo p,z) \ar[r] & \displaystyle\frac{\CFL^-(\Sigma,\bo\alpha,\bo\beta,\bo p,z)}{U - \Id} \ar[r] & 0
	\end{tikzcd}
\]induces the following exact triangle. \[
	\begin{tikzcd}[column sep=0]
		\HFL^-(K_b \cup C,\sigma) \ar[rr,"U - \Id"] & & \HFL^-(K_b \cup C,\sigma) \ar[dl]\\
		& {H\left(\displaystyle\frac{\CFL^-(\Sigma,\bo\alpha,\bo\beta,\bo p,z)}{U - \Id} \right)} \ar[ul] &
	\end{tikzcd}
\]We will identify the quotient complex in Lemma~\ref{lem:quotientComplex}. 

\begin{lem}\label{lem:basepointErasing}
	The sutured Heegaard diagram $(\Sigma,\bo\alpha,\bo\beta,\bo p \setminus \{z\})$ obtained by erasing $z$ represents the sutured exterior of $K_b$ with a sutured puncture. 
\end{lem}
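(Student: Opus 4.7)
The plan is to analyze explicitly the effect on the underlying sutured 3-manifold of deleting the basepoint $z$ from the Heegaard data. Recall from section~\ref{subsubsec:suturedFloerHomology} that from a balanced diagram $(\Sigma,\bo\alpha,\bo\beta,\bo p)$ one builds the sutured manifold as follows: let $\Sigma' = \Sigma \setminus N(\bo p)$, form $[-1,1] \x \Sigma'$, and attach $3$-dimensional $2$-handles along $\{-1\} \x \alpha_i$ and $\{1\} \x \beta_i$. Each basepoint $p \in \bo p$ corresponds to a boundary circle of $\Sigma'$, and the annular component $[-1,1] \x (\partial_p \Sigma')$ of $\gamma$ contains the suture $\{0\} \x \partial_p \Sigma'$. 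In particular, the basepoint $z$ corresponds to the annular component $A_\sigma$ of $\gamma$ that carries the meridional suture $\sigma$ of $C$.

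Next I would compare the two constructions. Writing $\Sigma'' = \Sigma \setminus N(\bo p \setminus \{z\})$, we have $\Sigma'' = \Sigma' \cup D_z$ where $D_z = N(z)$ is a disk capping off the boundary circle $\partial_z \Sigma'$. Hence the sutured manifold associated to $(\Sigma,\bo\alpha,\bo\beta,\bo p \setminus \{z\})$ is obtained from $S^3(K_b \cup C)$ by gluing the $3$-ball $[-1,1] \x D_z$ along the annulus $A_\sigma = [-1,1] \x \partial D_z$. So the entire question reduces to identifying the result of attaching a $3$-ball to $S^3(K_b \cup C)$ along the meridional annulus $A_\sigma \subset \partial N(C)$.

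I would identify this topologically by viewing the gluing inside $S^3 \setminus N(K_b)$. Decompose the solid torus $N(C) \cong D^2 \x S^1$ along a meridional disk into two $3$-balls $B_1, B_2$ glued along two disks; correspondingly, $\partial N(C)$ decomposes into two meridional annuli $A_1, A_2$ with $\sigma \in A_1$ and $\sigma' \in A_2$. Choosing the decomposition so that $A_1 = A_\sigma$, abstract gluing of a $3$-ball along $A_\sigma$ is homeomorphic to the inclusion $M \cup_{A_\sigma} B_1 \subset S^3 \setminus N(K_b)$, and since $N(C) = B_1 \cup B_2$, this identifies the new manifold with $(S^3 \setminus N(K_b)) \setminus \mathrm{int}(B_2)$, i.e., $S^3 \setminus N(K_b)$ with an open ball removed from its interior.

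Finally I would read off the sutures. The two meridional sutures on $\partial N(K_b)$ are untouched. The new $S^2$ boundary component is $\partial B_2$, made up of $A_2$ (on the old $\partial N(C)$) together with the two disks where $B_1$ and $B_2$ were glued; the only suture lying on it is $\sigma' \in A_2$. This is precisely one suture on a new spherical boundary component, so the resulting sutured manifold is $S^3(K_b)(1)$. The main thing to check carefully is that the abstract $3$-ball $[-1,1] \x D_z$ glued along $A_\sigma$ can be identified with the geometric half $B_1$ of $N(C)$; this is forced by the fact that $A_\sigma$ is a meridional annulus in $\partial N(C)$, so any $3$-ball attached along it, together with $M$, completes to $S^3 \setminus N(K_b)$ minus one ball.
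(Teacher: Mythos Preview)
Your proof is correct and follows essentially the same approach as the paper: both recognize that erasing $z$ amounts to capping off the corresponding boundary circle of $\Sigma'$ by a disk $D_z$, and hence to attaching the $3$-ball $[-1,1]\times D_z$ to $S^3(K_b\cup C)$ along the meridional annulus $A_\sigma\subset\partial N(C)$. The paper phrases this operation as ``attaching a $3$-dimensional $2$-handle along the suture $\sigma$'' and simply asserts that the result is $S^3(K_b)(1)$; your argument supplies the extra detail behind that assertion by explicitly splitting $N(C)$ into two balls along a meridional disk and reading off the resulting boundary and sutures.
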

\begin{proof}
	Recall that the sutured exterior of $K_b \cup C$ may obtained from the diagram $(\Sigma,\bo\alpha,\bo\beta,\bo p)$ by first deleting a regular neighborhood of $\bo p$ to obtain $\Sigma'$, and then attaching $2$-handles to $[-1,1] \x \Sigma'$ along $\{-1\} \x \alpha_i$ and $\{1\} \x \beta_i$, and viewing $\{0\} \x \partial\Sigma'$ as the collection of sutures. 

	Erasing $z$ from the diagram therefore corresponds to filling the corresponding boundary component of $\Sigma'$ with a disc. Thus the corresponding sutured manifold is obtained from the sutured exterior of $K_b \cup C$ by attaching a $3$-dimensional $2$-handle along the suture on $\partial N(C)$. This sutured manifold is precisely the sutured exterior of $K_b$ with a sutured puncture. 
\end{proof}

\begin{lem}\label{lem:quotientComplex}
	The quotient complex \[
		\frac{\CFL^-(\Sigma,\bo\alpha,\bo\beta,\bo p,z)}{U - \Id}
	\]is isomorphic to the sutured Floer complex associated to $(\Sigma,\bo\alpha,\bo\beta,\bo p\setminus\{z\})$. 
\end{lem}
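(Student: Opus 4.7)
The plan is to unwind both sides of the claimed isomorphism directly from their definitions and observe that, after setting $U = 1$, the formula defining the differential on $\CFL^-(\Sigma,\bo\alpha,\bo\beta,\bo p,z)$ is literally the formula for the sutured Floer differential on $(\Sigma,\bo\alpha,\bo\beta,\bo p\setminus\{z\})$. The isomorphism will be the tautological identification of generators.

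First I would identify the underlying $\F_2$-vector spaces. The module $\CFL^-(\Sigma,\bo\alpha,\bo\beta,\bo p,z)$ is free over $\F_2[U]$ on the set of generators $\mathbf{x} \in \mathbf{T}_\alpha \cap \mathbf{T}_\beta$, so its quotient by $U - \Id$, equivalently $\CFL^- \otimes_{\F_2[U]} \F_2[U]/(U - 1)$, is the $\F_2$-vector space freely generated by $\mathbf{T}_\alpha \cap \mathbf{T}_\beta$. This matches the underlying vector space of $\SFC(\Sigma,\bo\alpha,\bo\beta,\bo p\setminus\{z\})$ on the nose. To have any hope of matching differentials, I would fix a single family of almost complex structures on $\Sym^k(\Sigma)$ and use it for both complexes, so that each $\#\widehat{\sr M}(\phi)$ denotes the same number on both sides.

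Next I would match the induced differentials term by term. The $\CFL^-$ differential on a generator $\mathbf{x}$ is
\[
	\partial\mathbf{x} = \sum_{\mathbf{y}} \sum_{\substack{\phi \in \pi_2(\mathbf{x},\mathbf{y}),\ \mu(\phi) = 1 \\ n_p(\phi) = 0 \text{ for all } p \in \bo p \setminus \{z\}}} \#\widehat{\sr M}(\phi) \cdot U^{n_z(\phi)} \cdot \mathbf{y},
\]
and reducing modulo $U - \Id$ replaces $U^{n_z(\phi)}$ by $1$, yielding
\[
	\bar\partial\mathbf{x} = \sum_{\mathbf{y}} \sum_{\substack{\phi \in \pi_2(\mathbf{x},\mathbf{y}),\ \mu(\phi) = 1 \\ n_p(\phi) = 0 \text{ for all } p \in \bo p \setminus \{z\}}} \#\widehat{\sr M}(\phi) \cdot \mathbf{y}.
\]
This is precisely the sutured Floer differential associated to $(\Sigma,\bo\alpha,\bo\beta,\bo p\setminus\{z\})$ recalled in section~\ref{subsubsec:suturedFloerHomology}.

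I do not expect any substantive obstacle. The only technical point is admissibility: the sutured Floer complex on the right requires $(\Sigma,\bo\alpha,\bo\beta,\bo p\setminus\{z\})$ to be admissible, but this is exactly what was assumed when setting up $\CFL^-(\Sigma,\bo\alpha,\bo\beta,\bo p,z)$ at the start of this subsection. Admissibility of $(\Sigma,\bo\alpha,\bo\beta,\bo p\setminus\{z\})$ simultaneously ensures the $\SFC$ differential is well-defined and, via the identification above, independently confirms that the induced quotient differential squares to zero.
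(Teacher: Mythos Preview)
Your proof is correct and essentially identical to the paper's: both identify the quotient as the free $\F_2$-vector space on $\mathbf{T}_\alpha\cap\mathbf{T}_\beta$, set $U=1$ in the differential to recover the sutured Floer differential with $z$ erased, and appeal to the standing admissibility hypothesis. Your remark about fixing a single family of almost complex structures is a small elaboration the paper leaves implicit.
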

\begin{proof}
	We may identify the underlying $\F_2[U]$-module of the quotient complex as \[
		\frac{\CFL^-(\Sigma,\bo\alpha,\bo\beta,\bo p,z)}{U - \Id} = \bigoplus_{\mathbf{x} \in \mathbf{T}_\alpha\cap\mathbf{T}_\beta} \F_2\cdot\mathbf{x}
	\]where $U$ acts as the identity. Under this identification, the differential on the quotient is \[
		\partial \mathbf{x} = \sum_{\mathbf{y} \in \mathbf{T}_\alpha\cap\mathbf{T}_\beta} \sum_{\substack{\phi \in \pi_2(\mathbf{x},\mathbf{y}),\:\mu(\phi) = 1\\n_p(\phi) = 0 \:|\: p \in \bo{p}\setminus \{z\}}} \#\widehat{\sr M}(\phi)\cdot\mathbf{y}.
	\]This is precisely the sutured Floer chain complex associated to $(\Sigma,\bo\alpha,\bo\beta,\bo p\setminus\{z\})$. 
\end{proof}

\begin{prop}\label{prop:rankEqualsTwiceDim}
	The rank of $\HFL^-(K_b \cup C,\sigma)$ as an $\F_2[U]$-module is two times the dimension of $\HFKhat(K_b)$ as an $\F_2$-vector space. 
\end{prop}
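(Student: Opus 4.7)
The plan is to read off the $\F_2[U]$-rank of $H := \HFL^-(K_b \cup C, \sigma)$ from the long exact sequence associated to the short exact sequence of $\F_2$-chain complexes
\[
0 \to \CFL^-(\Sigma,\bo\alpha,\bo\beta,\bo p,z) \xrightarrow{\,U-\Id\,} \CFL^-(\Sigma,\bo\alpha,\bo\beta,\bo p,z) \to \CFL^-(\Sigma,\bo\alpha,\bo\beta,\bo p,z)/(U-\Id) \to 0.
\]
The map $U-\Id$ is injective on $\CFL^-$ because $\CFL^-$ is a free $\F_2[U]$-module and $\F_2[U]$ is an integral domain, so the short exact sequence is indeed exact.

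First I would identify the homology of the quotient. By Lemma~\ref{lem:quotientComplex} the quotient complex is the sutured Floer chain complex associated to $(\Sigma,\bo\alpha,\bo\beta,\bo p \setminus \{z\})$, and by Lemma~\ref{lem:basepointErasing} this diagram represents the sutured exterior of $K_b$ with a sutured puncture. Thus the quotient has homology $\SFH(S^3(K_b)(1)) \cong \HFKhat(K_b) \otimes \F_2^2$, of total $\F_2$-dimension $2\dim \HFKhat(K_b)$, where the factor of two reflects the extra basepoint on the sutured-puncture boundary.

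Next I would apply Lemma~\ref{lem:HomologyStruct} to $H$, using the splitting along relative $\Spin^c$ structures together with the $\Z$-action by translation by $[\sigma] \in H_1(S^3(K_b \cup C))$. This action is free because the homology class of $\sigma$ generates an infinite cyclic subgroup, as noted just before Lemma~\ref{lem:HomologyStruct}. Writing $H \cong \F_2[U]^k \oplus \bigoplus_i \F_2[U]/U^{k_i}$ with $k = \rank_{\F_2[U]} H$, the endomorphism $U-\Id$ is injective on each $\F_2[U]$ summand with one-dimensional cokernel, and is an isomorphism on each torsion summand, since $1 + U + \cdots + U^{k_i - 1}$ is a two-sided inverse for $U-1$ modulo $U^{k_i}$. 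Consequently $\ker(U-\Id) = 0$ on $H$ and $\dim_{\F_2} \coker(U-\Id) = k$.

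Feeding this into the long exact sequence
\[
\cdots \to H \xrightarrow{\,U-\Id\,} H \to \SFH(S^3(K_b)(1)) \to H \xrightarrow{\,U-\Id\,} H \to \cdots,
\]
the vanishing of $\ker(U-\Id)$ forces the connecting map $\SFH(S^3(K_b)(1)) \to H$ to be zero, so the sequence collapses to a short exact sequence
\[
0 \to H \xrightarrow{\,U-\Id\,} H \to \SFH(S^3(K_b)(1)) \to 0
\]
identifying $\SFH(S^3(K_b)(1))$ with $\coker(U-\Id)$, of $\F_2$-dimension $k$. Comparing with the first identification yields $k = 2 \dim \HFKhat(K_b)$, which is the content of the proposition. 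The one nontrivial external input is the stabilization identification $\SFH(S^3(K_b)(1)) \cong \HFKhat(K_b) \otimes \F_2^2$; everything else is a formal consequence of Lemma~\ref{lem:HomologyStruct} and the exactness of the long exact sequence.
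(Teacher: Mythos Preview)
Your proof is correct and follows essentially the same approach as the paper: set up the $(U-\Id)$ short exact sequence, identify the quotient with $\SFH(S^3(K_b)(1)) \cong \HFKhat(K_b)\otimes\F_2^2$ via Lemmas~\ref{lem:quotientComplex} and~\ref{lem:basepointErasing}, and then use the structure result (Lemma~\ref{lem:HomologyStruct}) to see that $U-\Id$ is injective with $k$-dimensional cokernel on $\HFL^-$. The paper cites Proposition~9.14 of \cite{Juh06} for the sutured-puncture identification you invoke informally; otherwise the arguments match.
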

\begin{proof}
	By Lemma~\ref{lem:quotientComplex}, we have a short exact sequence \[
		\begin{tikzcd}[column sep=small]
			0 \ar[r] & \CFL^-(\Sigma,\bo\alpha,\bo\beta,\bo p,z) \ar[r,"U - \Id"] &[2em] \CFL^-(\Sigma,\bo\alpha,\bo\beta,\bo p,z) \ar[r] & \SFC(\Sigma,\bo\alpha,\bo\beta,\bo p\setminus\{z\}) \ar[r] & 0.
		\end{tikzcd}
	\]Let $S^3(K_b)(1)$ denote the sutured exterior of $K_b$ with a sutured puncture. By Proposition 9.14 of \cite{Juh06}, we know that $\SFH(S^3(K_b)(1)) \cong \SFH(S^3(K_b)) \otimes \F_2^2 = \HFKhat(K_b) \otimes \F_2^2$ so by Lemma~\ref{lem:basepointErasing} we have the following exact triangle. \[
		\begin{tikzcd}[column sep=0]
			\HFL^-(K_b \cup C,\sigma) \ar[rr,"U - \Id"] & & \HFL^-(K_b \cup C,\sigma) \ar[dl]\\
			& \HFKhat(K_b) \otimes \F_2^2 \ar[ul] &
		\end{tikzcd}
	\]By Lemma~\ref{lem:HomologyStruct}, we know that $\HFL^-(K_b \cup C,\sigma)$ is isomorphic to \[
		\F_2[U]^k \oplus \bigoplus_{i=1}^n \frac{\F_2[U]}{U^{k_i}}
	\]as an $\F_2[U]$-module where $k$ is the rank of $\HFL^-(K_b \cup C,\sigma)$ and $k_1,\ldots,k_n$ are positive integers. It follows that $U - \Id$ is injective on $\HFL^-(K_b \cup C,\sigma)$, so it suffices to show that the cokernel of $U - \Id$ is $k$-dimensional over $\F_2$. But this is true because $U - \Id$ is invertible on $\F_2[U]/U^n$ for positive $n$ and the cokernel of $U - \Id$ on $\F_2[U]$ is $1$-dimensional. 
\end{proof}

Thus, in order to prove that $\HFKhat(K_b) \neq \HFKhat(K_\#)$ for $b$ nontrivial, it suffices to prove that the $\F_2[U]$-rank of $\HFL^-(K_b \cup C,\sigma)$ is strictly larger than the $\F_2[U]$-rank of $\HFL^-(K_\# \cup C,\sigma)$. We give a sufficient condition (Proposition~\ref{prop:SpinCstructDiff}) on the structure of $\HFL^-(K_b \cup C,\sigma)$ for this to occur, and we later verify it. First observe that we have a rank inequality from an application of Zemke's ribbon concordance argument. 

\begin{lem}\label{lem:HeegaardMinusInject}
	Let $R\colon K_\# \cup C \to K_b\cup C$ be a ribbon concordance, with $R'\colon K_b \cup C \to K_\# \cup C$ the concordance in reverse. There are induced $\F_2[U]$-equivariant maps \[
		\HFL^-(K_\# \cup C,\sigma) \to \HFL^-(K_b \cup C,\sigma) \to \HFL^-(K_\# \cup C,\sigma)
	\]respecting $\Spin^c$ structure whose composite is the identity on $\HFL^-(K_\# \cup C,\sigma)$. The maps respect the action of $[\sigma]$ on $\Spin^c$ structures. 
\end{lem}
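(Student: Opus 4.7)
The plan is to adapt Zemke's ribbon concordance argument (the proof of Theorem~\ref{thm:zemke}) from the hat version of knot Floer homology to the minus version of link Floer homology of section~\ref{subsubsec:ComparisonWithMinusLinkFloer}. Equip $K_\# \cup C$ and $K_b \cup C$ with multi-based link structures having one $\bo w$ and one $\bo z$ basepoint per component, chosen so that the $\bo z$ basepoint on the $C$-component is the basepoint $z$ corresponding to the suture $\sigma$. Decorate $R\colon K_\# \cup C \to K_b \cup C$ so that its restriction to the $C$-component is the trivial concordance $[0,1] \x C$ with the standard arc decoration, and its restriction to the $K$-component is decorated as a ribbon concordance from $K_\#$ to $K_b$ in the manner of section~\ref{subsubsec:functorialityLinkFloerHomology}, with the $\bo z$-arcs disjoint from the births and saddles in the movie. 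Decorate $R'$ with the reverse decoration.

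Apply Zemke's functorial cobordism maps for the minus version of link Floer homology (section 3 of \cite{Zem19c}) to obtain chain maps on $\CFL^-$ equivariant over the full polynomial ring $\F_2[U_1,V_1,U_2,V_2]$, summed over $\Spin^c$ structures of the cobordism. Setting $U_1 = V_1 = V_2 = 0$ (keeping only $U = U_2$, the variable attached to $z$) recovers the complexes $\CFL^-(\Sigma,\bo\alpha,\bo\beta,\bo p,z)$ of section~\ref{subsubsec:ComparisonWithMinusLinkFloer}, producing $\F_2[U]$-equivariant chain maps on these quotients. Passing to homology yields the claimed maps $\HFL^-(K_\# \cup C,\sigma) \to \HFL^-(K_b \cup C,\sigma) \to \HFL^-(K_\# \cup C,\sigma)$. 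Since multiplication by $U$ realizes the $[\sigma]$-shift on $\Spin^c$ structures (by the definition in section~\ref{subsubsec:ComparisonWithMinusLinkFloer}), the $U$-equivariance of these maps gives the requested compatibility with the $[\sigma]$-action on $\Spin^c$ structures, and the individual $\Spin^c$ summands are preserved because Zemke's cobordism maps split along $\Spin^c$ structures on the cobordism.

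To see the composite is the identity, note that $R' \circ R$ restricts to the trivial decorated concordance on the $C$-component, so the geometric reduction underlying Zemke's proof of Theorem~\ref{thm:zemke} takes place in a region disjoint from $C$. By that argument, $R' \circ R$ is equivalent, as a decorated link cobordism, to the trivial concordance together with a disjoint union of decorated trivial $2$-spheres, each of which acts as the identity on $\CFL^-$. The main obstacle, and the step demanding most care, is verifying that Zemke's single-component ribbon-antiribbon reduction carries over cleanly to this multi-based two-component setting with the particular quotient by $U_1 = V_1 = V_2 = 0$; the key point is that because the $C$-component and its decoration are fixed throughout, the reduction proceeds entirely on the $K_\#$-component and is insensitive to the additional data, so the resulting trivial $2$-spheres lie in the complement of $C$ and their induced maps commute with the surviving $U$-action and act as the identity in the minus version.
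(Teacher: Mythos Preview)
Your proposal is correct and takes essentially the same approach as the paper, which simply cites Zemke's ribbon concordance argument (specifically Theorem 1.7 of \cite{Zem19a}) without further elaboration. What you have written is a faithful unpacking of that citation: the general $\CFL^-$ cobordism maps specialize to the quotient with all variables but the one attached to $z$ set to zero, and Zemke's tubing argument for $R'\circ R$ goes through because the $C$-component is the trivially decorated product throughout.
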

\begin{proof}
	This follows from Zemke's ribbon concordance argument. For example, see Theorem 1.7 of \cite{Zem19a}. 
\end{proof}

\begin{cor}
	We have an $\F_2[U]$-rank inequality \[
		\rank \HFL^-(K_\# \cup C,\sigma) \leq \rank \HFL^-(K_b \cup C,\sigma).
	\]
\end{cor}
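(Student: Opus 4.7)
The plan is to deduce the corollary directly from Lemma~\ref{lem:HeegaardMinusInject} together with a formal fact about ranks of split injections over the PID $\F_2[U]$. First I would observe that Corollary~\ref{cor:RibbonConcordance} supplies a ribbon concordance $R\colon K_\# \to K_b$; taking the disjoint union of $R$ with the trivial concordance $[0,1] \x C$ (chosen so that the trace does not interact with the saddles of $R$, which is arranged as in the proof of Lemma~\ref{lem:HeegaardInjectBetweenTriangles}) yields a ribbon concordance $K_\# \cup C \to K_b \cup C$, so the hypothesis of Lemma~\ref{lem:HeegaardMinusInject} is satisfied.

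Next I would apply Lemma~\ref{lem:HeegaardMinusInject} to obtain $\F_2[U]$-equivariant maps
\[
\iota\colon \HFL^-(K_\# \cup C,\sigma) \to \HFL^-(K_b \cup C,\sigma), \qquad \pi\colon \HFL^-(K_b \cup C,\sigma) \to \HFL^-(K_\# \cup C,\sigma)
\]
with $\pi \circ \iota = \Id$. Since $\pi \circ \iota = \Id$, the map $\iota$ is a split injection of $\F_2[U]$-modules, so there is a direct sum decomposition
\[
\HFL^-(K_b \cup C,\sigma) \cong \HFL^-(K_\# \cup C,\sigma) \oplus \ker(\pi)
\]
of $\F_2[U]$-modules.

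Finally, since $\F_2[U]$ is a principal ideal domain, the rank of a finitely generated $\F_2[U]$-module is additive on direct sums. Combined with the decomposition above, this yields
\[
\rank \HFL^-(K_b \cup C,\sigma) = \rank \HFL^-(K_\# \cup C,\sigma) + \rank \ker(\pi) \ge \rank \HFL^-(K_\# \cup C,\sigma),
\]
which is the desired inequality. There is no serious obstacle here; the only point to check is that the ribbon concordance between knots produced in Corollary~\ref{cor:RibbonConcordance} can be extended to a ribbon concordance between the two-component links $K_\# \cup C$ and $K_b \cup C$, which is immediate since $C$ can be placed entirely outside the region where all the saddle and death moves occur.
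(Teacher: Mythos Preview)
Your proposal is correct and is essentially the approach the paper intends: the paper states this corollary with no proof at all, leaving it as an immediate consequence of Lemma~\ref{lem:HeegaardMinusInject}, and your argument simply fills in the standard details (split injection over a PID gives a direct summand, hence rank is subadditive). The one extra thing you supply that the paper does not spell out here is the construction of the ribbon concordance $K_\#\cup C \to K_b\cup C$, which is indeed handled exactly as you describe (cf.\ the proof of Lemma~\ref{lem:HeegaardInjectBetweenTriangles}).
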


By Lemma~\ref{lem:HomologyStruct}, we know that $\HFL^-(K_b\cup C,\sigma)$ is isomorphic to $\F_2[U]^k \oplus \bigoplus_i \F_2[U]/U^{k_i}$ where the generators of the summands can be chosen to be homogeneous with respect to the splitting along $\Spin^c$ structures. Recall that $x \in \HFL^-(K_b \cup C,\sigma)$ is $U$-torsion if $U^n x = 0$ for some $n$. 

\begin{df*}[generating a free summand]
	We say that a $\Spin^c$ structure $\fk{s}$ of the sutured exterior of $K_b \cup C$ \textit{generates a free summand} of $\HFL^-(K_b \cup C,\sigma)$ if there exists an element $x \in \HFL^-(K_b \cup C,\sigma,\fk{s})$ which is not $U$-torsion and there is no $y \in \HFL^-(K_b \cup C,\sigma)$ for which $Uy = x + z$ where $z \in \HFL^-(K_b \cup C,\sigma,\fk{s})$ is $U$-torsion. 
\end{df*}
\begin{rem}
	Given an isomorphism of $\HFL^-(K_b \cup C,\sigma)$ with $\F_2[U]^k \oplus \bigoplus_i \F_2[U]/U^{k_i}$ where the generators of the free-summands are homogeneous with respect to the splitting along $\Spin^c$ structures, these $\Spin^c$ structures are the ones generating free summands in the sense of the definition. The isomorphism with $\F_2[U]^k \oplus \bigoplus_i \F_2[U]/U^{k_i}$ is non-canonical, but the $\Spin^c$ structures generating free summands are well-defined. 
\end{rem}

\begin{prop}\label{prop:SpinCstructDiff}
	If there are $\Spin^c$ structures $\fk{t},\fk{s}$ generating free summands of $\HFL^-(K_b \cup C,\sigma)$ satisfying $\fk{t} = \fk{s} + [\sigma]$, then the rank of $\HFL^-(K_b \cup C,\sigma)$ is strictly larger than the rank of $\HFL^-(K_\# \cup C,\sigma)$. 
\end{prop}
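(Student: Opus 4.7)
The plan is to argue by contradiction, using Zemke's ribbon concordance splitting to transfer the two free summand generators into $\HFL^-(K_\# \cup C,\sigma)$ and then show that the split-link structure of $K_\# \cup C$ forbids this configuration.

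By Lemma~\ref{lem:HeegaardMinusInject}, the ribbon concordance $K_\# \cup C \to K_b \cup C$ and its reverse furnish a $\Spin^c$-respecting, $\F_2[U]$-equivariant split injection $\iota\colon \HFL^-(K_\# \cup C,\sigma) \hookrightarrow \HFL^-(K_b \cup C,\sigma)$, yielding a $\Spin^c$-graded $\F_2[U]$-module decomposition $\HFL^-(K_b \cup C,\sigma) \cong \iota(\HFL^-(K_\# \cup C,\sigma)) \oplus R$. Suppose for contradiction that the two ranks are equal; then $R$ has rank zero and so is $U$-torsion. Let $x \in \HFL^-(K_b \cup C,\sigma,\fk{s})$ witness $\fk{s}$ as a free summand generator, and decompose $x = \iota(x') + r$ with $x' \in \HFL^-(K_\# \cup C,\sigma,\fk{s})$ and $r \in R$. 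Since $r$ is $U$-torsion but $x$ is not, $x'$ is not $U$-torsion. If some $y' \in \HFL^-(K_\# \cup C,\sigma)$ satisfied $Uy' - x' = z'$ with $z'$ a $U$-torsion element in $\Spin^c$-structure $\fk{s}$, then $U\iota(y')$ would exhibit $x$ as lying in the image of $U$ modulo a $U$-torsion element in $\fk{s}$, contradicting the hypothesis on $x$. Hence $\fk{s}$, and similarly $\fk{t}$, generates a free summand of $\HFL^-(K_\# \cup C,\sigma)$.

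The remaining task, and the main obstacle, is to show that $\HFL^-(K_\# \cup C,\sigma)$ cannot have two free summand generators whose $\Spin^c$-structures differ by $[\sigma]$. Because $C$ is the linking circle for the trivial band joining the two components of the original split link, $C$ bounds a disc disjoint from $K_\#$, so $K_\# \cup C$ is the split union of $K_\#$ and an unknot. Using the connected-sum decomposition $S^3(K_\# \cup C) \cong S^3(K_\#) \# S^3(C)$ together with the direct Heegaard-diagram computation $\HFL^-(\text{unknot},\sigma) \cong \F_2[U]$ from the one-intersection diagram of the unknot, a K\"unneth-type argument (accounting for the bridge contribution in the sutured connected sum and its interaction with the $[\sigma]$-grading) places all free summand generators of $\HFL^-(K_\# \cup C,\sigma)$ at a single $[\sigma]$-coordinate, so no two can differ by $[\sigma]$. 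This contradicts the conclusion of the previous paragraph, completing the proof. The technical heart of the argument is therefore the K\"unneth/connected-sum identification for this particular split link and the verification that the $\F_2[U]$-free part it produces is concentrated at one $[\sigma]$-level.
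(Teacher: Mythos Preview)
Your overall strategy matches the paper's: both arguments reduce the proposition to the claim that in $\HFL^-(K_\# \cup C,\sigma)$ no two $\Spin^c$ structures generating free summands can differ by $[\sigma]$, and both then finish using the $\Spin^c$-graded split injection of Lemma~\ref{lem:HeegaardMinusInject}. Your contradiction framework in the first two paragraphs is correct (in particular, the transfer of the ``generates a free summand'' property from $x$ to $x'$ is a valid manipulation of the definition), though it is somewhat more elaborate than needed; the paper simply observes that since $\HFL^-(K_\# \cup C,\sigma)$ is free with all generators concentrated at a single $[\sigma]$-level, its image under $\iota$ cannot account for free summands of $\HFL^-(K_b \cup C,\sigma)$ sitting at two distinct $[\sigma]$-levels, forcing a strict rank jump.

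The substantive difference is in how the key structural fact about $\HFL^-(K_\# \cup C,\sigma)$ is established. You invoke an abstract K\"unneth/connected-sum identification and the computation $\HFL^-(\text{unknot},\sigma)\cong\F_2[U]$, and you flag this as the technical heart but do not carry it out. The paper instead builds an explicit admissible Heegaard diagram for the split link $K_\#\cup C$ by inserting a small local configuration $(\alpha_C,\beta_C,z,w)$ into a region of a diagram for $K_\#$ containing a basepoint $p$. From this diagram one reads off directly that every Whitney disc crossing $z$ also crosses $w$ (so $U$ never appears in the differential and the module is free), and that the $\Spin^c$ structures of the generators differ only by multiples of the meridian of $K_\#$. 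This explicit local argument \emph{is} the proof of the K\"unneth-type statement you need in this particular minus version (where only the one variable $U$ tracking $z$ survives), so your sketch is pointing at the right fact but stops short of the concrete verification the paper supplies.
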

\begin{proof}
	We first describe the structure of $\HFL^-(K_\# \cup C,\sigma)$. Fix a Heegaard diagram $(\Sigma,\bo\alpha,\bo\beta,\bo p)$ with two basepoints for $K_\#$, and in a region of $\Sigma$ containing one of the basepoints $p$, add the two curves $\alpha_C,\beta_C$ and the two basepoints $z,w$ appearing in Figure~\ref{fig:unknotDiagram}. The new diagram $(\Sigma,\bo\alpha',\bo\beta',\bo p',z) = (\Sigma,\bo\alpha \cup\{\alpha_C\},\bo\beta \cup \{\beta_C\},\bo p \cup \{z,w\},z)$ with distinguished basepoint $z$ is an admissible diagram for the sutured exterior of the split link $K_\# \cup C$ where $z$ corresponds to the suture on $\partial N(C)$ oriented as the meridian. 

	\begin{figure}[!ht]
		\centering
		\labellist
		\pinlabel $\alpha_C$ at 565 210
		\pinlabel $\beta_C$ at 50 210
		\pinlabel $p$ at 30 405
		\pinlabel $z$ at 325 265
		\pinlabel $w$ at 335 155
		\endlabellist
		\includegraphics[width=.35\textwidth]{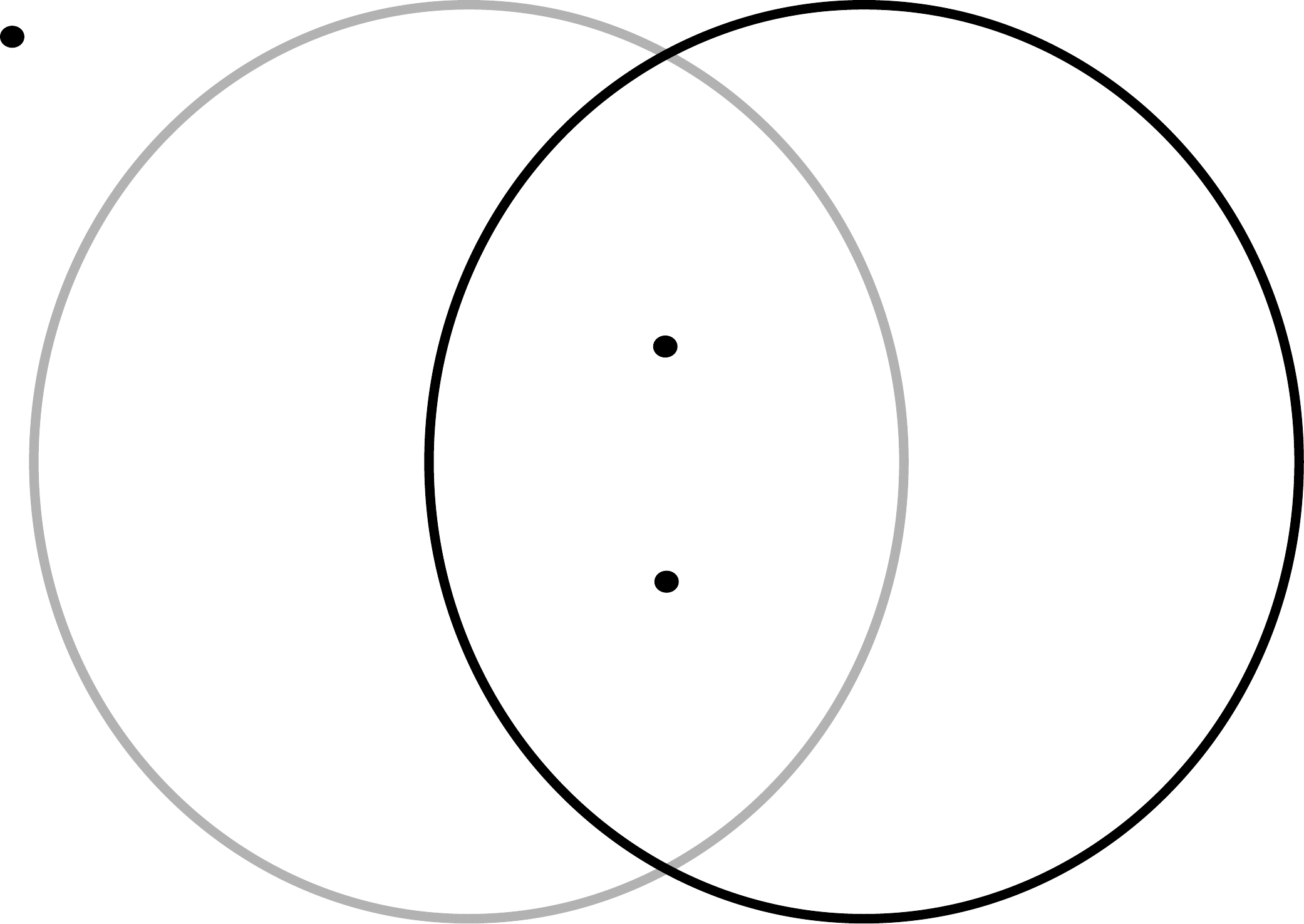}
		\caption{Forming a Heegaard diagram for $K_\# \cup C$ from a diagram for $K_\#$.}
		\label{fig:unknotDiagram}
	\end{figure}

	To each generator $\mathbf{x}$ of the complex $\CFKhat(\Sigma,\bo\alpha,\bo\beta,\bo p)$, there are two generators $\mathbf{x}_1,\mathbf{x}_2$ of the complex $\CFL^-(\Sigma,\bo\alpha',\bo\beta',\bo p',z)$. For suitable families of almost complex structures on the symmetric products, we can arrange that if the differential acts as $\partial \mathbf{x} = \sum_{\mathbf{y}} c_{\mathbf{y}} \cdot \mathbf{y}$ on $\CFKhat(\Sigma,\bo\alpha,\bo\beta,\bo p)$, then $\partial \mathbf{x}_i = \sum_{\mathbf{y}} c_{\mathbf{y}} \cdot \mathbf{y}_i$ for $i = 1,2$ in $\CFL^-(\Sigma,\bo\alpha',\bo\beta',\bo p',z)$. Indeed, any Whitney disc passing over the basepoint $z$ will be blocked by the basepoint $w$, discs connecting generators of differing subscript cancel in pairs, and no previously existing discs will be blocked because the additional decorations were placed in a region containing a basepoint. 

	It follows that $\HFL^-(K_\# \cup C,\sigma)$ is a free $\F_2[U]$-module, and all $\Spin^c$ structures generating free-summands differ by multiples of the homology class of the meridian of $K_\#$. In particular, no two $\Spin^c$ structures generating free-summands satisfy $\fk{t} = \fk{s} + [\sigma]$. From Lemma~\ref{lem:HeegaardMinusInject}, it follows that free summands of $\HFL^-(K_\# \cup C,\sigma)$ are mapped isomorphically onto free summands of $\HFL^-(K_b \cup C,\sigma)$. Hence if $\HFL^-(K_b\cup C,\sigma)$ has $\Spin^c$ structures generating free-summands satisfying $\fk{t} = \fk{s} + [\sigma]$, then there must be a free-summand not in the image of $\HFL^-(K_\#\cup C,\sigma)$. 
\end{proof}

\subsubsection{A minus version of sutured Floer homology}\label{subsubsec:minusHeegaardSuturedFloer}

The sutured exterior of $K_b\cup C$ with a distinguished suture $\sigma$ on $\partial N(C)$ is an example of a balanced sutured manifold with a suitable distinguished suture for which we will define a suitable minus version of sutured Floer homology. See Alishahi and Eftekhary \cite{MR3412088} for a more general minus version of sutured Floer homology. 

\begin{df*}[suitable distinguished suture]
	Let $(M,\gamma)$ be a balanced sutured manifold. We say that one of the sutures $\sigma$ is a \textit{suitable distinguished suture} if \begin{enumerate}[itemsep=-0.5ex]
		\item $\sigma$ lies on a toral boundary component $T$ of $M$,
		\item $s(\gamma) \cap T$ consists of two oppositely oriented parallel copies of $\sigma$,
		\item there exists a properly embedded compact oriented surface $\Sigma \subset M$ for which $\partial \Sigma \cap T$ is a simple closed curve that intersects $\sigma$ in a single point transversely.
	\end{enumerate}
	
	We say that $(\Sigma,\bo\alpha,\bo\beta,\bo p,z)$ is a balanced sutured Heegaard diagram \textit{with a suitable distinguished basepoint} representing $(M,\gamma,\sigma)$ if $(\Sigma,\bo\alpha,\bo\beta,\bo p)$ is a balanced sutured Heegaard diagram for $(M,\gamma)$ and $z \in \bo p$ is the basepoint corresponding to the suture $\sigma$. It is \textit{admissible} if both $(\Sigma,\bo\alpha,\bo\beta,\bo p)$ and $(\Sigma,\bo\alpha,\bo\beta,\bo p\setminus\{z\})$ are admissible.
\end{df*}

\begin{examples}
	Let $K \cup C$ be a two component link in $S^3$, and let $S^3(K \cup C)$ be its sutured exterior. Let $\sigma$ be a suture on $\partial N(C)$. Then $\sigma$ is a suitable distinguished suture for $S^3(K \cup C)$. 

	Suppose $K$ has a Seifert surface $\Sigma$ disjoint from $C$. Then $\sigma$ is a suitable distinguished suture on the sutured manifold obtained by decomposing $S^3(K \cup C)$ along $\Sigma$.
\end{examples}

\begin{rem}
	Let $(\Sigma,\bo\alpha,\bo\beta,\bo p,z)$ be a balanced diagram with a suitable distinguished suture representing $(M,\gamma,\sigma)$. The diagram $(\Sigma,\bo\alpha,\bo\beta,\bo p \setminus \{z\})$ obtained by forgetting the basepoint $z$ represents the sutured manifold obtained from $(M,\gamma,\sigma)$ by attaching a two-handle to $\sigma$. See Lemma~\ref{lem:basepointErasing}. 
\end{rem}

\begin{lem}\label{lem:existsAdmissibleDiagram}
	Every balanced sutured manifold with a suitable distinguished suture has an admissible balanced diagram with a suitable distinguished basepoint. 
\end{lem}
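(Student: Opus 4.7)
The plan is to start with any balanced sutured Heegaard diagram $(\Sigma,\bo\alpha,\bo\beta,\bo p)$ for $(M,\gamma)$ in which a prescribed basepoint $z \in \bo p$ corresponds to the distinguished suture $\sigma$; such a diagram exists by Proposition 2.14 of \cite{Juh06} (after relabeling which boundary component of $\Sigma \setminus N(\bo p)$ corresponds to which suture if necessary). The key observation is that the admissibility condition for $(\Sigma,\bo\alpha,\bo\beta,\bo p\setminus\{z\})$ is strictly stronger than that for $(\Sigma,\bo\alpha,\bo\beta,\bo p)$: a nonnegative periodic domain $P$ with $n_p(P) = 0$ for all $p \in \bo p$ automatically satisfies $n_p(P) = 0$ for all $p \in \bo p \setminus \{z\}$. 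It therefore suffices to modify the initial diagram so that the reduced diagram $(\Sigma,\bo\alpha,\bo\beta,\bo p\setminus\{z\})$ becomes admissible.

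The second step is to apply Juh\'asz's winding procedure from the proof that every balanced sutured manifold admits an admissible diagram (Corollary 3.15 of \cite{Juh06}) to the reduced diagram. The reduced diagram is a balanced sutured Heegaard diagram for the sutured manifold $(M',\gamma')$ obtained from $(M,\gamma)$ by attaching a $3$-dimensional $2$-handle along $\sigma$ (compare Lemma~\ref{lem:basepointErasing}); this $(M',\gamma')$ is balanced because the toral boundary component of $M$ containing $\sigma$ carries two oppositely oriented parallel copies of $\sigma$ as its sutures, so the $2$-handle attachment leaves one remaining suture on the resulting spherical component while increasing $\chi(R_+)$ and $\chi(R_-)$ each by one. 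The winding procedure consists of choosing simple closed curves $\eta_1,\ldots,\eta_k$ on $\Sigma$ dual to a basis of $H_2(M')$, and then winding one $\alpha$-curve once around each $\eta_i$; the output is a new system $\bo\alpha'$ such that every nonzero periodic domain of the type relevant for $(\Sigma,\bo\alpha',\bo\beta,\bo p\setminus\{z\})$ has both positive and negative coefficients. Hence the reduced diagram is admissible, and by the first observation so is the full diagram $(\Sigma,\bo\alpha',\bo\beta,\bo p)$.

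The main technical point is to arrange that the winding does not disturb $z$ or its identification with $\sigma$. This is straightforward: each $\eta_i$ can be isotoped into general position so as to avoid $z$, and the winding is carried out in arbitrarily thin annular neighborhoods of the $\eta_i$, which we may take to be disjoint from $z$. Since winding is an isotopy of the $\alpha$-curves on $\Sigma$, the underlying sutured manifold is unchanged, and $z$ remains in the same component of $\Sigma\setminus(\bigcup\bo\alpha' \cup \bigcup\bo\beta)$, hence continues to correspond to the suture $\sigma$. This produces the required admissible balanced diagram with suitable distinguished basepoint for $(M,\gamma,\sigma)$.
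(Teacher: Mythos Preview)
Your proof is correct and follows essentially the same approach as the paper's own argument: start from an arbitrary diagram, view the reduced diagram (with $z$ erased) as a diagram for the $2$-handle attachment $(N,\beta)$, apply Juh\'asz's procedure to make that diagram admissible while keeping all moves away from $z$, and then observe that admissibility of the reduced diagram implies admissibility of the full diagram. The paper's proof is more terse but structurally identical; your additional justification that $(N,\beta)$ is balanced and that the winding curves can be taken disjoint from $z$ is helpful detail.
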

\begin{proof}
	Let $(M,\gamma,\sigma)$ be a balanced sutured manifold with a suitable distinguished suture, and let $(N,\beta)$ be the sutured manifold obtained by attaching a $2$-handle to $\sigma$.

	Start with an arbitrary diagram for $(M,\gamma)$, and view it as a diagram for $(N,\beta)$ by ignoring the basepoint $z$ corresponding to $\sigma$. Apply Juh\'asz's procedure of Proposition 3.15 of \cite{Juh06} to obtain an admissible diagram for $(N,\beta)$, ensuring that each move is disjoint from $z$. The resulting diagram for $(N,\beta)$ is admissible by construction, and by including the basepoint $z$, the resulting diagram for $(M,\gamma)$ is also admissible. 
\end{proof}

\begin{df*}[a minus version of sutured Floer homology for balanced sutured manifolds with a suitable distinguished suture]
	Let $(\Sigma,\bo\alpha,\bo\beta,\bo p,z)$ be an admissible balanced diagram with suitable distinguished basepoint. As usual, let ${\bf T}_\alpha$, $\mathbf{T}_\beta$ be the tori $\alpha_1 \x \cdots \alpha_k$, $\beta_1 \x \cdots \beta_k$ in the $k$-fold symmetric product of $\Sigma$, where $k$ is the number of curves in $\bo\alpha$. Let $\SFC^-(\Sigma,\bo\alpha,\bo\beta,\bo p,z)$ be the free $\F_2[U]$-module \[
		\SFC^-(\Sigma,\bo\alpha,\bo\beta,\bo p,z) = \bigoplus_{{\bf x} \in \mathbf{T}_\alpha \cap \mathbf{T}_\beta} \F_2[U]\cdot {\bf x}
	\]with differential \[
		\partial \mathbf{x} = \sum_{\mathbf{y} \in \mathbf{T}_\alpha\cap\mathbf{T}_\beta} \sum_{\substack{\phi \in \pi_2(\mathbf{x},\mathbf{y}),\:\mu(\phi) = 1\\n_p(z) = 0 \:|\: p \in \bo p\setminus\{z\}}} \#\widehat{\sr M}(\phi) \cdot U^{n_z(\phi)}\cdot \mathbf{y}.
	\]where $\mu(\phi)$ is the Maslov index of $\phi \in \pi_2(\mathbf{x},\mathbf{y})$, $\#\widehat{\sr M}(\phi)$ is the mod $2$ count of the $J$-holomorphic representatives of $\phi$ up to reparametrization for a suitable family $J = (J_t)_{t \in [0,1]}$ of almost complex structures on $\Sym\!{}^k(\Sigma)$, and $n_p(\phi)$ is the algebraic intersection number of $\phi$ and $p \x \Sym\!{}^{k-1}(\Sigma)$.

	Let $\SFH^-(M,\gamma,\sigma)$ denote the homology of the complex. Letting $[\sigma] \in H_1(M)$ denote the homology class of $\sigma \subset \partial M$, we declare that $U^n\mathbf{x}$ lies in the $\Spin^c$ structure $\fk{s}(\mathbf{x}) - n[\sigma]$. We obtain a splitting $\SFH^-(M,\gamma,\sigma) = \bigoplus_{\fk s} \SFH^-(M,\gamma,\sigma,\fk{s})$ along $\fk{s} \in \Spin^c(M,\gamma)$ and $U$ sends $\SFH^-(M,\gamma,\sigma,\fk{s})$ to $\SFH^-(M,\gamma,\sigma,\fk{s} - [\sigma])$. 
\end{df*}

\begin{rem}
	Invariance from the Heegaard diagram follows in the same way as the proof of invariance for knot Floer homology. Let $(N,\beta)$ be the sutured manifold obtained by attaching a $2$-handle to $(M,\gamma)$ along $\sigma$. The minus complex for $(M,\gamma,\sigma)$ is essentially the hat complex for $(N,\beta)$ with the filtration induced by the basepoint $z$. Alternatively, our construction is a special case of the more general minus version of sutured Floer homology appearing in \cite{MR3412088}. 
\end{rem}

The purpose of this minus version of sutured Floer homology is its behavior under nice surface decompositions when the decomposing surface is disjoint from the distinguished torus. 

\begin{prop}\label{prop:HeegaardMinusSurfDecomp}
	Let $(M,\gamma,\sigma)$ be a balanced sutured manifold with a suitable distinguished suture, and consider a nice surface decomposition \[
		(M,\gamma) \overset{S}{\rightsquigarrow} (M',\gamma')
	\]where $S \subset M$ is disjoint from the toral boundary component of $M$ that contains $\sigma$. Then $(M',\gamma',\sigma)$ is a balanced sutured manifold with a suitable distinguished suture, and $\SFH^-(M',\gamma',\sigma)$ is a direct summand of $\SFH^-(M,\gamma,\sigma)$ as an $\F_2[U]$-module, compatible with the action of the homology class of $\sigma$ on $\Spin^c$ structures. 

	More precisely, if $x,y \in \SFH^-(M',\gamma',\sigma)$ are homogeneous with respect to the splitting along $\Spin^c$ structures, then their images in $\SFH^-(M,\gamma,\sigma)$ are as well. If the $\Spin^c$ structures supporting $x$ and $y$ differ by $[\sigma] \in H_1(M')$, then the the $\Spin^c$ structures supporting their images differ by $[\sigma] \in H_1(M)$. 
\end{prop}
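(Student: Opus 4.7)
The plan is to adapt Juhász's proof of Theorem 1.3 of \cite{Juh08} to the minus setting, carefully tracking the $\F_2[U]$-module structure and the action of $[\sigma]$ on $\Spin^c$ structures. The first step is to verify that $(M',\gamma',\sigma)$ is itself a balanced sutured manifold with a suitable distinguished suture: the assumption that $S$ is disjoint from the toral boundary component $T$ containing $\sigma$ means $T$ and both of its parallel sutures survive intact in $M'$, and an auxiliary surface in $(M,\gamma)$ meeting $\sigma$ once transversely can be cut along $S$ and a suitable component extracted to produce the analogous surface in $(M',\gamma',\sigma)$.

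Second, I would choose an admissible balanced surface diagram $(\Sigma,\bo\alpha,\bo\beta,\bo p,z)$ for $(M,\gamma,\sigma)$ adapted to the decomposition along $S$ in Juhász's sense, with the basepoint $z$ placed in a region of $\Sigma$ disjoint from the decomposition; this is possible because $S$ is disjoint from $T$. Juhász's construction (Sections 5 and 6 of \cite{Juh08}) produces an induced admissible balanced diagram $(\Sigma',\bo\alpha',\bo\beta',\bo p',z)$ for $(M',\gamma',\sigma)$ whose basepoints include $z$, together with a bijection between the outer generators of the first diagram (those $\mathbf{x}$ with $\fk s(\mathbf{x}) \in O_S$) and all generators of the second, matching holomorphic Whitney discs that avoid the basepoints other than $z$. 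Because $z$ lies on $T$ and is therefore disjoint from $S$, this bijection of domains preserves the multiplicity $n_z$, so the $U^{n_z}$-weighted counts coincide and the bijection extends to an $\F_2[U]$-chain isomorphism identifying $\SFC^-(\Sigma',\bo\alpha',\bo\beta',\bo p',z)$ with the subcomplex of $\SFC^-(\Sigma,\bo\alpha,\bo\beta,\bo p,z)$ generated by outer $\mathbf{x}$.

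Third, I would check that the set $O_S$ of outer $\Spin^c$ structures is closed under shifts by $\pm[\sigma] \in H_1(M)$: since $\sigma$ is a cycle on $T$ that is disjoint from $S$, a representative unit vector field for $\fk s \in O_S$ nowhere normal to $S$ can be modified by a standard twisting construction supported in a tubular neighborhood of $\sigma$ disjoint from $S$ to produce a representative of $\fk s \pm [\sigma]$ with the same property. Since the differential of $\SFC^-$ preserves $\Spin^c$ structure and $U$ shifts $\Spin^c$ by $-[\sigma]$, this closure implies that both the outer subsum $\bigoplus_{\fk s \in O_S} \SFC^-(\Sigma,\bo\alpha,\bo\beta,\bo p,z,\fk s)$ and its complement are $\F_2[U]$-subcomplexes, producing the desired $\F_2[U]$-direct summand decomposition via the identification from the previous step. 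The compatibility of the inclusion with the $[\sigma]$-action on $\Spin^c$ structures is built into the identification of the basepoint $z$ and suture $\sigma$ on both sides.

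The main obstacle is matching holomorphic disc counts weighted by $U^{n_z}$. Juhász's proof uses a stretching of the almost complex structure near $S$ to force holomorphic discs to localize on one side of $S$, and this must be compatible with discs of arbitrary multiplicity at $z$, whereas the hat-level argument only needs to control discs with $n_z = 0$. Because $z$ lies on $T$ which is disjoint from $S$, the stretched almost complex structure leaves the geometry near $z$ entirely unchanged, so the same family of almost complex structures simultaneously computes the required counts for every $n_z \ge 0$, allowing the hat-level bijection to refine to an equivariant chain isomorphism over $\F_2[U]$.
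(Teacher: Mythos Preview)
Your overall strategy matches the paper's: adapt Juh\'asz's surface-decomposition argument while tracking the distinguished basepoint $z$, and your step showing that $O_S$ is closed under shifts by $[\sigma]$ is correct and is what gives the $\F_2[U]$-summand splitting. However, your final paragraph rests on a mischaracterization of Juh\'asz's proof. He does \emph{not} use neck-stretching of almost complex structures; the argument in \cite{Juh08} proceeds by making the surface diagram \emph{nice} in the Sarkar--Wang sense (Theorem~6.4 there), so that the only index-one holomorphic discs are embedded bigons and rectangles and all counts are purely combinatorial. The obstacle you identify therefore does not arise, and your proposed resolution is a hand-wavy fix to a non-problem that would itself require substantial new analysis if one actually tried to run a stretching argument.

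The paper streamlines the reduction by first passing to $(N,\beta)$, the result of attaching a $2$-handle to $(M,\gamma)$ along $\sigma$. Then $\SFC^-(M,\gamma,\sigma)$ is exactly the hat complex of $(N,\beta)$ with the $U$-weighting recording $n_z$, and one applies Juh\'asz's machinery directly to the decomposition $(N,\beta)\overset{S}{\rightsquigarrow}(N',\beta')$: make $S$ good, choose an adapted surface diagram, arrange admissibility for both $(M,\gamma)$ and $(N,\beta)$ simultaneously (via the same trick used earlier to produce admissible diagrams with a suitable distinguished basepoint), and then make the $(N,\beta)$-diagram nice. The combinatorial bijection of Proposition~7.6 and Lemma~5.4 of \cite{Juh08} is then automatically $\F_2[U]$-equivariant, since embedded bigons and rectangles carry $n_z$ unchanged under the bijection; and because outerness is a property of the $\Spin^c$ structure on $(N,\beta)$, which is preserved by every disc avoiding $\bo p\setminus\{z\}$, the outer subcomplex is already an $\F_2[U]$-summand at the chain level.
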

\begin{proof}
	Note that $M'$ is obtained from $M$ by deleting a regular neighborhood of $S$, so the distinguished toral boundary component $T$ of $M$ remains a toral boundary component of $M'$ with the same two sutures. Let $\Sigma$ be a properly embedded surface in $M$ for which $\partial \Sigma \cap T$ is a simple closed curve intersecting $\sigma$ in single point. We may assume that $\Sigma$ and $S$ intersect transversely. The surface $\Sigma \cap M'$ verifies that $\sigma$ is a suitable distinguished suture for $(M',\gamma')$. 

	As before, let $(N,\beta)$ be obtained from $(M,\gamma)$ by attaching a $2$-handle to $\sigma$. Let $(N',\beta')$ be obtained by decomposing $(N,\beta)$ along $S$. Certainly $(N',\beta')$ can also be obtained by attaching a $2$-handle to $(M',\gamma')$ along $\sigma$. We now reduce to Juh\'asz's proof of Theorem 1.3 of \cite{Juh08}. First by Lemma 4.5 of \cite{Juh08}, we may assume that $S \subset (N,\beta)$ is \textit{good}. Next, by Proposition 4.4 of \cite{Juh08}, we may choose a good surface diagram of $(M,\gamma)$ adapted to $S$. By Proposition 4.8 of \cite{Juh08} and the argument used in the proof of Lemma~\ref{lem:existsAdmissibleDiagram}, we may assume that the diagram is admissible, and that the surface diagram of $(N,\beta)$ adapted to $S$ obtained by deleting the distinguished basepoint is admissible. By Theorem 6.4 of \cite{Juh08}, we may suppose that the diagram for $(N,\beta)$ is nice, from which it follows that the associated diagram for $(M,\gamma)$ is nice as well. The result now follows from Proposition 7.6 and Lemma 5.4 of \cite{Juh08}. 
\end{proof}

\begin{cor}\label{cor:HeegaardSpinCDiffSurf}
	Let $(M,\gamma,\sigma)$ and $(M',\gamma',\sigma)$ be balanced sutured manifolds with suitable distinguished sutures where $(M',\gamma')$ is obtained from $(M,\gamma)$ by a nice surface decomposition along a surface $S$ disjoint from the toral boundary component of $M$ containing $\sigma$. Suppose there are $\Spin^c$ structures $\fk{s}',\fk{t}'$ generating free-summands of $\SFH^-(M',\gamma',\sigma)$ for which $\fk{t}' = \fk{s}' + [\sigma]$. Then there are $\Spin^c$ structures $\fk{s},\fk{t}$ generating free-summands of $\SFH^-(M,\gamma,\sigma)$ for which $\fk{t} = \fk{s} + [\sigma]$. 
\end{cor}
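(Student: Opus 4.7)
The plan is to deduce this directly from Proposition~\ref{prop:HeegaardMinusSurfDecomp}, which supplies an $\F_2[U]$-linear direct summand inclusion $\iota\colon \SFH^-(M',\gamma',\sigma) \hookrightarrow \SFH^-(M,\gamma,\sigma)$ sending $\Spin^c$-homogeneous elements to $\Spin^c$-homogeneous elements and commuting with the $[\sigma]$-action on $\Spin^c$ structures. Fix any $\F_2[U]$-linear retraction $\pi$ of $\iota$. Let $x' \in \SFH^-(M',\gamma',\sigma,\fk{s}')$ and $y' \in \SFH^-(M',\gamma',\sigma,\fk{t}')$ be the given free-summand generators, and let $\fk{s},\fk{t} \in \Spin^c(M,\gamma)$ be the $\Spin^c$ structures supporting $\iota(x')$ and $\iota(y')$. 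The compatibility of $\iota$ with the $[\sigma]$-action gives $\fk{t} = \fk{s}+[\sigma]$ in $H_1(M)$, so it suffices to show that $\iota(x')$ and $\iota(y')$ generate free summands of $\SFH^-(M,\gamma,\sigma)$; by symmetry it is enough to treat $\iota(x')$.

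Since $\iota$ is injective and $\F_2[U]$-linear, $\iota(x')$ is not $U$-torsion. Suppose for contradiction that some $y \in \SFH^-(M,\gamma,\sigma,\fk{s}+[\sigma])$ satisfies $Uy = \iota(x') + z$ for a $U$-torsion element $z \in \SFH^-(M,\gamma,\sigma,\fk{s})$. Applying $\pi$ yields $U\pi(y) = x' + \pi(z)$ in $\SFH^-(M',\gamma',\sigma)$, and $\pi(z)$ is still $U$-torsion since $\pi$ is $\F_2[U]$-linear. Decompose $\pi(y)$ and $\pi(z)$ into $\Spin^c$-homogeneous components; because $U$ shifts $\Spin^c$ grading by $-[\sigma]$, extracting the $\fk{s}'$-component of the identity gives $Uy_1 = x' + z_1$ with $y_1 \in \SFH^-(M',\gamma',\sigma,\fk{s}'+[\sigma])$ and $z_1 \in \SFH^-(M',\gamma',\sigma,\fk{s}')$. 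Moreover $z_1$ is $U$-torsion, since the homogeneous components of a $U$-torsion element are individually $U$-torsion (the terms $U^n z_1$ for varying homogeneous components lie in distinct $\Spin^c$ structures, so must vanish separately). This contradicts the assumption that $\fk{s}'$ generates a free summand of $\SFH^-(M',\gamma',\sigma)$.

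The substantive content, namely producing an $\F_2[U]$-linear, $\Spin^c$-graded, $[\sigma]$-equivariant direct summand inclusion out of a nice surface decomposition disjoint from the distinguished torus, is entirely handled by Proposition~\ref{prop:HeegaardMinusSurfDecomp}; the corollary itself is a formal consequence and I do not anticipate any real obstacle beyond what is already resolved there.
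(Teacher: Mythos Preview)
Your proof is correct and follows exactly the approach the paper intends: the paper states this corollary without proof, treating it as immediate from Proposition~\ref{prop:HeegaardMinusSurfDecomp}, and your argument carefully fills in the details of that deduction. In particular, your handling of the potential non-homogeneity of $\pi(y)$ and $\pi(z)$ by extracting $\Spin^c$-components is the right way to close the loop, and your observation that homogeneous components of $U$-torsion elements remain $U$-torsion is the only small lemma needed beyond what Proposition~\ref{prop:HeegaardMinusSurfDecomp} supplies.
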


\subsubsection{The Hopf link model computation}\label{subsubsec:HopfHeegaard}

We will show that $\HFL^-(K_b \cup C,\sigma)$ satisfies the condition given in Proposition~\ref{prop:SpinCstructDiff} by using Corollary~\ref{cor:HeegaardSpinCDiffSurf} and the sequence of surface decompositions constructed in Theorem~\ref{thm:SeqOfSurfDecomp}. 

\begin{lem}\label{lem:HeegaardHopf}
	Let $H$ be a two-component Hopf link, and let $\sigma$ be any one of the four sutures in the sutured exterior of $H$. Then there are $\Spin^c$ structures $\fk{s},\fk{t}$ generating free summands of $\HFL^-(H,\sigma)$ satisfying $\fk{t} = \fk{s} + [\sigma]$. 
\end{lem}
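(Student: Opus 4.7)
The plan is to compute $\HFL^-(H, \sigma)$ in two steps—first its $\F_2[U]$-rank, then the $\Spin^c$ locations of the free summands—by comparing it against $\SFH$ of the $2$-handle attachment along $\sigma$ and against $\HFLhat$ of the Hopf link itself. Let $(M, \gamma) = S^3(H)$ be the sutured Hopf link exterior, so $M = T^2 \x I$, and without loss of generality take $\sigma$ to be a meridian of the component $H_1$. Attaching a three-dimensional $2$-handle along $\sigma$ produces a sutured manifold $M'$ which, by Corollary~\ref{cor:ZeroFillingMeridian} (the light bulb trick), is the sutured exterior of the unknot together with a sutured puncture; the sutured puncture formula then gives
\[
\dim_{\F_2}\SFH(M') = 2\cdot\dim\HFKhat(\text{unknot}) = 2,
\]
with both generators lying in the unique $\Spin^c$ structure $\fk{u}' \in \Spin^c(M')$ supporting $\SFH(M')$.

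Running the argument of Proposition~\ref{prop:rankEqualsTwiceDim} in this setting—the short exact sequence $0 \to \CFL^- \xrightarrow{U - \Id} \CFL^- \to \SFC(M') \to 0$ together with Lemma~\ref{lem:HomologyStruct}—shows that $\HFL^-(H, \sigma)$ has $\F_2[U]$-rank $2$, so
\[
\HFL^-(H, \sigma) \cong \F_2[U]^2 \oplus \bigoplus_i \F_2[U]/U^{n_i}
\]
with generators homogeneous in $\Spin^c$. The analogous long exact sequence for $\CFL^- \xrightarrow{U} \CFL^- \to \CFL^-/U$ computes $\HFLhat(H)$, and plugging in the classical value $\dim_{\F_2}\HFLhat(H) = 4$ (e.g.\ from the multivariable Alexander polynomial $\Delta_H(t_1, t_2) = 1$, which gives rank-one support at each of the four corner bigradings $(\pm \tfrac12, \pm \tfrac12)$) forces exactly one torsion summand $\F_2[U]/U^n$, $n \ge 1$.

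Write $\fk{s}_1, \fk{s}_2$ for the tops of the two free summands and $\fk{s}_t$ for the top of the torsion summand. The $U$-sequence realizes $\HFLhat(H)$ as an extension of $\coker(U)$ by $\ker(U)$, placing its four generators in the $\Spin^c$ structures $\fk{s}_1, \fk{s}_2, \fk{s}_t$, and $\fk{s}_t - (n{-}1)[\sigma]$. Since these must be four pairwise distinct $\Spin^c$'s (one per corner bigrading), we get $n \ge 2$ and $\fk{s}_1 \ne \fk{s}_2$. Separately, in the $(U - \Id)$-sequence the torsion summand contributes trivially to $\coker(U - \Id) \cong \SFH(M')$, since $U - \Id$ is invertible modulo nilpotents; so the surjection $\HFL^- \twoheadrightarrow \SFH(M')$ is determined by the two free-summand tops, and consequently $\fk{s}_1$ and $\fk{s}_2$ both project to $\fk{u}'$ in $\Spin^c(M') = \Spin^c(M)/\langle [\sigma]\rangle$, i.e.\ they differ by a multiple of $[\sigma]$ and share a common Alexander grading $A_2$.

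The four corner bigradings $(\pm\tfrac12, \pm\tfrac12)$ comprise exactly two $A_2$-values, each carrying two $A_1$-values differing by $1 = [\sigma]$. The pair $\{\fk{s}_t, \fk{s}_t - (n{-}1)[\sigma]\}$ also shares an $A_2$-value (trivially), so the only way for all four $\Spin^c$ structures $\fk{s}_1, \fk{s}_2, \fk{s}_t, \fk{s}_t - (n{-}1)[\sigma]$ to be distinct is for each pair to occupy a single $A_2$-value with its two $A_1$-values—forcing $\fk{s}_2 - \fk{s}_1 = \pm[\sigma]$ (and $n = 2$). Setting $\fk{s} := \fk{s}_1$ and $\fk{t} := \fk{s}_2$ then produces the desired pair of $\Spin^c$ structures generating free summands. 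The main obstacle is supplying the input that $\HFLhat(H)$ has rank one at each of the four corner bigradings $(\pm\tfrac12, \pm\tfrac12)$; this is a standard computation, doable either from the Alexander polynomial together with a Maslov-grading parity check or from a small explicit Heegaard diagram for the sutured Hopf link exterior.
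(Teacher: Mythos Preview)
Your overall strategy is correct and is exactly the argument the paper uses in the instanton setting (Lemma~\ref{lem:instantonHopfLink}); the Remark immediately following Lemma~\ref{lem:HeegaardHopf} explicitly notes that this argument works equally well here. The paper's primary proof takes a more direct route: it writes down a nice Heegaard diagram for $H$ on $S^2$ with four generators $a,b,c,d$ and a single nonzero differential $\partial b = Uc$, then simply reads off that $a$ and $d$ generate free summands whose $\Spin^c$ structures differ by $[\sigma]$.

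There is, however, a bookkeeping slip in your $\Spin^c$ accounting. In the long exact sequence for $0 \to \CFL^- \xrightarrow{U} \CFL^- \to \CFL^-/U \to 0$, the connecting map $\delta\colon \HFLhat(H) \to \HFL^-(H,\sigma)$ shifts $\Spin^c$ by $[\sigma]$ (if $\tilde x$ lifts $x$ and $d\tilde x = Uy$, then $\fk{s}(y) = \fk{s}(x) + [\sigma]$). Hence a torsion summand $\F_2[U]/U^n$ with top at $\fk{s}_t$ contributes to $\HFLhat(H)$ at $\fk{s}_t$ (via $\coker U$) and at $\fk{s}_t - n[\sigma]$ (the $\delta$-preimage of $U^{n-1}$ times the generator), not at $\fk{s}_t - (n-1)[\sigma]$. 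With this correction, matching the four distinct corner bigradings forces $n = 1$ rather than $n \ge 2$; your final conclusion that $\fk{s}_1$ and $\fk{s}_2$ differ by $\pm[\sigma]$ is unaffected.
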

\begin{figure}[!ht]
		\centering
		\labellist
		\pinlabel $z$ at 240 70
		\pinlabel $a$ at 150 300
		\pinlabel $b$ at 150 155
		\pinlabel $c$ at 300 150
		\pinlabel $d$ at 295 295
		\pinlabel $\alpha$ at 160 430
		\pinlabel $\beta$ at 20 290
		\endlabellist
		\includegraphics[width=.35\textwidth]{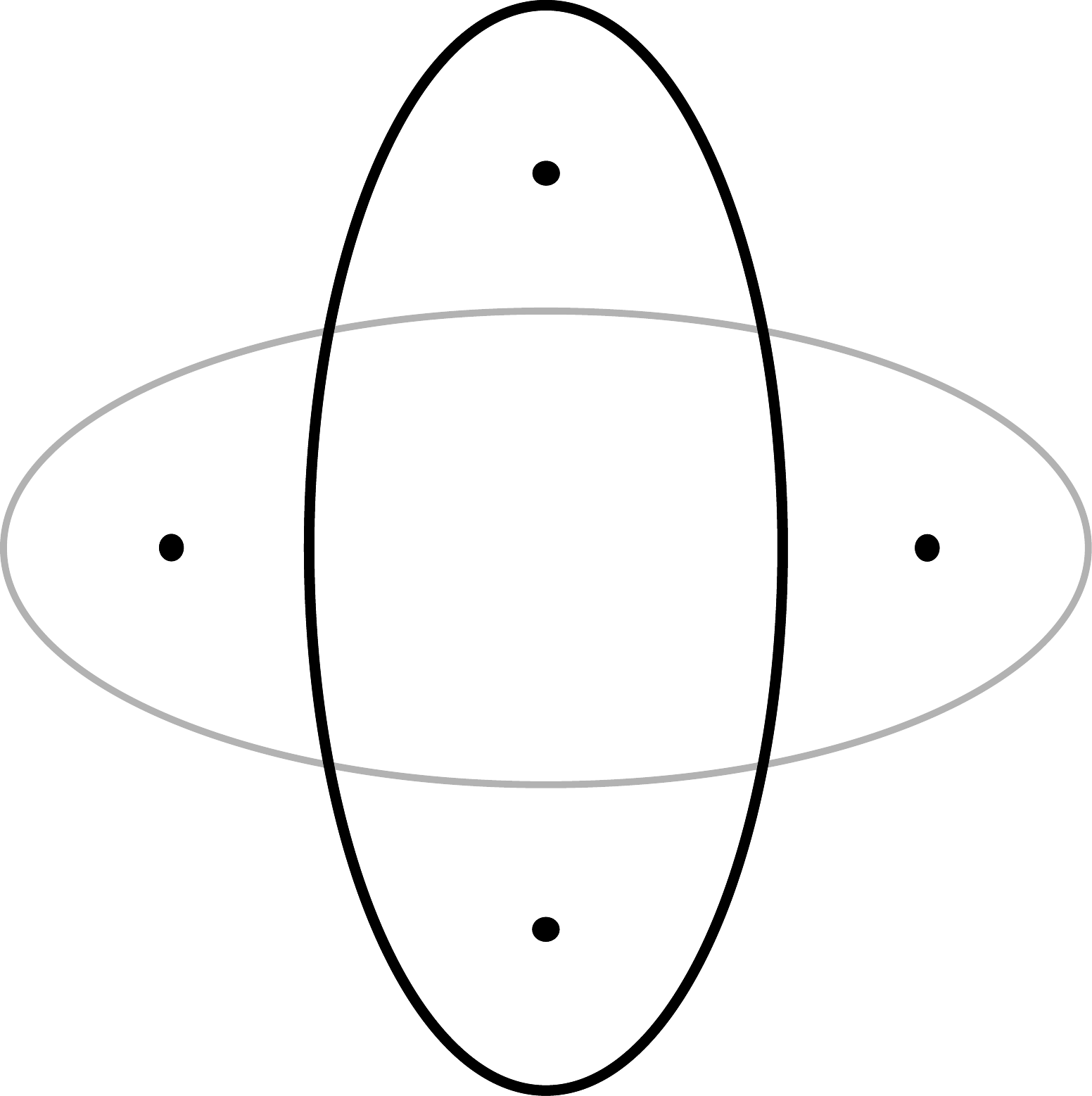}
		\caption{A nice Heegaard diagram for the Hopf link.}
		\label{fig:HopfLinkDiagram}
	\end{figure}
\begin{proof}
	A nice diagram in $S^2$ of the Hopf link is given in Figure~\ref{fig:HopfLinkDiagram}. There are four generators $a,b,c,d$, all lying in distinct $\Spin^c$ structures, and one nonzero differential $\partial b = Uc$. The generators $a,d$ lie in $\Spin^c$ structures differing by $[\sigma]$, and each generate a free summand in $\HFL^-(H,\sigma)$. 
\end{proof}
\begin{rem}
	In Lemma~\ref{lem:instantonHopfLink}, we prove the analogous result for sutured instanton homology. The argument there works equally well here. 
\end{rem}

\begin{thm}[Theorem~\ref{thm:HeegaardFloerHomologyDetectsTrivialBand}]\label{thm:HeegaardTrivBandDetect}
	Let $K_b$ be the band sum of a split two-component link along a band $b$, and let $K_\#$ be the connected sum. Then $b$ is trivial if and only if $\HFKhat(K_b)$ and $\HFKhat(K_\#)$ have the same total dimension with coefficients in $\F_2$. In particular, if $b$ is nontrivial then there is a strict inequality\[
		\dim \HFKhat(K_b,g(K_b)) > \dim \HFKhat(K_\#,g(K_b))
	\]in Alexander grading $g(K_b)$, the Seifert genus of $K_b$. 
\end{thm}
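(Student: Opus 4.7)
If $b$ is trivial then $K_b = K_\#$ and the equality of dimensions holds trivially, so I focus on the contrapositive together with the Alexander-graded refinement. Assume $b$ is nontrivial, let $C$ be a linking circle for $b$, and let $\sigma$ be the suture on $\partial N(C)$ oriented as the meridian of $C$. By Proposition~\ref{prop:rankEqualsTwiceDim}, the $\F_2[U]$-rank of $\HFL^-(K_b \cup C, \sigma)$ equals $2\dim_{\F_2}\HFKhat(K_b)$, and analogously for $K_\#$, so it suffices to prove the strict rank inequality
\[
\rank \HFL^-(K_b \cup C, \sigma) > \rank \HFL^-(K_\# \cup C, \sigma).
\]
By Proposition~\ref{prop:SpinCstructDiff}, this will follow once I produce two $\Spin^c$ structures $\fk{s}, \fk{t} = \fk{s} + [\sigma]$ on the sutured exterior $(M_0, \gamma_0) = S^3(K_b \cup C)$ that both generate free summands of $\SFH^-(M_0, \gamma_0, \sigma)$.

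To produce such a pair, I will apply Theorem~\ref{thm:SeqOfSurfDecomp} to obtain a sequence of nice surface decompositions
\[
(M_0, \gamma_0) \overset{S_1}{\rightsquigarrow} (M_1, \gamma_1) \overset{S_2}{\rightsquigarrow} \cdots \overset{S_n}{\rightsquigarrow} (M_n, \gamma_n)
\]
in which each $S_i$ is disjoint from $\partial N(C)$ and $(M_n, \gamma_n)$ is the disjoint union of a product sutured manifold $(P, \pi)$ with the sutured exterior $(H, \delta)$ of a two-component Hopf link containing $\partial N(C)$. Iterating Corollary~\ref{cor:HeegaardSpinCDiffSurf} reduces the task to producing the required pair of $\Spin^c$ structures on $(M_n, \gamma_n, \sigma)$. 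Since $\SFH^-$ of a product sutured manifold is one-dimensional over $\F_2$, a K\"unneth-type splitting gives $\SFH^-(M_n, \gamma_n, \sigma) \cong \HFL^-(H, \sigma)$ as $\F_2[U]$-modules compatibly with their $\Spin^c$ decompositions, and Lemma~\ref{lem:HeegaardHopf} then supplies the needed pair. This establishes the main iff statement.

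For the strict Alexander-graded inequality, I will choose $S_1$ to be a minimal-genus Seifert surface for $K_b$ disjoint from $C$, which is the choice already made in the proof of Theorem~\ref{thm:SeqOfSurfDecomp}. The extra free summands produced above then lie in relative $\Spin^c$ structures on $(M_0, \gamma_0)$ outer with respect to $S_1$. Under the exact triangle
\[
\begin{tikzcd}[column sep=0]
\HFL^-(K_b \cup C,\sigma) \ar[rr,"U-\Id"] & & \HFL^-(K_b \cup C,\sigma) \ar[dl]\\
& \HFKhat(K_b)\otimes \F_2^2 \ar[ul] &
\end{tikzcd}
\]
from section~\ref{subsubsec:ComparisonWithMinusLinkFloer}, a pair of extra free summands with $\Spin^c$ structures differing by $[\sigma]$ contributes two extra $\F_2$-dimensions to $\HFKhat(K_b) \otimes \F_2^2$, and by Juh\'asz's surface decomposition formula applied to $S_1$ these extra dimensions sit in Alexander grading $g(K_b)$. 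The main technical obstacle will be confirming the precise grading compatibility: one must track how a $\Spin^c$ structure on $S^3(K_b \cup C)$ outer with respect to $S_1$ descends, after quotienting by the $[\sigma]$-direction via $U - \Id$ and using Juh\'asz's identification $\SFH(S^3(K_b)(1)) \cong \HFKhat(K_b) \otimes \F_2^2$, to the top Alexander grading on $\HFKhat(K_b)$.
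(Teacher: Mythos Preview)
Your proposal is correct and follows essentially the same approach as the paper: reduce via Proposition~\ref{prop:rankEqualsTwiceDim} and Proposition~\ref{prop:SpinCstructDiff} to finding two $\Spin^c$ structures differing by $[\sigma]$ that generate free summands, then use Theorem~\ref{thm:SeqOfSurfDecomp}, Corollary~\ref{cor:HeegaardSpinCDiffSurf}, and Lemma~\ref{lem:HeegaardHopf}. The only difference is in the Alexander-graded refinement: the paper dispatches this in one line by citing Juh\'asz's Theorem~1.5 of \cite{Juh08} (decomposing along a minimal-genus Seifert surface $S_1$ picks out exactly the top Alexander grading), whereas you propose tracking the extra free summands through the $U-\Id$ exact triangle and then down to $\HFKhat(K_b)$, which is a more roundabout way of arriving at the same conclusion.
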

\begin{proof}
	To show that $\dim \HFKhat(K_b) > \dim \HFKhat(K_\#)$, it suffices to show that \[
		\rank \HFL^-(K_b \cup C,\sigma) > \rank \HFL^-(K_\# \cup C,\sigma)
	\]where $\sigma$ is a suture on $\partial N(C)$ in both cases by Proposition~\ref{prop:rankEqualsTwiceDim}. By Proposition~\ref{prop:SpinCstructDiff}, it suffices to show that there are free summands of $\HFL^-(K_b \cup C,\sigma)$ whose generators lie in $\Spin^c$ structures differing by $[\sigma]$. Assume that the band $b$ is nontrivial, Theorem~\ref{thm:SeqOfSurfDecomp} gives a sequence of nice surface decompositions \[
		S^3(K_b \cup C) \overset{S_1}{\rightsquigarrow} (M_1,\gamma_1) \overset{S_2}{\rightsquigarrow} \cdots \overset{S_n}{\rightsquigarrow} (M_n,\gamma_n)
	\]where each $S_i$ is disjoint from $\partial N(C)$, and $(M_n,\gamma_n)$ is the disjoint union of a product sutured manifold and $S^3(H)$, the sutured exterior of a Hopf link. By Lemma~\ref{lem:HeegaardHopf}, we know that $\SFH^-(M_n,\gamma_n,\sigma)$ has free summands with generators lying in $\Spin^c$ structures differing by $[\sigma]$, so by Corollary~\ref{cor:HeegaardSpinCDiffSurf}, we know that $\SFH^-(S^3(K_b \cup C),\sigma) = \HFL^-(K_b \cup C,\sigma)$ does as well. 

	The fact that this inequality may be taken to be in Alexander grading $g(K_b)$ follows from Theorem 1.5 of \cite{Juh08} and the fact that the first surface $S_1$ in the sequence of surface decompositions is along a minimal genus Seifert surface for $K_b$. 
\end{proof}

We obtain Floer-theoretic proofs of the following two known topological results. 

\begin{cor}[Theorem 1 of \cite{MR4058258}]\label{cor:miyazaki}
	The band sum of a two-component split link is isotopic to the connected sum if and only if the band is trivial. 
\end{cor}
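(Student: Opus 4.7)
The plan is to deduce this corollary directly from Theorem~\ref{thm:HeegaardTrivBandDetect}, which says that the band $b$ is trivial if and only if $\dim \HFKhat(K_b) = \dim \HFKhat(K_\#)$ over $\F_2$. The statement is an ``if and only if'', so I would handle the two directions separately.

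For the forward direction, I would simply invoke the definition recalled in the preliminaries: when $b$ is a trivial band for a split link $J_1 \cup J_2$, the band sum along $b$ is, by definition, isotopic to the connected sum $J_1 \# J_2 = K_\#$. So there is nothing to prove in this direction.

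For the reverse direction, suppose $K_b$ is isotopic to $K_\#$ as knots in $S^3$. Since $\HFKhat$ is an invariant of the unoriented knot type, this gives a bigraded isomorphism $\HFKhat(K_b) \cong \HFKhat(K_\#)$ over $\F_2$; in particular, their total $\F_2$-dimensions coincide. By Theorem~\ref{thm:HeegaardTrivBandDetect}, this equality of dimensions forces the band $b$ to be trivial. Combining the two directions yields the corollary.

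There is essentially no obstacle here: once Theorem~\ref{thm:HeegaardTrivBandDetect} is established, this is a one-line consequence via naturality of the invariant. The only thing to be slightly careful about is that $\HFKhat$ is an unoriented knot invariant (so isotopy, even without regard to orientation, suffices to guarantee the equality of dimensions), which is exactly what is recorded in section~\ref{subsec:HeegaardPreliminaries}.
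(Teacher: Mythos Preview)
Your proposal is correct and matches the paper's approach: the paper states this corollary immediately after Theorem~\ref{thm:HeegaardTrivBandDetect} without giving a separate proof, since it follows at once from the fact that isotopic knots have isomorphic $\HFKhat$ and from the definition that a trivial band yields the connected sum.
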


\begin{cor}[Theorem 2 of \cite{MR1177410}]\label{cor:kobayashi}
	If $g(K_b) = g(K_\#)$ and $K_b$ is fibered, then $b$ is trivial. 
\end{cor}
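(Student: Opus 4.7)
The plan is to derive Corollary~\ref{cor:kobayashi} as an immediate consequence of the sharper form of Theorem~\ref{thm:HeegaardFloerHomologyDetectsTrivialBand} proved as Theorem~\ref{thm:HeegaardTrivBandDetect}, namely the strict inequality
\[
    \dim \HFKhat(K_b, g(K_b)) > \dim \HFKhat(K_\#, g(K_b))
\]
in Alexander grading $g(K_b)$ whenever $b$ is nontrivial. The strategy is to argue by contradiction: assume $g(K_b) = g(K_\#)$, that $K_b$ is fibered, and that $b$ is nontrivial, and then use the genus and fiberedness detection properties of $\HFKhat$ to force both sides of the above inequality to equal $1$.

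First I would recall the two classical detection theorems needed. The Ozsv\'ath--Szab\'o genus detection result (Theorem 1.2 of \cite{MR2023281}) asserts that the top Alexander grading in which $\HFKhat(J)$ is nonzero equals the Seifert genus $g(J)$, so in particular $\HFKhat(K_\#, g(K_\#))$ is nonzero and hence has dimension at least $1$. The Ghiggini--Ni fiberedness detection result (Theorem 1.1 of \cite{MR2357503}) asserts that $K$ is fibered if and only if $\dim \HFKhat(K, g(K)) = 1$.

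Now I would combine these. Since by hypothesis $g(K_b) = g(K_\#)$, we have
\[
    \HFKhat(K_\#, g(K_b)) \;=\; \HFKhat(K_\#, g(K_\#)),
\]
and by genus detection this has dimension at least $1$. Since $K_b$ is fibered, fiberedness detection gives $\dim \HFKhat(K_b, g(K_b)) = 1$. If $b$ were nontrivial, Theorem~\ref{thm:HeegaardTrivBandDetect} would then give
\[
    1 \;=\; \dim \HFKhat(K_b, g(K_b)) \;>\; \dim \HFKhat(K_\#, g(K_b)) \;\geq\; 1,
\]
which is absurd. Hence $b$ must be trivial, completing the proof. There is no real obstacle here; the work has already been done in Theorem~\ref{thm:HeegaardTrivBandDetect}, and this corollary is simply the observation that the strict inequality in the top Alexander grading, combined with the genus/fiberedness detection properties, recovers Kobayashi's result.
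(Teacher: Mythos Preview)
Your proof is correct and essentially identical to the paper's own argument: both combine the strict inequality in Alexander grading $g(K_b)$ from Theorem~\ref{thm:HeegaardTrivBandDetect} with genus detection (to get $\dim \HFKhat(K_\#,g) \ge 1$) and fiberedness detection (to force $\dim \HFKhat(K_b,g) = 1$), yielding a contradiction when $b$ is nontrivial. The only cosmetic difference is that the paper phrases it as the contrapositive (nontrivial $b$ implies $\dim \HFKhat(K_b,g) \ge 2$, hence not fibered) rather than as a direct contradiction.
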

\begin{proof}
	Let $g = g(K_b) = g(K_\#)$, and suppose that $b$ is nontrivial. Then by Theorem~\ref{thm:HeegaardTrivBandDetect} and the genus detection result for $\HFKhat$ (Theorem 1.2 of \cite{MR2023281}) \[
		\dim \HFKhat(K_b,g) > \dim \HFKhat(K_\#,g) \ge 1.
	\]Thus $\dim \HFKhat(K_b,g) \ge 2$ so $K_b$ is not fibered (Theorem 1.1 of \cite{MR2357503}).
\end{proof}


\section{Instanton Floer homology}

We will use two different versions of instanton Floer homology, which was originally defined by Floer in \cite{MR956166}. The first is \textit{sutured instanton homology} defined in \cite{MR2652464} for balanced sutured manifolds. The invariant $\KHI$ is a special case of this construction. The instanton and Heegaard versions of sutured Floer homology share many formal properties, including the behavior under nice surface decomposition. In section~\ref{subsec:DetectTrivBandInstanton}, we will use the sequence of surface decompositions constructed in Theorem~\ref{thm:SeqOfSurfDecomp} to show that the sutured instanton homology of the sutured exterior of a band sum detects the trivial band. The structure of the argument is the same as for sutured Heegaard Floer homology, but the construction of a suitable minus version of sutured instanton homology is not as straightforward. Section~\ref{subsec:MinusVersionSuturedInstanton} is dedicated towards this task. In section~\ref{subsec:invarianceInstanton}, we show that the sutured Floer homology of a band sum is invariant under adding full twists to the band. 

The other version of instanton homology we consider is \textit{singular instanton homology} \cite{MR2805599} defined for links in closed oriented $3$-manifolds. For links in $S^3$, there is a spectral sequence from Khovanov homology to singular instanton homology, originally used to prove that Khovanov homology detects the unknot. An important feature of the instanton homology package is that the reduced variant of singular instanton homology for a knot in $S^3$ is isomorphic to the sutured instanton homology of its sutured complement. One side of the isomorphism is a theory connected to Khovanov homology through a spectral sequence, and the other side is a theory in which we can use sutured manifold techniques. In section~\ref{subsec:singularInstantonHomology}, we show that reduced and unreduced singular instanton homology detect the trivial band.  

\subsection{Preliminaries}

We review Kronheimer-Mrowka's sutured instanton homology \cite{MR2652464}. For a review of sutured manifolds, see section~\ref{subsubsec:suturedManifolds}. Singular instanton homology is reviewed in section~\ref{subsec:singularInstantonHomology}.

\begin{df*}[closure of a balanced sutured manifold]
	Let $(M,\gamma)$ be a balanced sutured manifold. An \textit{auxiliary surface} for $(M,\gamma)$ is connected compact oriented surface $B$ equipped with an orientation-reversing diffeomorphism $\partial B = s(\gamma)$. Associated to an auxiliary surface $B$ is the \textit{preclosure}\[
		M \cup [-1,1] \x B
	\]of $(M,\gamma)$ where $A(\gamma)$ is identified with $[-1,1] \x \partial B$ using the identification $\partial B = s(\gamma)$. The gluing is done so that $1 \x B$ is glued to $R_+(\gamma)$. The preclosure has two boundary components, labeled $\bar{R}_+$ and $\bar{R}_-$ where $\bar{R}_\pm$ is the boundary component containing $R_\pm(\gamma)$. Because $(M,\gamma)$ is balanced, it follows that $\bar{R}_+$ and $\bar{R}_-$ are diffeomorphic. Fix a diffeomorphism $h\colon \bar{R}_+ \to \bar{R}_-$ which preserves orientations inherited from $R_\pm(\gamma)$ and which sends a point $1 \x q \in 1 \x B \subset \bar{R}_+$ to $-1 \x q \subset -1 \x B \subset \bar{R}_-$. Identify the boundary components of the preclosure using $h$ to form a connected closed oriented manifold $Y$. The boundary components $\bar{R}_\pm$ of the preclosure become a single connected closed oriented surface $\bar{R}$ embedded in $Y$, and the arc $[-1,1] \x q \subset [-1,1] \x B$ becomes a closed oriented loop $\alpha$ in $Y$ intersecting $\bar{R}$ once positively. The triple $(Y,\bar{R},\alpha)$ is called a closure of $(M,\gamma)$ if the genus of $\bar{R}$ is at least $1$. 

	It is useful to record the information of a closure in a slightly different way following \cite{MR3352794}. First of all, instead of forming the closure $Y$ by gluing the two boundary components of the preclosure together, we instead glue $[1,3] \x \bar{R}_+$ to the preclosure so that $1 \x \bar{R}_+$ is identified with $\bar{R}_+$ and $3 \x \bar{R}_+$ is identified with $\bar{R}_-$ using $h$. This way, the sutured manifold $(M,\gamma)$ is embedded in $Y$. We think of $\bar{R} = 2 \x \bar{R}_+$ as the distinguished surface, and we let $\alpha$ be the union of the arcs $[-1,1] \x q \cup [1,3] \x q$. We assume that $g(\bar{R}) \ge 1$, and we choose a non-separating curve $\eta \subset \bar{R} \setminus q$. The tuple $(Y,\bar{R},\eta,\alpha)$ together with the embeddings $M \hookrightarrow Y$ and $[1,3] \x \bar{R} \hookrightarrow Y$ is called a \textit{closure} of $(M,\gamma)$. 
\end{df*}

\begin{df*}[admissible pair]
	Let $Y$ be a connected closed oriented $3$-manifold, and let $\beta \subset Y$ be a closed oriented $1$-manifold. Then $(Y,\beta)$ is an \textit{admissible pair} if $\beta$ intersects some embedded surface transversely in an odd number of points. 
\end{df*}

There is a well-defined instanton Floer homology $I_*(Y)_\beta$ for an admissible pair $(Y,\beta)$ taking the form of a $\Z/8\Z$-graded complex vector space. It is defined by a Morse theory for a Chern-Simons functional defined on a space of $\SO(3)$ connections on a $\U(2)$ bundle over $Y$ whose first Chern class is Poincar\'e dual to $\alpha$. Associated to each class $[S] \in H_2(Y)$ is a linear operator $\mu([S])\colon I_*(Y)_\beta \to I_*(Y)_\beta$. If $[T] \in H_2(Y)$ is another class, then $\mu([S])$ and $\mu([T])$ commute and $\mu([S] + [T]) = \mu([S]) + \mu([T])$. There is an additional operator $\mu(y)$ thought of as associated to a point $y \in Y$, but is independent of the point. This operator also commutes with $\mu([S])$ for each $[S] \in H_2(Y)$. 

\begin{prop}[Corollary 7.2 of \cite{MR2652464} referencing \cite{MR1670396}]
	Let $(Y,\beta)$ be an admissible pair, and let $R$ be a connected closed oriented surface embedded in $Y$ of positive genus with $R \cdot \beta$ odd. The simultaneous eigenvalues of the pair of operators $\mu(R)$ and $\mu(y)$ on $I_*(Y)_\beta$ are a subset of the pairs \[
		(i^r(2k),(-1)^r2)
	\]for integers $k$ with $0 \leq k \leq g(R)-1$ and $r = 0,1,2,3$.
\end{prop}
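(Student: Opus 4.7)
The plan is to derive the eigenvalue statement from Muñoz's computation in \cite{MR1670396} of the action of the operators $\mu(\Sigma)$ and $\mu(\mathrm{pt})$ on the instanton Floer homology of a standard model containing a surface $\Sigma$ of genus $g$. The idea is to reduce the spectral statement to a universal polynomial relation satisfied by the pair of operators that depends only on the genus of $R$ and not on the ambient $3$-manifold $Y$.

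The first step is to isolate $R$ using a neck-stretching argument. Because $R$ is an oriented surface embedded in $Y$ with $R \cdot \beta$ odd and $g(R) \geq 1$, a tubular neighborhood of $R$ in $Y$ is diffeomorphic to $[-1,1] \times R$ and meets $\beta$ in an odd number of arcs. Stretching the neck to $[-T,T] \times R$ with $T \to \infty$ relates $I_*(Y)_\beta$ to a Floer-theoretic object built from the two pieces obtained by cutting along $R$, and crucially shows that any universal relation between $\mu(R)$ and $\mu(y)$ established in a local model is inherited by $I_*(Y)_\beta$. Both operators are defined by cutting down moduli spaces against cycles supported near $R$ (respectively near $y$), so they are compatible with the neck-stretching decomposition.

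The second step is to invoke Muñoz's computation. He proves that for an admissible closed pair whose underlying $3$-manifold contains a surface $\Sigma$ of genus $g \geq 1$ with odd intersection with the bundle data, the operators $\mu(\Sigma)$ and $\mu(\mathrm{pt})$ satisfy a universal polynomial identity whose zero set in $\C \times \C$ is exactly
\[
\{\,(i^r(2k),\,(-1)^r 2) : 0 \leq k \leq g-1,\ r \in \{0,1,2,3\}\,\}.
\]
Because $\mu(R)$ and $\mu(y)$ commute, they can be simultaneously generalized-eigen-decomposed on $I_*(Y)_\beta$, and any joint generalized eigenvalue must lie in the vanishing locus of the universal polynomial. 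This gives the stated inclusion.

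The main obstacle is the transfer from the local Muñoz computation to arbitrary $Y$. This requires checking that polynomial relations among the $\mu$-operators are preserved under the gluing maps associated to the neck-stretching decomposition and that the relations proved by Muñoz on the standard model do hold in the appropriate universal sense. This is precisely the content of the remarks in \cite{MR2652464} about the structure of the representation of the operators $\mu(R)$ and $\mu(y)$; once that transfer is in place, the stated list of eigenvalue pairs follows immediately from Muñoz's computation of the joint spectrum on the model.
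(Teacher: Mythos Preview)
The paper does not give its own proof of this proposition: it is simply quoted from Kronheimer--Mrowka \cite{MR2652464}, who in turn rely on Mu\~noz's computation \cite{MR1670396}. So there is nothing in the present paper to compare your argument against directly.

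That said, your sketch is essentially the standard argument behind the cited result. Mu\~noz computes the ring structure of $I_*(\Sigma \times S^1)_\beta$ for the nontrivial bundle and reads off the joint spectrum of $\mu(\Sigma)$ and $\mu(y)$ there. The transfer to a general $(Y,\beta)$ containing $R$ is then a gluing/TQFT argument: the relative invariant of the complement of a neighborhood of $R$ takes values in the Floer homology of the model, and the operators $\mu(R),\mu(y)$ act through that module structure, so any polynomial relation they satisfy in the model holds universally. Your ``neck-stretching'' description is a correct intuitive picture of this, though in the literature the transfer is usually phrased via Floer's excision theorem or the module structure over $I_*(R\times S^1)_\beta$ rather than a direct stretching limit. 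The one point where your write-up is thin is exactly the one you flag: making precise \emph{why} polynomial identities among the $\mu$-operators propagate under gluing. That is where the actual work lies, and it is handled in \cite{MR2652464} via excision rather than being re-proved from scratch.
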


Let $I_*(Y|R)_\beta$ be the $(2g(R)-2,2)$-generalized eigenspace of $(\mu(R),\mu(y))$. When $g(R) \ge 2$, this is simply the $(2g(R)-2)$-generalized eigenspace of $\mu(R)$.  

\begin{prop}[Proposition 7.5 of \cite{MR2652464}]\label{prop:eigenvalsOfMuS}
	Let $(Y,\beta)$ be an admissible pair, with $R \subset Y$ a connected closed oriented surface with $R \cdot \beta$ odd. 
	Let $S$ be a connected closed oriented surface of positive genus properly embedded in $Y$. The eigenvalues of the operator \[
		\mu(S)\colon I^*(Y|R)_\beta \to I^*(Y|R)_\beta
	\]are a subset of the even integers $2i$ for which $|2i| \leq 2g(S) - 2$. 
\end{prop}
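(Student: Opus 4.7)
The plan is to reduce to a model computation on $\Sigma_g \x S^1$ via Kronheimer--Mrowka's excision theorem for instanton Floer homology. Since $\mu(S)$ commutes with both $\mu(R)$ and $\mu(y)$, it preserves the generalized eigenspace $I^*(Y|R)_\beta$, so it is meaningful to speak of its eigenvalues there. Because $\mu(S)$ depends only on $[S] \in H_2(Y;\Z)$, we may also replace $S$ by a homologous surface of the same or smaller genus if convenient (for example, to arrange that $S$ is disjoint from $R$ after tubing intersections), and any eigenvalue bound proved with a smaller-genus representative is stronger than the claim.

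The main step is an excision argument. Choose two parallel copies $S_0, S_1$ of $S$ inside a product neighborhood $S \x [-1,1] \subset Y$, cut $Y$ along $S_0 \cup S_1$, and reglue by a twist identification to obtain a new admissible pair which decomposes as $(Y^\circ, \beta^\circ) \amalg (\Sigma_g \x S^1, \beta'')$, where $g = g(S)$ and $\Sigma_g \x S^1$ comes equipped with a distinguished fiber $\Sigma_g \x \pt$ (also called $\Sigma_g$ below). The excision theorem produces an isomorphism
\[
I^*(Y|R)_\beta \;\cong\; I^*(Y^\circ|R)_{\beta^\circ} \otimes I^*(\Sigma_g \x S^1 \mid \Sigma_g)_{\beta''}
\]
which intertwines $\mu(S)$ with $1 \otimes \mu(\Sigma_g)$. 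Consequently the eigenvalues of $\mu(S)$ on the left lie among the eigenvalues of $\mu(\Sigma_g)$ on $I^*(\Sigma_g \x S^1 \mid \Sigma_g)$.

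The final step is the model computation. Using Floer's and Dostoglou--Salamon's description of $I^*(\Sigma_g \x S^1 \mid \Sigma_g)$ as the cohomology of the moduli space of odd-degree stable rank-$2$ bundles on $\Sigma_g$, together with the Mumford--Newstead--Kirwan relations in that cohomology ring, one shows that $\mu(\Sigma_g)$ satisfies the polynomial identity
\[
\prod_{k=-(g-1)}^{g-1}\bigl(\mu(\Sigma_g) - 2k\bigr) \;=\; 0,
\]
so its eigenvalues form a subset of the even integers in $[-(2g-2),\, 2g-2]$. Combining this with the excision isomorphism yields the stated bound on the eigenvalues of $\mu(S)$.

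The main obstacle is the bookkeeping in the excision argument: one must position $S$ and $R$ so that the cut-and-reglue is valid, verify that the generalized eigenspace for $\mu(R)$ on the original side corresponds cleanly to the tensor product of the relevant top eigenspaces after excision, and track the first Chern class data recorded by $\beta$ through the regluing so that the resulting pairs on both factors remain admissible. Once those compatibility issues are handled, the eigenvalue bound transfers immediately from the explicit model calculation on $\Sigma_g \x S^1$.
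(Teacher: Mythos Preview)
The paper does not prove this proposition; it is quoted from Kronheimer--Mrowka \cite{MR2652464}. Your outline---reduce via excision to the model case $\Sigma_g \times S^1$ and read off the eigenvalues from Mu\~noz's description of the ring structure---is indeed the strategy behind the original argument, so at the level of ideas you are on the right track.

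Two points deserve more than the ``bookkeeping'' label you give them. First, the excision isomorphism you write, with second tensor factor $I^*(\Sigma_g\times S^1\mid\Sigma_g)_{\beta''}$, cannot be what you mean: that factor is by definition the single top generalized eigenspace of $\mu(\Sigma_g)$, so $1\otimes\mu(\Sigma_g)$ would act there as the scalar $2g-2$ and you would learn nothing about the other eigenvalues. What one actually uses is an excision isomorphism on the full Floer groups (or on the $\mu(y)=2$ part), under which $\mu(S)$ corresponds to $1\otimes\mu(\Sigma_g)$ acting on the whole $\mu(y)=2$ eigenspace of $I_*(\Sigma_g\times S^1)_{\beta''}$; Mu\~noz's computation then gives the even integers in $[-(2g-2),2g-2]$ there.

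Second, and more seriously, the excision theorem as stated by Kronheimer--Mrowka requires the bundle to be nontrivial on the cut surfaces, i.e.\ $S\cdot\beta$ odd. The proposition makes no such assumption on $S$. When $S\cdot\beta$ is even the $\Sigma_g\times S^1$ piece you produce carries an inadmissible pair (its second homology is generated by the fiber, which now meets $\beta''$ evenly), so the excision isomorphism is not available in the form you wrote. If $S$ is null-homologous this is harmless since $\mu(S)=0$, but for $S$ nonseparating with $S\cdot\beta$ even one needs an additional device---in Kronheimer--Mrowka this is handled, but it is a genuine step rather than an afterthought, and your sketch does not indicate how you would bridge it.
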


A cobordism $(W,\nu)\colon (Y_1,\beta_1) \to (Y_2,\beta_2)$ between admissible pairs induces a map \[
	I_*(W)_\nu\colon I_*(Y_1)_{\beta_1} \to I_*(Y_2)_{\beta_2}
\] which depends up to sign only on $[\nu] \in H_2(W,\partial W)$ and the isomorphism class of $(W,\nu)$. If surfaces $\Sigma_1 \subset Y_1$ and $\Sigma_2 \subset Y_2$ are homologous in $W$, then $I_*(W)_\nu$ intertwines $\mu(\Sigma_1)$ and $\mu(\Sigma_2)$. See section 2 of \cite{1801.07634} for more. 

Given a closure $(Y,\bar{R},\eta,\alpha)$ of $(M,\gamma)$, both $(Y,\alpha)$ and $(Y,\eta \cup \alpha)$ are admissible pairs. Let the resulting instanton Floer homology groups be $I_*(Y)_\alpha$ and $I_*(Y)_{\eta + \alpha}$. Kronheimer-Mrowka define the sutured instanton homology of $(M,\gamma)$ to be $I_*(Y|\bar{R})_\alpha$. The Floer group $I_*(Y)_{\eta + \alpha}$ is used as an auxiliary tool in a number of their arguments. Following the terminology of \cite{MR3352794}, call $I_*(Y|\bar{R})_\alpha$ (resp. $I_*(Y|\bar{R})_{\eta+\alpha}$) the \textit{untwisted} (resp. \textit{twisted}) sutured instanton homology of the closure. Kronheimer-Mrowka \cite{MR2652464} prove that up to isomorphism, the twisted and untwisted Floer groups are same and depend only on the balanced sutured manifold. If $J$ is a knot in $S^3$, then the \textit{(sutured) instanton knot homology} of $J$, denoted $\KHI(J)$, is the sutured instanton homology of its sutured exterior. Similarly, if $J$ is a link in $S^3$, then $\KHI(J)$ is the sutured instanton homology of its exterior. 

Baldwin-Sivek \cite{MR3352794} establish naturality of Kronheimer-Mrowka's construction of sutured instanton homology. Two linear maps between complex vector spaces are \textit{projectively equivalent} if they differ by a scalar in $\C^\x$. Projective equivalence classes of maps can be composed, and the notions of injectivity, surjectivity, isomorphism, etc. are well-defined. 

\begin{thm}[Section 9 of \cite{MR3352794}]\label{thm:BaldwinSivekNaturality}
	For each pair of closures $\sr D = (Y,\bar{R},\eta,\alpha)$, $\sr D' = (Y',\bar{R}',\eta',\alpha')$ of a balanced sutured manifold $(M,\gamma)$, there is a projective equivalence class $\Psi_{\sr D,\sr D'}$ of isomorphisms \[
		\Psi_{\sr D,\sr D'}\colon I_*(Y|\bar{R})_{\eta + \alpha} \to I_*(Y'|\bar{R}')_{\eta' +\alpha'}
	\]for which $\Id \in \Psi_{\sr D,\sr D}$ and $\Psi_{\sr D',\sr D''} \circ \Psi_{\sr D,\sr D'} = \Psi_{\sr D,\sr D''}$. 

	Fix an isotopy class of diffeomorphisms $(M,\gamma) \to (N,\beta)$ of balanced sutured manifolds. Then for each pair of closures $(Y,\bar{R},\eta,\alpha), (Y'',\bar{R}'',\eta'',\alpha'')$ of $(M,\gamma),(N,\beta)$, there is an equivalence class of isomorphisms \[
		I_*(Y|\bar{R})_{\eta + \alpha} \to I_*(Y''|\bar{R}'')_{\eta''+\alpha''}.
	\]These maps commute with the above $\Psi_{\sr D,\sr D'}$ classes. These maps are also functorial with respect to composing isotopy classes of diffeomorphisms. 
\end{thm}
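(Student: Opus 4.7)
The plan is to follow the standard strategy for establishing naturality of an invariant defined from auxiliary data: reduce to a manageable class of elementary moves between closures, define maps for each elementary move, and prove invariance under the relations among moves. First I would show that any two closures $\sr D,\sr D'$ of a fixed balanced sutured manifold $(M,\gamma)$ can be connected by a finite sequence of moves drawn from a short list: (i) enlarging the auxiliary surface $B$ by attaching a $1$-handle, thereby raising the genus of $\bar R$; (ii) isotoping the gluing diffeomorphism $h\colon \bar R_+ \to \bar R_-$; and (iii) changing the non-separating auxiliary curve $\eta \subset \bar R \setminus q$. Since any two auxiliary surfaces embed in a common larger one, and any two gluings become isotopic after enough stabilization, this reduction is essentially topological bookkeeping, and one may always assume the embedded copy of $M$ is held fixed during each move.

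For each elementary move I would define $\Psi_{\sr D,\sr D'}$ via a canonical cobordism. In case (i) the cobordism is the trace of a $4$-dimensional $1$-handle attached to $[0,1] \x Y$ outside the embedded copy of $[0,1] \x M$, with $\eta + \alpha$ extended in the obvious way; in cases (ii) and (iii) it is a product cobordism modified only in the auxiliary region. The induced instanton cobordism map $I_*(W)_\nu$ carries the $(2g(\bar R)-2)$-generalized eigenspace of $\mu(\bar R)$ into the corresponding eigenspace on the other side, by the eigenvalue bounds recalled in Proposition~\ref{prop:eigenvalsOfMuS}. To identify the two eigenspaces as isomorphic I would invoke a Floer excision argument of the type used by Kronheimer-Mrowka, which compares eigenspaces across changes of auxiliary data. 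Because instanton cobordism maps are defined only up to sign and each excision identification carries its own scalar ambiguity, the induced map is canonical only up to a nonzero complex scalar; this is exactly the projective ambiguity asserted in the theorem.

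The hard part will be path-independence: the composite along any sequence of moves must depend only on its endpoints. For this I would enumerate a generating family of relations among the elementary moves--\emph{disjoint} moves commute, a move composed with its inverse is trivial, and certain coherence relations comparing different orderings of stabilizations and gluing isotopies--and verify that each relation yields a commutative diagram of cobordism maps, up to $\C^\x$. Disjointness and inverse relations are handled by locality of instanton cobordism maps, but the coherence relations require a careful excision-theoretic comparison of the corresponding cobordisms, and this is where the bulk of the technical work sits. Once path-independence is in hand, the composition law $\Psi_{\sr D',\sr D''} \circ \Psi_{\sr D,\sr D'} = \Psi_{\sr D,\sr D''}$ follows from pasting cobordisms. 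For the second assertion, given an isotopy class of diffeomorphism $\varphi\colon (M,\gamma) \to (N,\beta)$, I would extend $\varphi$ to a diffeomorphism of chosen closures by choosing compatible auxiliary surfaces and gluings, and take the map induced by its mapping cylinder as the cobordism. Any two such extensions differ by a composition of elementary moves on each side, so after quotienting by the $\Psi$-classes the induced map becomes canonical; compatibility with $\Psi_{\sr D,\sr D'}$ and functoriality under composing isotopy classes then follow from the same elementary-move analysis applied to the glued mapping cylinders.
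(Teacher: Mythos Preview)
The paper does not prove this theorem; it is quoted as a result of Baldwin--Sivek with the attribution ``Section 9 of \cite{MR3352794}'' and no argument is given. There is therefore nothing in the present paper to compare your proposal against.

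That said, your outline is a reasonable high-level sketch of the Baldwin--Sivek strategy: connect closures by elementary moves (genus stabilization, change of gluing diffeomorphism, change of $\eta$), assign to each move a cobordism or excision map, and verify path-independence by checking relations among moves. One caution: your move (i) is not quite right as stated. Attaching a single $1$-handle to the auxiliary surface $B$ does not by itself produce a cobordism between closed $3$-manifolds of the required form; Baldwin--Sivek's comparison between closures of different genus proceeds via Floer excision along tori rather than a direct handle-attachment cobordism, and the resulting isomorphism is an excision isomorphism, not a simple $2$-handle map. Likewise, the change-of-gluing move is handled not by a ``product cobordism modified in the auxiliary region'' but by factoring the difference of gluing diffeomorphisms into Dehn twists and realizing each twist as surgery on a curve in $\bar R$, giving a $2$-handle cobordism. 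The substantive work---and the part your sketch underestimates---is precisely the verification that the excision isomorphisms and the surgery maps satisfy the necessary coherence relations; this occupies the bulk of \cite{MR3352794} and is not something one can wave through as ``locality of instanton cobordism maps.''
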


\begin{rem}\label{rem:projectiveEquiv}
 	We will let $\SHI(M,\gamma)$ denote the (twisted) sutured instanton homology of $(M,\gamma)$. One can view $\SHI(M,\gamma)$ either as a particular group defined using a closure, or as the groupoid defined by Theorem~\ref{thm:BaldwinSivekNaturality}. 

 	A map between sutured instanton homology groups will mean a projective equivalence class of maps between groups defined by particular closures. A \textit{natural} map will mean a map defined for every pair of closures which commute with the \textit{canonical isomorphisms} $\Psi_{\sr D,\sr D'}$. A natural map $\SHI(M,\gamma) \to \SHI(N,\beta)$ is typically constructed in the following way. Given a closure $\sr D_M = (Y,R,\eta,\alpha)$ of $(M,\gamma)$ with special properties specific to the situation, we then construct a closure $\sr D_N = (Y_*,R_*,\eta_*,\alpha_*)$ of $(N,\beta)$ along with a map \[
 		I_*(Y|R)_{\eta + \alpha} \to I_*(Y_*|R_*)_{\eta_* + \alpha_*}.
 	\]This particular map defined using $\sr D$ then defines a natural map $\SHI(M,\gamma) \to \SHI(N,\beta)$ just by pre- and post-composing with the canonical isomorphisms. However, if we start with a different closure $\sr D'_M$ of $(M,\gamma)$ which still satisfies the same special properties, the resulting natural map may differ. The work in showing that a map is natural is that these resulting natural maps are the same. To do this, it suffices to show that the maps defined using $\sr D_M$ and $\sr D'_M$ intertwine the canonical isomorphisms $\Psi_{\sr D_M,\sr D_M'}$ and $\Psi_{\sr D_N,\sr D_N'}$. 
\end{rem}

\subsection{A minus version of sutured instanton Floer homology}\label{subsec:MinusVersionSuturedInstanton}

In this section, we define a minus version of sutured instanton homology for balanced sutured manifolds with a suitable distinguished suture. Recall the following definition from section~\ref{subsubsec:minusHeegaardSuturedFloer}. 

\begin{df*}[suitable distinguished suture]
	Let $(M,\gamma)$ be a balanced sutured manifold. We say that one of the sutures $\sigma$ is a \textit{suitable distinguished suture} if \begin{enumerate}[itemsep=-0.5ex]
		\item $\sigma$ lies on a toral boundary component $T$ of $M$,
		\item $s(\gamma) \cap T$ consists of two oppositely oriented parallel copies of $\sigma$,
		\item There exists a properly embedded compact oriented surface $\Sigma \subset M$ for which $\partial \Sigma \cap T$ is a simple closed curve that intersects $\sigma$ in a single point transversely.
	\end{enumerate}
\end{df*}

The definition of a minus version of sutured instanton homology for $(M,\gamma,\sigma)$ is based on a construction that first appeared in \cite{MR3650078} in the context of Heegaard Floer homology, which was subsequently adapted for instanton (and monopole) Floer homology in \cite{MR3352794,MR3477339,1801.07634,1810.13071,1901.06679,1910.01758}. In section~\ref{subsubsec:contactStructures}, we briefly explain the alternative construction of the minus version of knot Floer homology in \cite{MR3650078} and the analogous results in instanton Floer homology needed to adapt the construction to define a minus version of instanton knot Floer homology. We construct our minus version of sutured instanton homology in section~\ref{subsubsec:constructionOfTheInvariant} and prove a number of features of the invariant in section~\ref{subsubsec:featuresOfTheInvariant}. Finally in section~\ref{subsubsec:gradingSuturedInstanton}, we prove a few technical results about gradings used in previous sections. 

\subsubsection{Contact structures and a minus version of knot Floer homology}\label{subsubsec:contactStructures}

Let $\xi$ be a (cooriented and positive) contact structure on a compact oriented $3$-manifold $M$. A surface $F$ properly embedded in $M$ is \textit{convex} if there is a contact vector field $v$ on $M$ transverse to $F$. The surface $F$ and the vector field $v$ determine a \textit{dividing set} $\Gamma$ on $F$ which is a $1$-manifold in $F$ which divides $F$ into two subsurfaces $F_+ \cup F_-$ that meet along $\Gamma$. Furthermore, if $F$ is given an orientation, there is a natural orientation on $\Gamma$ which agrees with the boundary orientation induced from $F_+$. Any other contact vector field $v$ transverse to $F$ will yield an isotopic dividing set. 

\begin{df*}[sutured contact manifold]
	A contact structure $\xi$ on a balanced sutured manifold $(M,\gamma)$ is a contact structure on $M$ for which $\partial M$ is convex and has dividing set isotopic to $\gamma$.
	Two contact structures $\xi_0$ and $\xi_1$ on $(M,\gamma)$ are equivalent if they lie in a $1$-parameter family $\xi_t$ for $t \in [0,1]$ of contact structures on $(M,\gamma)$. 
\end{df*}

\begin{thm}[Theorem 1.1 of \cite{0807.2431}]\label{thm:HKMgluing}
	Let $(M_1,\gamma_1)$ and $(M_2,\gamma_2)$ be balanced sutured manifolds. Suppose that $M_1$ is embedded in the interior of $M_2$, and every component of $M_2\setminus \Int(M_1)$ contains a boundary component of $M_2$. Associated to an equivalence class of contact structures $\xi$ on the balanced sutured manifold $(M_2\setminus \Int(M_1),\gamma_1 \cup \gamma_2)$ is a natural ``gluing'' map \[
		\phi_{\xi}\colon \SFH(-M_1,-\gamma_1) \to \SFH(-M_2,-\gamma_2)
	\]defined over $\F_2$. The map is also defined over $\Z$, but with a sign ambiguity. 
\end{thm}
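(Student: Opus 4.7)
The plan is to follow the partial open book strategy of Honda-Kazez-Mati\'c. The starting point is the classification result, analogous to Giroux's theorem in the closed case, that every equivalence class of contact structures on a balanced sutured manifold corresponds to an equivalence class of \emph{partial open book decompositions} $(S, P, h)$, modulo positive stabilization. Here $S$ is a compact oriented surface, $P \subset S$ is a subsurface containing $\partial S \setminus \partial P$, and $h \colon P \to S$ is an embedding that restricts to the identity on $\partial P \cap \partial S$. I would first record this correspondence for the contact structure $\xi$ on $N := M_2 \setminus \Int(M_1)$, noting that the dividing set on $\partial N$ consists of the sutures $\gamma_1 \cup \gamma_2$.

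The main construction is to promote a partial open book for $(N, \gamma_1 \cup \gamma_2, \xi)$ to an extension procedure on Heegaard diagrams. Given any admissible balanced diagram $\sr{H}_1 = (\Sigma_1, \bo\alpha_1, \bo\beta_1, \bo p_1)$ for $(-M_1, -\gamma_1)$, I would build an extended diagram $\sr{H}_2 = (\Sigma_2, \bo\alpha_2, \bo\beta_2, \bo p_2)$ for $(-M_2, -\gamma_2)$ by attaching a surface piece built from the partial open book: take $\Sigma_2 = \Sigma_1 \cup (S \cup_{\partial P} h(P))$, add one $\alpha$ curve for each arc in a basis of arcs $\{a_i\}$ for $(S, P)$ (namely the double $a_i \cup a_i$), and one $\beta$ curve obtained by applying $h$ and pushing endpoints along $\partial S$. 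By construction, each new $\alpha$-$\beta$ pair meets in a single distinguished intersection point $c_i$, and the tuple $\bo c = (c_1, \ldots, c_k)$ supports the Honda-Kazez-Mati\'c contact class. The chain-level gluing map is then $\phi_\xi^\circ(\bo x) := \bo x \cup \bo c$.

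Verifying that $\phi_\xi^\circ$ is a chain map reduces to showing that every holomorphic disc in $\sr{H}_2$ emanating from a generator of the form $\bo x \cup \bo c$ with $\mu = 1$ either stays inside $\Sigma_1$ (recovering the differential of $\sr{H}_1$) or exits through the open-book region in a way that forces a positive coefficient at some basepoint. This follows from a positivity-of-domains argument: the new region carries an area filtration in which $\bo c$ is at the minimum, so $\bo c$ cannot appear in the outgoing end of a nonnegative domain unless the domain is trivial on the open-book side. Independence of the admissible extension of $\sr{H}_1$ to $\sr{H}_2$ then follows from the standard Heegaard moves argument in sutured Floer homology.

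The main obstacle is proving that the resulting map depends only on the equivalence class of $\xi$, i.e.\ invariance under positive stabilization of the partial open book and under isotopy of $\xi$. For positive stabilization, I would exhibit an explicit isomorphism of Heegaard diagrams modifying $\sr{H}_2$ by a handle attachment plus an isotopy that preserves the contact element $\bo c$, and then invoke Juh\'asz's invariance of sutured Floer homology under such moves. For isotopy invariance, one reduces to a one-parameter family of partial open books connected by a sequence of stabilizations using the uniqueness-up-to-stabilization part of the Honda-Kazez-Mati\'c correspondence. Naturality with respect to the canonical isomorphisms of $\SFH$ between different Heegaard presentations of $(-M_1, -\gamma_1)$ and $(-M_2, -\gamma_2)$ is the most delicate bookkeeping step; the $\Z$-coefficient statement requires an additional orientation choice on the arcs $a_i$ and the basepoints $\bo c$, which accounts for the stated sign ambiguity.
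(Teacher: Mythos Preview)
The paper does not prove this theorem; it is stated as Theorem 1.1 of \cite{0807.2431} (Honda--Kazez--Mati\'c) and quoted without proof as background for the discussion of contact gluing maps. So there is no ``paper's own proof'' to compare against.

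That said, your outline is recognizably the Honda--Kazez--Mati\'c strategy from the cited paper: represent $\xi$ by a partial open book, extend a sutured Heegaard diagram for $(-M_1,-\gamma_1)$ by the open-book piece, and send $\mathbf{x}\mapsto \mathbf{x}\cup\mathbf{c}$ where $\mathbf{c}$ is the distinguished generator supporting the contact class. Your description of the partial open book data is slightly garbled (the condition on $P\subset S$ and the formula for $\Sigma_2$ are not quite right as written), and the positivity-of-domains argument you invoke for the chain-map property needs more care than a one-line filtration remark, but the overall architecture matches the source. Since the present paper only \emph{uses} this result and the later instanton analogue (Theorem~\ref{thm:instantonGluingMaps}), there is nothing further to reconcile here.
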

Naturality refers to a suitable independence of the choices of auxiliary data needed to define the sutured Floer homology groups. If $M_1$ is the complement of an open tubular neighborhood $(0,1] \x \partial M_2$ of the boundary of $M_2$ with the same set of sutures, then there is a $[0,1]$-translation invariant contact structure $\xi$ on $[0,1] \x \partial M_2$ with dividing set $\{0,1\} \x \gamma_2$. In this case, the gluing map $\phi_\xi$ is the identity. If we have two inclusions $(M_1,\gamma_1) \subset (M_2,\gamma_2)$ and $(M_2,\gamma_2) \subset (M_3,\gamma_3)$ satisfying the hypotheses of Theorem~\ref{thm:HKMgluing} along with contact structures $\xi_{12}$ and $\xi_{23}$ on $M_2\setminus \Int(M_1)$ and $M_3\setminus \Int(M_2)$, respectively, compatible with the sutures, then there is a composite contact structure $\xi_{13} = \xi_{12} \cup \xi_{23}$ on $M_3\setminus\Int(M_1)$ compatible with the sutures, and $\phi_{\xi_{13}} = \phi_{\xi_{23}}\circ\phi_{\xi_{12}}$. 

\vspace{20pt}

Let $K$ be a null-homologous oriented knot in a closed oriented $3$-manifold $Y$.
We outline how the authors of \cite{MR3650078} recover $\HFK^-(-Y,K)$ as a $\F_2[U]$-module with an Alexander grading from a limit of certain sutured Floer homology groups. Let $N(K)$ be an tubular neighborhood of $K$, and parametrize the knot complement as $Y\setminus N(K) \cup [0,\infty) \x \partial N(K)$. For each $n \ge 1$, let $X_n = Y\setminus N(K) \cup [0,n] \x \partial N(K)$, and let $X_n$ have a pair of parallel oppositely oriented sutures $\gamma_n$ of slope $\lambda - n\mu$ where $\mu$ is the meridian and $\lambda$ is the longitude. For each $n < m$, the sutured manifold $(X_n,\gamma_n)$ is embedded in the interior of $(X_m,\gamma_m)$, and $X_m\setminus \Int(X_n)$ is the thicken torus $[n,m] \x \partial N(K)$. For each $n$, a particular contact structure $\xi_n^-$ on $[n,n+1] \x \partial N(K)$ compatible with $\gamma_n \cup \gamma_{n+1}$ is chosen. These contact structures induce gluing maps $\psi_n^- = \phi_{\xi_n^-}$ \[
	\begin{tikzcd}[column sep=large]
		\cdots \ar[r,"\psi_{n-1}^-"] & \SFH(-X_n,-\gamma_n) \ar[r,"\psi_n^-"] & \SFH(-X_{n+1},-\gamma_{n+1}) \ar[r,"\psi_{n+1}^-"] & \cdots
	\end{tikzcd}
\]and we let $H^-$ be the colimit of this diagram in the category of vector spaces over $\F_2$. 

There is another special contact structure $\xi_n^+$ on $[n,n+1] \x \partial N(K)$ compatible with $\gamma_n \cup \gamma_{n+1}$ which has the property that the two contact structures $\xi_n^+ \cup \xi_{n+1}^-$ and $\xi_n^- \cup \xi_{n+1}^+$ on $([n,n+2] \x \partial N(K), \gamma_n \cup \gamma_{n+2})$ are equivalent. For $\psi_n^+ = \phi_{\xi_n^+}$, it follows that the diagram \[
	\begin{tikzcd}[column sep=large,row sep =large]
		\cdots \ar[r,"\psi_{n-1}^-"] & \SFH(-X_n,-\gamma_n) \ar[r,"\psi_{n}^-"] \ar[d,"\psi_n^+"] & \SFH(-X_{n+1},-\gamma_{n+1}) \ar[r,"\psi_{n+1}^-"] \ar[d,"\psi_{n+1}^+"] & \cdots\\
		\cdots \ar[r,"\psi_{n}^-"] & \SFH(-X_{n+1},-\gamma_{n+1}) \ar[r,"\psi_{n+1}^-"] & \SFH(-X_{n+2},-\gamma_{n+2}) \ar[r,"\psi_{n+2}^-"] & \cdots
	\end{tikzcd}
\]commutes. The $\psi_n^+$ maps therefore induce a map $U\colon H^-\to H^-$ on the colimit, and it is with respect to this map that $H^-$ is made an $\F_2[U]$-module. Finally, the relative homology class of a Seifert surface yields gradings on each sutured Floer homology group in the sequence with respect to which the $\psi_-$ and $\psi_+$ maps are homogeneous, so that there is an induced grading on $H^-$ with respect to which $U$ is homogeneous of degree $-1$. The graded $\F_2[U]$-module $H^-$ defined in this way is isomorphic to the $\F_2[U]$-module $\HFK^-(-Y,K)$ with the Alexander grading. 

\vspace{20pt}

The contact structures $\xi_n^-,\xi_n^+$ on $([n,n+1] \x \partial N(K), \gamma_n \cup \gamma_{n+1})$ are tight, and there is a characterization of them in terms of a classification of tight contact structures on thickened tori. See section 2.1.3 of \cite{MR3650078} which references \cite{MR1779622,MR1786111}. We will instead specify them through a sequence of \textit{contact handle attachments}. An explicit construction of the Honda-Kazez-Mati\'c gluing maps in terms of contact handles was given in \cite{MR4080483}. 

We first explain the topology of contact handle attachments. Let $h^i$ for $i = 0,1,2,3$ be a $3$-ball with a single suture. We next specify how attaching regions for each handle must intersect the suture. \begin{itemize}[itemsep=-0.5ex]
	\item The attaching region for $h^0$ is empty. 
	\item The attaching region for $h^1$ consists of two disjoint discs on the boundary centered at points on the suture.
	\item The attaching region for $h^2$ is an annular neighborhood of a simple closed curve on the boundary that intersects the suture twice transversely.
	\item The attaching region for $h^3$ is the entire boundary.
\end{itemize}Let $(M,\gamma)$ be a balanced sutured manifold. A contact handle attachment of index $i \in \{0,1,2,3\}$ is a smooth handle attachment subject to the constraint that the attaching regions of $M$ and $h^i$ are identified in a way that identifies their intersections with the sutures. The sutures within the attaching regions must also be identified in an orientation-reversing way. The result of a contact handle attachment is another sutured manifold. It will be balanced except for the possibility that a $3$-handle attachment creates a closed component of the sutured manifold. 

For each contact handle attachment, a \textit{contact handle attachment map} \[
	\SFH(-M,-\gamma) \to \SFH(-M',-\gamma')
\]is constructed in \cite{MR4080483}  where $(M',\gamma')$ is obtained by attaching a contact handle to $(M,\gamma)$. If $(M'',\gamma'')$ is obtained from $(M,\gamma)$ by two different sequences of contact handle attachments, we have two maps $\SFH(-M,-\gamma) \to \SFH(-M'',-\gamma'')$ obtained by the composites of contact handle attachment maps. These two maps can be different. However, each contact handle carries a tight contact structure compatible with the suture, unique up to equivalence, and if the two sequences of contact handle attachments determine the equivalent contact structures then the two maps will be the same. More precisely, we shrink $(M,\gamma)$ slightly within $(M'',\gamma'')$ along a collar neighborhood of its boundary so that $M$ is embedded in the interior of $M''$. We may then glue an $I$-invariant contact structure on this collar neighborhood to the contact structures on the handles to obtain a contact structure on $M''\setminus \Int(M)$ compatible with the sutures. If the two sequences determine the same contact structure on $M''\setminus \Int(M)$, then they will induce the same map. In particular, it is shown in \cite{MR4080483} that they will induce the Honda-Kazez-Mati\'c gluing map. 

The contact structures $\xi_n^-,\xi_n^+$ on $([n,n+1] \x \partial N(K),\gamma_n \cup \gamma_{n+1})$ arise as \textit{bypass attachments} \cite{MR1786111}, so we first briefly explain bypass attachments more generally and Honda's bypass exact triangle following \cite{1801.07634,MR2774055}. Let $\alpha$ be an arc embedded in the boundary of a balanced sutured manifold $(M,\gamma)$, transverse to $s(\gamma)$. We require that the endpoints of $\alpha$ lie on $s(\gamma)$ and the interior of $\alpha$ intersects $s(\gamma)$ in a single point. An arc of this form is called a (\textit{bypass}) \textit{attaching arc} for $(M,\gamma)$. Two attaching arcs are equivalent if they are isotopic through attaching arcs. Given an attaching arc $\alpha$, first attach a contact $1$-handle to $(M,\gamma)$ along the endpoints of $\alpha$. Then attach a canceling contact $2$-handle along a circle consisting of the arc $\alpha$ and an arc which runs along the $1$-handle. This contact $2$-handle attachment is uniquely specified once we require that the sutures change in the way shown in Figure~\ref{fig:bypass}. The composite of this pair of contact handles is referred to as a bypass attachment along $\alpha$. 

\begin{figure}[!ht]
	\centering
	\vspace{7.5pt}
	\includegraphics[width=.15\textwidth]{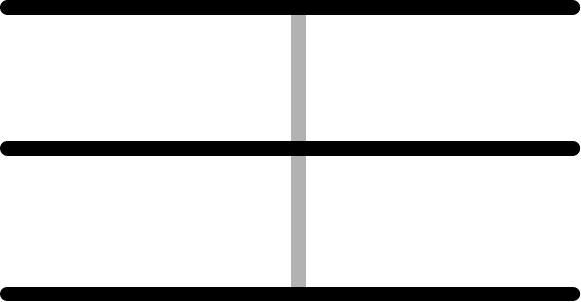}
	\hspace{20pt}
	\includegraphics[width=.15\textwidth]{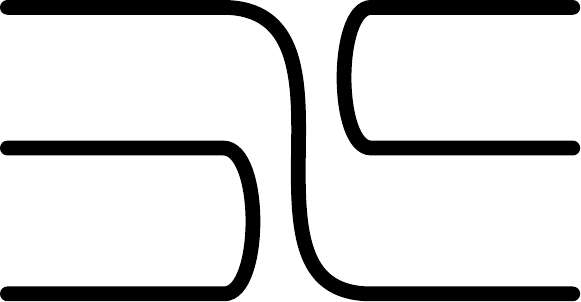}
	\caption{A bypass attachment. The outward normal points out of the page.}
	\label{fig:bypass}
\end{figure}

The intersection of the cocore of the $1$-handle with the boundary of the resulting sutured manifold is new bypass arc. By applying a bypass attachment to this arc, and repeating this procedure, we obtain a $3$-periodic sequence of sutured manifolds shown in Figure~\ref{fig:bypassTriangle}. According to unpublished work of Honda, the resulting triangle of bypass attachment maps form an exact triangle. 

\begin{figure}[!ht]
	\centering
	\vspace{7.5pt}
	\includegraphics[width=.35\textwidth]{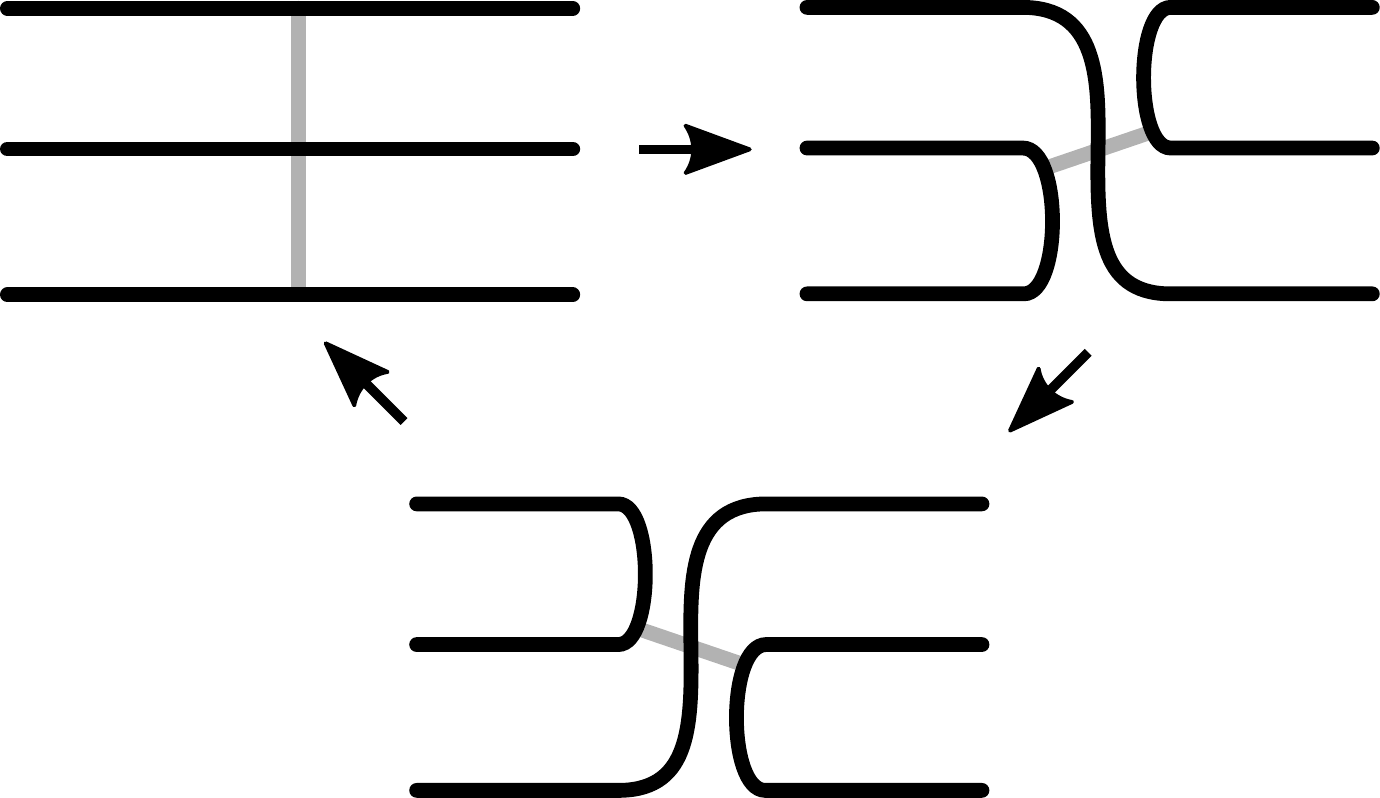}
	\caption{The bypass triangle.}
	\label{fig:bypassTriangle}
\end{figure}

We return to specifying the contact structures $\xi_n^-,\xi_n^+$ on $[n,n+1] \x \partial N(K)$ which define the maps $\psi_-,\psi_+\colon \SFH(-X_n,-\gamma_n) \to \SFH(-X_{n+1},-\gamma_{n+1})$. The sutures $\gamma_n$ on $X_n$ are parallel oppositely oriented curves of slope $\lambda - n \mu$ on $\partial X_n = \{n\} \x \partial N(K)$. Let $\lambda_n$ be the suture oriented as $\lambda - n\mu$ and let $-\lambda_n$ be the other suture. For each $n \ge 1$, let $\alpha_{n+1}^-$ be a bypass attaching arc on $\partial X_{n+1}$ which intersects $\lambda_{n+1}$ precisely at its endpoints and which is disjoint from both $\mu$ and $\lambda$. This arc is unique up to isotopy through attaching arcs. The sutured manifolds in the associated bypass exact triangle are \[
	\begin{tikzcd}[column sep=small]
		(X_{n+1},\gamma_{n+1}) \ar[rr,"\alpha_{n+1}^-"] & & Y(K) \ar[dl]\\
		& (X_{n},\gamma_{n}) \ar[ul] &
	\end{tikzcd}
\]where $Y(K)$ denotes the exterior of $K$ with two meridional sutures. The bypass attachment $(X_{n},\gamma_{n}) \to (X_{n+1},\gamma_{n+1})$ is the contact structure $\xi_{n}^-$. Similarly, let $\alpha_{n+1}^+$ be the unique bypass attaching arc on $\partial X_{n+1}$ which intersects $-\lambda_{n+1}$ precisely at its endpoints and which is disjoint from both $\mu$ and $\lambda$. The exact triangle involves the same three sutured manifolds and the bypass attachment $(X_{n},\gamma_{n}) \to (X_{n+1},\gamma_{n+1})$ in this triangle is $\xi_{n}^+$. 

\vspace{20pt}

As noted in Remark 1.11 of \cite{MR3650078}, this limiting invariant can be defined for any homology theory for sutured manifolds with a contact gluing result. We explain the relevant results in sutured instanton homology that have been developed towards this end. 

First of all, in order to make sense of maps between sutured instanton homology groups, a version of naturality for $\SHI(M,\gamma)$ is needed; the relevant result is proven in \cite{MR3352794} and stated here in Theorem~\ref{thm:BaldwinSivekNaturality}. As noted before, since the canonical isomorphisms between the sutured Floer homology groups constructed from different closures are well-defined up to a unit, all equalities of maps are taken to mean up equality to rescaling by an element of $\C^\x$. With this naturality, Baldwin-Sivek construct contact handle attachment maps. 

\begin{prop}[Section 3 of \cite{MR3477339}]\label{prop:instantonContactHandleAttachmentMaps}
	If $(M',\gamma')$ is obtained by attaching a contact handle to the balanced sutured manifold $(M,\gamma)$, then there is a natural contact handle attachment map \[
		\SHI(-M,-\gamma) \to \SHI(-M',\gamma').
	\]The contact $0$- and $1$-handle attachment maps are isomorphisms.
\end{prop}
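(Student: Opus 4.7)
The plan is to construct the four contact handle attachment maps separately by index, using the naturality framework of Theorem~\ref{thm:BaldwinSivekNaturality}. For each index $i$, I would show that given a closure $\sr D$ of $(M,\gamma)$, there is a canonical construction of a closure $\sr D'$ of $(M',\gamma')$ together with either an isomorphism or a cobordism map relating the associated twisted instanton Floer groups; naturality then becomes the statement that this construction commutes (up to $\C^\x$) with the canonical isomorphisms $\Psi_{\sr D_1,\sr D_2}$ as one varies the closure.

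For a contact $0$-handle, the piece $(M',\gamma')$ is $(M,\gamma)$ disjoint union a $3$-ball carrying one suture. The extra ball can be absorbed into the auxiliary surface portion of any closure without altering the closed $3$-manifold, the distinguished surface, or the curves $\eta,\alpha$, so the map is defined by the identity on the Floer group of the closure; since the sutured exterior of the $3$-ball with one suture has one-dimensional $\SHI$, this is automatically an isomorphism. For a contact $1$-handle, whose attaching region is two discs centered on $s(\gamma)$, I would choose a closure in which the attaching region is extended across the auxiliary product region $[-1,1]\times B$; attaching the $1$-handle produces a new balanced sutured manifold whose preclosure is obtained from the old preclosure by a smooth $1$-handle attachment that can be canceled against a matching change in the auxiliary surface, giving a diffeomorphism between closures of the same genus and hence an induced isomorphism on $\SHI$. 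For a contact $2$-handle, whose attaching region is an annular neighborhood of an arc-pair forming a curve $c \subset \partial M$ meeting $s(\gamma)$ in two points, I would arrange a closure $(Y,\bar{R},\eta,\alpha)$ in which $c$ bounds an embedded surface in the auxiliary region, so that the contact $2$-handle attachment extends to a $4$-dimensional $2$-handle attachment along $c$ in $Y$; the associated instanton cobordism map, evaluated on the appropriate generalized eigenspace, provides the handle attachment map. A contact $3$-handle caps off an $S^2$ boundary component with a single suture; this extends to filling in a $B^3$ in the closure, and the corresponding cobordism map defines the $3$-handle map.

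Naturality in each case reduces to checking compatibility of the construction with stabilizations of the auxiliary surface and boundary-parametrization changes, since the groupoid $\Psi_{\sr D,\sr D'}$ is generated by such elementary moves; each elementary move commutes with the handle attachment construction up to a unit in $\C^\x$. The isomorphism claims for the $0$- and $1$-handles then follow immediately from the constructions above: the $0$-handle map is a tensor product with a one-dimensional group, and the $1$-handle map is induced by a diffeomorphism of closures. The main obstacle is expected to be in the contact $2$-handle case: the cobordism defining the map depends a priori on the extension of $c$ into the auxiliary region, and one must verify that different extensions produce maps that agree after applying canonical isomorphisms; this requires a careful excision-type argument analogous to the corresponding Heegaard-Floer verification in \cite{MR4080483} but adapted to the twisted eigenspace setup and the $\C^\x$-projective nature of the instanton category.
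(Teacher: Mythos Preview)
The paper does not prove this proposition at all: it is stated as a citation to Section~3 of Baldwin--Sivek \cite{MR3477339}, with no argument given. So there is no ``paper's own proof'' to compare against; your proposal is an attempt to reconstruct the Baldwin--Sivek construction from scratch.

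Your outline for the $0$- and $1$-handle maps is essentially correct and matches what Baldwin--Sivek do: in both cases a closure of $(M,\gamma)$ is literally also a closure of $(M',\gamma')$ (the extra ball or $1$-handle is absorbed into the auxiliary product piece), and the map is the identity on the Floer group of that closure, hence an isomorphism. Your description of the $2$-handle map, however, is not quite how Baldwin--Sivek proceed and contains a genuine imprecision. The attaching curve $c$ lies on $\partial M$, which sits in the \emph{interior} of the closed $3$-manifold $Y$; it does not make sense to ``extend the contact $2$-handle attachment to a $4$-dimensional $2$-handle attachment along $c$ in $Y$'' directly, nor to ask that $c$ bound a surface in the auxiliary region. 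What Baldwin--Sivek actually do (and what this paper recalls explicitly just before Proposition~\ref{prop:NbetaCommutativeDiagram}) is: push $c$ slightly into the interior of $M$ to get a framed knot $c^*\subset Y$, perform honest $3$-manifold surgery on $c^*$ to obtain a new closure, and take the resulting instanton surgery-cobordism map. The surgered manifold is a closure not of $(M',\gamma')$ but of $(M',\gamma')$ with an extra contact $1$-handle attached; the $2$-handle map is then \emph{defined} as the surgery cobordism map composed with the inverse of that $1$-handle isomorphism. Your sketch misses this two-step structure, and the naturality issue you flag at the end (dependence on the ``extension of $c$'') dissolves once the construction is phrased correctly as surgery on a pushoff.
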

\begin{df*}[Section 4 of \cite{1801.07634}]
	Let $(M',\gamma')$ be obtained from $(M,\gamma)$ by a bypass attachment along an attaching arc. The bypass attachment can realized by a contact $1$-handle attachment followed by a contact $2$-handle attachment. Let the \textit{bypass attachment map} \[
		\SHI(-M,-\gamma) \to \SHI(-M',-\gamma')
	\]be the composite of the two associated contact handle attaching maps in Proposition~\ref{prop:instantonContactHandleAttachmentMaps}. 
\end{df*}

Given a null-homologous oriented knot $K$ in a closed oriented $3$-manifold $Y$, let $X$ be its exterior, and let $\gamma_n$ consist of two parallel oppositely oriented sutures on $\partial X$ of slope $\lambda - n\mu$ where $\lambda$ is the longitude and $\mu$ is the meridian. Let $\psi_n^-$ and $\psi_n^+$ be the bypass attachment maps associated to the bypass attachments $\xi_n^-$ and $\xi_n^+$, respectively, from $(X,\gamma_n)$ to $(X,\gamma_{n+1})$. Then consider the colimit $H^-$ of \[
	\begin{tikzcd}[column sep=large]
		\cdots \ar[r,"\psi_{n-1}^-"] & \SHI(-X,-\gamma_n) \ar[r,"\psi_n^-"] & \SHI(-X,-\gamma_{n+1}) \ar[r,"\psi_{n+1}^-"] & \cdots
	\end{tikzcd}
\]in the category of vector spaces over $\C$. This colimit $H^-$ is the underlying vector space of the candidate minus version of instanton knot Floer homology. To define the $U$ action, we need the identity $\psi_{n+1}^-\circ\psi_n^+ = \psi_{n+1}^+ \circ \psi_n^-$. This result was proven for sutured Heegaard Floer homology using the Honda-Kazez-Mati\'c gluing maps. Although this particular identity follows from their construction of the bypass attachment maps, Baldwin-Sivek do not show in general that the composite of a sequence of contact handle attachment maps depends only on the resulting contact structure. However, they do prove the exactness of the bypass triangle.

\begin{thm}[Theorem 1.20 of \cite{1801.07634}]\label{thm:instantonByPassTri}
	Given a bypass triangle of sutured manifolds \[
		\begin{tikzcd}[column sep=small,row sep=small]
			(M,\gamma_1) \ar[rr] & & (M,\gamma_2) \ar[dl]\\
			& (M,\gamma_3) \ar[ul] &
		\end{tikzcd}
	\]the triangle of bypass attachment maps \[
		\begin{tikzcd}[column sep=-1ex,row sep=small]
			\SHI(-M,-\gamma_1) \ar[rr] & & \SHI(-M,-\gamma_2) \ar[dl]\\
			& \SHI(-M,-\gamma_3) \ar[ul] & 
		\end{tikzcd}
	\]is exact.
\end{thm}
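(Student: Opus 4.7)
My plan is to reduce the bypass exact triangle to Floer's surgery exact triangle in instanton Floer homology, transported to the sutured setting via closures. The three bypass attaching configurations differ only inside a small disk on $\partial M$, and each bypass attachment can be factored as a contact $1$-handle attachment followed by a contact $2$-handle attachment. The contact $1$-handle attached in each of the three cases is the same (it is attached at the endpoints of the three attaching arcs, which coincide up to isotopy), and by Proposition~\ref{prop:instantonContactHandleAttachmentMaps} this $1$-handle attachment map is an isomorphism. Hence it suffices to establish exactness for the three contact $2$-handle attachment maps out of a common sutured manifold $(M^+,\gamma^+)$.

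The first step is to identify the attaching curves of the three contact $2$-handles. Drawing the three bypass arcs in the local disk and tracking the canceling saddle for each $1$-$2$ handle cancellation, the three $2$-handle attaching curves lie on an annulus in $\partial M^+$ and can be isotoped to three curves on a single torus in a closure $(Y,\bar R,\eta,\alpha)$ of $(M^+,\gamma^+)$; consecutive pairs of these curves intersect transversely in a single point, so the three Dehn fillings along these curves form a surgery triad in the standard sense. This is the crucial geometric input, and identifying the triad correctly is the step I expect to require the most care: one must check that the three $2$-handles really do fill along slopes $\mu$, $\mu+\lambda$, $\lambda$ on a common torus, rather than some other triple.

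With the triad identified, I would then invoke the surgery exact triangle for instanton Floer homology, originally due to Floer and adapted to closures and to the sutured setting in Baldwin--Sivek's work. This produces an exact triangle relating the instanton Floer homology groups of the three fillings, which on the eigenspace level gives an exact triangle of the three sutured instanton homology groups $\SHI(-M,-\gamma_i)$. The maps in Floer's triangle are induced by the $4$-dimensional $2$-handle cobordisms obtained by filling in one slope at a time, and under the closure construction these cobordism maps agree, up to $\C^\times$, with the contact $2$-handle attachment maps defined in Proposition~\ref{prop:instantonContactHandleAttachmentMaps}; this identification follows from the description of the contact $2$-handle cobordism in section~$3$ of \cite{MR3477339}, since attaching a contact $2$-handle is exactly a $4$-dimensional $2$-handle attachment along the same framed curve inside the preclosure.

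Finally, one must verify naturality: the exact triangle produced by Floer's theorem is \emph{a priori} between particular Floer groups attached to particular closures, and we need the corresponding exact triangle at the level of the projective groupoid $\SHI$. This is the part where one leans on the canonical isomorphisms $\Psi_{\sr D,\sr D'}$ of Theorem~\ref{thm:BaldwinSivekNaturality}: because the three bypass attachments are local and can all be realized using a common family of closures, the surgery exact triangle commutes with the canonical isomorphisms, so descends to an exact triangle of natural maps. Since the composite of the contact $1$-handle map (an isomorphism) with the contact $2$-handle map is by definition the bypass attachment map, the resulting exact triangle is precisely the asserted bypass triangle.
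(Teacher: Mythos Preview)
The paper does not prove this theorem; it is quoted as Theorem~1.20 of Baldwin--Sivek \cite{1801.07634} and used as a black box. Your outline is essentially the Baldwin--Sivek argument: factor each bypass as a contact $1$-handle followed by a contact $2$-handle, identify the three $2$-handle attaching curves as a surgery triad in a closure, and invoke Floer's surgery exact triangle together with the naturality framework of \cite{MR3352794}.

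One point deserves more care than you give it. You assert that the contact $1$-handle ``is the same in each of the three cases'' because the endpoints of the three attaching arcs coincide up to isotopy. But the three arcs $\alpha_i$ live on three \emph{different} sutured manifolds $(M,\gamma_i)$ with different sutures, and the endpoints of $\alpha_i$ lie on $s(\gamma_i)$; they do not literally coincide. What is actually true (and is verified in \cite{1801.07634}) is that after attaching the $1$-handle $h^1_i$ to $(M,\gamma_i)$ at the feet of $\alpha_i$, the resulting sutured manifolds $(M^+,\gamma^+_i)$ are all diffeomorphic rel the complement of the bypass disk, and under this identification the three $2$-handle attaching curves sit on a common punctured torus with pairwise geometric intersection one. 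This is exactly the ``crucial geometric input'' you flag as needing care, and it is the genuine content of the proof; once it is established, the rest of your outline (surgery triangle, identification of cobordism maps with contact $2$-handle maps via \cite{MR3477339}, naturality via Theorem~\ref{thm:BaldwinSivekNaturality}) goes through as you describe.
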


In \cite{1810.13071}, Li proves that the composite of a sequence of contact handle attachment maps depends only on the resulting contact structure, thereby defining gluing maps for sutured instanton homology. 

\begin{thm}[Theorem 1.1 of \cite{1810.13071}]\label{thm:instantonGluingMaps}
	Let $(M_1,\gamma_1)$ and $(M_2,\gamma_2)$ be balanced sutured manifolds. Suppose that $M_1$ is embedded in the interior of $M_2$, and every component of $M_2 \setminus \Int(M_1)$ contains a boundary component of $M_2$. Associated to an equivalence class of contact structures $\xi$ on the balanced sutured manifold $(M_2\setminus \Int(M_1),\gamma_1 \cup \gamma_2)$ is a natural gluing map \[
		\phi_{\xi}\colon \SHI(-M_1,-\gamma_1) \to \SHI(-M_2,-\gamma_2)
	\]satisfying the following properties: \begin{enumerate}
		\item If $M_1$ is the complement of an open tubular neighborhood $(0,1] \x \partial M_2$ of the boundary of $M_2$ with the same set of sutures, then gluing map for the $I$-invariant contact structure is the identity. 
		\item If $(M_1,\gamma_1) \subset (M_2,\gamma_2)$ and $(M_2,\gamma_2) \subset (M_3,\gamma_3)$ satisfy the hypotheses of the theorem, then the gluing map $\phi_{\xi_{13}}$ associated to the composite contact structure $\xi_{13}$ on $M_3\setminus \Int(M_1)$ of contact structures $\xi_{12}$ on $M_2\setminus \Int(M_1)$ and $\xi_{23}$ on $M_3\setminus \Int(M_2)$ respecting the sutures satisfies $\phi_{\xi_{13}} = \phi_{\xi_{23}}\circ\phi_{\xi_{12}}$.
		\item If $(M_2,\gamma_2)$ is obtained by attaching a contact handle to $(M_1,\gamma_1)$, then after shrinking $M_1$ slightly within a collar neighborhood of its boundary within $M_2$, the gluing map $\phi_{\xi}$ associated to the contact structure $\xi$ obtained from an $I$-invariant contact structure on the collar neighborhood and the contact handle coincides with the contact handle attachment map. 
	\end{enumerate}
\end{thm}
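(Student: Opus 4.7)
The overall plan is to define $\phi_\xi$ via a contact handle decomposition of $(M_2 \setminus \Int(M_1), \gamma_1 \cup \gamma_2)$ relative to the contact structure $\xi$, and then verify that the resulting composite of contact handle attachment maps from Proposition~\ref{prop:instantonContactHandleAttachmentMaps} depends only on the equivalence class of $\xi$. First I would use the standard fact (going back to Giroux) that any contact structure on a sutured manifold admits a contact handle decomposition adapted to the dividing set on the boundary; moreover, any two such decompositions of equivalent contact structures are related by a finite sequence of elementary moves, namely contact isotopies, births/cancellations of a canceling pair of contact handles of consecutive indices, and handle slides. Once such a decomposition is fixed, I define $\phi_\xi$ to be the corresponding composite of contact handle attachment maps.

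The main obstacle is well-definedness: showing that $\phi_\xi$ is independent of the chosen decomposition. The plan is to verify invariance under each of the elementary moves. Invariance under contact isotopy reduces to the naturality statement already built into Proposition~\ref{prop:instantonContactHandleAttachmentMaps}. Invariance under birth of a canceling $0/1$ or $2/3$ pair is immediate from the fact that contact $0$- and $1$-handle attachment maps are isomorphisms (the $0/1$ case) together with a dual argument (the $2/3$ case, using the Poincar\'e-duality symmetry $\SHI(-M,-\gamma) \cong \SHI(M,\gamma)^*$ that exchanges handles of index $i$ with handles of index $3-i$). The subtlest step is invariance under handle slides; the plan here is to reduce handle slides to an algebraic identity among bypass attachment maps, and then deduce that identity from exactness of the bypass triangle (Theorem~\ref{thm:instantonByPassTri}) together with a dimension/rank count. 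Concretely, a $1$-handle slide over another $1$-handle is realized by attaching a bypass to the region swept out by the slide, and the composition $\psi_\alpha \circ \psi_\beta$ of two bypass attachments along nested attaching arcs fits into the bypass triangle; the bypass triangle then forces the desired equality.

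Granted well-definedness, the three listed properties should be essentially tautological. For (1), the $I$-invariant contact structure on $(0,1] \x \partial M_2$ admits a contact handle decomposition with no handles at all (it is obtained from a collar by no handle attachments), so $\phi_\xi$ is the identity. For (2), concatenating contact handle decompositions of $M_2 \setminus \Int(M_1)$ and $M_3 \setminus \Int(M_2)$ gives a contact handle decomposition of $M_3 \setminus \Int(M_1)$ representing $\xi_{13} = \xi_{12} \cup \xi_{23}$, and functoriality of composition in Proposition~\ref{prop:instantonContactHandleAttachmentMaps} then gives $\phi_{\xi_{13}} = \phi_{\xi_{23}} \circ \phi_{\xi_{12}}$. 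For (3), after shrinking $M_1$ along a collar, the contact structure $\xi$ consists of an $I$-invariant piece followed by a single contact handle, so $\phi_\xi$ reduces by (1) and (2) to the single contact handle attachment map.

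The hardest step is verifying invariance under handle slides, because this is where the instanton theory lacks the explicit diagrammatic tools available in the Heegaard Floer setting (where the Honda-Kazez-Mati\'c construction was used). The expectation is that the bypass exact triangle of Theorem~\ref{thm:instantonByPassTri}, which was specifically established with this application in mind, carries enough structural information to force the handle-slide identities; the remaining work is a careful case analysis of the possible slides and translations of each into a statement about compositions of bypass maps.
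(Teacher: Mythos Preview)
This theorem is not proved in the present paper; it is quoted as Theorem~1.1 of Li's paper \cite{1810.13071} and used as a black box. So there is no ``paper's own proof'' to compare against, and your proposal should be read as an attempted outline of Li's argument rather than of anything here.

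As an outline of Li's approach, the shape is right: define $\phi_\xi$ as a composite of Baldwin--Sivek contact handle maps along a chosen handle decomposition of $\xi$, then prove independence of the decomposition. But several of your proposed invariance steps are not arguments as written. The handle-slide step is the real content, and your plan to extract an \emph{equality} of compositions from the bypass exact triangle does not work: exactness constrains kernels and images, not the identity of a particular composite, and a ``dimension/rank count'' cannot pin down a map up to the projective ambiguity inherent in $\SHI$. Li's actual mechanism is different---he builds the gluing map by realizing the contact region via partial open book decompositions and positive stabilizations, and proves invariance by tracking the effect of Giroux-type moves on closures and the associated cobordism maps, not by a triangle trick. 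Your $2/3$-cancellation via a Poincar\'e duality $\SHI(-M,-\gamma)\cong \SHI(M,\gamma)^*$ is also not available in the form you invoke; that isomorphism does not intertwine handle attachment maps in the required dual way without further work. Finally, the statement that any two contact handle decompositions of equivalent $\xi$ are related by the specific list of moves you name is itself a nontrivial input (it is essentially a relative Giroux correspondence) and needs to be cited or proved, not assumed.

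Properties (1)--(3), once well-definedness is in hand, are indeed formal, and your derivations of them are fine.
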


\begin{cor}\label{cor:instantonDisjointAttachmentArcs}
	Suppose $\alpha$ and $\beta$ are disjoint attaching arcs on $(M,\gamma)$. Then the diagram \[
		\begin{tikzcd}
			\SHI(-M,-\gamma) \ar[r] \ar[d] & \SHI(-M',-\gamma') \ar[d]\\
			\SHI(-M'',-\gamma'') \ar[r] & \SHI(-M''',-\gamma''')
		\end{tikzcd}
	\]commutes where the horizontal arrows are bypass attachment maps along $\alpha$ and the vertical maps are by bypass attachment maps along $\beta$. 
\end{cor}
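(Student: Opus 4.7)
The plan is to decompose each bypass attachment along $\alpha$ or $\beta$ into its defining pair of contact handles, use the disjointness of $\alpha$ and $\beta$ to arrange that all four contact handles are attached in disjoint pieces of a collar of $\partial M$, and then invoke Theorem~\ref{thm:instantonGluingMaps} to conclude that the two composites going around the square both compute the gluing map associated to a single contact structure.

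First I would recall that by definition a bypass attachment along $\alpha$ is the composite of a contact $1$-handle attached at the endpoints of $\alpha$ followed by a cancelling contact $2$-handle whose attaching circle runs along $\alpha$ and across the $1$-handle, and similarly for $\beta$. The associated bypass attachment map is, by definition, the composite of the two contact handle attachment maps supplied by Proposition~\ref{prop:instantonContactHandleAttachmentMaps}. Because $\alpha$ and $\beta$ are disjoint arcs on $\partial M$ (with endpoints on $s(\gamma)$), I can isotope the attaching regions of the two handles used for $\alpha$ to be disjoint from those used for $\beta$, so that inside the target sutured manifold $(M''',\gamma''')$ all four handles lie in disjoint pieces of a collar of $\partial M$.

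Next I would apply Theorem~\ref{thm:instantonGluingMaps}(3): after first shrinking $M$ slightly inside a boundary collar, each contact handle attachment map equals the gluing map associated to the contact structure obtained by combining an $I$-invariant contact structure on the collar with the standard tight contact structure on the handle. Both the top-then-right and the left-then-bottom composites in the square are therefore composites of such gluing maps, and by Theorem~\ref{thm:instantonGluingMaps}(2) each of them equals $\phi_\xi$, where $\xi$ is the contact structure on $M''' \setminus \Int(M)$ obtained by gluing together the four handle contact structures and the intervening $I$-invariant pieces. Since the two orderings differ only by rearranging contact handle attachments in disjoint regions, the resulting contact structures are equivalent, and hence the two composites are equal.

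The main obstacle is the verification that the two orderings of the four disjoint contact handle attachments produce equivalent contact structures on $M''' \setminus \Int(M)$; this is really the only non-cosmetic point in the argument. It is however immediate once one notes that each contact handle carries a tight contact structure uniquely determined up to equivalence by compatibility with its suture, and that disjoint attaching regions commute (both orderings can be interpolated through simultaneous attachment of the disjoint handles). The substantive work has been done by Li in Theorem~\ref{thm:instantonGluingMaps}, which converts the topological commutation of disjoint contact handle attachments into the algebraic commutation of their induced maps on $\SHI$.
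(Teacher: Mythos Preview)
Your proof is correct and follows essentially the same approach as the paper: both invoke Theorem~\ref{thm:instantonGluingMaps} by observing that the two orderings of the bypass attachments yield equivalent contact structures on $M'''\setminus\Int(M)$. The paper's proof is simply a two-sentence version of what you have spelled out in detail, and it also remarks that the commutativity can alternatively be seen directly from the construction of the bypass attachment maps.
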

\begin{proof}
	Theorem~\ref{thm:instantonGluingMaps} applies because the contact structures induced by the composite of the two bypass attachments does not depend on the order of the two attachments. This can also be seen directly from the construction of the bypass attachment maps. 
\end{proof}

We also record here a lemma about trivial bypass attachment arcs. 

\begin{lem}\label{lem:trivialBypassArc}
	Let $\alpha_0$ be an attaching arc on $(M,\gamma)$ for which all three intersection points of $\alpha_0 \cap s(\gamma)$ lie on a single component of $s(\gamma)$. Assume that there is an isotopy $\alpha_t$ in $\partial M$ for $t \in [0,1]$ with $\alpha_t$ a bypass attachment arc for $t \in [0,1)$, and $\alpha_1$ a subarc of $s(\gamma)$. If the bypass attachment along $\alpha$ does not change the balanced sutured manifold, then the bypass attachment map is the identity.
\end{lem}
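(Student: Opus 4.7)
The plan is to reduce the statement to property (1) of Theorem~\ref{thm:instantonGluingMaps}, which asserts that the gluing map associated to an $I$-invariant contact structure on a collar of the boundary is the identity. By property (3) of the same theorem, the bypass attachment map along $\alpha_0$ coincides with the gluing map for the contact structure $\xi$ obtained by taking an $I$-invariant contact structure on a thin collar of $\partial M$ and then gluing on the contact $1$-handle and contact $2$-handle which together realize the bypass. Thus it suffices to show that this composite contact structure $\xi$ on $[0,1] \times \partial M$ (or, more precisely, on the region between the shrunken $M$ and the result of the bypass attachment) is equivalent to the $I$-invariant contact structure.

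The key input is the isotopy $\alpha_t$ from $\alpha_0$ to a subarc $\alpha_1$ of $s(\gamma)$ through bypass attaching arcs. First I would observe that because the bypass attachment along $\alpha_0$ produces the same balanced sutured manifold $(M,\gamma)$, the dividing set after attaching is isotopic to the original, and the bypass disc $D$ used to realize the attachment can be taken to be a convex half-disc with a prescribed dividing set, embedded in a small neighborhood of $\alpha_0$ in $[0,1] \times \partial M$. Using the isotopy $\alpha_t$, I would then isotope this half-disc through convex half-discs to a degenerate half-disc lying entirely in the boundary region near the suture. Concretely, as $t \to 1$ the bypass arc limits to a subarc of $s(\gamma)$, and the ambient isotopy of $\partial M$ extends to an ambient isotopy of the collar that flattens the bypass disc onto $\partial M$.

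The effect of this flattening is that the contact structure $\xi$ on the collar becomes equivalent to one which is supported in a neighborhood of $\partial M$ and differs from the $I$-invariant contact structure only by a local modification along an arc in $s(\gamma)$. Such a modification — a bypass attached along an arc that is already a subarc of the dividing set — is what Honda calls a trivial bypass, and corresponds to an $I$-invariant contact structure. Standard convex surface theory (as used in \cite{MR1786111,MR1779622}) then provides the required equivalence $\xi \simeq \xi_{I\text{-inv}}$ through contact structures compatible with the sutures at both ends of the collar.

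The main obstacle in this plan is verifying rigorously that the contact structure obtained from a trivial bypass attachment really is equivalent to the $I$-invariant one, as this requires using the isotopy $\alpha_t$ to produce a $1$-parameter family of contact structures on the collar interpolating between $\xi$ and the $I$-invariant model. A cleaner alternative, which avoids explicitly constructing the interpolating family, is to observe that once $\xi$ is supported in a neighborhood of $\partial M$ with dividing set $\gamma$ on both boundary components, tightness of the contact handles together with the uniqueness (up to equivalence) of a tight, $\partial$-parallel contact structure on a collar of a convex surface forces $\xi$ to be $I$-invariant. With this equivalence in hand, property (1) of Theorem~\ref{thm:instantonGluingMaps} finishes the proof.
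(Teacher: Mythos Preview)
Your approach is essentially the same as the paper's: both reduce to showing that the contact structure associated to the bypass is the $I$-invariant one on a collar, and then apply property (1) of Theorem~\ref{thm:instantonGluingMaps}. The paper is simply more concise, citing directly that such an arc is \emph{trivial} in Honda's sense and invoking Lemma~1.9 of \cite{MR1940409} for the fact that a trivial bypass yields the $I$-invariant contact structure, rather than sketching the convex-surface argument as you do.
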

\begin{proof}
	The attaching arc is \textit{trivial} in the sense used in \cite{MR1940409}, so its contact structure is the standard $I$-invariant contact structure (Lemma 1.9 of \cite{MR1940409}). Theorem~\ref{thm:instantonGluingMaps} then implies the result. 
\end{proof}

By Corollary~\ref{cor:instantonDisjointAttachmentArcs}, the diagram of bypass attachment maps \[
	\begin{tikzcd}[column sep=large, row sep=large]
		\cdots \ar[r,"\psi_{n-1}^-"] & \SHI(-X,-\gamma_n) \ar[r,"\psi_n^-"] \ar[d,"\psi_n^+"] & \SHI(-X,-\gamma_{n+1}) \ar[r,"\psi_{n+1}^-"] \ar[d,"\psi_{n+1}^+"] & \cdots\\
		\cdots \ar[r,"\psi_n^-"] & \SHI(-X,-\gamma_{n+1}) \ar[r,"\psi_{n+1}^-"] & \SHI(-X,-\gamma_{n+2}) \ar[r,"\psi_{n+2}^-"] & \cdots
	\end{tikzcd}
\]commutes where as before $X$ is the exterior of the null-homologous knot $K$ in $Y$, and $\gamma_n$ is the pair of oppositely oriented sutures of slope $\lambda - n\mu$. The induced map $H^- \to H^-$ on colimits is declared to be the $U$ action, which makes $H^-$ into a $\C[U]$-module. 

\vspace{20pt}

The last ingredient is the $\Z$-grading, with respect to which $U$ is homogeneous of degree $-1$. Since we will need a more than a $\Z$-grading in section~\ref{subsec:MinusVersionSuturedInstanton}, we discuss gradings here in the generality we require there. Because the chain complex underlying sutured instanton homology is not known to have a splitting analogous to a splitting along $\Spin^c$ structures, a different approach is used to define a grading. In \cite{MR2652464}, a $\Z$-grading on $\SHI(Y(K))$ is defined using a Seifert surface $\Sigma$ for $K$ in the following way. They first choose a particular closure $(Z,\bar{R},\alpha)$ for $Y(K)$ and choose a particular extension of $\Sigma$ to a closed surface $\bar{\Sigma}$ in $Z$. The grading on $I_*(Z|\bar{R})_{\alpha}$ is defined by the splitting along generalized eigenspaces of $\mu(\bar{\Sigma})$. In \cite{1801.07634}, Baldwin-Sivek extend this construction by defining extensions of $\Sigma$ in closures of any genus and showing that their canonical isomorphisms between sutured Floer homology groups defined using different closures respect these gradings. In the situations considered in \cite{MR2652464,1801.07634}, the Seifert surface defining the grading always intersects the sutures of the ambient manifold in exactly two points. In order to define gradings on $\SHI(-X,-\gamma_n)$ with respect to which $\psi_n^\pm$ are homogeneous, Li defines in \cite{1901.06679} an a further generalization to handle the case where the boundary intersects the sutures in more points. Finally, in \cite{1910.10842}, gradings are defined in the generality that we will need in section~\ref{subsec:MinusVersionSuturedInstanton}. The following definition appears as Definition 2.25 in \cite{1910.10842}. 

\begin{df*}[admissible surface]
	Let $S$ be a compact oriented surface properly embedded in a balanced sutured manifold $(M,\gamma)$. Then $S$ is \textit{admissible} if every component of $\partial S$ intersects $s(\gamma)$, and $\frac12|S \cap s(\gamma)| - \chi(S))$ is an even integer. The second condition is just the requirement that $\pi_0(\partial S)$ and $\frac12|S \cap s(\gamma)|$ have the same parity. 
\end{df*}

Let $S$ be an admissible surface in a balanced sutured manifold $(M,\gamma)$, and let $B$ be an auxiliary surface for $(M,\gamma)$ with an orientation-reversing identification $\partial B = s(\gamma)$. Let $X = M \cup [-1,1] \x B$ be the associated preclosure with $\bar{R}_{\pm} = R_\pm(\gamma) \cup \{\pm1\} \x B$. Let $\delta$ be a collection of properly embedded disjoint arcs in $B$ for which $\partial \delta = \partial S \cap \partial B = \partial S \cap s(\gamma)$. Then on $\bar{R}_+$, we have a collection of closed oriented curves $\partial S \cap R_+(\gamma) \cup \{1 \} \x \delta$ and similarly on $\bar{R}_-$ we have $\partial S \cap R_-(\gamma) \cup \{-1\} \cup \delta$. Observe that $\delta$ determines a partition of $\partial S \cap s(\gamma)$ into sets of two; this partition is called a \textit{balanced pairing} for $S$ if the resulting collections of closed curves on $\bar{R}_+$ and $\bar{R}_-$ have the same number of components. The arcs in $\delta$ inherit orientations from these curves on $\bar{R}_+$, and if the arcs in $\delta$ oriented in this way are linearly independent in $H_1(B,\partial B)$ and $\delta$ induces a balanced pairing for $S$, then there exists a diffeomorphism $h\colon \bar{R}_+ \to \bar{R}_-$ which sends $\partial S \cap R_+(\gamma) \cup \{1\} \x \delta$ to $\partial S \cap R_-(\gamma) \cup \{-1\} \x \delta$ in an orientation-reversing way. 

If the genus of $B$ is large enough, the arcs $\delta$ can be chosen to be independent in $H_1(B,\partial B)$ and to induce a balanced pairing. Assume the genus of $B$ is large enough, and choose $\delta$ to satisfy these conditions. Let $h\colon \bar{R}_+ \to \bar{R}_-$ be a diffeomorphism sending the curves on $\bar{R}_+$ to those on $\bar{R}_-$, and let $Y$ be the closure obtained by gluing $[1,3] \x \bar{R}_+$ to $X$ using $\Id\colon 1 \x \bar{R}_+ \to \bar{R}_+$ and $h\colon 3 \x \bar{R}_+ \to \bar{R}_-$. Let $\bar{S}$ be the union of $S' = S \cup [-1,1] \x \delta \subset X$ and $[1,3] \x (\partial S' \cap \bar{R}_+)\subset [1,3] \x\bar{R}_+$ in $Y$. With suitable $\eta,\alpha \subset Y$, the sutured instanton homology of $(M,\gamma)$ is the $(2g(\bar{R}) - 2,2)$-generalized eigenspace $I_*(Y|\bar{R})_{\eta +\alpha}$ of the operators $\mu(\bar{R}),\mu(y)$ on the instanton homology of the admissible pair $(Y,\eta \cup\alpha)$. The operator $\mu(\bar{S})$ commutes with $\mu(\bar{R})$ and $\mu(y)$, and on $I_*(Y|\bar{R})_{\eta+\alpha}$, every eigenvalue $\lambda$ of $\mu(\bar{S})$ is an even integer satisfying $-(2g(\bar{S}) - 2) \leq\lambda \leq 2g(\bar{S}) - 2$ by Proposition~\ref{prop:eigenvalsOfMuS}. The $i$-th graded portion of $\SHI(M,\gamma)$ in the grading defined by $S$ is declared to be the $2i$-generalized eigenspace of $\mu(\bar{S})$ acting on $I_*(Y|\bar{R})_{\eta+\alpha}$. 

\begin{prop}[Theorem 2.27 of \cite{1910.10842} referencing \cite{1801.07634,1901.06679,1910.11954}]\label{prop:gradingWellDefined}
	The $\Z$-grading on $\SHI(M,\gamma)$ defined by $S$ is well-defined. For any two sets of choices leading to gradings on two closures of $(M,\gamma)$, the canonical isomorphism between the two corresponding instanton homology groups is grading-preserving. 
\end{prop}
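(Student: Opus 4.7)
The plan is to reduce to previously-established special cases and then patch together the general situation through a cobordism argument, following the template used by Baldwin--Sivek \cite{1801.07634} for surfaces meeting the sutures in exactly two points and by Li \cite{1901.06679} for more general configurations. The claim has two components: independence of the eigenspace decomposition of $\mu(\bar S)$ from the choices $(B,\delta,h)$ used to build a single closure, and compatibility with the canonical isomorphisms $\Psi_{\sr D,\sr D'}$ of Theorem~\ref{thm:BaldwinSivekNaturality} between different closures.

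First I would fix $B$ and vary $(\delta,h)$. Two arc systems $\delta,\delta'$ with balanced pairings, together with corresponding gluing maps $h,h'$, produce extensions $\bar S, \bar S'$ inside closures that can be identified rel $M$ after an isotopy. The difference $\bar S - \bar S'$ is then a closed surface $F$ supported entirely in the auxiliary region $[-1,1]\times B\cup[1,3]\times \bar R_+$; since $H_2$ of this region maps trivially into the $(2g(\bar R)-2,2)$-generalized eigenspace of $(\mu(\bar R),\mu(y))$ after a suitable stabilization of $B$, the operator $\mu(F)$ acts as zero on $I_*(Y|\bar R)_{\eta+\alpha}$. Hence $\mu(\bar S)$ and $\mu(\bar S')$ have identical eigenspace decompositions and the grading is independent of $(\delta,h)$.

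Next, to handle a change of auxiliary surface $B$, I would use the stabilization cobordism of Section~9 of \cite{MR3352794} that induces the canonical isomorphism $\Psi_{\sr D,\sr D'}$. The key input to extract is a properly embedded cobordism surface $\bar S_W$ inside the stabilization cobordism $W$ with $\partial \bar S_W = \bar S\sqcup \bar S'$, built by capping off the arcs $\delta,\delta'$ across the new handle. Once such a $\bar S_W$ exists, functoriality of instanton cobordism maps shows that the cobordism map $I_*(W)_{\nu}$ intertwines $\mu(\bar S)$ and $\mu(\bar S')$, hence preserves each generalized eigenspace; since $\Psi_{\sr D,\sr D'}$ is (projectively) this cobordism map, the gradings on the two closures match.

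The hard part will be the combinatorial bookkeeping around admissibility: the parity condition $\pi_0(\partial S) \equiv \tfrac12|S\cap s(\gamma)|\pmod 2$ built into admissibility is precisely what guarantees the existence of a balanced pairing on some sufficiently stabilized $B$ and an orientation-reversing matching on $\bar R_\pm$, and it is exactly what is needed to arrange $\partial \bar S_W = \bar S\sqcup\bar S'$ in the stabilization cobordism. The more restrictive hypotheses in \cite{MR2652464,1801.07634,1901.06679} bypass this parity discussion by constraining how $S$ can meet the sutures; the generalization to arbitrary admissible surfaces is essentially a careful orientation-tracking argument on top of their machinery, which is where the bulk of the technical work sits.
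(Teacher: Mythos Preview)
The paper does not supply its own proof of this proposition. It is stated with an attribution to Theorem~2.27 of \cite{1910.10842} (building on \cite{1801.07634,1901.06679,1910.11954}) and then used as a black box; the arguments in section~\ref{subsubsec:gradingSuturedInstanton} establish the distinct Propositions~\ref{prop:sameRelGrading}, \ref{prop:simultaneousClosures}, and \ref{prop:gradingShift}, not this one. So there is no in-paper proof to compare against, only the cited literature.

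Your overall shape---show that in each cobordism $W$ realizing a canonical isomorphism $\Psi_{\sr D,\sr D'}$ the extensions $\bar S$ and $\bar S'$ are homologous, hence $I_*(W)_\nu$ intertwines $\mu(\bar S)$ with $\mu(\bar S')$---is the correct mechanism and is indeed what the cited papers do. Your second step captures this accurately.

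Your first step, however, has a genuine gap. Fixing $B$ but changing $(\delta,h)$ to $(\delta',h')$ produces two \emph{different} closures $Y,Y'$: they share the same preclosure but are glued by different diffeomorphisms $h,h'$. You cannot simply ``identify them rel $M$ after an isotopy'' and subtract the extensions inside a single manifold. Even when a diffeomorphism $Y\to Y'$ fixing $M$ exists, it is not the canonical isomorphism $\Psi_{\sr D,\sr D'}$, which Baldwin--Sivek build as a composite of explicit cobordism maps (surgeries along curves in $\bar R$, excisions, etc.). The right argument here is the same as in your second step: exhibit a surface in the relevant cobordism $W$ restricting to $\bar S$ and $\bar S'$ on the ends. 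Separately, the assertion that $\mu(F)=0$ on $I_*(Y|\bar R)_{\eta+\alpha}$ whenever $F$ lies in the auxiliary region is not automatic; surfaces there (e.g.\ tori $c\times S^1$ in $[1,3]\times\bar R_+$) can be homologically nontrivial in $Y$, and one needs an adjunction argument or an explicit homology calculation, not just support considerations. In short: collapse your two steps into a single cobordism argument covering all changes of closure, and drop the attempted shortcut in step one.
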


\noindent We note that the grading depends on how $S$ intersects $s(\gamma)$, but if $T$ is in the same relative homology class as $S$ and $\partial T = \partial S$, then $T$ and $S$ induce the same grading on $\SHI(M,\gamma)$. The grading is written \[
	\SHI(M,\gamma) = \bigoplus_{i\in\Z} \SHI(M,\gamma,S,i).
\]Similarly, if $S$ and $T$ are isotopic through properly embedded surfaces whose boundaries always intersect $\gamma$ transversely, then $S$ and $T$ intersect $\gamma$ in the same way and define the same grading. 

Recall that a relative $\Z$-grading of a vector space $V$ is a splitting $V = \bigoplus_{i} V_i$ where the index set is an affine space over $\Z$. In particular, an absolute $\Z$-grading of $V = \bigoplus_{i \in \Z} V_i$ determines a relative $\Z$-grading. A different $\Z$-grading $V = \bigoplus_{i \in \Z} V_i'$ determines the same relative $\Z$-grading if there is an integer $k \in \Z$ for which $V_{i+k} = V_i'$ for each $i \in \Z$. We provide an alternate proof of the following result due to Ghosh-Li in section~\ref{subsubsec:gradingSuturedInstanton}. 

\begin{prop}[Theorem 1.12 of \cite{1910.10842}]\label{prop:sameRelGrading}
	Suppose $S$ and $T$ are admissible surfaces in a balanced sutured manifold $(M,\gamma)$ in the same relative homology class. Then $S$ and $T$ determine the same relative $\Z$-grading on $\SHI(M,\gamma)$. 
\end{prop}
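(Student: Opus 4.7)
The plan is to realize $S$ and $T$ simultaneously inside a single closure $(Y,\bar R,\eta,\alpha)$ of $(M,\gamma)$ and then to analyze the operator difference $\mu(\bar S) - \mu(\bar T) = \mu(\bar S - \bar T)$ using linearity of the $\mu$-map together with the fact that $\mu(\bar R)$ has a single generalized eigenvalue on $I_*(Y|\bar R)_{\eta+\alpha}$. The main obstacle will be the first step below, in which I reduce to the case $\partial S = \partial T$; once the boundaries match, the rest follows from linearity of $\mu$ and the eigenvalue bound of Proposition~\ref{prop:eigenvalsOfMuS}.

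First I would reduce to the case $\partial S = \partial T$ as oriented $1$-manifolds in $\partial M$. Since $[S]=[T]$ in $H_2(M,\partial M)$, the boundaries satisfy $[\partial S]=[\partial T]$ in $H_1(\partial M)$ and therefore cobound an oriented subsurface $F \subset \partial M$. I would modify $S$ through a sequence of boundary stabilizations, each attaching a small pushoff of a piece of $F$ into the interior of $M$, augmented when necessary by interior genus stabilizations to preserve the admissibility parity $\tfrac12|S\cap s(\gamma)| - \chi(S) \in 2\Z$. Each such stabilization enlarges the closed extension $\bar S$ in any closure by a surface supported in a neighborhood of $\bar R$, and the hard part is to verify that the homology class in $H_2(Y)$ of this added surface depends only on the topology of the stabilization rather than on $S$, so that its contribution to the generalized spectrum of $\mu(\bar S)$ is a constant shift. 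Granted this, the relative grading on $\SHI(M,\gamma)$ is preserved under each such stabilization, reducing us to the matched-boundary case.

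With $\partial S = \partial T$ arranged, I would build a single closure $(Y,\bar R,\eta,\alpha)$ into which both $S$ and $T$ extend. Using the same auxiliary surface $B$, the same balanced-pairing arcs $\delta \subset B$ (which is legal since $\partial S \cap s(\gamma) = \partial T\cap s(\gamma)$), and the same gluing diffeomorphism $h\colon \bar R_+ \to \bar R_-$ matching the corresponding curves on $\bar R_\pm$, both extensions $\bar S$ and $\bar T$ exist inside the same $Y$ and agree outside of $M$. Hence $\bar S - \bar T$ equals $S-T$ as a $2$-cycle in $Y$, and this class lies in the image of $H_2(\partial M)\to H_2(M) \to H_2(Y)$ because $[S-T]$ vanishes in $H_2(M,\partial M)$. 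By choosing $\bar R$ of very large genus I would represent $[\bar S - \bar T]$ in the form $c\,[\bar R] + [\Sigma'']$ with $\Sigma''$ a disjoint union of tori disjoint from $\bar R$ and $\eta$. Since $\mu(\bar R)$ acts on $I_*(Y|\bar R)_{\eta+\alpha}$ as $(2g(\bar R)-2)\Id$ plus a nilpotent, and since Proposition~\ref{prop:eigenvalsOfMuS} forces $\mu$ of each toral component of $\Sigma''$ to be nilpotent, the linearity of $\mu$ gives $\mu(\bar S - \bar T) = c(2g(\bar R)-2)\Id + (\text{nilpotent})$. This expresses the generalized eigenspace decomposition of $\mu(\bar T)$ as a uniform shift of that of $\mu(\bar S)$, which is precisely the statement that $S$ and $T$ define the same relative $\Z$-grading on $\SHI(M,\gamma)$.
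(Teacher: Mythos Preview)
Your overall strategy --- reduce to the case $\partial S = \partial T$ and then argue in a single closure --- matches the paper's, but the proposal has two genuine gaps.

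The first is that Step~1 is precisely the content of the proposition, and you do not actually carry it out. You write ``the hard part is to verify that the homology class in $H_2(Y)$ of this added surface depends only on the topology of the stabilization \ldots\ Granted this, the relative grading is preserved.'' The paper devotes Lemmas~\ref{lem:muMapThurstonNorm}, \ref{lem:ThurstonNormCalcForQ}, \ref{lem:gradingShiftF}, and \ref{lem:homologousCurvesEquiv} to exactly this reduction. Concretely, the paper shows that homologous curve systems $\partial S$ and $\partial T$ on $\partial M$ are connected by a sequence $\delta_1=\partial S,\delta_2,\ldots,\delta_n=\partial T$ with each consecutive pair cobounding a subsurface $F_i\subset\partial M$ (Lemma~\ref{lem:homologousCurvesEquiv}), and then at each step computes the grading shift as $\chi(F_i\cap R_-(\gamma))-\chi(F_i\cap R_+(\gamma))$ via an explicit Thurston-norm bound on $[\bar R]\pm[\bar T-\bar S]$ (Lemmas~\ref{lem:ThurstonNormCalcForQ} and~\ref{lem:gradingShiftF}). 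Your hand-wave about ``boundary stabilizations'' and ``interior genus stabilizations'' does not supply any of this.

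The second gap is your Step~4: the assertion that for $\bar R$ of large genus one can write $[\bar S-\bar T]=c[\bar R]+[\Sigma'']$ with $\Sigma''$ a disjoint union of tori is unjustified. The class $[\bar S-\bar T]$ lies in the image of $H_2(\partial M)\to H_2(Y)$, but components of $\partial M$ need not be tori, and there is no reason their images in $H_2(Y)$ should decompose this way regardless of $g(\bar R)$. The paper never attempts such a decomposition; instead it bounds the Thurston norm of $[\bar R]\pm[\bar S-\bar T]$ directly (Lemma~\ref{lem:ThurstonNormCalcForQ}) and feeds this into the eigenvalue bound (Lemma~\ref{lem:muMapThurstonNorm}). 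Incidentally, once $\partial S=\partial T$ is achieved, the paper treats the equality of gradings as a consequence of the well-definedness statement (Proposition~\ref{prop:gradingWellDefined}) rather than via your Steps~2--5.
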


In section~\ref{subsubsec:gradingSuturedInstanton}, we also provide a proof of the following result generalizing an argument appearing the proof of Theorem 1.12 of \cite{1910.10842}. 

\begin{prop}\label{prop:simultaneousClosures}
	Let $S$ be an admissible surface in a balanced sutured manifold $(M,\gamma)$, and let $\beta_1,\ldots,\beta_n$ be relative homology class in $H_2(M,\partial M)$. Then there are admissible surfaces $S',T_1,\ldots,T_n$ in $(M,\gamma)$ for which \begin{enumerate}[itemsep=-.5ex]
		\item decomposing $(M,\gamma)$ along either $S$ or $S'$ yields the same sutured manifold,
		\item $T_i$ represents $\beta_i$, 
		\item there exists a closure of $(M,\gamma)$ in which all of the surfaces $S',T_1,\ldots,T_n$ simultaneously extend to closed surfaces in the manner used to define gradings. 
	\end{enumerate} 
\end{prop}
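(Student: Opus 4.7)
The plan is to construct a single closure of $(M,\gamma)$ whose auxiliary surface has sufficiently large genus and in which each of the surfaces $S', T_1, \ldots, T_n$ extends to a closed surface by the construction preceding Proposition~\ref{prop:gradingWellDefined}. First I would fix admissible representatives $T_1, \ldots, T_n$ of the classes $\beta_1, \ldots, \beta_n$: every class in $H_2(M,\partial M)$ is represented by a properly embedded oriented surface, and small boundary-tubings along $s(\gamma)$ adjust the parity condition in the definition of an admissible surface without changing the relative homology class, yielding condition~(2). By general position we may also arrange that the $T_i$'s meet $s(\gamma)$ transversely and that distinct surfaces have disjoint boundary intersections with $s(\gamma)$.

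Next, I would take the auxiliary surface $B$ to have very large genus and, for each $U \in \{S, T_1, \ldots, T_n\}$, choose a collection of arcs $\delta_U \subset B$ with $\partial \delta_U = \partial U \cap s(\gamma)$ such that the various $\delta_U$ are pairwise disjoint, the combined collection $\bigsqcup_U \delta_U$ is linearly independent in $H_1(B,\partial B)$, and each $\delta_U$ determines a balanced pairing for $U$. Such arcs exist by the same linear-algebra argument used in the single-surface case preceding Proposition~\ref{prop:gradingWellDefined}, applied componentwise, with the high genus of $B$ providing the room needed to keep arcs for different surfaces simultaneously disjoint and independent.

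The main obstacle is to produce a single orientation-preserving diffeomorphism $h\colon \bar R_+ \to \bar R_-$ that simultaneously sends $\partial U' \cap \bar R_+$ to $\partial U' \cap \bar R_-$ in an orientation-reversing way for every $U$, where $U' = U \cup [-1,1] \x \delta_U$. For a single surface such an $h$ exists by the change-of-coordinates principle for simple closed curves on a closed oriented surface, but for a system of surfaces the combinatorial types of the two multicurve configurations on $\bar R_+$ and $\bar R_-$ may a priori fail to match. The role of $S'$ is to absorb this discrepancy: $S$ is replaced by a surface $S'$ obtained from $S$ by an ambient isotopy in $M$ together with possible slides of $\partial S$ across $s(\gamma)$, all of which preserve the sutured manifold obtained by decomposition and so give condition~(1). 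By choosing $S'$ and $\delta_{S'}$ appropriately, one can adjust the curve configurations on $\bar R_\pm$ until they have matching topological type, at which point the high genus of $B$ and the change-of-coordinates principle produce the desired $h$.

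With $h$ in hand, the closure $(Y, \bar R, \eta, \alpha)$ determined by $B$ and $h$ contains simultaneous closed extensions $\bar S', \bar T_1, \ldots, \bar T_n$ built by capping off each $U'$ with $[1,3] \x (\partial U' \cap \bar R_+)$ in the manner used to define gradings, which gives condition~(3). Once the construction of $h$ is carried out, the other conditions are immediate, so the heart of the proof is in controlling the combinatorial matching of the boundary curve configurations by a careful joint choice of $S'$ and the arc systems $\delta_U$.
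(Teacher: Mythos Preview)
Your outline correctly identifies the architecture of the argument and the main obstacle, but the proposed resolution of that obstacle has a genuine gap. You suggest that the mismatch between the multicurve configurations on $\bar R_+$ and $\bar R_-$ can be repaired by adjusting only $S'$ and its arc system $\delta_{S'}$. This cannot work in general: the configurations on $\bar R_\pm$ also record the intersection points of $\partial T_i \cap \partial T_j$ lying in $R_\pm(\gamma)$ for $i,j \ge 1$, and these are completely unaffected by any modification of $S$. If, say, $\partial T_1$ and $\partial T_2$ meet twice positively in $R_+(\gamma)$ and not at all in $R_-(\gamma)$, no choice of $S'$ or of arcs in $B$ will make the two curve systems diffeomorphic.

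The paper's proof resolves this by isotoping \emph{all} of the surfaces near their boundaries (the $T_i$ freely, since only their homology class is constrained; $S$ only via negative stabilizations, so that the decomposition is unchanged). Two conditions are arranged: each arc of $\partial T_i \cap R(\gamma)$ meets the union of the other boundaries in at most one point, and every positive (resp.\ negative) point of $\partial T_i \cap \partial T_j$ lies in $R_+(\gamma)$ (resp.\ $R_-(\gamma)$). The second condition is the crucial idea you are missing: since the interior intersection $T_i \cap T_j$ consists of arcs, each arc pairs a positive boundary intersection with a negative one, so after the sign arrangement the number of intersections contributed to $\bar R_+$ equals the number contributed to $\bar R_-$. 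Together with the first condition and a compatible choice of balanced pairings (pairing the two endpoints of any arc that carries an intersection), this forces the two multicurve configurations to have matching combinatorial type, and then the change-of-coordinates principle applies. One further remark: your ``slides of $\partial S$ across $s(\gamma)$'' must be restricted to negative stabilizations; arbitrary slides do not preserve the result of the sutured decomposition.
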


As explained in \cite{1910.10842}, these gradings give rise to a splitting of $\SHI(M,\gamma)$ somewhat analogous to the splitting of $\SFH(M,\gamma)$ along relative $\Spin^c$ structures. An element $x \in \SHI(M,\gamma)$ is declared to be \textit{homogeneous} if it is homogeneous with respect to the relative $\Z$-grading determined by each class in $H_2(M,\partial M)$. Any two homogeneous elements $x,y$ determine a function $H_2(M,\partial M) \to \Z$ given by sending $\alpha \in H_2(M,\partial M)$ to the difference in grading between $x$ and $y$ in the grading determined by $\alpha$. 

\begin{prop}\label{prop:instantonSplittingAlongAffineSpace}
	Let $(M,\gamma)$ be a balanced sutured manifold. Then there is a splitting \[
		\SHI(M,\gamma) = \bigoplus_{h} \SHI(M,\gamma,h)
	\]along an affine space over $\Hom(H_2(M,\partial M),\Z)$. 
\end{prop}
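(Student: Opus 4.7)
The plan is to reduce to a finite-dimensional problem using Proposition~\ref{prop:simultaneousClosures} together with the linearity of $\mu$ and the fact that surfaces in the same relative homology class determine the same relative grading (Proposition~\ref{prop:sameRelGrading}). Since $M$ is compact, $H_2(M,\partial M;\Z)$ is finitely generated; I would begin by choosing classes $\beta_1,\ldots,\beta_r \in H_2(M,\partial M)$ whose images form a basis of the free part. Applying Proposition~\ref{prop:simultaneousClosures} (in one shot, with the input list $\beta_1,\ldots,\beta_r$) produces admissible surfaces $T_1,\ldots,T_r$ representing these classes together with a closure $(Y,\bar R,\eta,\alpha)$ in which each $T_i$ extends to a closed oriented surface $\bar T_i$ used to define the grading associated to $\beta_i$.

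Next I would decompose via simultaneous generalized eigenspaces. The operators $\mu(\bar T_1),\ldots,\mu(\bar T_r)$ pairwise commute and commute with $\mu(\bar R)$ and $\mu(y)$, and by Proposition~\ref{prop:eigenvalsOfMuS} each $\mu(\bar T_i)$ has eigenvalues contained in a finite subset of $2\Z$. Restricting to $I_*(Y|\bar R)_{\eta+\alpha} = \SHI(M,\gamma)$ and decomposing into simultaneous generalized eigenspaces yields a finite splitting
\[
\SHI(M,\gamma) = \bigoplus_{(a_1,\ldots,a_r) \in \Z^r} V_{(a_1,\ldots,a_r)},
\]
where $V_{(a_1,\ldots,a_r)}$ is the intersection of the $2a_i$-generalized eigenspaces of $\mu(\bar T_i)$. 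By construction, the $i$-th coordinate recovers the $\Z$-grading determined by $T_i$.

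Coordinate independence and the affine structure would come from two checks. First, if the $T_i$ are replaced by other admissible representatives $T_i'$ of $\beta_i$, Proposition~\ref{prop:sameRelGrading} says the $T_i'$-grading and the $T_i$-grading agree as relative $\Z$-gradings, so the indexing tuple shifts by a constant in each coordinate, giving a natural free action by $\Z^r$. Second, for any class $\beta = \sum c_i \beta_i$ in the free part and any admissible surface $S$ representing $\beta$, I would invoke Proposition~\ref{prop:simultaneousClosures} again to realize $S,T_1,\ldots,T_r$ in a common closure. In that closure, the absolute homology classes of $\bar S$ and $\sum c_i \bar T_i$ differ by a class supported in the added $[1,3]\x\bar R$ and $[-1,1]\x B$ regions whose effect on $\mu$ contributes only to an absolute shift; thus $\mu(\bar S)$ and $\sum c_i\mu(\bar T_i)$ have equal generalized eigenspace decompositions up to a shift, and the $S$-grading respects the above splitting with the expected label $\sum c_i a_i$ up to a constant. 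Applying this with $n\beta = 0$ for any torsion class $\beta$ forces the $\beta$-grading to be trivial on each summand, so the index set receives no further subdivision and is naturally an affine space over $\Hom(H_2(M,\partial M),\Z)$.

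The main obstacle is the second check: tracing how the relative homology class of $S$ in $(M,\gamma)$, which equals $\sum c_i[T_i]$, interacts with the absolute class of its extension $\bar S$ in the closure, and confirming that any discrepancy between $\bar S$ and $\sum c_i \bar T_i$ in $H_2(Y)$ produces only an absolute shift of grading rather than a different eigenspace decomposition. This is essentially a bookkeeping argument about how the extension process across the auxiliary surface $B$ and the region $[1,3]\x\bar R$ interacts with linearity of $\mu$, and the corresponding statement for a single class is already implicit in Proposition~\ref{prop:sameRelGrading}; generalizing it to linear combinations is what makes the affine $\Hom(H_2(M,\partial M),\Z)$-structure emerge.
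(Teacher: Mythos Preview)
Your overall architecture matches the paper's: choose a basis of $H_2(M,\partial M)$, use Proposition~\ref{prop:simultaneousClosures} to realize representatives $T_1,\ldots,T_r$ with closed extensions $\bar T_i$ in a single closure, and take the simultaneous generalized eigenspace decomposition for the commuting operators $\mu(\bar T_i)$. Coordinate independence via Proposition~\ref{prop:sameRelGrading} is exactly what the paper uses.

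The divergence is in your ``second check,'' and this is where you have a genuine gap. You propose to re-apply Proposition~\ref{prop:simultaneousClosures} to place an arbitrary representative $S$ of $\sum c_i\beta_i$ in a common closure with the $T_i$, and then argue that $[\bar S] - \sum c_i[\bar T_i] \in H_2(Y)$ is supported in the auxiliary region and hence contributes only an absolute shift to the grading. That last claim is unjustified: you would need to know that $\mu$ of the discrepancy class acts as a scalar on $I_*(Y|\bar R)_{\eta+\alpha}$, and nothing in the setup guarantees this. Moreover, re-invoking Proposition~\ref{prop:simultaneousClosures} hands you \emph{new} representatives of the $\beta_i$, not the original $T_i$, so you would also have to compare two different closures.

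The paper sidesteps all of this with a single observation: inside the fixed closure $Y$, make $\bar T_i$ and $\bar T_j$ transverse and smooth the double-point circles to obtain an embedded surface representing $[\bar T_i]+[\bar T_j]$. This surface is itself a closed extension of an admissible surface in $(M,\gamma)$ representing $\beta_i+\beta_j$, so by additivity of $\mu$ the grading it defines is literally the sum of the $T_i$- and $T_j$-gradings. Iterating handles all integer combinations, and then Proposition~\ref{prop:sameRelGrading} shows any other admissible representative of $\sum c_i\beta_i$ gives the same relative grading. This immediately yields that homogeneity with respect to the basis implies homogeneity with respect to every class, and that the grading-difference function $H_2(M,\partial M)\to\Z$ between two homogeneous elements is a homomorphism --- which is the affine structure. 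Your torsion argument is unnecessary once linearity is established this way.
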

\begin{proof}
	This is essentially Proposition 1.14 of \cite{1910.10842}. Choose a basis $\beta_1,\ldots,\beta_n$ for $H_2(M,\partial M)$, and by Proposition~\ref{prop:simultaneousClosures} we may choose admissible representatives $T_1,\ldots,T_n$ which can be simultaneously closed in a closure $Y$. Let $\bar{T}_1,\ldots,\bar{T}_n$ be their closures. Given any two of these surfaces $\bar{T}_i$ and $\bar{T}_j$, we may make them transverse and smooth out the circles of double points of $\bar{T}_i \cup \bar{T}_j$ to obtain a smooth representative of the homology class $[\bar{T}_i] + [\bar{T}_j]$. This smooth representative is a closed extension of an admissible surface in $(M,\gamma)$ representing $\beta_i + \beta_j$. Using additivity of $\mu$, it follows that if $x \in \SHI(M,\gamma)$ is homogeneous with respect to $\beta_1,\ldots,\beta_n$, then it is homogeneous with respect to each class in $H_2(M,\partial M)$. It also follows that the function $H_2(M,\partial M) \to \Z$ determined by two homogeneous elements of $\SHI(M,\gamma)$ is a group homomorphism. The splitting of $\SHI(M,\gamma)$ along generalized eigenspaces of $\mu(\bar{T}_i),\ldots,\mu(\bar{T}_j)$ thought of as an absolute $\Z^n$ grading depends on the choices of $T_1,\ldots,T_n$, but the same splitting with index set thought of as an affine space over $\Hom(H_2(M,\partial M),\Z)$ is an invariant of the balanced sutured manifold. 
\end{proof}

\vspace{20pt}

We return to Li's definition of a $\Z$-grading on the colimit $H^-$ with respect to which $U$ is homogeneous of degree $1$. The following result relates the gradings we have discussed to bypass attachments. 

\begin{prop}[Proof of Proposition 5.5 of \cite{1901.06679}]\label{prop:bypassGradings}
	Let $S$ be an admissible surface in a balanced sutured manifold $(M,\gamma)$, and let $\alpha$ be an attaching arc on $\partial M$ which is disjoint from $\partial S$. Let $(M,\gamma')$ be the balanced sutured manifold obtained by a bypass attachment along $\alpha$, and observe that $S$ is also an admissible surface in $(M,\gamma')$. The bypass attachment map \[
		\SHI(-M,-\gamma) \to \SHI(-M,-\gamma')
	\]preserves the $\Z$-gradings defined by $S$. 
\end{prop}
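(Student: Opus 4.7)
The plan is to reduce to a homological argument in a suitable cobordism of closures, exploiting the hypothesis that $\alpha$ is disjoint from $\partial S$ to ensure that every piece of auxiliary data for the closure can be chosen away from the bypass region.

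First, I would decompose the bypass attachment into its constituent contact $1$-handle attachment $(M,\gamma) \to (M_1,\gamma_1)$ followed by a canceling contact $2$-handle attachment $(M_1,\gamma_1) \to (M,\gamma')$, each performed inside a regular neighborhood $\nu(\alpha) \subset \partial M$ of the bypass arc. Since $\alpha \cap \partial S = \emptyset$, this neighborhood can be taken disjoint from $\partial S$, so that $S$ is also an admissible surface in the intermediate balanced sutured manifold $(M_1,\gamma_1)$ intersecting its sutures in exactly the points $\partial S \cap s(\gamma) = \partial S \cap s(\gamma_1) = \partial S \cap s(\gamma')$. Because composites of grading-preserving maps are grading-preserving, it suffices to prove the claim separately for a single contact handle attachment whose attaching region is disjoint from $\partial S$.

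Fix such a contact handle attachment $(M,\gamma) \rightsquigarrow (\widetilde{M},\widetilde{\gamma})$. Choose an auxiliary surface $B$ for $(M,\gamma)$ of sufficiently large genus together with a collection of properly embedded arcs $\delta \subset B$ that induce a balanced pairing for $S$ and are linearly independent in $H_1(B,\partial B)$; this is possible by the construction underlying Proposition~\ref{prop:gradingWellDefined}. Because $\nu(\alpha) \cap \partial S = \emptyset$, I can arrange that $\delta$ together with the identification $\partial B = s(\gamma)$ is supported away from the portion of $s(\gamma)$ meeting $\nu(\alpha)$. The contact handle attachment then modifies the auxiliary surface in the corresponding region only, producing $\widetilde{B}$ with $\partial \widetilde{B} = s(\widetilde{\gamma})$, so that the very same arcs $\delta$ still realize a balanced, homologically independent pairing for $S$ inside $\widetilde{B}$, and the gluing diffeomorphism $h$ extends to a diffeomorphism $\widetilde{h}$ agreeing with $h$ away from the handle region. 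The resulting closures $(Y,\bar R,\eta,\alpha_Y)$ of $(M,\gamma)$ and $(\widetilde{Y},\bar{\widetilde R},\widetilde\eta,\widetilde\alpha_Y)$ of $(\widetilde{M},\widetilde\gamma)$ can thus be built so that $\widetilde{Y}$ is obtained from $Y$ by attaching a single $4$-dimensional handle (of the same index as the contact handle) in the complement of the closed extension $\bar S$ of $S$, and the closed extension $\bar{\widetilde S}$ inside $\widetilde Y$ equals $\bar S$ as a subsurface.

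Let $W$ be the resulting cobordism from $Y$ to $\widetilde{Y}$. Since the $4$-dimensional handle sits in the complement of $\bar S$, the product $[0,1]\times \bar S \subset W$ is a homology between $\bar S \subset Y$ and $\bar{\widetilde S} \subset \widetilde Y$, so $[\bar S] = [\bar{\widetilde S}] \in H_2(W)$. The associated cobordism map $I_*(W)_\nu$, which by construction coincides (up to projective equivalence) with Baldwin--Sivek's contact handle attachment map, therefore intertwines $\mu(\bar S)$ with $\mu(\bar{\widetilde S})$, and it already intertwines $\mu(\bar R)$ with $\mu(\bar{\widetilde R})$ and the point operator. It follows that the induced map $\SHI(-M,-\gamma)\to \SHI(-\widetilde{M},-\widetilde{\gamma})$ carries the $2i$-generalized eigenspace of $\mu(\bar S)$ on $I_*(Y|\bar R)_{\eta+\alpha_Y}$ into the $2i$-generalized eigenspace of $\mu(\bar{\widetilde S})$ on $I_*(\widetilde Y|\bar{\widetilde R})_{\widetilde\eta+\widetilde\alpha_Y}$, which is precisely the statement that the grading defined by $S$ is preserved.

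I expect the genuine technical work to lie in the second paragraph: carefully verifying that the auxiliary surface, the balancing arcs $\delta$, and the gluing diffeomorphism $h$ can simultaneously be chosen disjoint from the bypass region while still producing an honest closure realizing the grading. Once this is arranged, the homological argument on the cobordism and the compatibility with the canonical isomorphisms (via Proposition~\ref{prop:gradingWellDefined}) render the grading statement automatic.
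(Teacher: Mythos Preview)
The paper does not supply its own proof of this proposition; it simply imports the statement from Li's paper (the attribution in the proposition header is the entire ``proof''). Your argument is the correct one and matches the standard approach: reduce to individual contact handle attachments disjoint from $\partial S$, build closures in which $\bar S$ extends compatibly, and use that the induced map intertwines $\mu(\bar S)$.

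One small correction worth internalizing: your description of the contact $1$-handle map as a $4$-dimensional handle cobordism is not how Baldwin--Sivek actually define it. In their construction, a closure of $(M,\gamma)$ is literally the \emph{same} closed manifold as a closure of the $1$-handle attachment $(M_1,\gamma_1)$, with the $1$-handle absorbed into the thickened auxiliary surface; the $1$-handle map is then just the identity on that common closure (composed with canonical isomorphisms). Since $\partial S$ avoids the attaching region, the closed extension $\bar S$ is unchanged, so grading preservation is immediate and requires no cobordism argument at all. Your cobordism picture is genuinely needed only for the contact $2$-handle, where the map is induced by Dehn surgery on a curve in the closure disjoint from $\bar S$; there your homological argument that $[\bar S]$ survives through $W$ and hence $\mu(\bar S)$ is intertwined is exactly right. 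With this adjustment, your proof is complete.
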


\noindent Fix a Seifert surface $\Sigma$ for the null-homologous knot $K \subset Y$, and view it as a surface in the exterior $X$ of $K$. For $\lambda = \partial \Sigma \cap \partial X$, let $\gamma_n$ as before consist of two parallel oppositely oriented sutures on $\partial X$ of slope $\lambda - n\mu$. We may arrange $\gamma_n$ so that it intersects $\partial \Sigma$ in exact $2n$ points on $T$. Note that $\Sigma$ is admissible in $(X,\gamma_n)$ if and only if $n$ is odd. By Proposition~\ref{prop:gradingWellDefined}, $\Sigma$ defines a $\Z$-grading on $\SHI(-X,-\gamma_n)$ when $n$ is odd. 
\begin{figure}[!ht]
	\centering
	\labellist
	\pinlabel $\alpha_n^-$ at 680 610
	\pinlabel $\lambda$ at 465 720
	\pinlabel $-\mu$ at 760 410
	\pinlabel $\alpha_n^+$ at 1555 610
	\pinlabel $\lambda$ at 1335 720
	\pinlabel $-\mu$ at 1630 410
	\endlabellist
	\includegraphics[width=.45\textwidth]{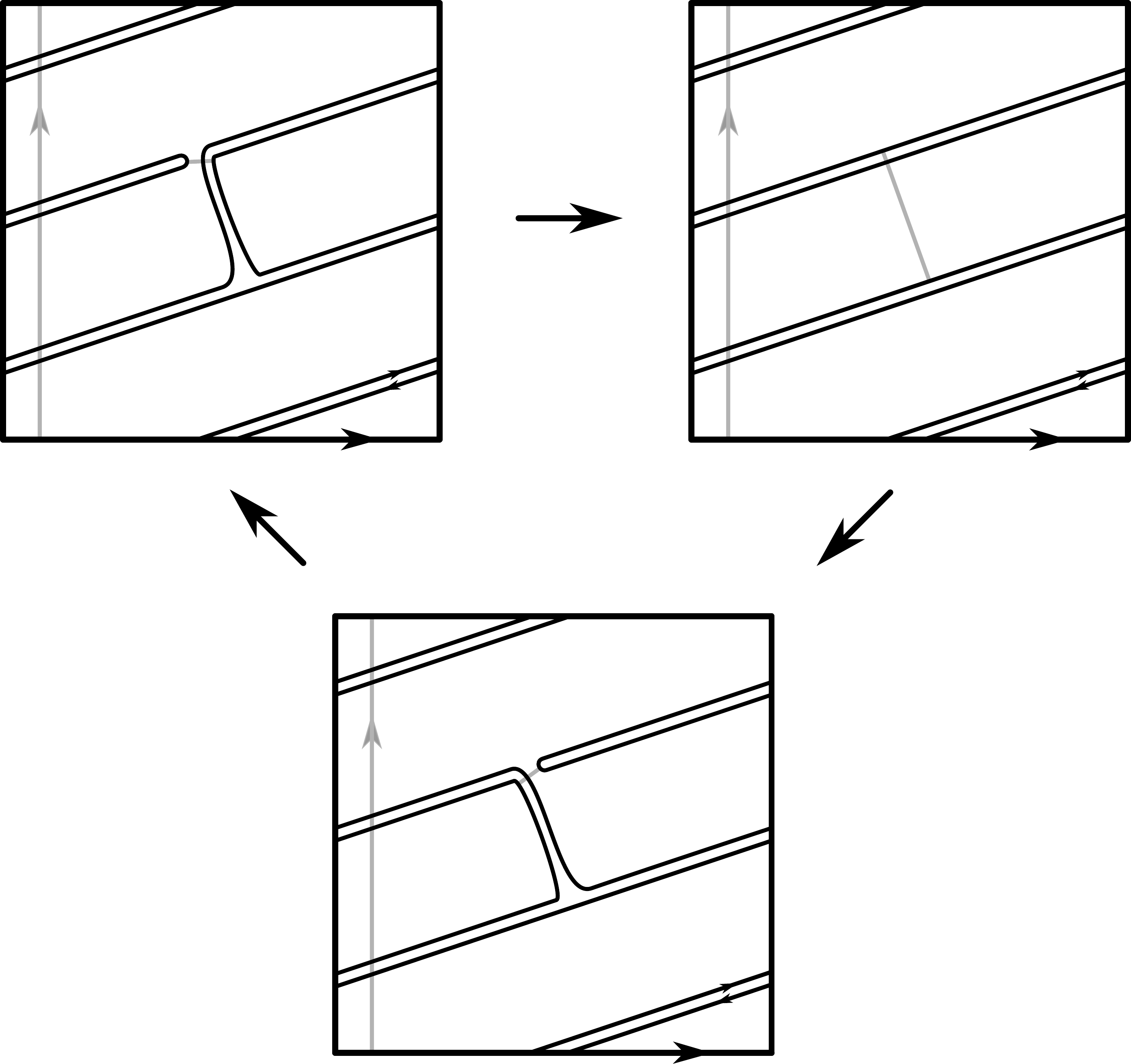}
	\hspace{10pt}
	\includegraphics[width=.45\textwidth]{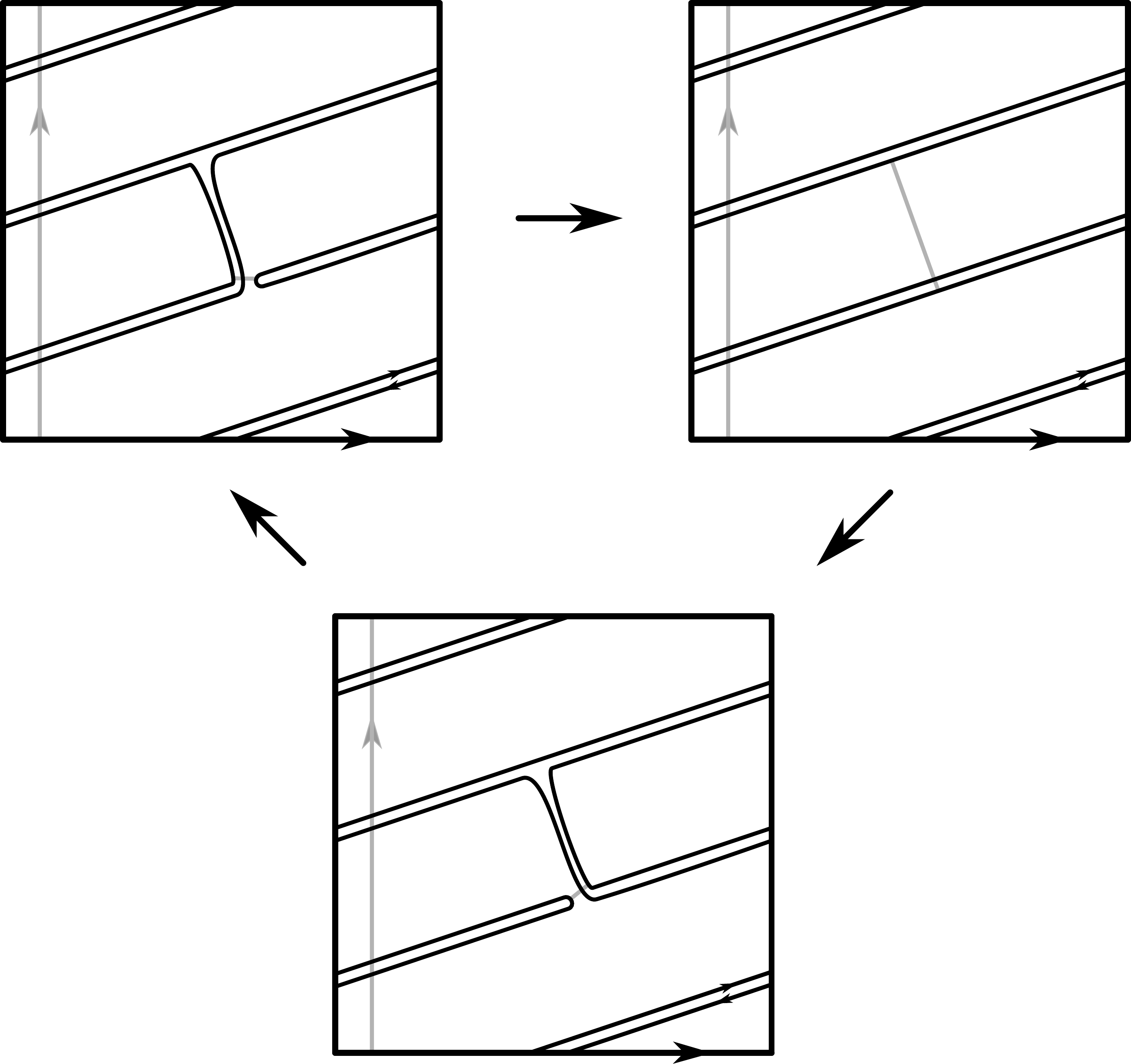}
	\caption{(Figure 3 of \cite{1901.06679}). Bypass attachment triangles associated to $\alpha_n^-,\alpha_n^+$.}
	\label{fig:triangleGradings}
\end{figure}
The attaching arcs $\alpha_n^-,\alpha_n^+$ are disjoint from $\partial \Sigma = \lambda$ by definition so by Proposition~\ref{prop:bypassGradings} when $n$ is odd, the bypass attachment maps in their associated exact triangles \[
	\begin{tikzcd}[column sep=-1.5em]
		\SHI(-X,-\gamma_{n-1}) \ar[rr,"\psi_{n-1}^-"] & & \SHI(-X,-\gamma_{n}) \ar[dl]\\
		& \SHI(-Y(K)) \ar[ul] &
	\end{tikzcd}\qquad \begin{tikzcd}[column sep=-1.5em]
		\SHI(-X,-\gamma_{n-1}) \ar[rr,"\psi_{n-1}^+"] & & \SHI(-X,-\gamma_{n}) \ar[dl]\\
		& \SHI(-Y(K)) \ar[ul] &
	\end{tikzcd}
\]respect the $\Z$-gradings defined by $\Sigma$ where $\Sigma$ intersects each collection of sutures on $\partial X$ in the ways shown in Figure~\ref{fig:triangleGradings}. Observe that in the two triangles, $\Sigma$ intersects $\gamma_{n-1}$ admissibly in two different ways. The two $\Z$-gradings on $\SHI(-X,-\gamma_{n-1})$ induced by $\Sigma$ intersecting $s(-\gamma_{n-1})$ in these two ways turn out to be different. The following definition follows the terminology of Definition 3.1 of \cite{1901.06679}. 

\begin{df*}[stabilization]
	Let $S$ be a compact oriented surface properly embedded in a balanced sutured manifold $(M,\gamma)$. Suppose $D$ is a disc in $\partial M$ for which \begin{enumerate}[itemsep=-0.5ex]
		\item $D \cap s(\gamma)$ is a single arc,
		\item $D \cap \partial S$ is a single arc,
		\item the two arcs $D \cap s(\gamma)$ and $D \cap \partial S$ are disjoint. 
	\end{enumerate}Consider a finger move on $\partial S$ supported within $D$ which introduces two new intersection points between $\partial S$ and $s(\gamma)$ (Figure~\ref{fig:stabilization}). A \textit{stabilization} of $S$ is an isotopy of $S$ through properly embedded surfaces which realizes this finger move on $\partial S$. We refer to $D$ as a disc which \textit{supports} the stabilization.
	After the stabilization, there are arcs along $\partial S$ and $s(\gamma)$ whose endpoints are the two new intersection points; these two arcs form the boundary of a disc in $D$ called the \textit{Whitney disc} of the stabilization. If the orientations of the two arcs inherited from $\partial S$ and $s(\gamma)$ match a boundary orientation of the Whitney disc, then the stabilization is called \textit{negative}. Otherwise, the stabilization is \textit{positive}.
\end{df*}

\begin{figure}[!ht]
	\centering
	\labellist
	\pinlabel $s(\gamma)$ at 330 0 
	\pinlabel $\partial S$ at 135 2
	\pinlabel $D$ at 50 445
	\pinlabel $s(\gamma)$ at 925 0 
	\pinlabel $\partial S$ at 730 2
	\pinlabel $D$ at 635 445
	\endlabellist
	\includegraphics[width=.2\textwidth]{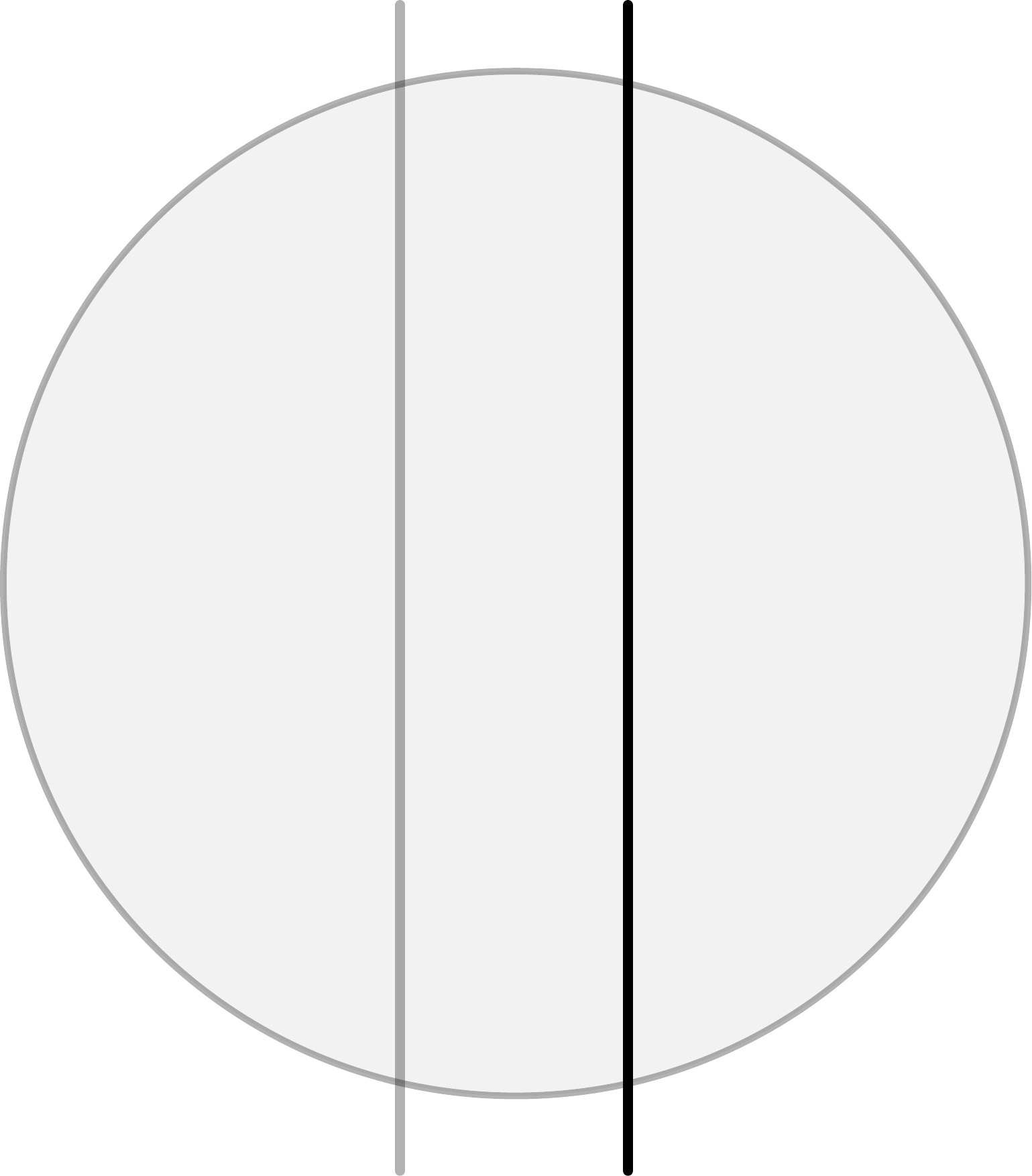}
	\hspace{20pt}
	\includegraphics[width=.2\textwidth]{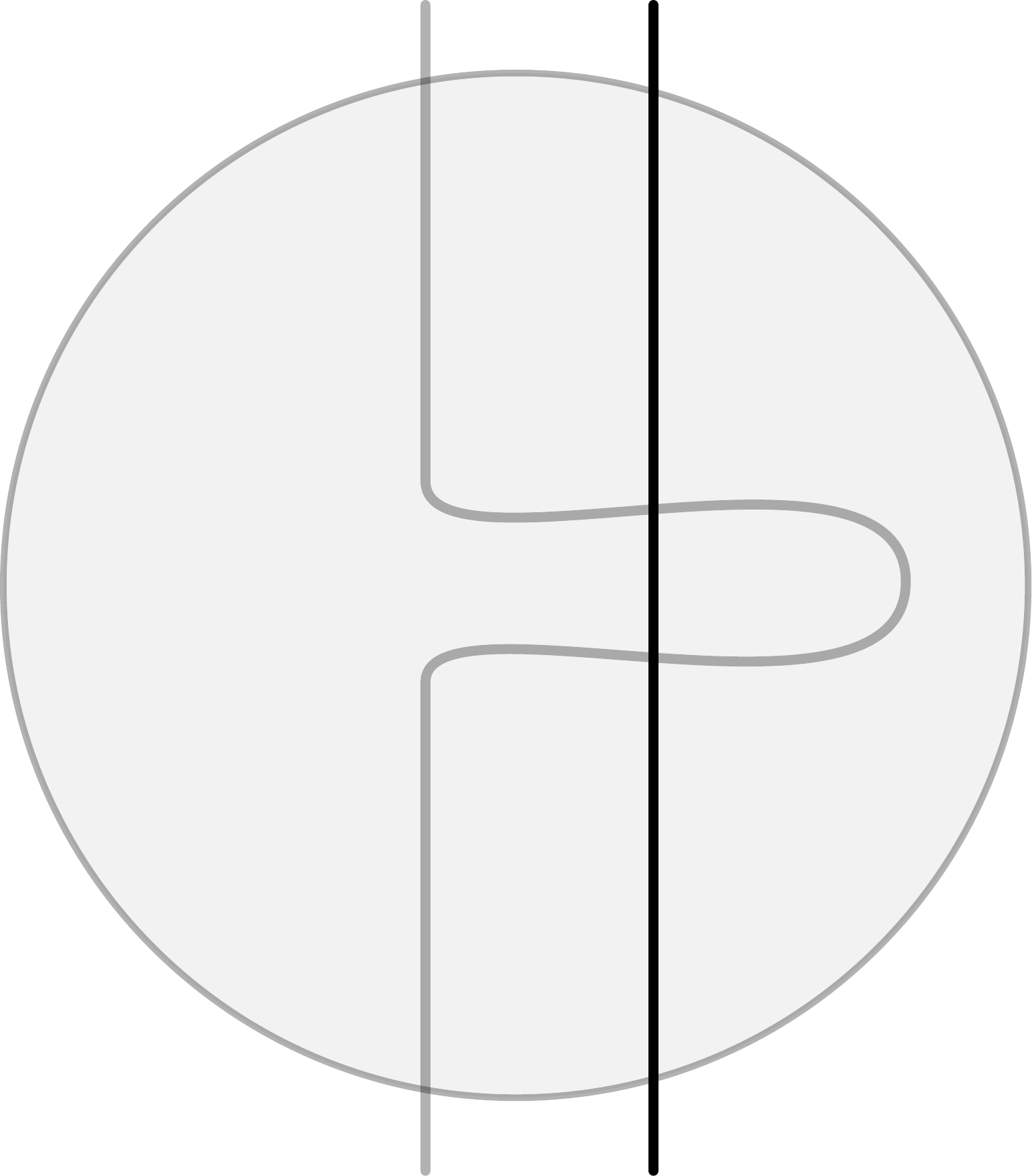}
	\caption{A stabilization.}
	\label{fig:stabilization}
\end{figure}

\begin{rem}
	Stabilizations are used in Lemma 4.5 of \cite{Juh08}. The main observation is that if $T$ is obtained by a negative stabilization of $S$, then decomposing along either $S$ or $T$ results in the same sutured manifold. Juh\'asz essentially shows that any nice surface $S$ can be isotoped through negative stabilizations to a surface $T$ for which each component of $\partial T$ intersects $s(\gamma)$. 
\end{rem}

In section~\ref{subsubsec:gradingSuturedInstanton}, we prove the following proposition which answers Conjecture 4.2 of \cite{1910.10842} affirmatively for instanton Floer homology. A special case that assumes that $S$ and $-S$ define taut surface decompositions is Proposition 4.1 of \cite{1910.10842}. 

\begin{prop}\label{prop:gradingShift}
	Let $S$ be an admissible surface in a balanced sutured manifold $(M,\gamma)$. Suppose $T$ is obtained from $S$ by $p$ positive stabilizations and $q$ negative stabilizations with $p - q = 2k$. Then for each $i \in \Z$ \[
		\SHI(M,\gamma,S,i) = \SHI(M,\gamma,T,i + k).
	\]
\end{prop}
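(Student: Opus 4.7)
The general formula reduces by induction to two base cases: (i) $(p,q) = (1,1)$, producing grading shift $0$, and (ii) $(p,q) = (2,0)$, producing grading shift $+1$ (by symmetry, $(0,2)$ gives $-1$). Any $(p,q)$ with $p - q = 2k$ decomposes into $\min(p,q)$ mixed pairs followed by $|k|$ same-sign pairs, each performed in a disjoint small disc on $\partial M$. Since the resulting intermediate surfaces are all admissible and the shifts add, the base cases yield the general claim.

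By Proposition~\ref{prop:simultaneousClosures} I may choose a single closure $(Y, \bar R, \eta, \alpha)$ in which both $S$ and $T$ extend to closed surfaces $\bar S, \bar T$ used in the grading construction. Since $\mu\colon H_2(Y) \to \End(I_*(Y|\bar R)_{\eta + \alpha})$ is additive and the $i$-th graded summand of $\SHI(M,\gamma,S,\cdot)$ is the $2i$-generalized eigenspace of $\mu(\bar S)$, the grading shift from $S$ to $T$ is controlled entirely by the class $[\bar T] - [\bar S] \in H_2(Y)$.

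For base case (i), I arrange the positive and negative stabilizations in disjoint discs $D_\pm \subset \partial M$ and choose the pairing arcs in the auxiliary surface $B$ and the continuations in $[1,3] \times \bar R_+$ so that the local homological contributions of the positive and negative Whitney discs (each completed with its product strip) represent negatives of each other in $H_2(Y)$; hence $[\bar T] = [\bar S]$ and $\mu(\bar T) = \mu(\bar S)$, giving shift $0$. For base case (ii), the two positive contributions assemble into a single closed surface $\Theta \subset Y$ representing $[\bar T] - [\bar S]$, and the shift of $+1$ follows from the identity $\mu(\Theta) = 2 \cdot \Id$ on $I_*(Y|\bar R)_{\eta + \alpha}$.

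\textbf{The main obstacle} is the identification $\mu(\Theta) = 2 \cdot \Id$ in base case (ii): this shift must be exactly $+1$ regardless of the genus $g(\bar R)$ of the closure. The plan is to realize $\Theta$ as an explicit local surface built from the two stabilization discs, their pairing strips in $[-1,1] \times B$, and a standardized completion in $[1,3] \times \bar R_+$, then compute $\mu(\Theta)$ via a local model calculation paralleling Li's analysis of the bypass attachment maps $\psi_n^{\pm}$ in \cite{1901.06679}. An alternative route is to realize a pair of positive stabilizations as a composite of bypass attachments whose attaching arcs cross $\partial S$ transversally, and to apply the bypass triangle (Theorem~\ref{thm:instantonByPassTri}) together with a grading-shift computation for such bypasses; this would adapt the taut-case argument of Proposition 4.1 of \cite{1910.10842} to the general admissible setting, where tautness of the surface decomposition is not available.
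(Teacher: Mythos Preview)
Your reduction to the two base cases $(p,q)=(1,1)$ and $(p,q)=(2,0)$ is sound and matches the paper's overall strategy of reducing to pairs of stabilizations. The genuine gap is exactly where you flag it: the identity $\mu(\Theta)=2\cdot\Id$ in base case (ii) is never established, and neither of your two suggested routes is carried out. Worse, this identity is not easy to obtain directly. The eigenvalue bounds of Proposition~\ref{prop:eigenvalsOfMuS} apply to a \emph{connected} closed representative of $[\Theta]$, and your $\Theta$ is assembled from two Whitney discs, pairing strips, and a completion in $[1,3]\times\bar R_+$; there is no reason to expect its genus or the eigenvalue of $\mu$ on its class to be what you need without further argument. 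A local model computation in the style of \cite{1901.06679} would require setting up and analyzing a specific closure, which you have not done.

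The paper's proof sidesteps this obstacle entirely by a Thurston-norm argument. Rather than computing $\mu(\bar T-\bar S)$, one observes that by additivity $\mu(\bar R\pm(\bar T-\bar S))=\mu(\bar R)\pm\mu(\bar T)\mp\mu(\bar S)$, and then bounds the eigenvalues of these combined operators on $I_*(Y|\bar R)_{\eta+\alpha}$ using the Thurston norm of the classes $[\bar R]\pm[\bar T-\bar S]$ (this is Lemma~\ref{lem:muMapThurstonNorm}). The point is that one can produce explicit embedded representatives of these classes of controlled complexity: the difference $\bar T-\bar S$ is represented by a surface $Q$ built from a subsurface $F\subset\partial M$ with $\partial F=\partial T-\partial S$ together with product pieces, and after an isotopy one can cut-and-paste $Q$ with $\bar R$ to realize $[\bar R]\pm[Q]$ by surfaces of Euler characteristic $\chi(\bar R)\mp(\chi(F\cap R_+(\gamma))-\chi(F\cap R_-(\gamma)))$ (Lemma~\ref{lem:ThurstonNormCalcForQ}). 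The grading shift is then read off as $\chi(F\cap R_-(\gamma))-\chi(F\cap R_+(\gamma))$ (Lemma~\ref{lem:gradingShiftF}), and for two positive stabilizations this is $+1$ because both Whitney discs lie in $R_-(\gamma)$. This replaces your unproven operator identity with a topological complexity bound that is straightforward to verify.
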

\begin{cor}\label{cor:plusminusStab}
	Let $T$ and $T'$ be admissible surface each obtained by a single stabilization of a surface $S$ in a balanced sutured manifold $(M,\gamma)$. If the two stabilizations are the same sign, then $T$ and $T'$ induce the same $\Z$-grading. If $T$ is obtained by a negative stabilization, and $T'$ by a positive stabilization, then \[
		\SHI(M,\gamma,T,i) = \SHI(M,\gamma,T',i+1).
	\]
\end{cor}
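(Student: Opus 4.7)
The plan is to reduce the corollary to Proposition~\ref{prop:gradingShift} by constructing, in each case, a common admissible surface $V$ obtained from both $T$ and $T'$ by sequences of stabilizations whose signed counts satisfy the parity hypothesis of the proposition. Note that $S$ itself cannot directly play this role: a single stabilization changes $\tfrac12|S\cap s(\gamma)|$ by one while preserving $\chi(S)$, so $S$ and $T$ have opposite admissibility parities and in particular $S$ is not admissible. Accordingly, I would apply Proposition~\ref{prop:gradingShift} with $T$ or $T'$ as the base surface, not $S$.

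Let $D$ and $D'$ be the supporting discs of $S\rightsquigarrow T$ and $S\rightsquigarrow T'$. After a small isotopy these may be taken to be disjoint, since each stabilization is supported in an arbitrarily small neighborhood of a point of $s(\gamma)$; I would also fix a third disjoint disc $D''$ on $\partial M$ supporting a further stabilization. The key observation is that stabilizations supported in pairwise disjoint discs commute up to isotopy, so any composite surface built from such stabilizations depends only on the unordered list of (disc, sign) data attached to $S$.

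For the same-sign case, suppose both $T$ and $T'$ are positive stabilizations of $S$ (the negative subcase is identical). Take $V$ to be $T$ with a positive stabilization added in $D'$ and a negative stabilization added in $D''$; by commutativity the same $V$ arises from $T'$ via a positive stabilization in $D$ and a negative stabilization in $D''$. In both descriptions $(p,q)=(1,1)$, so $V$ is admissible and Proposition~\ref{prop:gradingShift} applies with $k=0$ to give $\SHI(M,\gamma,T,i)=\SHI(M,\gamma,V,i)=\SHI(M,\gamma,T',i)$. For the opposite-sign case, with $T$ a negative stabilization and $T'$ a positive stabilization, take $V$ to be $T$ augmented by positive stabilizations in both $D'$ and $D''$; by commutativity this equals $T'$ augmented by a negative stabilization in $D$ together with a positive stabilization in $D''$. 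From $T$ we have $(p,q)=(2,0)$ and $k=1$, while from $T'$ we have $(p,q)=(1,1)$ and $k=0$, so Proposition~\ref{prop:gradingShift} yields $\SHI(M,\gamma,T,i)=\SHI(M,\gamma,V,i+1)=\SHI(M,\gamma,T',i+1)$.

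The main obstacle is the commutativity claim for stabilizations in disjoint discs, which is geometrically clear but requires a careful check that each finger move and its Whitney disc can be taken to be supported inside its own small neighborhood, independent of the others. Apart from this, the only remaining bookkeeping is verifying admissibility of $V$, which is immediate since $V$ differs from the admissible surface $T$ by an even number of stabilizations and thus has the same parity of $\tfrac12|V\cap s(\gamma)|-\chi(V)$ as $T$.
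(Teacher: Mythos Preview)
Your proposal is correct and follows essentially the same approach as the paper's proof: both arguments take $D$, $D'$ disjoint, introduce a third disjoint disc $D''$, form the common surface $V$ obtained from $S$ by one stabilization in each of the three discs, and then apply Proposition~\ref{prop:gradingShift} twice to compare $T\to V$ and $T'\to V$. The paper is simply terser, leaving the sign of the stabilization in $D''$ unspecified, whereas you work out explicit choices and verify the resulting shifts.
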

\begin{proof}
	Let $D$ and $D'$ be discs in $\partial M$ supporting the two stabilizations. We may assume that $D$ and $D'$ are disjoint. Let $D''$ be another disc in $\partial M$ disjoint from $D$ and $D'$ which supports some other stabilization of $S$. Let $V$ be the surface obtained from $S$ by three stabilizations, one within each of $D,D',D''$. Proposition~\ref{prop:gradingShift} determines the grading shift between $T$ and $V$ and between $V$ and $T'$. 
\end{proof}

\noindent Proposition~\ref{prop:sameRelGrading} itself implies that each $\psi_n^\pm$ is homogeneous with respect to the relative $\Z$-gradings defined by the relative homology class of $\Sigma$, so the colimit $H^-$ can be given a $\Z$-grading. A version of Proposition~\ref{prop:gradingShift} is needed to show that $U$ is homogeneous of degree $-1$. The main point is that the two gradings on $\SHI(-X,-\gamma_{n-1})$ induced by $\Sigma$ intersecting $s(-\gamma_{n-1})$ in the two different ways are related by stabilizations, so Proposition~\ref{prop:gradingShift} allows Li to compute that for any homogeneous element $x \in \SHI(-X,-\gamma_{n-1})$, the homogeneous element $\psi_{n-1}^-(x) \in \SHI(-X,-\gamma_n)$ lies in $1$ relative grading greater than $\psi_{n-1}^+(x)$. We will repeat this argument in the proof of Proposition~\ref{prop:bypasstrianglesGradings}. 

\subsubsection{Construction and basic properties}\label{subsubsec:constructionOfTheInvariant}

The construction in this section is an adaptation of Li's construction in the special case of the sutured exterior of a knot. Let $(M,\gamma,\sigma)$ be a balanced sutured manifold with a suitable distinguished suture, and let $T \subset \partial M$ be the toral boundary component containing $\sigma$. Let $\lambda$ be an oriented simple closed curve on $T$ that intersects $\sigma$ once so that $\{\lambda,\sigma\}$ is a positive basis for $T$. For each $n \ge 1$, let $\lambda_n$ be a simple closed curve on $T$ in the homology class $\lambda - n\sigma$, and let $\gamma_n$ be the set of sutures obtained from $\gamma$ by replacing the sutures on $T$ with oppositely oriented copies of $\lambda_n$. We refer to the two sutures as $\lambda_n$ and $-\lambda_n$. 

Arrange $s(\gamma_n)$ so that it intersects $\lambda$ in exactly $2n$ points. Just as in section~\ref{subsubsec:contactStructures}, let $\alpha_{n+1}^-$ (resp. $\alpha_{n+1}^+$) be a bypass attaching arc on $T$ which intersects $\lambda_{n+1}$ (resp $-\lambda_{n+1}$) in precisely its endpoints and which is disjoint from both $\mu$ and $\lambda$. These arcs are unique up to isotopy through bypass arcs. The bypass exact triangle (Theorem~\ref{thm:instantonByPassTri}) for each of $\alpha_{n+1}^-,\alpha_{n+1}^+$ is of the form \[
	\begin{tikzcd}[column sep=0]
		\SHI(-M,-\gamma_{n}) \ar[rr] & & \SHI(-M,-\gamma_{n+1})\ar[dl]\\
		& \SHI(-M,-\gamma) \ar[ul] &
	\end{tikzcd}
\]where the map $\SHI(-M,-\gamma_{n+1}) \to \SHI(-M,-\gamma)$ is the one induced by the arc itself. Let $\psi_n^-$ (resp. $\psi_n^+$) be the bypass attaching map $\SHI(-M,-\gamma_n) \to \SHI(-M,-\gamma_{n+1})$ in the triangle associated to $\alpha_{n+1}^-$ (resp. $\alpha_{n+1}^+$). The bypass arcs which induce $\psi_n^-$ and $\psi_n^+$ can be made disjoint, so by Corollary~\ref{cor:instantonDisjointAttachmentArcs} we have the following commutative diagram. \[
	\begin{tikzcd}[column sep=large,row sep=large]
		\cdots \ar[r,"\psi_{n-1}^-"] & \SHI(-M,-\gamma_n) \ar[r,"\psi_{n}^-"] \ar[d,"\psi_n^+"] & \SHI(-M,-\gamma_{n+1}) \ar[r,"\psi_{n+1}^-"] \ar[d,"\psi_{n+1}^+"] & \cdots\\
		\cdots \ar[r,"\psi_{n}^-"] & \SHI(-M,-\gamma_{n+1}) \ar[r,"\psi_{n+1}^-"] & \SHI(-M,-\gamma_{n+2}) \ar[r,"\psi_{n+2}^-"] & \cdots
	\end{tikzcd}
\]

\begin{df*}[a minus version of sutured instanton homology for balanced sutured manifolds with a suitable distinguished suture]
	Define $\SHI^-(-M,-\gamma,-\sigma)$ be the colimit of \[
		\begin{tikzcd}
			\SHI(-M,-\gamma_1) \ar[r,"\psi_1^-"] & \cdots \ar[r,"\psi_{n-1}^-"] & \SHI(-M,-\gamma_n) \ar[r,"\psi_n^-"] & \cdots
		\end{tikzcd}
	\]in the category of complex vector spaces. Let $U\colon \SHI^-(-M,-\gamma,-\sigma) \to \SHI^-(-M,-\gamma,-\sigma)$ be the map induced by the sequence of maps $\psi_n^+\colon \SHI(-M,-\gamma_n) \to \SHI(-M,-\gamma_{n+1})$, and view $\SHI^-(-M,-\gamma,-\sigma)$ as a module over $\C[U]$. 
\end{df*}

\begin{rem}
	That the module $\SHI^-(-M,-\gamma,-\sigma)$ is independent of the choice of longitude $\lambda$ follows from the characterization of the contact structures $\xi_n^\pm$ just as in \cite{MR3650078}. This can also be seen directly by checking that the arcs $\alpha_n^\pm(\lambda)$ on $(M,\gamma_n(\lambda))$ defined in terms of $\lambda$ agree with the arcs $\alpha_{n+1}^\pm(\lambda')$ on $(M,\gamma_{n+1}(\lambda'))$ defined by a longitude $\lambda'$ of slope $\lambda + \sigma$. 
\end{rem}

\vspace{20pt}

We now turn to gradings on $\SHI^-(-M,-\gamma,-\sigma)$. First, we show that there is a splitting of $\SHI^-(-M,-\gamma,-\sigma)$ along an affine space over $\Hom(H_2(-M,\partial (-M)),\Z)$ analogous to the splitting in Proposition~\ref{prop:instantonSplittingAlongAffineSpace}. We then show that $U$ is homogeneous with respect to this splitting. In particular, if $x \in \SHI^-(-M,-\gamma,-\sigma)$ is homogeneous, then the difference in gradings between $x$ and $Ux$ is the image of $[\sigma] \in H_1(-M)$ under the composite \[
	H_1(-M) \to H^2(-M,\partial (-M)) \to \Hom(H_2(-M,\partial (-M)),\Z)
\]where the first map is Poincar\'e duality and the second is the natural map in the universal coefficients theorem. 

\begin{prop}\label{prop:splittingofSuturedInstantonMinus}
	There is a well-defined splitting \[
		\SHI^-(-M,-\gamma,-\sigma) = \bigoplus_h \SHI^-(-M,-\gamma,-\sigma,h)
	\]for $h$ in an affine space over $\Hom(H_2(-M,\partial (-M)),\Z)$. 
\end{prop}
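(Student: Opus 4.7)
The plan is to transfer the affine splittings of Proposition~\ref{prop:instantonSplittingAlongAffineSpace} through the directed system defining $\SHI^-(-M,-\gamma,-\sigma)$, showing that each bypass map $\psi_n^-$ is homogeneous under a canonical identification of the relevant grading sets, so that the colimit inherits a splitting. Concretely, Proposition~\ref{prop:instantonSplittingAlongAffineSpace} gives for each $n\ge 1$ a splitting
\[
\SHI(-M,-\gamma_n) = \bigoplus_{h\in A_n} \SHI(-M,-\gamma_n,h)
\]
where $A_n$ is an affine space over $\Hom(H_2(M,\partial M),\Z)$. The task is to produce canonical affine identifications $\phi_n\colon A_n \to A_{n+1}$ such that $\psi_n^-$ sends the $h$-graded summand into the $\phi_n(h)$-graded summand. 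Granted these identifications, set $A = \varinjlim(A_n,\phi_n)$; this is again an affine space over $\Hom(H_2(M,\partial M),\Z)$ and the compatibility of splittings with $\psi_n^-$ endows the colimit $\SHI^-(-M,-\gamma,-\sigma)$ with a splitting indexed by $A$.

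The construction of $\phi_n$ is geometric. Given a class $\beta\in H_2(M,\partial M)$, I will represent $\beta$ by an admissible surface $S$ whose boundary on $T$ is disjoint from the bypass attaching arc $\alpha_{n+1}^-$. Since $\alpha_{n+1}^-$ is by construction disjoint from both $\lambda$ and $\mu$, any properly embedded surface $S$ with $\partial S\cap T$ consisting of parallel copies of $\lambda$ (or of $\mu$, or empty) has $\partial S$ disjoint from $\alpha_{n+1}^-$, so Proposition~\ref{prop:bypassGradings} shows that $\psi_n^-$ preserves the $S$-grading. Every class $\beta$ admits such a representative: if the image of $\beta$ in $H_1(T)$ vanishes, a representative disjoint from $T$ exists, while a general class can be represented with $\partial S\cap T$ a union of parallel copies of a single primitive slope, which after an isotopy in $T$ can be taken to be either $\lambda$ or $\mu$. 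Admissibility of such an $S$ in each $(M,\gamma_n)$ can be arranged by stabilizations as in Lemma~4.5 of \cite{Juh08}, and the resulting grading shifts are controlled by Proposition~\ref{prop:gradingShift}.

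Well-definedness of $\phi_n$ then follows from Proposition~\ref{prop:sameRelGrading}: any two admissible representatives of the same class $\beta$ induce the same relative $\Z$-grading. To compare absolute gradings obtained from different choices of representatives or from different stabilization patterns, one uses Proposition~\ref{prop:simultaneousClosures} to realize a finite collection of representatives simultaneously in a common closure together with Proposition~\ref{prop:gradingShift} to account for stabilization-induced shifts. This yields that $\phi_n$ is a well-defined affine map, and the splitting on the colimit follows.

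The main obstacle will be the compatibility across the tower: because the sutures on $T$ rotate as $n$ increases, a fixed surface with boundary on $T$ eventually fails to be admissible in $(M,\gamma_n)$, and the stabilizations required to restore admissibility must be chosen so that the cumulative grading shifts are compatible with $\psi_n^-$. Controlling these shifts uniformly across $n$ is the technical heart of the argument, and closely parallels Li's analysis for knot exteriors in \cite{1901.06679,1910.10842}. Once this bookkeeping is in place, the splitting passes to the colimit essentially formally, and the index set is an affine space over $\Hom(H_2(-M,\partial(-M)),\Z) = \Hom(H_2(M,\partial M),\Z)$ as claimed.
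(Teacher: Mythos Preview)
Your approach is the same as the paper's, but you have overcomplicated it and introduced one incorrect step.

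The paper's proof is three sentences: each $\SHI(-M,-\gamma_n)$ carries the affine splitting of Proposition~\ref{prop:instantonSplittingAlongAffineSpace}; for any class $a\in H_2(-M,\partial(-M))$ one can choose an admissible representative $A$ whose boundary is disjoint from the bypass attaching arc defining $\psi_n^-$; then Proposition~\ref{prop:bypassGradings} says $\psi_n^-$ preserves the relative $\Z$-grading determined by $a$. Since this holds for every $a$, the map $\psi_n^-$ is homogeneous for the affine splitting, and the colimit inherits one.

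Two points where your version goes astray. First, the claim that the primitive slope of $\partial S\cap T$ ``after an isotopy in $T$ can be taken to be either $\lambda$ or $\mu$'' is false: isotopy in $T$ does not change the homology class of a curve, and a general class in $H_1(T)$ is neither a multiple of $\lambda$ nor of $\sigma$. Fortunately this restriction is unnecessary. The bypass arc $\alpha_{n+1}^-$ is a single embedded arc on $T$ (it is disjoint from both $\lambda$ and $\sigma$, hence inessential in the complement of $\lambda_{n+1}$), so any collection of curves on $T$ can be isotoped off it regardless of slope, while keeping the surface admissible.

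Second, the ``main obstacle'' you identify --- tracking cumulative stabilization shifts so that a fixed surface remains admissible across all $n$ --- is not present. The splitting being constructed is \emph{affine}, so one only needs each $\psi_n^-$ to be homogeneous and to preserve differences of gradings. That is checked for each $n$ separately, using whatever admissible representative of $a$ happens to avoid the $n$-th bypass arc; Proposition~\ref{prop:sameRelGrading} guarantees the relative grading depends only on $[A]$, so different choices of surface at different stages are automatically compatible. There is no tower-wide bookkeeping to do.
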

\begin{proof}
	The sutured instanton homology of $(-M,-\gamma_n)$ has a splitting over an affine space over $H_2(-M,\partial (-M))$ by Proposition~\ref{prop:instantonSplittingAlongAffineSpace}. It suffices to show that \[
		\psi_n^-\colon \SHI(-M,-\gamma_n) \to \SHI(-M,-\gamma_{n+1})
	\]is homogeneous with respect to this splitting. Given any $a \in H_2(-M,\partial (-M))$, we can find an admissible surface $A \subset (-M,-\gamma_n)$ representing $a$ which is disjoint from the bypass attaching arc defining $\psi_n^-$. By Proposition~\ref{prop:bypassGradings}, it follows that $\psi_n^-$ respects the relative $\Z$-grading defined by $a$. Since this is true for each $a \in H_2(-M,\partial (-M))$, the result follows. 
\end{proof}

\begin{rem}
	Given an admissible surface $S \subset (-M,-\gamma)$, there is a relative $\Z$-grading on $\SHI^-(-M,-\gamma,-\sigma)$ denoted \[
		\SHI^-(-M,-\gamma,-\sigma) = \bigoplus_{i} \SHI^-(-M,-\gamma,-\sigma,S,i)
	\]which is compatible with the splitting in Proposition~\ref{prop:splittingofSuturedInstantonMinus}. Each $\SHI^-(-M,-\gamma,-\sigma,h)$ is a direct summand of one of the $\SHI^-(-M,-\gamma,-\sigma,S,i)$, and another term $\SHI^-(-M,-\gamma,-\sigma,h+\phi)$ also lies in the same $\SHI^-(-M,-\gamma,-\sigma,S,i)$ if and only if $\phi([S]) = 0$. 
\end{rem}

\begin{df*}[generalized Seifert surface]
	Let $(M,\gamma,\sigma)$ be a balanced sutured manifold with a suitable distinguished suture. A \textit{generalized Seifert surface} for $(M,\gamma,\sigma)$ is an admissible surface $\Sigma$ in $(M,\gamma)$ for which $\partial \Sigma \cap T$ is a simple closed curve $\lambda$ that intersects $\sigma$ in a single point transversely with $\lambda,\sigma$ a positive basis for $T$. The existence of a generalized Seifert surface for $(M,\gamma,\sigma)$ is guaranteed by the definition of a suitable distinguished suture. Note that a generalized Seifert surface is not required to be connected, and may have multiple boundary components. 
\end{df*}

Let $\Sigma$ be a generalized Seifert surface for $(M,\gamma,\sigma)$, and let $\lambda = \partial \Sigma \cap T$. We may use $\lambda$ to define the sutures $\gamma_n$. 
Since the way $\partial \Sigma$ intersects the sutures is relevant for the gradings, we let $\Sigma_n$ be the surface $\Sigma$ in $(M,\gamma_n)$ arranged to intersect $s(\gamma_n) \cap T$ in exactly $2n$ points. If $n$ is odd, then $\Sigma_n$ is admissible. Otherwise, any stabilization of $\Sigma_n$ is admissible. By Corollary~\ref{cor:plusminusStab}, any two negative stabilizations define the same grading, as do any two positive gradings. We let $\Sigma_n^-$ (resp. $\Sigma_n^+$) denote a surface obtained from $\Sigma$ by a negative (resp. positive) stabilization with this understanding. Similarly, for any $k \in \Z$, we let $\Sigma_n^{k}$ denote a surface obtained from $\Sigma$ by $m$ positive stabilizations and $m-k$ negative stabilizations. The grading that $\Sigma_n^k$ defines is independent of $m$ and the way the stabilizations are performed by Proposition~\ref{prop:gradingShift}. 

In the following proposition, we use $\Sigma$ to define a $\Z$-grading on $\SHI(-M,-\gamma)$, and we use $\Sigma_n$ and $\Sigma_n^-$ to define a grading on $\SHI(-M,-\gamma_n)$ when $n$ is odd and even, respectively. 

\begin{prop}[Proposition 5.5 of \cite{1901.06679}]\label{prop:bypasstrianglesGradings}
	The bypass exact triangles associated to $\alpha_{2k}^-,\alpha_{2k+1}^-$ are homogeneous of following degrees with respect to the gradings defined by $\Sigma,\Sigma_n$, and $\Sigma_n^-$. \[
		\begin{tikzcd}[column sep=-3em]
			\SHI(-M,-\gamma_{2k-1},i-1) \ar[rr,"\psi_{2k-1}^-"] & & \SHI(-M,-\gamma_{2k},i) \ar[rr,"\psi_{2k}^-"] \ar[dl] & & \SHI(-M,-\gamma_{2k+1},i) \ar[dl]\\
			& \SHI(-M,-\gamma,i + k - 1) \ar[ul] & & \SHI(-M,-\gamma,i+k) \ar[ul] &
		\end{tikzcd}
	\]Similarly, the bypass exact triangles associated to $\alpha_{2k}^+,\alpha_{2k+1}^+$ are homogeneous of the following degrees. \[
		\begin{tikzcd}[column sep=-3em]
			\SHI(-M,-\gamma_{2k-1},i+1) \ar[rr,"\psi_{2k-1}^+"] & & \SHI(-M,-\gamma_{2k},i+1) \ar[rr,"\psi_{2k}^+"] \ar[dl] & & \SHI(-M,-\gamma_{2k+1},i) \ar[dl]\\
			& \SHI(-M,-\gamma,i-k+1) \ar[ul] & & \SHI(-M,-\gamma,i-k) \ar[ul]
		\end{tikzcd}
	\]
\end{prop}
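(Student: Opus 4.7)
The approach follows Li's argument in the proof of Proposition 5.5 of \cite{1901.06679}, adapted to our setting of a balanced sutured manifold with a suitable distinguished suture. The key ingredients are Proposition~\ref{prop:bypassGradings}, which ensures that a bypass map preserves the $\Z$-grading defined by an admissible surface disjoint from the attaching arc, together with Proposition~\ref{prop:gradingShift} and Corollary~\ref{cor:plusminusStab}, which compute the grading shifts associated to stabilizations.

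First, I would verify that all three bypass attaching arcs in each bypass triangle can be chosen disjoint from $\lambda = \partial \Sigma \cap T$. The arcs $\alpha_{n+1}^{\pm}$ are disjoint from $\lambda$ by their very definition, and the two other arcs that appear when rotating through the bypass triangle lie in a small neighborhood of the original arc and can likewise be arranged disjoint from $\lambda$. To apply Proposition~\ref{prop:bypassGradings} cleanly, I would fix a sufficiently stabilized version of $\Sigma$ that is simultaneously admissible in all three sutured manifolds appearing in a given triangle; this serves as a common reference surface with respect to which each of the three bypass maps preserves its $\Z$-grading.

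Second, I would translate this preservation of the common grading into the stated shifts by comparing the common reference with the gradings actually defined by $\Sigma_n$ (for $n$ odd), $\Sigma_n^-$ (for $n$ even), and $\Sigma$ on $(M,\gamma)$, using Corollary~\ref{cor:plusminusStab} and Proposition~\ref{prop:gradingShift}. As $n$ increases by one, $\lambda$ acquires exactly two additional intersection points with $s(\gamma_n) \cap T$, which realizes a single stabilization of $\Sigma$. A local orientation analysis on $T$ identifies the sign of this stabilization: the rotation of sutures associated to a $-$ bypass realizes a negative stabilization, while that associated to a $+$ bypass realizes a positive stabilization. Putting these pieces together, the $+1$ shift of $\psi_{2k-1}^-$ corresponds to the one-grading difference between positive and negative stabilizations guaranteed by Corollary~\ref{cor:plusminusStab}; the zero shift of $\psi_{2k}^-$ reflects matching signs of the implicit stabilizations on source and target; and the shifts $+k-1$ and $+k$ appearing in the $\SHI(-M,-\gamma)$ summands accumulate from the roughly $n$ stabilizations of consistent sign needed to relate $\Sigma_n$ to $\Sigma$, as controlled by Proposition~\ref{prop:gradingShift}. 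The $\psi^+$ statements then follow from the same argument with positive and negative stabilizations exchanged.

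The main technical obstacle is the orientation-sensitive sign calculation: confirming that the rotation of sutures induced by a $-$ bypass realizes a \emph{negative} (rather than a positive) stabilization requires a careful local analysis at $T$ of both the bypass attachment picture and the Whitney-disc orientation convention used in the definition of the sign of a stabilization.
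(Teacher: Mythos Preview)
Your approach is essentially the same as the paper's: both use Proposition~\ref{prop:bypassGradings} to see that the bypass maps preserve the grading defined by a common stabilized surface disjoint from the arcs, and then invoke Proposition~\ref{prop:gradingShift} to convert those gradings back to the ones defined by $\Sigma$, $\Sigma_n$, and $\Sigma_n^-$. The paper carries this out by reading off from Figure~\ref{fig:triangleGradings} the precise stabilized surfaces (e.g.\ $\Sigma_{2k-1}^{-2}$, $\Sigma^{2k}$, etc.) that serve as the common reference in each triangle, then applying the shift formula.

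One point you should make explicit, which the paper highlights and your proposal only gestures at: the stabilization sign is orientation-sensitive, so a positive stabilization of $S$ in $(M,\gamma)$ becomes a \emph{negative} stabilization of $S$ in $(-M,-\gamma)$. Since the gradings in the statement are on $\SHI(-M,-\gamma_n)$, this sign flip is exactly what produces the stated shifts, and it is the content of the ``careful local analysis'' you flag at the end.
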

\begin{proof}
	Using Figure~\ref{fig:triangleGradings} and Proposition~\ref{prop:bypassGradings}, we may compute the grading shifts of the maps in the triangle with respect to stabilizations of $\Sigma_n$ for $\alpha_{2k+1}^\pm$. Together with similar diagrams for $\alpha_{2k}^\pm$, we have \[
		\begin{tikzcd}[column sep=-3.5em]
			\SHI(-M,-\gamma_{2k-1},\Sigma_{2k-1}^{-2},i) \ar[rr,"\psi_{2k-1}^-"] & & \SHI(-M,-\gamma_{2k},\Sigma_{2k}^-,i) \ar[rr,"\psi_{2k}^-"] \ar[dl] & & \SHI(-M,-\gamma_{2k+1},\Sigma_{2k+1},i) \ar[dl]\\
			& \SHI(-M,-\gamma,\Sigma^{2k-2},i) \ar[ul] & & \SHI(-M,-\gamma,\Sigma^{2k},i) \ar[ul] &
		\end{tikzcd}
	\]and \[
		\begin{tikzcd}[column sep=-3.8em]
			\SHI(-M,-\gamma_{2k-1},\Sigma_{2k-1}^{+2},i) \ar[rr,"\psi_{2k-1}^+"] & & \SHI(-M,-\gamma_{2k},\Sigma_{2k}^+,i) \ar[rr,"\psi_{2k}^+"] \ar[dl] & & \SHI(-M,-\gamma_{2k+1},\Sigma_{2k+1},i) \ar[dl]\\
			& \SHI(-M,-\gamma,\Sigma^{-2k+2},i) \ar[ul] & & \SHI(-M,-\gamma,\Sigma^{-2k},i). \ar[ul]
		\end{tikzcd}
	\]The claim now follows from Proposition~\ref{prop:gradingShift} keeping in mind that if $T$ is obtained from $S$ by a positive stabilization in $(M,\gamma)$, then $T$ is obtained from $S$ by a negative stabilization in $(-M,-\gamma)$. 
\end{proof}

\begin{cor}\label{cor:UgradingDropForG}
	Let $\Sigma$ be a generalized Seifert surface for $(M,\gamma)$. Then $U\colon \SHI^-(-M,-\gamma,-\sigma) \to \SHI^-(-M,-\gamma,-\sigma)$ is homogeneous of degree $-1$ with respect to the relative $\Z$-grading defined by $[\Sigma] \in H_2(-M,\partial (-M))$. 
\end{cor}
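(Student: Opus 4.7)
The plan is to derive the corollary directly from the grading shift data in Proposition~\ref{prop:bypasstrianglesGradings} by a bookkeeping argument. Since $\SHI^-(-M,-\gamma,-\sigma)$ is the colimit of the groups $\SHI(-M,-\gamma_n)$ along the maps $\psi_n^-$, one can equip each $\SHI(-M,-\gamma_n)$ with a $\Z$-grading adjusted so that every $\psi_n^-$ becomes homogeneous of degree zero; these adjusted gradings then descend to an absolute $\Z$-grading on the colimit that refines the relative $\Z$-grading induced by $[\Sigma]$. The degree of $U$, which is induced by the $\psi_n^+$, can then be computed from the grading shift of $\psi_n^+$ alone.

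First, I would equip $\SHI(-M,-\gamma_n)$ with the $\Sigma$-grading used in Proposition~\ref{prop:bypasstrianglesGradings}: by $\Sigma_n$ when $n$ is odd and by $\Sigma_n^-$ when $n$ is even (note that $\partial\Sigma$ is disjoint from the bypass arcs $\alpha_n^\pm$, so the proposition applies). Reading off from the proposition, $\psi_{2k-1}^-$ shifts this grading up by $1$ and $\psi_{2k}^-$ preserves it, so the cumulative shift from $n=1$ to $n$ equals $\lfloor n/2 \rfloor$. Accordingly, define the adjusted grading
\[
\widetilde{\gr}_n \;=\; \gr_n - \lfloor n/2 \rfloor
\]
on $\SHI(-M,-\gamma_n)$. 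Then each $\psi_n^-$ has $\widetilde{\gr}$-degree $0$, so the $\widetilde{\gr}$-gradings descend to an absolute $\Z$-grading on $\SHI^-(-M,-\gamma,-\sigma)$ which refines the relative $\Z$-grading defined by $[\Sigma]$ on the colimit.

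Next, I would compute the $\widetilde{\gr}$-degree of $\psi_n^+ \colon \SHI(-M,-\gamma_n) \to \SHI(-M,-\gamma_{n+1})$ in the two parity cases. For $n = 2k-1$, Proposition~\ref{prop:bypasstrianglesGradings} gives that $\psi_{2k-1}^+$ has $\gr$-degree $0$, while $\lfloor 2k/2 \rfloor - \lfloor (2k-1)/2 \rfloor = 1$, so its $\widetilde{\gr}$-degree is $0 - 1 = -1$. For $n = 2k$, the same proposition gives that $\psi_{2k}^+$ has $\gr$-degree $-1$, while $\lfloor (2k+1)/2 \rfloor - \lfloor 2k/2 \rfloor = 0$, so its $\widetilde{\gr}$-degree is $-1 - 0 = -1$. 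In either case the $\widetilde{\gr}$-degree of $\psi_n^+$ equals $-1$, so the induced map $U$ on the colimit is homogeneous of $\widetilde{\gr}$-degree $-1$, which is precisely the statement that $U$ drops the relative $\Z$-grading defined by $[\Sigma]$ by one.

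There is no conceptual obstacle here: once Proposition~\ref{prop:bypasstrianglesGradings} is available, the argument is purely arithmetic. The only mild care required is in tracking the parity-dependent choice of grading surface ($\Sigma_n$ versus $\Sigma_n^-$) consistently across the colimit, which is precisely what the cumulative offset $\lfloor n/2\rfloor$ absorbs.
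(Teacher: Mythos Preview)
Your proof is correct and follows the same approach as the paper: both derive the result directly from the grading shifts in Proposition~\ref{prop:bypasstrianglesGradings}, observing that $\psi_n^-(x)$ lands one grading higher than $\psi_n^+(x)$ so that when the colimit grading is normalized to make $\psi_n^-$ degree-preserving, $U$ (induced by $\psi_n^+$) has degree $-1$. Your version is simply a more explicit unpacking of the parity bookkeeping via the offset $\lfloor n/2\rfloor$, whereas the paper states the comparison of $\psi_n^-$ and $\psi_n^+$ in one line.
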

\begin{proof}
	From Proposition~\ref{prop:bypasstrianglesGradings}, if $x \in \SHI(-M,-\gamma_n)$ is homogeneous with respect to $[\Sigma]$, then $\psi_n^-(x)$ lies in one relative grading higher than $\psi_n^+(x)$. The relative $\Z$-grading on $\SHI^-(-M,-\gamma,-\sigma)$ induced by $[\Sigma] \in H_2(-M,\partial (-M))$ is defined using the $\psi_n^-$ maps so $U$ drops the relative grading by $1$. 
\end{proof}

\begin{prop}\label{prop:instantonUisHomogeneous}
	With respect to the splitting in Proposition~\ref{prop:splittingofSuturedInstantonMinus}, $U$ sends $\SHI^-(-M,-\gamma,-\sigma,h)$ to $\SHI^-(-M,-\gamma,-\sigma,h - [-\sigma])$ where $[-\sigma] \in \Hom(H_2(-M,\partial (-M)),\Z)$ is the image of the distinguished suture $-\sigma$ under $H_1(-M) \to H^2(-M,\partial (-M))\to \Hom(H_2(-M,\partial (-M)),\Z)$. 
\end{prop}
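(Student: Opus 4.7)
The splitting of $\SHI^-(-M,-\gamma,-\sigma)$ established in Proposition~\ref{prop:splittingofSuturedInstantonMinus} is additive in $H_2(-M,\partial(-M))$, so it suffices to show that for each $\beta$ in a spanning set, the shift in the relative $\beta$-grading under $U$ equals $-\langle[-\sigma],\beta\rangle$. The plan is to handle two complementary cases and then conclude by additivity.

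For $\beta=[\Sigma]$ the class of a generalized Seifert surface, Corollary~\ref{cor:UgradingDropForG} already gives that the $U$-shift equals $-1$, which matches $-\langle[-\sigma],[\Sigma]\rangle=-1$ since $\partial\Sigma\cap T$ crosses $\sigma$ once transversely.

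For $\beta\in\ker\langle[-\sigma],\cdot\rangle$, observe that the pairing $\langle[-\sigma],\beta\rangle$ equals the intersection number on $T$ of $\partial_T(\beta)$ with $\sigma$; hence $\ker\langle[-\sigma],\cdot\rangle$ is exactly the set of classes with $\partial_T(\beta)=m\sigma$ for some $m\in\Z$. Represent $\beta$ by an admissible surface $A$ in $(-M,-\gamma_n)$ with $\partial A\cap T$ a disjoint collection of $|m|$ simple closed curves parallel to $\sigma$, with orientations summing to $m\sigma$ in $H_1(T)$ (and when $m=0$, take $A$ disjoint from $T$). This is possible because any collection of curves on $T$ in the class $m\sigma$ can be converted into parallel copies of $\sigma$ by adjoining an annular cobordism on $T$, which is null-homologous in $H_2(-M,\partial(-M))$. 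Each such boundary component crosses each suture $\pm\lambda_n$ transversely in a single point, so $A$ is admissible in $(-M,-\gamma_n)$ for every $n\ge 1$. Because the bypass attaching arcs $\alpha_n^\pm$ are disjoint from $\sigma$ by construction, $\partial A\cap T$ can be isotoped within a small annular neighborhood of $\sigma$ on $T$ to be disjoint from every $\alpha_n^\pm$. Proposition~\ref{prop:bypassGradings} then implies that both $\psi_n^-$ and $\psi_n^+$ preserve the $A$-grading on $\SHI(-M,-\gamma_n)$, so $U$ preserves the $\beta$-grading on the colimit.

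Finally, since $\langle[-\sigma],[\Sigma]\rangle=1$ we have $H_2(-M,\partial(-M))=\Z[\Sigma]\oplus\ker\langle[-\sigma],\cdot\rangle$: any $\alpha$ decomposes as $k[\Sigma]+(\alpha-k[\Sigma])$ with $k=\langle[-\sigma],\alpha\rangle$. By additivity of the relative gradings in the homology class, the $U$-shift of the $\alpha$-grading equals $-k=-\langle[-\sigma],\alpha\rangle$, proving the proposition. The main technical concern is preserving admissibility of $A$ as $n$ varies while isotoping off the bypass arcs; this succeeds because parallel-$\sigma$ curves on $T$ always cross the suture slopes $\lambda-n\sigma$ transversely in a single point, so admissibility and disjointness from the attaching arcs can be arranged simultaneously within a small neighborhood of $\sigma$.
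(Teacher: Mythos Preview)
Your proof is correct and follows essentially the same approach as the paper: both establish the result by handling the generalized Seifert surface class via Corollary~\ref{cor:UgradingDropForG}, handling classes whose boundary on $T$ is a multiple of $\sigma$ by representing them with admissible surfaces whose boundary is disjoint from the bypass attaching arcs and invoking Proposition~\ref{prop:bypassGradings}, and then concluding for arbitrary classes by the decomposition $[B]=k[\Sigma]+[A]$ and additivity of the gradings.
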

\begin{proof}
	The map $H_2(-M,\partial (-M)) \to \Z$ given by $[-\sigma]$ is the algebraic intersection number $[A] \mapsto - A \cdot \sigma$ where $A$ is a properly embedded compact oriented surface representing $[A] \in H_2(-M,\partial (-M))$. Fix a homogeneous element $x \in \SHI^-(-M,-\gamma,-\sigma,h)$. We will show that for each relative homology class $[A] \in H_2(-M,\partial (-M))$, that $Ux$ is homogeneous with respect to the relative grading defined by $[A]$ and the grading difference from $x$ to $Ux$ is given by $A \cdot \sigma$. 

	Let $\Sigma$ be a generalized Seifert surface for $(M,\gamma,\sigma)$. Then $\Sigma \cdot \sigma = 1$ in $(M,\gamma)$ so $\Sigma \cdot \sigma = -1$ in $(-M,-\gamma)$. By Corollary~\ref{cor:UgradingDropForG}, we know that $Ux$ is homogeneous with respect to the grading defined by $[\Sigma] \in H_2(-M,\partial (-M))$, and the grading difference from $x$ to $Ux$ is $-1 = \Sigma \cdot \sigma$ as required. 

	Suppose $[A] \in H_2(-M,\partial(-M))$ is a class for which $[\partial A \cap T] \in H_1(T)$ is a multiple of $\sigma$. Then we may find an admissible representative $A$ of $[A]$ for which $\partial A \cap T$ is just a number of parallel copies of $\sigma$. Since $\partial A$ can be made disjoint from the attaching arc which defines $\psi_n^+$, it follows that $U$ preserves the $\Z$-grading defined by $[A]$ so again $Ux$ is homogeneous with respect to the grading defined by $[A]$ and there is no grading difference between $Ux$ and $x$ as required because $A \cdot \sigma = 0$. 

	Now let $[B] \in H_2(-M,\partial (-M))$ be an arbitrary class. We may write $[B]$ as $k[\Sigma] + [A]$ for some integer $k$ and for some class $[A] \in H_2(-M,\partial (-M))$ for which $[\partial A \cap T] \in H_1(T)$ is a multiple of $\sigma$. By the proof of Proposition~\ref{prop:instantonSplittingAlongAffineSpace}, if $Ux$ is homogeneous with respect to the relative gradings defined by a basis for $H_2(-M,\partial (-M))$, then it is homogeneous with respect to the relative grading defined by any class in $H_2(-M,\partial (-M))$. Thus $Ux$ is homogeneous with respect to the relative grading defined by $[B]$. Since the function $H_2(-M,\partial(-M)) \to \Z$ defined by the grading difference between $x$ and $Ux$ is a group homomorphism, and since it agrees with $[\sigma]$ on a basis, it is equal to $[\sigma]$ as required. 
\end{proof}

\vspace{20pt}

Next, we prove a structural result for $\SHI^-(-M,-\gamma,-\sigma)$ as a $\C[U]$-module, and we relate $\SHI^-(-M,-\gamma,-\sigma)$ to $\SHI(-M,-\gamma)$.

\begin{prop}\label{prop:instantonMinusModuleFinitelyGenerated}
	The $\C[U]$-module $\SHI^-(-M,-\gamma,-\sigma)$ is finitely-generated. 
\end{prop}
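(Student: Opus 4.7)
Fix a generalized Seifert surface $\Sigma$ for $(M,\gamma,\sigma)$. Proposition~\ref{prop:bypasstrianglesGradings} describes exactly how each $\psi_n^-$ shifts the $\Z$-grading defined by $\Sigma$ (and its stabilizations), and these shifts are compatible along the directed system defining $\SHI^-(-M,-\gamma,-\sigma)$, so they may be reindexed to yield a well-defined absolute Alexander grading $\{\SHI^-(-M,-\gamma,-\sigma;a)\}_{a\in\Z}$ on the colimit. By Corollary~\ref{cor:UgradingDropForG}, the action of $U$ lowers this grading by $1$. Let $[c,d]$ be the range of $\Z$-gradings in which $\SHI(-M,-\gamma)$ is nonzero, and let $b_1$ be the top $\Z$-grading of $\SHI(-M,-\gamma_1)$. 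The plan is to produce a finite-dimensional subspace of the colimit that generates the whole $\C[U]$-module.

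The first step analyzes the bypass triangles of Proposition~\ref{prop:bypasstrianglesGradings} grading by grading. In Alexander grading $a$, the third term of the triangle containing $\psi_n^-$ is $\SHI(-M,-\gamma,a+2k-1)$ when $n=2k-1$ and $\SHI(-M,-\gamma,a+2k)$ when $n=2k$. For fixed $a$ both vanish once $k>(d-a+1)/2$, so $\psi_n^-$ becomes an isomorphism in Alexander grading $a$ for all sufficiently large $n$, showing that each $\SHI^-(-M,-\gamma,-\sigma;a)$ is finite-dimensional. Moreover, when $a>d$ the third term vanishes for every $k\ge 1$, so the natural map $\SHI(-M,-\gamma_1,a)\to\SHI^-(-M,-\gamma,-\sigma;a)$ is already an isomorphism; in particular $\SHI^-(-M,-\gamma,-\sigma;a)=0$ for $a>A_{\max}:=\max(b_1,d)$.

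The second step treats the maps $\psi_n^+$ that define the $U$-action. The third term of the triangle for $\psi_n^+$ in Alexander grading $a$ is always $\SHI(-M,-\gamma,a)$, which vanishes as soon as $a<c$. So for each $n$ and each $a<c$, $\psi_n^+$ is surjective in Alexander grading $a+1\to a$, and this surjectivity passes to the colimit to give
\[
U\colon \SHI^-(-M,-\gamma,-\sigma;a+1)\twoheadrightarrow \SHI^-(-M,-\gamma,-\sigma;a)\qquad\text{for all }a<c.
\]
Combining the two steps, the subspace $W=\bigoplus_{c\le a\le A_{\max}}\SHI^-(-M,-\gamma,-\sigma;a)$ is finite-dimensional and $\C[U]$-generates $\SHI^-(-M,-\gamma,-\sigma)$: pieces in Alexander gradings above $A_{\max}$ are zero, pieces in $[c,A_{\max}]$ lie inside $W$, and pieces in grading $a<c$ equal $U^{c-a}\cdot\SHI^-(-M,-\gamma,-\sigma;c)\subseteq \C[U]\cdot W$ by iteration of the surjection above.

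The main technical obstacle is the very first step: one must check that the relative $\Z$-grading on $\SHI^-(-M,-\gamma,-\sigma)$ coming from $\Sigma$ really does assemble into a coherent absolute grading on the colimit. This requires consistently choosing stabilizations of $\Sigma_n$ in each $(M,\gamma_n)$ so that the grading shifts computed in Proposition~\ref{prop:bypasstrianglesGradings} match up at every stage of the directed system (and using Proposition~\ref{prop:gradingShift} and Corollary~\ref{cor:plusminusStab} to compare different stabilization choices). Once this bookkeeping is settled, the rest of the argument is a direct application of the exactness of the bypass triangles in each fixed Alexander grading.
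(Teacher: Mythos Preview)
Your proof is correct and follows essentially the same approach as the paper's. Both arguments fix an absolute Alexander grading on the colimit using $\Sigma$ and Proposition~\ref{prop:bypasstrianglesGradings}, use the $\psi_n^-$ triangles to show the colimit vanishes above a finite degree and is finite-dimensional in each degree, and use the $\psi_n^+$ triangles to control $U$ in low degrees. The only organizational difference is that the paper shows $U$ is an \emph{isomorphism} for sufficiently negative $a$ (by first identifying each graded piece of the colimit with a stabilized $\SHI(-M,-\gamma_n,\,\cdot\,)$), whereas you extract surjectivity of $U$ directly from surjectivity of each $\psi_n^+$ at the finite level, which is all that is needed for finite generation.
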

\begin{proof}
	The following argument is based on the proof of Proposition 5.10 of \cite{1901.06679}. Let $\Sigma$ be a generalized Seifert surface for $(M,\gamma)$. Just like in Proposition~\ref{prop:bypasstrianglesGradings}, we use $\Sigma,\Sigma_n$, and $\Sigma_n^-$ to define absolute $\Z$-gradings on $\SHI(-M,-\gamma)$ and $\SHI(-M,-\gamma_n)$. Because $\SHI(-M,-\gamma_1)$ and $\SHI(-M,-\gamma)$ are finite-dimensional, there is a constant $\ell \in \Z$ for which $\SHI(-M,-\gamma_1,i) = 0 = \SHI(-M,-\gamma,i)$ for all $i \ge \ell$. By Proposition~\ref{prop:bypasstrianglesGradings}, we have the following diagram of exact triangles. \[
		\begin{tikzcd}[column sep=-3em]
			\SHI(-M,-\gamma_1,i) \ar[rr,"\psi_1^-"] & & \SHI(-M,-\gamma_2,i+1) \ar[rr,"\psi_2^-"] \ar[dl] & & \SHI(-M,-\gamma_3,i+1) \ar[dl] \ar[rr,"\psi_3^-"] & & \cdots \\
			& \SHI(-M,-\gamma,i+1) \ar[ul] & & \SHI(-M,-\gamma,i+2) \ar[ul] & & \phantom{\SHI(a)}\cdots\phantom{\SHI(M)} \ar[ul] &
		\end{tikzcd}
	\]Since the bottom row of vector spaces $\SHI(-M,-\gamma, i + n)$ are all zero for $i \ge \ell$, exactness of the triangles implies all terms in the top row are also zero for $i \ge \ell$. It follows that $\SHI^-(-M,-\gamma,-\sigma,\Sigma,i) = 0$ for $i \ge \ell$ as well. Although in general $[\Sigma]$ only defines a relative $\Z$-grading, here we have an absolute lift by declaring the $i$-graded portion to be the colimit $\psi_n^-$ maps starting at $\SHI(-M,-\gamma_1,i)$.

	Now observe that the same triangles show that for any fixed grading $i \in \Z$, if $k$ is large enough then \[
		\begin{tikzcd}[column sep=large]
			\SHI(-M,-\gamma_{2k-1},i+k-1) \ar[r,"\psi_{2k-1}^-"] & \SHI(-M,-\gamma_{2k},i+k) \ar[r,"\psi_{2k}^-"] & \cdots
		\end{tikzcd}
	\]are isomorphisms. In particular, if we fix $i$ and make $k$ large enough then we have \begin{align*}
		\SHI(-M,-\gamma_{2k},i+k) &\cong \SHI^-(-M,-\gamma,-\sigma,\Sigma,i)\\
		\SHI(-M,-\gamma_{2k+1},i+k-1) &\cong \SHI^-(-M,-\gamma,-\sigma,\Sigma,i-1)
	\end{align*}The exact triangle \[
		\begin{tikzcd}[column sep=-2em]
			\SHI(-M,-\gamma_{2k},i+k) \ar[rr,"\psi_{2k}^+"] & & \SHI(-M,-\gamma_{2k+1},i+k-1) \ar[dl]\\
			& \SHI(-M,-\gamma,i-1) \ar[ul] &
		\end{tikzcd}
	\]from Proposition~\ref{prop:bypasstrianglesGradings} shows that if $i$ is be sufficiently negative, say $i \leq m$, then \[
		U\colon \SHI^-(-M,-\gamma,-\sigma,\Sigma,i) \to \SHI(-M,-\gamma,-\sigma,\Sigma,i-1)
	\]is an isomorphism. Thus a finite complex basis for $\bigoplus_{i=m}^\ell \SHI^-(-M,-\gamma,-\sigma,\Sigma,i)$ generates $\SHI^-(-M,-\gamma,-\sigma)$ as a $\C[U]$-module. 
\end{proof}

\begin{cor}\label{cor:structMinusInstantonSutured}
	There is an isomorphism \[
		\SHI^-(-M,-\gamma,-\sigma) \cong \C[U]^k \oplus \bigoplus_{i=1}^n \frac{\C[U]}{U^{k_i}}
	\]where $k$ is the $\C[U]$-module rank of $\SHI^-(-M,-\gamma,-\sigma)$ and the integers $k_1,\ldots,k_n$ are positive. The generators of the summands may be chosen to be homogeneous with respect to the splitting over an affine space for $\Hom(H_2(-M,\partial (-M)),\Z)$ of Proposition~\ref{prop:splittingofSuturedInstantonMinus}. 
\end{cor}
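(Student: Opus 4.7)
The plan is to obtain the decomposition as a direct application of Lemma~\ref{lem:HomologyStruct}, whose statement is field-agnostic and hence applies equally to $\C[U]$ as it does to $\F_2[U]$. The preceding propositions have assembled essentially all the required input data; only one genuinely new input needs verification, namely that the relevant $\Z$-action is free. I would proceed in three steps.

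First, finite generation of $\SHI^-(-M,-\gamma,-\sigma)$ as a $\C[U]$-module is exactly Proposition~\ref{prop:instantonMinusModuleFinitelyGenerated}. Second, Proposition~\ref{prop:splittingofSuturedInstantonMinus} provides a splitting
\[
	\SHI^-(-M,-\gamma,-\sigma) = \bigoplus_{h \in A} \SHI^-(-M,-\gamma,-\sigma,h)
\]
indexed by an affine space $A$ over $G = \Hom(H_2(-M,\partial(-M)),\Z)$, and Proposition~\ref{prop:instantonUisHomogeneous} shows that $U$ sends the $h$-summand into the $(h - [-\sigma])$-summand. Regarding $A$ as a $\Z$-set via $(h,n) \mapsto h + n\,[-\sigma]$, this says precisely that $U$ lowers the $\Z$-index by $1$ in the sense required by Lemma~\ref{lem:HomologyStruct}.

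The main step, and the only real obstacle, is the third: showing that this $\Z$-action on $A$ is free, i.e., that $[-\sigma] \in G$ has infinite order. This is exactly where the hypothesis that $\sigma$ is a \emph{suitable} distinguished suture enters. By definition, there is a generalized Seifert surface $\Sigma$ for $(M,\gamma,\sigma)$, and $\Sigma \cdot \sigma = \pm 1$ in $M$. Evaluating the functional $[-\sigma] \in G$ on $[\Sigma] \in H_2(-M,\partial(-M))$ therefore gives $\mp 1$, so $n\,[-\sigma]$ evaluates to a nonzero integer on $[\Sigma]$ whenever $n \neq 0$. Hence the $\Z$-action on $A$ is free.

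With all hypotheses of Lemma~\ref{lem:HomologyStruct} in place, the lemma produces an isomorphism of $\C[U]$-modules
\[
	\SHI^-(-M,-\gamma,-\sigma) \cong \C[U]^k \oplus \bigoplus_{i=1}^n \frac{\C[U]}{U^{k_i}},
\]
with generators of the summands chosen to be homogeneous with respect to the splitting over $A$, which is precisely the claim of the corollary. The integer $k$ is the $\C[U]$-rank of the module and each $k_i$ is positive because the classification is stated in the standard form of the structure theorem for modules over the PID $\C[U]$ specialized to the case where the elementary divisors are powers of $U$.
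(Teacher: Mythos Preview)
Your proof is correct and follows the same approach as the paper, which simply cites Lemma~\ref{lem:HomologyStruct} together with Propositions~\ref{prop:instantonUisHomogeneous} and~\ref{prop:instantonMinusModuleFinitelyGenerated}. You have been more careful than the paper in explicitly verifying the freeness hypothesis of Lemma~\ref{lem:HomologyStruct} via the generalized Seifert surface, a detail the paper leaves implicit.
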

\begin{proof}
	The result follows from Lemma~\ref{lem:HomologyStruct} and Propositions \ref{prop:instantonUisHomogeneous} and \ref{prop:instantonMinusModuleFinitelyGenerated}. 
\end{proof}

\begin{prop}[Proposition 5.14 of \cite{1901.06679}]\label{prop:instantonMinusUTriangle}
	The sequence of bypass exact triangles \[
		\begin{tikzcd}[column sep=-1em]
			\SHI(-M,-\gamma_{n}) \ar[rr,"\psi_n^+"] & & \SHI(-M,-\gamma_{n+1}) \ar[dl]\\
			& \SHI(-M,-\gamma) \ar[ul]
		\end{tikzcd}
	\]induces an exact triangle \[
		\begin{tikzcd}[column sep=-1em]
			\SHI^-(-M,-\gamma,-\sigma) \ar[rr,"U"] & & \SHI^-(-M,-\gamma,-\sigma) \ar[dl]\\
			& \SHI(-M,-\gamma). \ar[ul] &
		\end{tikzcd}
	\]The affine spaces over $\Hom(H_2(-M,\partial (-M)),\Z)$ for $\SHI^-(-M,-\gamma,-\sigma)$ and $\SHI(-M,-\gamma)$ can be identified, and with respect to this identification, map $\SHI^-(-M,-\gamma,-\sigma) \to \SHI(-M,-\gamma)$ is grading-preserving while the map $\SHI(-M,-\gamma) \to \SHI^-(-M,-\gamma,-\sigma)$ is homogeneous of degree $-[\sigma] \in \Hom(H_2(-M,\partial(-M)),\Z)$. 
\end{prop}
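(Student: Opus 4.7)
The plan is to realize the desired exact triangle as the colimit of the given sequence of bypass exact triangles. The first step is to assemble the bypass triangles for the arcs $\alpha_{n+1}^+$ into a directed system indexed by $n$, with transition maps given by $\psi_n^-$ on the first two corners and the identity on $\SHI(-M,-\gamma)$. For each $n$, I would write the bypass triangle associated to $\alpha_{n+1}^+$ as a six-term sequence, whose three bypass maps are attached along three arcs on $T$ (a cyclically rotated triple obtained from $\alpha_{n+1}^+$ as in the construction of the bypass triangle). By isotoping these arcs within $T$, I would arrange them to be disjoint from the arc $\alpha_{n+2}^-$ defining $\psi_{n+1}^-$. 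Corollary~\ref{cor:instantonDisjointAttachmentArcs} then makes all three squares comparing the $n$-th and $(n+1)$-st triangles commute, so the sequence of triangles really is a directed system.

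The second step is just to pass to the colimit. Filtered colimits are exact in complex vector spaces, so the colimit triangle is again exact. By construction the colimit of the first two columns is $\SHI^-(-M,-\gamma,-\sigma)$ (reindexing does not affect the colimit), and by the definition of the $U$ action through $\psi_n^+$, the induced map on these colimits is exactly $U$. The third column is the constant sequence on $\SHI(-M,-\gamma)$ with identity transition maps, hence has colimit $\SHI(-M,-\gamma)$. This gives the desired exact triangle.

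The third step is the grading statement. Fix a generalized Seifert surface $\Sigma$ for $(M,\gamma,\sigma)$ and use it (together with the stabilized versions $\Sigma_n^\pm$) to define $\Z$-gradings on all groups as in Proposition~\ref{prop:bypasstrianglesGradings}. The connecting maps $g_n\colon\SHI(-M,-\gamma)\to\SHI(-M,-\gamma_n)$ in the $\alpha^+$ triangles are homogeneous of known degrees, so passing a fixed element $y\in\SHI(-M,-\gamma)$ through $g_n$ for all $n$ (and composing with the $\psi^-$ maps which define the colimit) produces a coherent element of $\SHI^-$. Tracking the degree shifts recorded in Proposition~\ref{prop:bypasstrianglesGradings} through the colimit (and cancelling the known $\psi^-$ shifts) shows that $g_n(y)$ lies in relative $\Sigma$-grading one less than the pre-image one would obtain via $\psi^-\circ g_{n-1}$. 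Since $[\Sigma]\cdot\sigma=1$ in $(M,\gamma)$ (so $[\Sigma]\cdot\sigma=-1$ after orientation-reversal), this is exactly the shift by $-[-\sigma]$ required. Compatibility with every relative homology class in $H_2(-M,\partial(-M))$ is then obtained by choosing admissible representatives disjoint from the bypass arcs, so that Proposition~\ref{prop:bypassGradings} forces the maps in the triangle to preserve the corresponding gradings; together with the $\Sigma$-computation and the splitting argument of Proposition~\ref{prop:instantonSplittingAlongAffineSpace}, this pins down the grading shift of both maps in the colimit triangle.

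The main obstacle I expect is the first step: arranging the three bypass arcs that define a single $\alpha^+$ triangle on the same torus $T$ so that, for every $n$ simultaneously, they can be made disjoint from the arc $\alpha_{n+1}^-$ defining $\psi_n^-$, and so that these disjointness choices are consistent across $n$. The verification is essentially local on $T$ and is made possible by the fact that all the arcs involved can be taken as small translates of a fixed configuration in a neighborhood of the sutures; nevertheless, one must argue uniformly in $n$ and check that no arc is trivial in the sense of Lemma~\ref{lem:trivialBypassArc}, so that the three bypass maps really fit together into the claimed exact triangle. The grading computation is then mechanical given Proposition~\ref{prop:bypasstrianglesGradings}.
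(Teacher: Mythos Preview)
Your overall architecture is correct and matches the paper: build a directed system of the $\alpha^+$ bypass triangles using $\psi^-$ as transition maps, take the colimit, and read off the gradings from Proposition~\ref{prop:bypasstrianglesGradings}. The grading paragraph is fine.

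The gap is in your first step, and it is exactly opposite to what you flagged as the obstacle. You assert that the transition map on the third corner $\SHI(-M,-\gamma)$ is the identity, but you never say why. When you make the three arcs of the $\alpha_{n+1}^+$ triangle disjoint from the arc inducing $\psi_n^-$ and apply Corollary~\ref{cor:instantonDisjointAttachmentArcs}, the transition map you get on $\SHI(-M,-\gamma)$ is not automatically the identity: it is the bypass attachment map along whatever arc the $\psi^-$ arc becomes on the sutured manifold $(M,\gamma)$. The content here is that this induced arc on $(M,\gamma)$ is \emph{trivial} in the sense of Lemma~\ref{lem:trivialBypassArc}, and therefore the induced map is the identity. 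This is precisely what the paper checks (see Figure~\ref{fig:Utriangle} and the sentence invoking Lemma~\ref{lem:trivialBypassArc}). Your closing remark that one must ``check that no arc is trivial'' has it backwards: triviality of an arc does not threaten exactness of the bypass triangle (Theorem~\ref{thm:instantonByPassTri} holds regardless), and in fact you \emph{need} the induced arc on $(M,\gamma)$ to be trivial to conclude that the transition map there is the identity. Once you reverse this, the argument goes through as you outlined.
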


\begin{proof}
	The attaching arcs $\alpha_n^-$ and $\alpha_n^+$ can be made disjoint so any of the three attaching arcs in the bypass triangle for $\alpha_n^-$ can be made disjoint from any of the three attaching arcs in the triangle for $\alpha_n^+$. From Corollary~\ref{cor:instantonDisjointAttachmentArcs} and Figure~\ref{fig:Utriangle}, we have the following commutative diagram of bypass attachment maps. \[
		\begin{tikzcd}[column sep=small]
			& \SHI(-M,-\gamma_{n+1}) \ar[dd] \ar[rrr,"\psi_{n+1}^-"] & & & \SHI(-M,-\gamma_{n+2}) \ar[dd]\\
			\SHI(-M,-\gamma_n) \ar[ur,"\psi_n^+"] \ar[rrr,crossing over,"\quad\psi_n^-"] & & & \SHI(-M,-\gamma_{n+1}) \ar[ur,"\psi_{n+1}^+"] &\\
			& \SHI(-M,-\gamma) \ar[ul] \ar[rrr] & & & \SHI(-M,-\gamma) \ar[ul]
		\end{tikzcd}
	\]Now observe that the attaching arc inducing the map $\SHI(-M,-\gamma) \to \SHI(-M,-\gamma)$ is a trivial attaching arc so it induces the identity by Lemma~\ref{lem:trivialBypassArc}. The desired exact triangle on colimits now follows. The maps are homogeneous of the stated degrees by Proposition~\ref{prop:bypasstrianglesGradings} and the proof of Proposition~\ref{prop:splittingofSuturedInstantonMinus}. 
\end{proof}

\begin{figure}[!ht]
	\centering
	\labellist
	\pinlabel $\alpha_{n+1}^+$ at 740 630
	\endlabellist
	\includegraphics[width=.6\textwidth]{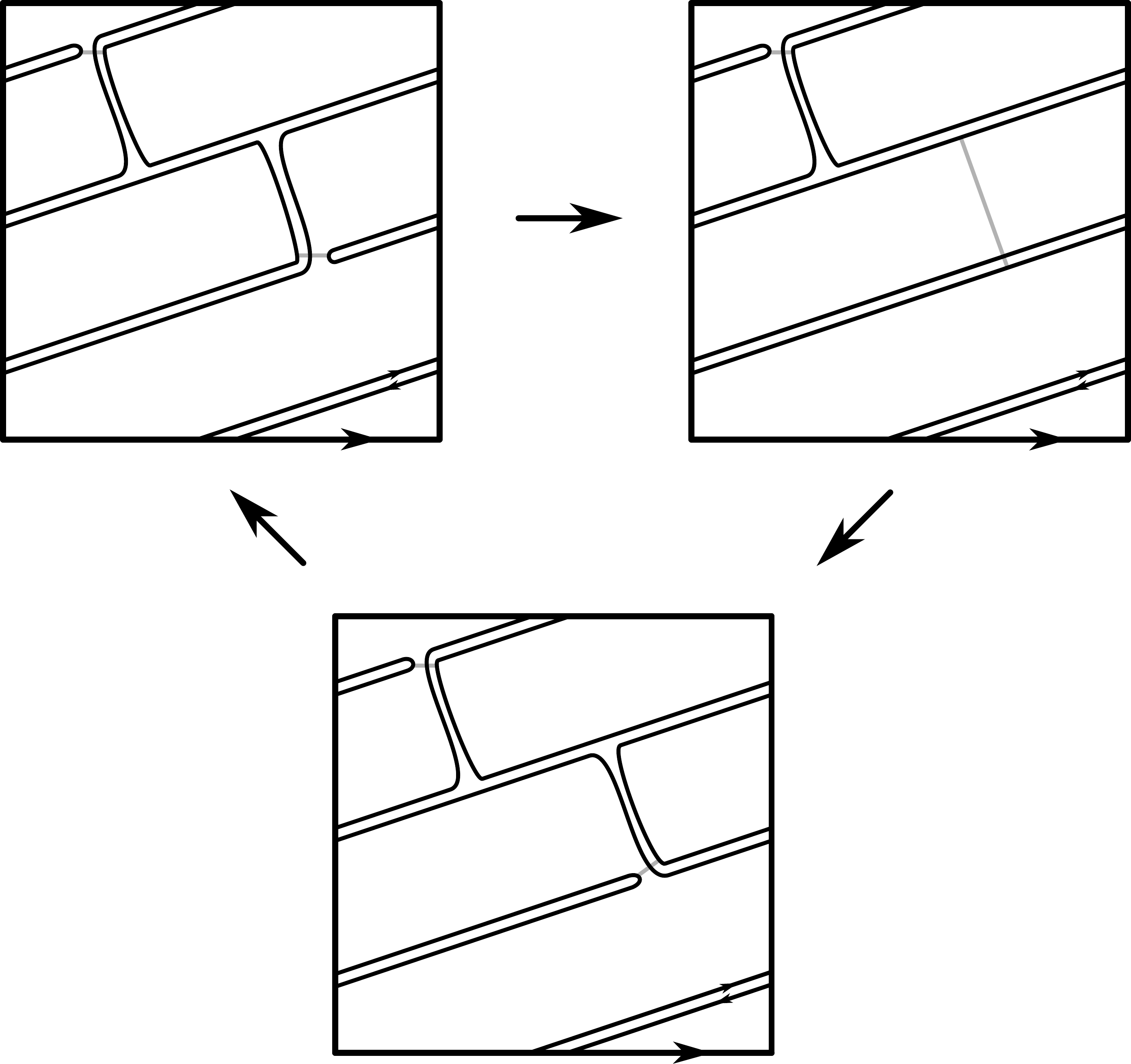}
	\captionsetup{width=.8\textwidth}
	\caption{The bypass triangle associated to $\alpha_{n+1}^+$. The short bypass arc in the upper left hand corner of the two top diagrams induce $\psi_n^-$ and $\psi_{n+1}^-$. In the bottom diagram, it is a trivial attaching arc.}
	\label{fig:Utriangle}
\end{figure}

\begin{rem}
	Proposition~\ref{prop:instantonMinusUTriangle} implies that $\SHI^-(-M,-\gamma,-\sigma)$ determines the dimension of each summand of $\SHI(-M,-\gamma)$ in the splitting of Proposition~\ref{prop:instantonSplittingAlongAffineSpace}. In particular, the total dimension of $\SHI(-M,-\gamma)$ is $k + 2n$ where $k$ is the $\C[U]$-module rank of $\SHI^-(-M,-\gamma,-\sigma)$ and $n$ is the number of $U$-torsion towers (Corollary~\ref{cor:structMinusInstantonSutured}).
\end{rem}

\vspace{20pt}

Consider attaching a smooth $2$-handle to $(M,\gamma)$ along the distinguished suture $\sigma$ to obtain a new balanced sutured manifold $(N,\beta)$. The toral boundary component of $M$ containing $\sigma$ becomes a $2$-sphere boundary component of $N$ with a single suture. We now show that the rank of $\SHI^-(M,\gamma,\sigma)$ is equal to the dimension of $\SHI(N,\beta)$, following section 3 of \cite{1910.01758}. 

Let $\sigma^*$ be a slight pushoff of $\sigma$ into the interior of $M$, and give it the framing induced by a tubular neighborhood $\sigma$ within $\partial M$. Surgery on $\sigma^*$, thought of as a knot in $(M,\gamma_n)$ with this framing, yields a balanced sutured manifold $(M^0,\gamma_n^0)$. Surgery on $\sigma^* \subset (M,\gamma_n)$ with framing $-1$ with respect to the given framing and meridian yields $(M,\gamma_{n-1})$. Given a closure of $(Y,R,\alpha)$ of $(M,\gamma)$, the knot $\sigma^* \subset \Int(M) \subset Y$ is disjoint from $R$. The two surgeries on $\sigma^*$ yield closures of $(M^0,\gamma_n^0)$ and $(M,\gamma_{n-1})$ and since $R$ is disjoint from $\sigma^*$, the associated surgery exact triangle preserves the gradings induced by $R$. In particular, we obtain an exact triangle \[
	\begin{tikzcd}[column sep=-2em,row sep=small]
		\SHI(-M,-\gamma_{n-1}) \ar[rr] & & \SHI(-M,-\gamma_n) \ar[dl]\\
		& \SHI(-M^0,-\gamma_n^0). \ar[ul] &
	\end{tikzcd}
\]Baldwin-Sivek prove in section 3.3 of \cite{MR3477339} that the maps in this triangle are natural. 

Observe that $\sigma \subset \partial M^0$ is the boundary of a product disc in $(M^0,\gamma_n^0)$. In fact, decomposing $(M^0,\gamma_n^0)$ along this product disc yields $(N,\beta)$. Decomposing along a product disc is the operation reverse to attaching a contact $1$-handle, so we may obtain $(M^0,\gamma_n^0)$ from $(N,\beta)$ by a contact $1$-handle attachment. Contact $1$-handle attachment maps are isomorphisms by construction, so we obtain an isomorphism \[
	\SHI(-N,-\beta) \to \SHI(-M^0,-\gamma_n^0)
\]which we use to identify these two vector spaces. Note that we can also obtain $(N,\beta)$ by attaching a contact $2$-handle to $(M,\gamma_n)$. The associated contact $2$-handle attachment map is by construction the composite of the map in the surgery triangle with the inverse to above $1$-handle attachment map. 

A bypass attachment arc on $(M,\gamma_n)$ naturally yields bypass attachment arcs on the two other sutured manifolds in the surgery triangle. The associated bypass attachment maps commute with the surgery exact triangle maps because the composite maps are ultimately defined by the same cobordisms. 
If $\alpha$ is an attachment arc on $(M^0,\gamma_n^0)$ which is disjoint from $\sigma \subset \partial M^0$, then $\alpha$ determines an attachment arc on $(N,\beta)$. The bypass attachment maps associated to these arcs commute with the $1$-handle attachment map by Theorem~\ref{thm:instantonGluingMaps}. Since the attaching arc on $(N,\beta)$ lies on a $2$-sphere boundary component of $N$ which contains a single suture, its associated bypass attachment map is either the identity or zero by Lemma~\ref{lem:trivialBypassArc}. Since the attaching arcs defining the bypass attachment maps $\psi_n^-,\psi_n^+$ can be made disjoint from $\sigma$, we obtain the following result. 

\begin{prop}\label{prop:NbetaCommutativeDiagram}
	We have the following commutative diagram \[
		\begin{tikzcd}[column sep=small]
			& \SHI(-M,-\gamma_n) \ar[dd] \ar[rrr,"\psi_n^-"] & & & \SHI(-M,-\gamma_{n+1}) \ar[dd] \\
			\SHI(-M,-\gamma_{n-1}) \ar[ur] \ar[rrr,crossing over,"\quad\psi_{n-1}^-"] & & & \SHI(-M,-\gamma_n) \ar[ur] &\\
			& \SHI(-N,-\beta) \ar[ul] \ar[rrr,"\Id"] & & & \SHI(-N,-\beta) \ar[ul]
		\end{tikzcd}
	\]where the two triangles are surgery exact triangles. The same diagram with $\psi_n^-$ and $\psi_{n-1}^-$ replaced by $\psi_n^+$ and $\psi_{n-1}^+$, respectively, is commutative. 
\end{prop}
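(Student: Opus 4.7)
The plan is to verify commutativity of the diagram by viewing it as a cube and checking each face separately, with the key inputs being the compatibility of bypass attachments with disjoint $2$-handle cobordisms, Theorem~\ref{thm:instantonGluingMaps} for contact handle interactions, and Lemma~\ref{lem:trivialBypassArc} for trivializing the induced bypass map on $(N,\beta)$.

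First I would pick the bypass attaching arc $\alpha$ on $T$ defining $\psi_n^-$ to be a representative of $\alpha_{n+1}^-$ lying in a small region of $T$ away from $\sigma$; this is possible because by construction $\alpha_{n+1}^-$ is disjoint from $\sigma$ and from $\lambda$. In particular $\alpha$ can be taken disjoint from the framed curve $\sigma^* \subset \mathrm{Int}(M)$ whose surgeries give rise to the terms $(M,\gamma_{n-1})$ and $(M^0,\gamma_n^0)$ in the surgery triangle. The arc $\alpha$ then descends to bypass attaching arcs on $(M,\gamma_{n-1})$ and on $(M^0,\gamma_n^0)$, and from these to an arc on $(N,\beta)$ after the contact $1$-handle attachment.

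Next I would verify the two vertical squares involving the surgery exact triangle. The surgery maps are induced by $4$-dimensional $2$-handle cobordisms along $\sigma^*$, while the bypass attachment maps are induced by contact handles attached along $\alpha$. Since $\alpha$ and $\sigma^*$ are disjoint, the composite cobordisms agree up to diffeomorphism regardless of the order in which the handles are attached, so functoriality of the instanton cobordism maps yields commutativity. For the square identifying $\SHI(-M^0,-\gamma_n^0)$ with $\SHI(-N,-\beta)$, the isomorphism is a contact $1$-handle attachment along a product-disc arc that can be chosen disjoint from $\alpha$, so Theorem~\ref{thm:instantonGluingMaps} gives that the bypass attachment map at $\alpha$ intertwines with it.

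The final face is the bypass attachment map on $\SHI(-N,-\beta)$. The image of $\alpha$ lives on the newly created $S^2$ boundary component of $N$, which carries a single suture, so all three intersection points of $\alpha \cap s(\beta)$ lie on this single component and $\alpha$ can be isotoped through bypass attaching arcs to a subarc of that suture. Moreover the bypass attachment across a sphere with one suture does not change the underlying balanced sutured manifold, so Lemma~\ref{lem:trivialBypassArc} applies and the induced map is the identity. Assembling the four faces gives commutativity of the cube. The $\psi^+$ version follows by the identical argument with $\alpha_{n+1}^+$ replacing $\alpha_{n+1}^-$. The step most likely to require care is the second one: making precise the claim that the $2$-handle cobordism along $\sigma^*$ and the contact handles along $\alpha$ commute as $4$-manifold cobordisms at the level of natural sutured instanton maps, using Baldwin--Sivek's naturality of surgery exact triangle maps established in section 3.3 of \cite{MR3477339} together with Theorem~\ref{thm:instantonGluingMaps}.
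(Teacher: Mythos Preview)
Your proposal is correct and follows essentially the same approach as the paper: the paper's proof (together with the discussion immediately preceding the proposition) also argues that the bypass arc can be taken disjoint from $\sigma$, that the bypass and surgery cobordism maps commute because the composite cobordisms agree, that the bypass map commutes with the contact $1$-handle isomorphism via Theorem~\ref{thm:instantonGluingMaps}, and that on $(N,\beta)$ the induced arc is trivial so Lemma~\ref{lem:trivialBypassArc} gives the identity. One small terminological slip: the arc $\alpha_{n+1}^-$ is the arc on $(M,\gamma_{n+1})$ inducing the map into $\SHI(-M,-\gamma)$ in its bypass triangle, not the arc on $(M,\gamma_n)$ inducing $\psi_n^-$ itself; but the arc you actually need (the one on $(M,\gamma_n)$ in that same triangle) is equally disjoint from $\sigma$, so the argument goes through unchanged.
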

\begin{proof}
	Let $\beta^-$ be the attaching arc on $(N,\beta)$ coming from the attaching arc on $(M,\gamma_n)$ defining $\psi_n^-$. The sutured manifold resulting from a bypass attachment along $\beta^-$ is still $(N,\beta)$ so by Lemma~\ref{lem:trivialBypassArc} the induced map is the identity. The same is true for the attaching arc defining $\psi_n^+$. The preceding discussion completes proof, which is just a rephrasing of the proof of Lemmas 3.4 and 3.6 in \cite{1910.01758}. 
\end{proof}

\begin{cor}
	There is an exact triangle \[
		\begin{tikzcd}[column sep=-1em]
			\SHI^-(-M,-\gamma,-\sigma) \ar[rr] & & \SHI^-(-M,-\gamma,-\sigma) \ar[dl]\\
			& \SHI(-N,-\beta) \ar[ul] &
		\end{tikzcd}
	\]of $\C[U]$-module maps, where $U$ acts as the identity on $\SHI(-N,-\beta)$. 
\end{cor}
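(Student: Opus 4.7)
The plan is to pass to the colimit in the two commutative cubes of Proposition~\ref{prop:NbetaCommutativeDiagram}, using the surgery exact triangles on the left and right faces of each cube as the third side of the desired triangle.

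First, I would fix a consistent identification between the two copies of $\SHI(-N,-\beta)$ appearing in the $\psi_n^-$ and $\psi_n^+$ versions of Proposition~\ref{prop:NbetaCommutativeDiagram}. Both identifications arise from the same inverse contact $1$-handle attachment, and because the bypass attaching arcs defining $\psi_n^\pm$ can be arranged disjoint from the attaching region of the $2$-handle that produces $(N,\beta)$, Corollary~\ref{cor:instantonDisjointAttachmentArcs} together with Theorem~\ref{thm:instantonGluingMaps} ensures compatibility. Merging the two cubes then gives, for each $n$, a surgery exact triangle
\[
\SHI(-M,-\gamma_{n-1}) \xrightarrow{f_n} \SHI(-M,-\gamma_n) \xrightarrow{g_n} \SHI(-N,-\beta) \xrightarrow{h_n} \SHI(-M,-\gamma_{n-1})[1]
\]
together with squares showing that $f_n$, $g_n$, and $h_n$ are natural with respect to \emph{both} the $\psi_n^-$ and the $\psi_n^+$ direct systems, and that the induced map on $\SHI(-N,-\beta)$ is the identity in each case.

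Second, I would take the colimit along the $\psi_n^-$ system. By the definition of $\SHI^-(-M,-\gamma,-\sigma)$ this yields $\SHI^-$ at the first two vertices, and because the bottom edges of the cube are identities, it yields $\SHI(-N,-\beta)$ at the third vertex. Exactness persists under filtered colimits in the category of $\C$-vector spaces, so the colimit is again an exact triangle. The $\psi_n^+$ version of Proposition~\ref{prop:NbetaCommutativeDiagram} then shows that $g_n$ commutes with $\psi_n^+$ over the identity on $\SHI(-N,-\beta)$; passing to the colimit gives $\tilde g \circ U = \tilde g$, so $\tilde g$ is $\C[U]$-equivariant when $\SHI(-N,-\beta)$ is endowed with the trivial $\C[U]$-structure. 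The analogous naturality arguments make $\tilde f$ and $\tilde h$ into $\C[U]$-linear maps as well.

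The main obstacle is identifying $\tilde f$ explicitly as $U - \Id$: since sutured instanton cobordism maps are only defined up to $\C^\times$, one cannot literally form the chain-level difference $\psi_{n-1}^+ - \psi_{n-1}^-$. I would handle this following Li's approach for Proposition 5.14 of \cite{1901.06679}: fix a single closure adapted to the $2$-handle attachment defining $(N,\beta)$, realize the surgery cobordism map $f_n$ and the bypass maps $\psi_{n-1}^\pm$ as honest linear maps at the chain level in that closure (thereby eliminating the projective ambiguity), and use the standard presentation of the surgery cobordism as a mapping cone that mixes the two bypass cobordisms. Passing to the $\psi_n^-$ colimit then identifies the induced map on $\SHI^-$ with $U - \Id$, completing the exact triangle. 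Strictly speaking the statement of the corollary only asserts the existence of a $\C[U]$-module exact triangle with $U$ acting as the identity on $\SHI(-N,-\beta)$, and this follows from the colimit argument above without explicitly naming $\tilde f$; but the explicit identification as $U - \Id$ is what makes the triangle useful for rank computations as in Proposition~\ref{prop:dimInequalitySuturedInstanton}.
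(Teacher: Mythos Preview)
Your first two paragraphs are essentially the paper's proof: take the colimit of the surgery exact triangles along the $\psi_n^-$ system (exactness is preserved under filtered colimits of vector spaces), and then use the $\psi_n^+$ cube of Proposition~\ref{prop:NbetaCommutativeDiagram} to see that the three resulting maps commute with $U$, where $U$ acts as the identity on $\SHI(-N,-\beta)$. That is all the paper does.

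Your third paragraph, however, is both unnecessary and based on a misreading. The map $\SHI^- \to \SHI^-$ in this triangle is the colimit of the \emph{surgery} cobordism maps $\SHI(-M,-\gamma_{n-1}) \to \SHI(-M,-\gamma_n)$; these are not bypass maps, and the paper never identifies this colimit with $U-\Id$ (contrast with Proposition~\ref{prop:instantonMinusUTriangle}, where the top map genuinely is $U$ because that triangle is built from bypass triangles). More to the point, your claim that the identification with $U-\Id$ is ``what makes the triangle useful for rank computations'' is incorrect. Look at the proof of Proposition~\ref{prop:rankOfMinusBiggerThanDim}: it uses only that the maps are $\C[U]$-linear with $U$ acting trivially on $\SHI(-N,-\beta)$. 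From this, the map $\SHI(-N,-\beta)\to\SHI^-$ must vanish (no nonzero element of $\SHI^-$ is $U$-fixed, by the grading), so $\SHI^-\to\SHI(-N,-\beta)$ is surjective; being $\C[U]$-linear into a module where $U=\Id$, it factors through $\SHI^-/(U-\Id)$, whose dimension is the rank. No identification of $\tilde f$ is needed. You should simply drop the third paragraph.
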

\begin{proof}
	The exact triangle of $\C$-linear maps is obtained by taking the colimit along the $\psi_n^-$ maps of the surgery exact triangle using Proposition~\ref{prop:NbetaCommutativeDiagram}. That the maps commute with $U$ also follows from Proposition~\ref{prop:NbetaCommutativeDiagram}. The analogous result in \cite{1910.01758} is Corollary 3.6. 
\end{proof}

\begin{prop}\label{prop:rankOfMinusBiggerThanDim}
	There is an inequality \[
		\rank \SHI^-(-M,-\gamma,-\sigma) \ge \dim \SHI(-N,-\beta).
	\]
\end{prop}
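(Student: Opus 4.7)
The plan is to exploit the exact triangle of $\C[U]$-module maps
\[
\SHI^-(-M,-\gamma,-\sigma) \xrightarrow{f} \SHI^-(-M,-\gamma,-\sigma) \xrightarrow{g} \SHI(-N,-\beta) \xrightarrow{h} \SHI^-(-M,-\gamma,-\sigma)
\]
from the preceding corollary (in which $U$ acts as the identity on $\SHI(-N,-\beta)$), together with the structural decomposition in Corollary~\ref{cor:structMinusInstantonSutured}, to collapse the triangle to a short exact sequence and then bound $\dim \SHI(-N,-\beta)$ by a dimension count on a quotient by $U - \Id$.

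The first step will be to show that $h = 0$. Since $h$ is $\C[U]$-linear and $U$ acts as the identity on the source, for any $b \in \SHI(-N,-\beta)$ we have $h(b) = h(Ub) = U h(b)$, so $h(b)$ lies in the kernel of $U - \Id$ acting on $\SHI^-(-M,-\gamma,-\sigma)$. By Corollary~\ref{cor:structMinusInstantonSutured},
\[
\SHI^-(-M,-\gamma,-\sigma) \cong \C[U]^k \oplus \bigoplus_{i=1}^n \C[U]/U^{k_i},
\]
where $k = \rank \SHI^-(-M,-\gamma,-\sigma)$. On this decomposition $U - \Id$ is injective on each free summand and invertible on each torsion summand, since $U$ is nilpotent there and $-(1 + U + \cdots + U^{k_i - 1})$ inverts $U - \Id$. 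Thus $\ker(U - \Id) = 0$ on $\SHI^-(-M,-\gamma,-\sigma)$, forcing $h = 0$.

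Exactness then reduces the triangle to a short exact sequence
\[
0 \to \SHI^-(-M,-\gamma,-\sigma) \xrightarrow{f} \SHI^-(-M,-\gamma,-\sigma) \xrightarrow{g} \SHI(-N,-\beta) \to 0.
\]
Applying the same linearity trick to $g$ shows $g \circ (U - \Id) = 0$, so $(U - \Id)\SHI^-(-M,-\gamma,-\sigma) \subseteq \ker(g) = \mathrm{image}(f)$. Hence $g$ descends to a surjection from the quotient $\SHI^-(-M,-\gamma,-\sigma)/(U - \Id)\SHI^-(-M,-\gamma,-\sigma)$ onto $\SHI(-N,-\beta)$. The quotient has $\C$-dimension exactly $k$: each free summand contributes a one-dimensional quotient $\C[U]/(U-1) \cong \C$, while each torsion summand contributes zero since $U - \Id$ is already surjective there. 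This yields $\dim \SHI(-N,-\beta) \le k = \rank \SHI^-(-M,-\gamma,-\sigma)$. There is no genuine obstacle here: the argument is a purely formal consequence of the structural decomposition and the exact triangle, both already established earlier in this section.
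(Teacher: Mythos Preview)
Your proof is correct and follows essentially the same route as the paper's: show the connecting map $h$ vanishes, deduce $g$ is surjective and factors through $\SHI^-(-M,-\gamma,-\sigma)/(U-\Id)$, and compute that quotient's dimension as the rank. The only cosmetic difference is in justifying $\ker(U-\Id)=0$: the paper argues directly from the $\Z$-grading with $U$ homogeneous of degree $-1$, whereas you invoke the structural decomposition of Corollary~\ref{cor:structMinusInstantonSutured} (which itself rests on that grading).
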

\begin{proof}
	Because $\SHI^-(-M,-\gamma,-\sigma)$ can be given a $\Z$-grading with respect to which $U$ is homogeneous of degree $-1$, no nonzero element $x \in \SHI^-(-M,-\gamma,-\sigma)$ satisfies $Ux = x$. Since $\SHI(-N,-\beta) \to \SHI^-(-M,-\gamma,-\sigma)$ is $U$-equivariant and $U$ acts as the identity on $\SHI(-N,-\beta)$, the map must be zero. Thus the map $\SHI^-(-M,-\gamma,-\sigma) \to \SHI(-N,-\beta)$ is surjective and factors through the quotient $\SHI^-(-M,-\gamma,-\sigma)/(U - \Id)$. The dimension of $\SHI^-(-M,-\gamma,-\sigma)/(U - \Id)$ is precisely the $\C[U]$-module rank of $\SHI^-(-M,-\gamma,-\sigma)$ so the result follows. 
\end{proof}
\begin{rem}
	When $(M,\gamma,\sigma)$ is the sutured exterior of a knot in $S^3$, the manifold $(N,\beta)$ is just the $3$-ball with a single suture. Li proves the analogous result (Lemmas 3.2 and 3.4 of \cite{1910.01758}) using the fact that the contact invariant for $(N,\beta)$ generates $\SHI(N,\beta)$, which does not generalize to this situation. 
\end{rem}

Li points out that Proposition~\ref{prop:rankOfMinusBiggerThanDim} has the following corollary. The analogous result in Heegaard Floer homology follows from a spectral sequence from $\HFKhat(Y,K)$ to $\HFhat(Y)$ which is expected to exist in the instanton Floer context.

\theoremstyle{plain}
\newtheorem*{prop:dimIneq}{Proposition~\ref{prop:dimInequalitySuturedInstanton}}

\begin{prop:dimIneq}
	Let $K$ be a null-homologous knot in a closed oriented $3$-manifold $Y$. Then \[
		\dim \SHI(Y(K)) \ge \dim \SHI(Y(1))
	\]where $Y(K)$ is the sutured exterior of $K$ and $Y(1)$ is $Y$ with a sutured puncture. 
\end{prop:dimIneq}
\begin{proof}
	From Corollary~\ref{cor:structMinusInstantonSutured} and Proposition~\ref{prop:instantonMinusUTriangle}, we have the inequality \[
		\dim \SHI(Y(K)) \ge \rank \SHI^-(Y(K),\sigma)
	\]where $\sigma$ is either of the two sutures on $Y(K)$. Proposition~\ref{prop:rankOfMinusBiggerThanDim} gives \[
		\rank \SHI^-(Y(K),\sigma) \ge \dim \SHI(Y(1))
	\]from which the result follows.
\end{proof}

\begin{prop}\label{prop:rankOfMinusSmallerThanDim}
	There is an inequality \[
		\rank \SHI^-(-M,-\gamma,-\sigma) \leq \dim \SHI(-N,-\beta).
	\]
\end{prop}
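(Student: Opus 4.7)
The plan is to upgrade the inequality of Proposition~\ref{prop:rankOfMinusBiggerThanDim} to an equality by identifying the first map in the exact triangle from the preceding corollary with $U-\Id$ (up to an invertible scalar), and then invoking the structure theorem Corollary~\ref{cor:structMinusInstantonSutured}. The exact triangle has the form $\SHI^-\xrightarrow{f}\SHI^-\xrightarrow{g}\SHI(-N,-\beta)\xrightarrow{h}\SHI^-[1]$ with all maps $\C[U]$-linear and $U$ acting as the identity on $\SHI(-N,-\beta)$. The argument of Proposition~\ref{prop:rankOfMinusBiggerThanDim} showed only that $\Image(U-\Id)\subseteq \ker(g)$, which controls one direction; we now need the reverse inclusion, that is $\ker(g)=\Image(f)\subseteq\Image(U-\Id)$, which amounts to identifying $f$ explicitly.

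To pin down $f$, I would track the commutative cube of Proposition~\ref{prop:NbetaCommutativeDiagram}. At each finite level $n$, the surgery triangle $\SHI(-M,-\gamma_{n-1})\to\SHI(-M,-\gamma_n)\to\SHI(-N,-\beta)$ is compatible with the bypass maps $\psi^-$ (which build the direct system whose colimit is $\SHI^-$) and also with $\psi^+$ (whose colimit is the $U$-action on $\SHI^-$). In the limit, the inclusions of the individual terms into the colimit, coming via $\psi^-$, induce the identity, while the $\psi^+$-maps induce $U$; the surgery-triangle first map intertwines these two systems. This forces the induced map $f$ on the colimit to equal $U-\Id$ up to an invertible unit, mirroring Li's identification in the Heegaard setting \cite{1910.01758}. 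Once this is established, Corollary~\ref{cor:structMinusInstantonSutured} gives $\SHI^-\cong \C[U]^k\oplus\bigoplus_i \C[U]/U^{k_i}$ with $k=\rank\SHI^-$; on each torsion summand $U$ is nilpotent so $U-\Id$ is invertible, while on each free summand $U-\Id$ is injective with one-dimensional cokernel.

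Thus $f=U-\Id$ is injective, so by exactness $\Image(h)=\ker(f)=0$ and $h=0$. Consequently $g$ is surjective and $\ker(g)=\Image(f)=\Image(U-\Id)$, so $g$ descends to an isomorphism $\coker(U-\Id)\xrightarrow{\sim}\SHI(-N,-\beta)$. The cokernel is computed from the structure theorem: torsion summands contribute nothing since $U-\Id$ is invertible on them, and each free summand contributes one copy of $\C$, giving $\dim\coker(U-\Id)=k=\rank\SHI^-$. This yields $\rank\SHI^-(-M,-\gamma,-\sigma)=\dim\SHI(-N,-\beta)$, hence the stated inequality (and in combination with Proposition~\ref{prop:rankOfMinusBiggerThanDim}, an equality). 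The main obstacle is the identification of $f$ with $U-\Id$: $\C[U]$-equivariance alone only yields one inclusion, and the reverse requires a careful bookkeeping of the colimit system and the surgery cobordism, taking care of the unit ambiguity inherent to instanton Floer homology over $\C$.
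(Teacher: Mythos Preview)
Your proposal rests entirely on identifying the first map $f$ in the colimit surgery triangle with $U-\Id$, and this identification is not justified. Commuting with both the $\psi^-$ and $\psi^+$ systems (which is all that Proposition~\ref{prop:NbetaCommutativeDiagram} gives) does not determine $f$; it only says $f$ is $\C[U]$-equivariant, which you already knew from the corollary. Many $\C[U]$-equivariant endomorphisms of $\C[U]^k\oplus\bigoplus_i\C[U]/U^{k_i}$ have cokernel of dimension strictly less than $k$ (e.g.\ the identity), so without pinning down $f$ you get no bound in the desired direction. Worse, in the instanton setting all maps are only defined up to $\C^\times$, so an expression like ``$U-\Id$'' is not even well-formed as a natural map: one can at best hope for $f=c_1 U + c_2\Id$ with specified but uncontrolled nonzero scalars, and proving this would require identifying the surgery cobordism map with a specific linear combination of bypass cobordism maps, which is not done anywhere in the paper. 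Your appeal to Li's work is also misplaced: the paper follows Li's Proposition~3.3 of \cite{1910.01758}, and that argument is precisely the grading argument the paper gives here, not an identification of the surgery map with $U-\Id$.

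The paper's proof avoids this difficulty entirely by working at a fixed finite level rather than in the colimit. Using the graded bypass triangles of Proposition~\ref{prop:bypasstrianglesGradings}, one finds a large $N$ for which $\psi_{2N}^-$ and $\psi_{2N}^+$ are isomorphisms in complementary grading ranges, yielding $\dim\SHI(-M,-\gamma_{2N+1})=\dim\SHI(-M,-\gamma_{2N+1},0)+\dim\SHI(-M,-\gamma_{2N})$. The surgery triangle at this single level then gives $\dim\SHI(-M,-\gamma_{2N+1},0)\le\dim\SHI(-N,-\beta)$, and a separate grading argument identifies $\dim\SHI(-M,-\gamma_{2N+1},0)$ with the rank of $\SHI^-$. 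No identification of any limiting map is needed.
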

\begin{proof}
	The proof is just a rephrasing of Li's proof of Proposition 3.3 of \cite{1910.01758}. Let $\Sigma$ be a generalized Seifert surface for $(M,\gamma)$. By Proposition~\ref{prop:bypasstrianglesGradings}, we have the following two graded exact triangles \[
		\begin{tikzcd}[column sep=-1em]
			\SHI(-M,-\gamma_{2k},i) \ar[rr,"\psi_{2k}^-"] & & \SHI(-M,-\gamma_{2k+1},i) \ar[dl]\\
			& \SHI(-M,-\gamma, i + k) \ar[ul] & 
		\end{tikzcd}
	\] \[
		\begin{tikzcd}[column sep=-1em]
			\SHI(-M,-\gamma_{2k},i+1) \ar[rr,"\psi_{2k}^+"] & & \SHI(-M,-\gamma_{2k+1},i) \ar[dl]\\
			& \SHI(-M,-\gamma,i-k) \ar[ul] &
		\end{tikzcd}	
	\]with gradings determined by $\Sigma,\Sigma_n$, and $\Sigma_n^-$. Suppose $N$ is large enough that $\SHI(-M,-\gamma,i)$ is zero if $|i| > N$. Then for $k = N$ in the above triangles, we have isomorphisms \begin{align*}
		&\psi_{2N}^-\colon \SHI(-M,-\gamma_{2N},i) \to \SHI(-M,-\gamma_{2N+1},i) && \text{when }i > 0\\
		&\psi_{2N}^+\colon \SHI(-M,-\gamma_{2N},i+1) \to \SHI(-M,-\gamma_{2N+1},i) && \text{when }i < 0.
	\end{align*}It follows that $\dim \SHI(-M,-\gamma_{2N+1}) = \dim \SHI(-M,-\gamma_{2N+1},0) + \dim \SHI(-M,-\gamma_{2N})$ as $\C$-vector spaces. The surgery exact triangle \[
		\begin{tikzcd}[column sep=0]
			\SHI(-M,-\gamma_{2N}) \ar[rr] & & \SHI(-M,-\gamma_{2N+1}) \ar[dl]\\
			& \SHI(-N,-\beta) \ar[ul] &
		\end{tikzcd}
	\]therefore gives the inequality \[
		\dim \SHI(-M,-\gamma_{2N+1},0) \leq \dim \SHI(-N,-\beta)
	\]so it suffices to show that $\dim \SHI(-M,-\gamma_{2N+1},0) = \rank \SHI^-(-M,-\gamma,-\sigma)$. But observe that the assumption that $\SHI(-M,-\gamma,i) = 0$ when $|i| > N$ implies that \[
		\begin{tikzcd}[column sep=large]
			\SHI(-M,-\gamma_{2N+1},0) \ar[r,"\psi_{2N+1}^-"] & \SHI(-M,-\gamma_{2N+2},1) \ar[r,"\psi_{2N+2}^-"] & \cdots
		\end{tikzcd}
	\]are all isomorphisms so $\SHI(-M,-\gamma_{2N+1},0) \cong \SHI^-(-M,-\gamma,-\sigma,-N)$. By Corollary~\ref{cor:structMinusInstantonSutured}, we may choose an isomorphism \[
		\SHI^-(-M,-\gamma,-\sigma) = \C[U]^k \oplus \bigoplus_{i=1}^n \frac{\C[U]}{U^{k_i}}
	\]for $k = \rank \SHI^-(-M,-\gamma,-\sigma)$ and positive $k_1,\ldots,k_n$ where the generators of the summands are homogeneous. By Proposition~\ref{prop:instantonMinusUTriangle}, the generator of each free summand must lie in grading at least $-N$, and all torsion is supported in gradings strictly larger than $-N$. Thus $\dim \SHI^-(-M,-\gamma,-\sigma,-N) = \rank \SHI^-(-M,-\gamma,-\sigma)$ as required. 
\end{proof}

Propositions \ref{prop:rankOfMinusBiggerThanDim} and \ref{prop:rankOfMinusSmallerThanDim} together give the following equality. 
\begin{cor}\label{cor:rankOfMinusInstantonEqualsDimOfHatNBeta}
	Let $(M,\gamma,\sigma)$ be a balanced sutured manifold with a suitable distinguished suture. Let $(N,\beta)$ be the balanced sutured manifold obtained by attaching a $2$-handle to $\sigma$. Then \[
		\rank \SHI^-(M,\gamma,\sigma) = \dim \SHI(N,\beta).
	\]
\end{cor}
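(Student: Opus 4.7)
The plan is to simply combine the two inequalities already established earlier in the section. Proposition~\ref{prop:rankOfMinusBiggerThanDim} (applied with the sutured manifolds replaced by their orientation reverses, which is harmless since sutured instanton homology is defined up to isomorphism) gives
\[
    \rank \SHI^-(M,\gamma,\sigma) \ge \dim \SHI(N,\beta),
\]
while Proposition~\ref{prop:rankOfMinusSmallerThanDim} gives the reverse inequality
\[
    \rank \SHI^-(M,\gamma,\sigma) \le \dim \SHI(N,\beta).
\]
Putting the two together yields the claimed equality, so the proof is a one-line appeal to the preceding results.

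There is essentially no new work to do here; all the substantive content has already been absorbed into the two propositions. If anything requires care, it is simply noting that the orientation-reversal used in the statement of the propositions does not affect rank or dimension, since $\SHI^-(-M,-\gamma,-\sigma)$ and $\SHI^-(M,\gamma,\sigma)$ have the same rank as $\C[U]$-modules (the construction is symmetric under the identification coming from orientation reversal used in Li's formalism), and likewise $\dim \SHI(-N,-\beta) = \dim \SHI(N,\beta)$. No obstacle of substance arises.
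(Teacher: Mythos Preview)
Your proposal is correct and matches the paper's approach exactly: the paper simply states that Propositions~\ref{prop:rankOfMinusBiggerThanDim} and~\ref{prop:rankOfMinusSmallerThanDim} together give the equality. Your remark about the orientation reversal is a reasonable clarification of a minor notational inconsistency in the paper (the propositions are stated for $(-M,-\gamma,-\sigma)$ while the corollary drops the minus signs), but there is no substantive difference.
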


\subsubsection{Surface decompositions and ribbon concordances}\label{subsubsec:featuresOfTheInvariant}

Let $(M,\gamma,\sigma)$ be a balanced sutured manifold with a suitable distinguished suture. If $S$ defines a nice surface decomposition \[
	(M,\gamma) \overset{S}{\rightsquigarrow} (M',\gamma')
\]where $S$ is disjoint from the distinguished toral boundary component $T$ of $M$, then $\sigma$ is a suitable distinguished suture for $(M',\gamma')$. We show that $\SHI^-(-M',-\gamma',-\sigma)$ is a direct $\C[U]$-module summand of $\SHI^-(-M,-\gamma,-\sigma)$ in a way compatible with the action of $[-\sigma]$ on relative gradings. There are more refined grading statements, but we only explicitly make note of those required for our application. 

\begin{lem}\label{lem:surfDecomp1handle}
	Let $(M,\gamma,\sigma)$ be a balanced sutured manifold with a suitable distinguished suture. If $(M',\gamma',\sigma)$ is obtained by attaching a contact $1$-handle away from the distinguished toral boundary component of $(M,\gamma,\sigma)$, then there is a $\C[U]$-module isomorphism \[
		\SHI^-(-M,-\gamma,-\sigma) \to \SHI^-(-M',-\gamma',-\sigma).
	\]If two homogeneous elements of $\SHI^-(-M,-\gamma,-\sigma)$ differ in grading by $[-\sigma]$, then their images in $\SHI^-(-M',-\gamma',-\sigma)$ are also homogeneous and differ in grading by $[-\sigma]$.
\end{lem}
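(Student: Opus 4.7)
The plan is to build the isomorphism at each finite stage of the colimit defining $\SHI^-$ and then pass to colimits. Write $(M_n,\gamma_n)$ for $(M,\gamma_n)$ and $(M_n',\gamma_n')$ for the result of attaching the given contact $1$-handle $h^1$ to $(M_n,\gamma_n)$. Since $h^1$ is attached to $\partial M\setminus T$ and the modifications $\gamma\rightsquigarrow\gamma_n$ only alter the sutures on the distinguished toral boundary component $T$, the handle $h^1$ makes sense simultaneously for every $n$, and $T$ with its distinguished suture $\sigma$ is unchanged. By Proposition~\ref{prop:instantonContactHandleAttachmentMaps}, for each $n$ the contact $1$-handle attachment induces a natural isomorphism
\[
   h_n\colon \SHI(-M_n,-\gamma_n) \xrightarrow{\;\sim\;} \SHI(-M_n',-\gamma_n').
\]

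First I would verify compatibility of the $h_n$ with both families of bypass attachment maps $\psi_n^\pm$. The bypass attaching arcs $\alpha_{n+1}^\pm$ defining $\psi_n^\pm$ lie on $T$, hence are disjoint from the attaching region of $h^1$ on $\partial M\setminus T$. The composite of $h^1$ with a bypass-handle pair along $\alpha_{n+1}^\pm$ on $(M_n,\gamma_n)$ corresponds to a contact structure on a collar of the boundary that is independent of the order in which the handles are attached. By Theorem~\ref{thm:instantonGluingMaps}(2), the associated maps on $\SHI$ compose in either order to the same gluing map, giving
\[
   h_{n+1}\circ\psi_n^- = \psi_n^-\circ h_n, \qquad h_{n+1}\circ\psi_n^+ = \psi_n^+\circ h_n.
\]
Taking the colimit along the $\psi_n^-$ direction, the first identity gives a well-defined $\C$-linear isomorphism $\SHI^-(-M,-\gamma,-\sigma)\xrightarrow{\sim}\SHI^-(-M',-\gamma',-\sigma)$, and the second identity shows that it intertwines the $U$ actions on the two sides, so it is $\C[U]$-linear.

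For the grading statement, I would use that the splitting of Proposition~\ref{prop:splittingofSuturedInstantonMinus} is built from generalized eigenspace decompositions of $\mu(\bar S)$ for $S$ ranging over admissible surfaces in $(M,\gamma)$, and the grading shift under $U$ is determined by $[-\sigma]$ in the sense of Proposition~\ref{prop:instantonUisHomogeneous}. Any admissible surface $S$ representing a class in $H_2(-M,\partial(-M))$ can be isotoped to be disjoint from the attaching region of $h^1$, after which $S$ becomes an admissible surface in $(M',\gamma')$ in the image class under the natural map $H_2(-M,\partial(-M))\to H_2(-M',\partial(-M'))$; in particular a surface representing $[-\sigma]$ (built from a generalized Seifert surface for $(M,\gamma,\sigma)$) can be chosen away from $h^1$. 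Choosing compatible closures of $(M_n,\gamma_n)$ and $(M_n',\gamma_n')$ that differ by an interior $4$-dimensional $1$-handle (which is how the contact $1$-handle attachment cobordism is constructed in Baldwin--Sivek), the extension $\bar S$ defines gradings on both sides, and the cobordism map realizing $h_n$ intertwines $\mu(\bar S)$ on the two closures. Thus $h_n$ is homogeneous of degree $0$ for every such grading, and taking colimits identifies the $[-\sigma]$-grading on the two minus invariants, giving the grading statement.

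The main obstacle is the grading compatibility step: verifying that closures of $(M_n,\gamma_n)$ and $(M_n',\gamma_n')$ can be chosen so that a single admissible surface simultaneously closes up in both, and that the specific $4$-manifold cobordism used in the Baldwin--Sivek contact $1$-handle attachment map intertwines the $\mu$ operators on the two closures. The rest of the argument is a routine application of Theorem~\ref{thm:instantonGluingMaps} and the colimit construction.
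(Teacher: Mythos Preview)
Your proposal is correct and follows essentially the same approach as the paper: define the isomorphism stage by stage using the contact $1$-handle maps $h_n$, verify compatibility with $\psi_n^\pm$ via disjointness of the attaching regions on $T$ from the $1$-handle (invoking Theorem~\ref{thm:instantonGluingMaps}), and pass to colimits.

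One clarification on what you flag as the main obstacle. In Baldwin--Sivek's construction the contact $1$-handle attachment map is \emph{not} defined via a $4$-dimensional $1$-handle cobordism; rather, a closure of $(M_n',\gamma_n')$ is literally already a closure of $(M_n,\gamma_n)$ (the contact $1$-handle is absorbed into the auxiliary-surface portion of the preclosure), and the map $h_n$ is by definition the identity on that shared closure. Consequently an admissible surface $V$ disjoint from the $1$-handle closes up to the \emph{same} $\bar V$ in both closures, the operators $\mu(\bar V)$ coincide, and grading preservation is immediate --- there is no intertwining to check. The paper dispatches the grading claim in two sentences along these lines: take $V$ admissible in $(M',\gamma')$, isotope it off the $1$-handle so it sits in $(M,\gamma)$, and note that $h_n$ preserves the grading $V$ defines and that $\sigma$ meets $V$ identically on both sides. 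So the step you were most worried about is actually easier than you anticipated.
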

\begin{proof}
	Fix a longitude $\lambda$ for $(M,\gamma,\sigma)$, and define $\gamma_n$ as before using $\lambda$. Let $\gamma_n'$ be obtained from $\gamma'$ by similarly swapping the two sutures on the distinguished toral boundary component for two oppositely oriented parallel curves of slope $\lambda - n\sigma$. Clearly $(M',\gamma_n')$ may also be obtained by attaching a contact $1$-handle to $(M,\gamma_n)$. The contact $1$-handle attachment map $\SHI^-(-M,-\gamma_n) \to \SHI^-(-M',-\gamma_n')$ is an isomorphism and commutes with the bypass attachment maps $\psi_n^\pm$. The limit of these maps is the desired $\C[U]$-module isomorphism. 

	Let $V$ be an admissible surface in $(M',\gamma')$. We may assume that $V$ is disjoint from the $1$-handle, so we may also think of $V$ as an admissible surface in $(M,\gamma)$. The contact $1$-handle attachment map preserves the grading that $V$ defines. Since $\sigma$ intersects $V$ in the same way in either $(M',\gamma')$ or $(M,\gamma)$, the grading claim follows. 
\end{proof}

\begin{lem}\label{lem:surfDecompProdAnnulus}
	Let $(M,\gamma,\sigma)$ be a balanced sutured manifold with a suitable distinguished suture, and let $A$ be a product annulus in $M$ that is disjoint from the toral boundary component $T$ of $M$ containing $\sigma$. Also suppose that $\partial A \cap R_+(\gamma)$ and $\partial A \cap R_-(\gamma)$ are homologically nontrivial simple closed curves in $R_+(\gamma)$ and $R_-(\gamma)$, respectively. If $(M',\gamma',\sigma)$ is obtained by decomposing along $A$, then there is an isomorphism \[
		\SHI^-(-M',-\gamma',-\sigma) \to \SHI^-(-M,-\gamma,-\sigma) 
	\]of $\C[U]$-modules. 
	If two homogeneous elements of $\SHI^-(-M',-\gamma',-\sigma)$ differ in grading by $[-\sigma]$, then their images are also homogeneous and differ by $[-\sigma]$. 
\end{lem}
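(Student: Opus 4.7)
The strategy will be to establish the isomorphism level-by-level in the direct system defining $\SHI^-$ and then pass to the colimit, in parallel with the proof of Lemma~\ref{lem:surfDecomp1handle}. First I would fix a longitude $\lambda$ on $T$ and use it to define sutures $\gamma_n$ on $(M,T)$ and $\gamma_n'$ on $(M',T)$ as in the construction of $\SHI^-$. Since $A$ is disjoint from $T$, the product annulus $A$ remains a product annulus in each $(M,\gamma_n)$ with boundary components essential in $R_\pm(\gamma_n)$, and decomposing $(M,\gamma_n)$ along $A$ yields $(M',\gamma_n')$.

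Next, I would establish a natural isomorphism $\Phi_n\colon \SHI(-M',-\gamma_n') \to \SHI(-M,-\gamma_n)$ for each $n$ by realizing the product annulus decomposition via a pair of cancelling contact handle attachments, or equivalently by invoking the surface decomposition theorem for sutured instanton homology in the product annulus case, where the essential-boundary hypothesis forces an isomorphism rather than a direct summand inclusion. The maps $\Phi_n$ commute with both families of bypass attachment maps $\psi_n^\pm$, because the attaching arcs $\alpha_n^\pm$ lie on $T$ and hence can be taken disjoint from $A$; this permits a direct application of Theorem~\ref{thm:instantonGluingMaps} to interchange the two operations. Passing to the colimit along the $\psi_n^-$ maps yields a $\C$-linear isomorphism $\SHI^-(-M',-\gamma',-\sigma) \to \SHI^-(-M,-\gamma,-\sigma)$, and compatibility with the $\psi_n^+$ maps promotes this to a $\C[U]$-module isomorphism. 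For the grading claim, any admissible surface $V$ in $(M',\gamma')$ defining the $[-\sigma]$-grading can be isotoped to be disjoint from $A$; such a $V$ is then also an admissible surface in $(M,\gamma)$ with the same intersection with $\sigma$, and $\Phi_n$ preserves the grading it defines because $V$ is disjoint from the decomposing surface.

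The principal technical point will be verifying that product annulus decomposition with boundary essential in $R_\pm$ induces an isomorphism in sutured instanton homology that is natural with respect to the bypass attachment maps. The cleanest approach is to present the decomposition as a sequence of cancelling contact handle attachments: intuitively, cutting along $A$ and then regluing a thickened neighborhood can be realized as a contact $1$-handle attachment followed by a contact $2$-handle attachment whose composite contact structure is equivalent to the $I$-invariant contact structure, so that by Theorem~\ref{thm:instantonGluingMaps} their composite is the identity. Since the contact handles can be arranged entirely away from $T$ and from the bypass attaching arcs $\alpha_n^\pm$, the same theorem delivers simultaneously both the isomorphism $\Phi_n$ and its commutativity with $\psi_n^\pm$, and the remaining grading statement is then formal.
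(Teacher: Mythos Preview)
Your overall framework matches the paper exactly: establish isomorphisms $\Phi_n\colon \SHI(-M',-\gamma_n') \to \SHI(-M,-\gamma_n)$ at each level, check compatibility with $\psi_n^\pm$, and pass to the colimit. The gap is in how you propose to construct $\Phi_n$.

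The contact-handle argument does not work as stated. You claim the passage $(M',\gamma_n') \to (M,\gamma_n)$ can be realized as a contact $1$-handle followed by a contact $2$-handle ``whose composite contact structure is equivalent to the $I$-invariant contact structure.'' But the $I$-invariant structure of Theorem~\ref{thm:instantonGluingMaps} lives on a collar $[0,1]\times\partial M'$, whereas here you are gluing in a thickened annulus $N(A)$; these are not even the same rel boundary. More to the point, contact $2$-handle maps are not isomorphisms in general, and your handles are not a cancelling pair in any sense that forces invertibility: after attaching both you land in $(M,\gamma_n)$, not back in $(M',\gamma_n')$. Invoking the general surface decomposition theorem instead is circular in this paper, since Theorem~\ref{thm:instantonMinusSurfDecomp} uses the present lemma.

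The paper builds $\Phi_n$ by Floer excision. One chooses a closure of $(M',\gamma_n')$ in which pushoffs $c_1,c_2$ of the two new sutures (into the auxiliary surface $B$) are preserved by the gluing diffeomorphism $h\colon \bar R_+\to\bar R_-$; excision along the tori $c_i\times S^1$ then produces a closure of $(M,\gamma_n)$, and the excision cobordism map is $\Phi_n$. Naturality and compatibility with $\psi_n^\pm$ hold because the excision cobordism is supported in the $[1,3]\times\bar R_+$ region of the closure while the bypass cobordism is supported near $T\subset M$; this is a disjoint-support argument for commuting cobordism maps, not an application of Theorem~\ref{thm:instantonGluingMaps}. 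For the grading claim the paper also runs in the opposite direction from yours: it starts with an admissible $V$ in $(M,\gamma)$, arranges $V\cap A$ to consist of essential arcs, and sets $W=V\cap M'$; a closure in which $W$ extends is carried by excision to one in which $V$ extends, and since $V$ and $W$ have the same algebraic intersection with $\sigma$ the conclusion follows. Starting from a surface in $M'$ as you propose is problematic, since such a surface may have boundary on the new sutures created by the decomposition and need not extend to an admissible surface in $M$.
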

\begin{proof}
	Let $B$ be an auxiliary surface for $(M',\gamma')$ with an identification $\partial B = s(\gamma')$. Let $s_1,s_2$ be the two new sutures in $s(\gamma')$ introduced in the decomposition along the product annulus $A$. These two sutures correspond to boundary components of $B$; let $c_1,c_2$ be copies of these two boundary components pushed into the interior of $B$. We construct a closure $Y$ using an identification $h\colon \bar{R}_+ \to \bar{R}_-$ which sends $c_1$ to itself and $c_2$ to itself. There are two tori $c_1 \x S^1$ and $c_2 \x S^1$ in $Y$ along which we may do excision. The result is a closure of $(M,\gamma)$. Floer excision gives an isomorphism $\phi\colon \SHI(M',\gamma') \to \SHI(M,\gamma)$ where these sutured Floer homology groups are defined by the described closures. 

	This isomorphism is natural; we point out the main observations needed to see this. Suppose $h'\colon \bar{R}_+\to\bar{R}_-$ is another diffeomorphism sending $c_i$ to itself. Following \cite{MR3352794}, the canonical isomorphism between the Floer homologies defined by these two closures can be viewed as a composite of cobordism maps defined by surgery along curves in $\bar{R}$ disjoint from $c_i$. The Floer excision map $\phi$ commutes with surgery along such a curve, so the isomorphism is natural under closures of the same genus. Similarly, the canonical isomorphisms defined to identify the Floer homologies of closures of different genera are defined by excision along tori disjoint from the tori defining $\phi$. 

	Fix homogeneous elements $x,y \in \SHI(M',\gamma')$ which differ in grading by $[\sigma]$. Let $V$ be an admissible surface in $(M,\gamma)$. We may assume that $V \cap A$ consists of homologically essential arcs in $A$. The intersection $W = V \cap M'$ is an admissible surface in $(M',\gamma')$. Choose a closure of $(M',\gamma')$ in which $W$ extends to a closed surface $\bar{W}$. Floer excision takes $\bar{W}$ to a closed extension of $V$ in the closure of $(M,\gamma)$, so the isomorphism $\phi$ respects the grading defined by $W$ and $V$. Because $V$ and $W$ have the same algebraic intersection with $\sigma$, the images of $x$ and $y$ in $\SHI(M,\gamma)$ differ by $[\sigma]$ in grading. 

	We have established the isomorphism with the specified grading behavior for sutured Floer homology. To obtain the same isomorphism on the minus version, we simply take the limit of the isomorphisms $\SHI(-M',-\gamma_n') \to \SHI(-M,-\gamma_n)$ defined in this way. These isomorphisms commute with the $\psi_n^\pm$ maps so the desired $\C[U]$-module isomorphism follows, along with the claim on grading.
\end{proof}

\begin{thm}\label{thm:instantonMinusSurfDecomp}
	Let $(M,\gamma,\sigma)$ be a balanced sutured manifold with a suitable distinguished suture, and let $S$ define a nice surface decomposition $(M,\gamma) \rightsquigarrow (M',\gamma')$ where $S$ is disjoint from the toral boundary component $T$ of $M$ containing $\sigma$. Then $\SHI^-(-M',-\gamma',-\sigma)$ is a direct $\C[U]$-module summand of $\SHI^-(-M,-\gamma,-\sigma)$. If two homogeneous elements in $\SHI^-(-M',-\gamma',-\sigma)$ differ in grading by $[-\sigma]$, then their images in $\SHI^-(-M,-\gamma,-\sigma)$ are homogeneous and also differ by $[-\sigma]$.
\end{thm}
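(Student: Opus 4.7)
The plan is to apply the surface decomposition $S$ level-by-level in the colimit defining $\SHI^-(-M,-\gamma,-\sigma)$. Since $S$ is disjoint from $T$, for each $n\ge 1$ the surface $S$ remains a nice decomposing surface in $(M,\gamma_n)$ (the extra sutures on $T$ play no role in the niceness condition), and the decomposition yields $(M',\gamma_n')$. I will invoke the instanton Floer analog of Juh\'asz's surface decomposition theorem, which in the grading framework of Ghosh--Li \cite{1910.10842} identifies $\SHI(-M',-\gamma_n')$ with the top-graded summand of $\SHI(-M,-\gamma_n)$ in the $\Z$-grading defined by (a stabilization of, if needed) $S$. This produces at each level a natural $\C$-linear inclusion $\iota_n:\SHI(-M',-\gamma_n')\hookrightarrow \SHI(-M,-\gamma_n)$ onto a direct summand, together with a complementary retraction $r_n$.

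Next I will verify that $\iota_n$ and $r_n$ commute with the bypass attachment maps $\psi_n^\pm$. The defining attaching arcs $\alpha_{n+1}^\pm$ lie on $T$ and are therefore disjoint from $S$, so Proposition~\ref{prop:bypassGradings} implies that $\psi_n^\pm$ preserve the $\Z$-grading defined by $S$ and hence restrict to the top summand. Using Theorem~\ref{thm:instantonGluingMaps}, the contact handles implementing the bypass can be attached away from a tubular neighborhood of $S$, so the restriction to the top summand agrees, under $\iota_n$, with the bypass attachment map for $(M',\gamma_n')$; the same argument applies to $r_n$. Taking the colimit along the $\psi_n^-$ maps then yields a $\C$-linear direct summand inclusion $\SHI^-(-M',-\gamma',-\sigma)\hookrightarrow \SHI^-(-M,-\gamma,-\sigma)$ together with a $\C$-linear retraction, and commutation with $\psi_n^+$ promotes both maps to $\C[U]$-equivariant ones, giving the desired $\C[U]$-direct summand structure. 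As a sanity check on the broad strategy: the special cases of a contact $1$-handle attachment and of a product-annulus decomposition are already handled by Lemmas~\ref{lem:surfDecomp1handle} and~\ref{lem:surfDecompProdAnnulus}, and the argument above specializes correctly to those.

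For the grading statement, suppose $x,y\in \SHI^-(-M',-\gamma',-\sigma)$ are homogeneous with grading classes differing by $[-\sigma]$, witnessed by an admissible surface $V$ in $(M',\gamma')$ with $V\cdot\sigma=\pm 1$. Using Proposition~\ref{prop:simultaneousClosures} I may arrange $V$ to be disjoint from $S$ and to extend simultaneously with $S$ to a closure of $(M,\gamma_n)$; then $V$ is also an admissible surface in $(M,\gamma)$ with the same intersection pattern with $\sigma$. The inclusion $\iota$ preserves the $V$-grading (because $V$ is disjoint from $S$ and the per-level identification of $\SHI(-M',-\gamma_n')$ with the top $S$-summand respects additional gradings defined by surfaces transverse to $S$). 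Consequently, the images of $x,y$ in $\SHI^-(-M,-\gamma,-\sigma)$ remain homogeneous and differ by $[-\sigma]$ in the splitting of Proposition~\ref{prop:splittingofSuturedInstantonMinus}. The main obstacle I anticipate is setting up the per-$n$ direct summand inclusion (and its retraction) with enough naturality that both commute on the nose with $\psi_n^\pm$; once this is in place the limit and grading arguments proceed along the pattern already established in the sutured Heegaard Floer setting (Proposition~\ref{prop:HeegaardMinusSurfDecomp}).
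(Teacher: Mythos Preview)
Your overall strategy matches the paper's: apply the instanton surface–decomposition theorem at each level $(M,\gamma_n)$, check compatibility with the bypass maps $\psi_n^\pm$, and pass to the colimit. The paper carries out the per-level step explicitly by following Kronheimer--Mrowka's construction (Proposition~6.9 of \cite{MR2652464}): attach contact $1$-handles along a balanced pairing for $S_*$, extend $S_*$ across the handles, and identify $\SHI(-M',-\gamma_n')$ with the top $\mu(\bar S_*)$-eigenspace in a closure whose distinguished surface is the double-curve sum $\bar R + \bar S_*$. Your black-box citation of the Ghosh--Li framework is acceptable in place of this, and your commutation argument is morally the paper's: the attaching arcs $\alpha_{n+1}^\pm$ lie on $T$, so the data (balanced pairing, bijection of curves) used to build $\iota_n$ can be chosen independently of $n$, whence $\iota_n$ and $r_n$ intertwine $\psi_n^\pm$.

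The grading paragraph, however, has a genuine gap. You pick a single admissible surface $V$ in $(M',\gamma')$ with $V\cdot\sigma=\pm1$ and claim you can ``arrange $V$ to be disjoint from $S$'' and view it in $(M,\gamma)$. But $V$ lives in $M'=M\setminus N(S)$ and may have boundary on the new part of $\partial M'$ coming from $S$; it need not be properly embedded in $M$ at all. More importantly, a single $V$ cannot establish that the images are \emph{homogeneous} in $\SHI^-(-M,-\gamma,-\sigma)$, nor that the grading difference equals the class $[-\sigma]\in\Hom(H_2(-M,\partial(-M)),\Z)$, since both statements are conditions against every class in $H_2(M,\partial M)$. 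The paper fixes this by working in the opposite direction: it takes a basis $T_1,\dots,T_n$ for $H_2(M,\partial M)$, uses Proposition~\ref{prop:simultaneousClosures} to arrange (after negative stabilizations of $S_*$) that $S_*,T_1,\dots,T_n$ close simultaneously and that each $T_i$ meets $S_*$ only in arcs, and then restricts to get admissible surfaces $T_i\cap M'$ in $(M',\gamma')$. The inclusion then preserves each $T_i$-grading by construction, and since $T_i\cdot\sigma=(T_i\cap M')\cdot\sigma$ (as $\sigma\subset T$ is disjoint from $S$), homogeneous elements of $\SHI^-(-M',-\gamma',-\sigma)$ differing by $[-\sigma]$ map to homogeneous elements differing by $[-\sigma]$. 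Replacing your single-$V$ argument with this basis-in-$M$ argument closes the gap.
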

\begin{proof}
	By Lemma 4.5 of \cite{Juh08}, there is an admissible surface $S_*$ isotopic to $S$ for which the result of decomposing $(M,\gamma)$ along $S_*$ is still $(M',\gamma')$. Following the proof of Proposition 6.9 of \cite{MR2652464}, choose a balanced pairing for $S_*$ and attach a contact $1$-handle to $(M,\gamma)$ along each pair to obtain a new balanced sutured manifold $(\tilde{M},\tilde{\gamma})$. Extend $S_*$ to a surface $\tilde{S}_*$ in $\tilde{M}$ by attaching a band within each contact $1$-handle. Let $(\tilde{M}',\tilde{\gamma}')$ be the result of decomposing $(\tilde{M},\tilde{\gamma})$ along $\tilde{S}_*$. Observe that $(\tilde{M}',\tilde{\gamma}')$ may obtained from $(M',\gamma')$ by attaching contact $1$-handles. In particular, we can use two contact $1$-handles for each $1$-handle used to construct $(\tilde{M},\tilde{\gamma})$ from $(M,\gamma)$. 

	Since $\partial \tilde{S}_* \cap s(\tilde{\gamma}) = \emptyset$, each suture of $(\tilde{M},\tilde{\gamma})$ is a suture of $(\tilde{M}',\tilde{\gamma}')$, and there is an extra suture of $(\tilde{M}',\tilde{\gamma}')$ for each component of $\partial\tilde{S}_* \cap (R_+(\tilde{\gamma}) \cup R_-(\tilde{\gamma}))$. Choose a bijection between the components of $\partial\tilde{S}_* \cap R_+(\tilde{\gamma})$ and the components of $\partial\tilde{S}_* \cap R_-(\tilde{\gamma})$. Still following the proof of Proposition 6.9 of \cite{MR2652464}, we glue a copy of $[-1,1] \x A$ where $A$ is an annulus to $(\tilde{M}',\tilde{\gamma}')$ along each extra pair of sutures, where the pairing is determined by the chosen bijection. Let $(\tilde{M}'',\tilde{\gamma}'')$ be the resulting sutured manifold. Observe that we may obtain $(\tilde{M}',\tilde{\gamma}')$ from $(\tilde{M}'',\tilde{\gamma}'')$ by decomposing along product annuli.

	Now choose an auxiliary surface $B$ for $(\tilde{M},\tilde{\gamma})$ and a homeomorphism $h\colon \bar{R}_+ \to \bar{R}_-$ which sends $\partial \tilde{S}_* \cap \bar{R}_+$ to $\partial \tilde{S}_* \cap \bar{R}_-$ matching the chosen bijection. Within the closure $Y$ determined by $B$ and $h$, we have two closed oriented surfaces. We have $\bar{R}$ as usual and $\bar{S}_*$, a closed extension of $\tilde{S}_*$. From this data, Kronheimer-Mrowka construct a closure of $(\tilde{M}'',\tilde{\gamma}'')$ whose ambient $3$-manifold is $Y$ but whose distinguished surface is the double-curve sum of $\bar{R}$ and $\bar{S}_*$. Name this surface $F$, and since $\mu(F) = \mu(\bar{R}) + \mu(\bar{S}_*)$ and $\chi(F) = \chi(\bar{R}) + \chi(\bar{S}_*)$, it follows that the $(2g(F) - 2)$-generalized eigenspace of $\mu(F)$ is just the intersection of the $(2g(\bar{R})-2)$-generalized eigenspace of $\mu(\bar{R})$ with the $(2g(\bar{S}_*) - 2)$-generalized eigenspace of $\mu(\bar{S}_*)$. Thus the sutured instanton homology of $(\tilde{M}'',\tilde{\gamma}'')$ includes as a direct summand of the sutured instanton homology of $(\tilde{M},\tilde{\gamma})$ defined by these closures. 

	There are a number of choices involved in establishing this direct summand relationship. We show that these choices can be made so that the inclusion respects the action of $[\sigma]$ on gradings. By Proposition~\ref{prop:simultaneousClosures}, we may choose a basis for $H_2(M,\partial M)$ along with representatives $T_1,\ldots,T_n$ such that after negative stabilizations of $S_*$, the surfaces $S_*,T_1,\ldots,T_n$ all admit closed extensions in a closure $Y$ of $(M,\gamma)$. From the proof of the Proposition, we may also assume that each $T_i$ intersects $S_*$ only in arcs. Let $h\colon \bar{R}_+ \to \bar{R}_-$ the diffeomorphism used in the construction of the closure $Y$ in which $S_*,T_1\ldots,T_n$ all admit closed extensions. The data determines a balanced pairing for $S_*$ and a bijection between the components of $\partial \tilde{S}_* \cap R_+(\tilde{\gamma})$ and $\partial \tilde{S}_* \cap R_-(\tilde{\gamma})$. The manifold $(\tilde{M},\tilde{\gamma})$ defined using this bijection is naturally embedded in the fixed preclosure of $(M,\gamma)$. The surfaces $T_1,\ldots,T_n$ and cocores of the contact $1$-handles are representatives for a basis for $H_2(\tilde{M},\partial \tilde{M})$. If the genus of the auxiliary surface is large enough, there is a homeomorphism $h'$ determining the same bijection between components of $\partial \tilde{S}_* \cap R_+(\tilde{\gamma})$ and $\partial \tilde{S}_* \cap R_-(\tilde{\gamma})$ for which all of these representatives have closed extensions in the associated closure. Since the representatives $T_i$ each intersect $\tilde{S}_*$ only along arcs, the surfaces $T_i \cap \tilde{M}'$ are admissible in $\tilde{M}'$. These surfaces can then be extended to admissible surfaces in $\tilde{M}''$ which close up to the given closures of $T_i$ in $Y$. It follows that if two elements in $\SHI(\tilde{M}'',\tilde{\gamma}'')$ are homogeneous and differ by $[\sigma]$ in grading, then their images in $\SHI(\tilde{M},\tilde{\gamma})$ are homogeneous and differ by $[\sigma]$ in grading as well. Combined with Lemmas \ref{lem:surfDecomp1handle} and \ref{lem:surfDecompProdAnnulus}, we have shown that there is an inclusion \[
		\SHI(M',\gamma') \hookrightarrow \SHI(M,\gamma)
	\]as a direct summand which respects the action of $[\sigma]$ on gradings. 

	To extend this result to the minus version, first observe that we have inclusions \[
		\SHI(-M',-\gamma_n') \hookrightarrow \SHI(-M,-\gamma_n)
	\]for each $n$. In particular, the data of $S_*$, a balanced pairing, and a bijection between the components of $\partial \tilde{S}_* \cap R_+(\tilde{\gamma})$ and $\partial \tilde{S}_* \cap R_-(\tilde{\gamma})$ can be chosen to be the same for each $n$. With these compatible choices, the inclusions commute with $\psi_n^\pm$ so they induce a inclusion \[
		\SHI^-(-M',-\gamma',-\sigma) \hookrightarrow \SHI^-(-M,-\gamma,-\sigma)
	\]of $\C[U]$-modules. To see that it is a direct summand, recall the splitting $\SHI^-(-M,-\gamma,-\sigma) = \bigoplus_h \SHI^-(-M,-\gamma,-\sigma,h)$ along an affine space. This splitting is a splitting as a $\C$-vector space. If we instead consider the splitting \[
		\SHI^-(-M,-\gamma,-\sigma) = \bigoplus_{h+\Z[-\sigma]} \SHI^-(-M,-\gamma,-\sigma,h+\Z[-\sigma])
	\]where $\SHI^-(-M,-\gamma,-\sigma,h+\Z[-\sigma]) = \bigoplus_{k \in \Z}\SHI^-(-M,-\gamma,-\sigma,h+k[-\sigma])$, we find that this is a splitting as a $\C[U]$-module by Proposition~\ref{prop:instantonUisHomogeneous}. The inclusion is onto a direct sum of a number of $\SHI^-(-M,-\gamma,-\sigma,h+\Z[-\sigma])$ so $\SHI^-(-M',-\gamma',-\sigma)$ is indeed a direct summand of $\SHI^-(-M,-\gamma,-\sigma)$ as a $\C[U]$-module.
\end{proof}

\vspace{20pt}

Consider a ribbon concordance $R\colon K_\# \cup C \to K_b \cup C$. By definition, $R$ consists of two disjoint concordances, one from $K_\#$ to $K_b$, the other between linking circles. Furthermore, they may be simultaneously arranged so that with respect to the projection $[0,1] \x S^3 \to [0,1]$ the annuli have critical points only of index $0$ and $1$. It follows that the exterior of $R$, viewed as a cobordism from $S^3(K_\# \cup C)$ to $S^3(K_b \cup C)$ can be constructed from $1$- and $2$-handles. 

\begin{thm}\label{thm:instantonMinusRibbon}
	Let $R\colon K_\#\cup C \to K_b \cup C$ be an oriented ribbon concordance. On each of the sutured exteriors $S^3(K_\# \cup C)$ and $S^3(K_b \cup C)$, let $\sigma$ be the suture on $\partial N(C)$ oriented as the meridian of $C$. Then there is an inclusion \[
		\SHI^-(S^3(K_\# \cup C),\sigma) \hookrightarrow \SHI^-(S^3(K_b \cup C),\sigma)
	\]as a direct $\C[U]$-module summand. Furthermore, the inclusion respects the action of $[\sigma]$ on gradings. 
\end{thm}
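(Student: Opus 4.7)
The plan is to lift the instanton-theoretic ribbon concordance argument (the analog of Theorem~\ref{thm:zemke}) to the minus version by working with the bypass-attachment system used to build $\SHI^-$. Concretely, I would arrange $R$ and its reverse $R'\colon K_b\cup C\to K_\#\cup C$, via the movie presentation from Corollary~\ref{cor:RibbonConcordance}, so that all births and saddles take place in the complement of a fixed tubular neighborhood $N(C)$ and of a short segment of the band. The complement of $R$ in $[0,1]\x S^3$ is then a sutured cobordism whose restriction to $[0,1]\x\partial N(C)$ is the product cobordism, so it carries the sutures $\gamma_n$ across for every $n\ge 1$ without modification.

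For each $n$ this yields a natural sutured instanton cobordism map
\[
F_n\colon \SHI(-S^3(K_\#\cup C),-\gamma_n)\to \SHI(-S^3(K_b\cup C),-\gamma_n)
\]
built from contact handle and $2$-handle attachment maps. Because the bypass attaching arcs $\alpha_n^{\pm}$ defining $\psi_n^-$ and $\psi_n^+$ lie on $\partial N(C)$ and are therefore disjoint from the cobordism, Theorem~\ref{thm:instantonGluingMaps} together with the disjointness argument of Corollary~\ref{cor:instantonDisjointAttachmentArcs} gives $F_{n+1}\circ\psi_n^{\pm} = \psi_n^{\pm}\circ F_n$ for each sign. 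Passing to the colimit along the $\psi_n^-$ system and using equivariance with respect to the $\psi_n^+$ system induces a $\C[U]$-equivariant map $F\colon \SHI^-(S^3(K_\#\cup C),\sigma)\to \SHI^-(S^3(K_b\cup C),\sigma)$; the reverse concordance $R'$ likewise induces a $\C[U]$-equivariant $F'$ going the other way.

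To exhibit $F$ as an inclusion onto a direct $\C[U]$-summand, it suffices to show $F'\circ F = \Id$ on $\SHI^-(S^3(K_\#\cup C),\sigma)$. For each $n$, the composite cobordism $R'\circ R$ pulled back to the sutured setting decomposes into births, saddles, their reverses, and a trivial cobordism on $[0,1]\x\partial N(C)$; the instanton-Floer ribbon concordance argument (the adaptation of \cite{Zem19a} at the level of sutured instanton cobordism maps, together with the handle-cancellation of death-saddle pairs and triviality of the induced cobordism on $\partial N(C)$) forces $F'_n\circ F_n = \Id$ on each $\SHI(-S^3(K_\#\cup C),-\gamma_n)$, naturally in $n$. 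The colimit argument then yields $F'\circ F = \Id$. The grading statement follows because an admissible surface in $S^3(K_\#\cup C)$ can be pushed through the product region near $\partial N(C)$ to an admissible surface of the same topological type in $S^3(K_b\cup C)$ with the same algebraic intersection with $\sigma$, so the cobordism maps are homogeneous and the affine splittings of Proposition~\ref{prop:splittingofSuturedInstantonMinus} on source and target are identified in a way that preserves the action of $[\sigma]$.

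The main obstacle is verifying the instanton analog of Zemke's ribbon concordance theorem uniformly across the whole family $\{\gamma_n\}_{n\ge 1}$, i.e., producing sutured cobordism maps whose associated death-saddle cancellations yield $F'_n\circ F_n=\Id$ and whose construction is compatible with the bypass attachment maps $\psi_n^{\pm}$; once this is in place, the colimit and grading statements are formal consequences of the machinery already developed in Sections~\ref{subsubsec:contactStructures}--\ref{subsubsec:featuresOfTheInvariant}.
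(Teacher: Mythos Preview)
Your approach is essentially the same as the paper's: produce maps on each $\SHI(-X,-\gamma_n)$ from the ribbon concordance, check they commute with $\psi_n^\pm$, take the colimit, and handle gradings via surfaces carried through the cobordism. The paper's execution is more direct in two respects. First, rather than speaking of abstract sutured cobordism maps ``built from contact handle and $2$-handle attachment maps,'' the paper works at the level of closures: given a closure $Y^\#$ of $(X^\#,\gamma_n^\#)$, the $1$- and $2$-handles of the ribbon concordance exterior are attached inside the interior of $X^\#\subset Y^\#$, producing an honest $4$-manifold cobordism $W$ to a closure $Y^b$ of $(X^b,\gamma_n^b)$, and one uses the ordinary instanton cobordism map $I_*(W)$. (Your phrase ``contact handle'' is misleading here; the relevant handles are $4$-dimensional, not the $3$-dimensional contact handles of Baldwin--Sivek.) Second, what you flag as the ``main obstacle'' is not something the paper proves from scratch: the composite $F_n'\circ F_n=\Id$ is simply cited from Theorem~6.7 of \cite{1904.09721}, which establishes the instanton ribbon concordance result at the level of closures and hence uniformly in $n$. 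The grading argument in the paper follows Proposition~3.8 of \cite{1910.01758}: one chooses the closure $Y^\#$ so that Seifert surfaces for $K_\#$ and $C$ extend to closed surfaces, and these are homologous in $W$ to closed extensions of Seifert surfaces for $K_b$ and $C$, so the cobordism map intertwines the $\mu$-operators and hence the gradings.
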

\begin{proof}
	Let $X^\#$ be the exterior of $K_\# \cup C$ in $S^3$, and similarly let $X^b$ be the exterior of $K_b \cup C$. Let $(X^\#,\gamma^\#)$ and $(X^b,\gamma^b)$ be the sutured exteriors with two meridional sutures on each boundary component. Using a longitude for $C$, we may define the sutures $\gamma^\#_n$ and $\gamma^b_n$ as before by replacing the meridional sutures on $\partial N(C)$ with oppositely oriented sutures of slope $\lambda - n\sigma$. 

	Given a closure $Y^\#$ of $(X^\#,\gamma^\#_n)$, the ribbon concordance induces a cobordism $W$ from $Y^\#$ to a closure $Y^b$ of $(X^b,\gamma^b_n)$. The ribbon concordance in reverse induces another cobordism $W'$ from $Y^b$ to $Y^\#$. By Theorem 6.7 of \cite{1904.09721}, the composite map $\SHI(X^\#,\gamma^\#_n) \to \SHI(X^b,\gamma^b_n) \to \SHI(X^\#,\gamma^\#_n)$ is the identity up to a unit. These cobordism maps commute with $\psi_n^\pm$ so we have induced $\C[U]$-module maps \[
		\SHI^-(S^3(K_\# \cup C),\sigma) \to \SHI^-(S^3(K_b \cup C),\sigma) \to \SHI^-(S^3(K_\# \cup C),\sigma)
	\]whose composite is the identity up to a unit. This proves that the first map is an inclusion onto a direct $\C[U]$-summand. The argument that we may choose our closure $Y^\#$ so that the induced $\C[U]$-module map is grading-preserving is the same as in the proof of Proposition 3.8 of \cite{1910.01758}. We chose the closure $Y^\#$ of $(X^\#,\gamma_n^\#)$ in such a way that Seifert surfaces for $K_\#$ and $C$ extend to closed surfaces. The closures of these Seifert surfaces are homologous within the cobordism $W$ to closures of Seifert surfaces of $K_b$ and $C$ so the inclusion respects the splitting along an affine space of Proposition~\ref{prop:splittingofSuturedInstantonMinus}. 
\end{proof}

\subsubsection{Gradings in sutured instanton Floer homology}\label{subsubsec:gradingSuturedInstanton}

We now prove Propositions \ref{prop:gradingShift}, \ref{prop:sameRelGrading}, and \ref{prop:simultaneousClosures}, restated here for the reader's convenience. 

\theoremstyle{plain}
\newtheorem*{prop:gradingShift}{Proposition~\ref{prop:gradingShift}}
\begin{prop:gradingShift}
	Let $S$ be an admissible surface in a balanced sutured manifold $(M,\gamma)$. Suppose $T$ is obtained from $S$ by $p$ positive stabilizations and $q$ negative stabilizations with $p - q = 2k$, then for each $i \in \Z$ \[
		\SHI(M,\gamma,S,i) = \SHI(M,\gamma,T,i + k).
	\]
\end{prop:gradingShift}

\newtheorem*{prop:sameRelGrading}{Proposition~\ref{prop:sameRelGrading}}
\begin{prop:sameRelGrading}[Theorem 1.12 of \cite{1910.10842}]
	Suppose $S$ and $T$ are admissible grading surfaces in a balanced sutured manifold $(M,\gamma)$ in the same relative homology class. Then $S$ and $T$ determine the same relative $\Z$-grading on $\SHI(M,\gamma)$. 
\end{prop:sameRelGrading}

\newtheorem*{prop:simultaneousClosures}{Proposition~\ref{prop:simultaneousClosures}}
\begin{prop:simultaneousClosures}
	Let $S$ be an admissible surface in a balanced sutured manifold $(M,\gamma)$, and let $\beta_1,\ldots,\beta_n$ be relative homology class in $H_2(M,\partial M)$. Then there are admissible surfaces $S',T_1,\ldots,T_n$ in $(M,\gamma)$ for which \begin{enumerate}[itemsep=-.5ex]
		\item decomposing $(M,\gamma)$ along either $S$ or $S'$ yields the same sutured manifold,
		\item $T_i$ represents $\beta_i$, 
		\item there exists a closure of $(M,\gamma)$ in which all of the surfaces $S',T_1,\ldots,T_n$ simultaneously extend to closed surfaces in the manner used to define gradings. 
	\end{enumerate} 
\end{prop:simultaneousClosures}

Recall that the complexity of a closed oriented surface $\bar{R}$ is $x(\bar{R}) = \sum \max(0,-\chi(\bar{R}_i))$ where the sum is over connected components of $\bar{R}$, and the Thurston norm $x(\alpha)$ of a second homology class $\alpha$ in a closed oriented $3$-manifold is the least complexity of an embedded representative. 

\begin{lem}\label{lem:muMapThurstonNorm}
	Let $(Y,\bar{R},\eta,\alpha)$ be a closure of a balanced sutured manifold $(M,\gamma)$, and let $\bar{S}$ and $\bar{T}$ be closed oriented surfaces embedded in $Y$ with no sphere components. If the Thurston norms of the homology classes $\bar{R} - \bar{T} + \bar{S}$ and $\bar{R} + \bar{T} - \bar{S}$ are less than or equal to the complexity of $\bar{R}$, then the gradings that they define on $I_*(Y|\bar{R})_{\eta +\alpha}$ are the same. 
	
	More generally, if the Thurston norm of $\bar{R} - \bar{T} + \bar{S}$ is at most $x(\bar{R}) - 2k$ while the Thurston norm of $\bar{R} + \bar{T} - \bar{S}$ is at most $x(\bar{R}) + 2k$, then the $i$-graded part of $I_*(Y|\bar{R})_{\eta+\alpha}$ with respect to $\bar{S}$ is equal to the $(i+k)$-graded part with respect to $\bar{T}$. 
\end{lem}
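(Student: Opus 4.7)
I would prove this by applying Kronheimer--Mrowka's eigenvalue bound (Proposition~\ref{prop:eigenvalsOfMuS}) to the two combined homology classes $\bar{R} - \bar{T} + \bar{S}$ and $\bar{R} + \bar{T} - \bar{S}$, exploiting the fact that the operators $\mu(\bar R), \mu(\bar S), \mu(\bar T)$ are pairwise commuting and additive in their arguments.

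First, since $\mu\colon H_2(Y) \to \End(I_*(Y)_{\eta+\alpha})$ is an additive homomorphism into a commuting family of operators, $\mu(\bar R), \mu(\bar S), \mu(\bar T)$, and $\mu(y)$ all pairwise commute. Restrict attention to $V = I_*(Y|\bar R)_{\eta+\alpha}$, the $(x(\bar R), 2) = (2g(\bar R) - 2, 2)$-generalized eigenspace of $(\mu(\bar R), \mu(y))$. Since $\mu(\bar S)$ and $\mu(\bar T)$ commute and preserve $V$, I would further decompose $V = \bigoplus_{i,j} V_{i,j}$ into simultaneous generalized eigenspaces, where $V_{i,j}$ has $\mu(\bar S)$-eigenvalue $2i$ and $\mu(\bar T)$-eigenvalue $2j$.

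Next, on $V_{i,j}$, the operator $\mu(\bar R - \bar T + \bar S)$ has generalized eigenvalue $x(\bar R) + 2i - 2j$, and $\mu(\bar R + \bar T - \bar S)$ has generalized eigenvalue $x(\bar R) + 2j - 2i$. The key step is the bound that for any class $\beta \in H_2(Y)$, the eigenvalues of $\mu(\beta)$ on $V$ are even integers bounded in absolute value by the Thurston norm $x(\beta)$. Assuming this, the hypothesis $x(\bar R - \bar T + \bar S) \le x(\bar R) - 2k$ gives
\[
    |x(\bar R) + 2i - 2j| \le x(\bar R) - 2k,
\]
whose right-hand comparison forces $2(j - i) \ge 2k$. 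Similarly $x(\bar R + \bar T - \bar S) \le x(\bar R) + 2k$ gives $|x(\bar R) + 2j - 2i| \le x(\bar R) + 2k$, whose right-hand comparison forces $2(j - i) \le 2k$. Combined, $j = i + k$, so $V_{i,j} = 0$ unless $j = i + k$, which means the $2i$-generalized eigenspace of $\mu(\bar S)$ coincides with the $2(i+k)$-generalized eigenspace of $\mu(\bar T)$, as desired. Specializing to $k = 0$ gives the first statement.

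The main obstacle is the Thurston-norm eigenvalue bound on $\mu(\beta)$ itself: Proposition~\ref{prop:eigenvalsOfMuS} as stated covers only connected embedded surfaces of positive genus, whereas a norm-minimizing representative of $\beta$ may be disconnected. To handle this I would take a norm-minimizing representative $\Sigma$, discard any sphere components (which contribute nothing to either the Thurston norm or $\mu$, using that $\mu$ of a sphere class vanishes on the top eigenspace $V$ by the genus-zero case of the adjunction inequality), apply Proposition~\ref{prop:eigenvalsOfMuS} to each remaining positive-genus component, and sum using additivity of $\mu$. Since $\sum_i (2g(\Sigma_i) - 2) = \sum_i (-\chi(\Sigma_i)) = x(\Sigma) = x(\beta)$ over the positive-genus components, the eigenvalues of $\mu(\beta)$ on $V$ are bounded in absolute value by $x(\beta)$, completing the argument.
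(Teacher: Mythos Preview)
Your proof is correct and follows essentially the same approach as the paper: decompose $I_*(Y|\bar R)_{\eta+\alpha}$ into simultaneous generalized eigenspaces of $\mu(\bar S)$ and $\mu(\bar T)$, then use additivity of $\mu$ and the eigenvalue bound applied to the combined classes $\bar R \pm (\bar S - \bar T)$ to force $j = i+k$. Your final paragraph, where you extend Proposition~\ref{prop:eigenvalsOfMuS} from a single connected surface to a Thurston-norm bound by decomposing a norm-minimizing representative into components, is a detail the paper leaves implicit; its proof simply invokes Proposition~\ref{prop:eigenvalsOfMuS} directly on the homology classes.
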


\begin{proof}
	We restrict attention to the $2$-generalized eigenspace $\Eig(\mu(y),2)$ of $\mu(y)$. The $i$-graded part of $I_*(Y|\bar{R})_{\eta+\alpha} = \Eig(\mu(\bar{R}),x(\bar{R}))$ with respect to $\bar{S}$ is by definition \[
		\Eig(\mu(\bar{R}),x(\bar{R})) \cap \Eig(\mu(\bar{S}),2i).
	\]Observe that $\Eig(\mu(\bar{R}),x(\bar{R})) \cap \Eig(\mu(\bar{S}),2i) \cap \Eig(\mu(\bar{T}),2j)$ is contained in \[
		\Eig(\mu(\bar{R} + \bar{S} - \bar{T}),x(\bar{R}) + 2i - 2j) \cap \Eig(\mu(\bar{R} - \bar{S} + \bar{T}),x(\bar{R}) - 2i + 2j)
	\]by additivity of $\mu$. Since the Thurston norm of the class $\bar{R} + \bar{S} - \bar{T}$ is at most $x(\bar{R}) - 2k$, the first generalized eigenspace in the intersection is zero when $j < i + k$ by Proposition~\ref{prop:eigenvalsOfMuS}. Similarly, the second is zero when $j > i + k$ because the Thurston norm of $\bar{R} - \bar{S} + \bar{T}$ is at most $x(\bar{R}) + 2k$. Thus \begin{align*}
		\Eig(\mu(\bar{R}),x(\bar{R})) \cap \Eig(\mu(\bar{S}),2i) &= \bigoplus_{j\in\Z} \Eig(\mu(\bar{R}),x(\bar{R})) \cap \Eig(\mu(\bar{S}),2i) \cap \Eig(\mu(\bar{T}),2j)\\
		&= \Eig(\mu(\bar{R}),x(\bar{R})) \cap \Eig(\mu(\bar{S}),2i) \cap \Eig(\mu(\bar{T}),2(i+k)).
	\end{align*}By the same argument with the roles of $\bar{S}$ and $\bar{T}$ reversed, we find that \[
		\Eig(\mu(\bar{R}),x(\bar{R})) \cap \Eig(\mu(\bar{S}),2i) = \Eig(\mu(\bar{R}),x(\bar{R})) \cap \Eig(\mu(\bar{T}),2(i+k)).\qedhere
	\]
\end{proof}
\begin{rem}
	If $\SHI(M,\gamma) \neq 0$, then the Thurston norm of $[\bar{R}]$ is exactly $x(\bar{R}) = 2g(\bar{R}) - 2$. It follows that the upper bounds on the Thurston norms of $\bar{R} - \bar{T} + \bar{S}$ and $\bar{R} + \bar{T} - \bar{S}$ are actually equalities. 
\end{rem}

In applications of the following lemma, $Q$ will be a surface representing the difference of homology classes of surfaces $\bar{S},\bar{T}$ used to define a grading. 

\begin{lem}\label{lem:ThurstonNormCalcForQ}
	Let $(Y,\bar{R},\eta,\alpha)$ be a closure of a balanced sutured manifold $(M,\gamma)$, and let $Q$ be a closed oriented surface embedded in $Y$ which meets $\bar{R}$ transversely in $n$ circles. Let $X$ be the exterior of $\bar{R}$ in $Y$ so that $\partial X = \bar{R}_+ \cup - \bar{R}_-$ where each of $\bar{R}_+,\bar{R}_-$ is oriented in the same way as $\bar{R}$. Let $Q_X$ be the surface $Q$ viewed in $X$, so that it is properly embedded in $X$ with $n$ boundary components on each of $\bar{R}_+$ and $\bar{R}_-$. 

	Suppose $Q_X$ may be isotoped rel boundary so that it is the union along boundary components of three surfaces with boundary $Q_X^+ \cup Q_X^A \cup Q_X^-$ with the following descriptions. The surface $Q_X^A$ is the disjoint union of $m$ properly embedded annuli in $X$, where each annulus connects $\bar{R}_+$ with $\bar{R}_-$. The surface $Q_X^+$ is a compact subsurface of $\bar{R}_+$ with $n + m$ boundary components with the same orientation as $\bar{R}_+$, and $Q_X^-$ is a compact subsurface of $\bar{R}_-$ with $n + m$ boundary components with the opposite orientation as $\bar{R}_-$. We also require that the complement of $Q_X^+$ in $\bar{R}_+$ and the complement of $Q_X^-$ in $\bar{R}_-$ are connected. 

	Then the homology class $\bar{R} + Q$ has Thurston norm at most $x(\bar{R}) - \chi(Q_X^+) + \chi(Q_X^-)$ while $\bar{R} - Q$ has Thurston norm at most $x(\bar{R}) + \chi(Q_X^+) - \chi(Q_X^-)$, as long as these two numbers are nonnegative. 
\end{lem}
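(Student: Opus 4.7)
The plan is to produce, for each homology class $[\bar{R}] \pm [Q]$, an explicit embedded closed oriented surface in $Y$ whose Euler characteristic matches the target. Using the decomposition $Q_X = Q_X^+ \cup Q_X^A \cup Q_X^-$ after the isotopy rel boundary, I would work inside $X$ and set
\[
F_+ := \overline{\bar{R}_- \setminus Q_X^-} \cup Q_X^A \cup Q_X^+, \qquad F_- := \overline{\bar{R}_+ \setminus Q_X^+} \cup Q_X^A \cup Q_X^-.
\]
These assemble into closed embedded surfaces after regluing $X$ to recover $Y$: on each $\bar{R}_\pm$ the $m$ inner circles of $\partial Q_X^\pm$ coincide with the $\bar{R}_\pm$-ends of $Q_X^A$, and the $n$ outer circles of $\partial(\bar{R}_\mp \setminus Q_X^\mp)$ match the $n$ outer circles of $\partial Q_X^\pm$ in $Y$, since both are the intersection circles $Q \cap \bar{R}$.

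With $\overline{\bar{R}_\pm \setminus Q_X^\pm}$ given the orientation inherited from $\bar{R}$ and the remaining pieces oriented so that boundary orientations oppose at every gluing circle, a direct orientation check will show that $F_\pm$ represents $[\bar{R}] \pm [Q]$, up to an overall sign that is immaterial for the Thurston norm and up to a possible reversal of the orientation of $Q$ that merely swaps the two labels. Since all gluings occur along circles (contributing $0$ to Euler characteristic) and $Q_X^A$ is a union of annuli, the Euler characteristic is additive and, using $\chi(\overline{\bar{R}_\pm \setminus Q_X^\pm}) = \chi(\bar{R}) - \chi(Q_X^\pm)$,
\[
\chi(F_\pm) = \chi(\bar{R}) \pm \chi(Q_X^+) \mp \chi(Q_X^-).
\]

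The bound will then follow from $x([\bar{R}] \pm [Q]) \le x(F_\pm) \le -\chi(F_\pm)$, where the second inequality requires $F_\pm$ to have no components of positive Euler characteristic. The connectedness hypotheses on $\bar{R}_\pm \setminus Q_X^\pm$, together with the standing assumption that $\bar{R}$ has no sphere components, will ensure this whenever the target quantity $-\chi(F_\pm)$ is nonnegative, yielding exactly the claimed bound
\[
x([\bar{R}] \pm [Q]) \le -\chi(F_\pm) = x(\bar{R}) \mp \chi(Q_X^+) \pm \chi(Q_X^-)
\]
in that range. The main obstacle I expect is the orientation bookkeeping needed to confirm that $F_\pm$ really represents $[\bar{R}] \pm [Q]$ (as opposed to some other combination), together with the verification that no sphere components of $F_\pm$ arise when $-\chi(F_\pm) \ge 0$; both should be tractable once the three pieces are arranged and glued as above.
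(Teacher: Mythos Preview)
Your approach is correct and essentially the same as the paper's: both build an explicit representative of $[\bar{R}] \pm [Q]$ by replacing $Q_X^\mp$ with $\overline{\bar{R}_\mp \setminus Q_X^\mp}$ and retaining the remaining pieces $Q_X^A$ and $Q_X^\pm$ of $Q_X$, then compute $\chi$ additively and use the connectedness hypothesis to rule out sphere components. The paper phrases the homology verification as a modification of the $2$-cycle $(\{0\}\times\bar{R}) \cup Q$ via the $3$-chain $[0,1]\times Q_X^-$ inside a product neighborhood $[-1,1]\times\bar{R}$, but the resulting embedded surface is exactly your $F_+$ (together with the annuli $[-1,1]\times c_i$ needed to close it up in $Y$).
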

\begin{proof}
	Assume that $Q_X$ has been isotoped to the form described. Up to isotopy, the closed surface $Q$ is recovered from $Q_X \subset Y$ by attaching $n$ disjoint annuli within the regular neighborhood of $\bar{R}$ which we parametrize as $[-1,1] \x \bar{R}$ as oriented manifolds. It follows that $\bar{R}_+ = \{-1\} \x \bar{R}$ while $\bar{R}_- = \{1\} \x \bar{R}$. If $c_1,\ldots,c_n$ are the original circles of intersection between $Q$ and $\bar{R}$, then the $n$ annuli that we attach to $Q_X$ are $[-1,1] \x c_i$. 

	Now consider the $2$-cycle $(\{0\} \x \bar{R}) \cup Q$ which represents the homology class $\bar{R} + Q$. Within $[0,1] \x \bar{R}$, this $2$-cycle consists of $\{0\} \x \bar{R}$, the annuli $[0,1] \x c_i$ and the surface $\{1\} \x Q_X^-$ with the orientation opposite to that of $\{1\}\x \bar{R} = \bar{R}_-$. Let $d_1,\ldots,d_m$ be the $m$ boundary components of $Q_X^-$ which are the $m$ boundary components of $Q_X^A$ lying on $\bar{R}_-$. The $3$-chain $[0,1] \x Q_X^-$ shows us that we may swap $\{0,1\}\x Q_X^- \cup \bigcup_i [0,1] \x c_i$ for $\bigcup_i [0,1] \x d_i$ and obtain a homologous surface. The effect is that the Euler characteristic has dropped by $2\cdot \chi(Q_X^-)$, and the resulting surface can be smoothed out. This resulting embedded surface has Euler characteristic $\chi(\bar{R}) + \chi(Q_X^+) - \chi(Q_X^-)$. From the assumption that the complement of $Q_X^-$ is connected, the desired claim on Thurston norm follows. 

	By applying the same argument to the $2$-cycle $(\{0\} \x \bar{R}) \cup -Q$ with a modification within $[-1,0] \x R$, the other claim follows as well.
\end{proof}

\begin{lem}\label{lem:gradingShiftF}
	Let $S$ and $T$ be admissible surfaces in a balanced sutured manifold $(M,\gamma)$ in the same relative homology class. 
	Suppose that $\partial T$ and $\partial S$ are disjoint, and that $\partial T - \partial S$ is the boundary of a subsurface $F$ of $\partial M$, where the orientation of $F$ matches that of $\partial M$. Then $S$ and $T$ determine the same relative $\Z$-grading on $\SHI(M,\gamma)$. In particular \[
		\SHI(M,\gamma,S,i) = \SHI(M,\gamma,T,i+k)
	\]where $k = \chi(F \cap R_-(\gamma)) - \chi(F \cap R_+(\gamma))$. 
\end{lem}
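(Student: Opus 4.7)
The plan is to prove the lemma by realizing the gradings defined by $S$ and $T$ as two splittings of a single sutured instanton homology group $I_*(Y \mid \bar R)_{\eta + \alpha}$ associated to a common closure, and then using $F$ to produce a surface representative of $\bar T - \bar S \in H_2(Y)$ whose Thurston-norm behavior is controlled by Lemma~\ref{lem:ThurstonNormCalcForQ}. The grading shift will then be read off via Lemma~\ref{lem:muMapThurstonNorm}.

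First, I would apply Proposition~\ref{prop:simultaneousClosures} to produce a single closure $(Y,\bar R,\eta,\alpha)$ of $(M,\gamma)$ in which both $S$ and $T$ extend to closed surfaces $\bar S$ and $\bar T$. This requires choosing an auxiliary surface $B$ of sufficiently large genus, balanced pairings $\delta_S, \delta_T \subset B$ for $\partial S \cap s(\gamma)$ and $\partial T \cap s(\gamma)$ (the hypothesis that $\partial S$ and $\partial T$ are disjoint is what lets them be realized simultaneously in a single auxiliary surface), and a gluing homeomorphism $h \colon \bar R_+ \to \bar R_-$ simultaneously compatible with both closures. Since $S$ and $T$ are relatively homologous, no further stabilizations are required, and both induced gradings on $\SHI(M,\gamma)$ are realized on the same ambient Floer group.

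Second, I would use $F$ to build a surface $Q$ in $Y$ representing $\bar T - \bar S$. The key observation is that the $2$-chain $T - S - F$ is an \emph{absolute} $2$-cycle in $M$: its boundary $\partial(T-S) - \partial F = (\partial T - \partial S) - (\partial T - \partial S) = 0$ vanishes, and since $[T] = [S] \in H_2(M,\partial M)$ one may choose $F$ so that $T - S - F$ is null-homologous, hence bounds a $3$-chain $E \subset M$. Accordingly, $\bar T - \bar S$ is homologous in $Y$ to the $2$-cycle $Q$ obtained by gluing $F \subset \partial M$ to the auxiliary-surface pieces $[-1,1] \times (\delta_T - \delta_S) \subset [-1,1] \times B$ and to the cylindrical collar pieces $[1,3] \times (\partial T' - \partial S') \subset [1,3] \times \bar R_+$. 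After cutting along $\bar R$, I would isotope $Q_X$ into the form $Q_X^+ \cup Q_X^A \cup Q_X^-$ of Lemma~\ref{lem:ThurstonNormCalcForQ}, with $Q_X^\pm$ identified with $F \cap R_\pm(\gamma) \subset R_\pm(\gamma) \subset \bar R_\pm$, and with $Q_X^A$ consisting of annuli built from $F \cap A(\gamma)$ (which, because $\partial S$ and $\partial T$ meet the sutures in disjoint arcs, decomposes into annular strips on each annular component of $A(\gamma)$) together with cylindrical pieces $[1,3] \times \mathrm{arcs}$ from the collar region.

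The main obstacle will be verifying the three technical conditions of Lemma~\ref{lem:ThurstonNormCalcForQ}: that $Q_X^A$ is a disjoint union of properly embedded annuli, that the complements $\bar R_\pm \setminus Q_X^\pm$ are connected, and that the induced boundary orientations on $Q_X^\pm$ agree with the ambient orientation of $\bar R_\pm$. Connectedness of the complements is the most delicate point; it can be arranged by choosing the genus of $B$ sufficiently large and selecting the gluing homeomorphism $h$ so that $\partial(F \cap R_\pm(\gamma)) \cup (\{\pm 1\} \times \delta_S) \cup (\{\pm 1\} \times \delta_T)$ does not disconnect $\bar R_\pm$. Once the decomposition is established, Lemma~\ref{lem:ThurstonNormCalcForQ} produces the bounds $x(\bar R \pm Q) \leq x(\bar R) \mp \chi(F \cap R_+(\gamma)) \pm \chi(F \cap R_-(\gamma))$, and Lemma~\ref{lem:muMapThurstonNorm}, applied with $\bar S$ and $\bar T$ in place of its namesake surfaces, converts these Thurston-norm bounds into the asserted identity $\SHI(M,\gamma,S,i) = \SHI(M,\gamma,T,i+k)$ with $k = \chi(F \cap R_-(\gamma)) - \chi(F \cap R_+(\gamma))$.
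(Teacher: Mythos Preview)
Your proposal follows essentially the same strategy as the paper's proof: build a single closure in which both $S$ and $T$ extend to closed surfaces, produce from $F$ a closed surface $Q$ representing $[\bar T]-[\bar S]$, and then feed $Q$ into Lemmas~\ref{lem:ThurstonNormCalcForQ} and~\ref{lem:muMapThurstonNorm} to read off the shift $k=\chi(F\cap R_-(\gamma))-\chi(F\cap R_+(\gamma))$. A couple of details differ from the paper and are worth tightening: the paper does \emph{not} invoke Proposition~\ref{prop:simultaneousClosures} here (that proposition would in general replace $S$ by a stabilized $S'$), but instead builds the closure directly by choosing disjoint arc systems $\delta_S,\delta_T\subset B$ for the disjoint boundaries $\partial S,\partial T$; and the paper's decomposition of $Q_X$ is obtained by an explicit isotopy that pushes each interior annulus $[-1,1]\times a$ (with $a\subset B$ the cross-section) to $(\{-1\}\times A)\cup([-1,1]\times a^*)\cup(\{1\}\times A)$, so that $Q_X^\pm$ end up containing both $F\cap R_\pm(\gamma)$ \emph{and} annular pieces in $\{\pm1\}\times B$ (the latter having $\chi=0$, so your identification $\chi(Q_X^\pm)=\chi(F\cap R_\pm(\gamma))$ survives). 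Your ``collar pieces $[1,3]\times\mathrm{arcs}$'' do not belong in $Q_X^A$ --- once you cut along $\bar R$, what remains in $X$ is the preclosure, and the relevant annuli come only from $F\cap A(\gamma)$ together with $[-1,1]\times(\delta_T\cup\delta_S)$.
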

\begin{proof}
	Let $B$ be a connected auxiliary surface for $(M,\gamma)$ with a fixed identification $\partial B = s(\gamma)$, and let $X$ be the associated preclosure $M \cup [-1,1] \x B$ with oriented boundary $\bar{R}_+ - \bar{R}_-$ where $\bar{R}_\pm = R_\pm(\gamma) \cup \{\pm1\} \x B$. Choose two collections $\delta_T$ and $\delta_S$ of properly embedded arcs in $B$ so that all of the arcs in $\delta_T \cup \delta_S$ are disjoint, and $\partial \delta_T = \partial T \cap \partial B$ and $\partial \delta_S = \partial S \cap \partial B$. The simple closed curves $((\partial T \cup \partial S) \cap R_+(\gamma)) \cup (\{1\} \x (\delta_T \cup \delta_S))$ on $\bar{R}_+$ inherit orientations from $\partial T \cup \partial S$, and we orient $\delta_T \cup \delta_S$ to match these orientations. Furthermore, we assume that the genus of $B$ is large enough that the arcs in $\delta_T \cup \delta_S$ are all homologically independent in $H_1(B,\partial B)$ and $\delta_T$ and $\delta_S$ give rise to balanced pairings for $T$ and $S$, respectively. Fix a diffeomorphism $h\colon \bar{R}_+ \to \bar{R}_-$ which identifies the boundary components of $T \cup ([-1,1] \x \delta_T)$ on $\bar{R}_+$ with those on $\bar{R}_-$, and similarly for $S \cup ([-1,1] \x \delta_S)$. Let $Y$ be the result of gluing $[1,3] \x \bar{R}_+$ to $X$ using $\Id \colon 1 \x \bar{R}_+ \to \bar{R}_+$ and $h\colon 3\x\bar{R}_+ \to \bar{R}_-$. Let $\bar{T}$ and $\bar{S}$ be the natural extensions of $T \cup ([-1,1] \x \delta_T)$ and $S \cup ([-1,1] \x \delta_S)$, respectively, in $X$.

	Let $Q_X \subset X$ be the union of $F$ with $[-1,1] \x (\delta_T \cup \delta_S)$. The natural closed extension of $F$ in $Y$ is a closed oriented surface $Q$ in the same homology class of $\bar{T} - \bar{S}$. We now isotope $Q_X$ so that it is in the form described in Lemma~\ref{lem:ThurstonNormCalcForQ}. Observe that the intersection of $Q_X$ with $\partial X$ is the union \[
		(F \cap (R_-(\gamma) \cup R_+(\gamma)) \cup (\{-1,1\} \x (\delta_T \cup \delta_S))
	\]where the first term is a $2$-manifold with boundary while the second term is a union of arcs. The intersection of $Q_X$ with the interior of $X$ is a collection of annuli embedded in $(-1,1) \x B$ whose cross sections in $t \x B$ are all the same. Fix one of these annuli, and let $a$ be its intersection with $B$ in any cross section. At least one subarc of $a$ lies on along the boundary of $B$ so there is a unique pushoff of $a$ which still lies in $B$. Let $a^*$ be this pushoff, so that $a^*$ and $a$ cobound an annulus $A$ in $B$. Within $Q_X$, replace the fixed annulus with \[
		(\{-1\} \x A) \cup ([-1,1] \x a^*) \cup (\{1\} \x A).
	\]This can be achieved by an isotopy of $Q_X$, and we apply this to each annulus of $Q_X \cap \Int(X)$. The resulting surface, which we refer to now as $Q_X$, is of the form described in Lemma~\ref{lem:ThurstonNormCalcForQ}. 

	To compute $\chi(Q_X^+) - \chi(Q_X^-)$, we observe that $Q_X^+ \cap B$ and $Q_X^+\cap B$ are identical so \begin{align*}
		\chi(Q_X^+) - \chi(Q_X^-) &= \chi(Q_X^+ \cap R_+(\gamma)) - \chi(Q_X^- \cap R_-(\gamma)) \\
		&= \chi(F \cap R_+(\gamma)) - \chi(F \cap R_-(\gamma)).
	\end{align*}The claim now follows from Lemmas \ref{lem:muMapThurstonNorm} and \ref{lem:ThurstonNormCalcForQ}. 
\end{proof}

\begin{lem}\label{lem:gradingShiftSameSide}
	Let $S$ be an admissible surface in a balanced sutured manifold $(M,\gamma)$. Suppose $T$ is obtained by two stabilizations of $S$ satisfying the following two conditions: \begin{enumerate}[itemsep=-0.5ex]
		\item the Whitney discs introduced in the two stabilizations are disjoint,
		\item any small positive (with respect to the orientation of $S$) parallel pushoff of $\partial S$ within $\partial M$ is disjoint from $\partial T$.
	\end{enumerate}Then $\SHI(M,\gamma,S,i) = \SHI(M,\gamma,T,i+k)$ where \[
		k = \begin{cases}
			1 & \text{the stabilizations are both positive}\\
			0 & \text{one is negative and one is positive}\\
			-1 & \text{both are negative.}
		\end{cases}
	\]
\end{lem}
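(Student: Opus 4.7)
The plan is to reduce Lemma~\ref{lem:gradingShiftSameSide} to Lemma~\ref{lem:gradingShiftF} by exhibiting an explicit bounding surface $F \subset \partial M$. First I would use condition~(2) to take a small positive parallel pushoff $\partial S^*$ of $\partial S$ in $\partial M$ that is disjoint from $\partial T$. The associated surface $S^*$, obtained by a small isotopy of $S$ supported near $\partial S$, is admissible and isotopic to $S$ through admissible surfaces (the pushoff preserves the transverse intersection pattern with $s(\gamma)$). By the discussion preceding Proposition~\ref{prop:sameRelGrading}, $S$ and $S^*$ determine the same $\Z$-grading on $\SHI(M,\gamma)$, so it suffices to compute the grading shift from $S^*$ to $T$.

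Next, identify the subsurface $F \subset \partial M$ with $\partial F = \partial T - \partial S^*$ and the orientation of $\partial M$. Geometrically, $F$ is the region trapped between $\partial T$ and $\partial S^*$: outside the stabilization discs it is a thin strip running along $\partial S$, while inside each disc $D_i$ it forms a disc-shaped pocket straddling $s(\gamma)$ and containing the $i$-th Whitney disc together with its mirror portion on the opposite side of the suture. Condition~(1) ensures that the two pockets are disjoint, so their contributions to the Euler characteristics $\chi(F \cap R_\pm(\gamma))$ decompose independently.

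The heart of the argument is the Euler characteristic computation. The strip portion contributes $0$ to $\chi(F \cap R_-(\gamma)) - \chi(F \cap R_+(\gamma))$, since its partition by $s(\gamma)$ into arcs is balanced between $R_+$ and $R_-$. Each pocket contributes to the difference according to the orientation of its Whitney disc as a subsurface of $\partial M$; this is governed by whether the Whitney disc lies in $R_+(\gamma)$ or $R_-(\gamma)$, which in turn is dictated by the sign of the stabilization through the orientation convention in the definition of positive and negative stabilizations. A careful case analysis then shows that the combined contributions yield
\[
 k = \chi(F \cap R_-(\gamma)) - \chi(F \cap R_+(\gamma)) = \begin{cases}1 & p = 2,\, q = 0,\\ 0 & p = q = 1,\\ -1 & p = 0,\, q = 2,\end{cases}
\]
where $p$ and $q$ count the positive and negative stabilizations. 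Applying Lemma~\ref{lem:gradingShiftF} gives $\SHI(M,\gamma,S^*,i) = \SHI(M,\gamma,T,i+k)$, and composition with the grading-preserving isotopy $S \leftrightarrow S^*$ yields the desired equality.

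The main obstacle is the orientation bookkeeping. One has to distinguish between the ``thin'' and ``big'' candidates for $F$ in $\partial M \setminus (\partial T \cup \partial S^*)$ to pick the one whose inherited boundary orientation matches $\partial T - \partial S^*$, identify whether each Whitney disc lies in $R_+$ or in $R_-$ (recalling that $R_+$ inherits the orientation of $\partial M$ while $R_-$ inherits the reverse), and match these data to the positive/negative sign of each stabilization. Condition~(2) is exactly what guarantees that the positive pushoff direction can be chosen consistently along all of $\partial S$, so that the strip and both pockets assemble into a single coherently oriented $F$ realizing $\partial T - \partial S^*$ as its boundary.
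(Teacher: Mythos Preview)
Your proposal is correct and follows essentially the same route as the paper: push off $\partial S$ in the positive direction using condition~(2), take $F$ to be the region of $\partial M$ bounded by $\partial T - \partial S^*$, compute $\chi(F\cap R_-(\gamma)) - \chi(F\cap R_+(\gamma))$ in terms of the signs of the two stabilizations, and invoke Lemma~\ref{lem:gradingShiftF}. The paper's proof is terser---it records only the key observation that condition~(2) forces each Whitney disc $D_i$ to lie in $R_+(\gamma)$ exactly when the corresponding stabilization is negative, and immediately reads off $k$ as the number of positive minus the number of negative stabilizations---but your more explicit decomposition of $F$ into a strip and two pockets is a correct elaboration of the same computation.
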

\begin{proof}
	Apply a small isotopy of $S$, which extends a small positive parallel pushoff of $\partial S$ so that $\partial T$ and $\partial S$ are disjoint. 
	Let $F$ be the oriented subsurface of $\partial M$ with boundary $\partial T - \partial S$. Let $D_1,D_2$ be the Whitney discs of the stabilizations. Note the second condition in the statement of the lemma implies that $D_i$ lies in $R_+(\gamma)$ if and only if the stabilization is negative. Thus \[
		k = \chi(F \cap R_-(\gamma)) - \chi(F \cap R_+(\gamma))
	\]is just the number of positive stabilizations minus the number of negative stabilizations. The claim now follows from Lemma~\ref{lem:gradingShiftF}. 
\end{proof}

\begin{lem}\label{lem:gradingShiftOppSide}
	Let $S$ and $T$ be admissible surfaces, both obtained from a common surface $V$ by a single stabilization of the same sign. Also suppose that any small positive pushoff of $\partial S$ is disjoint from $\partial T$. Then $S$ and $T$ define the same grading. 
\end{lem}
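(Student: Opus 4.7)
The plan is to reduce to Lemma~\ref{lem:gradingShiftSameSide} by constructing a single admissible surface $W$ reachable both from $S$ and from $T$ by two same-sign stabilizations. Let $D_S, D_T \subset \partial M$ denote the Whitney discs of the given stabilizations $V \rightsquigarrow S$ and $V \rightsquigarrow T$; after a small isotopy these may be taken disjoint. I pick a third disc $D \subset \partial M$, disjoint from $D_S$ and $D_T$, supporting a stabilization of the common sign, and take $W$ to be the surface obtained from $V$ by performing all three same-sign stabilizations in $D_S$, $D_T$, and $D$. Equivalently, $W$ is $S$ stabilized in $D_T$ and $D$, and also $T$ stabilized in $D_S$ and $D$. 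Since $W$ differs from $V$ by an odd number of stabilizations, it has the same admissibility parity as $S$ and $T$, so $W$ is admissible.

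The main step is to verify the pushoff hypothesis~(2) of Lemma~\ref{lem:gradingShiftSameSide} for both two-stabilization passages $S \rightsquigarrow W$ and $T \rightsquigarrow W$. Away from $D_S \cup D_T \cup D$ the three boundaries $\partial S$, $\partial T$, $\partial W$ all agree with $\partial V$, so a small positive pushoff of $\partial S$ avoids $\partial W$ there by smallness. Inside $D_T$, a positive pushoff of $\partial S$ is a pushoff of $\partial V$ that must avoid the $\partial T$-style finger of $\partial W$; this is exactly the hypothesis of the current lemma, which asserts that a positive pushoff of $\partial S$ is disjoint from $\partial T$. Symmetrically, the pushoff of $\partial T$ in $D_S$ avoids $\partial W$. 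Inside $D$, both $\partial S$ and $\partial T$ agree with $\partial V$; here I must arrange the finger of the new stabilization to lie on the side of $\partial V$ opposite the positive pushoff direction, which can be done by choosing $D$ small and exploiting the freedom in placing the finger consistently with the prescribed sign.

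With both pushoff hypotheses in hand, Lemma~\ref{lem:gradingShiftSameSide} applied to each of $S \rightsquigarrow W$ and $T \rightsquigarrow W$ produces a common grading shift $k \in \{+1,-1\}$ matching the common sign of the stabilizations, yielding $\SHI(M,\gamma,S,i) = \SHI(M,\gamma,W,i+k) = \SHI(M,\gamma,T,i)$ for every $i \in \Z$. The step I expect to be the main obstacle is the configuration inside the auxiliary disc $D$: the positive pushoff direction of $\partial V$ and the direction of the finger produced by a same-sign stabilization must be compatible, and the fact that the hypothesis of the present lemma already enforces this same compatibility between $\partial S$ and $\partial T$ in $D_T$ is what ultimately permits a consistent choice of $D$.
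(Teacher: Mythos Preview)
Your approach is genuinely different from the paper's, and it has a real gap. The paper does not introduce an auxiliary surface $W$ at all: it simply pushes $\partial S$ off in the positive direction so that $\partial S$ and $\partial T$ become disjoint, then applies Lemma~\ref{lem:gradingShiftF} directly to the cobounding subsurface $F\subset\partial M$. The key observation is that the pushoff hypothesis forces the Whitney disc of $S$ to lie in $R_+(\gamma)$ and the Whitney disc of $T$ in $R_-(\gamma)$ (or vice versa), so $\chi(F\cap R_+)=\chi(F\cap R_-)$ and the shift $k$ is zero.

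The gap in your argument is the sentence ``Symmetrically, the pushoff of $\partial T$ in $D_S$ avoids $\partial W$.'' The hypothesis is \emph{not} symmetric in $S$ and $T$. In $D_S$ the hypothesis says that the positive pushoff of the \emph{finger curve} $\partial S$ misses the \emph{straight curve} $\partial T=\partial V$; this forces the $S$-finger to lie on the positive side of $\partial V$. But then the positive pushoff of $\partial T=\partial V$ (a straight curve shifted to the positive side) necessarily meets that finger, so condition~(2) of Lemma~\ref{lem:gradingShiftSameSide} fails for $T\rightsquigarrow W$ in $D_S$. Dually, in $D_T$ the hypothesis forces the $T$-finger to the negative side of $\partial V$, so only the \emph{positive} pushoff works for $S\rightsquigarrow W$ there. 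Consequently $S\rightsquigarrow W$ needs the positive pushoff while $T\rightsquigarrow W$ needs the negative pushoff, and no placement of the auxiliary finger in $D$ (which must lie on one side or the other) is compatible with both. Your proposed freedom ``in placing the finger consistently with the prescribed sign'' does not resolve this: the two existing fingers already lie on opposite sides, and a single third finger cannot simultaneously avoid both pushoff directions.
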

\begin{proof}
	Apply an isotopy of $S$ extending a small positive parallel pushoff of $\partial S$ so that $\partial S$ and $\partial T$ are disjoint, and let $F$ be the oriented subsurface of $\partial M$ with boundary $\partial T - \partial S$. Lemma~\ref{lem:gradingShiftF} implies that $S$ and $T$ define the same grading if $\chi(F \cap R_+(\gamma)) = \chi(F \cap R_-(\gamma))$. Because both $S$ and $T$ are obtained from $V$ by a stabilization of the same sign, one Whitney disc lies in $R_+(\gamma)$ while the other lies in $R_-(\gamma)$ from which it follows that $\chi(F \cap R_+(\gamma)) = \chi(F \cap R_-(\gamma))$. 
\end{proof}

\begin{proof}[Proof of Proposition~\ref{prop:gradingShift}]
	It suffices to prove the result when $T$ is obtained from $S$ by exactly two stabilizations. If \begin{enumerate}[itemsep=-0.5ex]
		\item the two Whitney discs are disjoint,
		\item any small positive parallel pushoff of $\partial S$ is disjoint from $\partial T$
	\end{enumerate}then the result is true by Lemma~\ref{lem:gradingShiftSameSide}. If the first condition holds and any small negative parallel pushoff of $\partial S$ is disjoint from $\partial T$, then $-S$ and $-T$ satisfy the conditions of Lemma~\ref{lem:gradingShiftSameSide}. But since a positive stabilization of $S$ is a negative stabilization of $-S$, we have that \[
		\SHI(M,\gamma,S,i) = \SHI(M,\gamma,-S,-i) = \SHI(M,\gamma,-T,-i-k) = \SHI(M,\gamma,T,i+k).
	\]Suppose the first condition holds but small no parallel pushoff of $\partial S$ is disjoint from $\partial T$. Then we can find a surface $T'$ for which Lemma~\ref{lem:gradingShiftOppSide} applies to $T$ and $T'$ and for which Lemma~\ref{lem:gradingShiftSameSide} applies to $S$ and $T'$. The appropriate grading shift for $S$ and $T$ then follows. 

	Finally, if the first condition fails, then we can find surfaces $U,V$ such that \[
		\begin{tikzcd}
			U \ar[r] & V\\
			S \ar[u] & T \ar[u]
		\end{tikzcd}
	\]where $A \to B$ indicates that $B$ can be obtained from $A$ by two stabilizations of appropriate sign with disjoint Whitney discs, from which it follows that the grading shift property holds for $S$ and $T$. 
\end{proof}

\begin{df*}[Equivalent collections of curves]
	Suppose $\alpha$ and $\beta$ are collections of disjoint oriented simple closed curves on a closed oriented surface $\Sigma$. Then $\alpha$ and $\beta$ are \textit{equivalent} if there is a sequence $\delta_1,\ldots,\delta_n$ of collections of disjoint oriented simple closed curves on $\Sigma$ for which \begin{enumerate}[itemsep=-0.5ex]
		\item $\delta_1 = \alpha$ and $\delta_n = \beta$,
		\item $\delta_i$ and $\delta_{i+1}$ are disjoint, and there is a subsurface $F_i$ of $\Sigma$, oriented compatibly with either $\Sigma$ or $-\Sigma$, for which $\partial F_i = \delta_{i+1} - \delta_{i}$.
	\end{enumerate}This defines an equivalence relation on isotopy classes of disjoint oriented simple closed curves, and if $\alpha$ and $\beta$ are equivalent, then they are homologous. 
\end{df*}

\begin{lem}\label{lem:homologousCurvesEquiv}
	Suppose $\alpha$ and $\beta$ are collections of disjoint oriented simple closed curves on a closed oriented surface $\Sigma$. If $\alpha$ and $\beta$ are homologous, then they are equivalent. 
\end{lem}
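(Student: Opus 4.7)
The plan is to show that equivalence coincides with being homologous. One direction is immediate from the definition: each elementary move changes the cycle $\delta_i$ by the boundary $\partial F_i$ of an oriented subsurface of $\Sigma$, so the homology class is preserved along any chain of equivalences.

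For the converse, I first establish a fundamental move: if $c$ is a simple closed curve disjoint from $\delta$ that separates $\Sigma$, then $\delta \cup c \sim \delta$ for either orientation of $c$. This is realized by taking $F$ to be the subsurface of $\Sigma$ which $c$ bounds, oriented by $\Sigma$ or $-\Sigma$ so that $\partial F = c$ with the chosen orientation. A consequence is that any two ambient-isotopic disjoint multicurves are equivalent, since a small isotopy exhibits consecutive multicurves as cobounding a disjoint union of annuli.

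Now suppose $\alpha$ and $\beta$ are homologous. I would reduce to the case that $\alpha$ and $\beta$ are disjoint. Since the algebraic intersection $[\alpha] \cdot [\beta] = [\alpha] \cdot [\alpha] = 0$ by skew-symmetry of the intersection form on an oriented surface, geometric intersections of $\alpha$ and $\beta$ can be canceled via bigon reductions and further equivalence moves (band surgeries along arcs of $\beta$ replacing pieces of $\alpha$, realized as equivalence moves using appropriately oriented band regions). Once $\alpha$ and $\beta$ are disjoint, I use that $\beta - \alpha$ is null-homologous to write $\beta - \alpha = \partial F$ for some $2$-chain $F = \sum n_i V_i$, where the $V_i$ are the components of $\Sigma \setminus (\alpha \cup \beta)$. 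Normalizing $F$ by adding a multiple of the fundamental class so that $n_i \geq 0$ with $\min n_i = 0$, the sub-level sets $F^{(k)} = \bigcup_{n_i \geq k} V_i$ are subsurfaces of $\Sigma$ whose boundaries telescope to $\beta - \alpha$, producing a sequence of intermediate multicurves $\alpha = \delta_0, \delta_1, \dots, \delta_n = \beta$, where each consecutive pair cobounds the subsurface $F^{(k)}$ with the appropriate orientation from $\Sigma$ or $-\Sigma$.

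The main obstacle is ensuring that consecutive multicurves in this sequence are genuinely disjoint as subsets of $\Sigma$, as required by the definition of equivalence. A single boundary component $c$ of some $V_i$ may appear in multiple $\partial F^{(k)}$, so the raw level-set boundaries produce multicurves that share components; one must replace each such $c$ by a parallel pushoff into the appropriate sub-level set at each stage, inserting small annular moves to identify each pushoff with $c$ itself. Combined with the intersection-cancellation step that precedes this construction, this is where the main technical work lies.
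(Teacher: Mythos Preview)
Your level-set approach is a genuinely different strategy from the paper's. The paper first invokes Gabai's reduction (Lemma 3.10 of \cite{Gab83}) to replace each of $\alpha,\beta$ by parallel copies of a single curve, then tubes to remove intersections, and finally works inside the complement of $\alpha$ to reduce $\beta$ to parallel copies of the same curve. Your idea of writing $\beta-\alpha=\partial\!\left(\sum n_iV_i\right)$ and peeling off the sublevel sets $F^{(k)}$ is natural and would give a cleaner conceptual proof if the details go through.

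There is a small inaccuracy in your description of the obstacle. Since adjacent coefficients differ by exactly $1$, every curve $c\in\alpha\cup\beta$ lies in \emph{exactly one} boundary $\partial F^{(k)}$. The real issue is that the intermediate multicurves $\delta_k=\delta_{k-1}+\partial F^{(k)}$ share all the curves that do \emph{not} change at step $k$; concretely, $\delta_{k-1}\cap\delta_k$ contains every $c\in\alpha$ with $n_R(c)>k$ and every $c\in\beta$ with $n_L(c)<k$.

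The push-off repair you propose is more delicate than you indicate, and this is where the proposal has a genuine gap. A uniform push-off $\delta_{k-1}\to\delta_{k-1}'$ via annuli is only an equivalence move if all the annuli are oriented by $\Sigma$ (or all by $-\Sigma$), which forces every curve to be pushed in the same chirality (all to its left, or all to its right). But to build a subsurface $G$ with $\partial G=\delta_k-\delta_{k-1}'$ by modifying $F^{(k)}$, the shared $\alpha$-curves (which lie in the interior of $F^{(k)}$) require annuli \emph{removed} from $F^{(k)}$, while the shared $\beta$-curves (which lie in the complement of $F^{(k)}$) require annuli \emph{added}; tracing orientations, these force pushes to the right for $\alpha$-curves and to the left for $\beta$-curves. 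So the single annular move and the $F^{(k)}$-move demand incompatible push-off directions, and you cannot simply ``insert small annular moves'' without further argument. This can be repaired, but it takes real work that you have not supplied, and you say as much. The paper's reduction-to-parallel-copies route sidesteps this bookkeeping entirely.

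The intersection-cancellation step is also thinner than it looks: without first reducing $\alpha$ to parallel copies of a single curve, an individual component of $\alpha$ need not have algebraic intersection zero with $\beta$, so you cannot always find a cancelling pair of intersection points on a single component to tube along. The paper's preliminary reduction is what guarantees $\mu\cdot\nu=0$ before tubing begins.
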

\begin{proof}
	First, $\alpha$ is equivalent to some number of parallel copies of a simple closed curve $\mu$ by Lemma 3.10 of \cite{Gab83}. We repeat the argument here for the reader's convenience. Consider the components of $\Sigma\setminus N(\alpha)$ where $N(\alpha)$ is a regular neighborhood of $\alpha$. If any component has just a single boundary component, then we may erase the corresponding component of $\alpha$ from the collection and obtain an equivalent collection with fewer components. If a component $S$ of $\Sigma\setminus N(\alpha)$ contains $3$ or more boundary components, then there is an orientation of $S$ for which the orientation of at least two of its boundary components inherited from $\alpha$ coincides with its boundary orientation. Using a pair of pants within $S$, we may again reduce the number of components in $\alpha$. Hence we may assume every component of $S\setminus N(\alpha)$ has two boundary components. Now fix a component $\mu$ of $\alpha$, and let $S$ be a component of $\Sigma\setminus N(\alpha)$ for which one of its boundary components corresponds to $\mu$. If the other boundary component also corresponds to $\mu$, then $\alpha = \mu$ because $\Sigma$ is connected, so assume that the other boundary component of $S$ corresponds to a different component $\mu'$ of $\alpha$. Then $\alpha$ is equivalent via $S$ to the collection of curves where $\mu'$ is replaced with a parallel copy of $\mu$. Continuing in this way, we see that $\alpha$ is equivalent to some number of parallel copies of $\mu$. 

	Thus we can assume that $\alpha$ consists of $k$ parallel copies of a simple closed curve $\mu$, and similarly $\beta$ consists of $\ell$ parallel copies of a simple closed curve $\nu$. Then $\mu$ and $\nu$ have algebraic intersection number $0$. Think of $\mu$ as a particular component of $\alpha$, and choose an arc $c$ along $\mu$ whose endpoints are two intersection points $\mu \cap \beta$ of opposite sign where the interior of $\mu$ is disjoint from $\beta$. Tubing $\beta$ along $c$ results in an equivalent collection of curves $\beta'$, so by continuing in this way, we may assume that $\alpha$ and $\beta'$ are disjoint, where $\beta'$ no longer must consist only of parallel copies of a simple closed curve. It suffices to show that $\alpha$ and $\beta'$ are equivalent. We can assume that $\Sigma\setminus N(\alpha)$ consists of $k-1$ annuli and a connected component $Z$ containing $\beta'$, because otherwise $\alpha$ is null-homologous. Suppose that no other collection of curves equivalent to $\beta'$ within $Z$ has fewer components than $\beta'$. The same argument as before shows that every component of $Z \setminus N(\beta')$ has at most $2$ boundary components corresponding to curves in $\beta'$, and any component containing a single boundary component corresponding to $\beta'$ also has a boundary component corresponding to $\alpha$. A similar argument as above now shows that $\beta'$ is equivalent to a number of parallel copies of $\mu$. 
\end{proof}

\begin{proof}[Proof of Proposition~\ref{prop:sameRelGrading}]
	By Lemma~\ref{lem:homologousCurvesEquiv}, we know that $\partial S$ and $\partial T$ are equivalent, so let $\delta_1,\ldots,\delta_n$ be a sequence of collections of curves for which $\delta_1 = \partial S$, $\delta_n = \partial T$, and let $F_i$ be a subsurface of $\partial M$ for which $\partial F_i = \delta_{i+1} - \delta_{i}$. Let $S_2$ be the surface obtained by gluing $S$ to $F_1$ along the boundary of $S$ and pushing its interior off of $\partial M$. Let $S_{i+1}$ be obtained in the same way from $S_{i}$ and $F_{i}$ so that $S_n$ is a surface whose boundary is $\delta_n = \partial T$. If $S_2,\ldots,S_{n}$ are all admissible, then $S,S_2,\ldots,S_n$ all determine the same relative $\Z$-grading by Lemma~\ref{lem:gradingShiftF}. Since $S_n$ and $T$ lie in the same relative homology class and have the same boundary, they determine the same absolute $\Z$-grading, so it follows that $S$ and $T$ determine the same relative $\Z$-grading. 

	Suppose $S_i$ is not admissible. Consider the following operation which makes $S_i$ into an admissible surface. Choose a point $p \in s(\gamma)$ which does not lie on $\partial S_i$, and let $D_p$ be a disc neighborhood of $p$ in $\partial M$ whose interior is pushed off of $\partial M$. The union $S_i \cup D_p$ is admissible. By Lemma~\ref{lem:gradingShiftF}, the relative $\Z$-grading determined by $S_i \cup D_p$ does not depend on $p$ while the absolute $\Z$-grading depends on the orientation of $D_p$. We refer to this relative $\Z$-grading as the one that $S_i$ defines when $S_i$ is not admissible. 

	To see that $S$ and $S_2$ define the same relative grading when $S_2$ is not admissible, arbitrarily choose a point $p \in s(\gamma)$ disjoint from both $\partial S = \delta_1$ and $\partial S_2 = \delta_2$, and let $D_p$ be a disc neighborhood of $p$ whose interior is pushed off of $\partial M$, oriented in the same way $F_1$ is oriented. If $p$ lies in the interior of $F_1$, we use the surface $F_1 \setminus D_1$ to apply Lemma~\ref{lem:gradingShiftF} to see that $S$ and $S_2$ determine the same relative $\Z$-grading. If $p$ is not in the interior of $F_1$, we use $F_1 \cup D_p$. The same argument shows that $S_i$ and $S_{i+1}$ determine the same relative $\Z$-gradings so $S$ and $T$ do too. 
\end{proof}

\begin{proof}[Proof of Proposition~\ref{prop:simultaneousClosures}]
	Our proof follows the proof of Theorem 1.12 of \cite{1910.10842}. Arbitrarily choose admissible representatives $T_1,\ldots,T_n$ of the given relative homology classes, and assume that $S,T_1,\ldots,T_n$ all intersect transversely. Let $T_0 = S$. Via isotopies supported near their boundaries, we will achieve the following conditions: \begin{enumerate}[itemsep=-.5ex]
		\item each component of $\partial T_i \cap (R_+(\gamma) \cup R_-(\gamma))$ intersects $\bigcup_{i=0}^n \partial T_i$ in at most one point. 
		\item each positive intersection point $T_i \cap T_j$ with $i < j$ will lie on $R_+(\gamma)$ while each negative intersection point lies on $R_-(\gamma)$. 
	\end{enumerate}Furthermore, no new intersection points between the boundaries of these surfaces will be introduced, and the only isotopies of $S$ that change how it intersects $s(\gamma)$ will be negative stabilizations. 

	Note that further stabilizations preserve the two conditions. Choose balanced pairings for each of $T_0,\ldots,T_n$ in such a way that for each component $\delta$ of $\partial T_i \cap (R_+(\gamma) \cup R_-(\gamma))$ which intersects $\bigcup_i \partial T_i$ in a point, the boundary points of $\delta$ are paired. This is possible after sufficiently many extra stabilizations. Consider the resulting set of curves in a preclosure associated to a auxiliary surface of sufficiently large genus. The first condition implies that each curve on $\bar{R}_+$ intersects at most one other curve, and similarly for $\bar{R}_-$. To see that the curves on $\bar{R}_+$ corresponding to $\partial T_i$ intersect the curves corresponding to $\partial T_j$ in the same number of times that the corresponding curves on $\bar{R}_-$ intersect, note that the intersection of $T_i$ and $T_j$ consists of arcs and circles. The arcs of intersection define a pairing between the positive intersection points and negative. The second condition guarantees that the positive intersection points are on $R_+(\gamma)$ and the negative on $R_-(\gamma)$. Thus there exists a diffeomorphism $h\colon \bar{R}_+\to \bar{R}_-$ taking one set of curves to the other in an orientation-reversing way. 

	We achieve the two conditions stated above in 4 steps. 

	Step 1. Consider an arc $\delta$ of $\partial T_j \cap R(\gamma)$ for some $j \ge 1$. Suppose that either $\delta$ intersects $\partial S$ more than once, or it intersects $\partial S$ exactly once but intersects $\bigcup_{i=1}^n \partial T_i$ at least once. Choose an intersection point of $\delta \cap \partial S$ and apply a finger move to $\delta$ in the direction of $\partial S$ until it reaches $s(\gamma)$ and creates a stabilization. If this arc of $\partial S$ along which the finger move is applied intersects other curves, then do stabilizations to them as well. By repeating this process, we have ensured that every arc of $\partial T_j \cap R(\gamma)$ either intersects $\partial S$ once and nothing else, or it intersects $\bigcup_{i=1}^n \partial T_i$. 

	Step 2. Let $\delta$ be an arc of $\partial S \cap R(\gamma)$, and suppose that it intersects $\bigcup_{i=1}^n \partial T$ more than once. Choose a path in $R(\gamma)$ starting a point on $\delta$ between two intersection points and ending on $s(\gamma)$ whose interior is disjoint from $\partial S$. The interior may intersect $\partial T$. Apply a finger move along this path. We require that the path be chosen so that the resulting stabilization of $S$ is negative. We have now ensured that every arc of $\partial S \cap R(\gamma)$ has at most one intersection point with $\bigcup_{i=1}^n \partial T_i$ and that each arc of $\partial T_j \cap R(\gamma)$ which intersects $\partial S$ does so in a single point and intersects nothing else. 

	Step 3. Now let $\delta$ be an arc of $\partial T_j \cap R(\gamma)$ which intersects $\bigcup_{i=1}^n \partial T_i$ in more than one point. Choose an intersection point with say $\partial T_i$ closest to an endpoint of $\delta$ and apply a finger move to this arc of $\partial T_i$ towards this endpoint of $\delta$ and create a stabilization. The intersection point is now on the other side of $s(\gamma)$ and is with an arc $\delta'$ of $\partial T_j \cap R(\gamma)$. We then do a small stabilization of $\delta'$ as shown in Figure~\ref{fig:twoStabs}. 
	We have now achieved the first condition. 

	\begin{figure}[!ht]
		\centering
		\labellist
		\pinlabel $\delta$ at 125 10
		\pinlabel $\delta'$ at 128 230
		\pinlabel $\partial T_j$ at 85 225
		\pinlabel $s(\gamma)$ at 10 137
		\pinlabel $\partial T_i$ at 10 71
		\pinlabel $\delta$ at 520 10
		\pinlabel $\delta'$ at 523 230
		\pinlabel $\partial T_j$ at 480 225
		\pinlabel $s(\gamma)$ at 405 137
		\pinlabel $\partial T_i$ at 405 71
		\endlabellist
		\vspace{5pt}
		\includegraphics[width=.3\textwidth]{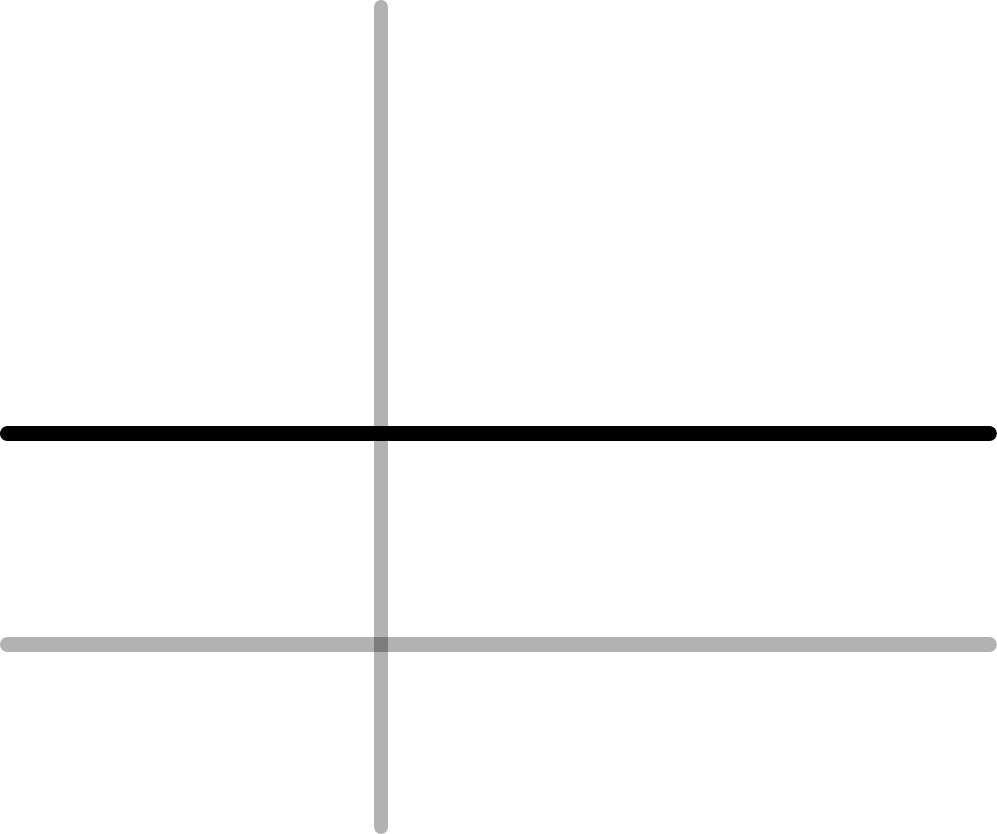}
		\hspace{40pt}
		\includegraphics[width=.3\textwidth]{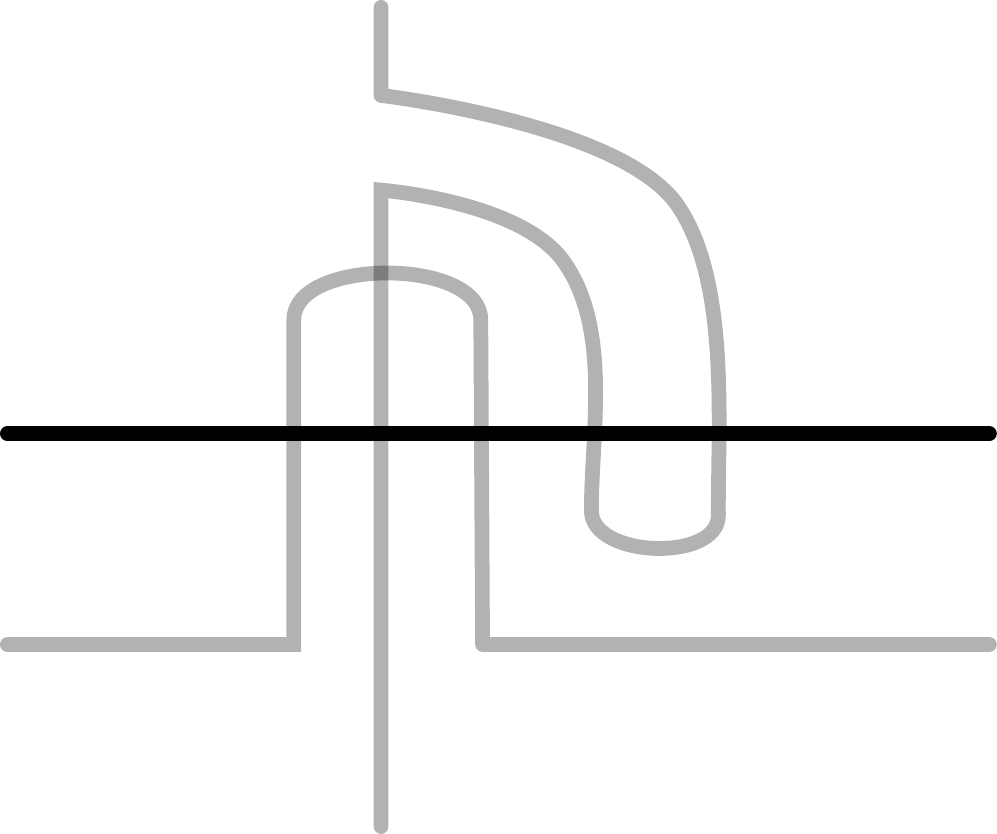}
		\caption{Two stabilizations.}
		\label{fig:twoStabs}
	\end{figure}

	Step 4. Let $\delta$ be an arc of $\partial T_j$ which intersects $\partial T_i$ with $i < j$. Assume that the sign of the intersection point does not match the sign of $R(\gamma)$ in which it lies. By a finger move of $\delta$ along $\partial T_i$, we can push this intersection point to the other side of $s(\gamma)$. We then do a small stabilization just as in Step 3 to ensure that each arc still intersects $\bigcup_{i=0}^n \partial T_i$ just once. The second condition has now also been satisfied. 
\end{proof}

\subsection{Detecting the trivial band}\label{subsec:DetectTrivBandInstanton}

In this section, we show that $\KHI(K_\#)$ and $\KHI(K_b)$ have the same dimension over $\C$ if and only if the band $b$ is trivial. 

\begin{lem}[Proposition 9.15 of \cite{Juh06}]\label{lem:instantonConnectedSumFormula}
	Let $(M,\gamma)$ and $(M',\gamma')$ be balanced sutured manifolds, and consider their connected sum $(M\# M',\gamma \cup \gamma')$ obtained by identifying $3$-balls in their interiors. Then there is an isomorphism \[
		\SHI(M\# M',\gamma \cup \gamma') \cong \SHI(M,\gamma) \otimes \SHI(M',\gamma') \otimes \C^2.
	\]
\end{lem}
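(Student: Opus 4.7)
The plan is to adapt the closure-based framework of Kronheimer-Mrowka to build a closure of the connected sum directly from closures of the summands, and then extract the stated tensor product formula from a connected sum theorem for instanton Floer homology of closed $3$-manifolds with admissible bundles. First I would choose closures $(Y_1,R_1,\eta_1,\alpha_1)$ and $(Y_2,R_2,\eta_2,\alpha_2)$ of $(M,\gamma)$ and $(M',\gamma')$ respectively, each with $g(R_i)\ge 2$, and verify that the balanced sutured manifold $(M\# M',\gamma\cup\gamma')$ admits a closure diffeomorphic to the connected sum $Y=Y_1\# Y_2$, taken along $3$-balls lying in $\mathrm{Int}(M)\subset Y_1$ and $\mathrm{Int}(M')\subset Y_2$ disjoint from $R_i,\eta_i,\alpha_i$. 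This follows by noting that the disjoint union $B_1\sqcup B_2$ of the auxiliary surfaces used for $Y_1$ and $Y_2$ is an auxiliary surface for $(M\# M',\gamma\cup\gamma')$, and that attaching $[-1,1]\times(B_1\sqcup B_2)$ and then gluing via the two gluing diffeomorphisms used for $Y_1,Y_2$ produces $Y$. In the generalized framework of disconnected distinguished surfaces, $\SHI(M\# M',\gamma\cup\gamma')$ is the simultaneous generalized eigenspace of $(\mu(R_1),\mu(R_2),\mu(y))$ with eigenvalues $(2g(R_1)-2,2g(R_2)-2,2)$ on $I_*(Y_1\# Y_2)_{\eta_1+\eta_2+\alpha_1+\alpha_2}$.

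Next I would apply the connected sum theorem for instanton Floer homology with admissible bundles (following Fukaya, and as developed in the work of Kronheimer-Mrowka, Scaduto, and others) to identify $I_*(Y_1\# Y_2)_{\eta_1+\eta_2+\alpha_1+\alpha_2}$ with $I_*(Y_1)_{\eta_1+\alpha_1}\otimes_{\mathbf{C}}I_*(Y_2)_{\eta_2+\alpha_2}\otimes V$, where $V$ is a two-dimensional contribution from the $S^1$-family of reducible flat connections supported on the neck. The operators $\mu(R_i)$ act only on the $i$-th tensor factor (since $R_i$ is contained in $Y_i$ away from the neck), while $\mu(y)$ acts diagonally; on $V$ the operator $\mu(y)$ has eigenvalues $\pm 2$, each with one-dimensional generalized eigenspace. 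Restricting to the $(2g(R_1)-2,2g(R_2)-2,2)$-generalized eigenspace of $(\mu(R_1),\mu(R_2),\mu(y))$ thus gives
\[
  \SHI(M,\gamma)\otimes\SHI(M',\gamma')\otimes\mathbf{C}^2,
\]
where the $\mathbf{C}^2$ is the $\mu(y)=2$ generalized eigenspace on $V$.

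The principal obstacle is establishing the precise form of the connected sum formula in instanton Floer homology, together with its compatibility with the $\mu(R_i)$ and $\mu(y)$ operators, as needed for the eigenspace decomposition argument. One route is a direct application of Floer's/Fukaya's connected sum theorem in the admissible setting; an alternative I would pursue if cleaner is Floer excision along an $S^2$ neck, comparing $Y_1\# Y_2$ with $Y_1\sqcup Y_2\sqcup S^3$ and using Lemma 7.10 of \cite{MR2652464} (the disjoint union formula $\SHI(M\sqcup M',\gamma\sqcup\gamma')\cong\SHI(M,\gamma)\otimes\SHI(M',\gamma')$) together with the explicit computation of the contribution of the $S^2$-excision, which is where the extra $\mathbf{C}^2$ factor arises. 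Either way, the $\mathbf{C}^2$ corresponds topologically to the homology $H_*(S^1)$ attached to the neck circle, mirroring the appearance of the two extra generators in the Heegaard Floer formula of Juh\'asz.
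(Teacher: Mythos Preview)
Your approach is genuinely different from the paper's, and the paper's route is considerably more elementary. Rather than passing to closures and invoking a connected sum theorem for instanton Floer homology of closed admissible pairs, the paper stays entirely within the sutured framework. It uses two product disc decompositions
\[
(M\# M',\gamma\cup\gamma') \rightsquigarrow (M,\gamma)\amalg(M',\gamma')(1), \qquad (M',\gamma')(1)\rightsquigarrow (M',\gamma')\amalg S^3(2),
\]
observes that the inverse of a product disc decomposition is a contact $1$-handle attachment (whose induced map on $\SHI$ is an isomorphism), and applies the disjoint union formula. This reduces the lemma to the single computation $\SHI(S^3(2))\cong\C^2$, which in turn follows from a further product disc decomposition $S^1\times S^2(1)\rightsquigarrow S^3(2)$ together with the known value $\SHI(S^1\times S^2(1))\cong\C^2$.

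Your route, by contrast, rests on a connected sum theorem of the form $I_*(Y_1\# Y_2)_{\omega_1+\omega_2}\cong I_*(Y_1)_{\omega_1}\otimes I_*(Y_2)_{\omega_2}\otimes V$ for admissible pairs, together with the action of $\mu(R_i)$ and $\mu(y)$ on each factor. As you acknowledge, establishing this precise statement with the required operator compatibility is the principal obstacle, and it is a real one: such a formula for $I_*(Y)_\omega$ (as opposed to framed instanton homology $I^\#$) with the needed $\mu$-map behavior is not, to my knowledge, available off the shelf in the form you need. Your suggested alternative of ``Floer excision along an $S^2$ neck'' is also not standard: Floer's excision theorem applies along tori or higher-genus surfaces, not spheres. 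So while your strategy is conceptually natural and could perhaps be made to work with enough effort, the paper's product disc argument sidesteps all of this entirely and uses only tools already developed within the sutured instanton package.
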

\begin{proof}
	This result is just Proposition 9.15 of \cite{Juh06} in the instanton Floer context. As Juh\'asz observes, there is a product disc decomposition \[
		(M\# M',\gamma \cup\gamma') \rightsquigarrow (M,\gamma) \amalg (M',\gamma')(1)
	\]where $(M',\gamma')(1)$ denotes $(M',\gamma')$ with a sutured puncture. There is another product disc decomposition \[
		(M',\gamma')(1) \rightsquigarrow (M',\gamma') \amalg S^3(2)
	\]where $S^3(2)$ denotes $S^3$ with two sutured punctures. The sutured instanton homology of a disjoint union is the tensor product of the sutured instanton homologies of the individual balanced sutured manifolds. Furthermore, the inverse of a product disc decomposition is a contact $1$-handle attachment, and the contact $1$-handle attachment map is an isomorphism. Thus, it suffices to show that $\SHI(S^3(2)) \cong \C^2$. Since there is a product disc decomposition $S^1 \x S^2(1) \rightsquigarrow S^3(2)$, the result follows from the fact that $\SHI(S^1\x S^2(1)) \cong \C^2$. 
\end{proof}

\begin{prop}\label{prop:instantonRankOfLinkMinusEqualsTwiceDimOfKnotHat}
	Let $K \cup C$ be a two component link in $S^3$. Let $\sigma$ be either of the two sutures on $\partial N(C)$ on the sutured exterior $S^3(K \cup C)$. Then \[
		\rank \SHI^-(S^3(K \cup C),\sigma) = 2\dim \KHI(K).
	\]
\end{prop}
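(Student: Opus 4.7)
The plan is to reduce the computation to Corollary~\ref{cor:rankOfMinusInstantonEqualsDimOfHatNBeta} and the sutured puncture formula that appears in the proof of Lemma~\ref{lem:instantonConnectedSumFormula}. As noted in the examples following the definition of a suitable distinguished suture, either suture $\sigma$ on $\partial N(C)$ is a suitable distinguished suture for the balanced sutured manifold $S^3(K\cup C)$, so the minus version $\SHI^-(S^3(K\cup C),\sigma)$ is defined.

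First I would identify the balanced sutured manifold $(N,\beta)$ obtained by attaching a two-handle to $S^3(K\cup C)$ along $\sigma$. The two-handle attachment caps off the meridional suture on $\partial N(C)$, so the toral boundary $\partial N(C)$ is replaced by a two-sphere boundary component carrying one suture. By exactly the argument of Lemma~\ref{lem:basepointErasing} (which is purely topological), this sutured manifold is $S^3(K)(1)$, the sutured exterior of $K$ equipped with a sutured puncture. Applying Corollary~\ref{cor:rankOfMinusInstantonEqualsDimOfHatNBeta} then gives
\[
\rank \SHI^-(S^3(K\cup C),\sigma) \;=\; \dim \SHI(S^3(K)(1)).
\]

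Next I would compute $\dim \SHI(S^3(K)(1))$ by the same product-disc decomposition argument used inside the proof of Lemma~\ref{lem:instantonConnectedSumFormula}. There is a product disc decomposition
\[
S^3(K)(1) \;\rightsquigarrow\; S^3(K) \amalg S^3(2),
\]
and the inverse of a product disc decomposition is a contact $1$-handle attachment, whose associated map on sutured instanton homology is an isomorphism by Proposition~\ref{prop:instantonContactHandleAttachmentMaps}. Since sutured instanton homology of a disjoint union is the tensor product, this yields
\[
\SHI(S^3(K)(1)) \;\cong\; \SHI(S^3(K)) \otimes \SHI(S^3(2)) \;\cong\; \KHI(K)\otimes \C^{2},
\]
using $\SHI(S^3(2))\cong \C^{2}$, which is established inside the proof of Lemma~\ref{lem:instantonConnectedSumFormula}. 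Combining the two displayed equalities gives
\[
\rank \SHI^-(S^3(K\cup C),\sigma) \;=\; 2\dim\KHI(K),
\]
as desired.

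There is no real obstacle here; all substantive work has already been done in developing the minus version of sutured instanton homology, in particular in establishing Corollary~\ref{cor:rankOfMinusInstantonEqualsDimOfHatNBeta}. The only thing to be careful about is matching the topology of the two-handle attachment with the sutured puncture, and noting that the minus theory is defined independently of which of the two sutures on $\partial N(C)$ is chosen as $\sigma$ (the other choice simply reverses the role of $\psi_n^{\pm}$ and does not affect the rank).
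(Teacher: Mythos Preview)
Your proposal is correct and matches the paper's proof essentially verbatim: identify the $2$-handle attachment as $S^3(K)(1)$, apply Corollary~\ref{cor:rankOfMinusInstantonEqualsDimOfHatNBeta}, and then invoke the connected sum formula (Lemma~\ref{lem:instantonConnectedSumFormula}) to get the factor of $2$. The only difference is that you spell out the product-disc step explicitly whereas the paper simply cites the lemma.
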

\begin{proof}
	Let $S^3(K)(1)$ denote the sutured exterior of $K$ with a sutured puncture. Observe that $S^3(K)(1)$ is the result of attaching a $2$-handle to the suture $\sigma$. By Corollary~\ref{cor:rankOfMinusInstantonEqualsDimOfHatNBeta} \[
		\rank \SHI^-(S^3(K \cup C),\sigma) = \dim \SHI(S^3(K)(1)).
	\]The result follows from the connected sum formula (Lemma~\ref{lem:instantonConnectedSumFormula}).
\end{proof}

\begin{cor}\label{cor:instantonSufficesToConsiderMinusLink}
	Let $K_b$ be the band sum of a split two-component link, and let $K_\#$ be the connected sum. Then \[
		\dim \KHI(K_b) = \dim \KHI(K_\#)
	\]if and only if \[
		\rank \SHI^-(S^3(K_b \cup C),\sigma) = \rank \SHI^-(S^3(K_\# \cup C),\sigma)
	\]In both cases, $C$ denotes a linking circle, and $\sigma$ is a suture on $\partial N(C)$. 
\end{cor}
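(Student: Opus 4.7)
The proof is essentially an immediate corollary of Proposition~\ref{prop:instantonRankOfLinkMinusEqualsTwiceDimOfKnotHat}. The plan is to apply that proposition twice: once with $K = K_b$ and once with $K = K_\#$, using the linking circle $C$ for the band sum in both settings. (Note that a linking circle for the band of $K_b$ remains a well-defined unknotted circle in the complement of $K_\#$, since $K_\#$ is obtained from $K_b$ by an isotopy in a neighborhood of the band when the band is made trivial; more intrinsically, we may just choose any linking circle for the corresponding trivial band giving $K_\#$, and the proposition applies to both pairs $K_b \cup C$ and $K_\# \cup C$ uniformly.)

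Specifically, I would first record the two identities
\[
\rank \SHI^-(S^3(K_b \cup C),\sigma) = 2\dim \KHI(K_b), \qquad \rank \SHI^-(S^3(K_\# \cup C),\sigma) = 2\dim \KHI(K_\#),
\]
which follow directly from Proposition~\ref{prop:instantonRankOfLinkMinusEqualsTwiceDimOfKnotHat} applied to each of the two-component links. Then I would simply note that dividing by $2$ converts the equality of ranks on the right-hand side of the corollary into the equality of dimensions on the left-hand side, and conversely. No further ingredients are needed.

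There is no real obstacle here; the substance of the argument is entirely absorbed into Proposition~\ref{prop:instantonRankOfLinkMinusEqualsTwiceDimOfKnotHat}, whose proof in turn relies on Corollary~\ref{cor:rankOfMinusInstantonEqualsDimOfHatNBeta} identifying the $\C[U]$-rank of $\SHI^-$ with the dimension of $\SHI$ after attaching a $2$-handle along the distinguished suture, together with the connected sum formula (Lemma~\ref{lem:instantonConnectedSumFormula}) identifying $\SHI$ of the sutured exterior with a sutured puncture with $\KHI(K) \otimes \C^2$. Thus the entire proof of the corollary reduces to two applications of a single equation and a division by $2$.
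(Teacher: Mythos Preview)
Your proposal is correct and matches the paper's approach exactly: the corollary is stated immediately after Proposition~\ref{prop:instantonRankOfLinkMinusEqualsTwiceDimOfKnotHat} with no separate proof, since it follows at once from the two instances of that proposition and division by $2$, just as you describe.
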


\begin{rem}
	By Corollary~\ref{cor:structMinusInstantonSutured}, we know that there is an $\C[U]$-module isomorphism \[
		\SHI^-(S^3(K_b \cup C),\sigma) \cong \C[U]^k \oplus \bigoplus_{i=1}^n \frac{\C[U]}{U^{k_i}}
	\]with $k_i$ positive where the generators of the summands are homogeneous with respect to the splitting along an affine space of Proposition~\ref{prop:splittingofSuturedInstantonMinus}. Although the generators themselves of the free summands are not well-defined, the relative gradings in which they lie are well-defined. 
\end{rem}

\begin{lem}\label{lem:instantonMinusDisjointUnion}
	Suppose $(M,\gamma,\sigma)$ a balanced sutured manifold with a suitable distinguished suture, and $(M',\gamma')$ is another balanced sutured manifold. Then \[
		\SHI^-(M\cup M',\gamma \cup \gamma',\sigma) \cong \SHI(M',\gamma') \otimes_\C \SHI^-(M,\gamma,\sigma)
	\]in a grading-preserving way. 
\end{lem}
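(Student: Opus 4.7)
The plan is to exhibit the isomorphism at finite stage and then pass to the colimit. Fix a longitude $\lambda$ on the distinguished toral boundary component $T\subset\partial M$, and for each $n\ge 1$ let $\gamma_n$ be obtained from $\gamma$ by replacing the two sutures on $T$ with oppositely oriented copies of $\lambda-n\sigma$. Since $T\subset\partial M$ is disjoint from $\partial M'$, we have an equality of balanced sutured manifolds
\[
	\bigl(M\amalg M',(\gamma\cup\gamma')_n\bigr)=(M,\gamma_n)\amalg(M',\gamma').
\]
The bypass attaching arcs $\alpha_{n+1}^{\pm}$ used to build the maps $\psi_n^{\pm}$ live on $T$ and are therefore disjoint from $M'$. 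Similarly, a generalized Seifert surface $\Sigma$ for $(M,\gamma,\sigma)$ may be viewed as a surface in the disjoint union disjoint from $M'$, so it serves as a generalized Seifert surface for $(M\amalg M',\gamma\cup\gamma',\sigma)$ as well.

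First I would establish a disjoint union formula for the ordinary sutured instanton homology, as already alluded to in the proof of Lemma~\ref{lem:instantonConnectedSumFormula}: for each $n$ there is a natural isomorphism
\[
	\SHI\bigl(-(M\amalg M'),-(\gamma\cup\gamma')_n\bigr)\cong\SHI(-M,-\gamma_n)\otimes_\C\SHI(-M',-\gamma').
\]
This can be proved by choosing a connected closure of $(M\amalg M',\gamma\cup\gamma')$ adapted to both factors (glue a single auxiliary surface whose handles connect sutures within each factor, then close up) and combining Floer excision along a separating torus with the fact that sutured instanton homology of a disjoint union is naturally a tensor product. Any admissible surface in $(M,\gamma_n)$ extends by zero to one in the disjoint union and the resulting grading decomposition factors as a tensor product of the gradings on each factor.

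Next I would check that under this identification the bypass maps split as $\psi_n^{\pm}\otimes\Id$. This follows from Corollary~\ref{cor:instantonDisjointAttachmentArcs} and Theorem~\ref{thm:instantonGluingMaps}: the contact $1$- and $2$-handles realizing the bypass live over a small neighborhood of $T$ in the $M$-factor, and the corresponding contact handle attachment map acts as the identity on the $(M',\gamma')$-factor. Applying this to both $\psi_n^-$ (defining the colimit system) and $\psi_n^+$ (defining the $U$-action), and using that tensoring with the fixed finite-dimensional vector space $\SHI(-M',-\gamma')$ commutes with colimits of complex vector spaces, we obtain the desired $\C[U]$-module isomorphism
\[
	\SHI^-\bigl(-(M\amalg M'),-(\gamma\cup\gamma'),-\sigma\bigr)\cong\SHI^-(-M,-\gamma,-\sigma)\otimes_\C\SHI(-M',-\gamma'),
\]
with $U$ acting as $U\otimes\Id$.

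Finally, to verify the grading statement with respect to the splitting of Proposition~\ref{prop:splittingofSuturedInstantonMinus} along an affine space over $\Hom(H_2(\cdot,\partial(\cdot)),\Z)$, I would use the Mayer--Vietoris identification $H_2(M\amalg M',\partial(M\amalg M'))\cong H_2(M,\partial M)\oplus H_2(M',\partial M')$ together with Proposition~\ref{prop:simultaneousClosures} applied to a basis realized by admissible surfaces in each factor separately. Since each such surface lies entirely in one factor, the operator $\mu$ of its closed extension acts only on the corresponding tensor factor, so the affine-space splitting on the left is the product of the splittings on the right, and the isomorphism is grading-preserving. The main obstacle I anticipate is the first step, namely proving the disjoint union formula with its naturality and grading refinements; once this is in hand, the rest is formal.
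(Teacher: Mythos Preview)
Your proposal is correct and follows essentially the same route as the paper: establish the disjoint-union isomorphism $\SHI(-(M\amalg M'),-(\gamma_n\cup\gamma'))\cong\SHI(-M',-\gamma')\otimes\SHI(-M,-\gamma_n)$ via Floer excision, check that it intertwines $\psi_n^\pm$ with $\Id\otimes\psi_n^\pm$, and pass to the colimit with the grading statement handled by choosing closures in which admissible representatives of bases for $H_2$ of each factor simultaneously extend. The one place you are more cautious than necessary is your ``main obstacle'': the disjoint-union formula for $\SHI$ is already Proposition~6.5 of Kronheimer--Mrowka's paper on sutured instanton homology, so the paper simply cites it rather than reproving it.
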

\begin{proof}
	There is an isomorphism \[
		\SHI(M \cup M',\gamma \cup \gamma') \cong \SHI(M',\gamma') \otimes \SHI(M,\gamma)
	\]defined by excision by Proposition 6.5 of \cite{MR2652464}. We choose bases for the second relative homologies of $M$ and $M'$, and choose closures in which representatives simultaneously extend to closed surfaces. The excision isomorphism defined on such closures will be preserving gradings. The excision map intertwines \[
		\psi_n^\pm\colon \SHI(M \cup M',\gamma_n \cup \gamma') \to \SHI(M \cup M',\gamma_{n+1}\cup \gamma')
	\]with \[
		\Id \otimes \psi_n^\pm \colon \SHI(M',\gamma') \otimes \SHI(M,\gamma_n) \to \SHI(M',\gamma') \otimes \SHI(M,\gamma_{n+1})
	\]and the result follows. 
\end{proof}

\begin{prop}\label{prop:instantonMinusOfConnectedSum}
	There is an isomorphism of $\C[U]$-modules \[
		\SHI^-(S^3(K_\# \cup C),\sigma) \cong \KHI(K_\#) \otimes_\C \C^2 \otimes_\C \C[U].
	\]With respect to the splitting of $\SHI^-(S^3(K_\# \cup C),\sigma)$ along an affine space of Proposition~\ref{prop:splittingofSuturedInstantonMinus}, no two generators of free summands lie in gradings differing by $[\sigma]$. 
\end{prop}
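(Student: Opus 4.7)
The plan is to exploit that, since $b$ is trivial, the linking circle $C$ bounds a disc disjoint from $K_\#$, so $K_\# \cup C$ is a split link and $S^3(K_\# \cup C) \cong S^3(K_\#) \# S^3(C)$ as balanced sutured manifolds, where $C$ is now an unknot and the connected sum is taken along balls in the interior.

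I first establish that $\SHI^-(S^3(K_\# \cup C), \sigma)$ is a free $\C[U]$-module of rank $2 \dim \KHI(K_\#)$. Attaching a $2$-handle along $\sigma$ fills in $C$ and yields $S^3(K_\#)(1)$, so by Corollary~\ref{cor:rankOfMinusInstantonEqualsDimOfHatNBeta} and the sutured puncture case of the instanton connected sum formula (Lemma~\ref{lem:instantonConnectedSumFormula}),
\[
	\rank_{\C[U]} \SHI^-(S^3(K_\# \cup C), \sigma) = \dim \SHI(S^3(K_\#)(1)) = 2 \dim \KHI(K_\#).
\]
On the other hand, applying Lemma~\ref{lem:instantonConnectedSumFormula} directly to the splitting $S^3(K_\# \cup C) \cong S^3(K_\#) \# S^3(C)$ together with $\dim \KHI(U) = 1$ for the unknot $U$ yields $\dim \SHI(S^3(K_\# \cup C)) = 2 \dim \KHI(K_\#)$. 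Combining these with the identity $\dim \SHI(M,\gamma) = k + 2n$ from the remark following Proposition~\ref{prop:instantonMinusUTriangle}, where $k$ is the $\C[U]$-rank and $n$ is the number of $U$-torsion towers, forces $n = 0$, so $\SHI^-$ is free of rank $2\dim\KHI(K_\#)$. This gives the desired isomorphism of $\C[U]$-modules $\SHI^-(S^3(K_\# \cup C),\sigma) \cong \KHI(K_\#) \otimes_{\C} \C^2 \otimes_{\C} \C[U]$.

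For the grading claim, freeness forces $U$ to be injective, so the exact triangle of Proposition~\ref{prop:instantonMinusUTriangle} becomes a short exact sequence
\[
	0 \to \SHI^-(S^3(K_\# \cup C), \sigma) \xrightarrow{U} \SHI^-(S^3(K_\# \cup C), \sigma) \to \SHI(S^3(K_\# \cup C)) \to 0
\]
in which the surjection is grading-preserving. Lifting a homogeneous basis of the quotient gives a homogeneous basis of free generators for $\SHI^-$, so it suffices to show that $\SHI(S^3(K_\# \cup C))$ is supported in a single $[\sigma]$-grading. Now $[\sigma]$ is the meridian of $C$, and a representative of the dual relative $2$-class can be chosen to lie entirely in the $S^3(C)$ summand of the connected sum. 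Under the tensor decomposition $\SHI(S^3(K_\# \cup C)) \cong \KHI(K_\#) \otimes \SHI(S^3(C)) \otimes \C^2$, the associated $\mu$-operator then acts only on the $\SHI(S^3(C))$ factor, so the $[\sigma]$-grading is carried entirely by $\SHI(S^3(C)) = \C$, which sits in a single grading. Hence no two generators of free summands of $\SHI^-(S^3(K_\# \cup C), \sigma)$ differ in grading by $[\sigma]$. The main technical point is verifying that the grading representative can be chosen compatibly with the tensor decomposition, which is achieved by choosing closures as in Proposition~\ref{prop:simultaneousClosures}.
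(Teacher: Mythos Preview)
Your proof is correct, and the overall strategy matches the paper's, but the details differ in two places worth noting.

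For the module structure, the paper proceeds more constructively: it uses the product disc decomposition $S^3(K_\#\cup C)\rightsquigarrow S^3(K_\#)(1)\amalg S^3(C)$, applies the disjoint-union lemma (Lemma~\ref{lem:instantonMinusDisjointUnion}) to get $\SHI^-\cong \SHI(S^3(K_\#)(1))\otimes_\C \SHI^-(S^3(C),\sigma)$, and then computes $\SHI^-(S^3(C),\sigma)\cong\C[U]$ directly from the $U$-triangle of Proposition~\ref{prop:instantonMinusUTriangle} together with $\KHI(C)\cong\C$. Your dimension-count argument via the identity $\dim\SHI(-M,-\gamma)=k+2n$ is a nice shortcut that sidesteps this explicit computation.

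For the grading claim, both arguments reduce to showing that $\KHI(K_\#\cup C)$ is concentrated in a single grading with respect to the class dual to $[\sigma]$. The paper does this in one line: the Seifert disc $D$ for $C$ extends to a \emph{torus} $\bar D$ in any closure, and by Proposition~\ref{prop:eigenvalsOfMuS} the only eigenvalue of $\mu(\bar D)$ is $0$. Your tensor-decomposition argument reaches the same conclusion but is less direct, and your citation of Proposition~\ref{prop:simultaneousClosures} is not quite on point: what you actually need is that the contact $1$-handle isomorphisms implementing the connected-sum formula preserve the grading defined by $D$ (this is the hat-version content of Lemma~\ref{lem:surfDecomp1handle}), so that the $[\sigma]$-grading on $\SHI(S^3(K_\#\cup C))$ is carried by the one-dimensional factor $\KHI(C)$. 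The torus argument avoids this bookkeeping entirely.
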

\begin{proof}
	Since $K_\# \cup C$ is a split link, there is a product disc decomposition \[
		S^3(K_\# \cup C) \rightsquigarrow S^3(K_\#)(1) \amalg S^3(C)
	\]just like in Lemma~\ref{lem:instantonConnectedSumFormula}. By Lemma~\ref{lem:instantonMinusDisjointUnion}, we have an isomorphism of $\C[U]$-modules
	\[
		\SHI^-(S^3(K_\#)(1) \amalg S^3(C),\sigma) \cong \SHI(S^3(K_\#)(1)) \otimes_\C \SHI^-(S^3(C),\sigma).
	\]By Proposition~\ref{prop:instantonMinusUTriangle}, we have the exact triangle \[
		\begin{tikzcd}[column sep=-1em]
			\SHI^-(S^3(C),\sigma) \ar[rr,"U"] & & \SHI^-(S^3(C),\sigma) \ar[dl]\\
			& \KHI(C) \ar[ul]
		\end{tikzcd}
	\]so $\SHI^-(S^3(C),\sigma) \cong \C[U]$ as a module because $\KHI(C) \cong \C$. Since a contact $1$-handle attachment is the reverse of a product disc decomposition, from Lemma~\ref{lem:surfDecomp1handle} we see that $\SHI^-(S^3(K_\# \cup C),\sigma)$ is a free $\C[U]$-module. 

	Again by Proposition~\ref{prop:instantonMinusUTriangle}, we have an exact triangle \[
		\begin{tikzcd}[column sep=-2em]
			\SHI^-(S^3(K_\# \cup C),\sigma) \ar[rr,"U"] & & \SHI^-(S^3(K_\# \cup C),\sigma) \ar[dl]\\
			& \KHI(K_\# \cup C) \ar[ul] &
		\end{tikzcd}
	\]where $\SHI^-(S^3(K_\# \cup C),\sigma) \to \KHI(K_\# \cup C)$ is grading-preserving. Since $\SHI^-(S^3(K_\# \cup C),\sigma)$ is free, this grading-preserving map is surjective. To see that no two generators of free summands of $\SHI^-(S^3(K_\# \cup C),\sigma)$ lie in gradings differing by $[\sigma]$, it suffices to show that no two basis elements of $\KHI(K_\# \cup C)$ lie in gradings differing by $[\sigma]$. 

	Let $D$ be a disc whose boundary is $C$ and is disjoint from $K_\#$. It suffices to show that $\KHI(K_\# \cup C)$ is concentrated in a single grading with respect to the $\Z$-grading that $D$ defines. The disc $D$ extends to a torus $\bar{D}$ in a closure $Y$ of $S^3(K_\# \cup C)$ so the only eigenvalue of $\mu(\bar{D})$ on $\KHI(K_\# \cup C)$ is zero, so the result follows. 
\end{proof}

\begin{cor}\label{cor:instantonDifferentGradingsGeneratorsSuffices}
	If there are generators of free summands of $\SHI^-(S^3(K_b \cup C),\sigma)$ lying in gradings differing by $[\sigma]$, then \[
		\rank \SHI^-(S^3(K_b \cup C),\sigma) > \rank \SHI^-(S^3(K_\# \cup C),\sigma).
	\]
\end{cor}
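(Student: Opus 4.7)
The plan is to leverage the ribbon concordance inclusion of Theorem~\ref{thm:instantonMinusRibbon} together with the structural result for $\SHI^-(S^3(K_\# \cup C),\sigma)$ of Proposition~\ref{prop:instantonMinusOfConnectedSum}. By Corollary~\ref{cor:RibbonConcordance}, there is a ribbon concordance $K_\# \to K_b$ disjoint from $C$, which extends to a ribbon concordance $K_\# \cup C \to K_b \cup C$ with a trivial annular component on $C$. Theorem~\ref{thm:instantonMinusRibbon} then provides a $\C[U]$-equivariant inclusion
\[
    \iota\colon \SHI^-(S^3(K_\# \cup C),\sigma) \hookrightarrow \SHI^-(S^3(K_b \cup C),\sigma)
\]
as a direct $\C[U]$-module summand, and the inclusion respects the action of $[\sigma]$ on the grading set (an affine space over $\Hom(H_2,\Z)$ via Proposition~\ref{prop:splittingofSuturedInstantonMinus}).

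Next I would write both modules in the normal form of Corollary~\ref{cor:structMinusInstantonSutured}, choosing homogeneous generators of the free and torsion summands. By Proposition~\ref{prop:instantonMinusOfConnectedSum}, $\SHI^-(S^3(K_\# \cup C),\sigma)$ is free of rank $r := \rank \SHI^-(S^3(K_\# \cup C),\sigma)$, and its free generators $e_1,\dots,e_r$ may be chosen homogeneous with the property that no two of their gradings differ by $[\sigma]$. Since $\iota$ is a $\C[U]$-module inclusion onto a direct summand, the images $\iota(e_1),\dots,\iota(e_r)$ span a free $\C[U]$-summand of $\SHI^-(S^3(K_b\cup C),\sigma)$ of rank $r$, and they remain homogeneous with no two gradings differing by $[\sigma]$.

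Now suppose that $\SHI^-(S^3(K_b\cup C),\sigma)$ admits homogeneous generators of free summands $f$ and $g$ whose gradings differ by $[\sigma]$. Complete $\iota(e_1),\dots,\iota(e_r)$ to a full system of homogeneous free generators by supplementing with additional homogeneous elements $\iota(e_1),\dots,\iota(e_r), g_1,\dots,g_s$ whose total number equals $\rank \SHI^-(S^3(K_b \cup C),\sigma)$; this is possible because $\iota$ realizes the left-hand side as a direct summand. Since no two of the gradings of the $\iota(e_i)$ differ by $[\sigma]$, the two generators $f,g$ cannot both lie in the span of $\iota(e_1),\dots,\iota(e_r)$; equivalently, the set of gradings of free generators of $\SHI^-(S^3(K_b\cup C),\sigma)$ cannot coincide with those of $\iota(e_1),\dots,\iota(e_r)$. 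Hence $s \ge 1$, giving the strict inequality $\rank \SHI^-(S^3(K_b \cup C),\sigma) > r = \rank \SHI^-(S^3(K_\# \cup C),\sigma)$.

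The only point requiring care is to make the last paragraph rigorous: one must argue that ``free generators grading set'' is a well-defined invariant of the module compatible with the $[\sigma]$-action, which follows from Corollary~\ref{cor:structMinusInstantonSutured} and the fact that the splitting in Proposition~\ref{prop:splittingofSuturedInstantonMinus} is canonical. I expect no essential obstacle beyond this bookkeeping, since every ingredient has already been established above.
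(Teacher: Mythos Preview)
Your proposal is correct and follows essentially the same approach as the paper: both use the ribbon concordance inclusion of Theorem~\ref{thm:instantonMinusRibbon} together with Proposition~\ref{prop:instantonMinusOfConnectedSum} to argue (by contrapositive) that if the ranks agreed, the set of gradings supporting free-summand generators in $\SHI^-(S^3(K_b\cup C),\sigma)$ would coincide with that of the free module $\SHI^-(S^3(K_\#\cup C),\sigma)$, where no two such gradings differ by $[\sigma]$. Your phrasing about $f,g$ lying ``in the span of $\iota(e_1),\dots,\iota(e_r)$'' is slightly off---the clean statement is about the invariance of the \emph{set of gradings} supporting free-summand generators (which you correctly flag at the end)---but the argument is the paper's argument.
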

\begin{proof}
	By Theorem~\ref{thm:instantonMinusRibbon}, there is an injective grading-preserving map \[
		\SHI^-(S^3(K_\# \cup C),\sigma) \hookrightarrow \SHI^-(S^3(K_b \cup C),\sigma)
	\]onto a direct $\C[U]$-module summand. If the two modules have the same rank, then by Proposition~\ref{prop:instantonMinusOfConnectedSum}, no generators of free summands of $\SHI^-(S^3(K_b \cup C),\sigma)$ can lie in gradings differing by $[\sigma]$. 
\end{proof}

We will use the sequence of surface decompositions constructed in Theorem~\ref{thm:SeqOfSurfDecomp} to verify that the condition of Corollary~\ref{cor:instantonDifferentGradingsGeneratorsSuffices} holds when the band $b$ is nontrivial. 

\begin{lem}\label{lem:instantonHopfLink}
	Let $H$ be a two-component Hopf link, and let $\sigma$ be one of the four sutures on $S^3(H)$. Then there are generators of free summands of $\SHI^-(S^3(H),\sigma)$ lying in gradings differing by $[\sigma]$. 
\end{lem}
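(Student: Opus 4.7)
The plan is to combine the rank formula (Corollary~\ref{cor:rankOfMinusInstantonEqualsDimOfHatNBeta}), the structure theorem (Corollary~\ref{cor:structMinusInstantonSutured}), and the $U$-action exact triangle (Proposition~\ref{prop:instantonMinusUTriangle}) with a direct analysis of the bigrading structure of $\KHI(H)$.

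First I would identify the sutured manifold $(N,\beta)$ obtained by attaching a contact $2$-handle to $\sigma$. Since $\sigma$ is a meridian of $C$, this $2$-handle caps off a meridional disc for $C$, and a light-bulb argument analogous to Corollary~\ref{cor:ZeroFillingMeridian} shows that $(N,\beta)$ is homeomorphic to $S^3(U)(1)$, the sutured exterior of the unknot with a sutured puncture. The connected sum formula (Lemma~\ref{lem:instantonConnectedSumFormula}) gives $\dim\SHI(S^3(U)(1))=2\dim\KHI(U)=2$, so by Corollary~\ref{cor:rankOfMinusInstantonEqualsDimOfHatNBeta} the $\C[U]$-rank of $\SHI^-(S^3(H),\sigma)$ equals $2$.

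Next I would pin down the bigrading structure of $\KHI(H)$ under the splitting of Proposition~\ref{prop:instantonSplittingAlongAffineSpace} along an affine space over $\Hom(H_2(S^3(H),\partial S^3(H)),\Z)\cong\Z^2$, showing that $\KHI(H)\cong\C^4$ and that the four generators lie in four distinct bigradings organized as two pairs each differing by $[\sigma]$. The two annular Seifert surfaces $\Sigma_K,\Sigma_C$ in $S^3(H)$ cobounding the two components of $H$ provide a basis for $H_2(S^3(H),\partial)$, and since $\Sigma_K$ meets $\sigma$ transversely in one point while $\Sigma_C$ is disjoint from $\sigma$, the class $[\sigma]$ pairs to $\pm1$ with $[\Sigma_K]$ and to $0$ with $[\Sigma_C]$; pairs differing by $[\sigma]$ are therefore exactly those with identical $\Sigma_C$-grading whose $\Sigma_K$-gradings differ by one. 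I would establish the rectangular support either via an excision/bypass-triangle computation tailored to $S^3(H)\cong T^2\times I$ with meridional sutures on both sides, or by exploiting the $\Z/2$-symmetry swapping $K$ and $C$ together with the Euler-characteristic identification of $\KHI$ with the multivariable Alexander polynomial of $H$.

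Finally I would assemble the pieces via the exact triangle of Proposition~\ref{prop:instantonMinusUTriangle}. Writing $\SHI^-(S^3(H),\sigma)\cong\C[U]^2\oplus\bigoplus_i\C[U]/U^{k_i}$ from Corollary~\ref{cor:structMinusInstantonSutured} with homogeneous summand generators at bigradings $g_\alpha,g_\delta,g_{\beta_i}$, the identity $\dim\SHI=k+2n$ from the remark following the proposition, combined with $k=2$ and $\dim\KHI(H)=4$, forces exactly one torsion tower. Tracing grading shifts (with $U$ and the connecting map $\SHI\to\SHI^-$ each of degree $-[\sigma]$), the four bigradings of $\KHI(H)$ are exactly $g_\alpha$, $g_\delta$, $g_\beta$, and $g_\beta-(k_1-2)[\sigma]$. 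Matching these to the rectangular support established above, the pair $\{g_\beta,g_\beta-(k_1-2)[\sigma]\}$ must form one of the two pairs differing by $[\sigma]$, forcing $k_1\in\{1,3\}$, and consequently the other pair $\{g_\alpha,g_\delta\}$ also differs by $[\sigma]$, which is what we wanted to show. The hard part will be the bigrading computation of $\KHI(H)$ itself, since no diagrammatic shortcut is available as in the Heegaard Floer analog (Lemma~\ref{lem:HeegaardHopf}).
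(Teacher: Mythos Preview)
Your overall architecture is the same as the paper's: compute the rank via the $2$-handle attachment, determine the bigrading support of $\KHI(H)$ as a $2\times 2$ rectangle, then feed this into the $U$-triangle and structure theorem to force the two free-summand generators to occupy one of the $[\sigma]$-pairs. Your endgame deduction is in fact slightly more careful than the paper's: the paper asserts $\SHI^-(S^3(H),\sigma)\cong\C[U]^2\oplus\C$ directly, whereas you correctly leave $k_1$ undetermined and then argue $k_1\in\{1,3\}$ from the rectangular support (the case $k_1=2$ being excluded because each graded piece is one-dimensional).

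The substantive difference is in how you establish the rectangular bigrading of $\KHI(H)$, which you flag as the hard part. The paper does this cleanly and directly, without excision tricks or Alexander-polynomial symmetry arguments. For the total dimension, two skein triangles (one with the figure-eight and one with the two-component unlink) pin down $\dim\KHI(H)=4$. For the bigrading, the paper simply decomposes $S^3(H)$ along each annular Seifert surface $\alpha$ or $\beta$; in each case the result is the unknot exterior with four meridional sutures, whose $\SHI$ is known to be $2$-dimensional. By the surface-decomposition theorem for sutured instanton homology, this identifies the extremal $\alpha$- and $\beta$-graded pieces, and since the total dimension is $4$ the support must be the four corners $h,\,h+\alpha^*,\,h+\beta^*,\,h+\alpha^*+\beta^*$, each one-dimensional. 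This is considerably more concrete than the routes you sketched.

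One small slip: with $\sigma$ a meridian of $C$, it is $\Sigma_C$ (whose boundary on $\partial N(C)$ is a longitude) that meets $\sigma$ once, while $\partial\Sigma_K\cap\partial N(C)$ is a meridian parallel to $\sigma$ and hence disjoint from it. You have the roles of $\Sigma_K$ and $\Sigma_C$ reversed; this does not affect the argument.
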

\begin{proof}
	We first compute $\KHI(H)$. There is a skein exact triangle (Theorem 3.1 of \cite{MR2683750}) relating $H$, the unknot, and the figure eight $4_1$. Since the sutured instanton homology of the sutured exterior of a knot $K$ has Euler characteristic the Alexander polynomial of $K$ (Theorem 1.1 of \cite{MR2683750}), we know that $\dim \KHI(4_1) \ge 5$ so $\dim \KHI(H) \ge 4$. The skein exact triangle involving $H$, the unknot, and the two-component unlink shows that $\dim \KHI(H) \leq 4$ so $\KHI(H)$ is $4$-dimensional. 

	Genus zero Seifert surfaces for the two components each viewed as an annulus in $S^3(H)$ form a basis $\alpha,\beta$ for $H_2(S^3(H),\partial S^3(H))$. Decomposing along either yields the exterior of an unknot with four meridional sutures. The sutured instanton homology of this manifold is $2$-dimensional as proved in Section 3.1 of \cite{MR2683750}. It follows that with respect to the splitting of Proposition~\ref{prop:instantonSplittingAlongAffineSpace}, each graded part of $\KHI(H)$ is either zero or $1$-dimensional, and there is a grading $h$ in which $\KHI(H,h) \cong \C$ for which the other three gradings that support $\KHI(H)$ are $h + \alpha^*,h+\beta^*,h+\alpha^*+\beta^*$ where $\alpha^*,\beta^*$ are the dual basis elements of $\Hom(H_2(S^3(H),\partial S^3(H)),\Z)$ arising from the basis $\alpha,\beta$ of $H_2(S^3(H),\partial S^3(H))$. 

	We now turn to $\SHI^-(S^3(H),\sigma)$. Attaching a $2$-handle to $\sigma$ yields $S^3(U)(1)$, whose sutured instanton homology is $2$-dimensional. By Corollary~\ref{cor:rankOfMinusInstantonEqualsDimOfHatNBeta}, it follows that $\SHI^-(S^3(H),\sigma)$ has rank $2$. The exact triangle \[
		\begin{tikzcd}[column sep=-1em]
			\SHI^-(S^3(H),\sigma) \ar[rr,"U"] & & \SHI^-(S^3(H),\sigma) \ar[dl]\\
			& \KHI(H) \ar[ul] &
		\end{tikzcd}
	\]of Proposition~\ref{prop:instantonMinusUTriangle} implies that \[
		\SHI^-(S^3(H),\sigma) \cong \C[U]^2 \oplus \C.
	\]The torsion $\C$ of $\SHI^-(S^3(H),\sigma)$ contributes two copies of $\C$ to $\KHI(H)$ which lie in gradings differing by $[\sigma]$. The two free summands of $\SHI^-(S^3(H),\sigma)$ must therefore contribute the two other $\C$ summands of $\KHI(H)$ which lie gradings differing by $[\sigma]$ from our explicit description of $\KHI(H)$ so the generators of the free summands must lie in gradings differing by $[\sigma]$. 
\end{proof}

\theoremstyle{plain}
\newtheorem*{thm:suturedDetects}{Theorem~\ref{thm:suturedInstantonHomologyDetectsTrivialBand}}
\begin{thm:suturedDetects}
	Let $K_b$ be the band sum of a split two-component link along a band $b$, and let $K_\#$ be the connected sum. Then \[
		\dim \KHI(K_\#) = \dim \KHI(K_b)
	\]over $\C$ if and only if $b$ is trivial. 
\end{thm:suturedDetects}
\begin{proof}
	Assume that $b$ is nontrivial. By Corollaries \ref{cor:instantonSufficesToConsiderMinusLink} and \ref{cor:instantonDifferentGradingsGeneratorsSuffices}, it suffices to show that there are generators of free summands of $\SHI^-(S^3(K_b \cup C),\sigma)$ lying in gradings differing by $[\sigma]$. By Theorem~\ref{thm:SeqOfSurfDecomp}, there is a sequence of surface decompositions \[
		S^3(K_b \cup C) \overset{S_1}{\rightsquigarrow} (M_1,\gamma_1) \overset{S_2}{\rightsquigarrow} \cdots \overset{S_n}{\rightsquigarrow} (M_n,\gamma_n)
	\]for which each $S_i$ is nice, taut, and disjoint from $\partial N(C)$, and $(M_n,\gamma_n)$ is the union of a product sutured manifold and $S^3(H)$. By Theorem~\ref{thm:instantonMinusSurfDecomp}, it suffices to show that $\SHI^-(M_n,\gamma_n,\sigma)$ has generators of free summands lying in gradings differing by $[\sigma]$. The result now follows from Lemmas \ref{lem:instantonMinusDisjointUnion} and \ref{lem:instantonHopfLink}. 
\end{proof}

\subsection{Invariance under full twists of the band}\label{subsec:invarianceInstanton}

We begin by recalling the oriented skein exact triangle in sutured instanton homology. 

\begin{thm}[Theorem 3.1 and Lemmas 3.3, 3.4 of \cite{MR2683750}]\label{thm:suturedInstantonSkein}
	Let $K_+,K_-$, and $K_0$ be oriented links of an oriented skein relation. The three links agree except in a local region where $K_+,K_-$ have crossings of the specified sign and $K_0$ is the oriented resolution. If $K_0$ has one more component than $K_+$ and $K_-$, then there is an exact triangle \[
		\begin{tikzcd}[column sep=-1em]
			\KHI(K_+) \ar[rr] & & \KHI(K_-) \ar[dl]\\
			& \KHI(K_0) \ar[ul] &
		\end{tikzcd}
	\]Choose a Seifert surface for each of the three links. With respect to the $\Z$-gradings defined by Seifert surfaces, all three maps are grading-preserving. With respect to the canonical mod 2 gradings, the map $\KHI(K_-) \to \KHI(K_0)$ has odd degree while the other two maps have even degree. 
\end{thm}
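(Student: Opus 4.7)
The plan is to derive this triangle from the instanton surgery exact triangle applied to a small unknotted circle at the distinguished crossing. Let $L$ be an unknot in $S^3$ that bounds a disc $D$ meeting $K_-$ transversely in the two strands of the given negative crossing. View the exterior of $K_- \cup L$ as a balanced sutured manifold with meridional sutures on $\partial N(K_-)$ and with $\partial N(L)$ left as a toral boundary component. Three Dehn fillings of $\partial N(L)$ along consecutive integer slopes (relative to the Seifert framing of $L$) produce three balanced sutured manifolds: meridional filling recovers the sutured exterior of $K_-$; adjacent $+1$ filling adds a full positive twist through $D$ and converts the negative crossing to a positive one, yielding the sutured exterior of $K_+$; and the $0$ filling yields $S^1\times S^2$ containing a link which, by the light bulb trick (Corollary~\ref{cor:ZeroSurgeryLinkingCircle}), is isotopic to the disjoint union of $K_0$ sitting in an embedded ball together with $S^1\times\pt$.

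First I would apply Floer's surgery exact triangle in instanton Floer homology, extended to the sutured setting by Kronheimer--Mrowka, to these three consecutive Dehn fillings. The maps come from the natural $2$-handle attachment cobordisms along $L$. Then I would identify the third term with $\KHI(K_0)$: the sutured manifold obtained from the $0$ filling contains the extra component $S^1\times\pt$, and via a sequence of product disc decompositions together with the connected sum formula (Lemma~\ref{lem:instantonConnectedSumFormula}) and the model computation $\SHI(S^1\times S^2(1))\cong\C^2$, the sutured Floer homology reduces to $\KHI(K_0)$, with the appropriate rank matching the fact that $K_0$ has one more component than $K_\pm$.

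For the grading assertions, I would fix Seifert surfaces $\Sigma_+, \Sigma_-, \Sigma_0$ for the three links that agree outside the crossing ball and differ inside by the standard band move. By Proposition~\ref{prop:simultaneousClosures} they may be simultaneously extended to closed surfaces in a common closure of the preclosure associated to $\partial N(L)$, and the $2$-handle cobordisms between the consecutive closures carry each closed extension to a homologous surface in the adjacent closure. Additivity of $\mu$ together with Proposition~\ref{prop:eigenvalsOfMuS} then imply that the three maps preserve the generalized eigenspaces of $\mu$ of these closed extensions, hence preserve the Alexander grading. The canonical mod $2$ grading shift of a cobordism map is determined by the parity of the expected dimension of the associated instanton moduli space, which is a topological invariant of the cobordism; a direct calculation for the three $2$-handle cobordisms yields an odd shift for $\KHI(K_-)\to\KHI(K_0)$ and even shifts for the other two, reflecting the fact that this map records the band saddle realizing the oriented resolution while the other two record smooth isotopy-type modifications of the ambient sutured manifold.

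The principal obstacle will be the identification of the third term with $\KHI(K_0)$, which requires combining the light bulb trick with the connected sum and disjoint union formulas for $\SHI$ while carefully tracking sutured structures and the splitting along an affine space (Proposition~\ref{prop:instantonSplittingAlongAffineSpace}) so that the Alexander grading on the identified $\KHI(K_0)$ summand matches the one induced by $\Sigma_0$. A secondary difficulty is the mod $2$ grading computation: Kronheimer--Mrowka's canonical mod $2$ grading is derived from the $\Z/8$-grading on instanton Floer homology, and explicit verification of the claimed shift for each cobordism requires careful bookkeeping of the instanton index modulo $8$ on each $2$-handle cobordism.
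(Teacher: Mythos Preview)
The paper does not prove this theorem; it is quoted from \cite{MR2683750} and accompanied only by a short remark explaining that the maps arise from surgery on a linking circle $C$ viewed inside closures of the sutured exteriors of $K_+$, $K_-$, and $K_0$. So there is no paper proof to compare against, only that remark.

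Your overall plan---derive the triangle from the surgery exact triangle applied to a small linking circle---is the right one and matches both the remark here and what Kronheimer--Mrowka actually do. The gap is in your identification of the third term. You claim that after $0$-filling on $L$ the link $K_-$ in $S^1\times S^2$ becomes isotopic to $K_0\sqcup(S^1\times\pt)$, citing Corollary~\ref{cor:ZeroSurgeryLinkingCircle}. That corollary applies only to band sums of \emph{split} links and yields the connected sum $K_\#$ in a ball inside $S^1\times S^2$, not $K_0$; and in any case your asserted isotopy fails on a component count, since $K_-$ has one fewer component than $K_0$ and two fewer than $K_0\sqcup(S^1\times\pt)$. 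So the light bulb trick does not produce $\KHI(K_0)$ here.

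The correct identification is not via an isotopy of links but via the sutured manifolds themselves. The twice-punctured sphere $(\pt\times S^2)\setminus N(K_-)$ is a product annulus in the sutured exterior of $K_-\subset S^1\times S^2$, and decomposing along it yields exactly the sutured exterior of $K_0$ in $S^3$. Equivalently, as the paper's remark indicates, one builds closures of $S^3(K_+)$, $S^3(K_-)$, $S^3(K_0)$ directly so that they differ by surgery on $C$; the extra boundary torus of $S^3(K_0)$ is absorbed by the closure construction, not by an isotopy in $S^1\times S^2$. This is also what the remark after Theorem~\ref{thm:SuturedInstantonInvariantUnderFullTwists} points to when it says the Heegaard Floer term $\HFKhat(S^1\times S^2,K_\#)$ and the instanton term $\KHI(L)$ are related by a product annulus, not by Corollary~\ref{cor:ZeroSurgeryLinkingCircle}. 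Once you set up the closures this way, your grading arguments via commuting $\mu$-operators and the index computation for the mod $2$ shift are along the right lines, though the details are exactly the content of Lemmas~3.3 and~3.4 of \cite{MR2683750}.
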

\begin{rem}
	We may view $K_+$ and $K_-$ as obtained from $K_0$ by band surgery. Let $C$ be a linking circle for the band. The maps in the exact triangle arise as surgery along $C$, viewed as a knot in closures of the sutured exteriors of $K_+,K_-$, and $K_0$. 
\end{rem}

\begin{lem}\label{lem:suturedInstantonTrivialSkeinTri}
	Let $K_\#$ be the connected sum of a split two-component link $L$ in $S^3$. Then the map $\KHI(K_\#) \to \KHI(K_\#)$ in the skein exact triangle \[
		\begin{tikzcd}[column sep=-1em]
			\KHI(K_\#) \ar[rr,"0"] & & \KHI(K_\#) \ar[dl]\\
			& \KHI(L) \ar[ul] & 
		\end{tikzcd}
	\]is zero. 
\end{lem}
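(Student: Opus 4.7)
The plan is to mirror the Heegaard Floer proof of Lemma~\ref{lem:HeegaardTrivTriangle}. I will show that the third group in the exact triangle, $\KHI(L)$, has complex dimension exactly $2\dim \KHI(K_\#)$, which by exactness of the triangle forces the map $\KHI(K_\#) \to \KHI(K_\#)$ to be zero.

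First, I compute $\dim \KHI(L)$ using a connected sum decomposition. Fix a splitting sphere $S$ for $L = J_1 \sqcup J_2$ separating $S^3$ into balls $B_1, B_2$ with $J_i \subset B_i$. Then $B_i \setminus N(J_i)$ is homeomorphic to $S^3(J_i)$ with an open $3$-ball (namely $B_{3-i}$, which lies in the interior of $S^3(J_i)$ because it misses $J_i$) removed from its interior. Reassembling along the resulting sphere boundaries exhibits $S^3(L)$ as the sutured connected sum $S^3(J_1) \# S^3(J_2)$, and applying Lemma~\ref{lem:instantonConnectedSumFormula} yields
\[
\SHI(S^3(L)) \;\cong\; \KHI(J_1) \otimes_{\C} \KHI(J_2) \otimes_{\C} \C^2.
\]

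Next, I establish a knot connect-sum formula $\KHI(J_1 \# J_2) \cong \KHI(J_1) \otimes_{\C} \KHI(J_2)$. The connect-sum sphere for $K_\#$ meets $K_\#$ transversely in two points; removing small meridional discs around these intersections produces an annulus $A$ in $S^3(K_\#)$ whose boundary consists of two meridional curves on $\partial N(K_\#)$. After a small isotopy, these boundary curves can be made disjoint from the sutures, with one component in $R_+(\gamma)$ and one in $R_-(\gamma)$, each homologically essential in its respective region. Hence $A$ is a product annulus of the type to which the decomposition result recorded in Lemma~\ref{lem:surfDecompProdAnnulus} applies; decomposition along this separating annulus preserves $\SHI$. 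A direct inspection of the resulting sutured manifold, possibly after further product disc decompositions to align sutures, identifies it with $S^3(J_1) \amalg S^3(J_2)$. The disjoint union formula for $\SHI$ (cf. the argument of Lemma~\ref{lem:instantonMinusDisjointUnion}) then gives $\KHI(K_\#) \cong \KHI(J_1) \otimes_{\C} \KHI(J_2)$.

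Combining the two computations produces $\dim \KHI(L) = 2\dim \KHI(K_\#)$. Exactness of the skein triangle of Theorem~\ref{thm:suturedInstantonSkein}, together with the identity $\dim \KHI(L) = 2\dim \KHI(K_\#) - 2\dim \operatorname{im}(f)$ where $f$ denotes the map in question, then forces $f = 0$. The main technical hurdle is the second step: carefully tracking how the two meridional sutures on $\partial N(K_\#)$ distribute between the two components when decomposing along $A$, and verifying that after normalization one genuinely recovers the pair of standard sutured exteriors $S^3(J_1) \amalg S^3(J_2)$ rather than a nearby balanced sutured manifold with a different suture structure.
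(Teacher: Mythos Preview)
Your proposal is correct and follows essentially the same route as the paper: both argue that $\dim \KHI(L) = 2\dim \KHI(K_\#)$ via the connected sum formula (Lemma~\ref{lem:instantonConnectedSumFormula}) for $\KHI(L)$ and a product annulus decomposition $S^3(K_\#) \rightsquigarrow S^3(J_1) \amalg S^3(J_2)$ for $\KHI(K_\#)$, then invoke exactness. The paper cites Proposition 6.7 of \cite{MR2652464} for the product annulus step rather than Lemma~\ref{lem:surfDecompProdAnnulus} (which concerns the minus version), and your hedging about ``further product disc decompositions to align sutures'' is unnecessary---the decomposition along $A$ directly yields $S^3(J_1) \amalg S^3(J_2)$ with the standard two meridional sutures on each piece.
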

\begin{proof}
	By exactness of the triangle, it suffices to prove that $\dim \KHI(L) = 2\cdot \dim \KHI(K_\#)$. If $K_1,K_2$ are the two components of $L$, then there a product annulus decomposition $S^3(K_\#) \rightsquigarrow S^3(K_1) \amalg S^3(K_2)$. By Proposition 6.7 of \cite{MR2652464} and the connected sum formula (Lemma~\ref{lem:instantonConnectedSumFormula}), we have \[
		\dim \KHI(L) = 2 \cdot \dim \KHI(K_1) \cdot \dim \KHI(K_2) = 2 \cdot \dim \KHI(K_\#)
	\]as required. 
\end{proof}

\theoremstyle{plain}
\newtheorem*{thm:suturedInvariance}{Theorem~\ref{thm:SuturedInstantonInvariantUnderFullTwists}}
\begin{thm:suturedInvariance}
	Let $K_b$ be a band sum of a split two-component link, and let $K_{b+n}$ be obtained by adding $n$ full twists to the band. Then \[
		\KHI(K_b) \cong \KHI(K_{b+n})
	\]as vector spaces over $\C$ equipped with Alexander gradings and canonical mod $2$ gradings. 
\end{thm:suturedInvariance}
\begin{proof}
	It suffices to prove the result for $n=1$. Just like in Lemma~\ref{lem:HeegaardInjectBetweenTriangles} and Remark~\ref{rem:HeegaardSurjTri}, we have the following commutative diagram.\[
		\begin{tikzcd}[row sep=small,column sep=small]
			\KHI(K_\#) \ar[rr] & & \KHI(K_\#) \ar[dl]\\
			& \KHI(L) \ar[ul]\\
			\KHI(K_{b+1}) \ar[rr] \ar[uu,two heads] & & \KHI(K_b) \ar[uu,two heads] \ar[dl]\\
			& \KHI(L) \ar[lu] \ar[uu,two heads,crossing over] & \\
			\KHI(K_\#) \ar[uu,hook] \ar[rr] & & \KHI(K_\#) \ar[dl] \ar[uu,hook]\\
			& \KHI(L) \ar[uu,hook,crossing over] \ar[ul] &
		\end{tikzcd}
	\]The injective vertical maps are induced by compatible ribbon concordances while the surjective vertical maps are induced by their reverses. That these maps are injective and surjective as claimed follows from Theorem 6.7 of \cite{1904.09721}. The result now follows in the same way as Theorem~\ref{thm:HeegaardKnotFloerInvariantUnderFullTwists} using Lemma~\ref{lem:suturedInstantonTrivialSkeinTri} and the grading claims in Theorem~\ref{thm:suturedInstantonSkein}.
\end{proof}
\begin{rem}
	Although the Heegaard Floer group playing the role of $\KHI(L)$ above is $\HFKhat(K_\#,S^1 \x S^2)$ in Lemma~\ref{lem:HeegaardInjectBetweenTriangles}, there is a product annulus decomposition relating the two sutured exteriors. From the proof of Lemma~\ref{lem:surfDecompProdAnnulus}, there is a grading-preserving identification of the two sutured instanton Floer homology groups, and the maps in the exact triangle of Theorem~\ref{thm:suturedInstantonSkein} are defined through such an isomorphism. 
\end{rem}

\subsection{Singular instanton homology}\label{subsec:singularInstantonHomology}

Let $J$ be a link in $B^3 \subset S^3$. The (unreduced) \textit{singular instanton homology} $\Isharp(J)$ of $J$ is defined in \cite{MR2805599} in the following way. Associated to $J$ is an auxiliary $3$-dimensional orbifold $Y$ with $\Z/2$ singularities along $J \amalg H$ where $H$ is a fixed auxiliary two-component Hopf link in the complement of our fixed $B^3 \subset S^3$. A chain complex $(C(J),d)$ over $\Z$ is then defined by a version of Morse homology for a Chern-Simons functional defined on a space of orbifold connections. The complex depends on a number of auxiliary choices, but its homology $\Isharp(J)$ is a functorial invariant of $J$. 

There is also a reduced variant $\Inat(J)$ of singular instanton homology when $J$ is equipped with a basepoint $p$. We associate to $J$ an orbifold $Z$ with $\Z/2$ singularities along $J \cup \mu$ where $\mu$ is a meridian of the component of $J$ containing $p$. Again a chain complex over $\Z$ is defined depending on a number of auxiliary choices, and its homology $\Inat(J)$ is a functorial invariant of $J$ equipped with a basepoint. 

\begin{prop}[Proposition 1.4 of \cite{MR2805599}]\label{prop:suturedIsoSingular}
	Let $J$ be a knot in $S^3$. Then $\Inat(J)$ and $\KHI(J)$ are isomorphic over $\Q$. 
\end{prop}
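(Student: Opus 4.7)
The plan is to realize both $\Inat(J)$ and $\KHI(J)$ as the same generalized eigenspace of a $\mu$-operator on the instanton Floer homology of a single admissible pair $(Y,\omega)$, and then invoke Floer excision to bridge the two constructions.

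First I would construct a convenient closure computing $\KHI(J)$. Let $(M,\gamma) = S^3(J)$ be the sutured exterior of $J$ with two oppositely oriented meridional sutures on $\partial N(J)$. Take as auxiliary surface a once-punctured torus $B$, form the preclosure $M \cup [-1,1]\times B$, and identify its two boundary components by a diffeomorphism $h\colon \bar R_+ \to \bar R_-$ sending $\{1\}\times q$ to $\{-1\}\times q$. This produces a closed oriented $3$-manifold $Y$ containing a closed genus-two surface $\bar R$ and a closed loop $\alpha$ meeting $\bar R$ transversely once; by definition $\KHI(J) = I_*(Y|\bar R)_\alpha$. The key structural feature is that $Y$ contains the knot exterior $S^3\setminus N(J)$ with its boundary torus glued to the cylinder on the sutures.

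Second, I would reinterpret the defining orbifold for $\Inat(J)$ as instanton Floer homology on a genuine closed $3$-manifold with nontrivial bundle data. The orbifold has $\Z/2$ singular locus $J \cup \mu$, where $\mu$ is a small meridian at the basepoint $p$. An orbifold $\SO(3)$ connection with holonomy $-1$ about each singular curve corresponds to an honest $\SO(3)$-bundle on the complement of $J \cup \mu$ whose second Stiefel--Whitney class is Poincar\'e dual to a surface linking $\mu$ once and linking $J$ once. After completing this complement with appropriate pieces so that the resulting pair is admissible and contains a distinguished genus-two surface, one expresses $\Inat(J)$ again as a generalized eigenspace $I_*(Y'|\bar R')_{\omega'}$.

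Third, both setups contain the knot exterior $S^3\setminus N(J)$ as a common subset, and Floer excision along a torus parallel to $\partial N(J)$ interchanges the complementary pieces: the $[-1,1]\times B$ cylinder on the sutures in the first construction, and the neighborhood of $\mu$ together with its auxiliary completion in the second. Kronheimer--Mrowka's excision theorem (as used throughout \cite{MR2652464}) produces an isomorphism $I_*(Y)_\alpha \cong I_*(Y')_{\omega'}$ intertwining the operators $\mu(\bar R)$ and $\mu(\bar R')$; restricting to the top generalized eigenspace yields $\KHI(J) \cong \Inat(J)$. Passing to $\Q$-coefficients is standard.

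The main obstacle will be matching the bundle data on the nose: verifying that the homology class $[\alpha] \in H_1(Y;\Z/2)$ of the loop used to set up $\KHI(J)$ corresponds, under the excision, precisely to the Poincar\'e dual of the $w_2$-class coming from the $\Z/2$ orbifold holonomy around $\mu$. A secondary technical point is choosing the excision torus so that it is disjoint from the surfaces whose $\mu$-eigenvalues cut out the relevant generalized eigenspaces, so that the excision map is grading-preserving for the $\mu(\bar R)$ operator. Both issues are resolved by a careful choice of auxiliary surface $B$ and gluing diffeomorphism $h$ in the closure construction, together with a compatible choice of completion piece on the singular instanton side.
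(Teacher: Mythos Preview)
The paper does not supply its own proof of this proposition: it is quoted verbatim as Proposition~1.4 of Kronheimer--Mrowka's paper \cite{MR2805599} and used as a black box. So there is no ``paper's proof'' to compare your proposal against; what you have written is a sketch of the argument in the cited reference rather than of anything in the present paper.

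As a sketch of the Kronheimer--Mrowka argument, your outline is in the right spirit --- both invariants are realized as generalized eigenspaces inside the instanton homology of an admissible pair, and one passes between them by an excision argument --- but there is a slip in your first step. The sutured exterior $S^3(J)$ carries \emph{two} meridional sutures, so an auxiliary surface $B$ must have two boundary components; a once-punctured torus has only one. To obtain a genus-two $\bar R$ you would instead take $B$ to be a genus-one surface with two boundary components. This does not affect the overall strategy, but it would need to be corrected before the closure construction makes sense.
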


\theoremstyle{plain}
\newtheorem*{cor:singular}{Corollary \ref{cor:singularInstantonHomologyDetectsTrivialBand}}
\begin{cor:singular}
	Let $K_b$ be the band sum of a two-component split link along a band $b$, and let $K_\#$ be the connected sum. Then \[
		\dim \Inat(K_b) = \dim \Inat(K_\#)
	\]over $\Q$ if and only if $b$ is trivial.
\end{cor:singular}
\begin{proof}
	The result follows from Theorem~\ref{thm:suturedInstantonHomologyDetectsTrivialBand} and Proposition~\ref{prop:suturedIsoSingular}. 
\end{proof}

\begin{cor}\label{cor:singularDetectsTrivialBandF2}
	Both $\dim \Inat$ and $\dim\Isharp$ over $\F_2$ detect the trivial band. 
\end{cor}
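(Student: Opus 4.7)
The plan is to derive both statements from Corollary~\ref{cor:singularInstantonHomologyDetectsTrivialBand} by combining the ribbon concordance splitting with the universal coefficient theorem, and then for $\Isharp$ to invoke the identity $\dim_{\F_2}\Isharp(K) = 2\dim_{\F_2}\Inat(K)$ of Lemma 7.7 of \cite{MR3880205}. This identity reduces the $\Isharp$ claim to the $\Inat$ claim, so I focus on showing that $\dim_{\F_2}\Inat$ over $\F_2$ detects the trivial band, where only the ``only if'' direction requires proof. I will therefore suppose $b$ is nontrivial and show that $\dim_{\F_2}\Inat(K_b;\F_2) > \dim_{\F_2}\Inat(K_\#;\F_2)$.

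The key tool is the ribbon concordance $R\colon K_\# \to K_b$ from Corollary~\ref{cor:RibbonConcordance}, together with its reverse $R'\colon K_b \to K_\#$. The ribbon concordance argument (in the spirit of \cite{MR4041014,1904.09721,Zem19a}) shows that $\Inat(R')\circ \Inat(R)$ is the identity map on $\Inat(K_\#;\Z)$, yielding an integral $\Z$-module splitting
\[
    \Inat(K_b;\Z) \cong \Inat(K_\#;\Z) \oplus H_b^{\Z}.
\]
Base-changing to $\Q$ gives a compatible splitting whose complementary summand equals $H_b^{\Z}\otimes \Q$. Because $b$ is nontrivial, Corollary~\ref{cor:singularInstantonHomologyDetectsTrivialBand} (together with the isomorphism $\Inat(K)\cong \KHI(K)$ over $\Q$ of Proposition~\ref{prop:suturedIsoSingular}) forces this summand to be nonzero, so $H_b^{\Z}$ has positive $\Z$-rank. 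Base-changing the integral splitting to $\F_2$ next yields $\Inat(K_b;\F_2) \cong \Inat(K_\#;\F_2) \oplus H_b^{\F_2}$, and the universal coefficient theorem provides the estimate $\dim_{\F_2}H_b^{\F_2} \ge \dim_{\F_2}(H_b^{\Z}\otimes \F_2) \ge \rank_{\Z}H_b^{\Z} > 0$, which is the desired strict inequality. The corollary for $\Isharp$ follows immediately by doubling dimensions via the identity of \cite{MR3880205}.

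The main obstacle is checking that the ribbon concordance identity $\Inat(R')\circ \Inat(R) = \Id$ holds at the level of $\Z$-modules for $\Inat$, since the analogous results in the literature are typically stated for Khovanov homology or for sutured instanton homology over $\C$. The expected argument is to adapt the handle-theoretic proof: composing $R$ with its reverse $R'$ and applying handle cancellations expresses $R'\circ R$ as a composition of saddle/birth/death pairs that cancel at the level of cobordism maps, and because the singular instanton cobordism maps are defined integrally the cancellations happen over $\Z$ (up to an overall sign, which is irrelevant for the splitting statement). Once this is in hand, no further work is needed: the universal coefficient step and the doubling identity for $\Isharp$ are then routine.
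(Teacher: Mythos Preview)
Your proposal is correct and follows essentially the same route as the paper: an integral ribbon-concordance splitting of $\Inat$, positivity of the rank of the complementary summand via the $\Q$ (equivalently $\C$) detection result, a universal-coefficients step to pass to $\F_2$, and finally the identity $\dim_{\F_2}\Isharp = 2\dim_{\F_2}\Inat$ from \cite{MR3880205}. The concern you flag about the integral ribbon-concordance identity for $\Inat$ is handled in the paper by citing Corollary~4.5 of \cite{1904.09721}, so no new argument is needed there.
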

\begin{proof}
	A ribbon concordance $K_\# \to K_b$ induces a inclusion \[
		\Inat(K_\#;\Z) \hookrightarrow \Inat(K_b;\Z)
	\]onto a direct summand by Zemke's argument \cite{Zem19a} (see Corollary 4.5 of \cite{1904.09721} for example) so we may write \[
		\Inat(K_b;\Z) \cong \Inat(K_\#;\Z) \oplus D_b
	\]where $D_b$ is a finitely-generated abelian group. Since the dimension of $\Inat$ with coefficients in $\C$ detects the trivial band, we know that the rank of $D_b$ is at least one by the universal coefficients theorem. Thus $\Inat$ also detects the trivial band over $\F_2$. 

	By Lemma 7.7 of \cite{MR3880205}, we know that $\dim \Isharp(J) = 2\cdot \dim \Inat(J)$ over $\F_2$ for any knot $J$, so $\Isharp$ over $\F_2$ detects the trivial band. 
\end{proof}


\section{Khovanov homology}\label{sec:Khovanovhomology}

In section~\ref{subsec:KhovanovDetectingTrivBand}, we show that Khovanov homology detects the trivial band using functoriality of Kronheimer-Mrowka's spectral sequence from Khovanov homology to singular instanton homology \cite{MR2805599,MR3903915}. Using this detection result, we show that adding full twists to the band changes Khovanov homology in section~\ref{subsec:KhovanovFullTwists}. 

\subsection{Preliminaries}

We review the basics of Khovanov homology \cite{MR1740682,MR1917056}. After a note on cobordism maps and Levine-Zemke's result on ribbon concordances \cite{MR4041014}, we briefly recall Kronheimer-Mrowka's spectral sequence from Khovanov homology to singular instanton homology \cite{MR2805599}.

\begin{dfs*}[Khovanov homology]
	Let $D$ be a diagram of an oriented link $L$. There are two resolutions of each crossing, which are called the $0$- and $1$-resolutions shown in Figure~\ref{fig:resolutions}.

	\begin{figure}[!ht]
		\centering
		\labellist
		\pinlabel $0$ at 240 120
		\pinlabel $1$ at 510 120
		\endlabellist
		\includegraphics[width=.45\textwidth]{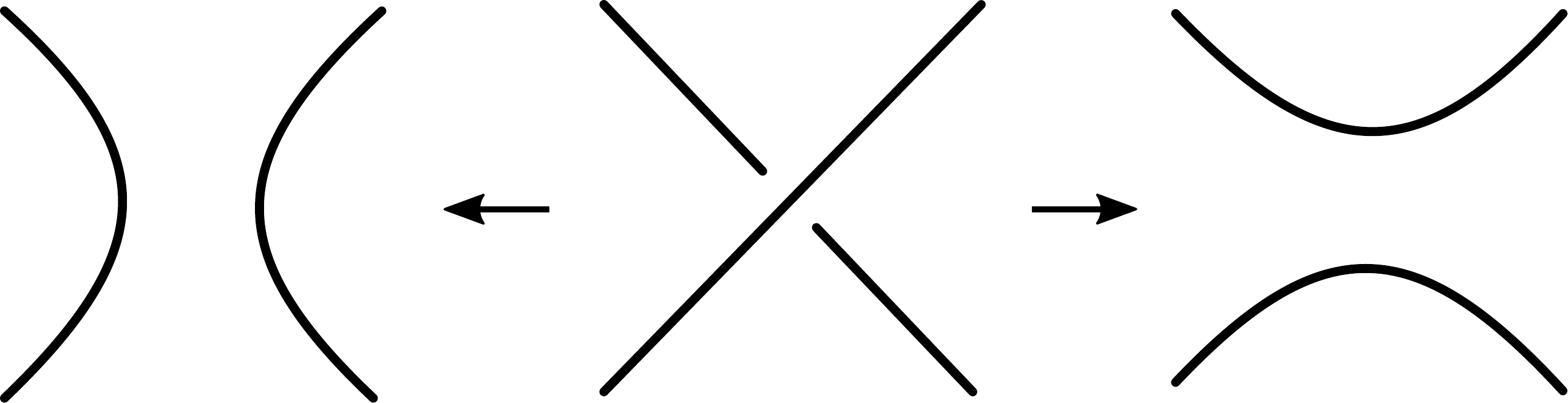}
		\caption{$0$- and $1$-resolutions of a crossing.}
		\label{fig:resolutions}
	\end{figure}

	\noindent A \textit{complete resolution} of $D$ is the result of resolving each crossing of $D$, and is determined by a function $v\colon c(D) \to \{0,1\}$ where $c(D)$ is the set of crossings of $D$. Let $D_v$ denote the diagram obtained by resolving $D$ via $v\colon c(D) \to \{0,1\}$. Associated to each complete resolution $v$ is the free abelian group $\KhC(D_v) = V^{\otimes \pi_0(D_v)}$ where $V = \Z[X]/X^2$. If $w\colon c(D) \to \{0,1\}$ agrees with $v$ except at a single crossing $c$ where $w(c) = 1$ but $v(c) = 0$, then there is a \textit{edge map} $\KhC(D_v) \to \KhC(D_w)$. The resolution $D_w$ is obtained from $D_v$ by either merging two components or by splitting a single component. Associated to a merge and a split are maps $m\colon V\otimes V \to V$ and $\Delta\colon V \to V \otimes V$, respectively, defined by \[
		m = \begin{cases}
			1 \otimes 1 \mapsto 1\\
			1 \otimes X \mapsto X\\
			X \otimes 1 \mapsto X\\
			X \otimes X \mapsto 0
		\end{cases} \qquad \Delta = \begin{cases}
			1 \mapsto 1 \otimes X + X \otimes 1\\
			X \mapsto X \otimes X.
		\end{cases}
	\]The edge map $\KhC(D_v) \to \KhC(D_w)$ is obtained by tensoring the merge or split map with the identity map on the other components. The \textit{Khovanov complex} $\KhC(D)$ is the direct sum of $\KhC(D_v)$ over all complete resolutions $v$, and the differential on $\KhC(D)$ is the sum of the edge maps with signs determined by an ordering of the crossings of $D$ (see \cite{MR1917056} for a precise formula). 

	The complex $\KhC(D)$ comes equipped with two gradings. First, there is a $q$-grading on $V = \Z[X]/X^2$ defined by $q(X) = -1$ and $q(1) = 1$. With respect to this grading, the merge and split maps drop $q$ by $1$. Each $\KhC(D_v)$ is equipped with the induced $q$-grading shifted by $|v| = \sum v(c)$ so that the differential is $q$-grading preserving. The $q$-grading on the total complex $\KhC(D)$ has a further global shift by $n_+(D) - 2n_-(D)$ where $n_\pm(D)$ is the number of crossings of $D$ of given sign. The other grading $h$ is defined by first placing $\KhC(D_v)$ in $h$-grading $|v| = \sum v(c)$ so that the differential increases $h$ by $1$. There is again a global shift on $\KhC(D)$ by $-n_-(D)$. The homology $\Kh(L)$ inherits these bigradings, and up to bigraded isomorphism, $\Kh(L)$ is an oriented link invariant. Khovanov homology is also invariant under simultaneously changing all of the orientations of the components of $L$. The invariant over a commutative ring $R$ is the homology of the bigraded chain complex $\KhC(D) \otimes_\Z R$.

	Given a basepoint $p$ on the diagram $D$ away from the crossings, each $\KhC(D_v)$ has an action of $V = \Z[X]/X^2$ given by the copy of $V$ in $\KhC(D_v)$ corresponding to the component of $D_v$ containing $p$. The differential preserves this action, and we let $X_p$ denote this action on $\KhC(D)$. The subcomplex $X_p\KhC(D)$ is the \textit{reduced Khovanov complex} of $D$, denoted $\KhrC(D)$, and its homology $\Khr(L)$, up to bigraded isomorphism, is an invariant of $L$ with the basepoint $p$. 
\end{dfs*}

There are unoriented skein exact triangles built into the theory. Let $D$ be a diagram for a link $L$ with a distinguished crossing $c$. Let $D_0$ and $D_1$ be the diagrams obtained from $D$ by taking the $0$- and $1$-resolutions, respectively, of $c$. Then after a grading shift, $\KhC(D_1)$ is a subcomplex of $\KhC(D)$, and $\KhC(D_0)$ is the quotient complex. The short exact sequence \[
	\begin{tikzcd}
		0 \ar[r] & \KhC(D_1) \ar[r] & \KhC(D) \ar[r] & \KhC(D_0) \ar[r] & 0
	\end{tikzcd}
\]induces an exact triangle \[
	\begin{tikzcd}[column sep=tiny]
		\Kh(D_1) \ar[rr] & & \Kh(D) \ar[dl]\\
		& \Kh(D_0). \ar[ul] &
	\end{tikzcd}
\]We state the precise grading shifts in the relevant special case. 

\begin{prop}[Proposition 4.2 of \cite{MR2189938}]\label{prop:KhovanovSkeinGradings}
	Let $D$ be a diagram for a knot $K_-$ with a distinguished negative crossing $c$. Let $K_+$ be the knot with a positive crossing at $c$, and let $U$ be knot obtained by the unoriented resolutions at $c$. Suppose that $L$, the two-component link obtained by the oriented resolution at $c$, has linking number $0$. Then the maps in the unoriented skein exact triangles are homogeneous in the following ways: \[
		\begin{tikzcd}[column sep=small]
			\Kh(K_-) \ar[rr,"{(1,2)}"] & & \Kh(U) \ar[rr,"{(1,2)}"] \ar[dl,"{(0,-1)}"] & & \Kh(K_+) \ar[dl,"{(0,-1)}"]\\
			& \Kh(L) \ar[ul,"{(0,-1)}"] & & \Kh(L) \ar[ul,"{(0,-1)}"] &
		\end{tikzcd}
	\]where an arrow labeled $(a,b)$ means that it increases $h$-grading by $a$ and $q$-grading by $b$. If the linking number of $L$ is not zero, then the maps are graded in this way after a global shift of gradings on $\Kh(U)$ based solely on $\ell k(L)$. The same exact triangles with the described grading shifts hold for reduced Khovanov homology.
\end{prop}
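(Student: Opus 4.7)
The plan is to verify the grading shifts by direct computation in the Khovanov chain complex. Once the crossing-sign bookkeeping is pinned down, all three shifts in each triangle follow mechanically from the normalization formulas $|v| + n_+ - 2n_-$ for the $q$-grading shift and $|v| - n_-$ for the $h$-grading shift of each summand $\KhC(D_v)$. The argument for $\Khr$ proceeds identically because the basepoint-action subcomplex is preserved by the inclusion and quotient maps obtained by resolving $c$ (placed away from the basepoint).

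For the first triangle, let $D$ be a diagram for $K_-$ with distinguished negative crossing $c$. At a negative crossing, the $0$-smoothing produces the unoriented resolution and the $1$-smoothing produces the oriented resolution, so $D_0$ is a diagram for $U$ and $D_1$ is a diagram for $L$. The summands of $\KhC(D)$ with $v(c) = 1$ form a subcomplex isomorphic to $\KhC(D_1)$ up to a grading shift, giving the short exact sequence $0 \to \KhC(D_1) \to \KhC(D) \to \KhC(D_0) \to 0$, whose long exact sequence of homologies is the first triangle.

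To apply the normalization formulas I will compare $n_\pm(D)$, $n_\pm(D_0)$, and $n_\pm(D_1)$. Since $L$ inherits its orientation from $K_-$ and the over/under data at the remaining crossings is unchanged, $n_+(D_1) = n_+(D)$ and $n_-(D_1) = n_-(D) - 1$. For the knot $U$, I will orient it by starting from the inherited orientation of $L$ and reversing one of its two components; crossings with both strands in a single component preserve their sign, while inter-component crossings flip. Since the signed count of inter-component crossings of $L$ equals $2\ell k(L)$, this yields $n_+(D_0) = n_+(D) - 2\ell k(L)$ and $n_-(D_0) = n_-(D) - 1 + 2\ell k(L)$, which under $\ell k(L) = 0$ reduce to $n_+(D_0) = n_+$ and $n_-(D_0) = n_- - 1$. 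Comparing the bigradings that each summand $\KhC(D_{v'})$ carries in $\KhC(D)$ versus in $\KhC(D_1) = \Kh(L)$ and $\KhC(D_0) = \Kh(U)$ yields shifts $(0,-1)$ for the inclusion $\Kh(L) \to \Kh(K_-)$ and $(1,2)$ for the quotient $\Kh(K_-) \to \Kh(U)$. The connecting homomorphism has $h$-degree $+1$ and $q$-degree $0$ in inherited bigradings; translating via these shifts gives shift $(0,-1)$ for $\Kh(U) \to \Kh(L)$. When $\ell k(L) \neq 0$, the same computation produces the stated shift on $\Kh(U)$ after a global bigrading translation by $(2\ell k(L), 6\ell k(L))$ that depends only on $\ell k(L)$.

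The second triangle follows by the same method applied to the diagram $D'$ for $K_+$ obtained from $D$ by switching $c$ from a negative to a positive crossing. At a positive crossing, the $0$-smoothing is oriented and the $1$-smoothing is unoriented, so the short exact sequence is $0 \to \KhC(D'_1) \to \KhC(D') \to \KhC(D'_0) \to 0$ with $\KhC(D'_1) = \Kh(U)$ and $\KhC(D'_0) = \Kh(L)$. Using $n_+(D') = n_+(D) + 1$ and $n_-(D') = n_-(D) - 1$, the same arithmetic delivers the shifts $(1,2)$, $(0,-1)$, and $(0,-1)$ in the positions claimed. The main obstacle throughout is the crossing-sign accounting for the unoriented resolution, which requires careful tracking of how reversing a component of $L$ to orient $U$ changes inter-component crossing signs; once this is isolated as the $2\ell k(L)$ correction, the remaining computation is purely mechanical.
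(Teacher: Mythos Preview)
Your proof is correct. The paper does not give its own proof of this proposition; it is quoted as Proposition 4.2 of \cite{MR2189938} and used as a known result. Your direct computation from the normalization formulas for the $h$- and $q$-gradings is exactly the standard way to establish these shifts, and the crossing-sign bookkeeping (in particular the $2\ell k(L)$ correction coming from reversing one component of $L$ to orient $U$) is handled correctly.
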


\vspace{20pt}

Khovanov homology is functorial with respect to oriented link cobordisms $\Sigma\colon L_0 \to L_1$ in $[0,1] \x S^3$ in the following way. First fix a \textit{planar movie presentation} for $\Sigma$. A planar movie presentation for $\Sigma$ starts with a diagram for $L_0$ and ends in a diagram for $L_1$ and consists of a finite sequence of the following elementary moves: planar isotopy, Reidemeister moves, planar birth, planar saddle, planar death. A planar birth or death is simply the appearance or disappearance of an unknot in the diagram which bounds a disc in the plane disjoint from the diagram. A planar saddle is just a merge or a split as considered before. Associated to each of these elementary moves is a chain map on Khovanov complexes. Up to sign, the map on Khovanov homology induced by the composite of these elementary moves is independent of the choice of planar movie presentation with fixed starting and ending diagrams \cite{MR2113903,MR2174270}. 

There are two features of the cobordism maps which we will need. The first follows from the construction of the maps.

\begin{rem}\label{rem:KhovanovSkeinTriangleCommutesWithCobordismMaps}
	Let $D'$ be a diagram obtained from a diagram $D$ by one of the elementary moves. Suppose that $c$ is a crossing of $D$ for which the elementary move is supported away from a neighborhood of $c$. Then the elementary move can be thought as elementary moves $D_0 \to D_0'$ and $D_1 \to D_1'$ where $D_0,D_1,D_0',D_1'$ are obtained from $D,D'$ by resolutions at $c$. Then the chain maps associated to the elementary move yield a map of short exact sequences \[
		\begin{tikzcd}
			0 \ar[r] & \KhC(D_1') \ar[r] & \KhC(D') \ar[r] & \KhC(D_0') \ar[r] & 0\\
			0 \ar[r] & \KhC(D_1) \ar[r] \ar[u] & \KhC(D) \ar[r] \ar[u] & \KhC(D_0) \ar[r] \ar[u] & 0
		\end{tikzcd}
	\]and therefore a map of exact triangles \[
		\begin{tikzcd}[column sep=small,row sep=small]
			\Kh(D_1') \ar[rr] & & \Kh(D') \ar[dl]\\
			& \Kh(D_0') \ar[ul] &\\
			\Kh(D_1) \ar[rr] \ar[uu] & & \Kh(D) \ar[dl] \ar[uu]\\
			& \Kh(D_0). \ar[ul] \ar[uu,crossing over]
		\end{tikzcd}
	\]The same is true for reduced Khovanov homology. 
\end{rem}

The second feature we need is the behavior under ribbon concordances. 

\begin{thm}[Theorem 1 of \cite{MR4041014}]
	If $C$ is a ribbon concordance from a link $L_0$ to a link $L_1$, the induced grading-preserving map $\Kh(C)\colon \Kh(L_0) \to \Kh(L_1)$ is injective, with left inverse given by $\Kh(\bar{C})$. In particular, $\Kh(L_0)$ is a direct summand of $\Kh(L_1)$ in each bigrading. 
\end{thm}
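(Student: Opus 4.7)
The plan is to prove the stronger statement that $\Kh(\bar{C}) \circ \Kh(C) = \mathrm{Id}_{\Kh(L_0)}$; injectivity of $\Kh(C)$ and the direct summand statement then follow formally. By functoriality of Khovanov homology under oriented link cobordisms, this composite equals $\Kh(\bar{C}\cup C)$, where $\bar{C}\cup C$ is the composite cobordism $L_0\to L_0$ obtained by stacking $C$ and $\bar{C}$ in $[0,2]\times S^3$. So the task reduces to computing the Khovanov map induced by this specific cobordism.

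To do so, I would first exploit the ribbon hypothesis to choose a favorable Morse presentation. Since $C$ has no deaths, I can self-index so that all $k$ births occur before all its saddles; dually, $\bar{C}$ consists of saddles followed by $k$ deaths. The composite $\bar{C}\cup C$ then reads, bottom-to-top: birth $k$ small unknots $U_1,\dots,U_k$ off to the side of $L_0$; attach bands according to the saddles of $C$ to arrive at $L_1$ at time $1$; attach further bands according to the saddles of $\bar{C}$; and finally cap off $k$ unknots. The key topological observation is that in $\bar{C}\cup C$, the birth disc of each $U_i$ in $C$ and the death disc of the corresponding unknot in $\bar{C}$ glue to an embedded $2$-sphere, and using the ribbon structure (which guarantees a pairing between births and the saddles that ``absorb'' them), these $2$-spheres can be isotoped rel boundary to lie in disjoint balls away from $L_0\times[0,2]$.

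With this topological simplification in place, I would then apply Bar-Natan's formulation of the Khovanov TQFT to compute the cobordism map. The cobordism $\bar{C}\cup C$ is movie-equivalent, rel boundary, to the product cobordism $L_0\times[0,2]$ together with some disjoint trivial $2$-spheres (each a birth-then-death of an unknot split from everything). The product cobordism induces the identity, and the trivial $2$-spheres contribute the scalar obtained from applying the TQFT to $S^2$; what makes this work despite $\Kh(S^2)$ being non-unit is that the combined effect of the intermediate saddles relating birth to death in each pair converts the composite into the identity rather than into a disjoint $2$-sphere, via the Bar-Natan ``neck-cutting'' relation. Careful bookkeeping at the chain level, matching up each birth to its partner death through the ribbon decomposition, gives $\Kh(\bar{C}\cup C)=\mathrm{Id}$.

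The main obstacle is the topological pairing step: showing that the births in $C$ and deaths in $\bar{C}$ can genuinely be cancelled via a movie-level isotopy rel boundary. Naively, an index-$0$ and index-$2$ critical point of a surface cannot be cancelled directly; one needs to route them together using the intermediate index-$1$ critical points, and this routing is exactly what the ribbon hypothesis supplies, since each birth in $C$ is dual to a specific saddle that kills it. Once that pairing is established, the remainder of the argument is bookkeeping in Bar-Natan's category. The bigrading claim follows immediately because ribbon cobordism maps are homogeneous of bidegree $(0,0)$ (the underlying cobordism has Euler characteristic zero), so $\Kh(L_0)$ sits inside $\Kh(L_1)$ as a direct summand in each bigrading.
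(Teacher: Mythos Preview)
The paper does not prove this statement; it is quoted from Levine--Zemke as an external input, so there is no in-paper proof to compare against. Your proposal is an attempt to reconstruct their argument, and the overall strategy---present $C$ as births followed by saddles, write $\bar C\circ C$ as births, saddles, reverse saddles, deaths, and compute the composite---is exactly right.

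The gap is in your topological description of $\bar C\circ C$. You assert it is ``movie-equivalent, rel boundary, to the product cobordism $L_0\times[0,2]$ together with some disjoint trivial $2$-spheres,'' but as you immediately notice, that would force the map to be zero since a closed sphere evaluates to $0$. The correct picture is that $\bar C\circ C$ is the product $L_0\times I$ \emph{tubed} to $n$ spheres $S_i$, one per birth: each $S_i$ is the birth disc, the cylinder on $U_i$, and the death disc glued end to end, and the $i$-th tube is the double of the band $b_i$. The spheres are not split-off components of the surface; the tubes connect them to the product. What you do have right is that each $S_i$ bounds a $3$-ball disjoint from $L_0\times I$ (because each $U_i$ is born split from $L_0$ and can be held at its birth position until the saddle). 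With that in hand, neck-cutting each tube writes the map as a sum of $2^n$ terms of the form $(L_0\times I\text{ with some dots})$ times $\prod_i(\text{closed }S_i\text{ with or without a dot})$; an unknotted sphere evaluates to $0$ undotted and $1$ once-dotted, so exactly one term survives and it is the identity. Your ``cancelling births with deaths via a movie-level isotopy'' is not the actual mechanism---no critical points get cancelled, and the computation is genuinely algebraic rather than a reduction to the product cobordism.
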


\noindent The result holds for coefficients in any ring, and the same argument works for reduced Khovanov homology.

\vspace{20pt}

Finally, we recall Kronheimer-Mrowka's spectral sequence from Khovanov homology to singular instanton homology. 

\begin{thm}[Proposition 1.2 and Theorem 8.2 of \cite{MR2805599}, also see Theorem 1.1 of \cite{MR3176310}]\label{thm:dsharp}
	Let $D$ be a diagram of an oriented link $J$ in $S^3$, and let $m(D)$ be the mirror of $D$ representing $m(J)$.
	There is a filtered chain complex $(\bo{C},\bo{F})$, depending on certain auxiliary choices, whose homology is $\Isharp(J)$. The $E_1$-page of the associated spectral sequence is the $h$-graded Khovanov chain complex $\KhC(m(D))$. 

	Similarly, if $D$ is given a basepoint, there is a spectral sequence arising from a filtered complex whose $E_2$-page is the $h$-graded reduced Khovanov homology $\Khr(m(J))$ of the mirror of $J$ and which abuts to $\Inat(J)$. 
\end{thm}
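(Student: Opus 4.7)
The plan is to follow Kronheimer--Mrowka's cube-of-resolutions construction, building a filtered chain complex by resolving every crossing of $D$ in all possible ways and stitching together the resulting singular instanton chain complexes. Fix an ordering of the crossings $c_1,\ldots,c_n$ of $m(D)$, so that each vertex $v\in\{0,1\}^n$ determines an unlink $D_v$ in $S^3$. The first ingredient is a model computation: for the unknot $U$ one has $\Isharp(U)\cong V=\Z[X]/X^2$ as a $q$-graded group, and more generally for an unlink $U^{\sqcup k}$ one has $\Isharp(U^{\sqcup k})\cong V^{\otimes k}$. This can be proved by direct analysis of the flat connections on the orbifold associated to the unknot together with the auxiliary Hopf link, plus a K\"unneth-type argument (or repeated use of the excision principle from \cite{MR2805599}) to handle split unions.

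Second, I would set up the edge maps. An edge of the cube from $v$ to $w$ (differing in exactly one coordinate, a $0\to 1$ change) corresponds to a standard saddle cobordism between the two resolutions. This cobordism, viewed as a properly embedded surface in $[0,1]\times S^3$, induces a functorial map $\Isharp(D_v)\to\Isharp(D_w)$. The key identification to make is that under the isomorphisms $\Isharp(D_v)\cong V^{\otimes\pi_0(D_v)}$ from the previous step, these edge maps are precisely Khovanov's merge $m\colon V\otimes V\to V$ and split $\Delta\colon V\to V\otimes V$. One proves this by reducing to a local computation on a small tangle, using functoriality and naturality of the orbifold cobordism maps together with the unknotted model computation.

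Third, I would assemble the filtered complex. For each vertex $v$ choose a chain-level representative $C(D_v)$ of $\Isharp(D_v)$; glue these into a total complex $\bo{C}=\bigoplus_v C(D_v)$ whose differential is the internal differential on each $C(D_v)$ plus the sum of edge maps (with Khovanov's sign conventions) plus, a priori, higher-length face contributions that arise because the chain-level composites of saddle cobordism maps need only agree with their prescribed values up to chain homotopy. The filtration $\bo{F}_p = \bigoplus_{|v|\geq p} C(D_v)$ by total cube degree yields a spectral sequence whose $E_0$ differential is the internal differential on each $C(D_v)$ (so $E_1\cong\bigoplus_v V^{\otimes\pi_0(D_v)} = \KhC(m(D))$) and whose $E_1$ differential is the sum of edge maps, which by the previous step is Khovanov's differential. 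Thus $E_2\cong\Kh(m(J))$. Convergence to $\Isharp(J)$ is a standard property of the bounded filtration on a total complex once one verifies, by a sequence of handle slides and Reidemeister-type moves paired with the unoriented skein triangle \cite{MR2805599} in $\Isharp$, that the total homology agrees with $\Isharp(J)$ (equivalently, that the higher cube contributions assemble into a chain complex equivalent to the one computing $\Isharp(J)$).

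The main obstacle is the third step: constructing a genuine chain-level model of $\Isharp$ on unlinks together with compatible edge maps and higher homotopies, and then proving that the total complex actually computes $\Isharp(J)$ rather than some other invariant built from the same data. This requires a careful choice of auxiliary perturbations so that the cube of resolutions is realized by a single family of Chern--Simons functionals whose chain-level structure is strict enough, and the verification that the resulting total complex is chain homotopy equivalent to $C(D)$ is what does most of the work in \cite{MR2805599}. The reduced version is handled by the same argument with the orbifold associated to $J\cup\mu$ (for $\mu$ a meridian at the basepoint) replacing the Hopf-link setup; the unknot then has $\Inat(U)\cong\Z$, so each vertex contributes $V^{\otimes(\pi_0(D_v)-1)}$ and the cube assembles to $\KhrC(m(D))$, yielding an $E_2$-page identification with $\Khr(m(J))$ abutting to $\Inat(J)$.
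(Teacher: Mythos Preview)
The paper does not give its own proof of this theorem: it is stated as a background result, attributed to Kronheimer--Mrowka (Proposition~1.2 and Theorem~8.2 of \cite{MR2805599}) with a reference also to \cite{MR3176310}. So there is no proof in the paper to compare against.

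That said, your outline is a faithful sketch of the Kronheimer--Mrowka argument. The cube-of-resolutions filtered complex, the identification of the vertex groups with $V^{\otimes k}$ via the unlink computation, the identification of the $E_1$ differential with Khovanov's edge maps, and the acknowledgment that the genuine content lies in constructing the higher face maps and proving that the total complex computes $\Isharp(J)$ --- these are exactly the ingredients of the original proof. Your honesty about the third step being the hard part is well placed: in \cite{MR2805599} this is done not by choosing perturbations on each unlink separately and then gluing, but by working with a single parametrized moduli problem over the cube (using metrics that stretch along the spheres bounding the crossing balls), so that the higher homotopies arise geometrically rather than being assembled by hand. If you were to flesh this out, that is the point where a purely algebraic ``choose homotopies'' approach would run into coherence issues, and the geometric family construction is what makes it work.
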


\subsection{Detecting the trivial band}\label{subsec:KhovanovDetectingTrivBand}

\theoremstyle{plain}
\newtheorem*{thm:KhovanovDetects}{Theorem~\ref{thm:KhovanovHomologyDetectsTrivialBand}}
\begin{thm:KhovanovDetects}
	Let $K_b$ be the band sum of a split two-component link along a band $b$, and let $K_\#$ be the connected sum. Then \[
		\dim \Kh(K_\#) = \dim \Kh(K_b)
	\]over $\F_2$ if and only if $b$ is trivial. 
\end{thm:KhovanovDetects}
\begin{proof}
	Let $R$ be a ribbon concordance from $K_\#$ to $K_b$. By Theorem 1.3 of \cite{MR3903915}, there are filtered complexes $(\bo{C}(K_\#),d^\sharp(K_\#)), (\bo{C}(K_b),d^\sharp(K_b))$ as in Theorem~\ref{thm:dsharp} defining spectral sequences from the Khovanov homology of $K_\#,K_b$ to the singular instanton homology of $m(K_\#),m(K_b)$, respectively, along with a filtered chain map \[
		f\colon \bo{C}(K_\#) \to \bo{C}(K_b)
	\]whose induced map $f_2$ on $E_2$-pages of the spectral sequence is the induced map on Khovanov homology $\Kh(R)\colon \Kh(K_\#) \to \Kh(K_b)$. If $\dim \Kh(K_b) = \dim \Kh(K_\#)$, then the map $\Kh(R)$ must be an isomorphism since $\Kh(R)$ is injective by Theorem 1 of \cite{MR4041014}. Since an isomorphism of chain complexes induces an isomorphism on homology, the induced map $f_r$ on $E_r$-pages is an isomorphism for $r \ge 2$ and $r = \infty$. Since $f_\infty$ is an isomorphism between the associated graded vector spaces of $\Isharp(m(K_\#))$ and $\Isharp(m(K_b))$, it follows that $\dim \Isharp(m(K_\#)) = \dim \Isharp(m(K_b))$. Since $\dim \Isharp$ over $\F_2$ detects the trivial band (Corollary~\ref{cor:singularDetectsTrivialBandF2}) we conclude that $b$ is trivial. 
\end{proof}

\begin{rem}\label{rem:reducedKhovanovDetectsToo}
	Since $\dim \Inat$ over $\F_2$ and $\dim \Inat$ over $\Q$ detect the trivial band as well (Corollaries~\ref{cor:singularInstantonHomologyDetectsTrivialBand} and \ref{cor:singularDetectsTrivialBandF2}), the same argument shows that reduced Khovanov homology over $\F_2$ and $\Q$ also detects the trivial band. 
\end{rem}

\subsection{Shifting under full twists of the band}\label{subsec:KhovanovFullTwists}

Let $L$ be a split two-component link, and let $K_b$ the band sum of $L$ along a band $b$. As before, $K_{b+1},K_{b},L$ form an oriented skein triple. Let $K_{b+1/2}$ denote the (arbitrarily oriented) knot obtained by taking the unoriented resolution at this crossing. We have the following two unoriented skein exact triangles over $\F_2$ by Proposition~\ref{prop:KhovanovSkeinGradings}. \[
	\begin{tikzcd}[column sep=tiny]
		\Kh(K_b) \ar[rr] & & \Kh(K_{b + 1/2}) \ar[rr] \ar[dl] & & \Kh(K_{b + 1}) \ar[dl]\\
		& \Kh(L) \ar[ul] & & \Kh(L) \ar[ul] &
	\end{tikzcd}
\]When the band is trivial, $K_b$ is the connected sum $K_\#$ of $L$, and $K_b = K_\# = K_{b+1}$ as oriented knots. The knot $K_{b+1/2}$ is the connected sum of the split link $L$ where one of the components of $L$ is given the opposite orientation. Let $K_\#'$ denote this knot with an arbitrary orientation. There is an isomorphism $\Kh(K_\#') \cong \Kh(K_\#)$ respecting the bigradings (Proposition 23 of \cite{MR1740682}).

\begin{lem}
	Suppose the band $b$ is trivial. Then the maps $\Kh(K_\#) \to \Kh(K_\#')\to \Kh(K_\#)$ in the two unoriented skein triangles \[
		\begin{tikzcd}[column sep=tiny]
			\Kh(K_\#) \ar[rr,"0"] & & \Kh(K_{\#}') \ar[rr,"0"] \ar[dl] & & \Kh(K_\#) \ar[dl]\\
		& \Kh(L) \ar[ul] & & \Kh(L) \ar[ul] &
		\end{tikzcd}
	\]are zero over $\F_2$. 
\end{lem}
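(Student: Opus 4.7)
The plan is to compute the dimensions of the three vertices of each triangle over $\F_2$ and then to deduce vanishing of the horizontal map by rank-nullity applied to the exact triangle, mirroring the dimension-counting argument used for Heegaard knot Floer homology in Lemma~\ref{lem:HeegaardTrivTriangle}.

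The key dimensional identity to establish is $\dim \Kh(L) = 2\dim \Kh(K_\#)$ over $\F_2$. Since $L = K_1 \sqcup K_2$ is a split link, Khovanov's split union formula gives the bigraded isomorphism $\Kh(L) \cong \Kh(K_1) \otimes \Kh(K_2)$, so that $\dim \Kh(L) = \dim \Kh(K_1)\cdot \dim \Kh(K_2)$. For the connected sum, I would invoke Shumakovitch's splitting theorem $\Kh(J;\F_2) \cong \Khr(J;\F_2) \otimes V$ (valid for every link $J$, with $V$ two-dimensional) together with the multiplicative formula $\Khr(K_1 \# K_2;\F_2) \cong \Khr(K_1;\F_2) \otimes \Khr(K_2;\F_2)$ for reduced Khovanov homology. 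Combined with $\dim \Kh(K_i) = 2\dim \Khr(K_i)$, these give
\[
\dim \Kh(K_\#) = 2 \dim \Khr(K_1)\dim \Khr(K_2) = \tfrac{1}{2}\dim \Kh(K_1)\dim \Kh(K_2) = \tfrac{1}{2}\dim \Kh(L).
\]
Since $\Kh(K_\#') \cong \Kh(K_\#)$ as noted just before the lemma, the identity $\dim \Kh(L) = \dim \Kh(K_\#) + \dim \Kh(K_\#')$ follows immediately.

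For any exact triangle $A \xrightarrow{f} B \xrightarrow{g} C \xrightarrow{h} A$ of finite-dimensional vector spaces, rank-nullity applied at each vertex gives $\mathrm{rk}(f) = \tfrac{1}{2}(\dim A + \dim B - \dim C)$. Applying this to either of the two unoriented skein triangles in the statement, the dimensional identity above forces the first horizontal arrow to have rank zero, hence to vanish. I anticipate no significant obstacle: the argument is a formal consequence of the split-union formula and Shumakovitch's splitting paired with the reduced connected-sum formula, all of which are standard features of Khovanov homology with $\F_2$ coefficients.
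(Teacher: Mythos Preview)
Your proposal is correct and follows essentially the same argument as the paper: both reduce to the dimensional identity $\dim \Kh(L) = 2\dim \Kh(K_\#)$ over $\F_2$, derived from Shumakovitch's splitting $\dim \Kh = 2\dim \Khr$, the reduced connected-sum formula, and the split-union formula, and then conclude vanishing of the horizontal maps from exactness. Your exposition is slightly more explicit about the rank-nullity step, but the substance is identical.
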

\begin{proof}
	It suffices to show that $\dim \Kh(L) = 2\dim \Kh(K_\#)$. Let $K_1,K_2$ be the components of $L$. 
	Since $\dim \Kh(J) = 2\dim \Khr(J)$ over $\F_2$ for any link $J$ \cite{MR3071132}, we have that \begin{align*}
		2\dim \Kh(K_\#) &= 4\dim \Khr(K_\#)\\
		&= 4\dim \Khr(K_1)\cdot\dim \Khr(K_2)\\
		&= \dim \Kh(K_1) \cdot \dim \Kh(K_2) = \dim \Kh(L).\qedhere
	\end{align*}
\end{proof}

\begin{prop}\label{prop:KhovanovSkeinTriangleRibbonMaps}
	There is a commutative diagram of unoriented skein exact triangles \[
		\begin{tikzcd}[column sep=small,row sep=small]
			\Kh(K_\#) \ar[rr] & & \Kh(K_\#') \ar[rr] \ar[dl] & & \Kh(K_\#) \ar[dl]\\
			& \Kh(L) \ar[ul] & & \Kh(L) \ar[ul] &\\
			\Kh(K_b) \ar[rr] \ar[uu,crossing over,two heads] & & \Kh(K_{b+1/2}) \ar[rr] \ar[dl] \ar[uu,crossing over,two heads] & & \Kh(K_{b+1}) \ar[dl] \ar[uu,crossing over,two heads]\\
			& \Kh(L) \ar[ul] \ar[uu,crossing over,two heads] & & \Kh(L) \ar[ul] \ar[uu,crossing over,two heads] &\\
			\Kh(K_\#) \ar[rr] \ar[uu,crossing over,hook] & & \Kh(K_\#') \ar[rr] \ar[dl] \ar[uu,crossing over,hook] & & \Kh(K_\#) \ar[dl] \ar[uu,crossing over,hook]\\
			& \Kh(L) \ar[ul] \ar[uu,crossing over,hook] & & \Kh(L) \ar[ul] \ar[uu,crossing over,hook] &
		\end{tikzcd}
	\]with coefficients in $\F_2$ where the vertical arrows are induced by ribbon concordances and their reverses. 
\end{prop}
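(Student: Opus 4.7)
The plan is to follow the argument of Lemma~\ref{lem:HeegaardInjectBetweenTriangles} and Remark~\ref{rem:HeegaardSurjTri}. Fix a $3$-ball $B\subset S^3$ containing a short segment of the band $b$, chosen so that the distinguished crossings relating $K_b$, $K_{b+1/2}$, and $K_{b+1}$ all lie inside $B$, and so that every elementary move in the movie presentation of the ribbon concordance $R\colon K_\#\to K_b$ produced by Corollary~\ref{cor:RibbonConcordance} is supported in the complement of $B$. Replaying the same sequence of planar isotopies, Reidemeister moves, births, and saddles, but starting with a half twist (respectively a full twist) inside $B$ rather than a trivial arc, produces ribbon concordances $R_{1/2}\colon K_\#'\to K_{b+1/2}$ and $R_1\colon K_\#\to K_{b+1}$, together with their reverses $\overline{R}, \overline{R}_{1/2}, \overline{R}_1$.

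Since every elementary move in each of these three movies is supported in the complement of $B$ and in particular away from the distinguished crossings used to form the unoriented skein exact triangles of Proposition~\ref{prop:KhovanovSkeinGradings}, Remark~\ref{rem:KhovanovSkeinTriangleCommutesWithCobordismMaps} produces level-by-level maps of short exact sequences and hence of exact triangles. The upward arrows in the diagram of the proposition are $\Kh(R)$, $\Kh(R_{1/2})$, $\Kh(R_1)$; the downward arrows are $\Kh(\overline{R})$, $\Kh(\overline{R}_{1/2})$, $\Kh(\overline{R}_1)$; and the vertical maps between the two copies of $\Kh(L)$ at adjacent tiers are induced by the cobordisms obtained from these movies by taking the oriented resolution inside $B$ in every frame. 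These resolved cobordisms agree move-for-move with $R, R_{1/2}, R_1$ outside $B$ and are trivial inside $B$, so they are themselves ribbon concordances from $L$ to $L$ built from the same births and saddles.

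Theorem 1 of \cite{MR4041014} applied to each of the four pairs of mutually reversed ribbon concordances shows that every composite of an upward arrow followed by the corresponding downward arrow is the identity on Khovanov homology. Consequently each upward map is injective onto a bigraded direct summand (hook arrow) and each downward map is a split surjection (two-head arrow), completing the diagram.

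The only genuine subtlety is the simultaneous construction of three movie presentations that agree outside $B$. This is immediate from inspection of the proof of Corollary~\ref{cor:RibbonConcordance}: its saddles arise from light-bulb-trick ``crossing changes'' involving the band, and these may be arranged to take place in the complement of any preselected tubular segment of the band. With this in hand, the remainder is a formal consequence of Levine--Zemke's injectivity and the crossing-disjoint naturality of Khovanov homology's unoriented skein triangle recorded in Remark~\ref{rem:KhovanovSkeinTriangleCommutesWithCobordismMaps}.
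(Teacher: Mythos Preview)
Your proof is correct and follows essentially the same approach as the paper: fix a movie presentation of the ribbon concordance supported away from the distinguished crossing, obtain the other concordances by modifying the local picture at that crossing, invoke Remark~\ref{rem:KhovanovSkeinTriangleCommutesWithCobordismMaps} for commutativity, and Levine--Zemke for the split injections and surjections. The paper's version is terser and phrases the modification as ``changing the crossing $c$ to the positive crossing, the oriented resolution, or the unoriented resolution,'' but this is exactly your ``replay the movie with a half twist, a full twist, or the oriented resolution inside $B$.''
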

\begin{proof}
	Fix a planar movie presentation of a ribbon concordance $K_\# \to K_b$ which is completely supported away from a negative crossing $c$ of a diagram for $K_\#$ which defines the skein exact triangles involving $K_\#,K_\#'$, and $L$. The induced map on Khovanov homology is the map on the left $\Kh(K_\#) \to \Kh(K_b)$ while its reverse is the map above it $\Kh(K_b) \to \Kh(K_\#)$. Changing the crossing $c$ to either the positive crossing, the oriented resolution, or the unoriented resolution for the duration of the entire movie yields the ribbon concordances defining the other four sets of vertical maps. The fact that these cobordism maps commute with the maps in the exact triangles follows from Remark~\ref{rem:KhovanovSkeinTriangleCommutesWithCobordismMaps}.
\end{proof}

\theoremstyle{plain}
\newtheorem*{thm:KhovanovShifts}{Theorem~\ref{thm:KhovanovHomologyChangesUnderFullTwists}}
\begin{thm:KhovanovShifts}
	Let $K_b$ be the band sum of a split two-component link along a nontrivial band $b$, and let $K_{b+n/2}$ be obtained by adding $n$ half twists to the band. Then there is a nonzero finite-dimensional bigraded vector space $H_b$ for which \[
		\Kh(K_{b+n/2}) \cong \Kh(K_\#) \oplus h^{n}q^{2n}\: H_b
	\]as bigraded vector spaces over $\F_2$. In particular, the bigraded vector spaces $\Kh(K_{b+n/2})$ over $\F_2$ for $n \in \Z$ are all distinct. 
\end{thm:KhovanovShifts}
\begin{proof}
	Consider the commutative diagram of Proposition~\ref{prop:KhovanovSkeinTriangleRibbonMaps} induced by a ribbon concordance $K_\# \to K_b$. Since $\Kh(K_\#) \hookrightarrow \Kh(K_b)$ is an inclusion onto a direct summand by Theorem 1 of \cite{MR4041014}, there is an isomorphism of bigraded vector spaces \[
		\Kh(K_b) \cong \Kh(K_\#) \oplus H_b
	\]where $H_b$ is a bigraded vector space of nonzero dimension by Theorem~\ref{thm:KhovanovHomologyDetectsTrivialBand} and the assumption that $b$ is nontrivial. Similarly, we may write \[
		\Kh(K_{b+1/2}) \cong \Kh(K_\#') \oplus H_{b+1/2} \qquad \Kh(K_{b+1}) \cong \Kh(K_\#) \oplus H_{b+1}
	\]where $H_{b+1/2}$ and $H_{b+1}$ are of nonzero dimension. Commutativity of the diagram implies that the maps \[
		\begin{tikzcd}
			\Kh(K_b) \ar[r] & \Kh(K_{b+1/2}) \ar[r] & \Kh(K_{b+1})
		\end{tikzcd}
	\]restrict to isomorphisms $H_b \to H_{b+1/2} \to H_{b+1}$. These isomorphisms are homogeneous of bidegree $(1,2)$ by Proposition~\ref{prop:KhovanovSkeinGradings}. Thus $H_{b+1/2} \cong hq^2\:H_b$ and $H_{b+1} \cong h^2q^4\: H_b$. Since they are finite-dimensional and of nonzero dimension, it follows that they are not isomorphic as bigraded vector spaces. Thus $\Kh(K_b)$, $\Kh(K_{b+1/2})$ and $\Kh(K_{b+1})$ are all distinct. The same argument iterated finishes the result. 
\end{proof}
\begin{rem}
	The argument also works for reduced Khovanov homology over $\F_2$ and $\Q$ by Remark~\ref{rem:reducedKhovanovDetectsToo}.
\end{rem}

Since the bigraded isomorphism type of $\Kh(K)$ over $\F_2$ is an invariant of the unoriented knot type of $K$, we obtain a proof of the main result.

\theoremstyle{plain}
\newtheorem*{thm:nontrivTwists}{Theorem~\ref{thm:nonTrivBandFullTwistsDifferent}}

\begin{thm:nontrivTwists}
	Let $K_b$ be the band sum of a split two-component link along a nontrivial band $b$, and let $K_{b+n/2}$ be obtained from $K_b$ by adding $n$ half twists to the band. Then the unoriented knots $K_{b+n/2}$ for $n \in \Z$ are all distinct. 
\end{thm:nontrivTwists}

\phantomsection
\addcontentsline{toc}{section}{\protect\numberline{}References}
\raggedright
\bibliography{bandsums}
\bibliographystyle{alpha}

\end{document}